%
% branched off test12_report.tex, on August 23, 2012
%
\documentclass[a4paper, twoside,10pt]{article}
   \setlength{\textwidth}{6.0in}
   \setlength{\oddsidemargin}{0.30in}
   \setlength{\evensidemargin}{0.0in}

%%%%%%%%%%%%%%%%%%%%%%%%%%%%%%%%%%%%%%%%%%%
% include packages
%%%%%%%%%%%%%%%%%%%%%%%%%%%%%%%%%%%%%%%%%%%
\usepackage{amsmath}
\usepackage{amssymb}
\usepackage{amsfonts}
\usepackage{graphicx}
\usepackage{subfigure}
\usepackage{eucal}
\usepackage{amsthm}
\usepackage{verbatim}
\usepackage{epsfig}
\usepackage{hyperref}

%%%%%%%%%%%%%%%%%%%%%%%%%%%%%%%%%%%%%%%%%%%
% User defined macros
%%%%%%%%%%%%%%%%%%%%%%%%%%%%%%%%%%%%%%%%%%%
\newcommand{\brk}[1]{\left( #1 \right)}
\newcommand{\abrk}[1]{\left| #1 \right|}

% Capital letters
\def\Rc{\mathbb{R}}
\def\Cc{\mathbb{C}}

% Some symbols

\newcommand{\F}{\mathcal{F}}
%%%%%%%%%%%%%%%%%%%%%%%%%%%%%%%%%%%%%%%%%%%
% Numbering
%%%%%%%%%%%%%%%%%%%%%%%%%%%%%%%%%%%%%%%%%%%
% make the equation be numbered like (1.3)
%\numberwithin{equation}{section}
% Theorems
\newtheorem{thm}{Theorem}
\newtheorem{prop}{Proposition}
 % to be numbered by sections
\newtheorem{cor}{Corollary}
\newtheorem{definition}{Definition}

\newtheorem*{goal}{Principal goal of this paper}
\newtheorem{remark}{Remark}
\newtheorem{conjecture}{Conjecture}

%%%%%%%%%%%%%%%%%%%%%%%%%%%%%%%%%%%%%%%%%%%%%
\begin{document}

\title{
Detailed analysis of prolate quadratures and interpolation
formulas}
\author{Andrei Osipov\footnote{This author's research was supported in part
 by the AFOSR grant \#FA9550-09-1-0241.}
\footnote{Yale University, 51 Prospect st, New Haven, CT 06511.
Email: andrei.osipov@yale.edu.
},
Vladimir Rokhlin\footnote{
This author's research was supported in part 
by the ONR grants \#N00014-10-1-0570, \#N00014-11-1-0718,
the AFOSR grant \#FA9550-09-1-0241,
and the ONR / Telcordia grant
\#N00014-10-C-0176 / PO\#20013660.}
\footnote{
This author has a significant financial interest
in the Fast Mathematical Algorithms and Hardware corporation
(FMAHc) of Connecticut.}
}

\maketitle

\begin{abstract}
As demonstrated by Slepian et. al. in a sequence of
classical papers (see \cite{SlepianComments},
\cite{ProlateSlepian1},
\cite{ProlateLandau1},
\cite{ProlateSlepian2},
\cite{SlepianAsymptotic}),
prolate spheroidal wave functions (PSWFs)
provide a natural and efficient tool
for computing with bandlimited functions defined on an interval.
As a result, PSWFs are becoming increasing popular
in various areas in which such function occur - this
includes
physics (e.g. wave phenomena, fluid dynamics),
engineering (e.g. signal processing, filter design), etc.

To use PSWFs as a computational tool, one needs
fast and accurate numerical algorithms
for the evaluation of PSWFs and related quantities,
as well as for the construction of quadratures,
interpolation formulas, etc. 
For the last half a century, substantial progress has been made in
design of such algorithms
- this includes both classical results 
(see e.g. \cite{Bouwkamp}) as well as 
more recent developments (see e.g. \cite{RokhlinXiaoProlate}). 

% ALTERNATIVE - start
% The existing algorithms are quite satisfactory
% for moderate values of band limit $c$ (e.g. $c \leq 10^3$),
% in terms of both speed and accuracy.
% Still, in the modern computational environment
% one might encounter problems with relatively large values of $c$
% (e.g. $c \geq 10^4$).
% ALTERNATIVE - end

% MAIN - start
The complexity of many of the existing algorithms, however,
is at least quadratic in the band limit $c$.
For example, the evaluation of the $n$th
eigenvalue of the prolate integral operator requires at least
$O(c^2)$ operations (see e.g. \cite{RokhlinXiaoProlate}); 
also, the construction of accurate quadrature
rules for the integration of bandlimited functions of band limit $c$
requires $O(c^3)$ operations (see e.g. \cite{ChengRokhlin}).
Therefore, while 
the existing algorithms are
quite satisfactory for moderate values of $c$
(e.g. $c \leq 10^3$), 
they tend to be relatively slow
when $c$ is large
(e.g. $c \geq 10^4$).
% MAIN - end

In this paper, we describe several numerical algorithms
for the evaluation of PSWFs and related quantities,
and design a class of PSWF-based quadratures for the integration of 
bandlimited functions. Also, we 
perform detailed analysis of the related properties of PSWFs.
While the analysis is somewhat involved, the resulting
numerical algorithms are quite simple and efficient in practice.
For example, the evaluation of the $n$th
eigenvalue of the prolate integral operator requires
$O(n+c)$ operations; also,  
the construction of accurate quadrature
rules for the integration of bandlimited functions of band limit $c$
requires $O(c)$ operations.

Our results are illustrated via several numerical experiments.
\end{abstract}

\noindent
{\bf Keywords:} {bandlimited functions, prolate spheroidal wave functions,
quadratures, interpolation}

\noindent
{\bf Math subject classification:} {
33E10, 34L15, 35S30, 42C10, 45C05, 54P05,
65D05, 65D15, 65D30, 65D32}
% 65D05 interpolation
% 65D15 algorithms for function approximation
% 65D30 numerical integration
% 65D32 quadrature and cubature formulas

%%%%%%%%%%%%%%%%%%%%%%%%%%%%%%%%%%%%%%%%%%%
% User defined macros
%%%%%%%%%%%%%%%%%%%%%%%%%%%%%%%%%%%%%%%%%%%

%%%%%%%%%%%%%%%%%%%%%%%%%%%%%%%%%%%%%%%%%%%%%
%%%%%%%%%%%%%%%%%%%%%%%%%%%%%%%%%%%%%%%%%%%%%
\section{Outline}
\subsection{Quadratures for Bandlimited Functions}
\label{sec_outline}
The principal goal of this paper is a quadrature
designed for the integration of bandlimited functions
of a specified band limit $c > 0$.

A function $f: \Rc \to \Rc$ is bandlimited of band limit $c>0$, if there
exists a function $\sigma \in L^2\left[-1,1\right]$ such that
\begin{align}
f(x) = \int_{-1}^1 \sigma(t) \cdot e^{icxt} \; dt.
\label{eq_intro_f}
\end{align}
In other words, the Fourier transform of a bandlimited function
is compactly supported.
While \eqref{eq_intro_f} defines $f$ for all real $x$, 
one is often interested in bandlimited functions, whose 
argument is confined to an interval, e.g. $-1 \leq x \leq 1$.
Such functions are encountered in physics (wave phenomena,
fluid dynamics), engineering (signal processing), etc.
(see e.g. \cite{SlepianComments}, \cite{Flammer}, \cite{Papoulis}).

By quadrature we mean a set of nodes
\begin{align}
\label{eq_quadrature_nodes}
-1 < t_1^{(n)} < \dots < t_n^{(n)} < 1
\end{align}
and weights
\begin{align}
\label {eq_quadrature_weights}
W_1^{(n)}, \dots, W_n^{(n)}.
\end{align}
If $f:(-1,1) \to \Rc$ is a bandlimited function, we use the quadrature to
approximate the integral of $f$ over the interval $(-1,1)$ by
a finite sum; more specifically,
\begin{align}
\int_{-1}^1 f(t) \; dt \approx \sum_{j=1}^n W_j^{(n)} f\brk{t_j^{(n)}} .
\label{eq_quadrature_used}
\end{align}
About half a century ago it was observed that the eigenfunctions 
of
the integral operator 
$F_c: L^2\left[-1,1\right] \to L^2\left[-1,1\right]$,
defined via the formula
\begin{align}
F_c\left[\varphi\right] (x) = \int_{-1}^1 \varphi(t) e^{icxt} \; dt,
\label{eq_intro_fc}
\end{align}
provide a natural tool for dealing with bandlimited functions, defined
on the interval $\left[-1,1\right]$. Moreover, it
was observed 
(see \cite{ProlateSlepian1}, \cite{ProlateLandau1}, \cite{ProlateSlepian2})
that the eigenfunctions of $F_c$
are precisely the prolate spheroidal wave functions (PSWFs)
of band limit $c$,
well known from the mathematical physics (see, for example,
\cite{PhysicsMorse}, \cite{Flammer}).
Therefore, when designing a quadrature for the integration
of bandlimited functions of band limit $c>0$,
it is natural to require that this quadrature integrate several
first PSWFs of band limit $c$ with high accuracy.

We formulate the principal objective of this paper
in a more precise manner, as follows.
%%%%%%%%%%%%%%%
\begin{goal}
Suppose that $c>0$ is a real number.
For every integer $n > 0$, 
we define a quadrature of order $n$ 
(for the integration of bandlimited functions
of band limit $c$ over $(-1,1)$) by specifying
$n$ nodes and 
$n$ weights (see \eqref{eq_quadrature_nodes}, \eqref{eq_quadrature_weights}).
Suppose also that $\varepsilon>0$.
We require that, for sufficiently large $n$,
the quadrature of order $n$ integrate
the first $n$ PSWFs of band limit $c$ up to the error $\varepsilon$.
More specifically, we find the integer $M=M(c,\varepsilon)$
such that, for every integer $n \geq M$
and all integer $m=0, 1, \dots, n-1$,
\begin{align}
\abrk{ \int_{-1}^1 \psi_m(t) \; dt - 
       \sum_{j=1}^n W_j^{(n)} \psi_m\brk{t_j^{(n)}} } \leq \varepsilon,
\label{eq_main_goal}
\end{align}
where $\psi_m : (-1,1) \to \Rc$ is the $m$th PSWF
of band limit $c$
(see Section~\ref{sec_pswf}).
\end{goal}
%%%%%%%%%%%%%
Quadratures for the integration of
bandlimited functions which satisfy \eqref{eq_main_goal}
have already been discussed in the literature, for example:

{\it \bf{Quadrature 1.}}
Suppose that $n>0$ is an integer.
The existence and uniqueness of $n$ nodes and weights, such that 
\eqref{eq_main_goal} holds for $\varepsilon = 0$ and 
all $m = 0, 1, \dots,
2n - 1$, was first observed more than 100 years ago 
(see, for example, \cite{Karlin}, \cite{Krein}, \cite{Markov1},
\cite{Markov2}) for all Chebyshev systems, of which PSWFs are a special
case (see \cite{RokhlinXiaoProlate}). 
Although numerical algorithms for the design
of this optimal quadrature were recently constructed
(see \cite{ChengRokhlin},
\cite{MaRokhlin},
\cite{YarvinRokhlin}), they tend to be rather expensive
(require order $n^3$ operations with a large proportionality constant).

{\it \bf{Quadrature 2.}}
Another quadrature was suggested in \cite{RokhlinXiaoProlate}.
The PSWF $\psi_n$ has $n$ roots $t_1, \dots, t_n$ in
the interval $(-1,1)$ (see Theorem~\ref{thm_pswf_main}
in Section~\ref{sec_pswf});
the idea is to use these roots as the quadrature nodes,
solve the linear system
of $n$ equations
\begin{align}
\left\{ 
\sum_{j=1}^n \psi_m(t_j) W_j = \int_{-1}^1 \psi_m(t) \; dt 
\right\}_{m=0}^{n-1}
\label{eq_weights_lin_system}
\end{align}
for the unknowns $W_1, \dots, W_n$, 
and use the resulting weights and nodes to define a quadrature for 
the integration of functions
of band limit $2c$. This approach is justified by the generalization
of the Euclid's division algorithm for PSWFs 
(see \cite{RokhlinXiaoProlate}),
and is less expensive computationally than the previous
one (its cost is dominated by the cost of solving the linear system
\eqref{eq_weights_lin_system}).
The same quadrature can be used 
to integrate functions of band limit $c$,
since \eqref{eq_weights_lin_system} implies that \eqref{eq_main_goal}
holds with $\varepsilon = 0$, for all
$m = 0, \dots, n-1$. 
% It has been observed that,
% for a ``typical'' bandlimited function, the error in
% \eqref{eq_quadrature_used} is expected
% to be proportional to $|\lambda_n|$,
% and thus $n$ should be chosen to be at least greater than $2c/\pi$
% (see \cite{Yoel}, \cite{RokhlinXiaoProlate}
% and Section~\ref{sec_pswf}
% below). 

In this paper, we describe another quadrature whose nodes
are the $n$ roots of $\psi_n$ in $\brk{-1, 1}$. However, its weights 
differ from the solution of \eqref{eq_weights_lin_system}, and 
can be evaluated in $O(n)$ operations (see Section~\ref{sec_quad} and 
Section~\ref{sec_numerical} below).

Thus, the quadratures of this paper are much faster to 
evaluate than those described above.
Moreover, \eqref{eq_main_goal}
ensures that their accuracy is similar to that of Quadrature 2.
Also, their nodes and weights can be used as starting points
for the scheme that computes the optimal Quadrature 1.

In order to define the weights, to make sure that \eqref{eq_main_goal}
holds and to be able to compute them efficiently, we need to analyze
the PSWFs in a somewhat detailed manner. This analysis
will be preceded by a heuristic explanation, which
provides some intuition as well as prevents one from the danger
of not seeing the forest for the trees (see
Section~\ref{sec_intuition} below).
Section~\ref{sec_overview} contains a short overview of the analysis.
Section~\ref{sec_prel} contains mathematical and numerical
preliminaries, to be used in the rest of the paper.
In Section~\ref{sec_summary}, we summarize the principal analytical
results of the paper.
Section~\ref{sec_analytical} contains the corresponding theorems
and proofs.
Section~\ref{sec_numerical} contains the description and analysis
of the numerical algorithms for the evaluation of the quadrature
and some related quantities.
In Section~\ref{sec_num_res}, we report the results
of several numerical experiments.

%%%%%%%%%%%%%%%%%%%%%%%%%%%%%%%%%%%%%%%%%%%%%%%%
\subsection{Intuition Behind Quadrature Weights}
\label{sec_intuition}

We recall the following classical interpolation problem.
Suppose that $t_1, \dots, t_n$ are $n$ distinct points
in the interval $(-1,1)$. We need to find the real numbers
$W_1, \dots, W_n$ such that 
\begin{align}
\int_{-1}^1 p(t) \; dt = \sum_{i=1}^n W_i \cdot p(t_i),
\label{eq_quad_polynomial}
\end{align}
for all the polynomials $p$ of degree at most $n-1$.
In other words, the quadrature with nodes $t_1,\dots,t_n$
and weights $W_1,\dots,W_n$ integrates all the polynomials
of degree up to $n-1$ exactly
(see \eqref{eq_quadrature_nodes}, \eqref{eq_quadrature_weights},
\eqref{eq_quadrature_used}).

To solve the problem, one constructs $n$ polynomials
$l_1, \dots, l_n$ of degree $n-1$ with the property
\begin{align}
\l_j(t_i) = \begin{cases}
0 & i \neq j, \\
1 & i = j
\end{cases}
\label{eq_pol_quad_l_prop}
\end{align}
for every integer $i, j = 1, \dots, n$ (see, for example,\cite{Isaacson}). 
It is easy to verify that, for every $j=1,\dots,n$, the polynomial $l_j$
is defined via the formula
\begin{align}
l_j(t) = \frac{ w_n(t) }{ w_n'(t_j) \cdot (t - t_j) },
\label{eq_pol_quad_l_formula}
\end{align}
where $w_n$ is the polynomial of degree $n$ whose roots are
precisely $t_1, \dots, t_n$. The weights $W_1, \dots, W_n$
are defined via the formula
\begin{align}
W_j = \int_{-1}^1 l_j(t) \; dt
    = \frac{1}{w_n'(t_j)} \int_{-1}^1 \frac{w_n(t) \; dt}{t - t_j}, 
\label{eq_pol_quad_weights}
\end{align}
for every integer $j=1, \dots, n$.
We observe that 
a single function $w_n$ is used to define all the $n$ weights; also, 
$w_n$ is a polynomial of degree $n$, and hence does not belong to the space
of the polynomials of degree up to $n-1$.

In our case, the basis functions are the PSWFs and not the polynomials.
Suppose that the roots $t_1, \dots, t_n$ of $\psi_n$ in the interval
$(-1,1)$ are chosen to be the nodes of the quadrature.
If we choose the weights $W_1,\dots,W_n$
such that the resulting quadrature integrates the 
first $n$ PSWFs exactly, this will lead to
the linear system \eqref{eq_weights_lin_system}, and hence 
to Quadrature 2 from
Section~\ref{sec_outline}. Instead, we define the weights via
using $\psi_n$ in the same way we used $w_n$ in \eqref{eq_pol_quad_weights}.
More specifically, similar to 
\eqref{eq_pol_quad_l_formula}, for every integer $j = 1, \dots, n$,
we define the function $\varphi_j : (-1,1) \to \Rc$ via
the formula
\begin{align}
\varphi_j(t) = \frac{ \psi_n(t) }{ \psi_n'(t_j) \cdot ( t-t_j ) }.
\label{eq_quad_phi_first}
\end{align}
We observe that, for every integer $i, j = 1, \dots, n$,
\begin{align}
\varphi_j(t_i) = 
\begin{cases}
0 & i \neq j, \\
1 & i = j,
\end{cases}
\label{eq_varphi_ij}
\end{align}
analogous to \eqref{eq_pol_quad_l_prop}. Viewed as a function
on the whole real line, each $\varphi_j$ is bandlimited
with the same band limit $c$ (see, for example,
Theorem~\ref{thm_quad_coef} in Section~\ref{sec_phi} or
Theorem 19.3 in \cite{Rudin}). 
On the other hand, $\varphi_j$
does not belong to the span of $\psi_0, \psi_1, \dots, \psi_{n-1}$
(see Theorem~\ref{thm_quad_coef} in Section~\ref{sec_phi}).
We define the weights $W_1, \dots, W_n$ via the formula
\begin{align}
W_j = \int_{-1}^1 \varphi_j(t) \; dt,
\label{eq_quad_w_first}
\end{align}
for $j = 1, 2, \dots, n$. The weights $W_1, \dots, W_n$, defined
via \eqref{eq_quad_w_first}, are different
from the solution of the linear system \eqref{eq_weights_lin_system}.
Nevertheless,
the resulting quadrature is expected to satisfy
\eqref{eq_main_goal} with $\varepsilon$ of order $|\lambda_n|$
(see Theorem~\ref{thm_quad} in Section~\ref{sec_quad_error}),
since the reciprocal of $\psi_n$ can be approximated well
by a rational function with $n$ poles.
Making the latter
statement precise is the principal purpose of Section~\ref{sec_analytical}
of this paper. While the analysis of the issue is somewhat detailed,
the principal idea is simple enough to be presented in the next few
sentences.

If $P$ is a polynomial with $m$ simple roots $z_1, \dots, z_m$ in $(-1, 1)$,
then the function $z \to P(z)^{-1}$ is meromorphic in the complex plane;
moreover,
\begin{align}
\frac{ 1 }{ P(z) } = \sum_{j = 1}^m \frac{ 1 }{ P'(z_j) \cdot (z - z_j) },
\label{eq_one_over_p}
\end{align}
for all complex $z$ different from $z_1, \dots, z_m$ 
(see Theorem~\ref{thm_cauchy} in Section~\ref{sec_misc_tools}).
The right-hand side of \eqref{eq_one_over_p}
is referred to as ``partial fractions expansion of $P^{-1}$''.
Similarly, the function $z \to \psi_n(z)^{-1}$ is meromorphic;
however, it has
infinitely many poles, all of which are real and simple
(see Corollary~\ref{cor_infinite} in Section~\ref{sec_first_order}),
and exactly
$n$ of which lie in $(-1, 1)$
(see Theorem~\ref{thm_pswf_main} in Section~\ref{sec_pswf}).
Suppose that the roots of $\psi_n$ in $(-1,1)$
are denoted by $t_1 < \dots < t_n$.
Motivated by
\eqref{eq_one_over_p}, we analyze the
partial fractions expansion of $\psi_n^{-1}$.
It turns out that 
\begin{align}
\frac{1}{\psi_n(t)} =
\sum_{j=1}^n \frac{ 1 }{ \psi_n'(t_j) \cdot (t-t_j) } + O(|\lambda_n|),
\label{eq_full_expansion}
\end{align}
for real $-1 < t < 1$
(see Section~\ref{sec_one_over_psi} and Theorem~\ref{thm_cauchy}
in Section~\ref{sec_misc_tools}).
In other words, \eqref{eq_full_expansion} means that 
the reciprocal of $\psi_n$
differs from a certain rational function with $n$ poles by a function,
whose magnitude in the interval $(-1,1)$ is of order
$|\lambda_n|$.

A rigorous version of \eqref{eq_full_expansion}
is established and proven in Section~\ref{sec_one_over_psi}.
The relation between \eqref{eq_main_goal}, \eqref{eq_quad_phi_first},
\eqref{eq_quad_w_first} and \eqref{eq_full_expansion} is studied
in Section~\ref{sec_quad}. The results of these two sections
rely on the machinery, 
developed in Sections~\ref{sec_oscillation},~\ref{sec_growth}.

%%%%%%%%%%%%%%%%%%%%%%%%%%%%%%%%%%%%%%%%%%%%%%%%%%%%%%%
\subsection{Overview of the Analysis}
\label{sec_overview}
%%%%%%%%%%%%%%%%%%%%%%%%%%%%%%%%%%%%%%%%%%
\subsubsection{Partial Fractions Expansion of $1 / \psi_n$}
\label{sec_pf}
%%%%%%%%%%%%%%%%%%%%%%%%%%%%%%%%%%%%%%%%%%
To establish \eqref{eq_full_expansion}, we proceed as follows.
Suppose that $x_1 < x_2 < \dots$ are the roots of $\psi_n$ in
$(1, \infty)$
(see Corollary~\ref{cor_infinite} in Section~\ref{sec_first_order}).
Suppose also that $M>1$, and $R>1$ is a point between $x_M$ and $x_{M+1}$.
In other words,
\begin{align}
1 < x_1 < x_2 < \dots < x_M < R < x_{M+1} < \dots.
\label{eq_pf_xr}
\end{align}
Then, for all real $-1 < t < 1$,
\begin{align}
& \frac{1}{\psi_n(t)} - \sum_{j=1}^n \frac{1}{\psi_n'(t_j)\cdot(t-t_j)} =
\nonumber \\
& \sum_{k=1}^M \left(
\frac{1}{\psi_n'(x_k) \cdot (t-x_k)} + 
\frac{1}{\psi_n'(-x_k) \cdot (t+x_k)}
\right) + \frac{1}{2\pi i}\oint_{\Gamma_R} \frac{dz}{\psi_n(z)\cdot(z-t)},
\label{eq_pf_cauchy}
\end{align}
where $\Gamma_R$ is the boundary of the square 
$\left[-R,R\right] \times \left[-iR,iR\right]$, traversed
in the counterclockwise direction
(see Theorem~\ref{thm_cauchy}
in Section~\ref{sec_misc_tools}).

Suppose now that $x>1$ is a root of $\psi_n$.
We observe that $\psi_n$ is a holomorphic function defined
in the entire complex plane.
We use the integral equation \eqref{eq_prolate_integral}
in Section~\ref{sec_pswf} and Theorem~\ref{thm_watson_lemma}
in Section~\ref{sec_misc_tools} to show that
\begin{align}
& \sqrt{
|\psi_n(x+it)|^2 + 
|\psi_n'(x+it)|^2 \cdot \frac{|(x+it)^2-1|}{|c^2\cdot(x+it)^2-\chi_n|}
} \sim \nonumber \\
& \frac{e^{ct} \cdot |\psi_n(1)| \cdot \sqrt{2}}{ct \cdot |\lambda_n|},
\quad t \to \infty
\label{eq_pf_watson}
\end{align}
(see Theorem~\ref{lem_q_at_infty} in Section~\ref{sec_upper}). On the 
other hand, we use the differential equation 
\eqref{eq_prolate_ode} in Section~\ref{sec_pswf} and
Theorem~\ref{thm_lewis} in Section~\ref{sec_growth_ode}
to show that
\begin{align}
& \sqrt{
|\psi_n(x+it)|^2 + 
|\psi_n'(x+it)|^2 \cdot \frac{|(x+it)^2-1|}{|c^2\cdot(x+it)^2-\chi_n|}
} \leq \nonumber \\
& \frac{e^{1/4} \cdot e^{ct} \cdot |\psi_n'(x)| \cdot (x^2-1)^{3/4}}
{ct \cdot \left(x^2 - (\chi_n/c^2)\right)^{1/4}}
\label{eq_pf_der_ineq}
\end{align}
(see Theorems~\ref{lem_harmonic},
\ref{lem_drr},
\ref{lem_big_int}, 
\ref{lem_q_sharp},
\ref{thm_bc_big} in
Section~\ref{sec_upper}). 
We combine \eqref{eq_pf_watson} and \eqref{eq_pf_der_ineq}
to establish the inequality
\begin{align}
\frac{1}{\abrk{\psi_n'(x)}} \leq 
e^{1/4} \cdot \abrk{\lambda_n} \cdot 
\frac{ (x^2-1)^{\frac{3}{4}} }{ (x^2-(\chi_n/c^2))^{\frac{1}{4}} }
\label{eq_pf_sharp}
\end{align}
(see Theorem~\ref{thm_sharp_simple} in Section~\ref{sec_upper}).
Then, we use \eqref{eq_pf_sharp} to show that,
for every integer $M>1$,
\begin{align}
\left|
\sum_{k=1}^M \frac{1}{(t-x_k) \cdot \psi_n'(x_k)}
\right| < 5 \cdot |\lambda_n| \cdot 
\left( \log(2 \cdot x_M) + (\chi_n)^{1/4} \right)
\label{eq_pf_head}
\end{align}
(see Theorems~\ref{lem_6_10},~\ref{lem_head_main} in
Section~\ref{sec_head} for a more precise statement).

We observe that \eqref{eq_pf_head} provides an upper
bound on the first summand in right-hand side
of \eqref{eq_pf_cauchy}. While this bound is of order $|\lambda_n|$
for $x_M < O(|\lambda_n|^{-1})$, it diverges if we let
$M$ go to infinity (see, however, \eqref{eq_pf_two} below).

To overcome this obstacle, we use the integral equation
\eqref{eq_prolate_integral2} in Section~\ref{sec_pswf}
to analyze the behavior of $\psi_n(x)$ and
$\psi_n'(x)$ for $x > |\lambda_n|^{-2}$ (see Section~\ref{sec_tail}).
In particular, if $x > 1$ is a root
of $\psi_n$ and if $x > |\lambda_n|^{-2}$,
then
\begin{align}
\left| \psi_n'(x) \right| = 
\left| \frac{2 \psi_n(1) }{ \lambda_n x } \right| \cdot
\left[1 + O\left(\left|x \cdot \lambda_n \right|^{-1} \right)\right]
\label{eq_pf_dpsi}
\end{align}
(see Theorem~\ref{lem_tail_dpsi} in Section~\ref{sec_tail}
for a more precise statement). More detailed analysis reveals that,
if $y > x > |\lambda_n|^{-2}$ are two consecutive roots of $\psi_n$
and $-1 < t < 1$ is a real number, then
\begin{align}
\left| \frac{1}{\psi_n'(x) \cdot (x-t)} +
       \frac{1}{\psi_n'(y) \cdot (y-t)} \right| \leq
20 \cdot c \cdot \int_x^y \frac{ds}{s^2}
\label{eq_pf_two}
\end{align}
(see Theorem~\ref{lem_two_bound} in Section~\ref{sec_tail}). 

In Theorem~\ref{thm_head_tail} of Section~\ref{sec_head_tail},
we establish, for all real $-1 < t < 1$, the inequality of the form
\begin{align}
\left|
\sum_{k=1}^{\infty}
 \frac{1}{\psi_n'(x_k) \cdot (x_k-t)}
\right| \leq \text{const} \cdot |\lambda_n| \cdot 
\left( \log\left( \frac{1}{|\lambda_n|} \right)+ 
      (\chi_n)^{1/4} \right),
\label{eq_pf_series}
\end{align}
where \eqref{eq_pf_head}, \eqref{eq_pf_two} are used
to bound the head and the tail of the infinite sum,
respectively.

Eventually, we analyze the behavior of $\psi_n$ of the complex
argument to demonstrate that, for all real $-1 < t < 1$,
\begin{align}
\limsup_{k \to \infty} \left|
\frac{1}{2\pi i} \oint_{\Gamma_{R_k}} \frac{dz}{\psi_n(z) \cdot (z-t)}
\right| < 2\sqrt{2}\cdot |\lambda_n|,
\label{eq_pf_contour}
\end{align}
where $\left\{ R_k \right\}$ is a certain sequence that tends
to infinity, and 
the contours $\Gamma_{R_k}$ are as in \eqref{eq_pf_cauchy} (see
Theorems~\ref{lem_vertical_bound},~\ref{lem_contour}
in Section~\ref{sec_head_tail} for more details).
We substitute \eqref{eq_pf_series} and \eqref{eq_pf_contour}
into \eqref{eq_pf_cauchy} to obtain, for all real $-1 < t < 1$,
an inequality of the form
\begin{align}
\left| \frac{1}{\psi_n(t)} - \sum_{j=1}^n \frac{1}{\psi_n'(t_j)\cdot(t-t_j)}
\right| \leq 
\text{const} \cdot |\lambda_n| \cdot
\left( \log\left( \frac{1}{|\lambda_n|} \right)+ 
      (\chi_n)^{1/4} \right)
\label{eq_pf_final}
\end{align}
(see Theorems~\ref{thm_complex},
\ref{thm_complex_summary} in Section~\ref{sec_head_tail}).
In the next subsection, we overview
the implications of \eqref{eq_pf_final} to the analysis of
the quadrature, discussed in Section~\ref{sec_intuition}.

%%%%%%%%%%%%%%%%%%%%%%%%%%%%%%%%%%
\subsubsection{Quadrature Weights}
%%%%%%%%%%%%%%%%%%%%%%%%%%%%%%%%%%
Roughly speaking, \eqref{eq_pf_final} asserts that, for all real $-1<t<1$,
\begin{align}
\frac{1}{\psi_n(t)} - \sum_{j=1}^n \frac{1}{\psi_n'(t_j)\cdot(t-t_j)} =
O\left(|\lambda_n|\right).
\label{eq_qw_basic}
\end{align}
In other words, the left-hand side of \eqref{eq_qw_basic} is uniformly
bounded in $(-1,1)$, and its magnitude is of order $|\lambda_n|$.
If we multiply both sides of \eqref{eq_qw_basic} by $\psi_n(t)$ and
use \eqref{eq_quad_phi_first}, we obtain
\begin{align}
1 = \varphi_1(t) + \dots + \varphi_n(t) + \psi_n(t) \cdot 
O\left(|\lambda_n|\right)
\label{eq_qw_1}
\end{align}
In other words, $\varphi_1, \dots, \varphi_n$ constitute a partition
of unity in the interval $(-1,1)$, up to an error of order
$|\lambda_n|$.
We integrate both sides of \eqref{eq_qw_1} over $(-1,1)$ 
and use Theorem~\ref{thm_pswf_main} in Section~\ref{sec_pswf} 
and \eqref{eq_quad_w_first} in Section~\ref{sec_intuition}
to obtain
\begin{align}
2 = W_1 + \dots + W_n + O\left(|\lambda_n|\right)
\label{eq_qw_ws}
\end{align}
(see Section~\ref{sec_weights} for more details).

Suppose now that $m \neq n$ is an integer. We multiply both sides of
\eqref{eq_qw_1} by $\psi_m$ to obtain
\begin{align}
\psi_m(t) = \sum_{j=1}^n \psi_m(t) \cdot \varphi_j(t) +
\psi_m(t) \cdot \psi_n(t) \cdot
O\left(|\lambda_n|\right).
\label{eq_qw_psim}
\end{align}
On the other hand, for every integer $j=1,\dots,n$, we use
integration by parts to evaluate
\begin{align}
& \int_{-1}^1 \varphi_j(t) \cdot \psi_m(t) \; dt = \nonumber \\
& \frac{ |\lambda_m|^2 \cdot \psi_m(t_j) }{ |\lambda_m|^2-|\lambda_n|^2} \cdot
\left[
W_j + \frac{ic\lambda_n}{\psi_n'(t_j)} 
\int_0^1 \psi_n(x) \cdot e^{-icxt_j} \; dx
\right]
\label{eq_qw_coef}
\end{align}
(see Theorem~\ref{thm_quad_coef} in Section~\ref{sec_phi}).
We combine \eqref{eq_pf_final},
\eqref{eq_qw_psim} and \eqref{eq_qw_coef} with some
additional analysis to conclude that,
for all integer $0 \leq m < n$,
\begin{align}
\left|
\int_{-1}^1 \psi_m(t) \; dt - 
\sum_{j=1}^n \psi_m(t_j) \cdot W_j 
\right| \leq
\text{const} \cdot |\lambda_n| \cdot \left(
\log \frac{1}{|\lambda_n|} + \chi_n
\right)
\label{eq_qw_quad_error}
\end{align}
(see Theorems~\ref{thm_quad},~\ref{thm_quad_simple} in 
Section~\ref{sec_quad_error}).

According to \eqref{eq_qw_quad_error}, the quadrature
error \eqref{eq_main_goal} in Section~\ref{sec_outline} is roughly
of order $|\lambda_n|$. It remains to establish for what
values of $n$ this error is smaller than the predefined
accuracy parameter $\varepsilon > 0$.
In Section~\ref{sec_main_result}, we 
combine
Theorems~\ref{thm_n_khi_simple},~\ref{thm_khi_1},~\ref{thm_lambda_khi}
with \eqref{eq_qw_quad_error}
to achieve that goal.
Namely, we show that, if
\begin{align}
n > \frac{2c}{\pi} + \text{const} \cdot
\log(c) \cdot \left( \log(c) + \log\frac{1}{\varepsilon} \right),
\label{eq_qw_main_n}
\end{align}
then
\begin{align}
\left|
\int_{-1}^1 \psi_m(t) \; dt - 
\sum_{j=1}^n \psi_m(t_j) \cdot W_j 
\right| \leq \varepsilon,
\label{eq_qw_main_error}
\end{align}
for all integer $0 \leq m < n$
(see Theorem~\ref{thm_quad_eps_simple}).

Numerical experiments seem to indicate that 
the situation is even better
in practice: namely, to achieve the desired accuracy 
it suffices to pick the minimal $n$ such that
$\abrk{\lambda_n} < \varepsilon$, which occurs for
$n = 2c/\pi + O((\log c) \cdot (-\log \varepsilon))$
(see Section~\ref{sec_num_res}, in particular,
Conjecture~\ref{conj_quad_error}
and Experiment 14 in Section~\ref{sec_exp14}).

%%%%%%%%%%%%%%%%%%%%%%%%%%%%%%%%%%%%%%%%%%%%%
%%%%%%%%%%%%%%%%%%%%%%%%%%%%%%%%%%%%%%%%%%%%%
\section{Mathematical and Numerical Preliminaries}
\label{sec_prel}
In this section, we introduce notation and summarize
several facts to be used in the rest of the paper.

%%%%%%%%%%%%%%%%%%%%%%%%%%%%%%%%%%%%%%%%%%%%%
\subsection{Prolate Spheroidal Wave Functions}
\label{sec_pswf}

In this subsection, we summarize several facts about
the PSWFs. Unless stated otherwise, all these facts can be 
found in \cite{RokhlinXiaoProlate}, 
\cite{RokhlinXiaoApprox},
\cite{LandauWidom},
\cite{ProlateSlepian1},
\cite{ProlateLandau1},
\cite{Report},
\cite{ReportArxiv}.

Given a real number $c > 0$, we define the operator
$F_c: L^2\left[-1, 1\right] \to L^2\left[-1, 1\right]$ 
via the formula
\begin{align}
F_c\left[\varphi\right] (x) = \int_{-1}^1 \varphi(t) e^{icxt} \; dt.
\label{eq_pswf_fc}
\end{align}
Obviously, $F_c$ is compact. We denote its eigenvalues by
$\lambda_0, \lambda_1, \dots, \lambda_n, \dots$ and assume that
they are ordered such that $\abrk{\lambda_n} \geq \abrk{\lambda_{n+1}}$
for all natural $n \geq 0$. We denote by $\psi_n$ the eigenfunction
corresponding to $\lambda_n$. In other words, the following
identity holds for all integer $n \geq 0$ and all real $-1 \leq x \leq 1$:
\begin{align}
\label{eq_prolate_integral}
\lambda_n \psi_n\brk{x} = \int_{-1}^1 \psi_n(t) e^{icxt} \; dt.
\end{align}
We adopt the convention\footnote{
This convention agrees with that of \cite{RokhlinXiaoProlate},
\cite{RokhlinXiaoApprox} and differs from that of \cite{ProlateSlepian1}.
}
that $\| \psi_n \|_{L^2\left[-1,1\right]} = 1$.
The following theorem describes the eigenvalues and eigenfunctions
of $F_c$.
%%%%%%%%%%%
\begin{thm}
Suppose that $c>0$ is a real number, and that the operator $F_c$
is defined via \eqref{eq_pswf_fc} above. Then,
the eigenfunctions $\psi_0, \psi_1, \dots$ of $F_c$ are purely real,
are orthonormal and are complete in $L^2\left[-1, 1\right]$.
The even-numbered functions are even, the odd-numbered ones are odd.
Each function $\psi_n$ has exactly $n$ simple roots in $\brk{-1, 1}$.
All eigenvalues $\lambda_n$ of $F_c$ are non-zero and simple;
the even-numbered ones are purely real and the odd-numbered ones
are purely imaginary; in particular, $\lambda_n = i^n \abrk{\lambda_n}$.
\label{thm_pswf_main}
\end{thm}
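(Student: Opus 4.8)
The plan is to establish Theorem~\ref{thm_pswf_main} by exploiting the
intimate connection between the integral operator $F_c$ of
\eqref{eq_pswf_fc} and a certain self-adjoint differential operator, a
connection that is classical in the theory of PSWFs. First I would observe
that $F_c$ is a compact integral operator whose kernel $e^{icxt}$ is
continuous and symmetric (though complex), so that $F_c$ is normal; in
fact, writing $F_c = Q_c \circ (\text{identity})$ one checks that
$F_c^* F_c$ and $F_c F_c^*$ share the real, nonnegative, compact operator
$Q_c$ whose kernel is $\sin(c(x-t))/(\pi(x-t))$ (up to normalization).
Since $Q_c$ is self-adjoint and its kernel is real and symmetric, its
eigenfunctions may be taken real and orthonormal and, by compactness, form
a complete orthonormal system in $L^2[-1,1]$. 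The first key step is to show
that $F_c$ and $Q_c$ have the \emph{same} eigenfunctions $\psi_n$; this
follows because $F_c$ commutes with $Q_c$ (both being built from the same
bandlimiting/truncation structure), so they are simultaneously
diagonalizable.

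The decisive tool — and the step I expect to carry most of the weight — is
the fact, going back to the work of Slepian and collaborators, that $F_c$
commutes with the second-order differential operator
\begin{align}
L_c[\psi](x) = \frac{d}{dx}\left[(1-x^2)\frac{d\psi}{dx}\right]
               - c^2 x^2 \psi(x),
\label{eq_plan_Lc}
\end{align}
acting on functions smooth on $[-1,1]$. The operator $L_c$ is a regular
Sturm--Liouville operator (the singularities at $\pm 1$ are of limit-circle
type and the natural boundary conditions are that $\psi$ be bounded), hence
self-adjoint with a discrete, simple spectrum and real eigenfunctions.
Verifying that $L_c$ commutes with $F_c$ reduces to an integration by parts
on the kernel $e^{icxt}$, using that $e^{icxt}$ is annihilated by the
symmetric differential expression $L_c^{(x)} - L_c^{(t)}$; the boundary
terms vanish precisely because of the factor $(1-x^2)$ in
\eqref{eq_plan_Lc}. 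Because $L_c$ has \emph{simple} spectrum, each of its
eigenspaces is one-dimensional, so the common eigenfunctions $\psi_n$ are
forced to be real (they solve a real ODE with real boundary conditions) and
are uniquely determined up to sign and normalization; this simultaneously
yields the reality, orthonormality, completeness, and simplicity claims.
The main obstacle is the rigorous justification of this commutation and of
the self-adjointness of $L_c$ at the singular endpoints $\pm 1$, which
requires care with the function spaces and boundary behavior rather than a
mere formal computation.

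The remaining structural assertions follow from symmetry and from classical
oscillation theory. For the parity statement, I would note that the kernel
$e^{icxt}$ satisfies $e^{ic(-x)t} = \overline{e^{icx(-t)}}$, so the
reflection $x \mapsto -x$ intertwines $F_c$ with its adjoint; equivalently
$L_c$ in \eqref{eq_plan_Lc} is invariant under $x \mapsto -x$, so its
eigenfunctions have definite parity, and ordering the simple eigenvalues of
$L_c$ by the Sturm oscillation count (the $n$th eigenfunction having exactly
$n$ interior zeros) shows the even-indexed $\psi_n$ are even and the
odd-indexed ones odd, while giving at once that $\psi_n$ has exactly $n$
simple roots in $(-1,1)$. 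Finally, for the eigenvalue claims, substituting
the parity of $\psi_n$ into \eqref{eq_prolate_integral} and splitting
$e^{icxt} = \cos(cxt) + i\sin(cxt)$ collapses the integral: for even $n$
only the cosine term survives, forcing $\lambda_n$ real, and for odd $n$
only the sine term survives, forcing $\lambda_n$ purely imaginary. A short
evaluation at a convenient point (or the known sign of the Fourier-type
integral) pins down the phase $i^n$, giving $\lambda_n = i^n|\lambda_n|$.
Non-vanishing of every $\lambda_n$ follows from the completeness of the
$\psi_n$: were some $\lambda_n = 0$, the corresponding $\psi_n$ would be a
nonzero bandlimited function vanishing identically, a contradiction with the
injectivity of the Fourier transform restricted to $L^2[-1,1]$, and
simplicity of the $\lambda_n$ is inherited from the simplicity of the
spectrum of $L_c$ together with the one-to-one correspondence of
eigenfunctions.
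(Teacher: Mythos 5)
First, a point of orientation: the paper does not prove Theorem~\ref{thm_pswf_main} at all. It appears in the preliminaries as a summary of classical facts, with the proof delegated to the literature (\cite{ProlateSlepian1}, \cite{ProlateLandau1}, \cite{RokhlinXiaoProlate}, etc.). So your proposal must be judged against the classical Slepian--Pollak argument, and in its broad architecture it matches that argument: $F_c$ is normal with $F_c^*F_c = F_cF_c^* = (2\pi/c)\,Q_c$ (cf.\ \eqref{eq_pswf_qc_fc}); the prolate differential operator of \eqref{eq_prolate_ode} commutes with $F_c$ because $L_c^{(x)}e^{icxt} = L_c^{(t)}e^{icxt}$ and the factor $1-t^2$ kills the boundary terms; the limit-circle analysis at $\pm 1$ yields a simple $\chi$-spectrum with real eigenfunctions of definite parity; and your argument that $\lambda_n \neq 0$ (the Fourier transform of a compactly supported $L^2$ function is entire, so it cannot vanish on an interval) is correct and is also what makes the $\psi_n$ complete, since it shows $Q_c$ is injective.

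The genuine gap is your silent identification of two different orderings. The Sturm oscillation step gives you: the eigenfunction of $L_c$ belonging to the $(n+1)$st \emph{smallest} $\chi$ has exactly $n$ simple zeros and parity $(-1)^n$. But in Theorem~\ref{thm_pswf_main} the index $n$ is defined by ordering the eigenvalues of $F_c$ so that $\abrk{\lambda_n} \geq \abrk{\lambda_{n+1}}$. To conclude that $\psi_n$ in \emph{this} indexing has exactly $n$ zeros, you must prove that $\mu_n = (c/2\pi)\abrk{\lambda_n}^2$ is strictly decreasing in the oscillation index, i.e.\ that the $\abrk{\lambda}$-ordering and the $\chi$-ordering coincide. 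Nothing in your outline supplies this, and it is where much of the real work in the classical proofs lies; it is obtained either from total positivity of the sinc kernel together with Gantmacher--Krein oscillation-kernel theory (the framework of \cite{Karlin}), or from Slepian--Pollak's continuity analysis in $c$. The same gap undermines your simplicity claim: simplicity of the $\chi$-spectrum gives one-dimensional eigenspaces of $L_c$, but simplicity of the $\lambda_n$ as eigenvalues of $F_c$ means the numbers $\lambda_0,\lambda_1,\dots$ are pairwise distinct. Parity separates adjacent indices, and the sign pattern $i^n$ separates $\lambda_{2k}$ from $\lambda_{2k+2}$, but distinguishing, say, $\lambda_{2k}$ from $\lambda_{2k+4}$ (both real, of the same sign) requires exactly the strict monotonicity of $\abrk{\lambda_n}$ that you have not established.

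A second, smaller gap concerns the phase. Parity plus the splitting $e^{icxt} = \cos(cxt) + i\sin(cxt)$ does correctly force $\lambda_n$ real for even $n$ and purely imaginary for odd $n$, but the specific sign in $\lambda_n = i^n\abrk{\lambda_n}$ is not pinned down by ``a short evaluation at a convenient point'': for even $n$ one has $\lambda_n = \psi_n(0)^{-1}\int_{-1}^1 \psi_n(t)\,dt$ by \eqref{eq_prolate_integral}, and the sign of that ratio is not evident (indeed it alternates, $(-1)^{n/2}$). The classical argument is a homotopy in the band limit: each $\lambda_n(c)$ is continuous in $c$, never vanishes, and as $c \to 0^+$ behaves like a positive multiple of $i^n c^n$ in the Legendre limit, so the phase $i^n$ persists for all $c > 0$. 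Your sketch should either include this deformation argument or cite a substitute; as written, both the phase and the ordering claims are asserted rather than proved.
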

%%%%%%%%%
\noindent
We define the self-adjoint operator
$Q_c: L^2\left[-1, 1\right] \to L^2\left[-1, 1\right]$ via the formula
\begin{align}
Q_c\left[\varphi\right] (x) =
\frac{1}{\pi} \int_{-1}^1 
\frac{ \sin \left(c\left(x-t\right)\right) }{x-t} \; \varphi(t) \; dt.
\label{eq_pswf_qc}
\end{align}
Clearly, if we denote by $\F:L^2(\Rc) \to L^2(\Rc)$ 
the unitary Fourier transform,
then
\begin{align}
Q_c\left[\varphi\right] (x) = 
\chi_{\left[-1,1\right]}(x) \cdot
\F^{-1} \left[ 
  \chi_{\left[-c,c\right]}(\xi) \cdot \F\left[\varphi\right](\xi)
\right](x),
\label{eq_pswf_fourier}
\end{align}
where $\chi_{\left[-a,a\right]} : \Rc \to \Rc$ is the characteristic
function of the interval $\left[-a,a\right]$, defined via the formula
\begin{align}
\chi_{\left[-a,a\right]}(x) = 
\begin{cases}
1 & -a \leq x \leq a, \\
0 & \text{otherwise},
\end{cases}
\label{eq_char_function}
\end{align}
for all real $x$.
In other words,
$Q_c$ represents low-passing followed by time-limiting.
$Q_c$ relates to $F_c$, defined via \eqref{eq_pswf_fc}, by 
\begin{align}
Q_c = \frac{ c }{ 2 \pi } \cdot F_c^{\ast} \cdot F_c,
\label{eq_pswf_qc_fc}
\end{align}
and the eigenvalues $\mu_n$ of $Q_n$ satisfy the identity
\begin{align}
\mu_n = \frac{c}{2\pi} \cdot \abrk{\lambda_n}^2,
\label{eq_prolate_mu}
\end{align}
for all integer $n \geq 0$. Obviously,
\begin{align}
\mu_n < 1,
\label{eq_mu_leg_1}
\end{align}
for all integer $n \geq 0$, due to \eqref{eq_pswf_fourier}.
Moreover, $Q_c$ has the same eigenfunctions $\psi_n$ as $F_c$.
In other words,
\begin{align}
\mu_n \psi_n(x) = \frac{1}{\pi} 
      \int_{-1}^1 \frac{ \sin\left(c\left(x-t\right)\right) }{ x - t } 
  \; \psi_n(t) \; dt,
\label{eq_prolate_integral2}
\end{align}
for all integer $n \geq 0$ and all $-1 \leq x \leq 1$.
Also,  $Q_c$ is closely related to the operator
$P_c: L^2(\Rc) \to L^2(\Rc)$,
defined via the formula
\begin{align}
P_c\left[\varphi\right] (x) =
\frac{1}{\pi} \int_{-\infty}^{\infty}
\frac{ \sin \left(c\left(x-t\right)\right) }{x-t} \; \varphi(t) \; dt,
\label{eq_pswf_pc}
\end{align}
which is a widely known orthogonal projection onto the space
of functions of band limit $c > 0$ on the real
line $\Rc$.

The following theorem about the eigenvalues $\mu_n$ of the operator $Q_c$,
defined via
\eqref{eq_pswf_qc},
can be traced back to \cite{LandauWidom}:
%%%%%%%%%%%
\begin{thm}
Suppose that $c>0$ and $0<\alpha<1$ are positive real numbers,
and that the operator $Q_c: L^2\left[-1,1\right] \to L^2\left[-1,1\right]$
is defined via \eqref{eq_pswf_qc} above.
Suppose also that the integer $N(c,\alpha)$ is the number of 
the eigenvalues $\mu_n$ of $Q_c$ that are greater than $\alpha$. In
other words,
\begin{align}
N(c,\alpha) = \max\left\{ k = 1,2,\dots \; : \; \mu_{k-1} > \alpha\right\}.
\end{align}
Then,
\begin{align}
N(c,\alpha)
= \frac{2c}{\pi} + \brk{ \frac{1}{\pi^2} \log \frac{1-\alpha}{\alpha} }
    \log c + O\brk{ \log c }.
\label{eq_mu_spectrum}
\end{align}
\label{thm_mu_spectrum}
\end{thm}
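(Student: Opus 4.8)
The plan is to prove Theorem~\ref{thm_mu_spectrum} by the method of traces, writing the counting function as $N(c,\alpha) = \sum_{n} \mathbf{1}_{(\alpha,1]}(\mu_n) = \mathrm{tr}\, h_\alpha(Q_c)$ with $h_\alpha = \mathbf{1}_{(\alpha,1]}$, and reducing everything to a single asymptotic trace formula of \emph{Szeg\H{o} type}. First I would record the leading term: since the kernel of $Q_c$ in \eqref{eq_pswf_qc} tends to $c/\pi$ on the diagonal $t = x$, a direct computation gives $\mathrm{tr}\, Q_c = \sum_n \mu_n = \frac{1}{\pi}\int_{-1}^1 c\,dx = \frac{2c}{\pi}$. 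This already isolates the leading term and suggests that the bulk of the $\mu_n$ cluster near $1$ with total count $\approx 2c/\pi$, so that the theorem is really a statement about the $\log c$--wide \emph{plunge region} in which the $\mu_n$ descend from $1$ to $0$.

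The core of the argument is a second-order trace asymptotic of the form
\[
\mathrm{tr}\, f(Q_c) = \frac{2c}{\pi}\, f(1) + E[f]\cdot \log c + o(\log c),
\]
valid for a suitable class of test functions $f$ on $[0,1]$, where $E[\cdot]$ is a fixed linear functional. I would first establish this for polynomials $f$ by computing the iterated kernel integrals $\mathrm{tr}\, Q_c^k = \int_{[-1,1]^k} \prod_{i=1}^{k} \frac{\sin\brk{c(x_i - x_{i+1})}}{\pi(x_i - x_{i+1})}\,dx_1\cdots dx_k$ (indices taken mod $k$), extracting from each the $\log c$ contribution produced by the endpoints $x = \pm 1$ of the interval. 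The natural organizing principle is the identification of $Q_c$, via \eqref{eq_pswf_fourier}, with a truncated Wiener--Hopf / Toeplitz-type operator whose symbol is the characteristic function $\chi_{[-c,c]}$; the logarithmic coefficient $E[f]$ is then governed by the single jump of this symbol through the Szeg\H{o}--Kac--Akhiezer--Widom machinery.

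The main obstacle is to pin down $E[f]$ \emph{exactly} and to evaluate it on the non-smooth indicator $h_\alpha$. A clean consistency check fixes the answer: differentiating in $\alpha$ turns $h_\alpha$ into $-\delta(\cdot-\alpha)$, so the density of $E$ at a point $s\in(0,1)$ must equal $\frac{1}{\pi^2 s(1-s)}$ in order to reproduce $\frac{d}{d\alpha}\brk{\frac{1}{\pi^2}\log\frac{1-\alpha}{\alpha}} = -\frac{1}{\pi^2\alpha(1-\alpha)}$; equivalently $E[h_\alpha] = \frac{1}{\pi^2}\int_\alpha^1 \frac{ds}{s(1-s)}$ after the appropriate endpoint regularization, which is exactly $\frac{1}{\pi^2}\log\frac{1-\alpha}{\alpha}$. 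The weight $\frac{1}{s(1-s)}$ is precisely what the jump of $\chi_{[-c,c]}$ contributes in the second-order formula, so the hard analytic step is to verify this weight rigorously rather than to guess its form. As a by-product one recovers the classical estimate $\mathrm{tr}\, Q_c\brk{I - Q_c} = \sum_n \mu_n(1-\mu_n) \sim \frac{1}{\pi^2}\log c$, which quantifies the width of the plunge region.

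Finally I would remove the discontinuity of $h_\alpha$ and conclude. Since $h_\alpha$ is not continuous, I would sandwich it between smooth $f_- \le h_\alpha \le f_+$ that differ only on a small neighborhood of $\alpha$, apply the smooth trace asymptotic to $f_\pm$, and bound the gap by the number of $\mu_n$ in that neighborhood. This count is controlled by the plunge estimate $\sum_n \mu_n(1-\mu_n) = O(\log c)$ together with the fact that a fixed $\alpha\in(0,1)$ is bounded away from $0$ and $1$, so $\mu_n(1-\mu_n) \ge \delta$ for every $\mu_n$ near $\alpha$; hence at most $O(\log c)$ eigenvalues lie near $\alpha$, which is exactly what degrades the remainder of the smooth formula to the stated $O(\log c)$ and absorbs the approximation error. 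An alternative route, closer to the prolate asymptotics developed elsewhere in this paper, is to show directly that in the plunge region $\log\frac{1-\mu_n}{\mu_n}$ is asymptotically affine in $n$ with slope $\pi^2/\log c$ and midpoint $n \approx 2c/\pi$; using \eqref{eq_prolate_mu} to pass between $\mu_n$ and $\abrk{\lambda_n}$ and then inverting $\mu_n = \alpha$ produces the logit $\log\frac{1-\alpha}{\alpha}$ and the coefficient $1/\pi^2$ immediately, which explains transparently why this particular combination of $\alpha$ appears.
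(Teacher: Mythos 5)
The paper does not prove Theorem~\ref{thm_mu_spectrum} at all: it is stated as a known result and attributed to \cite{LandauWidom}, so there is no in-paper proof to compare against. Your outline is, in substance, a reconstruction of the Landau--Widom argument itself: identify $Q_c$ through \eqref{eq_pswf_fourier} with a truncated Wiener--Hopf operator whose symbol is $\chi_{[-c,c]}$, establish a second-order Szeg\H{o}-type trace asymptotic, and evaluate it on the indicator $h_\alpha$ after a smoothing/sandwiching step controlled by the plunge estimate $\mathrm{tr}\, Q_c(I-Q_c) = O(\log c)$. The pieces you do carry out are correct: $\mathrm{tr}\, Q_c = 2c/\pi$; the observation that $\mu(1-\mu) \geq \delta > 0$ for $\mu$ in a fixed neighborhood of $\alpha \in (0,1)$ bounds the number of eigenvalues near $\alpha$ by $O(\log c)$; and since the theorem as stated only claims an $O(\log c)$ remainder (weaker than Landau--Widom's $o(\log c)$ for fixed $\alpha$), this closing step suffices.

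The genuine gap is that the entire analytic content is concentrated in the one step you name but do not execute: the second-order asymptotic
\begin{align}
\mathrm{tr}\, f(Q_c) = \frac{2c}{\pi}\, f(1) +
\frac{\log c}{\pi^2} \int_0^1 \frac{f(s) - s\, f(1)}{s(1-s)}\, ds + o(\log c).
\end{align}
Your consistency check cannot substitute for a proof of it: differentiating in $\alpha$ to read off the density $\frac{1}{\pi^2 s(1-s)}$ presupposes exactly the form of the $\alpha$-dependence that the theorem asserts, so it determines $E$ only conditionally on already knowing the answer. Moreover, as literally written your formula $E[h_\alpha] = \frac{1}{\pi^2}\int_\alpha^1 \frac{ds}{s(1-s)}$ diverges at $s=1$; the correct regularization is the subtraction $f(s) - s f(1)$ displayed above, under which $h_\alpha$ does give $\log\frac{1-\alpha}{\alpha}$ (near $s=1$ the integrand is $1/s$, near $s=0$ it is $-1/(1-s)$, both integrable). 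Extracting the $\log c$ coefficient of $\mathrm{tr}\, Q_c^k$ for every $k$ from the iterated sinc-kernel integrals, and passing from polynomials to the needed class of test functions with uniform control of the remainders, is precisely the bulk of the Landau--Widom paper and cannot be waved through. Finally, your ``alternative route'' --- that $\log\frac{1-\mu_n}{\mu_n}$ is asymptotically affine in $n$ with slope $\pi^2/\log c$ and midpoint $2c/\pi$ --- is essentially a restatement of \eqref{eq_mu_spectrum} (it is what one gets by inverting the counting function in $\alpha$), so it explains the shape of the answer but cannot serve as an independent proof without the same hard input.
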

%%%%%%%%%
\noindent
According to \eqref{eq_mu_spectrum}, there are about $2c/\pi$
eigenvalues whose absolute value is close to one, order of $\log c$
eigenvalues that decay exponentially, and the rest of them are
very close to zero. 

The eigenfunctions $\psi_n$ of $Q_c$ turn out to be the PSWFs, well
known from classical mathematical physics \cite{PhysicsMorse}.
The following theorem, proved in a more general form in
\cite{ProlateSlepian2},
formalizes this statement.
%%%%%%%%%%%
\begin{thm}
For any $c > 0$, there exists a strictly increasing unbounded sequence
of positive numbers $\chi_0 <  \chi_1 <  \dots$ such that, for
each integer $n \geq 0$, the differential equation
\begin{align}
\brk{1 - x^2} \psi''(x) - 2 x \cdot \psi'(x) 
+ \brk{\chi_n - c^2 x^2} \psi(x) = 0
\label{eq_prolate_ode}
\end{align}
has a solution that is continuous on $\left[-1, 1\right]$.
Moreover, all such solutions are constant multiples of 
the eigenfunction $\psi_n$ of $F_c$,
defined via \eqref{eq_pswf_fc} above.
\label{thm_prolate_ode}
\end{thm}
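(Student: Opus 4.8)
The plan is to exploit the fact that the integral operator $F_c$ commutes with the second-order differential operator
\[
L_c = \frac{d}{dx}\left[(1-x^2)\frac{d}{dx}\right] - c^2 x^2,
\]
whose eigenvalue equation $L_c\psi=-\chi\psi$ is exactly the prolate ODE \eqref{eq_prolate_ode}. First I would record that every eigenfunction $\psi_n$ of $F_c$ extends to an entire function of $x$, since the right-hand side of \eqref{eq_prolate_integral} defines an entire function and $\lambda_n\neq 0$ (Theorem~\ref{thm_pswf_main}); in particular $\psi_n$ is real-analytic on $[-1,1]$, so $L_c\psi_n$ is classically defined. The crux is the commutation identity $L_c F_c = F_c L_c$. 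To prove it I would differentiate the kernel $K(x,t)=e^{icxt}$ and verify the pointwise symmetry $L_c^{(x)}K(x,t)=L_c^{(t)}K(x,t)$, where the superscript indicates the variable on which $L_c$ acts (both sides equal $(-c^2t^2+c^2x^2t^2-2icxt-c^2x^2)\,e^{icxt}$). Applying $L_c$ under the integral sign in \eqref{eq_pswf_fc} and then transferring it from $x$ to $t$ via this symmetry, I would integrate by parts twice; the boundary terms carry the factor $(1-t^2)$ and hence vanish at $t=\pm1$, which yields $L_c[F_c\varphi]=F_c[L_c\varphi]$ for smooth $\varphi$.

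With the commutation in hand, the identification of eigenfunctions is immediate. Since the eigenvalues $\lambda_n$ of $F_c$ are simple (Theorem~\ref{thm_pswf_main}), each eigenspace is one-dimensional, and because $L_c$ commutes with $F_c$ it maps this eigenspace into itself; hence $L_c\psi_n=-\chi_n\psi_n$ for some scalar $\chi_n$, i.e.\ $\psi_n$ solves \eqref{eq_prolate_ode}. Pairing this relation with $\psi_n$ and integrating by parts (again the boundary term vanishes) gives
\[
\chi_n = \int_{-1}^1 (1-t^2)\,\psi_n'(t)^2\,dt + c^2\int_{-1}^1 t^2\,\psi_n(t)^2\,dt,
\]
which is real and, for $c>0$, strictly positive. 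This establishes the existence of a continuous solution (namely $\psi_n$ itself) and the value $\chi_n$.

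It remains to show that these solutions exhaust the continuous ones and to arrange the $\chi_n$ in increasing order. The endpoints $x=\pm1$ are regular singular points of \eqref{eq_prolate_ode}, each with a double indicial root $0$; consequently, at each endpoint the solutions bounded there form a one-dimensional space, the second independent solution having a logarithmic singularity. A solution continuous on all of $[-1,1]$ must lie in both one-dimensional spaces simultaneously, which forces the continuous-solution space to be at most one-dimensional for any $\chi$; combined with the previous paragraph this proves that every continuous solution of \eqref{eq_prolate_ode} with $\chi=\chi_n$ is a constant multiple of $\psi_n$. To match the index and obtain monotonicity and unboundedness, I would invoke singular Sturm--Liouville oscillation theory: the continuous eigenfunctions of $L_c$, ordered by increasing $\chi$, gain exactly one interior zero at each step, while by Theorem~\ref{thm_pswf_main} the function $\psi_n$ has exactly $n$ zeros in $(-1,1)$. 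Matching by zero count pins the labeling $\chi_n\leftrightarrow\psi_n$ and yields $\chi_0<\chi_1<\cdots$ with $\chi_n\to\infty$ (the latter also following from the Rayleigh-quotient formula above, since the $\psi_n$ are orthonormal and complete).

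The main obstacle is the singular endpoint analysis. Because the leading coefficient $1-x^2$ vanishes at $\pm1$, the operator $L_c$ is a singular Sturm--Liouville operator, and one must justify that ``continuous on $[-1,1]$'' is the correct self-adjoint boundary condition, that the resulting spectrum is simple and discrete, and that the oscillation theorem applies. Verifying the vanishing of the boundary terms in the integrations by parts, and the one-dimensionality of the bounded-solution space at each regular singular point, is where the care is needed; once that framework is secured, the commutation identity does the real work of identifying the prolate eigenfunctions with the $\psi_n$.
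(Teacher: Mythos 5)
The paper does not actually prove this theorem --- it is quoted from the classical literature (``proved in a more general form in \cite{ProlateSlepian2}'') --- and your argument is precisely the commuting-operator proof from that source: verify the kernel symmetry $L_c^{(x)}e^{icxt}=L_c^{(t)}e^{icxt}$, integrate by parts twice (the factor $(1-t^2)$ annihilates the boundary terms), use the simplicity of $\lambda_n$ from Theorem~\ref{thm_pswf_main} to conclude $L_c\psi_n=-\chi_n\psi_n$, and settle uniqueness and the ordering via the double indicial root $r^2=0$ at the regular singular endpoints $\pm 1$ together with singular Sturm--Liouville oscillation theory and the zero count of $\psi_n$. Your reconstruction is correct, and you have correctly isolated the one genuinely delicate point --- the limit-circle endpoint analysis that makes ``continuous on $[-1,1]$'' a legitimate self-adjoint boundary condition and yields discreteness, simplicity, and $\chi_0<\chi_1<\dots\to\infty$.
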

%%%%%%%%%
\begin{remark}
\label{rem_continuation}
For all real $c > 0$ and all integer $n \geq 0$,
\eqref{eq_prolate_integral} defines an analytic continuation
of $\psi_n$ onto the entire complex plane.
All the roots of $\psi_n$ are simple and real.
In addition, the ODE \eqref{eq_prolate_ode} is satisfied
for all complex $x$.
\end{remark}
%%%%%%%%%%%%
%%%%%%%%%%%%%%%%%
Many properties of the PSWF $\psi_n$ depend on
whether the eigenvalue $\chi_n$ of the ODE \eqref{eq_prolate_ode}
is greater than or less than $c^2$. 
In the following theorem from \cite{Report}, \cite{ReportArxiv}, we describe
a simple relationship 
between $c, n$ and $\chi_n$.
%%%%%%%%%%%%
\begin{thm}
Suppose that $n \geq 2$ is a non-negative integer.
\begin{itemize}
\item If $n \leq (2c/\pi)-1$, then $\chi_n < c^2$.
\item If $n \geq (2c/\pi)$, then $\chi_n > c^2$.
\item If $(2c/\pi)-1 < n < (2c/\pi)$, then either inequality is possible.
\end{itemize}
\label{thm_n_and_khi}
\end{thm}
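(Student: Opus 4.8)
The plan is to translate the statement into a zero-counting problem and settle it by Sturm oscillation theory. By Theorem~\ref{thm_pswf_main}, $\psi_n$ has exactly $n$ simple zeros in $(-1,1)$, so it suffices to relate the number of these zeros to the sign of $\chi_n - c^2$. Writing \eqref{eq_prolate_ode} in self-adjoint form $\frac{d}{dx}[(1-x^2)\psi'] + (\chi_n - c^2 x^2)\psi = 0$, the coefficient $\chi_n - c^2 x^2$ vanishes at the turning points $x = \pm\sqrt{\chi_n}/c$; these lie inside $(-1,1)$ precisely when $\chi_n < c^2$ and reach the endpoints exactly when $\chi_n = c^2$. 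This is the geometric source of the threshold. The Liouville substitution $\psi = (1-x^2)^{-1/2} w$ removes the first-order term and yields $w'' + Q\, w = 0$ with
\[
Q(x) = \frac{1}{(1-x^2)^2} + \frac{\chi_n - c^2 x^2}{1-x^2},
\]
and at $\chi_n = c^2$ the associated phase integral $\frac{1}{\pi}\int_{-1}^1 \sqrt{(\chi_n - c^2 t^2)/(1-t^2)}\,dt$ equals exactly $2c/\pi$. Since $(1-x^2)^{-1/2}>0$ on $(-1,1)$, the functions $w$ and $\psi_n$ share the same zeros there.

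For the first bullet I would argue the contrapositive: $\chi_n \geq c^2 \Rightarrow n > 2c/\pi - 1$. The Liouville form is what makes a clean comparison possible. When $\chi_n \geq c^2$ one has $(\chi_n - c^2 x^2)/(1-x^2) \geq c^2$ on all of $(-1,1)$, and since $(1-x^2)^{-2}\geq 1$ there, $Q(x) > c^2$ \emph{uniformly} on $(-1,1)$; note that the division by $1-x^2$ is exactly what prevents this bound from degenerating as the turning points approach the endpoints. Sturm's comparison theorem, applied against $u'' + c^2 u = 0$ whose solution $\sin(c(x+1))$ has $\lceil 2c/\pi\rceil - 1$ zeros in $(-1,1)$, then forces $w$ (hence $\psi_n$) to have at least that many zeros, giving $n \geq \lceil 2c/\pi\rceil - 1 > 2c/\pi - 1$; the strict inequality $Q>c^2$ takes care of the borderline case when $2c/\pi$ is an integer, and the endpoint count is handled by exhausting $(-1,1)$ with compact subintervals.

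For the second bullet I would again use the contrapositive: $\chi_n \leq c^2 \Rightarrow n < 2c/\pi$, splitting $(-1,1)$ at the turning points. On each outer interval $(\sqrt{\chi_n}/c,\,1)$ and $(-1,\,-\sqrt{\chi_n}/c)$ the self-adjoint coefficient $\chi_n - c^2 x^2$ is $\leq 0$, so the equation is disconjugate and $\psi_n$ has at most one zero in each (here the normal form is not even needed). On the central interval I would bound the number of zeros by the Prüfer phase, i.e.\ by $\frac{1}{\pi}\int_{-\sqrt{\chi_n}/c}^{\sqrt{\chi_n}/c}\sqrt{(\chi_n - c^2 t^2)/(1-t^2)}\,dt$ up to a bounded turning-point correction; since this integral is strictly increasing in $\chi_n$ and equals $2c/\pi$ at $\chi_n = c^2$, it stays below $2c/\pi$ for $\chi_n < c^2$, and the total comes out below $2c/\pi$. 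The hard part lies precisely in this last estimate: a crude constant-coefficient comparison on the central region is useless, since replacing $1-x^2$ by its minimum $1-\chi_n/c^2$ makes the bound blow up as $\chi_n \uparrow c^2$ — exactly the regime that decides the theorem. One must therefore run the Prüfer angle against the true coefficients and control the connection (turning-point) contributions and the endpoint singularity of $Q$ uniformly, so that the $O(1)$ corrections do not consume the one-unit margin in $n < 2c/\pi$. This quantitative turning-point analysis is what the oscillation machinery of Section~\ref{sec_oscillation} is designed to supply, and it is where the real work resides.

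Finally, the third bullet needs no separate argument: the first two bounds leave the interval $(2c/\pi - 1,\, 2c/\pi)$ undetermined, and both inequalities genuinely occur there, as the boundary case $\chi_n = c^2$ of the phase-integral computation (where the count equals $2c/\pi$) together with explicit small examples shows. As a consistency check, the same threshold $2c/\pi$ is the location of the spectral plunge in Theorem~\ref{thm_mu_spectrum}, which is reassuring but not used in the proof.
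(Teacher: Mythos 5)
You should first note a structural point: the paper contains no proof of Theorem~\ref{thm_n_and_khi} at all --- it is imported verbatim from \cite{Report}, \cite{ReportArxiv} ("In the following theorem from \cite{Report}, \cite{ReportArxiv}\dots"), so your attempt can only be judged on its own merits, not against an in-paper argument. On those merits, your first bullet is essentially complete. The Liouville form is correct, and when $\chi_n \geq c^2$ you in fact have
$Q(x) = \frac{\chi_n - c^2 x^2}{1-x^2} + \frac{1}{(1-x^2)^2} \geq c^2 + 1$
on all of $(-1,1)$; since $w = \psi_n\sqrt{1-x^2}$ vanishes at $\pm 1$ as well as at the $n$ interior roots, the $n+1$ consecutive gaps each have length less than $\pi/\sqrt{c^2+1}$, whence $2 < (n+1)\pi/\sqrt{c^2+1}$ and $n > 2c/\pi - 1$ strictly --- this disposes of the integer borderline case cleanly, and the endpoint technicalities (the $+\infty$ blow-up of $Q$ at $\pm 1$) are harmless for the direction of the comparison, as you indicate.

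The genuine gap is the second bullet, and you have named it yourself without closing it. Your contrapositive $\chi_n \leq c^2 \Rightarrow n < 2c/\pi$ reduces, after the (correct) disconjugacy count of at most one zero in each outer interval, to the claim that the zero count in $\left(-\sqrt{\chi_n}/c,\, \sqrt{\chi_n}/c\right)$ is controlled by $\frac{1}{\pi}\int \sqrt{(\chi_n - c^2 t^2)/(1-t^2)}\,dt$ with corrections of \emph{definite favorable sign}, not merely $O(1)$: since the phase integral tends to $2c/\pi$ as $\chi_n \uparrow c^2$, any additive slack destroys the conclusion. This is not a technicality one can wave at. A uniform comparison $Q \leq B^2$ fails because $Q$ attains the value $\left(c^2/(c^2-\chi_n)\right)^2$ at the turning points, which diverges in exactly the critical regime; and the Pr\"ufer identity with $\gamma = \sqrt{pq}$ gives a phase increment $\int f - \int v \sin(2\theta)$ in which $v$ blows up at the turning points and $\int v\sin(2\theta)$ has no a priori sign --- controlling it is the entire content of the theorem. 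Your appeal to "the oscillation machinery of Section~\ref{sec_oscillation}" does not rescue this: that machinery treats the roots in $(1,\infty)$, and the only Pr\"ufer analysis on $(-1,1)$, Theorem~\ref{thm_prufer_old}, assumes $\chi_n > c^2$ and is itself quoted from \cite{Report}. Tellingly, the quantitative phase-integral relation the paper does state, Theorem~\ref{thm_n_khi_simple}, carries an additive slack of $3$ --- precisely the kind of error your argument cannot absorb. Separately, the third bullet is an existence claim requiring explicit witnesses of both $\chi_n < c^2$ and $\chi_n > c^2$ with $2c/\pi - 1 < n < 2c/\pi$; you invoke "explicit small examples" but exhibit none. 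In short: the skeleton and the first bullet are sound, but the decisive estimate behind the second bullet --- the one the paper outsources to \cite{Report}, \cite{ReportArxiv} --- is asserted rather than proved.
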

%%%%%%%%%

In the following theorem, upper and lower bounds on $\chi_n$
in terms of $c$ and $n$ are provided.
\begin{thm}
Suppose that $c > 0$ is a real number,
and $n \geq 0$ is an integer. Then,
\begin{align}
\label{eq_khi_crude}
n \brk{n + 1} < \chi_n < n \brk{n + 1} + c^2.
\end{align}
\label{thm_khi_crude}
\end{thm}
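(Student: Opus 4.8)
The plan is to recognize the prolate differential equation \eqref{eq_prolate_ode} as the eigenvalue problem for a self-adjoint Sturm--Liouville operator and to read off both bounds from the Courant--Fischer min-max principle. First I would rewrite \eqref{eq_prolate_ode} in divergence form as $L_c[\psi] = \chi_n\psi$, where
\[
L_c[\psi](x) = -\frac{d}{dx}\brk{(1-x^2)\psi'(x)} + c^2 x^2\psi(x) = L_0[\psi](x) + c^2 x^2 \psi(x),
\]
and $L_0[\psi] = -\brk{(1-x^2)\psi'}'$ is the classical Legendre operator, whose eigenvalues are exactly $k(k+1)$, $k=0,1,2,\dots$, with the Legendre polynomials $P_k$ as eigenfunctions. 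Since $x\mapsto c^2x^2$ is bounded on $[-1,1]$, the operator $L_c$ shares the form domain of $L_0$ and, by Theorems~\ref{thm_pswf_main} and~\ref{thm_prolate_ode}, has the strictly increasing discrete spectrum $\chi_0<\chi_1<\dots$ with the complete orthonormal system $\psi_0,\psi_1,\dots$ of eigenfunctions. Writing the associated quadratic form as
\[
\langle L_c v, v\rangle = \int_{-1}^1 (1-x^2)\,|v'(x)|^2\,dx + c^2\int_{-1}^1 x^2\,|v(x)|^2\,dx,
\]
the min-max principle gives $\chi_n = \min_{\dim V = n+1}\max_{0\ne v\in V}\langle L_c v,v\rangle/\langle v,v\rangle$, and likewise $n(n+1) = \min_{\dim V=n+1}\max_{0\ne v\in V}\langle L_0 v,v\rangle/\langle v,v\rangle$.

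For the lower bound I would test the min-max for $L_c$ against the subspace $V^\ast = \mathrm{span}(\psi_0,\dots,\psi_n)$, on which $L_c$ is diagonal with largest eigenvalue $\chi_n$, so that $\chi_n = \max_{0\ne v\in V^\ast}\langle L_c v,v\rangle/\langle v,v\rangle$. Let $u^\ast\in V^\ast$ be the unit vector maximizing the Legendre quotient $\langle L_0 v,v\rangle/\langle v,v\rangle$ over $V^\ast$; since $\dim V^\ast = n+1$, the min-max bound for $L_0$ forces $\langle L_0 u^\ast, u^\ast\rangle \ge n(n+1)$. Then
\[
\chi_n \ge \langle L_c u^\ast, u^\ast\rangle = \langle L_0 u^\ast, u^\ast\rangle + c^2\int_{-1}^1 x^2\,|u^\ast(x)|^2\,dx > n(n+1),
\]
the strict inequality coming from $\int_{-1}^1 x^2|u^\ast(x)|^2\,dx>0$, valid because $u^\ast$ is a nonzero $L^2$ function and $x^2>0$ almost everywhere.

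For the upper bound I would instead test the min-max for $L_c$ against $V_0 = \mathrm{span}(P_0,\dots,P_n)$, so that $\chi_n \le \max_{0\ne v\in V_0}\langle L_c v,v\rangle/\langle v,v\rangle$. For any $v\in V_0$ one has $\langle L_0 v,v\rangle \le n(n+1)\langle v,v\rangle$, and, writing $\int x^2|v|^2 = \langle v,v\rangle - \int(1-x^2)|v|^2$,
\[
\langle L_c v, v\rangle \le \brk{n(n+1)+c^2}\langle v,v\rangle - c^2\int_{-1}^1 (1-x^2)\,|v(x)|^2\,dx.
\]
Because every nonzero $v\in V_0$ is a polynomial and $(1-x^2)>0$ on $(-1,1)$, the last integral is strictly positive; by continuity and compactness of the unit sphere of the finite-dimensional space $V_0$ its minimum $\delta>0$ is attained, whence $\langle L_c v,v\rangle \le (n(n+1)+c^2-c^2\delta)\langle v,v\rangle$ uniformly on $V_0$, and taking the maximum yields $\chi_n \le n(n+1)+c^2-c^2\delta < n(n+1)+c^2$.

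The main obstacle is the strictness of both inequalities: the perturbation $c^2x^2$ is nonnegative but vanishes at $x=0$, so it is not bounded below by a positive multiple of the identity, and crude eigenvalue monotonicity under $0\le c^2x^2\le c^2$ yields only the non-strict bounds $n(n+1)\le\chi_n\le n(n+1)+c^2$. The device that overcomes this is the judicious choice of test subspaces — the PSWF span for the lower bound and the Legendre span for the upper bound — together with the fact that, on a finite-dimensional subspace, the relevant positive quantities $\int x^2|v|^2$ and $\int(1-x^2)|v|^2$ have a strictly positive minimum over the unit sphere. I would finally note that the whole argument is uniform in $c>0$ and in $n\ge0$, so no separate treatment of small $n$ is required.
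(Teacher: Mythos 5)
Your proof is correct. Note that the paper itself contains no proof of Theorem~\ref{thm_khi_crude}: it is one of the facts quoted from the cited literature (see the preamble of Section~\ref{sec_pswf} and \cite{RokhlinXiaoProlate}, \cite{Report}, \cite{ReportArxiv}), where the standard argument is perturbative in the parameter rather than variational. Writing the prolate operator as $L_c = L_0 + c^2 M_{x^2}$, with $M_{x^2}$ the multiplication by $x^2$, crude eigenvalue monotonicity from $0 \leq M_{x^2} \leq I$ gives only the non-strict bounds, and strictness is usually recovered via the Hellmann--Feynman identity $\partial \chi_n / \partial (c^2) = \int_{-1}^1 x^2 \psi_n^2(x)\,dx$, which lies strictly in $(0,1)$; integrating from $c=0$, where $\chi_n = n(n+1)$, yields \eqref{eq_khi_crude} and, as a bonus, the strict monotonicity of $\chi_n$ in $c$. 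Your min--max route reaches the same strict inequalities without differentiating in the parameter: as you correctly diagnose, the whole difficulty is that $c^2 x^2$ is not bounded below by a positive multiple of the identity and $c^2(1-x^2)$ vanishes at the endpoints, and your two judiciously chosen test subspaces --- the prolate span $\mathrm{span}(\psi_0,\dots,\psi_n)$ for the lower bound and the Legendre span $\mathrm{span}(P_0,\dots,P_n)$ for the upper --- together with the strict positivity of $\int_{-1}^1 x^2 |v|^2$ and $\int_{-1}^1 (1-x^2)|v|^2$ on the unit sphere of a finite-dimensional space are exactly what restores strictness. The only points worth making explicit in a polished write-up are (i) that both test subspaces consist of functions smooth on the closed interval $[-1,1]$, so the boundary terms $(1-x^2)v'\bar{v}$ vanish, both quadratic forms are finite on them, and the identification of $\langle L_c v, v\rangle$ with the form is legitimate, and (ii) that the min--max characterization for these singular Sturm--Liouville operators follows from the completeness and orthonormality of the eigenfunctions (Theorem~\ref{thm_pswf_main} and the classical Legendre theory), e.g.\ by choosing in any $(n+1)$-dimensional test space a nonzero vector orthogonal to the first $n$ eigenfunctions. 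With those routine verifications, your argument is a valid self-contained alternative; what the classical derivative proof buys additionally is the monotonicity of $\chi_n$ in $c$, which your argument does not provide.
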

%%%%%%%%%%%
It turns out that, for the purposes of this paper,
the inequality \eqref{eq_khi_crude} is insufficiently sharp.
More accurate bounds on $\chi_n$ are described in
the following three theorems (see \cite{Report}, 
\cite{ReportArxiv}, \cite{Report2}, \cite{Report2Arxiv}).
%%%%%%%%%%%%%%%%%%
\begin{thm}
Suppose that $n \geq 2$ is a positive integer, and that $\chi_n > c^2$. Then,
\begin{align}
n < & \; 
\frac{2}{\pi} \int_0^1 \sqrt{ \frac{\chi_n - c^2 t^2}{1 - t^2} } \; dt
= \nonumber \\
& \; \frac{2}{\pi} \sqrt{\chi_n} \cdot E \brk{ \frac{c}{\sqrt{\chi_n}} }
< n+3,
\label{eq_both_large_simple_prop}
\end{align}
where the function $E:\left[0,1\right] \to \Rc$ is defined via
\eqref{eq_E} in Section~\ref{sec_elliptic}.
\label{thm_n_khi_simple}
\end{thm}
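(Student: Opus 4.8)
The plan is to recover $n$ — the number of zeros of $\psi_n$ in $(-1,1)$, which is exactly $n$ by Theorem~\ref{thm_pswf_main} — from the oscillation of the solution of the prolate ODE \eqref{eq_prolate_ode}, and to compare the oscillation rate against the WKB (Liouville--Green) phase integral. First I would put \eqref{eq_prolate_ode} in self-adjoint form $\brk{p\psi_n'}' + q\psi_n = 0$ with $p(x)=1-x^2$ and $q(x)=\chi_n-c^2x^2$. The hypothesis $\chi_n>c^2$ forces $q(x)\geq\chi_n-c^2>0$, so both $p$ and $q$ are strictly positive on $(-1,1)$ and the solution oscillates throughout the open interval. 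The claimed equality $\frac{2}{\pi}\int_0^1\sqrt{(\chi_n-c^2t^2)/(1-t^2)}\,dt=\frac{2}{\pi}\sqrt{\chi_n}\,E\brk{c/\sqrt{\chi_n}}$ is immediate from the definition \eqref{eq_E} of the complete elliptic integral once $\sqrt{\chi_n}$ is factored out of the radical (and $c/\sqrt{\chi_n}<1$ since $\chi_n>c^2$); the substance of the theorem is therefore the two-sided estimate $0<\frac{1}{\pi}\int_{-1}^1\sqrt{q/p}\,dx-n<3$.

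To tie the zero count to this phase integral I would use the modified Pr\"ufer transformation adapted to the WKB amplitude $(pq)^{-1/4}$: introduce an amplitude $\rho>0$ and a phase $\phi$ by $(pq)^{1/4}\psi_n=\rho\sin\phi$ and $p\psi_n'/(pq)^{1/4}=\rho\cos\phi$. A direct computation, using $\brk{p\psi_n'}'=-q\psi_n$, gives the phase equation
\begin{align}
\phi'(x) = \sqrt{\frac{q(x)}{p(x)}} + \frac{1}{4}\cdot\frac{(pq)'(x)}{p(x)\,q(x)}\cdot\sin\brk{2\phi(x)} .
\end{align}
The first term is exactly the WKB rate $\sqrt{q/p}$, and the second is a correction whose coefficient $\frac14(pq)'/(pq)=\frac14\brk{p'/p+q'/q}$ is bounded on compact subsets of $(-1,1)$ but blows up like $(1-x^2)^{-1}$ at the endpoints.

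The step that turns this into an \emph{exact} identity is the endpoint behaviour of $\phi$. Since $\psi_n$ has all $n$ of its zeros strictly inside $(-1,1)$ (Theorem~\ref{thm_pswf_main}), we have $\psi_n(\pm1)\neq0$, while $\sqrt{q/p}\to+\infty$ as $x\to\pm1$; hence $\tan\phi=\sqrt{q/p}\cdot\psi_n/\psi_n'\to\infty$ and $\phi$ converges to an odd multiple of $\pi/2$ at each endpoint. Moreover $\psi_n(x)=0$ exactly when $\phi(x)\in\pi\mathbb{Z}$, and at any such point $\sin2\phi=0$ forces $\phi'=\sqrt{q/p}>0$, so $\phi$ crosses each multiple of $\pi$ only upward and therefore meets each one at most once. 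Consequently the number of interior zeros equals the number of multiples of $\pi$ strictly between the two (half-integer $\times\,\pi$) endpoint limits, giving the exact relation $n=\brk{\phi(1^-)-\phi(-1^+)}/\pi$. Integrating the phase equation and using the evenness of $p$ and $q$ then yields
\begin{align}
n = \frac{2}{\pi}\int_0^1\sqrt{\frac{\chi_n-c^2t^2}{1-t^2}}\;dt + \frac{1}{4\pi}\int_{-1}^1\frac{(pq)'(x)}{p(x)\,q(x)}\,\sin\brk{2\phi(x)}\;dx ,
\end{align}
so the theorem is equivalent to showing that the correction integral lies in $(-3,0)$.

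The main obstacle is precisely this bound on the correction integral, both its sign (needed for the strict lower bound $n<\dots$) and the explicit constant $3$. The coefficient $\frac14\brk{p'/p+q'/q}$ is \emph{not} absolutely integrable: $p'/p\sim-(1-x)^{-1}$ near $x=1$, and $q'/q$ has total variation $2\log\brk{\chi_n/(\chi_n-c^2)}$, which is unbounded as $\chi_n\downarrow c^2$. Hence the crude estimate $\abrk{\sin2\phi}\leq1$ fails, and one must genuinely exploit the cancellation produced by the rapid oscillation of $\sin2\phi$. The delicate regime is exactly $\chi_n\downarrow c^2$, where the turning point $x=\sqrt{\chi_n}/c$ of $q$ descends toward the endpoint $x=1$ and the WKB approximation degrades; there I would localize near the endpoint, use the monotone growth of $\phi$ to integrate by parts (or invoke a van der Corput / Airy-type bound uniform in the turning-point location) and control the remaining compact part by the bounded variation of the coefficient away from the endpoints, with Theorems~\ref{thm_n_and_khi} and~\ref{thm_khi_crude} used to locate the turning point and initialize the estimates. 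As a consistency check, in the Legendre limit $c\to0$ one has $E(0)=\pi/2$ and the identity reduces to $n<\sqrt{\chi_n}<n+3$, which is sharp on the lower side because $\chi_n=n(n+1)$ there; this confirms the lower bound is tight and shows that the constant $3$ carries slack, which is what makes it provable.
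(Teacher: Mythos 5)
You should first be aware that the paper itself contains no proof of Theorem~\ref{thm_n_khi_simple}: it is quoted from \cite{Report}, \cite{ReportArxiv}, and the only related machinery reproduced in the paper is Theorem~\ref{thm_prufer_old}, which already packages exactly the identity you derive. With the paper's conventions, $\theta(-1)=0$, $\theta(1)=n\pi$ and $\theta'=f-v\sin(2\theta)$, so that
\begin{align}
n\pi \;=\; \int_{-1}^1 f(t)\,dt \;-\; \int_{-1}^1 v(t)\,\sin\brk{2\theta(t)}\,dt,
\end{align}
which is your phase identity up to the substitution $\phi=\theta+\pi/2$ (note that then $\sin 2\phi=-\sin 2\theta$, and your correction coefficient $\tfrac{1}{4}(pq)'/(pq)$ coincides with $v$ of \eqref{eq_jan_v}). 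Your setup --- the self-adjoint form, the WKB-amplitude Pr\"ufer transformation, the endpoint limits of the phase forced by $\psi_n(\pm 1)\neq 0$ (cf.\ Theorem~\ref{thm_psi1_bound}), the upward-only crossings of $\pi\mathbb{Z}$ yielding the exact zero count, and the reduction of the elliptic-integral equality to the substitution $k=c/\sqrt{\chi_n}$ --- is correct and is the same route taken in the cited source.

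The genuine gap is that you stop exactly where the theorem begins. The entire content of \eqref{eq_both_large_simple_prop} is the two-sided bound on the oscillatory correction: in your normalization, that $\frac{1}{4\pi}\int_{-1}^1 \frac{(pq)'}{pq}\sin(2\phi)\,dx$ lies in $(-3,0)$. You assert the sign (which is what gives the strict inequality $n<\cdot$) and the constant $3$, but prove neither; ``localize near the endpoint, integrate by parts, or invoke a van der Corput / Airy-type bound'' is a program, not an argument. Two concrete points. First, your diagnosis of the obstruction is only partially right: the endpoint blow-up of $p'/p$ is actually harmless, because $\tan\phi\sim \mathrm{const}\cdot(1-x)^{-1/2}$ near $x=1$ forces $\sin 2\phi=O\brk{(1-x)^{1/2}}$, so the integrand is $O\brk{(1-x)^{-1/2}}$ and absolutely integrable there; the true uniformity problem is the $q'/q$ factor as $\chi_n\downarrow c^2$, i.e.\ the turning point $\sqrt{\chi_n}/c$ descending to $1$. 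Second, the sign of the correction integral is not a soft cancellation statement: it requires pairing consecutive half-waves of $\sin 2\theta$ via the monotonicity of $\theta$ (e.g.\ changing variables $\eta=\theta(t)$ and comparing the weight $v/\theta'$ on adjacent intervals where $\sin 2\eta$ has opposite signs), and the explicit constant $3$ must then be extracted quantitatively, uniformly in the turning-point location. Since none of this is carried out, what you have is a correct reduction of the theorem to its core estimate, together with a sanity check in the Legendre limit, rather than a proof.
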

%%%%%%%%%%%
%%%%%%%%%%%%%%%%
\begin{thm}
Suppose that $n$ is a positive integer, and that
\begin{align}
n > \frac{2c}{\pi} + \frac{2}{\pi^2} \cdot \delta
   \cdot \log \left( \frac{4e\pi c}{\delta} \right),
%\label{eq_n_greater}
\end{align}
for some
\begin{align}
0 < \delta < \frac{5\pi}{4} \cdot c.
\end{align}
Then,
\begin{align}
\chi_n > c^2 + \frac{4}{\pi} \cdot \delta \cdot c.
\end{align}
\label{thm_khi_1}
\end{thm}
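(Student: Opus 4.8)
The plan is to argue by contradiction, converting a hypothetical upper bound on $\chi_n$ into an upper bound on $n$ that contradicts the hypothesis, using Theorem~\ref{thm_n_khi_simple} as the bridge. First I would record that the hypothesis forces $n > 2c/\pi$ (since $\delta>0$ makes the correction term positive), so Theorem~\ref{thm_n_and_khi} guarantees $\chi_n > c^2$; this is precisely the hypothesis needed to invoke Theorem~\ref{thm_n_khi_simple}. (For the finitely many small $n$ where $n\geq 2$ could fail, the claim is checked directly via Theorem~\ref{thm_khi_crude}.) Now suppose, for contradiction, that $\chi_n \leq \chi^{*}$, where $\chi^{*} = c^2 + \tfrac{4}{\pi}\delta c$. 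For $\chi \geq c^2$ the integrand $\sqrt{(\chi-c^2t^2)/(1-t^2)}$ is increasing in $\chi$ for each fixed $t\in[0,1)$, so the left inequality of \eqref{eq_both_large_simple_prop}, applied at $\chi_n$, combined with this monotonicity yields
\begin{align}
n < \frac{2}{\pi}\int_0^1 \sqrt{\frac{\chi^{*} - c^2 t^2}{1-t^2}}\; dt.
\nonumber
\end{align}

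It therefore suffices to prove the purely analytic estimate that the right-hand side is bounded by $\tfrac{2c}{\pi} + \tfrac{2}{\pi^2}\delta\log(4e\pi c/\delta)$, as this contradicts the hypothesis on $n$. The substitution $t=\sin\theta$ identifies the integral as $\sqrt{\chi^{*}}\,E(c/\sqrt{\chi^{*}})$, a complete elliptic integral of the second kind with complementary modulus $(k')^2 = (\chi^{*}-c^2)/\chi^{*}$. Writing $\beta = \chi^{*}-c^2 = \tfrac{4}{\pi}\delta c$ and splitting $\chi^{*}-c^2t^2 = c^2(1-t^2)+\beta$, the target is equivalent to
\begin{align}
\int_0^1 \sqrt{c^2 + \frac{\beta}{1-t^2}}\; dt \leq c + \frac{\beta}{4c}\log\frac{16\,e\,c^2}{\beta},
\nonumber
\end{align}
and the logarithm records exactly the logarithmic blow-up of $E(k)$ as $k\to 1$ (equivalently, of $\int^{t}\!\frac{ds}{1-s^2}$ near the endpoint).

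To prove this bound I would split the interval at the crossover point $t_0$ where $c^2(1-t^2)=\beta$. On $[0,t_0]$, where the $c^2$ term dominates, I would use $\sqrt{c^2+x}\leq c + x/(2c)$ and the primitive $\int \frac{dt}{1-t^2}=\operatorname{arctanh} t$, which produces the main term $\tfrac{\beta}{2c}\operatorname{arctanh} t_0 \sim \tfrac{\beta}{4c}\log(4c^2/\beta)$; on $[t_0,1]$, where the $\beta$ term dominates, I would use $\sqrt{c^2+x}\leq\sqrt{2x}$ together with $\int\frac{dt}{\sqrt{1-t^2}}=\arcsin t$, so that the integrable singularity contributes only an $O(\beta/c)$ term with no stray $\sqrt{\beta}$. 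The main obstacle is quantitative rather than structural: one must track every $O(\beta/c)$ remainder, treat the regime $\beta\geq c^2$ (where $t_0$ is not real and the two pieces must be reorganized), and verify that, \emph{uniformly} over the stated range $0<\delta<5\pi c/4$ (i.e.\ $0<\beta<5c^2$), all accumulated constants are absorbed into the factor $4e\pi$ inside the logarithm. This uniform bookkeeping is exactly what pins down the otherwise peculiar constants $4e\pi$ and $5\pi/4$ appearing in the statement.
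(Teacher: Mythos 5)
First, a point of reference: the paper itself does not prove Theorem~\ref{thm_khi_1} --- it is imported from \cite{Report}, \cite{ReportArxiv} --- so your proposal must stand on its own. Its architecture is sound and is surely the natural one: the hypothesis forces $n>2c/\pi$, Theorem~\ref{thm_n_and_khi} then gives $\chi_n>c^2$, and the left half of \eqref{eq_both_large_simple_prop} together with the monotonicity of the integrand in $\chi$ reduces everything, correctly, to the analytic inequality $\int_0^1\sqrt{c^2+\beta/(1-t^2)}\,dt\leq c+\frac{\beta}{4c}\log\frac{16ec^2}{\beta}$ with $\beta=\frac{4}{\pi}\delta c$; your algebra identifying this with the statement's constants is right. (Minor wrinkle: for $n=1$ and $\sqrt{2}<c<\pi/2$ the bound $\chi_1>2$ from Theorem~\ref{thm_khi_crude} is weaker than $c^2$, so the edge case needs a real argument, not just a citation.)

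The genuine gap is in the last step, and it is quantitative but fatal to the proposed scheme. Inserting $k^2=\beta/\chi^{*}$ into \eqref{eq_E_exp} gives
\begin{align}
\sqrt{\chi^{*}}\cdot E\left(\frac{c}{\sqrt{\chi^{*}}}\right)
= c + \frac{\beta}{4c}\cdot\log\left(\frac{16\,e\,c^2}{\beta}\right)
+ O\left(\frac{\beta^2}{c^3}\cdot\log\frac{c^2}{\beta}\right),
\qquad \frac{\beta}{c^2}\to 0,
\nonumber
\end{align}
so the constant $16e$ (equivalently $4e\pi$) is \emph{exactly} the leading asymptotic constant: the target inequality is asymptotically an equality, and there is no slack at order $\beta/c$ into which "accumulated constants are absorbed," contrary to the closing assumption of your plan. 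Both of your pointwise bounds lose at precisely that order. On the tail, $\sqrt{c^2+x}\leq\sqrt{2x}$ yields $\approx\sqrt{2}\,\beta/c$, whereas the true tail contribution is $\frac{\beta}{2c}\left(\sqrt{2}+\log\left(1+\sqrt{2}\right)\right)(1+o(1))\approx 1.148\,\beta/c$, a loss of $\approx 0.27\,\beta/c$; on the head, truncating $\sqrt{c^2+x}\leq c+x/(2c)$ discards $-x^2/(8c^3)$, which integrated up to the crossover contributes another $\approx\beta/(16c)$. Summing your two pieces produces $c+\frac{\beta}{4c}\log\left(Kc^2/\beta\right)$ with $K\approx 155$, against the required $K=16e\approx 43.5$, while the true margin in the inequality is only $O\!\left(\frac{\beta^2}{c^3}\log\right)$ (numerically, at $\beta/c^2=0.01$ the two sides differ by about $0.002\,\beta/c$). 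So no tuning of constants within your split-at-the-crossover scheme can close the argument. A workable completion must evaluate the integral essentially losslessly: either prove a version of \eqref{eq_E_exp} with an explicit, signed remainder valid uniformly for $0<\beta<5c^2$ (it is exactly this uniform remainder control that pins down the cutoffs $4e\pi$ and $5\pi c/4$), or compute both head and tail in closed form --- the head via the $\operatorname{arctanh}$ primitive kept exact, the tail via the substitution $1-t^2=(\beta/c^2)v$ and the identity $\int_0^1\sqrt{1+1/v}\,dv=\sqrt{2}+\log\left(1+\sqrt{2}\right)$ --- and bound the differences to second order.
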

%%%%%%%%%%%%%%%%
\begin{thm}
Suppose that $n$ is a positive integer, and that
\begin{align}
\frac{2c}{\pi} \leq 
n \leq \frac{2c}{\pi} + \frac{2}{\pi^2} \cdot \delta
   \cdot \log \left( \frac{4e\pi c}{\delta} \right) - 3,
\label{eq_n_greater}
\end{align}
for some
\begin{align}
3 < \delta < \frac{5\pi}{4} \cdot c.
\label{eq_delta_khi_2}
\end{align}
Then,
\begin{align}
\chi_n < c^2 + \frac{8 }{\pi} \cdot \delta \cdot c.
\end{align}
\label{thm_khi_2}
\end{thm}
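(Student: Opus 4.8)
The plan is to combine the sharp quantization inequality of Theorem~\ref{thm_n_khi_simple} with a careful lower bound on the associated elliptic integral, and then argue by contradiction. First I would check that Theorem~\ref{thm_n_khi_simple} is applicable, i.e. that $\chi_n > c^2$: this follows from the hypothesis $n \geq 2c/\pi$ together with Theorem~\ref{thm_n_and_khi}. Granting this, the upper inequality in \eqref{eq_both_large_simple_prop} gives $\tfrac{2}{\pi}\sqrt{\chi_n}\,E(c/\sqrt{\chi_n}) < n+3$, and the upper bound on $n$ in the hypothesis has been arranged so that the $+3$ cancels the $-3$, yielding
\[
\frac{2}{\pi}\sqrt{\chi_n}\,E\!\left(\frac{c}{\sqrt{\chi_n}}\right) < \frac{2c}{\pi} + \frac{2}{\pi^2}\,\delta\,\log\!\left(\frac{4e\pi c}{\delta}\right).
\]
Thus everything reduces to understanding how fast the left-hand side grows with $\chi_n$ above the base value $\tfrac{2c}{\pi}$ (which is attained at $\chi_n = c^2$, where $E(1)=1$).

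The key analytic step is a \emph{lower} bound on the increment. Writing $\xi = \chi_n - c^2 > 0$ and using the integral form in \eqref{eq_both_large_simple_prop}, I would rewrite
\[
\frac{2}{\pi}\sqrt{\chi_n}\,E\!\left(\frac{c}{\sqrt{\chi_n}}\right) - \frac{2c}{\pi} = \frac{2}{\pi}\int_0^1 \left(\sqrt{c^2 + \frac{\xi}{1-t^2}} - c\right) dt,
\]
since $\chi_n - c^2 t^2 = c^2(1-t^2) + \xi$. Applying the identity $\sqrt{c^2+s}-c = s/(\sqrt{c^2+s}+c)$ with $s = \xi/(1-t^2)$ and bounding the denominator from above turns the integrand into a multiple of $\xi/(1-t^2)$ on the range where $s \lesssim c^2$; the resulting integral $\int \frac{dt}{1-t^2} = \operatorname{arctanh}(t)$ is exactly what produces the logarithmic factor, with $\int_0^{t^\ast}\frac{dt}{1-t^2} \approx \tfrac12\log(4c^2/\xi)$ at the cutoff $t^\ast = \sqrt{1-\xi/c^2}$. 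This yields a lower bound of the shape $\tfrac{2}{\pi}(\text{const})\cdot \tfrac{\xi}{c}\log(c^2/\xi)$, matching the structure of the right-hand side above. The monotonicity of $\chi \mapsto \tfrac{2}{\pi}\sqrt{\chi}\,E(c/\sqrt{\chi})$ then lets me evaluate at the critical value $\xi = \tfrac{8}{\pi}\delta c$: if $\chi_n \geq c^2 + \tfrac{8}{\pi}\delta c$, the lower bound should already exceed $\tfrac{2}{\pi^2}\delta\log(4e\pi c/\delta)$, contradicting the displayed upper bound. Hence $\chi_n < c^2 + \tfrac{8}{\pi}\delta c$, as claimed.

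The hard part will be the constant bookkeeping in this lower bound, and in particular handling the fact that the admissible range $\delta < \tfrac{5\pi}{4}c$ forces $\xi = \tfrac{8}{\pi}\delta c$ to possibly exceed $c^2$, so the naive split at $s = c^2$ (which uses $t^\ast \in [0,1)$) is only valid for small $\delta$; the regime $\xi \geq c^2$ must be treated separately, presumably by a cruder bound that is nonetheless sufficient because the required target $\log(4e\pi c/\delta)$ is then moderate. The extra slack in the constant $\tfrac{8}{\pi}$ (versus the $\tfrac{4}{\pi}$ appearing in the dual lower bound of Theorem~\ref{thm_khi_1}) is exactly what should absorb the looseness of the lower estimate on the elliptic integral, and the constraint $3 < \delta$ is used to keep the $n$-interval nonempty and to control the logarithmic term. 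I would organize the constant-matching so as to reduce, after the substitution $\delta = \pi\xi/(8c)$, to a single elementary inequality of the form $(\text{const})\log(4c^2/\xi) \geq \tfrac18\log(32ec^2/\xi)$ valid on the relevant range of $\xi$, which is the one place where the numerical constants genuinely need to be pinned down.
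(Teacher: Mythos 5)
The paper itself contains no proof of Theorem~\ref{thm_khi_2}: it is quoted from \cite{Report}, \cite{ReportArxiv}, \cite{Report2}, \cite{Report2Arxiv}, so there is no in-paper argument to compare against. Judged on its own terms, your route is the right one and is in the same spirit as the cited source: invoke Theorem~\ref{thm_n_and_khi} to get $\chi_n>c^2$, apply the upper half of \eqref{eq_both_large_simple_prop}, let the $+3$ absorb the $-3$ in \eqref{eq_n_greater}, and run a contradiction using the monotonicity of $\chi\mapsto\frac{2}{\pi}\int_0^1\sqrt{(\chi-c^2t^2)/(1-t^2)}\,dt$ in $\chi$. Your reduction is correct, and it is worth recording how clean it becomes: setting $u=8\delta/(\pi c)\in(0,10)$ (the range forced by \eqref{eq_delta_khi_2}) and $G(u)=\int_0^1\bigl(\sqrt{1+u/(1-t^2)}-1\bigr)\,dt=\sqrt{1+u}\,E\bigl(1/\sqrt{1+u}\bigr)-1$, the whole theorem is equivalent to the single elementary inequality $G(u)\geq\frac{u}{8}\log\frac{32e}{u}$ on $(0,10]$. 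This inequality is true with roughly a factor-two margin: your split at $t^\ast=\sqrt{1-u}$ gives, via $\operatorname{arctanh}(t^\ast)=\frac12\log\bigl((1+t^\ast)^2/u\bigr)$ and the tail contribution $\arcsin(\sqrt{u})\sqrt{u}-(1-t^\ast)$, the asymptotic $G(u)\sim\frac{u}{4}\log\frac{16e}{u}$ (consistent with \eqref{eq_E_exp}), which dominates $\frac{u}{8}\log\frac{32e}{u}$ precisely because $u<8e$; and for the regime $u\geq1$ that worried you, the crude bound $G(u)\geq\frac{\pi}{2}\sqrt{u}-1$ (from $\sqrt{1+u/(1-t^2)}\geq\sqrt{u/(1-t^2)}$) already suffices on all of $[1,10]$ — at $u=1$ it gives $0.5708$ versus the required $\frac18\log(32e)\approx0.5582$, and the gap widens with $u$. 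So the extra factor in $\frac{8}{\pi}\delta c$ versus the $\frac{4}{\pi}\delta c$ of Theorem~\ref{thm_khi_1} does exactly the absorbing job you predicted.

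Two small technical points to tidy up in a full write-up. First, Theorems~\ref{thm_n_and_khi} and~\ref{thm_n_khi_simple} are stated for $n\geq2$; you should note that when $n\leq1$ the hypotheses \eqref{eq_n_greater}--\eqref{eq_delta_khi_2} make the statement vacuous (the admissible $n$-interval is empty for the correspondingly small $c$), so nothing is lost. Second, your worst-case constant $\frac{1}{\sqrt{2}+1}$ for the main term, taken alone, only closes the inequality for very small $u$; as sketched above, you genuinely need both the retained factor $(1+t^\ast)^2$ inside the logarithm and the tail term near $t=1$ to cover moderate $u$, after which the $\frac{\pi}{2}\sqrt{u}-1$ bound takes over. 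With those pieces assembled, your proposal is complete.
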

%%%%%%%%%%%%%%%%
The following theorem is a direct consequence of 
Theorem~\ref{thm_n_khi_simple}.
\begin{thm}
Suppose that $n > 0$ is a positive integer, and that
\begin{align}
n > \frac{2c}{\pi} + 1.
\label{eq_n_elem}
\end{align}
Then,
\begin{align}
\chi_n > c^2 + 1.
\label{eq_khi_elem}
\end{align}
\label{thm_khi_elem}
\end{thm}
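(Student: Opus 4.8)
The plan is to argue by contradiction, using Theorem~\ref{thm_n_khi_simple} together with a crude estimate of the resulting integral. First I would check that the hypotheses of Theorem~\ref{thm_n_khi_simple} are satisfied. Since $c > 0$, the assumption $n > 2c/\pi + 1$ forces $n > 1$, so $n \geq 2$; moreover $n > 2c/\pi$, and hence Theorem~\ref{thm_n_and_khi} gives $\chi_n > c^2$. Thus both hypotheses of Theorem~\ref{thm_n_khi_simple} hold, and I may use the lower bound on the integral, namely
\begin{align}
n < \frac{2}{\pi} \int_0^1 \sqrt{ \frac{\chi_n - c^2 t^2}{1 - t^2} } \; dt.
\label{eq_plan_elliptic}
\end{align}

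Next I would suppose, for contradiction, that $\chi_n \leq c^2 + 1$. For each fixed $t \in [0,1)$ the integrand in \eqref{eq_plan_elliptic} is increasing in $\chi_n$, so replacing $\chi_n$ by the larger quantity $c^2 + 1$ only increases the right-hand side. The key algebraic observation is that $(c^2+1) - c^2 t^2 = c^2 (1 - t^2) + 1$, so that
\begin{align}
\frac{(c^2+1) - c^2 t^2}{1 - t^2} = c^2 + \frac{1}{1 - t^2}.
\end{align}

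The remaining step, which is the only genuine computation, is to bound the resulting integral. Applying the elementary inequality $\sqrt{a+b} \leq \sqrt{a} + \sqrt{b}$, valid for all $a, b \geq 0$, I would obtain
\begin{align}
n < \frac{2}{\pi} \int_0^1 \brk{ c + \frac{1}{\sqrt{1-t^2}} } \; dt
  = \frac{2}{\pi} \brk{ c + \frac{\pi}{2} } = \frac{2c}{\pi} + 1,
\end{align}
where I have used $\int_0^1 (1-t^2)^{-1/2} \, dt = \pi/2$. This contradicts the hypothesis $n > 2c/\pi + 1$, and so $\chi_n > c^2 + 1$, as claimed.

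I do not expect any real obstacle: the argument is essentially a single application of Theorem~\ref{thm_n_khi_simple} followed by the bound $\sqrt{a+b} \leq \sqrt{a} + \sqrt{b}$. The only points needing a moment's care are the convergence of the improper integral (the integrand is $O((1-t^2)^{-1/2})$ as $t \to 1^-$) and the legitimacy of the monotonicity substitution in $\chi_n$, both of which are immediate.
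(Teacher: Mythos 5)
Your proof is correct and takes essentially the same route as the paper: both rest on the bound $n < \frac{2}{\pi}\int_0^1 \sqrt{(\chi_n - c^2 t^2)/(1-t^2)}\,dt$ from Theorem~\ref{thm_n_khi_simple}, the algebraic split $\chi_n - c^2 t^2 = c^2(1-t^2) + (\chi_n - c^2)$, the inequality $\sqrt{a+b} \leq \sqrt{a} + \sqrt{b}$, and the evaluation $\int_0^1 (1-t^2)^{-1/2}\,dt = \pi/2$. The paper keeps $\chi_n$ general and concludes $n < 2c/\pi + \sqrt{\chi_n - c^2}$ directly, whereas you substitute the threshold value $c^2 + 1$ and argue by contradiction --- a purely cosmetic repackaging of the identical estimate.
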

\begin{proof}
It follows from \eqref{eq_both_large_simple_prop} of
Theorem~\ref{thm_n_khi_simple} that
\begin{align}
n & \; < \frac{2c}{\pi} \int_0^1 
\sqrt{1 + \frac{\chi_n-c^2}{c^2} \cdot \frac{1}{1-t^2}} \; dt \nonumber \\
& \; <
\frac{2c}{\pi} + \frac{2}{\pi} \cdot \sqrt{\chi_n-c^2} 
\cdot \int_0^1 \frac{dt}{\sqrt{1-t^2}}
= \frac{2c}{\pi} + \sqrt{\chi_n-c^2}.
\label{eq_elem_1}
\end{align}
We combine \eqref{eq_elem_1} with \eqref{eq_n_elem} to obtain 
\eqref{eq_khi_elem}.
\end{proof}
%%%%%%%%%%%%%%%%
In the following theorem from \cite{Report2}, \cite{Report2Arxiv}, 
we provide an upper bound on $|\lambda_n|$ in terms of $n$ and $c$.
%%%%%%%%%%%%
\begin{thm}
Suppose that $c>0$ is a real number, and that
\begin{align}
c > 22.
\label{eq_c22}
\end{align}
Suppose also that $\delta>0$ is a real number, and that
\begin{align}
3 < \delta < \frac{\pi c}{16}.
\label{eq_delta_crude}
\end{align}
Suppose, in addition, that $n$ is a positive integer, and that
\begin{align}
n > \frac{2c}{\pi} + \frac{2}{\pi^2} \cdot \delta \cdot
    \log\left( \frac{4e\pi c}{\delta} \right).
\label{eq_n_crude}
\end{align}
Suppose furthermore that the real number $\xi(n,c)$ is defined
via the formula
\begin{align}
\xi(n,c) = 7056 \cdot c \cdot 
\exp\left[-\delta\left(1 - \frac{\delta}{2\pi c}\right) \right].
\label{eq_xi_n_c}
\end{align}
Then, 
\begin{align}
\abrk{ \lambda_n } < \xi(n,c).
\label{eq_crude_inequality}
\end{align}
\label{thm_crude_inequality}
\end{thm}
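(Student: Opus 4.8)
The plan is to split the target $\xi(n,c)=7056\,c\cdot\exp\!\brk{-\delta(1-\delta/(2\pi c))}$ into a polynomial prefactor $7056\,c$ and an exponential decay factor, and to reach the two through independent mechanisms. The decay factor is governed by the size of the gap $\chi_n-c^2$, so the first step is to bound this gap from below. The hypotheses here are precisely those of Theorem~\ref{thm_khi_1} --- indeed $3<\delta<\pi c/16<\tfrac{5\pi}{4}c$, so that theorem applies --- and invoking it immediately yields $\chi_n>c^2+\tfrac{4}{\pi}\delta c$. This reduces the statement to two tasks: (i) an intermediate bound expressing $\abrk{\lambda_n}$ through $\chi_n$, and (ii) converting the gap bound into the stated exponent.

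For task (i) I would run a Liouville--Green (WKB) analysis of the prolate equation \eqref{eq_prolate_ode}. Since $\chi_n>c^2$, on the interval $1<x<\sqrt{\chi_n}/c$ the reduced coefficient $(\chi_n-c^2x^2)/(1-x^2)$ is negative, so this is a classically forbidden region across which $\psi_n$ grows by the factor $e^{I}$, with
\begin{align}
I = \int_1^{\sqrt{\chi_n}/c}\sqrt{\frac{\chi_n-c^2x^2}{x^2-1}}\;dx .
\end{align}
Up to a factor polynomial in $c$, the quantity $1/\abrk{\lambda_n}$ is this tunneling factor: the integral equation \eqref{eq_prolate_integral} links the endpoint normalization of $\psi_n$ to its magnitude in the complex direction (cf. \eqref{eq_pf_watson}), and $e^{I}$ is what separates the two scales. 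I would therefore import from \cite{Report2}, \cite{Report2Arxiv} an intermediate inequality $\abrk{\lambda_n}\leq A(c)\cdot e^{-I}$, the prefactor $A(c)$ carrying the constant $7056$ and a factor of $c$. Making this rigorous with explicit constants --- the turning-point matching at $x=\sqrt{\chi_n}/c$ and the control of the endpoint singularity at $x=1$ --- is the main obstacle and is the analytic content drawn from the cited reports; the rest is elementary.

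Task (ii) is then a self-contained estimate of $I$. Differentiating under the integral sign (the boundary term vanishes because the integrand vanishes at the upper limit) gives $dI/d\chi_n>0$, so the true $I$ is increasing in $\chi_n$ and it suffices to bound it at the threshold $\chi_n=c^2+\gamma_0$, where $\gamma_0=\tfrac{4}{\pi}\delta c$. There, writing $q=\chi_n/c^2$ and substituting $x^2=1+(q-1)s$ turns the phase integral into $\tfrac{(q-1)c}{2}\int_0^1\sqrt{(1-s)/\brk{s\brk{1+(q-1)s}}}\,ds$; the convexity bound $(1+u)^{-1/2}\geq 1-u/2$ together with $\int_0^1\sqrt{(1-s)/s}\,ds=\pi/2$ and $\int_0^1\sqrt{s(1-s)}\,ds=\pi/8$ then gives
\begin{align}
I \;\geq\; \frac{\pi\gamma_0}{4c}-\frac{\pi\gamma_0^2}{32\,c^3} \;=\; \delta\brk{1-\frac{\delta}{2\pi c}},
\end{align}
the final equality being the evaluation at $\gamma_0=\tfrac{4}{\pi}\delta c$ (legitimate, since there $\gamma_0<c^2/4$, comfortably inside the range where the convexity estimate is sharp). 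Combining $I\geq\delta(1-\delta/(2\pi c))$ with $\abrk{\lambda_n}\leq A(c)e^{-I}$ and absorbing $A(c)$ into $7056\,c$, with the hypothesis $c>22$ controlling the residual constants, produces \eqref{eq_crude_inequality}.
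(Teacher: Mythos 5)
You should first be aware that this paper contains no proof of Theorem~\ref{thm_crude_inequality}: it is stated in Section~\ref{sec_pswf} as an imported result, with an explicit attribution to \cite{Report2}, \cite{Report2Arxiv}. So there is no in-paper argument to compare against, and your proposal must stand or fall on its own. Its skeleton is sound, and two of your three steps check out completely. The appeal to Theorem~\ref{thm_khi_1} is legitimate (the hypotheses \eqref{eq_delta_crude}, \eqref{eq_n_crude} match, since $\pi c/16 < \tfrac{5\pi}{4}c$), and your task (ii) is airtight: the substitution $x^2 = 1+(q-1)s$ does turn the phase integral into $\tfrac{(q-1)c}{2}\int_0^1\sqrt{(1-s)/(s(1+(q-1)s))}\,ds$, the bound $(1+u)^{-1/2}\geq 1-u/2$ is valid for all $u\geq 0$, the two beta integrals are $\pi/2$ and $\pi/8$, monotonicity of $I$ in $\chi_n$ holds (the boundary term vanishes and the differentiated integrand is positive and integrable at both endpoints), and the evaluation at $\gamma_0=\tfrac{4}{\pi}\delta c$ yields exactly $\delta - \delta^2/(2\pi c)$. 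That this elementary computation reproduces the exponent in \eqref{eq_xi_n_c} on the nose is strong evidence you have identified the intended mechanism.

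The genuine gap is task (i). The inequality $\abrk{\lambda_n}\leq A(c)\cdot e^{-I}$ with $A(c)\leq 7056\,c$ \emph{is} the analytic content of the theorem, and you do not prove it --- you import it from the very references the paper cites for the whole statement, while the WKB narrative you give (endpoint normalization, tunneling through the forbidden region $1<x<\sqrt{\chi_n}/c$) is a heuristic that fixes neither the prefactor nor even the existence of an explicit constant. Your honesty about this ("the main obstacle") does not close the hole: a blind proof that cites the target theorem's source for its pivotal inequality has assumed what was to be shown. Nor can the gap be patched from within this paper: the only other bound available here, Theorem~\ref{thm_lambda_khi}, combined with Theorem~\ref{thm_khi_1} and the monotonicity of $y\mapsto y^{10}\exp[-\tfrac{\pi}{4}(y^2-c^2)/y]$, produces the exponent
\begin{align}
\frac{\pi}{4}\cdot\frac{\chi_n-c^2}{\sqrt{\chi_n}}\bigg|_{\chi_n=c^2+\frac{4}{\pi}\delta c}
= \frac{\delta}{\sqrt{1+4\delta/(\pi c)}},
\end{align}
and since $(1+4u)^{-1/2} < 1-u/2$ for $u=\delta/(\pi c)$ in the admissible range, this falls short of $\delta(1-\delta/(2\pi c))$ by a factor $\exp\brk{O(\delta^2/c)}$ that is unbounded as $c\to\infty$ (the polynomial prefactors in \eqref{eq_lambda_khi} also exceed $7056\,c$ without further work). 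So Theorem~\ref{thm_crude_inequality} is genuinely external to this paper, and a complete proof would require reproducing the explicit-constant tunneling estimate from \cite{Report2}, \cite{Report2Arxiv} --- the turning-point matching and the control at $x=1$ that you deferred --- rather than citing it.
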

%%%%%%%%%%%
In the following theorem from \cite{Report2}, \cite{Report2Arxiv},
we provide another upper bound on $|\lambda_n|$.
\begin{thm}
Suppose that $n>0$ is a positive integer, and that 
\begin{align}
n > \frac{2c}{\pi} + \sqrt{42}.
\label{eq_lambda_khi_0}
\end{align}
Suppose also that the real number $x_n$ is defined via the formula
\begin{align}
x_n = \frac{\chi_n}{c^2}.
\label{eq_lambda_khi_1}
\end{align}
Then,
\begin{align}
& |\lambda_n| < \nonumber \\
& 1195 \cdot c \cdot (x_n)^{\frac{3}{4}} \cdot (x_n-1)^{\frac{1}{4}} \cdot
\left(x_n-\frac{1}{2}\right)^3 \cdot
\exp\left[ -\frac{\pi}{4} \cdot \left(\sqrt{x_n}-\frac{1}{\sqrt{x_n}} \right)
\cdot c \right].
\label{eq_lambda_khi}
\end{align}
\label{thm_lambda_khi}
\end{thm}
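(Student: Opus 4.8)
The plan is to analyze the prolate ODE \eqref{eq_prolate_ode} semiclassically, treating $c$ as the large parameter, and to read off $\abrk{\lambda_n}$ from the behavior of $\psi_n$ outside $\left[-1,1\right]$ together with the integral equation \eqref{eq_prolate_integral}. First, the hypothesis \eqref{eq_lambda_khi_0} forces $\chi_n>c^2$ (via Theorem~\ref{thm_n_khi_simple}; compare Theorem~\ref{thm_khi_elem}), so $x_n=\chi_n/c^2>1$. Writing \eqref{eq_prolate_ode} in Sturm--Liouville form $\brk{(1-x^2)\psi'}' + (\chi_n-c^2x^2)\psi=0$, the coefficient $(\chi_n-c^2x^2)/(1-x^2)$ is positive on $(-1,1)$, negative on $(1,\sqrt{x_n})$, and positive again on $(\sqrt{x_n},\infty)$. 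Hence $\psi_n$ oscillates inside the interval, tunnels across the barrier $(1,\sqrt{x_n})$, and oscillates once more beyond the turning point $\sqrt{x_n}$; the entire estimate will be driven by the tunneling integral
\begin{align}
J = \int_1^{\sqrt{x_n}} \sqrt{ \frac{\chi_n - c^2 s^2}{s^2-1} } \; ds .
\label{eq_sketch_J}
\end{align}

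Next I would produce two expressions for the ``far-field'' amplitude of $\psi_n$ and match them. On one side, the integral equation \eqref{eq_prolate_integral}, continued to complex argument, exhibits $\psi_n$ as an entire function of exponential type $c$ whose growth in the imaginary direction is governed by $\abrk{\psi_n(1)}/\abrk{\lambda_n}$; this is the Paley--Wiener / Watson-lemma computation recorded in \eqref{eq_pf_watson}. On the other side, WKB for the ODE, with the Airy connection across the simple turning point $\sqrt{x_n}$ and a separate connection across the regular singular endpoint $x=1$, expresses the same amplitude as the $L^2$-normalized interior amplitude of $\psi_n$ multiplied by the growth factor $e^{J}$ (the ODE side encoded in \eqref{eq_pf_der_ineq}). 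Equating the two, exactly as in the passage from \eqref{eq_pf_watson} and \eqref{eq_pf_der_ineq} to \eqref{eq_pf_sharp}, yields a bound $\abrk{\lambda_n}\le(\text{algebraic prefactor})\cdot e^{-J}$, where the prefactor gathers the WKB amplitudes $\left[(1-x^2)(\chi_n-c^2x^2)\right]^{-1/4}$ near the edge and the turning point, the interior normalization, and the turning-point constant.

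It then remains to evaluate $J$ and bound it from below. The substitution $s^2 = 1 + (x_n-1)\sin^2\theta$ turns \eqref{eq_sketch_J} into
\begin{align}
J = c\sqrt{x_n}\cdot\brk{ K(k) - E(k) }, \qquad k^2 = 1 - \frac{1}{x_n},
\label{eq_sketch_JK}
\end{align}
where $K$ and $E$ are the complete elliptic integrals of the first and second kind (with $E$ the function of Theorem~\ref{thm_n_khi_simple}). Since I only need an upper bound on $\abrk{\lambda_n}$, a lower bound on $J$ suffices. Writing $K(k)-E(k)=k^2\int_0^{\pi/2}\sin^2\phi\,(1-k^2\sin^2\phi)^{-1/2}\,d\phi\ge\frac{\pi}{4}k^2$ and substituting $k^2=1-1/x_n$ gives $\sqrt{x_n}\brk{K(k)-E(k)}\ge\frac{\pi}{4}\brk{\sqrt{x_n}-1/\sqrt{x_n}}$ (an equality to leading order as $x_n\to1^{+}$), which reproduces exactly the exponent $\frac{\pi}{4}\brk{\sqrt{x_n}-1/\sqrt{x_n}}c$ of \eqref{eq_lambda_khi}.

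The main obstacle is making the WKB step rigorous with explicit constants. The standard Airy analysis handles the simple turning point $\sqrt{x_n}$, but the barrier abuts the regular singular point $x=1$ of \eqref{eq_prolate_ode} (where $1-x^2$ vanishes and the naive WKB amplitude diverges), so the connection at the edge must be carried out by a dedicated local analysis and the cumulative WKB error controlled uniformly across $(1,\sqrt{x_n})$; this is precisely the role of the growth-estimate machinery referenced in the overview (the theorems cited around \eqref{eq_pf_der_ineq} in Section~\ref{sec_overview}). Tracking the edge and turning-point amplitude factors, together with the interior normalization estimate and the regime condition \eqref{eq_lambda_khi_0} (which keeps all error terms and constants under control), is what ultimately produces the algebraic prefactor $(x_n)^{3/4}(x_n-1)^{1/4}(x_n-1/2)^3$ and the absolute constant $1195$ in \eqref{eq_lambda_khi}.
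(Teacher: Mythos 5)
The paper itself contains no proof of Theorem~\ref{thm_lambda_khi}: it is imported verbatim from \cite{Report2}, \cite{Report2Arxiv}, so there is no in-paper argument to compare against and your sketch must stand on its own. Its sound parts are genuinely sound: the hypothesis \eqref{eq_lambda_khi_0} does force $\chi_n>c^2$; the turning-point geometry of \eqref{eq_prolate_ode} is as you describe; the substitution $s^2=1+(x_n-1)\sin^2\theta$ does convert \eqref{eq_sketch_J} into $c\sqrt{x_n}\,(K(k)-E(k))$ with $k^2=1-1/x_n$; and the bound $K(k)-E(k)=k^2\int_0^{\pi/2}\sin^2\phi\,(1-k^2\sin^2\phi)^{-1/2}\,d\phi\geq\frac{\pi}{4}k^2$ reproduces exactly the exponent $\frac{\pi}{4}\left(\sqrt{x_n}-1/\sqrt{x_n}\right)c$ of \eqref{eq_lambda_khi}. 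The matching architecture --- Watson-lemma far field from \eqref{eq_prolate_integral} against ODE-controlled growth through the barrier --- is also the mechanism that underlies the machinery of Section~\ref{sec_upper}, so the strategy is the right one.

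The genuine gap is that everything which makes \eqref{eq_lambda_khi} a theorem rather than a WKB heuristic is deferred. First, the entire prefactor $1195\cdot c\cdot(x_n)^{3/4}(x_n-1)^{1/4}\left(x_n-\frac{1}{2}\right)^3$ is asserted to emerge from ``tracking the edge and turning-point amplitude factors,'' but this is precisely the hard content: rigorous WKB with explicit constants across a barrier whose left edge is the regular singular point $x=1$, where the naive amplitude $\left[(1-x^2)(\chi_n-c^2x^2)\right]^{-1/4}$ blows up. Within the toolkit visible in this paper that step is done not by Airy connection formulas but by two-sided estimates of the Theorem~\ref{thm_lewis} type on a $2\times 2$ system, with the loss quantified by a correction factor such as $b_c$ of Definition~\ref{def_bc} and tamed by spacing results like Theorem~\ref{thm_x1_x0_good}; you would need to replicate that analysis on the real segment $(1,\sqrt{x_n})$, and nothing in your sketch does so. Second, there is a direction-of-inequality problem in the step you do cite: \eqref{eq_pf_der_ineq} is an \emph{upper} bound on the half-plane quantity, and combined with the asymptotic equality \eqref{eq_pf_watson} it yields \eqref{eq_pf_sharp}, i.e.\ a \emph{lower} bound on $\abrk{\psi_n'(x)}$ in terms of a given $\abrk{\lambda_n}$ --- which cannot bound $\abrk{\lambda_n}$ from above. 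To conclude $\abrk{\lambda_n}\lesssim e^{-J}$ you need the complementary halves: the lower-bound side of \eqref{eq_wu_main_ineq} for the far-field amplitude, and a quantified \emph{lower} bound on the tunneling growth of $\psi_n$ from $x=1$ to past $\sqrt{x_n}$ (together with the normalization control of Theorem~\ref{thm_psi1_bound} to manage error terms). Since neither of these is carried out, neither the constant $1195$ nor the algebraic powers are obtained; the proposal is a correct strategic outline with its quantitatively essential steps missing.
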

%%%%%%%%%%%
The following theorem is a combination of certain results
from \cite{RokhlinXiaoApprox} and \cite{Report}, \cite{ReportArxiv}.
\begin{thm}
Suppose that $c > 0$ is a real number, and that $\chi_n > c^2$.
Then,
\begin{align}
\frac{1}{2} < \psi_n^2(1) < n + \frac{1}{2}.
\label{eq_psi1_bound}
\end{align}
\label{thm_psi1_bound}
\end{thm}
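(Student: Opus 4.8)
The plan is to treat the two inequalities separately, since they have a different character: the lower bound follows from an elementary ``energy'' monotonicity argument driven by the ODE \eqref{eq_prolate_ode}, whereas the upper bound is genuinely tied to the Legendre limit $c \to 0$ and is the harder direction. For the lower bound I would introduce the auxiliary function
\begin{align}
h(x) = \psi_n(x)^2 + \frac{\brk{1 - x^2} \cdot \psi_n'(x)^2}{\chi_n - c^2 x^2},
\label{eq_plan_h}
\end{align}
which is well defined on $\left[-1,1\right]$ precisely because the hypothesis $\chi_n > c^2$ forces $\chi_n - c^2 x^2 \geq \chi_n - c^2 > 0$ there. Using \eqref{eq_prolate_ode} to eliminate $\psi_n''$, a direct computation yields
\begin{align}
h'(x) = \frac{2 x \cdot \psi_n'(x)^2 \cdot \brk{\chi_n + c^2 - 2 c^2 x^2}}{\brk{\chi_n - c^2 x^2}^2}.
\label{eq_plan_hprime}
\end{align}
Since $\chi_n + c^2 - 2 c^2 x^2 \geq \chi_n - c^2 > 0$ for $\abrk{x} \leq 1$, the factor $h'(x)$ has the sign of $x$; moreover $h$ is even (as $\psi_n$ is even or odd), so $h$ is non-decreasing on $\left[0,1\right]$ and attains its maximum over $\left[-1,1\right]$ at the endpoints, where the second term vanishes and $h(\pm 1) = \psi_n(1)^2$. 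Because the second summand of \eqref{eq_plan_h} is non-negative, this gives the pointwise inequality $\psi_n(x)^2 \leq h(x) \leq \psi_n(1)^2$ for all $-1 \leq x \leq 1$; in words, $\psi_n$ attains its supremum norm on $\left[-1,1\right]$ at $\pm 1$.

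The lower bound then follows by integrating this pointwise inequality against the normalization $\| \psi_n \|_{L^2\left[-1,1\right]} = 1$, namely
\begin{align}
1 = \int_{-1}^1 \psi_n(t)^2 \; dt \leq \int_{-1}^1 \psi_n(1)^2 \; dt = 2 \cdot \psi_n(1)^2.
\label{eq_plan_lower}
\end{align}
The inequality is strict because $h$ is in fact strictly increasing on $\left[0,1\right]$ (its derivative \eqref{eq_plan_hprime} vanishes only at the finitely many nodes of $\psi_n'$ and at $0$), so that $\psi_n(t)^2 \leq h(t) < h(1) = \psi_n(1)^2$ for every $t \in \brk{-1,1}$; hence $\psi_n(1)^2 > 1/2$.

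For the upper bound $\psi_n(1)^2 < n + 1/2$ I expect the main obstacle to lie. The value $n + 1/2$ is exactly the endpoint value $\tilde P_n(1)^2$ of the $L^2\left[-1,1\right]$-normalized Legendre polynomial $\tilde P_n = \sqrt{n + 1/2}\cdot P_n$, i.e. the $c \to 0$ limit of $\psi_n$; moreover, by Theorem~\ref{thm_n_and_khi} the regime $\chi_n > c^2$ is precisely the side of the spectrum that connects continuously to $c = 0$ with $n$ fixed. This suggests two routes. The first is to expand $\psi_n = \sum_k \beta_k \tilde P_k$ in normalized Legendre polynomials, so that $\sum_k \beta_k^2 = 1$ and, using $P_k(1) = 1$, $\psi_n(1) = \sum_k \beta_k \sqrt{k + 1/2}$; one then exploits the three-term recurrence satisfied by the $\beta_k$ (obtained by testing the ODE \eqref{eq_prolate_ode} against each $P_k$ and integrating by parts) to show that the weighted sum $\sum_k \beta_k \sqrt{k + 1/2}$ cannot exceed $\sqrt{n + 1/2}$ in absolute value, which is essentially the content of the approximate Legendre-expansion analysis of \cite{RokhlinXiaoApprox}. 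The second route is to regard $F(c) = \psi_n(1)^2$ as a function of $c$, to establish via first-order perturbation of the eigenproblem that $F$ is strictly decreasing on the connected interval where $\chi_n > c^2$, and then to invoke $F(0) = n + 1/2$.

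The principal difficulty in either route is controlling the contribution of the Legendre modes of index larger than $n$ (equivalently, rigorously signing the $c$-derivative of $\psi_n(1)^2$): the function $h$ of \eqref{eq_plan_h} caps $\psi_n(1)^2$ only from below, and its monotonicity supplies no upper bound, so the oscillation and decay estimates for the coefficients $\beta_k$ — combined with the sharp bounds on $\chi_n$ from Theorems~\ref{thm_n_khi_simple} and \ref{thm_khi_crude} and with the normalization — are exactly the ingredients that must be assembled from \cite{RokhlinXiaoApprox}, \cite{Report}, \cite{ReportArxiv} in order to close the estimate at the clean value $n + 1/2$.
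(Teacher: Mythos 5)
First, a point of reference: the paper itself contains no proof of Theorem~\ref{thm_psi1_bound} --- it is imported wholesale from \cite{RokhlinXiaoApprox} and \cite{Report}, \cite{ReportArxiv} --- so your attempt must be judged against the machinery those sources (and the paper's quoted lemmas) supply. Your lower-bound argument is correct and complete. Your function $h$ is exactly the function $Q$ of \eqref{eq_q_old}, and your derivative formula coincides with the computation the paper carries out in \eqref{eq_dq_short} (there for $t>\max\{\sqrt{\chi_n}/c,1\}$, but the algebra is identical); with $\chi_n>c^2$ one has $\min\{\sqrt{\chi_n}/c,1\}=1$, so the monotonicity you derive is precisely the statement of Theorem~\ref{thm_Q_Q_tilde}, which you could simply have invoked. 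The finishing step --- $\psi_n^2(t)\leq h(t)<h(1)=\psi_n^2(1)$ on $(-1,1)$, integrated against $\|\psi_n\|_{L^2[-1,1]}=1$ to get $1<2\,\psi_n^2(1)$ --- is sound, including the strictness argument (the zeros of $h'$ form a finite set). This is a clean reconstruction of the argument behind the cited lower bound, and it also recovers \eqref{eq_extremum_special} of Theorem~\ref{thm_extrema} along the way.

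The upper bound, however, is a genuine gap: you name two routes but execute neither, and the first does not close as stated. Writing $\psi_n(1)=\sum_k\beta_k^{(n)}\sqrt{k+1/2}$ with $\sum_k\bigl(\beta_k^{(n)}\bigr)^2=1$ (see \eqref{eq_beta_unit_length}), no Cauchy--Schwarz-type estimate can cap this at $\sqrt{n+1/2}$, since $\beta_k^{(n)}\neq 0$ for infinitely many $k$ of the parity of $n$ ($\psi_n$ is not a polynomial, cf. \eqref{eq_beta_zero_parity}); the recurrence \eqref{eq_num_a_rec} by itself supplies neither the sign pattern nor the quantitative decay of $\beta_k^{(n)}$ for $k>n$ that would be needed, and assembling exactly that information is the nontrivial content of \cite{RokhlinXiaoApprox} to which you defer. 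The second route rests on the unproven assertion that $c\mapsto\psi_n^2(1)$ decreases throughout the region $\chi_n>c^2$: the connectedness of that region is salvageable (a Hellmann--Feynman computation gives $\partial\chi_n/\partial c=2c\int_{-1}^1 t^2\psi_n^2(t)\,dt<2c$, so $\chi_n-c^2$ is strictly decreasing in $c$), but signing the $c$-derivative of the \emph{endpoint value} $\psi_n^2(1)$ requires a perturbation formula for the eigenfunction itself and is not visibly easier than the inequality you are trying to prove. In short, your attempt establishes the left inequality of \eqref{eq_psi1_bound} rigorously, while the right inequality remains, as in the paper, a citation rather than a proof.
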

%%%%%%%%%%%%%%%%%
The following theorem appears in \cite{Report}, \cite{ReportArxiv}.
\begin{thm}
Suppose that $n \geq 0$ is a non-negative integer, and that
$x, y$ are two arbitrary extremum points of $\psi_n$ in $(-1,1)$.
If $\abrk{x} < \abrk{y}$, then
\begin{align}
\abrk{\psi_n(x)} < \abrk{\psi_n(y)}.
\label{eq_extremum_general}
\end{align}
If, in addition, $\chi_n > c^2$, then
\begin{align}
\abrk{\psi_n(x)} < \abrk{\psi_n(y)} < \abrk{\psi_n(1)}.
\label{eq_extremum_special}
\end{align}
\label{thm_extrema}
\end{thm}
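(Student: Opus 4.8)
The plan is to treat the PSWF differential equation \eqref{eq_prolate_ode} as a Sturm--Liouville problem and to track a monotone ``energy'' whose value at every critical point of $\psi_n$ equals $\psi_n^2$. Writing $p(x)=1-x^2$ and $q(x)=\chi_n-c^2x^2$, equation \eqref{eq_prolate_ode} takes the self-adjoint form
\begin{align}
\brk{ p(x)\,\psi_n'(x) }' + q(x)\,\psi_n(x) = 0,
\label{eq_plan_selfadjoint}
\end{align}
and on the set where $q(x)>0$ I would introduce the Sonin-type function
\begin{align}
h(x) = \psi_n(x)^2 + \frac{p(x)}{q(x)}\,\psi_n'(x)^2 .
\label{eq_plan_h}
\end{align}
Two elementary observations drive the argument. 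First, at any extremum $x$ of $\psi_n$ we have $\psi_n'(x)=0$, so $h(x)=\psi_n(x)^2$; thus a statement about $\abrk{\psi_n}$ at extrema becomes a statement about $h$. Second, since $p(1)=0$, one has $h(1)=\psi_n(1)^2$, which is exactly what the endpoint bound in \eqref{eq_extremum_special} requires.

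The heart of the matter is the sign of $h'$. Differentiating \eqref{eq_plan_h} and eliminating $\psi_n''$ via \eqref{eq_plan_selfadjoint} cancels the two terms containing $\psi_n\psi_n'$ and leaves the clean identity
\begin{align}
h'(x) = -\,\frac{\brk{p(x)q(x)}'}{q(x)^2}\,\psi_n'(x)^2
      = \frac{2x\brk{ \chi_n + c^2 - 2c^2x^2 }}{q(x)^2}\,\psi_n'(x)^2 ,
\label{eq_plan_hprime}
\end{align}
where I used $\brk{pq}'(x)=-2x\brk{\chi_n+c^2-2c^2x^2}$. For $x>0$ the bracket is positive exactly when $x^2<\brk{\chi_n+c^2}/\brk{2c^2}$; when $\chi_n\le c^2$ this threshold is at least the squared turning point $\chi_n/c^2$ and hence covers the whole oscillatory region $x^2<\chi_n/c^2$, while when $\chi_n>c^2$ the threshold exceeds $1$ and hence covers all of $[0,1]$. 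In either case $h'\ge 0$ throughout the region of interest, with equality only at the isolated zeros of $\psi_n'$ (recall $\psi_n$ is real-analytic and nonconstant, Remark~\ref{rem_continuation}). Thus $h$ is strictly increasing there, and since $\psi_n$ is even or odd (Theorem~\ref{thm_pswf_main}) the function $h$ is even, so $h$ is strictly increasing in $\abrk{x}$. When $\chi_n>c^2$ the entire interval $\brk{-1,1}$ is oscillatory and $h$ is strictly increasing on all of $[0,1]$; evaluating at two extrema $x,y$ with $\abrk{x}<\abrk{y}$ gives $\psi_n(x)^2=h(\abrk{x})<h(\abrk{y})=\psi_n(y)^2<h(1)=\psi_n(1)^2$, which is precisely \eqref{eq_extremum_special} (and its first half is \eqref{eq_extremum_general} in this case).

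The remaining case $\chi_n\le c^2$ is where I expect the main obstacle to lie: the turning point $x_0=\sqrt{\chi_n}/c$ now sits in $\brk{0,1}$, beyond which $q<0$, so $h$ is neither defined nor positive and the energy argument says nothing directly about extrema in $\brk{x_0,1}$. To close the gap I would show that $\psi_n$ has \emph{no} extremum in the classically forbidden region $\brk{x_0,1}$, so that every critical point is captured by the monotonicity of $h$ on the oscillatory region. Concretely, on $\brk{x_0,1}$ the equation \eqref{eq_plan_selfadjoint} is disconjugate ($q<0$), and once one knows $\psi_n$ keeps a constant sign there, the function $u=p\,\psi_n'$ satisfies $u'=-q\,\psi_n$ of one fixed sign while $u(1)=p(1)\psi_n'(1)=0$ (as $\psi_n$ is analytic up to the endpoint); a monotone $u$ vanishing at $x=1$ cannot vanish on the open interval, so $\psi_n'=u/p$ has no zero in $\brk{x_0,1}$ and $\psi_n$ is strictly monotone there. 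The one ingredient still needed---that $\psi_n$ has constant sign on $\brk{x_0,1}$, equivalently that all $n$ roots of $\psi_n$ lie in the oscillatory region---is exactly the type of oscillation statement developed in Section~\ref{sec_oscillation}, which I would quote. With the extrema thereby confined to $\brk{-x_0,x_0}$, the monotonicity of $h$ established above yields \eqref{eq_extremum_general} in general.
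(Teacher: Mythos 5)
Your argument is correct and coincides with the approach underlying the paper: the theorem itself is only quoted here (from \cite{Report}, \cite{ReportArxiv}), but your energy function $h$ is precisely the function $Q$ of Theorem~\ref{thm_Q_Q_tilde}, and your formula for $h'$ is identical to the computation \eqref{eq_dq_short} carried out in the proof of the mirror-image statement, Theorem~\ref{lem_Q_Q_tilde}, on $(1,\infty)$. Your treatment of the case $\chi_n \leq c^2$, confining all extrema to the oscillatory interval $\left(-\sqrt{\chi_n}/c, \sqrt{\chi_n}/c\right)$ before invoking the monotonicity of $h$, is likewise consistent with the paper, where that confinement is exactly the content of Theorem~\ref{thm_five}.
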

%%%%%%%%%%%%%%%%%
The following theorem appears in \cite{Yoel}.
\begin{thm}
For all real $c > 0$ and all natural $n \geq 1$,
\begin{align}
\max_{m \leq n+1} \; \max_{\abrk{t} \leq 1} |\psi_m(t)| \leq 2\sqrt{n}.
\end{align}
\label{thm_all_psi_upper_bound}
\end{thm}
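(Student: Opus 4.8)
The plan is to fix $c>0$ and an integer $n\ge 1$ and to bound $\max_{|t|\le 1}|\psi_m(t)|$ separately for each $m$ with $0\le m\le n+1$. Since the quantities that will appear are increasing in $m$, it suffices to produce, for each such $m$, a bound that does not exceed $2\sqrt n$. I would split the indices according to the sign of $\chi_m-c^2$, which by Theorem~\ref{thm_n_and_khi} corresponds (outside the narrow transition range $2c/\pi-1<m<2c/\pi$) to $m$ lying above or below $2c/\pi$. The two regimes require genuinely different arguments: when $\chi_m>c^2$ the modulus $|\psi_m|$ attains its maximum on $[-1,1]$ at the endpoints, whereas when $\chi_m\le c^2$ the maximum is attained at an interior extremum located near the classical turning point $\pm\sqrt{\chi_m}/c$.

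For the indices with $\chi_m>c^2$ the argument is short and constitutes the clean part of the proof. By the first assertion of Theorem~\ref{thm_extrema} the values of $|\psi_m|$ at the successive interior extrema increase with the distance of the extremum from the origin, and by the refined inequality \eqref{eq_extremum_special} every interior extremum value is strictly dominated by $|\psi_m(1)|$. Since $\psi_m$ is monotone on the interval between its outermost interior extremum and the endpoint $1$, it follows that $\max_{|t|\le 1}|\psi_m(t)|=|\psi_m(1)|$. Theorem~\ref{thm_psi1_bound} then gives $\psi_m(1)^2<m+\tfrac12$, so for $m\le n+1$ we obtain $\psi_m(1)^2<n+\tfrac32\le 4n$, the last step holding for every integer $n\ge 1$; hence $|\psi_m(1)|<2\sqrt n$, as desired.

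The main obstacle is the complementary regime $\chi_m\le c^2$, where \eqref{eq_extremum_special} is no longer available and the maximum sits in the interior. Here I would work from the prolate differential equation \eqref{eq_prolate_ode}: multiplying it by $\psi_m'$ and rearranging yields
\[
\frac{d}{dx}\left[(1-x^2)\psi_m'(x)^2+(\chi_m-c^2x^2)\psi_m(x)^2\right]=2x\left(\psi_m'(x)^2-c^2\psi_m(x)^2\right),
\]
so that at any interior extremum $x^\ast$ one has $\psi_m(x^\ast)^2=G(x^\ast)/(\chi_m-c^2(x^\ast)^2)$, where $G$ denotes the bracketed envelope. The strategy is to control $G$ across $(-1,1)$ and to combine this with the normalization $\int_{-1}^1\psi_m^2=1$ from Theorem~\ref{thm_pswf_main} in order to bound the largest extremum value. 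An alternative route inserts the integral equation \eqref{eq_prolate_integral2} into Cauchy--Schwarz and invokes the Bessel inequality for the orthonormal system $\{\sqrt{\mu_k}\,\psi_k\}$ in the Paley--Wiener space of band limit $c$, giving $\sum_k\mu_k\psi_k(x)^2\le c/\pi$ and hence a pointwise estimate on $\psi_m$. Pushing either bound down to the clean constant $2\sqrt n$, rather than to a $c$-dependent quantity, is the delicate point, and I expect it to rely on the quantitative control of $\chi_m$ and of $|\lambda_m|$ furnished by Theorems~\ref{thm_khi_crude} and~\ref{thm_lambda_khi} in exactly the range of $m$ for which $\chi_m\le c^2$.
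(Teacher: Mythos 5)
The paper does not actually prove this statement: Theorem~\ref{thm_all_psi_upper_bound} is imported verbatim from \cite{Yoel}, with no internal argument to compare against. Judged on its own merits, your proposal proves only half of the theorem. The branch $\chi_m > c^2$ is complete and correct: by Theorem~\ref{thm_extrema} (in particular \eqref{eq_extremum_special}) every interior extremum value is dominated by $\abrk{\psi_m(1)}$, monotonicity of $\psi_m$ between its outermost critical point and the endpoint then gives $\max_{\abrk{t}\le 1}\abrk{\psi_m(t)} = \abrk{\psi_m(1)}$, and Theorem~\ref{thm_psi1_bound} yields $\psi_m^2(1) < m + \tfrac12 \le n + \tfrac32 \le 4n$ for $n \ge 1$. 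That chain of inequalities is sound.

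The branch $\chi_m \le c^2$, however, is the main content of the theorem --- it covers all $m \lesssim 2c/\pi$, and when $n+1 < 2c/\pi$ it is the \emph{entire} theorem --- and there you offer only two unexecuted strategies, neither of which closes as described. Your envelope identity is algebraically correct, but $G'(x) = 2x\bigl(\psi_m'(x)^2 - c^2\psi_m(x)^2\bigr)$ has no fixed sign, so ``controlling $G$ across $(-1,1)$'' is not a routine step; worse, by Theorem~\ref{thm_extrema} the largest extremum value is attained at the extremum \emph{closest} to the turning point $\sqrt{\chi_m}/c$, which is exactly where the divisor $\chi_m - c^2(x^\ast)^2$ in your representation $\psi_m(x^\ast)^2 = G(x^\ast)/(\chi_m - c^2(x^\ast)^2)$ degenerates --- the bound is weakest precisely where it must be strongest, and the missing ingredient is quantitative control of the distance from the outermost extremum to the turning point (or a counting argument pairing the normalization $\|\psi_m\|_{L^2} = 1$ with the $\sim m$ oscillations), none of which is supplied. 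The Mercer route has a parallel defect: $\sum_k \mu_k \psi_k(x)^2 = c/\pi$ gives $\psi_m(x)^2 \le c/(\pi\mu_m)$, but to reach $4n$ you would need the \emph{lower} bound $\mu_m \ge c/(4\pi n)$ for every $m \le n+1$ with $\chi_m \le c^2$, and the paper provides no lower bounds on eigenvalues at all: Theorems~\ref{thm_crude_inequality} and~\ref{thm_lambda_khi} are upper bounds valid only under $n > 2c/\pi + \cdots$, i.e.\ in the regime $\chi_n > c^2$, so your plan to invoke Theorem~\ref{thm_lambda_khi} ``in exactly the range of $m$ for which $\chi_m \le c^2$'' lies outside its hypotheses, and Theorem~\ref{thm_mu_spectrum} locates the plunge region only up to $O(\log c)$, too coarse to certify that $\mu_m$ stays bounded below for all such $m$. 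As it stands, the hard regime is a genuine gap, not a deferred computation.
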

%%%%%%%%%%%%%%%%%%%%%
In the following theorem, we provide a recurrence relation
between the derivatives of $\psi_n$ of arbitrary order
(see Lemma 9.1 in \cite{RokhlinXiaoProlate}).
\begin{thm}
Suppose that $c>0$ is a real number, and that $n \geq 0$
is an integer. Then, 
\begin{align}
& \brk{1 - t^2} \psi_n'''(t) - 4t \psi_n''(t) +
\brk{\chi_n - c^2 t^2 - 2} \psi_n'(t) - 2c^2 t \psi_n(t) = 0
\label{eq_dpsi_rec3}
\end{align}
for all real $t$. Moreover,
for all integer $k \geq 2$ and all real $t$,
\begin{align}
& \brk{1 - t^2} \psi_n^{(k+2)}(t) 
  - 2 \brk{k+1} t \psi_n^{(k+1)}(t)
  + \brk{\chi_n - k\brk{k+1} - c^2 t^2} \psi_n^{(k)}(t) \nonumber \\
& \quad \quad 
  -c^2 k t \psi_n^{(k-1)}(t) - c^2 k \brk{k-1} \psi_n^{(k-2)}(t) = 0.
\label{eq_dpsi_reck}
\end{align}
\label{thm_dpsi_reck}
\end{thm}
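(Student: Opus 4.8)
The plan is to derive both identities by repeatedly differentiating the prolate differential equation \eqref{eq_prolate_ode}. By Remark~\ref{rem_continuation}, $\psi_n$ is entire and satisfies \eqref{eq_prolate_ode} for all complex, and in particular all real, $t$; hence it may be differentiated to all orders, and every identity obtained this way holds for all real $t$.

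First I would establish \eqref{eq_dpsi_rec3} by differentiating \eqref{eq_prolate_ode} once with respect to $t$. Applying the product rule to each of the three terms produces $(1-t^2)\psi_n''' - 2t\psi_n''$ from the first term, $-2\psi_n' - 2t\psi_n''$ from the second, and $-2c^2 t\psi_n + (\chi_n - c^2 t^2)\psi_n'$ from the third. Collecting like derivatives, using $-2t\psi_n'' - 2t\psi_n'' = -4t\psi_n''$ and $(\chi_n - c^2 t^2)\psi_n' - 2\psi_n' = (\chi_n - c^2 t^2 - 2)\psi_n'$, yields \eqref{eq_dpsi_rec3} directly.

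For the general recurrence \eqref{eq_dpsi_reck}, the cleanest route is to apply the general Leibniz rule to the $k$-fold derivative of each term of \eqref{eq_prolate_ode}. The crucial simplification is that every coefficient function in \eqref{eq_prolate_ode}, namely $1-t^2$, $-2t$, and $\chi_n - c^2 t^2$, is a polynomial of degree at most two, so in each Leibniz sum $\sum_{j}\binom{k}{j}(\cdot)^{(j)}\psi_n^{(k+\ell-j)}$ only the terms $j=0,1,2$ survive. Consequently the $k$-th derivative of \eqref{eq_prolate_ode} involves exactly the five derivative orders $\psi_n^{(k+2)}, \psi_n^{(k+1)}, \psi_n^{(k)}, \psi_n^{(k-1)}, \psi_n^{(k-2)}$ appearing in \eqref{eq_dpsi_reck}, and collecting the coefficient of each order reduces to elementary binomial algebra. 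For instance, the coefficient of $\psi_n^{(k)}$ combines the $j=2$ contribution $\binom{k}{2}(-2)$ from $(1-t^2)\psi_n''$ with the $j=1$ contribution $\binom{k}{1}(-2)$ from $-2t\psi_n'$ and the $j=0$ contribution $\chi_n - c^2 t^2$, giving $\chi_n - c^2 t^2 - k(k-1) - 2k = \chi_n - c^2 t^2 - k(k+1)$, and the coefficient of $\psi_n^{(k+1)}$ combines $\binom{k}{1}(-2t)$ with $-2t$ to give $-2(k+1)t$. Alternatively, one may argue by induction on $k$: the base case $k=2$ follows by differentiating \eqref{eq_dpsi_rec3} once, and the inductive step follows by differentiating the $k$-th instance of \eqref{eq_dpsi_reck} and regrouping, noting that the lowest term (with constant coefficient) is raised precisely to order $\psi_n^{((k+1)-2)}$, so the list of surviving orders closes correctly.

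I do not expect any conceptual difficulty here; the entire content is the differentiation together with the combinatorial bookkeeping. The one step that demands genuine care, and which I regard as the main obstacle, is the tabulation of the lower-order coefficients, those of $\psi_n^{(k-1)}$ and $\psi_n^{(k-2)}$: these receive contributions only from the $j=1,2$ derivatives of $\chi_n - c^2 t^2$ (and, in the inductive formulation, from the two lowest terms of the preceding identity), so it is exactly here that the binomial factors $\binom{k}{1}$ and $\binom{k}{2}$ must be combined correctly. I would pin these down by cross-checking against the explicit $k=2$ case obtained by differentiating \eqref{eq_dpsi_rec3} a second time.
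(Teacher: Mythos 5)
Your plan is the right one, and indeed the only natural one: the paper itself contains no proof of Theorem~\ref{thm_dpsi_reck} (it is quoted from Lemma 9.1 of \cite{RokhlinXiaoProlate}), and the intended derivation is exactly what you describe --- differentiate \eqref{eq_prolate_ode} $k$ times and apply the Leibniz rule, noting that the coefficients $1-t^2$, $-2t$, $\chi_n-c^2t^2$ have degree at most two, so only $j=0,1,2$ survive. Your first differentiation giving \eqref{eq_dpsi_rec3} is correct, as are the coefficients of $\psi_n^{(k+2)}$, $\psi_n^{(k+1)}$ and $\psi_n^{(k)}$ that you computed explicitly.

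The problem sits precisely in the step you deferred. The only contribution to $\psi_n^{(k-1)}$ comes from $\binom{k}{1}\cdot\frac{d}{dt}\left(\chi_n-c^2t^2\right)=-2c^2kt$, so the $k$-fold derivative of \eqref{eq_prolate_ode} is
\begin{align*}
\left(1-t^2\right)\psi_n^{(k+2)}(t)-2(k+1)t\,\psi_n^{(k+1)}(t)
+\left(\chi_n-k(k+1)-c^2t^2\right)\psi_n^{(k)}(t)
-2c^2kt\,\psi_n^{(k-1)}(t)-c^2k(k-1)\,\psi_n^{(k-2)}(t)=0,
\end{align*}
with coefficient $-2c^2kt$, not the $-c^2kt$ printed in \eqref{eq_dpsi_reck}. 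The two versions differ by $c^2kt\,\psi_n^{(k-1)}(t)$, which is not identically zero, so the identity exactly as printed is false and cannot be proved by your argument or any other; the statement carries a factor-of-$2$ typo in that single term (note that \eqref{eq_dpsi_rec3}, which is the $k=1$ instance, does carry the correct factor $-2c^2t$, and the corrected formula above reduces to it at $k=1$). Your own proposed cross-check exposes this: differentiating \eqref{eq_dpsi_rec3} once gives $(1-t^2)\psi_n^{(4)}-6t\psi_n'''+(\chi_n-6-c^2t^2)\psi_n''-4c^2t\,\psi_n'-2c^2\psi_n=0$, whereas \eqref{eq_dpsi_reck} at $k=2$ asserts $-2c^2t\,\psi_n'$. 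Likewise, your inductive alternative closes only for the corrected coefficient: differentiating the corrected $k$-th identity yields $-2c^2(k+1)t\,\psi_n^{(k)}-c^2(k+1)k\,\psi_n^{(k-1)}$, matching the form at $k+1$, while differentiating the printed version produces $-c^2(k+2)t\,\psi_n^{(k)}-c^2k^2\psi_n^{(k-1)}$, which does not. So carry out the deferred computation, record the factor $2$, and prove the corrected recurrence; everything else in your proposal is sound.
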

%%%%%%%%%%%%%%%%%%%%%%
We refer to the roots of $\psi_n$, the roots of $\psi_n'$ and
the turning points of the ODE \eqref{eq_prolate_ode} as 
''special points''.
In the following theorem from \cite{Report}, \cite{ReportArxiv}, we describe
the location of some of the special points.
%%%%%%%%%%%%%%%%%%%%%%%%%%%
\begin{thm}[Special points]
Suppose that $n \geq 2$ is a positive integer. Suppose
also that $t_1<t_2<\dots$ are the roots of $\psi_n$ in $(-1,1)$,
and that $s_1<s_2<\dots$ are the roots of $\psi_n'$ in $(-1,1)$.
If $\chi_n < c^2$, then
\begin{align}
-1 < -\frac{\sqrt{\chi_n}}{c} < s_1 < t_1 < s_2 < \dots
< t_{n-1} < s_n < t_n < s_{n+1} < \frac{\sqrt{\chi_n}}{c} < 1
\label{eq_all_tx_small}
\end{align}
In particular, $\psi_n$ has $n$ roots in $(-1,1)$, and $\psi_n'$ has
$n+1$ roots in $(-1,1)$. 
On the other hand,
if $\chi_n > c^2$, then
\begin{align}
-\frac{\sqrt{\chi_n}}{c} < -1 < t_1 < s_1 < t_2 < \dots
< t_{n-1} < s_{n-1} < t_n < 1 < \frac{\sqrt{\chi_n}}{c}.
\label{eq_all_tx_large}
\end{align}
In particular, $\psi_n$ has $n$ roots in $(-1,1)$, and $\psi_n'$ has
$n-1$ roots in $(-1,1)$.
\label{thm_five}
\end{thm}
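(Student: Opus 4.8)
The plan is to exploit the Sturm--Liouville structure of the prolate ODE. First I would rewrite \eqref{eq_prolate_ode} in self-adjoint form
\[
\left[(1-x^2)\psi_n'(x)\right]' + (\chi_n - c^2 x^2)\psi_n(x) = 0,
\]
so that $p(x)=1-x^2>0$ on $(-1,1)$ and the turning points $\pm\sqrt{\chi_n}/c$ are exactly the zeros of the coefficient $q(x)=\chi_n-c^2x^2$. The single engine driving the whole argument will be the auxiliary function $g(x)=(1-x^2)\psi_n'(x)$, which satisfies $g'(x)=-(\chi_n-c^2x^2)\psi_n(x)$ and, since $\psi_n$ is analytic on $[-1,1]$ (Remark~\ref{rem_continuation}), the natural boundary values $g(\pm1)=0$. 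Two elementary facts are recorded at the outset: $\psi_n$ and $\psi_n'$ never vanish simultaneously (otherwise $\psi_n\equiv 0$ by uniqueness for the ODE), so every critical point has $\psi_n\neq0$ and every root is simple; and at a critical point $s$ the ODE gives $\psi_n''(s)=-\tfrac{\chi_n-c^2s^2}{1-s^2}\psi_n(s)$, whence $s$ is a genuine extremum (with $\psi_n\psi_n''<0$) precisely when $|s|<\sqrt{\chi_n}/c$, and an ``anti-extremum'' in the non-oscillatory region.

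Next I would establish interlacing inside the oscillatory region. If $u<v$ are consecutive critical points there, then $\psi_n'$ keeps one sign on $(u,v)$ while the alternating-extremum property forces $\psi_n(u)$ and $\psi_n(v)$ to have opposite signs; hence $\psi_n$ is strictly monotone and vanishes exactly once in $(u,v)$. Conversely Rolle's theorem yields at least one critical point between consecutive roots, and two would force an intermediate extremum of the wrong sign; so between consecutive roots of $\psi_n$ in the oscillatory region there is exactly one critical point, and the roots of $\psi_n$ and $\psi_n'$ strictly interlace there.

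The two cases are then separated by comparing $\sqrt{\chi_n}/c$ with $1$. When $\chi_n>c^2$ the entire interval is oscillatory and the turning points lie outside it, giving $-\sqrt{\chi_n}/c<-1$ and $1<\sqrt{\chi_n}/c$; $\psi_n$ has its $n$ roots $t_1<\dots<t_n$ (Theorem~\ref{thm_pswf_main}), and interlacing supplies one critical point in each of the $n-1$ gaps. To exclude further critical points beyond $t_1$ or $t_n$, I would run the $g$-argument: after possibly replacing $\psi_n$ by $-\psi_n$, on $(t_n,1)$ one has $q>0$ and $\psi_n>0$, so $g$ is strictly monotone with $g(1)=0$, forcing $\psi_n'\neq0$ throughout $(t_n,1)$; symmetrically on $(-1,t_1)$. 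This gives exactly \eqref{eq_all_tx_large}. When $\chi_n<c^2$ the turning points lie in $(-1,1)$. The same $g$-monotonicity, now with $g'=-q\psi_n$ of constant sign on each non-oscillatory collar $(\sqrt{\chi_n}/c,1)$ and $(-1,-\sqrt{\chi_n}/c)$ together with $g(\pm1)=0$, shows first that $\psi_n$ cannot vanish in a collar (so all $n$ roots lie in the oscillatory region, $-\sqrt{\chi_n}/c<t_1$ and $t_n<\sqrt{\chi_n}/c$), and second that $\psi_n'$ cannot vanish in a collar while $\psi_n'(\sqrt{\chi_n}/c)<0$ and $\psi_n'(-\sqrt{\chi_n}/c)>0$ strictly. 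Combined with $\psi_n'(t_n)>0$ and $\psi_n'(t_1)<0$, the intermediate value theorem produces exactly one additional critical point in each of $(t_n,\sqrt{\chi_n}/c)$ and $(-\sqrt{\chi_n}/c,t_1)$ (uniqueness again from the no-two-maxima argument). Adding the $n-1$ interlacing critical points yields $n+1$ in all and the full chain \eqref{eq_all_tx_small}, with the stated root counts as an immediate byproduct.

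The step I expect to be the main obstacle is the behavior at the turning points and at the singular endpoints, where naive Rolle/Sturm interlacing is insufficient. Everything hinges on controlling $g(x)=(1-x^2)\psi_n'(x)$ there: one must justify $g(\pm1)=0$ from the regularity of $\psi_n$ at the endpoints, and, crucially, must obtain the \emph{strict} sign statements $\psi_n'(\sqrt{\chi_n}/c)<0$ and $\psi_n'(-\sqrt{\chi_n}/c)>0$ — equivalently, that no critical point can sit exactly on a turning point — which is precisely what places $s_1$ and $s_{n+1}$ strictly inside the oscillatory region. These strict inequalities, and the exclusion of any stray root or critical point leaking into a non-oscillatory collar, are the delicate points; the monotonicity of $g$ driven by $g'=-(\chi_n-c^2x^2)\psi_n$ together with $g(\pm1)=0$ is the uniform mechanism that settles all of them.
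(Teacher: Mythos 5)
Your proposal is correct, but there is no in-paper proof to compare it against: the paper states Theorem~\ref{thm_five} without proof, importing it from \cite{Report}, \cite{ReportArxiv}. The closest argument the paper actually contains is the proof of the companion result on $(1,\infty)$, Theorem~\ref{thm_four}, and the comparison is instructive. Both arguments share the same local engine: reading the sign of $\psi_n''$ at a critical point off the ODE, so that in the oscillatory region $\{\,|x|<\sqrt{\chi_n}/c\,\}$ every critical point is a strict extremum of the sign-appropriate type, which (via Rolle plus your no-two-maxima argument) yields exactly one root of $\psi_n'$ between consecutive roots of $\psi_n$. Where you differ is in the handling of the endpoints and turning points. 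The paper's proof of Theorem~\ref{thm_four} uses the boundary relation $2\psi_n'(1)=(\chi_n-c^2)\psi_n(1)$, obtained by evaluating the ODE at $x=1$, and then rules out a premature critical point by a sign contradiction in $\psi_n''$. Your device is the function $g(x)=(1-x^2)\psi_n'(x)$ from the self-adjoint form, with $g'=-(\chi_n-c^2x^2)\psi_n$ and $g(\pm1)=0$ (justified by analyticity, Remark~\ref{rem_continuation}). This is arguably the cleaner tool for the two-sided statement here: a single monotonicity argument for $g$ on each collar simultaneously excludes roots of $\psi_n$ from $[\sqrt{\chi_n}/c,1)$ when $\chi_n<c^2$, shows $\psi_n'\neq 0$ on $[\sqrt{\chi_n}/c,1)$ including \emph{strictly at the turning point} (precisely the delicate point you flagged), and, when $\chi_n>c^2$, kills all critical points in $(-1,t_1)$ and $(t_n,1)$; combined with the IVT between $t_1$, $t_n$ and the turning points, it produces the two extra roots $s_1,s_{n+1}$ of $\psi_n'$ in the case $\chi_n<c^2$. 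With the count of $n$ roots of $\psi_n$ taken from Theorem~\ref{thm_pswf_main}, this assembles \eqref{eq_all_tx_small} and \eqref{eq_all_tx_large} with the stated counts $n+1$ and $n-1$.

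Two small repairs to your writeup. First, the blanket signs $\psi_n'(\sqrt{\chi_n}/c)<0$ and $\psi_n'(-\sqrt{\chi_n}/c)>0$ hold only for even $n$ (i.e., when $\psi_n$ has the same sign near both endpoints); for odd $n$ the left collar carries the opposite signs. The parity-free statement your $g$-argument actually proves is $\psi_n\psi_n'<0$ on $[\sqrt{\chi_n}/c,1)$ and $\psi_n\psi_n'>0$ on $(-1,-\sqrt{\chi_n}/c]$, which feeds the IVT step just as well; alternatively, invoke the parity of $\psi_n$ from Theorem~\ref{thm_pswf_main} and argue only on $[0,1)$. Second, in the collar-exclusion step the clean formulation is: take the largest root $t$ of $\psi_n$ in $[\sqrt{\chi_n}/c,1)$, note $g'$ has a fixed sign on $(t,1)$, and derive the contradiction between the simple-root sign of $g(t)$ and the sign forced by monotonicity together with $g(1)=0$; run this way, the argument does not even require knowing $\psi_n(\pm1)\neq0$. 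Neither point is a genuine gap.
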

%%%%%%%%%%%%%%%%%%%%%%%%%%%%%%%%%%%%
In the following theorem, proven in \cite{Report}, \cite{ReportArxiv}, 
we describe
a relation between the magnitude of $\psi_n$ and $\psi_n'$
in the interval $(-1,1)$.
%%%%%%%%%%%
\begin{thm}
Suppose that $n \geq 0$ is a non-negative integer, and that
the functions $p, q: \Rc \to \Rc$ are defined via 
\eqref{eq_prufer_1} in Section~\ref{sec_prufer}. 
Suppose also that the functions 
$Q, \tilde{Q} : (0,\min\left\{\sqrt{\chi_n}/c,1\right\}) \to \Rc$ are defined,
respectively, via the formulae
\begin{align}
Q(t) 
& \; = \psi_n^2(t) + \frac{ p(t) }{ q(t) } \cdot \brk{\psi'_n(t)}^2 
 = \psi_n^2(t) + 
       \frac{ \brk{1-t^2} \cdot \brk{\psi'_n(t)}^2 }{ \chi_n-c^2 t^2}
\label{eq_q_old}
\end{align}
and
\begin{align}
\tilde{Q}(t) 
& \; = p(t) \cdot q(t) \cdot Q(t)  \nonumber \\
& \; = \brk{1 - t^2} \cdot \brk{ \brk{\chi_n - c^2 t^2} \cdot \psi_n^2(t) +
                           \brk{1 - t^2} \cdot \brk{\psi'_n(t)}^2 }.
\label{eq_qtilde_old}
\end{align}
Then, $Q$ is increasing in the interval
$\left(0, \min\left\{\sqrt{\chi_n}/c, 1\right\}\right)$,
and $\tilde{Q}$ is decreasing in the interval 
$\left(0, \min\left\{\sqrt{\chi_n}/c, 1\right\}\right)$.
\label{thm_Q_Q_tilde}
\end{thm}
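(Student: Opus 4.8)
The plan is to differentiate $Q$ and $\tilde{Q}$ directly and to use the differential equation \eqref{eq_prolate_ode} to eliminate $\psi_n''$. Throughout, write $\psi = \psi_n$ and, following \eqref{eq_prufer_1}, $p(t) = 1 - t^2$ and $q(t) = \chi_n - c^2 t^2$, so that \eqref{eq_prolate_ode} reads $p \psi'' - 2 t \psi' + q \psi = 0$, i.e. $p \psi'' = 2 t \psi' - q \psi$. On the interval $I = (0, \min\{\sqrt{\chi_n}/c, 1\})$ both $p$ and $q$ are strictly positive.

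First I would treat $Q = \psi^2 + (p/q)(\psi')^2$. Differentiating and substituting $p \psi'' = 2 t \psi' - q \psi$ into the term $(p/q) \cdot 2 \psi' \psi''$, the two contributions involving $\psi \psi'$ cancel, leaving a multiple of $(\psi')^2$. Computing $(p/q)'$ and simplifying, I expect to arrive at
\begin{align}
Q'(t) = (\psi'(t))^2 \cdot
\frac{2 t \left( \chi_n + c^2 - 2 c^2 t^2 \right)}{\left( \chi_n - c^2 t^2 \right)^2}.
\end{align}
Next, since $\tilde{Q} = p q \, Q = p q \, \psi^2 + p^2 (\psi')^2$, I would differentiate again and once more replace $p \psi''$ via the ODE in the term $p^2 \cdot 2 \psi' \psi''$. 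As for $Q$, the $\psi \psi'$ terms cancel; the crucial point is that the two remaining $(\psi')^2$ contributions, namely $(p^2)' (\psi')^2$ and $4 p t (\psi')^2$, cancel identically because $(p^2)' = -4 t p$. What survives is only the $\psi^2$ term, giving
\begin{align}
\tilde{Q}'(t) = (p(t) q(t))' \cdot \psi^2(t)
= -2 t \left( \chi_n + c^2 - 2 c^2 t^2 \right) \psi^2(t).
\end{align}

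The only remaining issue is the sign of the common factor $\chi_n + c^2 - 2 c^2 t^2$ on $I$. For $t \in I$ we have $t^2 < \min\{\chi_n/c^2, 1\}$, hence $2 c^2 t^2 < 2 \min\{\chi_n, c^2\} \leq \chi_n + c^2$, where the last step uses the elementary inequality $2 \min\{a, b\} \leq a + b$; thus this factor is strictly positive throughout $I$. Since also $t > 0$ and $(\chi_n - c^2 t^2)^2 > 0$, the formula for $Q'$ shows $Q' \geq 0$ (with equality only at the isolated roots of $\psi'$), so $Q$ is increasing; similarly $\tilde{Q}' \leq 0$ (with equality only at the isolated roots of $\psi$), so $\tilde{Q}$ is decreasing. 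I expect the main (and essentially only) obstacle to be the bookkeeping in the two differentiations — specifically, verifying that the $(\psi')^2$ terms cancel exactly in $\tilde{Q}'$ — rather than any substantive difficulty; it is pleasant that the single factor $\chi_n + c^2 - 2 c^2 t^2$ governs both monotonicity statements simultaneously, and that its positivity on $I$ is precisely what forces the right endpoint to be $\min\{\sqrt{\chi_n}/c, 1\}$ rather than either quantity alone.
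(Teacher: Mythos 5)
Your proposal is correct and takes essentially the same route as the paper: while this particular statement is imported from \cite{Report}, \cite{ReportArxiv}, the paper's own proof of the companion Theorem~\ref{lem_Q_Q_tilde} performs exactly your computation, substituting the ODE \eqref{eq_prolate_ode} to obtain \eqref{eq_dq_short} and \eqref{eq_dqtilde_short} (identical to your formulas for $Q'$ and $\tilde{Q}'$, including the cancellation of the $\psi_n\psi_n'$ terms and of the $(\psi_n')^2$ terms in $\tilde{Q}'$), with both monotonicity claims governed by the sign of the single factor $\chi_n + c^2 - 2c^2t^2$. Your sign argument $2c^2t^2 < 2\min\left\{\chi_n, c^2\right\} \leq \chi_n + c^2$ on $\left(0, \min\left\{\sqrt{\chi_n}/c, 1\right\}\right)$ is correct and is precisely the mirror of the inequality \eqref{eq_term_pos} used in the paper on $\left(\max\left\{\sqrt{\chi_n}/c, 1\right\}, \infty\right)$.
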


%%%%%%%%%%%%%%%%%%%%%%%%%%%%%%%%%%%%%%%%%
\subsection{Legendre Polynomials and PSWFs}
\label{sec_legendre}
In this subsection, we list several well known
facts about Legendre polynomials and the relationship between
Legendre polynomials and PSWFs.
All of these facts can be found, for example, in 
\cite{Ryzhik},
\cite{RokhlinXiaoProlate},
\cite{Abramovitz}.

The Legendre polynomials $P_0, P_1, P_2, \dots$ are defined via
the formulae
\begin{align}
& P_0(t) = 1, \nonumber \\
& P_1(t) = t,
\label{eq_legendre_pol_0_1}
\end{align}
and the recurrence relation
\begin{align}
\brk{k+1} P_{k+1}(t) = \brk{2k+1} t P_k(t) - k P_{k-1}(t),
\label{eq_legendre_pol_rec}
\end{align}
for all $k = 1, 2, \dots$. The even-indexed Legendre
polynomials are even functions,
and the odd-indexed Legendre polynomials are odd functions.
The Legendre polynomials $\left\{P_k\right\}_{k=0}^{\infty}$ constitute a 
complete orthogonal system in $L^2\left[-1, 1\right]$. The normalized
Legendre polynomials are defined via the formula
\begin{align}
\overline{P_k}(t) = P_k(t) \cdot \sqrt{k + 1/2}, 
\label{eq_legendre_normalized}
\end{align}
for all $k=0,1,2,\dots$. The $L^2\left[-1,1\right]$-norm of each
normalized Legendre polynomial equals to one, i.e.
\begin{align}
\int_{-1}^1 \left( \overline{P_k}(t) \right)^2 \; dt = 1.
\label{eq_legendre_normalized_norm}
\end{align}
Therefore, the normalized Legendre polynomials constitute
an orthonormal basis for $L^2\left[-1, 1\right]$. 
In particular, for every real $c>0$
and every integer $n \geq 0$, the prolate spheroidal
wave function $\psi_n$, corresponding
to the band limit $c$, can be expanded into the series
\begin{align}
\psi_n(x) = \sum_{k = 0}^{\infty} \beta_k^{(n)} \cdot \overline{P_k}(x)
          = \sum_{k = 0}^{\infty} \alpha_k^{(n)} \cdot P_k(x),
\label{eq_num_leg_exp}
\end{align}
for all $-1 \leq x \leq 1$,
where $\beta_0^{(n)}, \beta_1^{(n)}, \dots$ are defined via
the formula
\begin{align}
\beta_k^{(n)} = \int_{-1}^1 \psi_n(x) \cdot \overline{P_k}(x) \; dx,
\label{eq_num_leg_beta_knc}
\end{align}
and $\alpha_0^{(n)}, \alpha_1^{(n)}, \dots$ are defined via
the formula
\begin{align}
\alpha_k^{(n)} = \beta_k^{(n)} \cdot \sqrt{k + 1/2},
\label{eq_num_leg_alpha_knc}
\end{align}
for all $k=0, 1, 2, \dots$. Due to the combination of
Theorem~\ref{thm_pswf_main} in Section~\ref{sec_pswf} with
\eqref{eq_legendre_normalized_norm},
\eqref{eq_num_leg_exp}, 
\eqref{eq_num_leg_beta_knc},
\begin{align}
\left( \beta^{(n)}_0 \right)^2 + 
\left( \beta^{(n)}_1 \right)^2 +
\left( \beta^{(n)}_2 \right)^2 + \dots = 1.
\label{eq_beta_unit_length}
\end{align}
The sequence $\beta_0^{(n)}, \beta_1^{(n)}, \dots$ satisfies
the recurrence relation
\begin{align}
A_{0,0} \cdot \beta_0^{(n)} + 
A_{0,2} \cdot \beta_2^{(n)} & = \chi_n \cdot \beta_0^{(n)}, 
\nonumber \\
A_{1,1} \cdot \beta_1^{(n)} + 
A_{1,3} \cdot \beta_3^{(n)} & = \chi_n \cdot \beta_1^{(n)}, 
\nonumber \\
A_{k,k-2} \cdot \beta_{k-2}^{(n)} + 
A_{k,k} \cdot \beta_k^{(n)} +
A_{k,k+2} \cdot \beta_{k+2}^{(n)} & = \chi_n \cdot \beta_k^{(n)}, 
\label{eq_num_a_rec}
\end{align}
for all $k=2,3,\dots$, where $A_{k,k}$, $A_{k+2,k}$, $A_{k,k+2}$ are
defined via the formulae
\begin{align}
& A_{k, k} = k(k+1) + \frac{ 2k(k+1) - 1 }{ (2k+3)(2k-1) } \cdot c^2, 
  \nonumber \\
& A_{k, k+2} = A_{k+2, k} =
\frac{ (k+2)(k+1) }{ (2k+3) \sqrt{(2k+1)(2k+5)} } \cdot c^2,
\label{eq_num_a_matrix}
\end{align}
for all $k=0,1,2,\dots$.
In other words,
the infinite vector $\left( \beta_0^{(n)}, \beta_1^{(n)}, \dots \right)$
satisfies the identity
\begin{align}
\brk{A - \chi_n I} \cdot 
\left( \beta_0^{(n)}, \beta_1^{(n)}, \dots \right)^T = 0,
\label{eq_num_beta}
\end{align}
where $I$ is the infinite identity matrix,
and the non-zero entries of the infinite symmetric matrix $A$ are given via
\eqref{eq_num_a_matrix}. 
 
The matrix $A$ naturally splits into two infinite 
symmetric tridiagonal matrices,
$A^{even}$ and $A^{odd}$, the former consisting of the elements of $A$
with even-indexed rows and columns, and the latter consisting of
the elements of $A$ with odd-indexed rows and columns.
Moreover, 
for every pair of integers $n, k \geq 0$,
\begin{align}
\beta^{(n)}_k = 0, \quad \text{if } k+n \text{ is odd},
\label{eq_beta_zero_parity}
\end{align}
due to the combination of
Theorem~\ref{thm_pswf_main} in Section~\ref{sec_pswf} and 
\eqref{eq_num_leg_beta_knc}. In the following theorem (that appears
in \cite{RokhlinXiaoProlate} in a slightly different form), we summarize
the implications of these observations to the identity \eqref{eq_num_beta},
that lead to numerical algorithms for the evaluation of PSWFs.
\begin{thm}
Suppose that $c>0$ is a real number, and that the infinite
tridiagonal symmetric matrices $A^{even}$ and $A^{odd}$
are defined, respectively, via
\begin{align}
A^{even} =
\begin{pmatrix}
A_{0,0} & A_{0,2} &  &  &  \\
A_{2,0} & A_{2,2} & A_{2,4} &  &  \\
        & A_{4,2} & A_{4,4} & A_{4,6} & \\
        &         & \ddots  & \ddots & \ddots \\
\end{pmatrix}
\label{eq_a_even}
\end{align}
and
\begin{align}
A^{odd} =
\begin{pmatrix}
A_{1,1} & A_{1,3} &  &  &  \\
A_{3,1} & A_{3,3} & A_{3,5} &  &  \\
        & A_{5,3} & A_{5,5} & A_{5,7} & \\
        &         & \ddots  & \ddots & \ddots \\
\end{pmatrix},
\label{eq_a_odd}
\end{align}
where the entries $A_{k,j}$ are defined via \eqref{eq_num_a_matrix}. Suppose
also that the unit length
infinite vector $\beta^{(n)} \in l^2$ is defined via 
the formula
\begin{align}
\beta^{(n)} =
\begin{cases}
\left( \beta^{(n)}_0, \beta^{(n)}_2, \dots \right)^T & $n$ \text{ is even}, \\
\left( \beta^{(n)}_1, \beta^{(n)}_3, \dots \right)^T & $n$ \text{ is odd},
\end{cases}
\label{eq_beta_n}
\end{align}
where $\beta^{(n)}_0, \beta^{(n)}_1, \dots$ are defined via
\eqref{eq_num_leg_beta_knc}. If $n$ is even, then
\begin{align}
A^{even} \cdot \beta^{(n)} = \chi_n \cdot \beta^{(n)}.
\label{eq_a_even_eig}
\end{align}
If $n$ is odd, then
\begin{align}
A^{odd} \cdot \beta^{(n)} = \chi_n \cdot \beta^{(n)}.
\label{eq_a_odd_eig}
\end{align}
\label{thm_tridiagonal}
\end{thm}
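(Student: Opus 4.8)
The plan is to derive the two eigenvalue identities \eqref{eq_a_even_eig} and \eqref{eq_a_odd_eig} directly from the single relation \eqref{eq_num_beta}, using the parity property \eqref{eq_beta_zero_parity} to decouple the full infinite system into its even and odd parts. Since \eqref{eq_num_beta} already encodes the three-term recurrence \eqref{eq_num_a_rec} as $(A - \chi_n I)(\beta_0^{(n)}, \beta_1^{(n)}, \dots)^T = 0$, there is no new analysis to perform on the ODE \eqref{eq_prolate_ode}; the content of the theorem is purely the observation that, by \eqref{eq_num_a_matrix}, the matrix $A$ couples a coefficient $\beta_k^{(n)}$ only to $\beta_{k-2}^{(n)}$, $\beta_k^{(n)}$, and $\beta_{k+2}^{(n)}$, so that the even-indexed and odd-indexed coefficients never interact.

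First I would fix the parity of $n$, say $n$ even, and record from \eqref{eq_beta_zero_parity} that $\beta_k^{(n)} = 0$ for every odd $k$. Next I would split the infinite system \eqref{eq_num_beta} row by row according to the parity of the row index $k$. For odd $k$ the $k$th equation reads $A_{k,k-2}\beta_{k-2}^{(n)} + A_{k,k}\beta_k^{(n)} + A_{k,k+2}\beta_{k+2}^{(n)} = \chi_n \beta_k^{(n)}$, and since $k-2$, $k$, $k+2$ are all odd, every term vanishes and the equation is satisfied trivially; thus no information is lost by discarding the odd rows. For even $k$ the $k$th equation involves only even-indexed coefficients, so after relabeling the even index $k = 2\ell$ by its position $\ell$ in the compressed vector $\beta^{(n)} = (\beta_0^{(n)}, \beta_2^{(n)}, \dots)^T$, the surviving equations are exactly the rows of the tridiagonal matrix $A^{even}$ in \eqref{eq_a_even}. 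The boundary case $k = 0$ produces the two-term relation $A_{0,0}\beta_0^{(n)} + A_{0,2}\beta_2^{(n)} = \chi_n\beta_0^{(n)}$, matching the first row of \eqref{eq_num_a_rec} and of $A^{even}$. Collecting these gives \eqref{eq_a_even_eig}. The case $n$ odd is identical after interchanging the roles of even and odd: parity forces $\beta_k^{(n)} = 0$ for even $k$, the even rows become trivial, and the odd rows, relabeled by $k = 2\ell + 1 \mapsto \ell$, reproduce $A^{odd}$ from \eqref{eq_a_odd} and yield \eqref{eq_a_odd_eig}.

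The only point requiring care — and the nearest thing to an obstacle — is the bookkeeping of the re-indexing: one must check that restricting $A$ to its even-even (respectively odd-odd) entries preserves the tridiagonal band and produces precisely the displayed matrices \eqref{eq_a_even} and \eqref{eq_a_odd}, with the diagonal entries $A_{2\ell,2\ell}$ and off-diagonal entries $A_{2\ell,2\ell+2}$ landing in the correct positions. This is immediate from \eqref{eq_num_a_matrix}, since $A_{k,j} = 0$ whenever $|k-j| \notin \{0,2\}$, so that within a fixed parity class the matrix is genuinely tridiagonal and the compression introduces no spurious couplings. Finally, $\beta^{(n)}$ has unit length by \eqref{eq_beta_unit_length} together with the parity vanishing, which confirms that it is the normalized eigenvector claimed.
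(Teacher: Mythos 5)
Your argument is correct and is essentially the one the paper intends: the theorem is stated there without a separate proof, as a direct summary of the identity \eqref{eq_num_beta}, the parity relation \eqref{eq_beta_zero_parity}, and the observation that the entries \eqref{eq_num_a_matrix} couple only indices of equal parity, so the infinite system decouples into the compressed tridiagonal eigenproblems \eqref{eq_a_even_eig} and \eqref{eq_a_odd_eig}. Your explicit checks — that the opposite-parity rows are trivially satisfied, that the re-indexing $k = 2\ell$ (respectively $k = 2\ell+1$) preserves the tridiagonal band and reproduces \eqref{eq_a_even} and \eqref{eq_a_odd}, and that unit length follows from \eqref{eq_beta_unit_length} together with the parity vanishing — supply exactly the bookkeeping the paper leaves implicit.
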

\begin{remark}
While the matrices $A^{even}$ and $A^{odd}$ are infinite, and their entries
do not decay with increasing row or column number, the coordinates
of each eigenvector $\beta^{(n)}$ decay superexponentially fast
(see e.g. \cite{RokhlinXiaoProlate} for estimates of this decay).
In particular, suppose that we need to evaluate 
the first $n+1$ eigenvalues $\chi_0, \dots,
\chi_{n}$ and the corresponding eigenvectors 
$\beta^{(0)}, \dots, \beta^{(n)}$ numerically. Then, we can replace
the matrices $A^{even}, A^{odd}$ in \eqref{eq_a_even_eig}, 
\eqref{eq_a_odd_eig}, respectively, with their $N \times N$ upper left
square submatrices, where $N$ is of order $n$,
and solve the resulting symmetric tridiagonal
eigenproblem by any standard technique (see, for example,
\cite{Wilkinson}, \cite{Dahlquist}; see also \cite{RokhlinXiaoProlate}
for more details about this numerical algorithm).
The cost of this algorithm is $O(n^2)$ operations.
\label{rem_tridiagonal}
\end{remark}

The Legendre functions of the second kind $Q_0, Q_1, Q_2, \dots$ 
are defined via the formulae
\begin{align}
&  Q_0(t) = \frac{1}{2} \log \frac{1+t}{1-t}, \nonumber \\
& Q_1(t) = \frac{t}{2} \log \frac{1+t}{1-t} - 1,
\label{eq_legendre_fun_0_1}
\end{align}
and the recurrence relation
\begin{align}
\brk{k+1} Q_{k+1}(t) = \brk{2k+1} t Q_k(t) - k Q_{k-1}(t),
\label{eq_legendre_fun_rec}
\end{align}
for all $k = 1, 2, \dots$. 
In particular,
\begin{align}
& Q_2(t) = \frac{3t^2 - 1}{4} \log \frac{1+t}{1-t} - \frac{3}{2} t, 
\nonumber \\
& Q_3(t) = \frac{5t^3 - 3t}{4} \log \frac{1+t}{1-t} - \frac{5}{2} t^2 + 
  \frac{2}{3}.
\label{eq_legendre_fun_2_3}
\end{align}
We observe that the recurrence relation
\eqref{eq_legendre_fun_rec} is the same as the recurrence relation
\eqref{eq_legendre_pol_rec}, satisfied by the Legendre polynomials.
It follows from \eqref{eq_legendre_pol_rec}, 
\eqref{eq_legendre_fun_rec}, that both the Legendre polynomials
$P_0, P_1, \dots$ and the Legendre functions of the second kind
$Q_0, Q_1, \dots$ satisfy another
recurrence relation, namely
\begin{align}
& t^2 P_k(t) = A_{k-2} P_{k-2}(t) + B_k P_k(t) + C_{k+2} P_{k+2}(t),
\nonumber \\
& t^2 Q_k(t) = A_{k-2} Q_{k-2}(t) + B_k Q_k(t) + C_{k+2} Q_{k+2}(t),
\label{eq_num_rk_rec}
\end{align}
for all $k = 2, 3, \dots$,
where
\begin{align}
\label{eq_a_k_rec}
A_k & \; = \frac{ (k+1)(k+2) }{ (2k+3)(2k+5) }, \\
\label{eq_b_k_rec}
B_k & \; = \frac{ 2k(k+1) - 1 }{ (2k+3)(2k-1) }, \\
\label{eq_c_k_rec}
C_k & \; = \frac{ k(k-1) }{ (2k-3)(2k-1) }.
\end{align}
In addition, for every integer $k = 0, 1, 2, \dots$, the $k$th Legendre
polynomial $P_k$ and the $k$th Legendre function of the second
kind $Q_k$ are two independent solutions of the second order Legendre
differential equation
\begin{align}
(1 - t^2) \cdot y''(t) - 2t \cdot y'(t) + k(k+1) \cdot y(t) = 0.
\label{eq_legendre_ode}
\end{align}
Also, for every integer $k=0, 1, \dots$ and all 
complex $z$ such that $\arg \brk{z - 1} < \pi$,
\begin{align}
Q_k(z) = \frac{1}{2} \int_{-1}^1 \frac{ P_k(t) }{z - t} \; dt
\label{eq_qn_pn}
\end{align}
(see, for example, Section 8.82 of \cite{Ryzhik}).
\begin{remark}
For any real number $-1 < x < 1$ and integer $n \geq 0$, 
we can use the three-term recurrences
\eqref{eq_legendre_pol_rec}, \eqref{eq_legendre_fun_rec}
to evaluate numerically
$P_0(x), \dots, P_n(x)$ and
$Q_0(x), \dots, Q_n(x)$ with high precision, in
$O(n)$ operations (see, for example,
\cite{Dahlquist} for more details).
\label{rem_legendre_evaluate}
\end{remark}

%%%%%%%%%%%%%%%%%%%%%%%%%%%%%%%%%%%%%%%%%%%%%
\subsection{Elliptic Integrals}
\label{sec_elliptic}
In this subsection, we summarize several facts about
elliptic integrals. These facts can be found, for example,
in section 8.1 in \cite{Ryzhik}, and in \cite{Abramovitz}.

The incomplete elliptic integrals of the first and second kind
are defined, respectively, by the formulae
\begin{align}
\label{eq_F_y}
& F(y, k) =  \int_0^y \frac{dt}{\sqrt{1 - k^2 \sin^2 t}}, \\
& E(y, k) = \int_0^y \sqrt{1 - k^2 \sin^2 t} \; dt,
\label{eq_E_y}
\end{align}
where $0 \leq y \leq \pi/2$ and $0 \leq k \leq 1$.
By performing the substitution $x = \sin t$, we can write 
\eqref{eq_F_y} and \eqref{eq_E_y} as
\begin{align}
& F(y, k) = \int_0^{\sin(y)}
  \frac{ dx }{ \sqrt{\brk{1 - x^2} \brk{1 - k^2 x^2} } },
\label{eq_F_y_2} \\
\nonumber \\
& E(y, k) = \int_0^{\sin(y)}
\sqrt{ \frac{1 - k^2 x^2}{1 - x^2} } \; dx.
\label{eq_E_y_2}
\end{align}
The complete elliptic integrals of the first and second kind are
defined, respectively, by the formulae
\begin{align}
\label{eq_F}
& F(k) = F\brk{\frac{\pi}{2}, k} = 
\int_0^{\pi/2} \frac{dt}{\sqrt{1 - k^2 \sin^2 t}}, \\
& E(k) = E\brk{\frac{\pi}{2}, k} =
\int_0^{\pi/2} \sqrt{1 - k^2 \sin^2 t} \; dt,
\label{eq_E}
\end{align}
for all $0 \leq k \leq 1$. Moreover,
\begin{align}
E\left( \sqrt{1-k^2} \right) =
1 + \left(-\frac{1}{4}+\log(2)-\frac{\log(k)}{2}\right) \cdot k^2 +
    O\left( k^4 \cdot \log(k) \right).
\label{eq_E_exp}
\end{align}

%%%%%%%%%%%%%%%%%%%%%%%%%%%%%%%%%%%%%%%%%%%%%
\subsection{Oscillation Properties of Second Order ODEs}
\label{sec_oscillation_ode}
In this subsection, we state several well known facts from the general theory
of second order ordinary differential equations (see e.g. \cite{Miller}).

The following two theorems appear in Section 3.6 of \cite{Miller}
in a slightly different form.
%%%%%%%%%%%
\begin{thm}[distance between roots]
Suppose that $h(t)$ is a solution of the ODE
\begin{align}
y''(t) + Q(t) \cdot y(t) = 0.
\label{eq_25_09_ode}
\end{align}
Suppose also that $x < y$ are two consecutive roots of $h(t)$, and
that
\begin{align}
A^2 \leq Q(t) \leq B^2, 
\label{eq_25_09_cond}
\end{align}
for all $x \leq t \leq y$.
Then,
\begin{align}
\frac{\pi}{B} < y - x < \frac{\pi}{A}.
\label{eq_25_09_thm}
\end{align}
\label{thm_25_09}
\end{thm}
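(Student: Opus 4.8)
The plan is to prove both inequalities at once via the Prüfer transformation, which converts the second-order linear ODE into a first-order equation for an oscillation angle whose growth is controlled pointwise by $A^2$ and $B^2$.

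First I would introduce polar-type coordinates for the pair $(h, h')$: writing $h(t) = \rho(t)\sin\phi(t)$ and $h'(t) = \rho(t)\cos\phi(t)$ and substituting into \eqref{eq_25_09_ode}, one finds after a short computation that the amplitude $\rho$ decouples and the phase satisfies
\begin{align}
\phi'(t) = \cos^2\phi(t) + Q(t)\cdot\sin^2\phi(t).
\label{eq_prufer_phase_plan}
\end{align}
The key qualitative observation is that $h(t) = 0$ exactly when $\sin\phi(t) = 0$, i.e. when $\phi(t)$ is an integer multiple of $\pi$, and that at every such point \eqref{eq_prufer_phase_plan} gives $\phi'(t) = 1 > 0$. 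Hence $\phi$ is strictly increasing as it crosses each multiple of $\pi$, the roots of $h$ are isolated, and between the two consecutive roots $x < y$ the phase increases by exactly $\pi$, so that $\phi(y) - \phi(x) = \pi$.

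Next I would exploit the hypothesis \eqref{eq_25_09_cond} to bound $\phi'$ from both sides. Since $A^2 \le Q(t) \le B^2$ on $[x,y]$ and $\sin^2\phi \ge 0$, \eqref{eq_prufer_phase_plan} yields
\begin{align}
\cos^2\phi + A^2\sin^2\phi \le \phi'(t) \le \cos^2\phi + B^2\sin^2\phi.
\label{eq_prufer_bound_plan}
\end{align}
Because $\phi' > 0$ throughout (here $A>0$ is used), I can change variables from $t$ to $\phi$ on $[x,y]$ and write $y - x = \int_{\phi(x)}^{\phi(y)} d\phi/\phi'$. Feeding \eqref{eq_prufer_bound_plan} into this integral and using $\phi(y)-\phi(x)=\pi$ together with the elementary identity $\int_0^\pi d\phi/(\cos^2\phi + k^2\sin^2\phi) = \pi/k$ gives $\pi/B \le y - x \le \pi/A$.

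The main point requiring care is the strictness of the inequalities in \eqref{eq_25_09_thm}. Equality in either bound forces $Q(t)$ to coincide with the corresponding constant ($B^2$ or $A^2$) almost everywhere on $[x,y]$; ruling this out (or observing that it cannot occur under the standing hypotheses) upgrades both inequalities to strict. An alternative route that avoids the change of variables is the Sturm comparison theorem: comparing $h$ with the solutions $\sin(A(t-x))$ and $\sin(B(t-x))$ of the constant-coefficient equations $u'' + A^2 u = 0$ and $v'' + B^2 v = 0$, one sees that $h$ oscillates no faster than the former and no slower than the latter, which is exactly \eqref{eq_25_09_thm}; I would use the Prüfer computation as the primary argument and the Sturm comparison as a cross-check.
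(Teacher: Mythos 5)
Your argument is correct in substance, but there is no in-paper proof to compare it against: the paper states Theorem~\ref{thm_25_09} as a known fact, citing Section 3.6 of \cite{Miller} without proof. Your primary route --- the classical Pr\"ufer substitution $h=\rho\sin\phi$, $h'=\rho\cos\phi$, the phase equation $\phi'=\cos^2\phi+Q(t)\sin^2\phi$, the observation that the roots of $h$ are precisely the upward transversal crossings of $\phi$ through integer multiples of $\pi$ (so the phase gains exactly $\pi$ between consecutive roots), and the change of variables
\begin{align*}
y-x \;=\; \int_{\phi(x)}^{\phi(y)} \frac{d\phi}{\phi'}
\end{align*}
combined with the identity $\int_0^{\pi} d\phi/(\cos^2\phi+k^2\sin^2\phi)=\pi/k$ --- is a complete, self-contained derivation, and it is (up to sign conventions) the special case $p\equiv 1$, $\gamma\equiv 1$ of the paper's own Pr\"ufer machinery \eqref{eq_general_prufer}--\eqref{eq_general_dtheta}, the same machinery the paper deploys for its finer spacing results (Theorems~\ref{thm_prufer} and~\ref{thm_x1_x0_good}). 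The textbook proof in \cite{Miller}, which your ``cross-check'' reproduces, instead runs through Sturm comparison against $\sin(A(t-x))$ and $\sin(B(t-x))$ --- the comparison principle the paper records separately as Theorem~\ref{thm_08_12_zeros}. The Sturm route is shorter; your Pr\"ufer route buys an exact integral formula for $y-x$ and an identification of the equality cases, which the comparison argument does not give directly.

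One point should be stated plainly rather than hedged: under the non-strict hypothesis $A^2\leq Q(t)\leq B^2$ alone, the strict conclusion \eqref{eq_25_09_thm} is actually unattainable in the degenerate case --- if $Q\equiv A^2=B^2$ is constant, then $y-x=\pi/A=\pi/B$ exactly. Your equality analysis is right (equality in either bound forces $Q$ to coincide a.e.\ on $[x,y]$ with the corresponding constant), but your parenthetical claim that this ``cannot occur under the standing hypotheses'' is false as written, since constant $Q$ satisfies the hypotheses; the statement as quoted is simply slightly too strong. This costs nothing downstream: the paper only ever invokes the non-strict form (see \eqref{eq_25_09_h1}, stated with $\leq$), and in its applications $Q_n$ is strictly between the comparison constants on the open interval (see \eqref{eq_qn_upper}), so your a.e.\ equality argument does restore strictness there. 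Two small housekeeping items: note that $\rho^2=h^2+(h')^2>0$ for a nontrivial solution (by uniqueness for the ODE), so $\phi$ is globally well defined; and record explicitly that $A>0$ is assumed --- it is what makes $\phi'>0$ and legitimizes the change of variables, and for $A=0$ the upper bound is vacuous in any case.
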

%%%%%%%%%
%%%%%%%%%%%
\begin{thm}
Suppose that $a<b$ are real numbers, and that $g:(a,b) \to \Rc$
is a continuous monotone function.
Suppose also that
$y(t)$ is a solution of the ODE
\begin{align}
 y''(t) + g(t) \cdot y(t) = 0,
\label{eq_25_09_ode2}
\end{align}
in the interval $(a,b)$.
Suppose furthermore that
\begin{align}
t_1 < t_2 < t_3 < \dots
\end{align}
are consecutive roots of $y(t)$. 
If $g$ is non-decreasing, then
\begin{align}
t_2 - t_1 \geq t_3 - t_2 \geq t_4 - t_3 \geq \dots.
\label{eq_shrink}
\end{align}
If $g$ is non-increasing, then
\begin{align}
t_2-t_1 \leq t_3-t_2 \leq t_4-t_3 \leq \dots.
\label{eq_stretch}
\end{align}
\label{thm_25_09_2}
\end{thm}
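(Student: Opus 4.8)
The plan is to reduce the claim to a comparison between two \emph{adjacent} gaps and then to invoke a translation argument of Sturm type. Fix an index $k$ and set $h = t_{k+1} - t_k > 0$; it suffices to prove that $t_{k+2} - t_{k+1} \leq t_{k+1} - t_k$ when $g$ is non-decreasing (and the reverse inequality when $g$ is non-increasing), since the full chain \eqref{eq_shrink} (resp.\ \eqref{eq_stretch}) then follows by letting $k$ range over all admissible indices. First I would introduce the translate $w(t) = y(t+h)$, which on its domain satisfies
\begin{align}
w''(t) + g(t+h) \cdot w(t) = 0.
\end{align}
Since the zeros of $y$ are exactly $\dots, t_k, t_{k+1}, t_{k+2}, \dots$, the zeros of $w$ are exactly $\dots, t_k - h, t_{k+1} - h, t_{k+2} - h, \dots$; in particular $w(t_k) = y(t_{k+1}) = 0$, and the first zero of $w$ strictly to the right of $t_k$ is $\tau := t_{k+2} - h = t_k + (t_{k+2} - t_{k+1})$. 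Thus the desired inequality $t_{k+2} - t_{k+1} \leq t_{k+1} - t_k$ is exactly the assertion $\tau \leq t_{k+1}$, i.e.\ that $w$ acquires a zero no later than the next zero $t_{k+1}$ of $y$.

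Next I would establish this ``faster oscillation'' statement via a Wronskian comparison. Because $g$ is non-decreasing and $h > 0$, we have $g(t+h) \geq g(t)$ on $[t_k, t_{k+1}]$. Consider $W(t) = y(t) w'(t) - y'(t) w(t)$; differentiating and using both ODEs gives
\begin{align}
W'(t) = \brk{ g(t) - g(t+h) } \cdot y(t) \cdot w(t).
\end{align}
Assume for contradiction that $w$ has no zero on $(t_k, t_{k+1}]$. On that interval $y$ keeps a fixed sign (say $y > 0$, the other case being identical after a global sign change), so $y'(t_{k+1}) < 0$ because the roots are simple; comparing the linearizations of $y$ and $w$ at $t_k$, whose slopes are $y'(t_k)$ and $w'(t_k) = y'(t_{k+1})$ and hence of opposite sign, shows $w < 0$ throughout $(t_k, t_{k+1}]$. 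Then the three factors in $W'$ satisfy $g(t) - g(t+h) \leq 0$, $y > 0$, $w < 0$, so $W' \geq 0$ and $W$ is non-decreasing; since $W(t_k) = 0$ this forces $W(t_{k+1}) \geq 0$. On the other hand $W(t_{k+1}) = -y'(t_{k+1}) \cdot w(t_{k+1})$, which is strictly negative because $y'(t_{k+1}) < 0$ and $w(t_{k+1}) < 0$. This contradiction shows that $w$ must vanish somewhere in $(t_k, t_{k+1}]$, hence $\tau \leq t_{k+1}$, which proves the non-decreasing case.

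For the non-increasing case I would run the identical argument with $g(t+h) \leq g(t)$: the sign of $W'$ reverses, $W$ becomes non-increasing, and one concludes $\tau \geq t_{k+1}$, i.e.\ $t_{k+2} - t_{k+1} \geq t_{k+1} - t_k$, which yields \eqref{eq_stretch}. Throughout, the simplicity of the zeros of a nontrivial solution of \eqref{eq_25_09_ode2} (a consequence of uniqueness for the initial value problem, since $y(t_0) = y'(t_0) = 0$ would force $y \equiv 0$) guarantees $y'(t_j) \neq 0$ and that consecutive zeros carry slopes of opposite sign, which is exactly what makes the sign bookkeeping consistent. The main obstacle I anticipate is precisely this sign bookkeeping: one must verify that $w$ and $y$ are oppositely signed just to the right of the shared zero $t_k$, and must confirm that the translated ODE for $w$ is valid on all of $[t_k, t_{k+1}]$. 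The latter is automatic, since under the contradiction hypothesis $y$ has no zero on $[t_{k+1}, 2t_{k+1} - t_k]$ and hence $2t_{k+1} - t_k < t_{k+2} < b$.
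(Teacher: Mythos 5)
Your proof is correct. Note that the paper itself supplies no proof of this statement: it is quoted from Section 3.6 of \cite{Miller} (``the following two theorems appear \dots in a slightly different form''), so there is no in-paper argument to compare against. What you have written is the standard translation-plus-Sturm-comparison proof of the convexity of the zeros, and it is in fact the same mechanism the paper does use nearby, in the proof of Corollary~\ref{cor_08_12_zeros}, where the identity $\phi_1''\phi_2 - \phi_2''\phi_1 = (g_2-g_1)\phi_1\phi_2$ is integrated between zeros; your $W'(t) = \left(g(t)-g(t+h)\right)\cdot y(t)\cdot w(t)$ is the differential form of that identity applied to the pair $y$ and $w(\cdot)=y(\cdot+h)$. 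Two microscopic points are worth tightening, neither of which affects validity. First, in the non-increasing case the ``identical argument'' should evaluate $W$ at $\tau$, the putative first zero of $w$ in $(t_k,t_{k+1})$, rather than at $t_{k+1}$: your sign control $w<0$ is only available up to $\tau$, and there one gets $W(\tau) = y(\tau)\cdot w'(\tau) > 0$ because $w'(\tau) = y'(t_{k+2}) > 0$ under the normalization $y>0$ on $(t_k,t_{k+1})$, contradicting the monotonicity conclusion $W(\tau)\leq 0$. Second, in your closing domain remark the correct assertion is that $y$ has no zero on the half-open interval $(t_{k+1},\, 2t_{k+1}-t_k]$ (it vanishes at $t_{k+1}$ itself), which still yields $2t_{k+1}-t_k < t_{k+2} < b$ as required. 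Incidentally, your sign claim $w<0$ on $(t_k,t_{k+1}]$ follows even more directly from $w(t)=y(t+h)$ with $t+h \in (t_{k+1},t_{k+2})$, where $y<0$ by simplicity of the zeros, bypassing the linearization comparison at $t_k$ altogether.
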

%%%%%%%%%
\noindent
The following theorem is a special case
of Theorem 6.2 from Section 3.6 in \cite{Miller}:
\begin{thm}
Suppose that $g_1, g_2$ are continuous
functions, and that, for all real $t$ in the interval $\brk{a, b}$,
the inequality $g_1(t) < g_2(t)$ holds. 
Suppose also that the function $\phi_1, \phi_2$ satisfy,
for all $a < t < b$,
\begin{align}
& \phi_1''(t) + g_1(t) \cdot \phi_1(t) = 0, \nonumber \\
& \phi_2''(t) + g_2(t) \cdot \phi_2(t) = 0.
\label{eq_08_12_g12}
\end{align}
Then, $\phi_2$ has a root 
between every two consecutive roots of $\phi_1$.
\label{thm_08_12_zeros}
\end{thm}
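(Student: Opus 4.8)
The plan is to prove this classical Sturm comparison result by a Wronskian argument, reasoning by contradiction. Fix two consecutive roots $t_1 < t_2$ of $\phi_1$, and suppose, toward a contradiction, that $\phi_2$ has no root in the open interval $(t_1, t_2)$. First I would normalize the signs: replacing $\phi_1$ by $-\phi_1$ and $\phi_2$ by $-\phi_2$ if necessary, I may assume $\phi_1(t) > 0$ for all $t \in (t_1, t_2)$ (since the two roots are consecutive, $\phi_1$ is nonvanishing and hence of constant sign on the open interval) and $\phi_2(t) > 0$ for all $t \in (t_1, t_2)$ (by the contradiction hypothesis $\phi_2$ is nonvanishing and thus of constant sign there). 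I would also record that the roots of $\phi_1$ are simple: since $\phi_1$ solves a second order linear homogeneous ODE and is not identically zero, the uniqueness theorem forbids $\phi_1(t_i) = \phi_1'(t_i) = 0$, so necessarily $\phi_1'(t_1) > 0$ and $\phi_1'(t_2) < 0$.

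The central object is the Wronskian-type quantity $W(t) = \phi_1(t)\phi_2'(t) - \phi_1'(t)\phi_2(t)$. Differentiating and substituting the two equations in \eqref{eq_08_12_g12} to eliminate the second derivatives gives $W'(t) = \phi_1 \phi_2'' - \phi_1'' \phi_2 = \brk{g_1(t) - g_2(t)} \phi_1(t)\phi_2(t)$. On $(t_1,t_2)$ the factors $\phi_1$ and $\phi_2$ are positive while $g_1 - g_2 < 0$ by hypothesis, so $W'(t) < 0$ throughout the interval; hence $W$ is strictly decreasing on $(t_1, t_2)$, which forces $W(t_2) < W(t_1)$.

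Next I would compute the boundary values of $W$. Because $\phi_1(t_1) = \phi_1(t_2) = 0$, the first term of $W$ drops out at both endpoints, leaving $W(t_1) = -\phi_1'(t_1)\phi_2(t_1)$ and $W(t_2) = -\phi_1'(t_2)\phi_2(t_2)$. Using $\phi_1'(t_1) > 0$, $\phi_1'(t_2) < 0$, and $\phi_2(t_1), \phi_2(t_2) \geq 0$ (these are limits of positive values), I obtain $W(t_1) \leq 0 \leq W(t_2)$, that is, $W(t_1) \leq W(t_2)$. This contradicts the strict decrease $W(t_2) < W(t_1)$ established above. Therefore the contradiction hypothesis fails, and $\phi_2$ must vanish somewhere in $(t_1, t_2)$.

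The argument is essentially routine, and the only points requiring care — and thus the main obstacle — are the sign bookkeeping at the endpoints and the degenerate cases in which $\phi_2$ happens to vanish at $t_1$ or at $t_2$. In such a degenerate case the corresponding boundary value of $W$ is exactly $0$ rather than strictly signed, but the chain $W(t_1) \leq W(t_2)$ together with $W(t_2) < W(t_1)$ still produces a contradiction, so no separate treatment is needed. I would also emphasize that the strictness of the hypothesis $g_1 < g_2$ is precisely what makes $W'$ strictly negative and hence closes the argument; under only $g_1 \leq g_2$ one would recover the weaker conclusion that $\phi_2$ has a root in the closed interval $\left[t_1, t_2\right]$.
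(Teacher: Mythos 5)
Your proof is correct: the Wronskian $W=\phi_1\phi_2'-\phi_1'\phi_2$ satisfies $W'=(g_1-g_2)\phi_1\phi_2<0$ on $(t_1,t_2)$ after the sign normalization, while the endpoint evaluations give $W(t_1)=-\phi_1'(t_1)\phi_2(t_1)\leq 0\leq -\phi_1'(t_2)\phi_2(t_2)=W(t_2)$, contradicting strict decrease; the simplicity of the roots of a nontrivial solution, the sign bookkeeping, and the degenerate cases $\phi_2(t_1)=0$ or $\phi_2(t_2)=0$ are all handled properly. Note, however, that the paper does not prove this theorem at all: it is quoted as a special case of Theorem 6.2 from Section 3.6 of \cite{Miller}, so there is no in-paper proof to compare against. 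That said, your argument is essentially the same identity that the paper \emph{does} use in its proof of Corollary~\ref{cor_08_12_zeros}, where $\phi_1''\phi_2-\phi_2''\phi_1=(g_2-g_1)\phi_1\phi_2$ is integrated over the interval and the boundary term $\left[\phi_1'\phi_2-\phi_1\phi_2'\right]$ is evaluated at the endpoints; your version works with the pointwise derivative $W'$ and strict monotonicity instead of the integral, which is an equivalent packaging. Your closing remark is also accurate: the strict hypothesis $g_1<g_2$ is what yields a root in the open interval, and under $g_1\leq g_2$ one only gets (absent an independence assumption) a root in the closed interval.
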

\begin{cor}
Suppose that the functions $\phi_1, \phi_2$
are those of Theorem~\ref{thm_08_12_zeros} above.
Suppose also that
\begin{align}
\phi_1(t_0) = \phi_2(t_0), \quad \phi_1'(t_0) = \phi_2'(t_0),
\end{align}
for some $a < t_0 < b$.
Then, $\phi_2$ has at least as many roots in $\brk{t_0, b}$ as $\phi_1$.
\label{cor_08_12_zeros}
\end{cor}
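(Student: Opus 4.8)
The plan is to reduce everything to the Sturmian comparison result of Theorem~\ref{thm_08_12_zeros} together with a single Wronskian computation that exploits the matching initial data at $t_0$. I introduce the Wronskian $W(t) = \phi_1(t)\phi_2'(t) - \phi_1'(t)\phi_2(t)$. Differentiating and substituting the two equations in \eqref{eq_08_12_g12} yields $W'(t) = \phi_1(t)\phi_2''(t) - \phi_1''(t)\phi_2(t) = \brk{g_1(t) - g_2(t)}\phi_1(t)\phi_2(t)$, and since $g_1 < g_2$ on $(a,b)$ the factor $g_1 - g_2$ is strictly negative throughout. The hypotheses $\phi_1(t_0) = \phi_2(t_0)$ and $\phi_1'(t_0) = \phi_2'(t_0)$ give $W(t_0) = 0$. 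I also record that every root of $\phi_1$ (and of $\phi_2$) is simple: a root at which both the function and its derivative vanish would, by uniqueness for the linear second-order ODE, force the solution to be identically zero.

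Let $r_1 < r_2 < \cdots$ denote the roots of $\phi_1$ in $[t_0, b)$; it suffices to produce, for each $m$, at least $m$ distinct roots of $\phi_2$ in $[t_0, b)$ whenever $\phi_1$ has $m$ such roots. The heart of the argument is to show that $\phi_2$ vanishes somewhere in $[t_0, r_1]$. If $\phi_1(t_0) = 0$ then $r_1 = t_0$ and $\phi_2(t_0) = \phi_1(t_0) = 0$, so this is immediate. Otherwise $\phi_1$ keeps a constant sign on $[t_0, r_1)$; normalizing so that $\phi_1 > 0$ there (replacing both functions by their negatives if necessary, which changes neither the ODEs nor the hypotheses) I have $\phi_2(t_0) = \phi_1(t_0) > 0$ and $\phi_1'(r_1) < 0$ by simplicity. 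Suppose, for contradiction, that $\phi_2$ had no root in $(t_0, r_1]$; then $\phi_2 > 0$ on all of $[t_0, r_1]$. Integrating $W'$ over $[t_0, r_1]$ and using $W(t_0) = 0$ and $\phi_1(r_1) = 0$ gives
\[
-\phi_1'(r_1)\,\phi_2(r_1) = W(r_1) = \int_{t_0}^{r_1} \brk{g_1 - g_2}\,\phi_1\,\phi_2 \; dt.
\]
The left-hand side is strictly positive while the integrand on the right is strictly negative, a contradiction; hence $\phi_2$ has a root in $(t_0, r_1]$.

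With this in hand the count is immediate. By Theorem~\ref{thm_08_12_zeros}, $\phi_2$ has a root strictly between each consecutive pair $r_i, r_{i+1}$, producing one root in each of the $m-1$ open intervals $(r_i, r_{i+1})$; together with the root in $[t_0, r_1]$ these are $m$ pairwise distinct roots, all lying in $[t_0, r_m] \subset [t_0, b)$. Letting $m$ range over all root counts of $\phi_1$ in $[t_0, b)$ shows that $\phi_2$ has at least as many roots there as $\phi_1$, as claimed. The main obstacle is the first-interval Wronskian argument: the comparison theorem only controls the open intervals \emph{between} consecutive roots of $\phi_1$ and does not by itself supply the extra root needed to account for the left endpoint $t_0$. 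The matching of value and derivative at $t_0$, encoded as $W(t_0) = 0$, is exactly what converts the strict sign of $g_1 - g_2$ into that extra root, and the only real care required is in the sign normalization and in the degenerate case $\phi_1(t_0) = 0$.
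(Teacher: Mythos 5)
Your proposal is correct and follows essentially the same route as the paper's own proof: both reduce the claim to producing a root of $\phi_2$ in the first interval $[t_0, t_1]$, and both obtain it by integrating the Wronskian identity $\left(\phi_1\phi_2' - \phi_1'\phi_2\right)' = (g_1 - g_2)\,\phi_1\phi_2$ over that interval, using the matched data at $t_0$ to kill the boundary term and the strict inequality $g_1 < g_2$ to force a sign contradiction, before invoking Theorem~\ref{thm_08_12_zeros} to interlace the remaining roots. Your write-up is merely more explicit than the paper's (simplicity of roots, the degenerate case $\phi_1(t_0)=0$, and the final count), but the underlying argument is identical.
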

\begin{proof}
By Theorem~\ref{thm_08_12_zeros}, we only need to show
that if $t_1$ is the minimal root of $\phi_1$ in $\brk{t_0, b}$,
then there exists a root of $\phi_2$ in $\brk{t_0, t_1}$.
By contradiction, suppose that this is not the case. In addition, without
loss of generality, suppose that $\phi_1(t), \phi_2(t)$ are positive
in $\brk{t_0, t_1}$. Then, due to \eqref{eq_08_12_g12},
\begin{align}
\phi_1'' \phi_2 - \phi_2'' \phi_1 = \brk{g_2 - g_1} \phi_1 \phi_2,
\end{align}
and hence
\begin{align}
0 & \; <
\int_{t_0}^{t_1} \brk{g_2(s) - g_1(s)} \phi_1(s) \phi_2(s) ds 
\nonumber \\
& \; = 
\left[\phi_1'(s) \phi_2(s) - \phi_1(s) \phi_2'(s)\right]_{t_0}^{t_1} 
\nonumber \\
& \; =
\phi_1'(t_1) \phi_2(t_1) \leq 0,
\end{align}
which is a contradiction.
\end{proof}
%%%%%%%%%%%%

%%%%%%%%%%%%%%%%%%%%%%%%%%%%%%%%%%%%%%%%%%%%%
\subsection{Growth Properties of Second Order ODEs}
\label{sec_growth_ode}
The following theorem appears in \cite{Le52} in a more general form.
We provide a proof for the sake of completeness.

%%%%%%%%%%%
\begin{thm}
\label{thm_lewis}
Suppose that $a<b$ are real numbers, and that the functions
$w, u, \beta, \gamma: (a,b) \to \Cc$ are continuously differentiable.
Suppose also that, for all real $a < t < b$,
\begin{align}
\label{eq_wu_ode}
\begin{pmatrix} w'(t) \\ u'(t) \end{pmatrix} =
\begin{pmatrix} 0 & \beta(t) \\ \gamma(t) & 0 \end{pmatrix}
\begin{pmatrix} w(t) \\ u(t) \end{pmatrix},
\end{align}
and that
\begin{align}
\beta(t) \neq 0, \quad \gamma(t) \neq 0,
\end{align}
for all $a < t < b$. Suppose furthermore that the functions
$R, Q: (a,b) \to \Rc$ are defined, respectively, via the formulae
\begin{align}
R(t) = \frac{ \abrk{ \beta(t) } }{ \abrk{ \gamma(t) } }
\label{eq_r_lewis}
\end{align}
and
\begin{align}
\label{eq_wu_q}
Q(t) = \abrk{ w\brk{t} }^2 + R(t) \cdot \abrk{ u\brk{t} }^2.
\end{align}
Then, for all real $a < t_0, t < b$,
\begin{align}
& \brk{ \frac{R(t)}{R(t_0)} }^{\frac{1}{4}}
\exp\left[ -\int_{t_0}^t 
    \brk{ \brk{\frac{R'(s)}{4R(s)}}^2 + 
          \frac{ \abrk{\beta(s)}\abrk{\gamma(s)} + 
                 \Re\left(\beta(s)\gamma(s)\right) }{ 2 } }^{\frac{1}{2}} ds 
 \right]
\nonumber \\
& \leq \sqrt{ \frac{Q(t)}{Q(t_0)} } \leq \nonumber \\
& \brk{ \frac{R(t)}{R(t_0)} }^{\frac{1}{4}}
\exp\left[ \int_{t_0}^t 
    \brk{ \brk{\frac{R'(s)}{4R(s)}}^2 + 
          \frac{ \abrk{\beta(s)}\abrk{\gamma(s)} + 
                 \Re\left(\beta(s)\gamma(s)\right) }{ 2 } }^{\frac{1}{2}} ds 
\right].
\label{eq_wu_main_ineq}
\end{align}
\end{thm}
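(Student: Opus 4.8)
**

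The plan is to derive a first-order scalar ODE for the quantity $Q(t)$ defined in \eqref{eq_wu_q}, and then to bound $Q(t)/Q(t_0)$ by integrating the logarithmic derivative $Q'/Q$. The key observation is that the prefactor $(R(t)/R(t_0))^{1/4}$ appearing in \eqref{eq_wu_main_ineq} suggests working instead with the renormalized quantity
\begin{align}
\tilde{Q}(t) = \frac{Q(t)}{\sqrt{R(t)}},
\label{eq_plan_qtilde}
\end{align}
so that the claimed inequality \eqref{eq_wu_main_ineq} becomes the cleaner statement that $\log(\tilde{Q}(t)/\tilde{Q}(t_0))$ is bounded in absolute value by $2\int_{t_0}^t \Phi(s)\,ds$, where $\Phi(s)$ denotes the integrand in the exponential of \eqref{eq_wu_main_ineq}. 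First I would compute $Q'(t)$ directly using the system \eqref{eq_wu_ode}. Since $|w|^2 = w\bar{w}$ and $|u|^2 = u\bar{u}$, differentiating gives terms involving $w'\bar{w} + w\bar{w}'$ and similarly for $u$; substituting $w' = \beta u$ and $u' = \gamma w$ yields cross terms of the form $\beta u \bar{w}$, $\bar\beta \bar{u} w$, and $R'|u|^2$, $R\gamma w\bar u$, $R\bar\gamma\bar w u$.

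Next I would form $\tilde{Q}'/\tilde{Q} = Q'/Q - R'/(2R)$ and organize the resulting expression. The goal is to show that
\begin{align}
\left| \frac{\tilde{Q}'(t)}{\tilde{Q}(t)} \right| \leq 2\,\Phi(t) =
2\left( \left(\frac{R'(t)}{4R(t)}\right)^2 +
\frac{|\beta(t)||\gamma(t)| + \Re(\beta(t)\gamma(t))}{2} \right)^{1/2}.
\label{eq_plan_target}
\end{align}
The cross terms in $Q'/Q$ should combine, using the definition $R = |\beta|/|\gamma|$ so that $\sqrt{R}\,|\gamma| = \sqrt{|\beta||\gamma|}$, into a real quantity that is estimated by Cauchy–Schwarz against the two ``masses'' $|w|^2$ and $R|u|^2$ that make up $Q$. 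The appearance of $\Re(\beta\gamma)$ rather than $|\beta\gamma|$ in \eqref{eq_plan_target} is the signal that the phases of $\beta$ and $\gamma$ interact: the mixed term will carry a factor like $\Re(\beta\gamma\, w\bar u \cdot (\text{phase}))$, and bounding its modulus by the geometric mean $\sqrt{|w|^2 \cdot R|u|^2}$ produces exactly the combination $|\beta||\gamma| + \Re(\beta\gamma)$ under the square root after completing the algebra. Once \eqref{eq_plan_target} is established, integrating from $t_0$ to $t$ and exponentiating gives both inequalities in \eqref{eq_wu_main_ineq} simultaneously, since $\log(\tilde Q(t)/\tilde Q(t_0))$ lies between $\pm 2\int_{t_0}^t\Phi$, and multiplying through by the factor $(R(t)/R(t_0))^{1/2}$ converts $\tilde Q$ back to $Q$ and splits the power $1/2$ into the stated $1/4$ exponent on each side.

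The main obstacle I expect is the bookkeeping in the estimate \eqref{eq_plan_target}: one must carefully track the complex phases so that the cross terms assemble into precisely $|\beta||\gamma| + \Re(\beta\gamma)$ and not some coarser bound like $2|\beta||\gamma|$. The sharp constant comes from a completing-the-square or discriminant argument applied to a quadratic form in $|w|$ and $\sqrt{R}\,|u|$, and getting the factor of $R'/(4R)$ exactly right inside the square root requires the renormalization \eqref{eq_plan_qtilde} to have cleanly absorbed the $R'/(2R)$ contribution. Verifying that the two half-terms $R\gamma w\bar u$ and its conjugate, together with the residual from differentiating $R$, genuinely reduce to the advertised expression — rather than leaving an unwanted remainder — is the delicate computational heart of the argument; everything before and after it is routine.
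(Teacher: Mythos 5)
Your proposal is correct and follows essentially the same route as the paper: both differentiate $Q$ using \eqref{eq_wu_ode}, pass to the variables $x=w$, $y=u\sqrt{R}$, and bound the resulting logarithmic derivative by the extreme eigenvalues of a $2\times 2$ Hermitian form, with the sharp off-diagonal constant coming from the identity $\abrk{\beta+R\bar{\gamma}}^2/R = 2\left(\abrk{\beta}\abrk{\gamma}+\Re(\beta\gamma)\right)$, which uses $R=\abrk{\beta}/\abrk{\gamma}$. Your renormalization $\tilde{Q}=Q/\sqrt{R}$ merely recenters the paper's eigenvalues \eqref{eq_lambdas_lewis} at zero, and the ``completing-the-square or discriminant argument'' you flag as the delicate step is precisely the paper's eigenvalue computation for the matrix in \eqref{eq_dQ_Q}, after which integration and exponentiation yield \eqref{eq_wu_main_ineq} identically.
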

\begin{proof}
We note that, for a each fixed $t$, the formula \eqref{eq_wu_q} 
can be written in the matrix notation as
\begin{align}
Q(t) =
\begin{pmatrix} \bar{w}(t) & \bar{u}(t) \end{pmatrix}
\begin{pmatrix} 1 & 0 \\ 0 & R(t) \end{pmatrix}
\begin{pmatrix} w(t) \\ u(t) \end{pmatrix}.
\label{eq_q_lewis}
\end{align}
We differentiate $Q(t)$ with respect to $t$ to obtain, 
by using \eqref{eq_wu_ode},
\begin{align}
Q'(t) & \; = w'(t)\bar{w}(t) + w(t)\bar{w}'(t) + 
             R(t)\bar{u}(t)u'(t) + R(t)\bar{u}'(t)u(t) + R'(t)u(t)\bar{u}(t)
\nonumber \\
   & \; = \beta(t) u(t) \bar{w}(t) + \bar{\beta}(t) \bar{u}(t) w(t) +
        R(t) \gamma(t) w(t) \bar{u}(t) + R(t) \bar{\gamma}(t) \bar{w}(t) u(t) + 
          R'(t)u(t)\bar{u}(t)
\nonumber \\
   & \; = \begin{pmatrix} \bar{w}(t) & \bar{u}(t) \end{pmatrix}
          \begin{pmatrix} 0 & \beta(t) + R(t) \bar{\gamma}(t) \\
                          \bar{\beta}(t) + R(t) \gamma(t) & R'(t) \end{pmatrix}
          \begin{pmatrix} w(t) \\ u(t) \end{pmatrix}.
\label{eq_dq_lewis}
\end{align}
Then, we define the functions $x, y: (a,b) \to \Rc$ via the formulae
\begin{align}
& x(t) = w(t), \\
& y(t) = u(t) \cdot \sqrt{R(t)}.
\label{eq_xy_lewis}
\end{align}
We substitute \eqref{eq_xy_lewis} into \eqref{eq_q_lewis},
\eqref{eq_dq_lewis} to obtain
\begin{align}
\label{eq_dQ_Q}
& \frac{Q'(t)}{Q(t)} = \nonumber \\
& \begin{pmatrix} \bar{x}(t) & \bar{y}(t) \end{pmatrix}
\begin{pmatrix} 0 & \frac{\beta(t) + R(t)\bar{\gamma}(t)}{\sqrt{R(t)}} \\
                \frac{\bar{\beta}(t) + R(t)\gamma(t)}{\sqrt{R(t)}} & 
                \frac{R'(t)}{R(t)}
\end{pmatrix} \begin{pmatrix} x(t) \\ y(t) \end{pmatrix} \cdot
\frac{1}{|x(t)|^2 + |y(t)|^2}.
\end{align}
To find the eigenvalues of the matrix in \eqref{eq_dQ_Q},
we solve, for each $a < t < b$, the quadratic equation
\begin{align}
\lambda^2 - \frac{R'(t)}{R(t)} \cdot \lambda -
\left( \frac{\beta(t) + R(t) \bar{\gamma}(t)}{\sqrt{R(t)}} \right) \cdot
\left( \frac{\bar{\beta}(t) + R(t) \gamma(t)}{\sqrt{R(t)}} \right) = 0,
\label{eq_quadr_lewis}
\end{align}
in the unknown $\lambda$. Suppose that $\lambda_1(t) < \lambda_2(t)$
are the roots of \eqref{eq_quadr_lewis} for a fixed $a < t < b$.
We use \eqref{eq_r_lewis} to obtain
\begin{align}
& \lambda_1(t) = \frac{R'(t)}{2R(t)} -
\left[ \brk{\frac{R'(t)}{2R(t)}}^2 + 2\brk{\abrk{\beta(t)}\abrk{\gamma(t)} + 
        \Re\left(\beta(t) \gamma(t)\right) } \right]^{\frac{1}{2}}, \nonumber \\
& \lambda_2(t) = \frac{R'(t)}{2R(t)} +
\left[ \brk{\frac{R'(t)}{2R(t)}}^2 + 2\brk{\abrk{\beta(t)}\abrk{\gamma(t)} + 
       \Re\left(\beta(t) \gamma(t)\right)} \right]^{\frac{1}{2}}.
\label{eq_lambdas_lewis}
\end{align}
Due to \eqref{eq_dQ_Q}, for all $a < t < b$,
\begin{align}
\lambda_1(t) \leq \frac{Q'(t)}{Q(t)} \leq \lambda_2(t).
\label{eq_ineq_lewis}
\end{align}
We substitute \eqref{eq_lambdas_lewis} into \eqref{eq_ineq_lewis},
integrate it from $t_0$ to $t$ and exponentiate the result
to obtain \eqref{eq_wu_main_ineq}.
\end{proof}

%%%%%%%%%%%%%%%%%%%%%%%%%%%%%%%%%%%%%
\subsection{Pr\"ufer Transformations}
\label{sec_prufer}

In this subsection, we describe the classical Pr\"ufer
transformation of a second order ODE 
(see e.g. \cite{Miller},\cite{Fedoryuk}).
Also, we describe a modification of Pr\"ufer transformation,
introduced in \cite{Glaser} and used in the rest of the paper.

Suppose that we are given the second order ODE
\begin{align}
\frac{ d }{ dt } \brk{ p(t) u'(t) } + q(t) u(t) = 0,
\label{eq_prufer_0}
\end{align}
where $t$ varies over some interval $I$ in which $p$ and $q$
are continuously differentiable and have no roots. 
We define the function 
$\theta: I \to \Rc$ via 
\begin{align}
\frac{ p(t) u'(t) }{ u(t) } =
\gamma(t) \tan \theta(t),
\label{eq_general_prufer}
\end{align}
where $\gamma: I \to \Rc$ is an arbitrary 
positive continuously differentiable function.
The function $\theta(t)$ satisfies, for all $t$ in $I$,
\begin{align}
\theta'(t) = -\frac{\gamma(t)}{p(t)} \sin^2 \theta(t)
             -\frac{q(t)}{\gamma(t)} \cos^2 \theta(t)
             -\brk{ \frac{\gamma'(t)}{\gamma(t)}} 
              \frac{\sin\brk{2\theta(t)}}{2}.
\label{eq_general_dtheta}
\end{align}
One can observe that if $u'(\tilde{t}) = 0$ for $\tilde{t} \in I$, then 
by \eqref{eq_general_prufer} 
\begin{align}
\theta(\tilde{t}) = k \pi, \quad
k \text{ is integer}.
\label{eq_prufer_du_zero}
\end{align}
Similarly, if $u(\tilde{t}) = 0$ for $\tilde{t} \in I$, then
\begin{align}
\theta(\tilde{t}) = \brk{k + 1/2} \pi, \quad
k \text{ is integer}.
\label{eq_prufer_u_zero}
\end{align}
The choice $\gamma(t) = 1$ in \eqref{eq_general_prufer}
gives rise to the classical Pr\"ufer transformation (see e.g. 
section 4.2 in \cite{Miller}).

In \cite{Glaser}, the choice $\gamma(t) = \sqrt{q(t) p(t)}$ is
suggested and shown to be more convenient numerically in 
several applications. In this paper, this choice also leads to a
more convenient analytical tool than the classical Pr\"ufer transformation.

Writing \eqref{eq_prolate_ode} in the form of \eqref{eq_prufer_0} yields
\begin{align}
p(t) = t^2-1, \quad q(t) = c^2 t^2 - \chi_n,
\label{eq_prufer_1}
\end{align}
for all real $t > \max\left\{\sqrt{\chi_n}/c, 1\right\}$.
The equation \eqref{eq_general_prufer} admits the form
\begin{align}
\frac{ p(t) \psi_n'(t) }{ \psi_n(t) } =
\sqrt{ p(t) q(t) } \tan \theta(t),
\label{eq_modified_prufer}
\end{align}
which implies that
\begin{align}
\theta(t) = \text{atan} \brk{ \sqrt{\frac{p(t)}{q(t)}} 
                        \frac{\psi_n'(t)}{\psi_n(t)} } + \pi m(t),
\label{eq_modified_prufer_theta}
\end{align}
where $m(t)$ is an integer determined for all $t$ by an arbitrary choice
at some $t = t_0$ (the role of $\pi m(t)$ in
\eqref{eq_modified_prufer_theta} is to enforce the continuity
of $\theta$ at the roots of $\psi_n$).
The first order ODE \eqref{eq_general_dtheta} admits the form
(see \cite{Glaser}, \cite{Fedoryuk})
\begin{align}
\theta'(t) = -f(t) - \sin \brk{2 \theta(t)} v(t),
\label{eq_jan_prufer_fv}
\end{align}
where the functions $f, v$ are defined, respectively, via the formulae
\begin{align}
f(t) = \sqrt{ \frac{ q(t) }{ p(t) } }
     = \sqrt{ \frac{ c^2 t^2 - \chi_n}{ t^2 - 1 } }
\label{eq_jan_f}
\end{align}
and
\begin{align}
v(t) =
\frac{1}{4} \cdot \frac{ p(t)q'(t) + q(t)p'(t) }{ p(t) q(t) } 
= 
\frac{1}{2} \brk{ \frac{t}{t^2-1} + \frac{c^2 t}{c^2 t^2 - \chi_n} }.
\label{eq_jan_v}
\end{align}
%%%%%%%%%%%%%%%%%%%%%%%%%%%%%%%%%%%%
\begin{remark}
In this paper, the variable $t$ in \eqref{eq_modified_prufer},
\eqref{eq_modified_prufer_theta}, \eqref{eq_jan_prufer_fv}
will be confined to the open ray
\begin{align}
\left( \max\left\{1,\sqrt{\chi_n}/c\right\}, \infty \right).
\end{align}
Nevertheless, a similar analysis is possible for $t$ in the interval
\begin{align}
\left( -\min\left\{1,\sqrt{\chi_n}/c\right\}, 
\min\left\{1,\sqrt{\chi_n}/c\right\}
\right).
\end{align}
The following theorem from \cite{Report}, \cite{ReportArxiv}, 
summarizes such analysis
for the case $\chi_n > c^2$.
\label{rem_prufer}
\end{remark}
%%%%%%%%%%%
\begin{thm}
Suppose that $n \geq 2$ is a positive integer, and that
$\chi_n > c^2$. Suppose
also that $t_1,\dots,t_n$ are the roots of $\psi_n$ in $(-1,1)$,
and $s_1,\dots,s_{n-1}$ are the roots of $\psi_n'$ in $(-1,1)$
(see Theorem~\ref{thm_five} in Section~\ref{sec_pswf}).
Suppose furthermore that
the function $\theta: \left[-1,1\right] \to \Rc$ is
defined via the formula
\begin{align}
\theta(t) = 
\begin{cases}
\left(i-\frac{1}{2}\right) \cdot \pi, & \text{ if } t = t_i
\text{ for some } 1 \leq i \leq n, \\
\\
\text{atan}\left( -\sqrt{\frac{1-t^2}{\chi_n-c^2t^2}} \cdot
                        \frac{\psi_n'(t)}{\psi_n(t)} \right)
          + m(t) \cdot \pi, & \text{ if } \psi_n(t) \neq 0, \\
\end{cases}
\label{eq_prufer_theta_old}
\end{align}
where $m(t)$ is the number of the roots
of $\psi_n$ in the interval $(-1,t)$.
Then, $\theta$ has
the following properties:
\begin{itemize}
\item $\theta$ is continuously differentiable in 
the interval $\left[-1,1\right]$.
\item $\theta$ satisfies, for all $-1 < t < 1$, the
differential equation
\begin{align}
\theta'(t) = f(t) - v(t) \cdot \sin( 2 \theta(t) ),
\label{eq_prufer_theta_ode_old}
\end{align}
where the functions $f,v$ are defined, respectively, 
via \eqref{eq_jan_f}, \eqref{eq_jan_v} in
Section~\ref{sec_prufer}.
\item for each integer $0 \leq k \leq 2n$, there is a unique solution
to the equation
\begin{align}
\theta(t) = k \cdot \frac{\pi}{2},
\label{eq_prufer_sm2_equation_old}
\end{align}
for the unknown $t$ in $\left[-1,1\right]$.
More specifically,
\begin{align}
\label{eq_theta_at_xn_old}
& \theta(-1) = 0, \\
\label{eq_theta_at_t_old}
& \theta(t_i) = \left(i-\frac{1}{2}\right) \cdot \pi, \\
\label{eq_theta_at_x_old}
& \theta(s_j) = j \cdot \pi, \\
\label{eq_theta_at_1_old}
& \theta(1) = n \cdot \pi,
\end{align}
for each $i = 1, \dots, n$ and each $j = 1, \dots, n-1$. 
\item For all real $-1 < t < 1$,
\begin{align}
\theta'(t) > 0.
\label{eq_dtheta_pos_old}
\end{align}
In other words, $\theta$ is monotonically increasing.
\end{itemize}
\label{thm_prufer_old}
\end{thm}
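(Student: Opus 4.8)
The plan is to recognize $\theta$ as the modified Pr\"ufer angle of the prolate ODE \eqref{eq_prolate_ode}, specialized to the case $\chi_n>c^2$. On $(-1,1)$ the substitution \eqref{eq_prufer_1} gives $p(t)=t^2-1<0$ and $q(t)=c^2t^2-\chi_n<0$, so that $\tilde p:=1-t^2$ and $\tilde q:=\chi_n-c^2t^2$ are both strictly positive (the latter since $\chi_n>c^2\geq c^2t^2$), and \eqref{eq_prolate_ode} is in Sturm--Liouville form $(\tilde p\,\psi_n')'+\tilde q\,\psi_n=0$. The defining relation $\tan\theta=-\sqrt{\tilde p/\tilde q}\,(\psi_n'/\psi_n)$ is precisely \eqref{eq_modified_prufer} with $\gamma=\sqrt{\tilde p\tilde q}$ and an overall sign flip. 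First I would differentiate this relation, eliminate $\psi_n''$ using \eqref{eq_prolate_ode}, and collect terms: writing $\Phi:=-\sqrt{\tilde p/\tilde q}\,(\psi_n'/\psi_n)$ this produces the Riccati equation $\Phi'=f(1+\Phi^2)-2v\Phi$ with $f,v$ as in \eqref{eq_jan_f}, \eqref{eq_jan_v}; since $\theta'=\Phi'/(1+\Phi^2)$ and $2\Phi/(1+\Phi^2)=\sin 2\theta$, this collapses to $\theta'=f-v\sin 2\theta$, which is exactly \eqref{eq_prufer_theta_ode_old}. Because the right-hand side $f-v\sin 2\theta$ is smooth on all of $(-1,1)$ (there $\tilde p,\tilde q>0$ keep $f,v$ finite) and $\theta$ agrees with this smooth flow away from the roots of $\psi_n$, the piecewise-atan definition \eqref{eq_prufer_theta_old} extends across each $t_i$ to the unique $C^1$ (indeed $C^\infty$) solution; the prescribed value $\theta(t_i)=(i-\tfrac12)\pi$ is exactly the one matching the one-sided limits $\mp\pi/2$ of the atan together with the unit jump of $m(t)$. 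This yields continuity and differentiability on the open interval; near $t=\pm1$ one finds $\theta'\to+\infty$, so the stated differentiability is understood on $(-1,1)$ with continuity up to the endpoints.

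Next I would read off \eqref{eq_theta_at_xn_old}--\eqref{eq_theta_at_1_old} directly from \eqref{eq_prufer_theta_old} and the facts \eqref{eq_prufer_u_zero}, \eqref{eq_prufer_du_zero}. At a root $t_i$ of $\psi_n$ the atan argument blows up, forcing $\theta(t_i)=(i-\tfrac12)\pi$; at a root $s_j$ of $\psi_n'$ the argument vanishes, so $\theta(s_j)=m(s_j)\pi$, and the interlacing \eqref{eq_all_tx_large} of Theorem~\ref{thm_five} (available since $\chi_n>c^2$) places exactly $j$ roots of $\psi_n$ below $s_j$, whence $\theta(s_j)=j\pi$. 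At the endpoints the factor $\sqrt{1-t^2}$ kills the atan argument, and $m(-1)=0$, $m(1)=n$ give $\theta(-1)=0$, $\theta(1)=n\pi$. Granting strict monotonicity, these identities together with continuity immediately give the existence and uniqueness of the solution of $\theta(t)=k\pi/2$ for each $0\le k\le 2n$.

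The heart of the argument is the monotonicity $\theta'>0$. Via $\theta'=\Phi'/(1+\Phi^2)$ this is equivalent to positivity of the quadratic $f\Phi^2-2v\Phi+f$ in $\Phi$, whose discriminant is $4(v^2-f^2)$; note also that wherever $\theta\in\frac{\pi}{2}\mathbb Z$ one has $\sin2\theta=0$ and hence $\theta'=f>0$, so $\theta$ crosses every multiple of $\pi/2$ transversally upward. Since $\tilde q\ge\chi_n-c^2>0$ is bounded below while $\tilde p=1-t^2$ vanishes only at $t=\pm1$, one checks that $|v|<f$ on a central subinterval and $|v|>f$ only in two small symmetric neighborhoods of the endpoints. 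On the central region the quadratic has no real root and is positive for \emph{every} value of $\Phi$, so $\theta'>0$ holds automatically there, independently of the actual value of $\psi_n'/\psi_n$.

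The remaining and genuinely delicate step is $\theta'>0$ in the endpoint neighborhoods, where $|v|>f$ and the quadratic does acquire real roots $\Phi_-\le\Phi_+<0$ (for $t>0$, where $v<0$); I expect this to be the main obstacle, since here $f$ is dominated by $v$ and $\theta'>0$ hinges on $\sin2\theta$ vanishing at the right rate. By the even/odd symmetry of $\psi_n$ it suffices to treat $t\to1^-$. Here I would invoke Remark~\ref{rem_continuation}: $\psi_n$ is analytic with $\psi_n(1)\neq0$, and evaluating \eqref{eq_prolate_ode} at $t=1$ gives $\psi_n'(1)/\psi_n(1)=(\chi_n-c^2)/2$, whence $\Phi(t)\sim-\sqrt{(\chi_n-c^2)/2}\,\sqrt{1-t}\to0^-$, which lies strictly above $\Phi_+\sim-2\sqrt{(\chi_n-c^2)/2}\,\sqrt{1-t}$; thus $\theta'>0$ in a one-sided neighborhood of $1$. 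To propagate this across the whole endpoint region I would compare $D:=\Phi-\Phi_+$: one checks that the upper nullcline $\Phi_+$ is increasing there, so at any zero of $D$ one has $D'=\Phi'-\Phi_+'=-\Phi_+'<0$ (because $\Phi'=0$ on the nullcline), which forbids $D$ from re-entering $\{D>0\}$ from below; combined with $D>0$ near the endpoint this forces $\Phi>\Phi_+$, i.e.\ $\theta'>0$, throughout the region, matching the central region at the single point where $|v|=f$. Once the ODE \eqref{eq_prufer_theta_ode_old} is in hand, the other three claims are routine Pr\"ufer bookkeeping; it is this reconciliation of the blow-up of $v$ against the vanishing of $\sin2\theta$ near $\pm1$ that carries the real work.
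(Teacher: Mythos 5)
Your architecture for the first three bullets is sound and essentially the standard one: positivity of $1-t^2$ and $\chi_n-c^2t^2$ on $(-1,1)$, the Riccati/Pr\"ufer computation giving \eqref{eq_prufer_theta_ode_old}, the $C^1$ gluing across each $t_i$ (the one-sided limits $\pm\pi/2$ of the arctangent cancel against the unit jump of $m$), and the special values from the interlacing \eqref{eq_all_tx_large}. Note that the paper itself does not prove this theorem — it quotes it from \cite{Report}, \cite{ReportArxiv} — and its in-paper analogue, Theorem~\ref{thm_prufer}, omits the monotonicity claim entirely and obtains uniqueness of \eqref{eq_prufer_sm2_equation} by the counting observation that $\theta$ is a multiple of $\pi/2$ only at roots of $\psi_n$, roots of $\psi_n'$, and the endpoints; you instead make bullet three depend on bullet four, which is avoidable and, given what follows, inadvisable. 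Your observation that $\theta'\to+\infty$ at $t=\pm1$ (since $\theta(t)-n\pi\sim-\sqrt{(\chi_n-c^2)(1-t)/2}$) is correct, and your endpoint asymptotics $\Phi\sim-\sqrt{(\chi_n-c^2)/2}\,\sqrt{1-t}$ versus $\Phi_+\sim f/(2v)\sim-2\sqrt{(\chi_n-c^2)/2}\,\sqrt{1-t}$ check out, so $\theta'>0$ does hold in a one-sided neighborhood of $1$.

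The genuine gap is in the propagation step for $\theta'>0$, and it is not a detail: the three facts you dispatch with ``one checks'' are the entire quantitative content of the hard bullet. First, your barrier argument needs $\Phi=\tan\theta$ to be continuous on the band region $\{|v|>f\}$, i.e.\ that region must contain no root of $\psi_n$. If it reached past $t_n$, then just to the right of $t_n$ one has $\Phi\to-\infty<\Phi_-$, where $\theta'>0$ still holds but $D=\Phi-\Phi_+<0$; your claimed dichotomy ``$D>0$ throughout, hence $\theta'>0$'' conflates $\theta'>0$ with $\Phi>\Phi_+$, which is only one of the two components of $\{f\Phi^2-2v\Phi+f>0\}$, and the trajectory must in fact cross the level of the band somewhere — which it can only do where the band is absent. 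So you need an estimate of the form ``$1-t_n$ exceeds the width of $\{|v|>f\}$'' (the interior analogue of \eqref{eq_x1_x0_good}), and nothing in your sketch supplies it. Second, your anchoring works only for the band component touching $t=1$: the lemma ``$D'=-\Phi_+'<0$ at zeros of $D$'' forbids upward crossings, so anchoring $D>0$ at the right end controls that component, but a hypothetical interior component of $\{|v|>f\}$ has no anchor (downward crossings into the band are perfectly consistent with your lemma), so the asserted two-interval structure of $\{|v|\ge f\}$ must actually be proven, not assumed. Third, the monotonicity $\Phi_+'>0$ on the band region is a real computation involving $f'v-fv'$, not a triviality — indeed it is essentially the inequality on which a complete proof turns (e.g.\ one can show directly that at any zero of $\theta'$ one has $\theta''=(f'v-fv')/v$, so a sign for $f'v-fv'$ precludes interior zeros of $\theta'$ altogether, a cleaner route than the nullcline comparison). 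As written, your proof establishes $\theta'>0$ only on the central region $\{|v|<f\}$ and in a one-sided neighborhood of each endpoint, with the bridge between them resting on three unverified claims.
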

%%%%%%%%%%%

%%%%%%%%%%%%%%%%%%%%%%%%%%%%
\subsection{Numerical Tools}
In this subsection, we summarize several numerical techniques
to be used in this paper.

%%%%%%%%%%%%%%%%%%%%%%%%%%%%%%%
\subsubsection{Newton's Method}
\label{sec_newton}
Newton's method solves the equation $f(x) = 0$ iteratively given
an initial approximation $x_0$ of the root $\tilde{x}$.
The $n$th iteration is defined by
\begin{align}
x_n = x_{n-1} - \frac{ f(x_{n-1}) }{ f'(x_{n-1}) }.
\label{eq_newton}
\end{align}
The convergence is quadratic provided that $\tilde{x}$ is a simple
root and $x_0$ is close enough to $\tilde{x}$. More details
can be found e.g. in \cite{Dahlquist}.

%%%%%%%%%%%%%%%%%%%%%%%%%%%%%%%%%%%%%%%%%%%%%%%%%%%%%%%%%%%%%%%%%
\subsubsection{The Taylor Series Method for the Solution of ODEs}
\label{sec_taylor}
The Taylor series method for the solution of a linear second order
differential equation is based on the Taylor formula
\begin{align}
u(x+h) = \sum_{j = 0}^k \frac{ u^{(j)}(x) }{ j! } h^j + O(h^{k+1}).
\label{eq_taylor}
\end{align}
This method evaluates $u(x+h)$ and $u'(x+h)$ by using \eqref{eq_taylor}
and depends on the ability to compute $u^{(j)}(x)$ for $j = 0, \dots, k$.
When the latter satisfy a simple recurrence relation like \eqref{eq_dpsi_reck}
and hence can be computed in $O(k)$ operations, this method is particularly
useful. The reader is referred to \cite{Glaser} for further details.

%%%%%%%%%%%%%%%%%%%%%%%%%%%%%%%%%%%%%%%%%%%%%%%%%
\subsubsection{A Second Order Runge-Kutta Method}
\label{sec_runge_kutta}
We use the following second order Runge-Kutta Method, which
can be found, for example, in \cite{Dahlquist}. It solves
the initial value problem
\begin{align}
y(t_0) = y_0, \quad y'(t) = f(t, y)
\label{eq_rk_ivp}
\end{align}
on the interval $t_0 \leq t \leq t_0 + L$ by computing
\begin{align}
& t_{i+1} = t_i + h, \nonumber \\
& k_{i+1} = h f\brk{t_{i+1}, y_i + k_i}, \nonumber \\
& y_{i+1} = y_i + \brk{k_i + k_{i+1}}/2
\label{eq_runge_kutta}
\end{align}
with $i = 0, \dots, n$ and
\begin{align}
h = \frac{ L }{ n }, \quad k_0 = f(t_0, y_0).
\label{eq_rk_h_k0}
\end{align}
Exactly $n+1$ evaluations of $f$ are required for this algorithm,
which results in the total cost being $O(n)$. The global truncation
error is $O(h^2)$.

%%%%%%%%%%%%%%%%%%%%%%%%%%%%%%%%%%%%%%%%%%%%%%%
\subsubsection{Power and Inverse Power Methods}
\label{sec_power}
The methods described in this subsection are widely known and can be found,
for example, in \cite{Dahlquist}. Suppose
that $A$ is an $n \times n$
real symmetric matrix, whose eigenvalues satisfy
\begin{align}
\abrk{ \sigma_1 } > \abrk{ \sigma_2 } \geq \abrk{ \sigma_3 } \geq \dots
\geq \abrk{ \sigma_n }.
\label{eq_power_sigma}
\end{align}
The Power Method approximates $\sigma_1$ and the corresponding unit eigenvector
in the following way.
\begin{itemize}
\item Set $v_0$ to be a random vector in $\Rc^n$ such that 
$\| v_0 \| = \sqrt{v_0^T v_0} = 1$.
\item Set $j = 1$ and $\eta_0 = 0$.
\item Compute $\hat{v}_j = A v_{j-1}$.
\item Set $\eta_j = v_{j-1}^T \hat{v}_j$.
\item Set $v_j = \hat{v}_j / \| \hat{v}_j \|$.
\item If $\abrk{\eta_j - \eta_{j-1}}$ is ``sufficiently small'', stop.
\item Otherwise, set $j = j + 1$ and repeat the iteration.
\end{itemize}
The output value $\eta_j$ approximates $\sigma_1$, and $v_j$
approximates a unit eigenvector corresponding to $\sigma_1$. The cost
of each iteration is dominated by the cost of evaluating $A v_{j-1}$.
The rate of convergence of the algorithm is linear and equals
to $\abrk{\sigma_2} / \abrk{\sigma_1}$, that is, the error after $j$
iterations is of order $\brk{ \abrk{\sigma_2} / \abrk{\sigma_1} }^j$.
\begin{remark}
A modification of the algorithm used in this paper defines $\eta_j$ by
\begin{align}
i = \mbox{\text{\rm{argmax}}} 
\left\{ \abrk{v_{j-1}(k)} \; : \; k = 1, \dots, n \right\}, 
\quad
\eta_j = \frac{ \hat{v}_j(i) }{ v_{j-1}(i) }.
\end{align}
\end{remark}
The Inverse Power Method finds the eigenvalue $\sigma_k$ of $A$ and a
corresponding unit eigenvector provided that an approximation $\sigma$
of $\sigma_k$ is known such that 
\begin{align}
\abrk{ \sigma - \sigma_k } < \max \left\{
   \abrk{ \sigma - \sigma_j } \; : \; j \neq k \right\}.
\label{eq_inverse_power_sigma}
\end{align}
Conceptually, the Inverse Power Method is an application of the Power
Method on the matrix $B = \brk{A - \sigma I}^{-1}$. In practice,
$B$ need not be evaluated explicitly and it suffices to be able to solve
the linear system of equations
\begin{align}
\brk{A - \sigma I} \hat{v}_j = v_{j-1}
\label{eq_inverse_power_lin}
\end{align}
for the unknown $\hat{v}_j$ on each iteration of the algorithm. 
\begin{remark}
\label{rem_power}
If
the matrix $A$ is tridiagonal, the system \eqref{eq_inverse_power_lin}
can be solved in $O(n)$ operations, for example, by means of
Gaussian elimination or QR decomposition (see e.g
\cite{Wilkinson}, \cite{Dahlquist}).
\end{remark}

%%%%%%%%%%%%%%%%%%%%%%%%%%%%%%
\subsubsection{Sturm Sequence}
\label{sec_sturm}
The following theorem can be found, for example, in \cite{Wilkinson}
(see also \cite{Wilkinson2}).
It provides the basis for an algorithm of evaluating the $k$th smallest
eigenvalue of a symmetric tridiagonal matrix.
%%%%%%%%%%%
\begin{thm}[Sturm sequence]
Suppose that
\begin{align}
C = 
\begin{pmatrix} 
a_1 & b_2 & 0 & \cdots & \cdots & 0 \\
b_2 & a_2 & b_3 & 0 & \cdots & 0 \\
\vdots & \ddots & \ddots & \ddots & \ddots & \vdots \\
0 & \cdots & 0 & b_{n-1} & a_{n-1} & b_n \\
0 & \cdots & \cdots & 0 & b_n & a_n
\end{pmatrix}
\label{eq_sturm_c}
\end{align}
is a symmetric tridiagonal matrix such that none of $b_2, \dots, b_n$
is zero.
Then, its $n$ eigenvalues satisfy
\begin{align}
\sigma_1(C) < \dots < \sigma_n(C).
\label{eq_sturm_lambda}
\end{align}
Suppose also that
$C_k$ is the $k \times k$ leading principal submatrix
of $C$,
for every integer $k=1,\dots,n$.
We define the polynomials $p_{-1}, p_0, \dots, p_n$ via
the formulae
\begin{align}
p_{-1}(x) = 0, \quad p_0(x) = 1
\label{eq_sturm_p}
\end{align}
and
\begin{align}
p_k(x) = \text{det} \brk{C_k - x I_k},
\label{eq_sturm_pk}
\end{align}
for $k=2, \dots, n$. In other words, $p_k$ is
the characteristic polynomials of $C_k$. Then,
\begin{align}
p_k(x) = \brk{a_k - x} p_{k-1}(x) - b^2_k p_{k-2}(x), 
\label{eq_sturm_rec}
\end{align}
for every integer $k = 1, 2, \dots, n$.
Suppose furthermore, that, for any real number $\sigma$,
the integer
$A(\sigma)$ is defined to be
the number of agreements of sign
of consecutive elements of the sequence 
\begin{align}
p_0(\sigma), p_1(\sigma), \dots, p_n(\sigma),
\label{eq_sturm_p_seq}
\end{align}
where the sign of $p_k(\sigma)$ is taken to be opposite to the sign
of $p_{k-1}(\sigma)$ if $p_k(\sigma)$ is zero. Then,
the number of eigenvalues of $C$ that are strictly larger than $\sigma$
is precisely $A(\sigma)$.
\label{thm_sturm}
\end{thm}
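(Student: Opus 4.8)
The plan is to proceed in four steps: establish the three-term recurrence \eqref{eq_sturm_rec}; record two algebraic properties of the sequence $p_{-1},\dots,p_n$; determine the sign pattern of \eqref{eq_sturm_p_seq} as $\sigma\to\pm\infty$; and finally track how the agreement count $A(\sigma)$ and the number $N(\sigma)$ of eigenvalues exceeding $\sigma$ vary as $\sigma$ sweeps the real line, showing them to coincide.

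First I would derive \eqref{eq_sturm_rec} by expanding the determinant $p_k(x)=\det(C_k-xI_k)$ of \eqref{eq_sturm_pk} along its last row and column: the $(k,k)$ entry contributes $(a_k-x)p_{k-1}(x)$, the single subdiagonal entry $b_k$ contributes $-b_k^2 p_{k-2}(x)$, and tridiagonality of \eqref{eq_sturm_c} kills all remaining terms. From \eqref{eq_sturm_rec} I would extract the two facts that drive everything. (i) Consecutive polynomials share no root: if $p_{k-1}(\lambda)=p_k(\lambda)=0$, then \eqref{eq_sturm_rec} forces $b_k^2 p_{k-2}(\lambda)=0$, hence $p_{k-2}(\lambda)=0$ since $b_k\neq 0$; descending through the recurrence eventually yields $p_0(\lambda)=1=0$, a contradiction. (ii) At any root $\lambda$ of $p_k$ with $0<k<n$, evaluating \eqref{eq_sturm_rec} at index $k+1$ gives $p_{k+1}(\lambda)=-b_{k+1}^2 p_{k-1}(\lambda)$, so $p_{k-1}(\lambda)$ and $p_{k+1}(\lambda)$ are nonzero and of opposite sign. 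I would also note here that, since the $b_j$ are nonzero, the lower-left $(n-1)\times(n-1)$ block of $C-\sigma I$ is triangular with nonvanishing diagonal, so $\dim\ker(C-\sigma I)\le 1$; thus the eigenvalues are simple, giving the strict ordering \eqref{eq_sturm_lambda}.

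Next I would compute the endpoints. Each $p_k$ is the characteristic polynomial of a $k\times k$ matrix, so its leading term is $(-1)^k x^k$; hence for $\sigma\to+\infty$ the sign of $p_k(\sigma)$ is $(-1)^k$, the sequence \eqref{eq_sturm_p_seq} strictly alternates, no consecutive pair agrees, and $A(+\infty)=0=N(+\infty)$. For $\sigma\to-\infty$ every $p_k(\sigma)$ is positive, so all consecutive pairs agree and $A(-\infty)=n=N(-\infty)$. Both $A$ and $N$ are integer-valued and piecewise constant, changing only when $\sigma$ crosses a root of some $p_k$, and $N$ decreases by exactly one precisely as $\sigma$ increases through a root of $p_n$. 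It remains to show $A$ mirrors this behavior.

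The crux is the local analysis of $A$ at a root $\lambda$. If $p_k(\lambda)=0$ with $0<k<n$, then by (ii) the signs of $p_{k-1}$ and $p_{k+1}$ are fixed and opposite in a neighborhood of $\lambda$ while only $p_k$ changes sign; a short case check — which must respect the stated convention assigning $p_k(\lambda)=0$ the sign opposite to $p_{k-1}(\lambda)$ — shows the two pairs $(p_{k-1},p_k)$ and $(p_k,p_{k+1})$ contribute exactly one agreement for every $\sigma$ near $\lambda$, so $A$ is unchanged there. If instead $p_n(\lambda)=0$, there is no $p_{n+1}$ and $p_{n-1}(\lambda)\neq 0$ by (i), so only the agreement status of the single pair $(p_{n-1},p_n)$ can flip, and $A$ jumps by $\pm 1$. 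Hence $A$ changes only at the $n$ simple roots of $p_n$, by $\pm1$ at each; since $A$ must descend from $n$ at $-\infty$ to $0$ at $+\infty$, it must decrease by exactly one at each root. Therefore $A$ and $N$ start equal at $-\infty$ and undergo identical unit decreases, giving $A(\sigma)=N(\sigma)$ for all $\sigma$, which is the claim. The main obstacle is precisely this last bookkeeping: one must handle the zero-sign convention with care and verify that coincidences among roots of non-adjacent $p_k$'s leave the count intact — which is exactly what property (ii) and the opposite-sign mechanism guarantee.
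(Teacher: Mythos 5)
Your proof is correct, but note that the paper itself offers no proof of this theorem: it is quoted verbatim from the literature (\cite{Wilkinson}, see also \cite{Wilkinson2}), and only the ensuing bisection corollary is proved. So the comparison here is between your self-contained argument and the classical treatment in Wilkinson's book. The textbook route typically establishes strict Cauchy interlacing between the spectra of $C_{k-1}$ and $C_k$ (made strict by $b_k \neq 0$) and then identifies the agreement count with the eigenvalue count level by level. You instead run a global homotopy argument on $A(\sigma)$ directly: the recurrence \eqref{eq_sturm_rec} yields your facts (i) (adjacent $p$'s share no root, by descent to $p_0 = 1$) and (ii) ($p_{k+1}(\lambda) = -b_{k+1}^2\,p_{k-1}(\lambda)$ at a root $\lambda$ of $p_k$), the local case check shows the pair of pairs $\brk{p_{k-1},p_k}$, $\brk{p_k,p_{k+1}}$ always contributes exactly one agreement near an interior root — correctly independent of whether $p_k$ changes sign there, so no simplicity of the roots of the intermediate $p_k$ is needed — and the endpoint values $A(-\infty)=n$, $A(+\infty)=0$ force every jump at the $n$ simple roots of $p_n$ to be $-1$. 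This buys you a proof that never invokes interlacing and never has to compute the sign of the jump; what it costs is the bookkeeping you flag at the end, which your (i)--(ii) mechanism does handle, including simultaneous roots of non-adjacent $p_k$'s (adjacent ones are excluded by (i), so the affected pairs never overlap). Two points deserve one extra line each: your rank argument for simplicity (the lower-left $(n-1)\times(n-1)$ block of $C - \sigma I$ is triangular with diagonal $b_2,\dots,b_n$) is what guarantees $p_n$ genuinely changes sign at each of its roots, so the end pair does flip; and at $\sigma$ exactly equal to an eigenvalue $\lambda$, the stated zero-sign convention makes $\brk{p_{n-1},p_n}$ disagree at $\lambda$, which matches the right-hand limit of $A$ and hence the \emph{strict} inequality in the eigenvalue count — worth stating explicitly rather than leaving implicit in the case check.
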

%%%%%%%%%%%
\begin{cor}[Sturm bisection]
The eigenvalue $\sigma_k(C)$ of \eqref{eq_sturm_c} can be found
by means of bisection, each iteration of which costs
$O(n)$ operations.
\label{cor_sturm}
\end{cor}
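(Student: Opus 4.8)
The plan is to exploit Theorem~\ref{thm_sturm} directly, using the count $A(\sigma)$ as a monotone oracle on which to perform bisection. First I would record the qualitative behavior of $A$. By Theorem~\ref{thm_sturm}, for every real $\sigma$ the integer $A(\sigma)$ equals the number of eigenvalues of $C$ that are strictly larger than $\sigma$, that is, $A(\sigma) = \#\{i : \sigma_i(C) > \sigma\}$. Since the eigenvalues are distinct by \eqref{eq_sturm_lambda}, the function $\sigma \mapsto A(\sigma)$ is a non-increasing, integer-valued step function: it equals $n$ for all $\sigma < \sigma_1(C)$, decreases by exactly one as $\sigma$ crosses each eigenvalue, and equals $0$ for all $\sigma \geq \sigma_n(C)$. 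In particular, for the index $k$ of interest, I would observe the equivalence
\begin{align}
\sigma < \sigma_k(C) \quad \Longleftrightarrow \quad A(\sigma) \geq n - k + 1,
\label{eq_sturm_bisect_pred}
\end{align}
which furnishes a monotone predicate whose unique switching point is exactly $\sigma_k(C)$.

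Next I would set up the bisection. Using, for instance, the Gershgorin bounds $l_0 = \min_i(a_i - |b_i| - |b_{i+1}|)$ and $r_0 = \max_i(a_i + |b_i| + |b_{i+1}|)$ (with the convention that the missing off-diagonal terms vanish), which are computed once in $O(n)$ operations, one obtains an initial interval $[l_0, r_0]$ containing all eigenvalues, and in particular $\sigma_k(C)$. At each iteration I would take the midpoint $\sigma = (l+r)/2$, evaluate $A(\sigma)$, and then replace $l$ by $\sigma$ if $A(\sigma) \geq n-k+1$ (so that $\sigma < \sigma_k(C)$ by \eqref{eq_sturm_bisect_pred}) and replace $r$ by $\sigma$ otherwise. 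By \eqref{eq_sturm_bisect_pred} the interval $[l,r]$ retains $\sigma_k(C)$ throughout, while its length is halved at each step, so the procedure converges to $\sigma_k(C)$.

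It remains to verify the $O(n)$ cost per iteration. Evaluating $A(\sigma)$ amounts to computing the Sturm sequence $p_0(\sigma), \dots, p_n(\sigma)$ and counting its sign agreements. The three-term recurrence \eqref{eq_sturm_rec} produces each $p_k(\sigma)$ from $p_{k-1}(\sigma)$ and $p_{k-2}(\sigma)$ in a bounded number of arithmetic operations, so the whole sequence is assembled in $O(n)$ operations; tallying the sign agreements (with the sign convention of Theorem~\ref{thm_sturm} at zeros) is likewise $O(n)$. Hence each bisection iteration costs $O(n)$, as claimed.

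The only point requiring genuine care is not the cost but the monotonicity feeding \eqref{eq_sturm_bisect_pred}: I would therefore lean on the strict inequalities in \eqref{eq_sturm_lambda} to guarantee that $A$ decreases by exactly one at each eigenvalue, so that the switching point of the predicate is unambiguous. In a practical implementation one would additionally replace the raw values $p_k(\sigma)$ by the ratios $p_k(\sigma)/p_{k-1}(\sigma)$ to avoid overflow, but this is a numerical refinement rather than a mathematical obstacle.
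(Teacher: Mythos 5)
Your proposal is correct and follows essentially the same route as the paper: bisection on the monotone Sturm count $A(\sigma)$, with each evaluation of $A$ costing $O(n)$ via the three-term recurrence \eqref{eq_sturm_rec} and the sign-agreement tally. The only differences are cosmetic --- you make the initialization explicit via Gershgorin bounds, and your predicate $A(\sigma) \geq n-k+1$ correctly targets the $k$th smallest eigenvalue under the ordering \eqref{eq_sturm_lambda}, whereas the paper's test $A_j \geq k$ tacitly counts eigenvalues from the top.
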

\begin{proof}
We initialize the bisection by choosing $x_0 < \sigma_k(C) < y_0$.
Then we set $j = 0$ and iterate as follows.
\begin{itemize}
\item Set $z_j = (x_j + y_j) / 2$.
\item If $y_j - x_j$ is small enough, stop and return $z_j$.
\item Compute $A_j = A(z_j)$ using \eqref{eq_sturm_rec} and
\eqref{eq_sturm_p_seq}.
\item If $A_j \geq k$, set $x_{j+1} = z_j$ and $y_{j+1} = y_j$.
\item If $A_j < k$, set $x_{j+1} = x_j$ and $y_{j+1} = z_j$.
\item Increase $j$ by one and go to the first step.
\end{itemize}
In the end $\abrk{\sigma_k(C) - z_j}$ is at most $y_j - x_j$. The
cost of the algorithm is due to \eqref{eq_sturm_rec} and
the definition of $A(\sigma)$.
\end{proof}

%%%%%%%%%%%%%%%%%%%%%%%%%%%%%%%%
\subsection{Miscellaneous tools}
\label{sec_misc_tools}

In this subsection, we list some widely know theorems of real analysis.
%%%%%%%%%%%

The following theorem can be found in section 6.4 of \cite{Orszag}
in a more general form. In this theorem, we use the following 
widely used
notation. Suppose that $g, h: (0, \infty) \to \Cc$ are complex-valued functions.
The expression
\begin{align}
g(t) \sim h(t), \quad t \to \infty,
\label{eq_asymptotic}
\end{align}
means that
\begin{align}
\lim_{t \to \infty} \frac{h(t)}{g(t)} = 1.
\label{eq_asymptotic_meaning}
\end{align}
%%%%%%%%%%%%%%%%%%%%%%%%%%%
\begin{thm}[Watson's Lemma]
Suppose that $b>0$, and that
the function $f: \left[0, b\right] \to \Rc$ is twice
continuously differentiable.
Then,
\begin{align}
\label{eq_watson}
\int_0^b f(s) \cdot e^{-st} ds \sim \frac{f(0)}{t}, \quad t \to \infty,
\end{align}
in the sense of \eqref{eq_asymptotic}.
In other words,
\begin{align}
\lim_{t \to \infty} \frac{t}{f(0)} \cdot \int_0^b f(s) \cdot e^{-st} ds = 1.
\label{eq_watson_meaning}
\end{align}
\label{thm_watson_lemma}
\end{thm}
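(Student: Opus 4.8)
The plan is to isolate the contribution of $f(0)$ and to show that the remaining part of the integral decays faster than $1/t$. First I would split the integral as
\begin{align}
\int_0^b f(s) \cdot e^{-st} \, ds = f(0) \int_0^b e^{-st} \, ds + \int_0^b \brk{ f(s) - f(0) } e^{-st} \, ds,
\end{align}
and evaluate the elementary integral $\int_0^b e^{-st}\,ds = (1 - e^{-bt})/t$. Multiplying the first summand by $t/f(0)$ then produces exactly $1 - e^{-bt}$, which tends to $1$ as $t \to \infty$; this is precisely the asserted leading behavior. (Here $f(0) \neq 0$ is implicit in the statement \eqref{eq_watson_meaning}, since we divide by it.)

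The main work is to show that the second summand is $o(1/t)$, that is, that $t$ times it tends to $0$. Since $f$ is continuously differentiable on the compact interval $\left[0,b\right]$, the mean value theorem gives $\abrk{f(s) - f(0)} \leq M \cdot s$ for all $0 \leq s \leq b$, where $M = \max_{0 \leq s \leq b} \abrk{f'(s)}$. Consequently,
\begin{align}
\abrk{ t \int_0^b \brk{ f(s) - f(0) } e^{-st} \, ds } \leq t M \int_0^b s \cdot e^{-st} \, ds \leq t M \int_0^\infty s \cdot e^{-st} \, ds = \frac{M}{t},
\end{align}
which tends to $0$ as $t \to \infty$. Combining this with the computation of the leading term yields $\frac{t}{f(0)} \int_0^b f(s) e^{-st}\,ds \to 1$, which is \eqref{eq_watson_meaning}.

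I do not expect any serious obstacle here: the only point that requires a little care is the remainder estimate, and it is worth noting that the full strength of the hypothesis ($f$ twice continuously differentiable) is not actually needed for the first-order statement claimed; continuous differentiability already supplies the Lipschitz-type bound $\abrk{f(s)-f(0)} \leq M s$ that drives the argument. An alternative route would be to expand $f$ via Taylor's theorem with integral remainder and integrate term by term, which would also expose the higher-order corrections; but since only the leading asymptotic is asserted, the two-term splitting above is the shortest and cleanest path.
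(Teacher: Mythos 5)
Your proof is correct, but note that the paper does not actually prove this lemma: it simply cites Section 6.4 of \cite{Orszag}, where Watson's lemma is established in greater generality (a full asymptotic expansion in powers of $1/t$, obtained essentially by the Taylor-expansion route you mention as an alternative). Your two-term splitting
$\int_0^b f(s)e^{-st}\,ds = f(0)\int_0^b e^{-st}\,ds + \int_0^b \brk{f(s)-f(0)}e^{-st}\,ds$,
with the elementary evaluation $\int_0^b e^{-st}\,ds=(1-e^{-bt})/t$ and the Lipschitz bound $\abrk{f(s)-f(0)}\leq Ms$ feeding into $t\,M\int_0^\infty s\,e^{-st}\,ds = M/t \to 0$, is the standard self-contained argument for the leading term, and every step checks out. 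Your two side remarks are also accurate and worth keeping: the statement implicitly requires $f(0)\neq 0$ (otherwise \eqref{eq_watson_meaning} is ill-posed, though \eqref{eq_watson} in the sense of \eqref{eq_asymptotic} is what the paper uses downstream, e.g.\ in Theorem~\ref{lem_q_at_infty}), and the $C^2$ hypothesis is stronger than needed — continuous differentiability (indeed, just a Lipschitz or even one-sided differentiability condition at $0$ together with integrability) already suffices for the first-order asymptotics, the $C^2$ assumption being the natural one when quoting the two-term version from the reference. In short: what your approach buys is a short, elementary, fully self-contained proof of exactly the statement the paper needs; what the cited reference's approach buys is the higher-order corrections, which the paper never uses.
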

%%%%%%%%%
The following theorem appears, for example, in \cite{Evans}
in a more general form.
%%%%%%%%%%%
\begin{thm}
Suppose that $x_0$ is a real number, and $u : \Rc^2 \to \Rc$
is a function of two real variables $(t,x)$, defined in the shifted upper
half-plane
\begin{align}
\bar{H}_{x_0} = \left\{ (t,x) \; : \; -\infty < t < \infty, \quad
   x_0 \leq x < \infty \right\}.
\end{align}
Suppose also, that $u$ is bounded in $\bar{H}_{x_0}$ and is harmonic
in the interior of $\bar{H}_{x_0}$. Suppose furthermore,
that 
\begin{align}
\int_{-\infty}^{\infty} \abrk{ u(t,x_0) } \; dt < \infty.
\label{eq_harm_l1}
\end{align}
Then, for all real $t$ and $x > x_0$, the value $u(t,x)$ is given
by the formula
\begin{align}
u(t,x) = \frac{1}{\pi} \int_{-\infty}^{\infty}
   u(s,x_0) \cdot \frac{ x-x_0 }{ (t-s)^2 + (x-x_0)^2 } \; ds,
\end{align}
and, moreover, for all $x > x_0$,
\begin{align}
\int_{-\infty}^{\infty} u(t,x_0) \; dt = \int_{-\infty}^{\infty} u(t,x) \; dt.
\end{align}
\label{thm_harmonic}
\end{thm}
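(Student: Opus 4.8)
The plan is to establish the representation formula (the first assertion) by comparing $u$ with the Poisson integral of its boundary data, and then to obtain the equality of slice integrals (the second assertion) by an elementary application of Fubini's theorem. Throughout I set $y = x - x_0 > 0$, so that the region becomes the standard upper half-plane and the kernel $\frac{1}{\pi}\,\frac{x-x_0}{(t-s)^2+(x-x_0)^2}$ is the Poisson kernel of the half-plane. I assume, as the statement implicitly requires, that $u$ is continuous on the closed region $\bar{H}_{x_0}$, so that the boundary values $u(\cdot,x_0)$ appearing in the integral are genuine function values.

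First I would introduce the candidate
\[
v(t,x) = \frac{1}{\pi}\int_{-\infty}^{\infty} u(s,x_0)\,\frac{x-x_0}{(t-s)^2+(x-x_0)^2}\,ds
\]
and verify its basic properties. Differentiating twice under the integral sign, which is justified because the integrand and its $(t,x)$-derivatives are dominated by integrable functions of $s$ once $x>x_0$, one checks that $v$ is harmonic in the interior, since the Poisson kernel is harmonic in $(t,x)$. As the kernel is nonnegative and integrates to $1$ in $s$ for each fixed $(t,x)$ (a one-line computation, $\int_{-\infty}^{\infty} y/(\tau^2+y^2)\,d\tau=\pi$), boundedness of $u(\cdot,x_0)$ gives $\|v\|_\infty\le \sup|u(\cdot,x_0)|<\infty$, while the hypothesis $u(\cdot,x_0)\in L^1$ guarantees absolute convergence of the defining integral. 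A standard approximate-identity argument then shows that $v$ extends continuously to the boundary with $v(t,x_0)=u(t,x_0)$.

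The heart of the argument, and the step I expect to be the main obstacle, is \emph{uniqueness}: I must show that the bounded harmonic function $w=u-v$, which has vanishing boundary values on the line $x=x_0$, is identically zero. Here I would invoke the Schwarz reflection principle: since $w$ is harmonic for $x>x_0$, continuous up to $\{x=x_0\}$, and vanishes there, the odd reflection $\tilde w(t,x)=-w(t,2x_0-x)$ for $x<x_0$ defines a harmonic function on all of $\Rc^2$. Because $w$ is bounded, so is $\tilde w$, and Liouville's theorem for harmonic functions forces $\tilde w$ to be constant; vanishing on $\{x=x_0\}$, the constant is $0$. Hence $u\equiv v$, which is precisely the Poisson representation asserted in the first half of the theorem. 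The delicate points to watch are the continuity of $u$ up to the boundary, without which the reflection step does not directly apply, and the interchange of limit and integral in the approximate-identity argument.

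Finally, the second assertion follows from the representation by integrating in $t$. Since $u(\cdot,x_0)\in L^1$ and the Poisson kernel is nonnegative, Tonelli's theorem permits swapping the order of integration:
\[
\int_{-\infty}^{\infty} u(t,x)\,dt = \frac{1}{\pi}\int_{-\infty}^{\infty} u(s,x_0)\left(\int_{-\infty}^{\infty}\frac{x-x_0}{(t-s)^2+(x-x_0)^2}\,dt\right)ds.
\]
The inner integral equals $\pi$ by the same elementary computation used above, independently of $s$ and of $x$, so the right-hand side collapses to $\int_{-\infty}^{\infty}u(s,x_0)\,ds$, yielding the claimed equality of the slice integrals for every $x>x_0$.
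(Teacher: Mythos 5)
Your proof is correct, but there is nothing in the paper to compare it against: Theorem~\ref{thm_harmonic} is stated among the preliminary tools of Section~\ref{sec_misc_tools} and is not proved there at all --- the authors simply cite \cite{Evans}, where it appears in a more general form. Judged on its own merits, your argument is the standard one and it holds up. The construction of the candidate $v$ as the Poisson integral of $u(\cdot,x_0)$, the verification that $v$ is harmonic and bounded (the kernel is a probability density in $s$, so $\sup|v| \leq \sup|u(\cdot,x_0)|$), and the approximate-identity identification of the boundary values are all routine and correctly justified. The genuinely nontrivial step is exactly the one you isolate: uniqueness of bounded harmonic functions on a half-plane with prescribed continuous boundary data, which you settle by odd (Schwarz) reflection of $w = u - v$ followed by Liouville's theorem for harmonic functions on $\Rc^2$; this is precisely the argument found in the PDE literature the paper points to. The second assertion then follows from Tonelli--Fubini together with $\int_{-\infty}^{\infty} (x-x_0)\,dt / \left( (t-s)^2 + (x-x_0)^2 \right) = \pi$, as you say; strictly speaking one applies Tonelli to $|u(s,x_0)|$ times the kernel to get absolute integrability of the double integral and then Fubini to the signed integrand (which also yields, as a by-product, that $u(\cdot,x) \in L^1(\Rc)$ for every $x > x_0$), but that is clearly the reading you intend. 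Your explicit flagging of the continuity of $u$ up to the line $\{x = x_0\}$ is also right: without it the theorem as stated is false (the boundary values could be altered arbitrarily without affecting the interior), so it must be read as implicit in the hypothesis that $u$ is defined on the closed half-plane, and both the boundary identification of $v$ and the reflection step use it. No gaps.
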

%%%%%%%%%%%%%%%%%%%%%%%%%%
The following theorem is a special case of the well known 
Cauchy's integral formula (see, for example, \cite{Rudin}).
\begin{thm}
Suppose that $D \subseteq \Cc$ is an open bounded simply connected
subset of the complex plane, and that the boundary $\Gamma$
of $D$ is piecewise continuously differentiable. Suppose also
that the function $g: \Cc \to \Cc$ is holomorphic in a neighborhood
of $D$, and that none of the roots of $g$ lies on $\Gamma$.
Suppose furthermore that $z_1, z_2, \dots, z_m \in D$ are the roots of $g$
in $D$, all of which are simple, and that $z \in D$ is a complex number
such that $g(z) \neq 0$. In other words,
\begin{align}
z \in D \setminus \left\{ z_1, z_2, \dots, z_m \right\}.
\end{align}
Then,
\begin{align}
\frac{1}{g(z)} = \sum_{j=1}^m \frac{1}{g'(z_j) \cdot (z-z_j)} +
\frac{1}{2\pi i} \oint_{\Gamma} \frac{d\zeta}{g(\zeta) \cdot (\zeta-z)},
\label{eq_cauchy}
\end{align}
where $\oint_{\Gamma}$ denotes the contour integral over $\Gamma$
in the counterclockwise
direction.
\label{thm_cauchy}
\end{thm}

%%%%%%%%%%%%%%%%%%%%%%%%%%%%%%%%%%%%%%%%%%%%%
%%%%%%%%%%%%%%%%%%%%%%%%%%%%%%%%%%%%%%%%%%%%%
\section{Summary}
\label{sec_summary}
In this section, we summarize some of the properties of
prolate spheroidal wave functions (PSWFs),
proved in the rest of the paper, mainly in Section~\ref{sec_analytical}.
The PSWFs and the related notation were introduced in 
Section~\ref{sec_pswf}.
Throughout this section, the band limit $c > 0$ is
assumed to be a positive real number.

In the following proposition, we describe the location of ``special points''
(roots of $\psi_n$, roots of $\psi_n'$, turning points of the
ODE \eqref{eq_prolate_ode}), in the case $\chi_n > c^2$.
This proposition is proven in Theorem~\ref{thm_four}
and Corollary~\ref{cor_infinite} in Section~\ref{sec_first_order}
(see also Theorem~\ref{thm_five} in Section~\ref{sec_pswf}).
It is illustrated in
Figures~\ref{fig:test75a}, \ref{fig:test75b}
(see Experiment 1 in Section~\ref{sec_exp1}).
%%%%%%%%%%%%
\begin{prop}
Suppose that $n \geq 0$ is a positive integer, and that $\chi_n > c^2$. 
Suppose also that 
$x_1 < x_2 < \dots$ are the roots of $\psi_n$ in $\left(1,\infty\right)$,
and $y_1 < y_2 < \dots$ are the roots of $\psi_n'$ in
$\left(1,\infty\right)$. Then,
\begin{align}
1 < \frac{\sqrt{\chi_n}}{c} < y_1 < x_1 < y_2 < x_2 < \dots.
\label{eq_four_prop}
\end{align}
Also, $\psi_n$ has infinitely many roots in $\brk{1, \infty}$;
all of these roots are simple. 
\label{prop_four}
\end{prop}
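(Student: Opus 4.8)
The plan is to study $\psi_n$ on the ray $(1,\infty)$ by splitting it at the turning point $t^{*}=\sqrt{\chi_n}/c$, which exceeds $1$ precisely because $\chi_n>c^2$. On $(1,t^{*})$ the coefficient $\chi_n-c^2t^2$ of the ODE \eqref{eq_prolate_ode} is positive, so $\psi_n$ is non-oscillatory there; on $(t^{*},\infty)$ it is negative, i.e. in the self-adjoint form $q(t)=c^2t^2-\chi_n>0$ (cf. \eqref{eq_prufer_1}), so $\psi_n$ oscillates. First I would normalize so that $\psi_n(1)>0$, which is legitimate since $\psi_n^2(1)>1/2$ by Theorem~\ref{thm_psi1_bound} and replacing $\psi_n$ by $-\psi_n$ changes neither its roots nor those of $\psi_n'$. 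Evaluating \eqref{eq_prolate_ode} at $x=1$ then gives $\psi_n'(1)=\tfrac12(\chi_n-c^2)\psi_n(1)>0$.

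Next I would treat the non-oscillatory interval. In the self-adjoint form $\left((t^2-1)\psi_n'\right)'=(\chi_n-c^2t^2)\psi_n$, the right-hand side is positive wherever $1<t<t^{*}$ and $\psi_n>0$. Since $(t^2-1)\psi_n'$ vanishes at $t=1$ and has positive derivative as long as $\psi_n>0$, a standard bootstrap shows $(t^2-1)\psi_n'>0$, hence $\psi_n'>0$ and $\psi_n>\psi_n(1)>0$ throughout $(1,t^{*}]$, with $\psi_n'(t^{*})>0$ as well. Consequently neither $\psi_n$ nor $\psi_n'$ has a root in $(1,t^{*}]$, which already yields $t^{*}<y_1$ and $t^{*}<x_1$. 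Moreover $\psi_n>0$ on $(1,x_1)$ and $\psi_n(x_1)=0$ with $\psi_n'(x_1)<0$ (simplicity of the root), so by continuity of $\psi_n'$ and $\psi_n'(t^{*})>0$ there is a root of $\psi_n'$ between $t^{*}$ and $x_1$; thus $y_1<x_1$.

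For the oscillatory region I would pass to the normal form $u=(t^2-1)^{1/2}\psi_n$, which satisfies $u''+Q\,u=0$ on $(1,\infty)$ with $Q(t)=\dfrac{c^2t^2-\chi_n}{t^2-1}+\dfrac{1}{(t^2-1)^2}$. The key point is that both summands are positive for $t>t^{*}$, so $Q>0$ there and $Q(t)\to c^2$ as $t\to\infty$; hence $Q$ is bounded between two positive constants on any $[T,\infty)$ with $T>t^{*}$. Comparing with the constant-coefficient equation $v''+A^2v=0$ via Theorem~\ref{thm_08_12_zeros}, together with the spacing bound of Theorem~\ref{thm_25_09}, shows that $u$, and hence $\psi_n$, has infinitely many roots in $(1,\infty)$ with bounded gaps. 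Simplicity of every such root is immediate: a common zero of $\psi_n$ and $\psi_n'$ at $t_0>1$ would, by uniqueness for \eqref{eq_prolate_ode} (whose leading coefficient $1-t^2$ is nonzero there), force $\psi_n\equiv0$.

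It remains to establish the full interlacing $y_1<x_1<y_2<x_2<\cdots$. One direction is Rolle: between consecutive roots of $\psi_n$ there is a root of $\psi_n'$. For the converse I would argue directly from the self-adjoint equation: at any root $t_0>t^{*}$ of $\psi_n'$ one has $\psi_n''(t_0)=-\tfrac{q(t_0)}{p(t_0)}\psi_n(t_0)$, whose sign is opposite to that of $\psi_n(t_0)$ because $p,q>0$. Hence every critical point with $\psi_n\neq0$ is a strict extremum, and two consecutive roots $a<b$ of $\psi_n'$ enclosing no root of $\psi_n$ (say $\psi_n>0$ on $[a,b]$) would both be local maxima; then $\psi_n'<0$ just right of $a$ and $\psi_n'>0$ just left of $b$, forcing an intermediate root of $\psi_n'$, a contradiction. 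This pins down exactly one root of $\psi_n'$ between consecutive roots of $\psi_n$ and vice versa, and, combined with the signs above (a maximum at $y_1$ with $\psi_n(y_1)>0$, a downward crossing at $x_1$, a minimum at $y_2$, and so on), gives the claimed alternation. The main obstacle, and the place demanding the most care, is the turning-point region near $t^{*}$: the Pr\"ufer angle of \eqref{eq_jan_prufer_fv} loses monotonicity there because $v$ blows up while $f$ vanishes, which is why I would control the first special points through the elementary sign and convexity arguments above rather than through phase monotonicity.
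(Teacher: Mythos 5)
Your proposal is correct and takes essentially the same route as the paper's own proof: the sign analysis at $t=1$ and on $\left(1,\sqrt{\chi_n}/c\right)$, the normal form $u=\sqrt{t^2-1}\,\psi_n$ with exactly the paper's potential $Q_n$ (Theorem~\ref{lem_trans}) combined with the oscillation results of Section~\ref{sec_oscillation_ode} to produce infinitely many roots, and the sign of $\psi_n''$ at critical points to force exactly one root of $\psi_n'$ between consecutive roots of $\psi_n$ all mirror Theorem~\ref{thm_four} and Corollary~\ref{cor_infinite}. The only cosmetic differences are your bootstrap on $(t^2-1)\psi_n'$ in the non-oscillatory region, where the paper instead derives a contradiction at a hypothetical first critical point below the turning point, and your explicit ODE-uniqueness argument for simplicity of the roots, which the paper delegates to Remark~\ref{rem_continuation}.
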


%%%%%%%%%%
The following proposition summarizes the statements of
Theorems~\ref{thm_x1_x0_good}, \ref{thm_spacing}
in Section~\ref{sec_oscillation}.
It is illustrated in
Tables~\ref{t:test76},
\ref{t:test81a}, \ref{t:test81b}.
%%%%%%%%%%%%
\begin{prop}
Suppose that $n \geq 0$ is an integer, and that $\chi_n > c^2$.
Suppose also that $x_1 < x_2 < \dots$ are the roots of $\psi_n$
in $(1, \infty)$.
\begin{itemize}
\item For each integer $k = 1, 2, \dots$,
\begin{align}
\frac{ \pi }{ c }
\sqrt{ 1 - \frac{ 1 }{ 1 + c^2 \left(x_k^2 - 1\right)^2 } } \leq
x_{k+1} - x_k \leq \frac{ \pi }{ c }
   \sqrt{ \frac{x_k^2 - 1}{ x_k^2 - (\chi_n/c^2) } }.
\label{eq_25_09_h1_prop}
\end{align}
\item If, in addition, $c > 1/5$ and
\begin{align}
n > \frac{2c}{\pi} + \frac{1}{2\pi} \cdot 
\left(\log c + \log(16 \cdot e)\right),
\label{eq_jan_20_prop}
\end{align}
then
\begin{align}
x_2 - x_1 \geq 
x_3 - x_2 \geq \dots \geq x_{k+1} - x_k \geq \dots \geq
\frac{ \pi }{ c }.
\label{eq_25_09_h3_prop}
\end{align}
\item Also,
\begin{align}
x_1 - \frac{\sqrt{\chi_n}}{c} > \frac{ \pi }{ 2c }.
\label{eq_x1_x0_good_prop}
\end{align}
\item Moreover,
\begin{align}
\sqrt{ \frac{x_1^2-1}{x_1^2-(\chi_n/c^2)} } < 
\frac{2c}{\pi} \cdot \left( 
x_1 - \frac{\sqrt{\chi_n}}{c}
\right).
\label{eq_x1_x0_smart_prop}
\end{align}
\end{itemize}
\label{prop_spacing_big}
\end{prop}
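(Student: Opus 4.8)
The plan is to transfer everything to the oscillation theory of Section~\ref{sec_oscillation_ode} after reducing the prolate ODE \eqref{eq_prolate_ode} to normal form. For $t>1$ I would write \eqref{eq_prolate_ode} as $(t^2-1)\psi_n''+2t\psi_n'+(c^2t^2-\chi_n)\psi_n=0$, divide by $t^2-1$, and apply the Liouville substitution $h=\psi_n\sqrt{t^2-1}$; since $\sqrt{t^2-1}$ has no zero on $(1,\infty)$, the roots of $h$ are exactly the $x_k$, and $h$ solves $h''+\tilde Q h=0$ with
\[
\tilde{Q}(t)=\frac{c^2 t^2-\chi_n}{t^2-1}+\frac{1}{(t^2-1)^2}.
\]
Two elementary facts drive the whole argument: writing the first summand as $c^2-(\chi_n-c^2)/(t^2-1)$ shows (since $\chi_n>c^2$) that it is increasing in $t$ and bounded above by $c^2$, whereas the second summand is positive and decreasing. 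Throughout, $t_0=\sqrt{\chi_n}/c$ is the turning point, and by Proposition~\ref{prop_four} the special points obey $t_0<y_1<x_1<y_2<x_2<\dots$.

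For the first bullet I would freeze constants on each $[x_k,x_{k+1}]$: by the monotonicity just noted, $\tilde Q(t)\ge (c^2x_k^2-\chi_n)/(x_k^2-1)=:A^2$ and $\tilde Q(t)\le c^2+(x_k^2-1)^{-2}=:B^2$ there. Theorem~\ref{thm_25_09} then gives $\pi/B<x_{k+1}-x_k<\pi/A$, and a one-line simplification of $\pi/A$ and $\pi/B$ reproduces exactly the upper and lower bounds of \eqref{eq_25_09_h1_prop}.

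The turning-point estimates (third and fourth bullets) I would obtain from the modified Pr\"ufer phase of Section~\ref{sec_prufer}, and pleasantly without ever touching $t_0$ itself. On $(t_0,\infty)$ the phase $\theta$ of \eqref{eq_modified_prufer_theta} is $C^1$ and satisfies \eqref{eq_jan_prufer_fv}, $\theta'=-f-v\sin(2\theta)$, where $f$ is given by \eqref{eq_jan_f} and $v>0$ by \eqref{eq_jan_v}; note $f(t)^2=c^2-(\chi_n-c^2)/(t^2-1)$ is increasing with $f<c$. Between $y_1$ (a zero of $\psi_n'$, $\theta\equiv0\bmod\pi$) and $x_1$ (the next special point, a zero of $\psi_n$, $\theta\equiv\tfrac\pi2\bmod\pi$) the net change of $\theta$ is $-\tfrac\pi2$, and because $(y_1,x_1)$ contains no special point $\theta$ stays strictly between consecutive multiples of $\pi/2$, so $\sin(2\theta)<0$ on all of it. Integrating gives $\int_{y_1}^{x_1}f\,dt=\tfrac\pi2-\int_{y_1}^{x_1}v\sin(2\theta)\,dt>\tfrac\pi2$; then $f\ge0$ on $[t_0,y_1]$ together with monotonicity of $f$ yield $\tfrac\pi2<\int_{t_0}^{x_1}f\,dt<f(x_1)(x_1-t_0)$, i.e. $x_1-t_0>\pi/(2f(x_1))$, which is \eqref{eq_x1_x0_smart_prop}; the weaker \eqref{eq_x1_x0_good_prop} follows from $f(x_1)<c$.

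Finally, for the second bullet I would apply Theorem~\ref{thm_25_09_2} to $h''+\tilde Q h=0$: the gaps are non-increasing as soon as $\tilde Q$ is non-decreasing on a neighbourhood of $[x_1,\infty)$, and since $\tilde Q'(t)$ has the sign of $(\chi_n-c^2)(t^2-1)-2$ it suffices that $(\chi_n-c^2)(x_1^2-1)>2$. This is where the hypotheses enter. Using the sharpened bound $x_1>t_0+\pi/(2c)$ just obtained gives $x_1^2-1>\pi t_0/c>\pi/c$, so it is enough to know $\chi_n-c^2>2c/\pi$; taking $\delta=\tfrac12$ in Theorem~\ref{thm_khi_1} (admissible precisely because $c>1/5$ forces $\tfrac12<\tfrac{5\pi}{4}c$) yields this lower bound once \eqref{eq_jan_20_prop} is shown to imply the threshold $n>\tfrac{2c}{\pi}+\tfrac1{\pi^2}\log(8e\pi c)$, a routine logarithmic comparison. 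With monotonicity established, the first bullet shows both sides of \eqref{eq_25_09_h1_prop} converge to $\pi/c$, so the non-increasing sequence of gaps has infimum $\pi/c$ and hence each gap is $\ge\pi/c$, which is \eqref{eq_25_09_h3_prop}. The one genuinely delicate point is this last calibration: converting the clean inequality $(\chi_n-c^2)(x_1^2-1)>2$ into the stated hypotheses forces the sharp first-root bound \eqref{eq_x1_x0_good_prop} (the cruder $x_1>t_0$ would demand $\chi_n-c^2\gtrsim\sqrt2\,c$ and a stronger condition on $n$), so the bullets must be proved in the order $1,3,4,2$ rather than as listed.
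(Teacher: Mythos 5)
Your proposal is correct and follows essentially the same route as the paper: the substitution $\Psi_n=\psi_n\sqrt{t^2-1}$ together with Theorem~\ref{thm_25_09} for the two-sided gap bounds, the modified Pr\"ufer phase integrated between $y_1$ and $x_1$ (your sign convention from \eqref{eq_jan_prufer_fv} is merely the mirror image of the paper's Theorem~\ref{thm_prufer}) for \eqref{eq_x1_x0_good_prop} and \eqref{eq_x1_x0_smart_prop}, and the sign of $Q_n'$ combined with Theorems~\ref{thm_25_09_2} and \ref{thm_khi_1} for the monotone gaps, with the same dependency order $1,3,4,2$ that the paper uses (Theorem~\ref{thm_x1_x0_good} feeding into Theorem~\ref{thm_spacing}). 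The only deviation is your choice $\delta=\tfrac12$ in Theorem~\ref{thm_khi_1} instead of the paper's $\delta=\pi/4$, which makes \eqref{eq_jan_20_prop} coincide exactly with the hypothesis of Theorem~\ref{thm_khi_1} and needs no further comparison; your deferred ``routine logarithmic comparison'' does in fact hold, since $\log(16ec)-\tfrac{2}{\pi}\log(8e\pi c)$ is increasing in $c$ and positive at $c=\tfrac15$.
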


%%%%%%%%%%
The following proposition is an analogue of
Proposition~\ref{prop_spacing_big} in the case
$\chi_n < c^2$. Its proof can be found in
Theorem~\ref{thm_spacing_2}
in Section~\ref{sec_oscillation}.
%%%%%%%%%%%%
\begin{prop}
Suppose that 
$n \geq 0$ is an integer, and that
$\chi_n < c^2$.
Suppose also that $x_1 < x_2 < \dots$ are the roots of $\psi_n$
in $(1, \infty)$.
Then,
\begin{align}
x_2 - x_1 \leq 
x_3 - x_2 \leq \dots \leq x_{k+1} - x_k \leq \dots \leq
\frac{ \pi }{ c }.
\label{eq_25_09_h3_reverse_prop}
\end{align}
\label{prop_spacing_small}
\end{prop}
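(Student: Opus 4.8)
The plan is to reduce the prolate ODE \eqref{eq_prolate_ode} to its Liouville normal form on the ray $(1,\infty)$ and then read off the behaviour of the gaps $x_{k+1}-x_k$ directly from the monotonicity of the resulting coefficient, using the general oscillation theorems of Section~\ref{sec_oscillation_ode}. Since $\chi_n<c^2$, the turning point $\sqrt{\chi_n}/c$ lies below $1$, so for every $t>1$ the quantities $p(t)=t^2-1$ and $q(t)=c^2t^2-\chi_n$ from \eqref{eq_prufer_1} are both strictly positive and $\psi_n$ is genuinely oscillatory throughout $(1,\infty)$.

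First I would rewrite \eqref{eq_prolate_ode} in self-adjoint form $\left((t^2-1)\psi_n'(t)\right)' + \left(c^2t^2-\chi_n\right)\psi_n(t) = 0$ and substitute $w(t)=\sqrt{t^2-1}\,\psi_n(t)$ to eliminate the first-order term. A direct computation then gives $w''(t)+Q(t)\,w(t)=0$ on $(1,\infty)$, where
\begin{align}
Q(t) = c^2 + \frac{c^2-\chi_n}{t^2-1} + \frac{1}{(t^2-1)^2}.
\label{eq_spacing_small_Q}
\end{align}
Because $\sqrt{t^2-1}>0$ for $t>1$, the roots of $w$ in $(1,\infty)$ coincide with $x_1<x_2<\dots$, so it suffices to study the spacing of the roots of the normal-form solution $w$.

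The key observation, and the precise place where the hypothesis $\chi_n<c^2$ enters, is that $c^2-\chi_n>0$; hence both non-constant terms in \eqref{eq_spacing_small_Q} are positive and strictly decreasing in $t$ on $(1,\infty)$, so $Q$ is strictly decreasing, with $Q(t)>c^2$ for all $t>1$ and $Q(t)\to c^2$ as $t\to\infty$. Applying Theorem~\ref{thm_25_09_2} with the non-increasing coefficient $g=Q$ yields the monotone growth of the gaps, $x_2-x_1 \le x_3-x_2 \le \dots \le x_{k+1}-x_k \le \dots$. Applying Theorem~\ref{thm_25_09} on each interval $[x_k,x_{k+1}]$ with the lower bound $A=c$ (legitimate since $Q>c^2$ there) gives $x_{k+1}-x_k<\pi/c$; combining the two statements produces the full chain \eqref{eq_25_09_h3_reverse_prop}.

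The argument is short once the normal form is in hand, so the only real work is the transformation step: carrying out the Liouville reduction correctly and verifying \eqref{eq_spacing_small_Q}. The sign of $c^2-\chi_n$ is decisive, and it is exactly this sign that flips relative to Proposition~\ref{prop_spacing_big}, turning the non-increasing $Q$ (expanding gaps) of the present case into a non-decreasing $Q$ (contracting gaps) when $\chi_n>c^2$. I would also note in passing that, since $Q$ is bounded below by $c^2>0$ throughout $(1,\infty)$, the solution $w$, and hence $\psi_n$, has infinitely many roots accumulating only at $+\infty$, so the sequence $x_1<x_2<\dots$ is indeed infinite and the statement is non-vacuous.
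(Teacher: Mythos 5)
Your proposal is correct and follows essentially the same route as the paper: the paper's Theorem~\ref{thm_spacing_2} also passes to the normal form $\Psi_n(t)=\psi_n(t)\sqrt{t^2-1}$ of Theorem~\ref{lem_trans}, whose coefficient $Q_n(t)=\frac{c^2t^2-\chi_n}{t^2-1}+\frac{1}{(t^2-1)^2}$ is exactly your \eqref{eq_spacing_small_Q} rewritten, observes that $\chi_n<c^2$ makes $Q_n$ decreasing and bounded below by $c^2$, and invokes Theorem~\ref{thm_25_09_2} (with Theorem~\ref{thm_25_09} supplying the $\pi/c$ cap). Your explicit separation of the two applications, and the closing remark on the infinitude of the roots (Corollary~\ref{cor_infinite} in the paper), merely make explicit what the paper's terser proof leaves implicit.
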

%%%%%%%%%%
%%%%%%%%%
The following inequality is proved in
Theorem~\ref{thm_consec_der} in Section~\ref{sec_two_by_two}
and is illustrated in
Tables~\ref{t:test83a}, \ref{t:test83b}
(see Experiment 5 in Section~\ref{sec_exp5}).
%%%%%%%%%%%
\begin{prop}
Suppose that $n \geq 0$ is an integer, and that $\chi_n > c^2$. 
Suppose also that $x < y$ are two roots of $\psi_n$ in $(1, \infty)$.
Then,
\begin{align}
\label{eq_consec_der_prop}
\abrk{ \psi'_n(x) } \cdot \frac{x^2 - 1}{y^2 - 1} \leq
\abrk{ \psi'_n(y) } \leq
\abrk{ \psi'_n(x) } 
\sqrt{ \frac{ x^2 - 1 }{ c^2 x^2 - \chi_n} \cdot 
       \frac{ c^2 y^2 - \chi_n }{ y^2 - 1 } }.
\end{align}
\label{prop_consec_der}
\end{prop}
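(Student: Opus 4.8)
The plan is to read \eqref{eq_consec_der_prop} as a direct consequence of the growth estimate of Theorem~\ref{thm_lewis}, applied to the prolate ODE \eqref{eq_prolate_ode} recast as a first-order system on the ray beyond the turning point. First I would rewrite \eqref{eq_prolate_ode} in self-adjoint form $\frac{d}{dt}\left[(t^2-1)\psi_n'(t)\right] = (\chi_n - c^2 t^2)\psi_n(t)$ and introduce the variables $w(t) = \psi_n(t)$ and $u(t) = (t^2-1)\psi_n'(t)$. This puts the pair $(w,u)$ into the form \eqref{eq_wu_ode} with $\beta(t) = 1/(t^2-1)$ and $\gamma(t) = \chi_n - c^2 t^2$, both real, both continuously differentiable, and both nonvanishing on $(\sqrt{\chi_n}/c,\infty)$. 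Since $\chi_n > c^2$, Proposition~\ref{prop_four} guarantees that the two roots $x < y$ lie in this ray, so Theorem~\ref{thm_lewis} applies on any finite interval $(a,b) \supset [x,y]$ with $a = \sqrt{\chi_n}/c$. Because $\psi_n$ is real, $|w|^2 = \psi_n^2$ and $|u|^2 = (t^2-1)^2(\psi_n')^2$.

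The \emph{crucial observation}, and the step that makes the estimate collapse to an exact two-sided bound, is that on this ray $\beta(t)\gamma(t)$ is real and negative, so $|\beta(t)||\gamma(t)| + \Re(\beta(t)\gamma(t)) = 0$ identically. Hence the integrand in the exponentials of \eqref{eq_wu_main_ineq} reduces to $\left|R'(s)/(4R(s))\right|$, where $R(t) = |\beta(t)|/|\gamma(t)| = 1/\left[(t^2-1)(c^2t^2-\chi_n)\right]$. The denominator is a product of two positive increasing functions on $(\sqrt{\chi_n}/c,\infty)$, so $R$ is monotone decreasing there; therefore $R'/R$ has constant sign and $\int_x^y \left|R'/(4R)\right|\,ds$ telescopes to $\frac14 \log(R(x)/R(y))$. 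Substituting back, the two exponential factors combine with the prefactor $(R(y)/R(x))^{1/4}$ to yield the clean bound $(R(y)/R(x))^{1/2} \le \sqrt{Q(y)/Q(x)} \le 1$, where $Q(t) = \psi_n^2(t) + \frac{t^2-1}{c^2t^2-\chi_n}(\psi_n'(t))^2$.

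Finally I would evaluate $Q$ at the roots. Since $\psi_n(x)=\psi_n(y)=0$, we get $Q(x) = \frac{x^2-1}{c^2x^2-\chi_n}(\psi_n'(x))^2$ and likewise at $y$. The upper bound $Q(y)\le Q(x)$ then reads $\frac{y^2-1}{c^2y^2-\chi_n}(\psi_n'(y))^2 \le \frac{x^2-1}{c^2x^2-\chi_n}(\psi_n'(x))^2$, which on taking square roots is exactly the right-hand inequality of \eqref{eq_consec_der_prop}. The lower bound $Q(y)/Q(x) \ge R(y)/R(x)$, after cancelling the common factor $(c^2x^2-\chi_n)$ and multiplying through by $(c^2y^2-\chi_n)$, simplifies to $(\psi_n'(y))^2 \ge (\psi_n'(x))^2 (x^2-1)^2/(y^2-1)^2$, i.e. the left-hand inequality. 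The main obstacle here is conceptual rather than computational: choosing the auxiliary variable $u = (t^2-1)\psi_n'$ so that the cancellation $|\beta||\gamma| + \Re(\beta\gamma) = 0$ takes place, which is precisely the algebraic signature of working past the turning point $\sqrt{\chi_n}/c$. Once that is in place, the remaining work is only the monotonicity of $R$ and routine simplification.
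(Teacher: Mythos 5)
Your proposal is correct, and it reaches the result by a genuinely different route than the paper. The paper proves Theorem~\ref{thm_consec_der} by first establishing Theorem~\ref{lem_Q_Q_tilde} through direct differentiation: it computes $Q'$ and $\tilde{Q}'$ for the functions \eqref{eq_psi1_q}, \eqref{eq_psi1_qtilde}, uses the ODE \eqref{eq_prolate_ode} to eliminate $\psi_n''$, and obtains the explicit formulas $Q'(t) = \frac{2t}{(\chi_n-c^2t^2)^2}(\chi_n+c^2-2c^2t^2)(\psi_n'(t))^2$ and $\tilde{Q}'(t) = 2t(2c^2t^2-\chi_n-c^2)\psi_n^2(t)$, whose signs past the turning point give monotonicity of $Q$ (decreasing) and $\tilde{Q}$ (increasing); evaluation at the roots then yields \eqref{eq_consec_der_prop} exactly as in your final step. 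You instead derive the same two facts as a formal consequence of the general growth estimate of Theorem~\ref{thm_lewis}: with your choice $w=\psi_n$, $u=(t^2-1)\psi_n'$ one has $R(t) = 1/\bigl[(t^2-1)(c^2t^2-\chi_n)\bigr]$, so that $\tilde{Q} = p\,q\,Q = Q/R$, and your two-sided bound $(R(y)/R(x))^{1/2} \leq \sqrt{Q(y)/Q(x)} \leq 1$ is precisely the statement that $Q$ decreases and $\tilde{Q}$ increases between $x$ and $y$ — your correctly identified cancellation $|\beta||\gamma| + \Re(\beta\gamma) = 0$ (real $\beta\gamma < 0$ past the turning point) collapses the Lewis eigenvalues to $R'/R \leq Q'/Q \leq 0$, which is an exact repackaging of the paper's derivative computations. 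Notably, the paper itself applies Theorem~\ref{thm_lewis} to this very system, but only for complex arguments in Theorem~\ref{lem_q_sharp}, where the cancellation fails off the real axis and produces the correction factor $b_c$; on the real ray your observation makes the bound lossless. What each approach buys: the paper's bespoke computation yields strict pointwise monotonicity with explicit derivative formulas (reused elsewhere, e.g.\ in Theorem~\ref{lem_6_10} and the weight analysis), while your argument is more systematic, shows the proposition holds between \emph{any} two points of $\bigl(\sqrt{\chi_n}/c,\infty\bigr)$ with the roots entering only to kill the $\psi_n^2$ term in $Q$, and unifies the real and complex cases under a single lemma.
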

%%%%%%%%%%

The following proposition summarizes Theorem~\ref{thm_sharp_simple}
in Section~\ref{sec_upper}.
%%%%%%%%%%%%%%
\begin{prop}
Suppose that $n\geq 0$ is an integer, and that
$\chi_n > c^2$. Suppose also that $x$ is 
a root of $\psi_n$ in $(1,\infty)$. Then,
\begin{align}
\frac{1}{\abrk{\psi_n'(x)}} \leq 
e^{1/4} \cdot \abrk{\lambda_n} \cdot 
\frac{ (x^2-1)^{\frac{3}{4}} }{ (x^2-(\chi_n/c^2))^{\frac{1}{4}} }.
\label{eq_sharp_simple_prop}
\end{align}
\label{prop_sharp_simple}
\end{prop}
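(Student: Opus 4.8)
The plan is to study $\psi_n$ along the vertical ray $z = x+it$, $t \geq 0$, emanating from the root $x$, and to squeeze the ``energy''
\begin{align}
Q(x+it) = \abrk{\psi_n(x+it)}^2 +
\frac{\abrk{(x+it)^2-1}}{\abrk{c^2(x+it)^2-\chi_n}} \cdot
\abrk{\psi_n'(x+it)}^2
\nonumber
\end{align}
between a sharp asymptotic at $t=\infty$ and a sharp upper bound issuing from $t=0$. This $Q$ is the complex analogue of the Pr\"ufer quantity of Theorem~\ref{thm_Q_Q_tilde}. By Proposition~\ref{prop_four} the root satisfies $x > \sqrt{\chi_n}/c > 1$, so $c^2x^2-\chi_n > 0$; since $\psi_n(x)=0$, the starting value is $Q(x) = \frac{x^2-1}{c^2x^2-\chi_n}\cdot\abrk{\psi_n'(x)}^2$, and pinning down $Q$ at both ends of the ray therefore pins down $\abrk{\psi_n'(x)}$.

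First I would establish the asymptotic \eqref{eq_pf_watson}. Writing \eqref{eq_prolate_integral} as $\lambda_n\psi_n(x+it) = \int_{-1}^1 \psi_n(s)\,e^{icxs}\,e^{-cts}\,ds$, the factor $e^{-cts}$ concentrates the mass near the endpoint $s=-1$ as $t\to\infty$; Watson's Lemma (Theorem~\ref{thm_watson_lemma}) then yields $\abrk{\psi_n(x+it)} \sim e^{ct}\abrk{\psi_n(1)}/(ct\,\abrk{\lambda_n})$, using $\abrk{\psi_n(-1)} = \abrk{\psi_n(1)}$. Differentiating the integral under the sign gives $\abrk{\psi_n'(x+it)} \sim c\,\abrk{\psi_n(x+it)}$, and since $\abrk{(x+it)^2-1}/\abrk{c^2(x+it)^2-\chi_n} \to c^{-2}$, one finds $\sqrt{Q(x+it)} \sim \sqrt{2}\,\abrk{\psi_n(x+it)}$, which is precisely \eqref{eq_pf_watson}.

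Next I would prove the upper bound \eqref{eq_pf_der_ineq}. Putting the ODE \eqref{eq_prolate_ode} in self-adjoint form $\frac{d}{dz}\brk{p\psi_n'}+q\psi_n=0$ with $p=z^2-1$, $q=c^2z^2-\chi_n$ (see \eqref{eq_prufer_1}), and setting $w=\psi_n$, $u=p\psi_n'$, the substitution $z=x+it$ (so $dz = i\,dt$) casts the system in the Lewis form \eqref{eq_wu_ode} with $\beta = i/p$, $\gamma = -iq$, hence $R = (\abrk{p}\,\abrk{q})^{-1}$ and energy $Q$ as above. Applying inequality \eqref{eq_wu_main_ineq} of Theorem~\ref{thm_lewis} from $t_0=0$ to $t=T$, the exponent integrand becomes $\sqrt{(R'/4R)^2 + (\abrk{q/p}+\Re(q/p))/2}$; since $q/p \to c^2$, its leading part is $c$, producing the $e^{cT}$ factor. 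The remaining task is to control $\int_0^\infty[\,\text{integrand}-c\,]\,dt$ together with the boundary prefactor $(R(x+iT)/R(x))^{1/4}$, which is exactly the content of the auxiliary Theorems~\ref{lem_harmonic},~\ref{lem_drr},~\ref{lem_big_int},~\ref{lem_q_sharp},~\ref{thm_bc_big}; these yield the constant $e^{1/4}$ and the algebraic factor $(x^2-1)^{3/4}/(x^2-(\chi_n/c^2))^{1/4}$, giving \eqref{eq_pf_der_ineq}.

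Finally, both \eqref{eq_pf_watson} and \eqref{eq_pf_der_ineq} share the common factor $e^{cT}/(cT)$, so dividing and letting $T\to\infty$ forces the asymptotic coefficient to be dominated by the upper bound, i.e.
\begin{align}
\frac{\sqrt{2}\cdot\abrk{\psi_n(1)}}{\abrk{\lambda_n}\cdot\abrk{\psi_n'(x)}}
\leq
e^{1/4}\cdot\frac{(x^2-1)^{3/4}}{(x^2-(\chi_n/c^2))^{1/4}}.
\nonumber
\end{align}
Invoking Theorem~\ref{thm_psi1_bound}, which gives $\psi_n^2(1) > 1/2$ and hence $\sqrt{2}\,\abrk{\psi_n(1)} > 1$, and then multiplying through by $\abrk{\lambda_n}$, yields \eqref{eq_sharp_simple_prop}. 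The hard part is the upper bound of the third step above, namely the Lewis estimate: extracting the clean constant $e^{1/4}$ demands sharp control of the deviation of the integrand from $c$ and of the boundary prefactor $(R(x+iT)/R(x))^{1/4}$, which is precisely where the subsidiary harmonicity and monotonicity estimates do the real work; by contrast the Watson asymptotic and the final limiting argument are comparatively routine.
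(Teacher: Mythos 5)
Your proposal is correct and follows essentially the same route as the paper: the paper proves this (as Theorem~\ref{thm_sharp_simple}) by comparing the Watson-lemma asymptotic of $\sqrt{Q(t)}$ as $t\to\infty$ (Theorem~\ref{lem_q_at_infty}) with the Lewis-type upper bound along the vertical ray (Theorems~\ref{lem_harmonic}, \ref{lem_drr}, \ref{lem_big_int}, \ref{lem_q_sharp}), bounding the correction factor $b_c\left(x,\sqrt{\chi_n}/c\right)$ by $e^{1/4}$ via Theorem~\ref{thm_bc_big} and absorbing $\sqrt{2}\,\abrk{\psi_n(1)}>1$ via Theorem~\ref{thm_psi1_bound} — exactly the chain you describe, with your $\beta=i/p$, $\gamma=-iq$ formulation equivalent to the paper's \eqref{eq_q_sharp_5}. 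Your identification of the Lewis estimate (the exponent's deviation from $c$ and the prefactor $\left(R(t)/R(0)\right)^{1/4}$) as the locus of the real work matches where the paper's effort lies.
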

%%%%%%%%%%
%%%%%%%%%%
The following two estimates are proven,
in a more precise form, in Theorem~\ref{lem_psi_for_large_x}
in Section~\ref{sec_tail}.
They describe the behavior of $\psi_n(x)$  for
$x > 1$ and are meaningful only when $x$ is large compared to
$\abrk{\lambda_n}^{-1}$.
%%%%%%%%%%%
\begin{prop} 
Suppose that $n \geq 0$ is a non-negative integer, and
that $x > 1$ is a real number. 
If $n$ is even, then
\begin{align}
\psi_n(x) = \frac{2 \psi_n(1)}{cx\lambda_n}
\left[\sin(cx) + O\brk{\frac{1}{x \abrk{\lambda_n} \psi_n(1) }} \right].
\label{eq_2_10_even1_prop}
\end{align}
If $n$ is odd, then
\begin{align}
\psi_n(x) = \frac{2 \psi_n(1)}{icx\lambda_n}
\left[\cos(cx) + O\brk{\frac{1}{x \abrk{\lambda_n} \psi_n(1) }} \right].
\label{eq_2_10_odd1_prop}
\end{align}
\label{prop_psi_for_large_x}
\end{prop}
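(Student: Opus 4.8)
The plan is to work from the symmetric integral equation \eqref{eq_prolate_integral2} rather than from \eqref{eq_prolate_integral}, because the former produces the error in precisely the required relative size. First I would observe that, although \eqref{eq_prolate_integral2} is stated for $-1 \le x \le 1$, it in fact holds for every real $x > 1$. Indeed, the function
\[
g(x) = \frac{1}{\pi}\int_{-1}^1 \frac{\sin\brk{c\brk{x-t}}}{x-t}\,\psi_n(t)\,dt
\]
extends to an entire function of $x$ (the kernel is entire in $x$, and for $x>1$ the integrand has no singularity), and so does $\mu_n \psi_n(x)$; since the two agree on $[-1,1]$ by \eqref{eq_prolate_integral2}, they agree for all $x$ by analytic continuation. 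Thus $\mu_n \psi_n(x) = g(x)$ for all $x > 1$.

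Next I would isolate the large-$x$ behaviour by splitting the kernel as $\brk{x-t}^{-1} = x^{-1} + t\,x^{-1}\brk{x-t}^{-1}$. The second piece contributes
\[
E(x) = \frac{1}{\pi}\int_{-1}^1 \sin\brk{c\brk{x-t}}\,\psi_n(t)\,\frac{t}{x\brk{x-t}}\,dt,
\]
whose modulus is at most $\brk{\pi x \brk{x-1}}^{-1}\int_{-1}^1 \abrk{\psi_n(t)}\,dt \le \sqrt{2}\,\brk{\pi x\brk{x-1}}^{-1}$ by Cauchy--Schwarz together with $\|\psi_n\|_{L^2[-1,1]} = 1$; in particular $\abrk{E(x)} = O\brk{x^{-2}}$ with an absolute constant. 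The first piece, after writing $\sin\brk{c\brk{x-t}} = \sin(cx)\cos(ct) - \cos(cx)\sin(ct)$, becomes
\[
\frac{1}{\pi x}\left[\sin(cx)\int_{-1}^1 \cos(ct)\psi_n(t)\,dt - \cos(cx)\int_{-1}^1 \sin(ct)\psi_n(t)\,dt\right].
\]
Here the parity of $\psi_n$ (Theorem~\ref{thm_pswf_main}) annihilates one of the two integrals, and the surviving one is identified with $\lambda_n\psi_n(1)$ by evaluating \eqref{eq_prolate_integral} at $x=1$: for $n$ even, $\int_{-1}^1 \sin(ct)\psi_n(t)\,dt = 0$ and $\int_{-1}^1 \cos(ct)\psi_n(t)\,dt = \lambda_n\psi_n(1)$, while for $n$ odd the roles of sine and cosine are interchanged and $\int_{-1}^1 \sin(ct)\psi_n(t)\,dt = -i\lambda_n\psi_n(1)$.

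Finally I would solve for $\psi_n(x)$ and collect the error. Dividing by $\mu_n = \frac{c}{2\pi}\abrk{\lambda_n}^2$ (see \eqref{eq_prolate_mu}) and using $\lambda_n = i^n\abrk{\lambda_n}$ to simplify $\lambda_n/\abrk{\lambda_n}^2$, the main term becomes $2\psi_n(1)\sin(cx)/(cx\lambda_n)$ in the even case and $2\psi_n(1)\cos(cx)/(icx\lambda_n)$ in the odd case, matching \eqref{eq_2_10_even1_prop} and \eqref{eq_2_10_odd1_prop}. The contribution of $E(x)$ is $E(x)/\mu_n = O\brk{1/(cx^2\abrk{\lambda_n}^2)}$, and factoring out the leading coefficient $2\psi_n(1)/(cx\lambda_n)$ shows this residual is exactly of the form $\frac{2\psi_n(1)}{cx\lambda_n}\cdot O\brk{\frac{1}{x\abrk{\lambda_n}\psi_n(1)}}$, which is the bracketed error in the statement. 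The only genuinely delicate points are the justification of the extension of \eqref{eq_prolate_integral2} past $x=1$ and the careful bookkeeping of the factors $i^n$ and $\mu_n$ so that the residual lands in precisely the advertised relative size; the size estimate for $E(x)$ itself is immediate once the kernel is split.
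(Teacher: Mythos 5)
Your proposal is correct and follows essentially the same route as the paper's proof of Theorem~\ref{lem_psi_for_large_x}: both start from the integral equation \eqref{eq_prolate_integral2} (tacitly continued past $x=1$), split the kernel as $\frac{1}{x-t}=\frac{1}{x}+\frac{t}{x\brk{x-t}}$, identify the resulting boundary integrals with $\lambda_n\psi_n(\pm 1)$ via \eqref{eq_prolate_integral}, and finish with the parity relations $\psi_n(1)=(-1)^n\psi_n(-1)$, $\abrk{\lambda_n}^2=(-1)^n\lambda_n^2$ and $\mu_n=c\abrk{\lambda_n}^2/2\pi$. Your angle-addition-plus-parity treatment of $\sin\brk{c(x-t)}$ is a cosmetic variant of the paper's splitting into $e^{\pm ic(x-t)}$, and your explicit analytic-continuation justification of \eqref{eq_prolate_integral2} for $x>1$ makes rigorous a step the paper leaves implicit.
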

%%%%%%%%%%
The following proposition asserts that, in the interval $(-1,1)$,
the difference between the reciprocal of $\psi_n$ and a certain
rational function with $n$ poles is of order $|\lambda_n|$.
This is an immediate consequence of
Theorem~\ref{thm_complex_summary} in Section~\ref{sec_head_tail}
and the proof of Theorem~\ref{thm_sum_w} in Section~\ref{sec_weights}.
\begin{prop}
Suppose that $c>30$, and that $n>0$ is an even positive integer.
Suppose also that
\begin{align}
n > \frac{2c}{\pi} + 7.
\label{eq_complex_1_prop}
\end{align}
Suppose furthermore that $-1 < t_1 < \dots < t_n < 1$ are the roots
of $\psi_n$ in $(-1,1)$, and that the function $I:(-1,1) \to \Rc$
is defined via the formula
\begin{align}
I(t) = \frac{1}{\psi_n(t)} - 
\sum_{k=1}^n \frac{1}{\psi_n'(t_j) \cdot (t-t_j)},
\label{eq_complex_it_prop}
\end{align}
for $-1 < t < 1$.
Then,
\begin{align}
|I(t)| \leq |\lambda_n| \cdot \left(
24 \cdot \log\left( \frac{1}{|\lambda_n|} \right) +
130 \cdot (\chi_n)^{1/4}
\right),
\label{eq_complex_prop}
\end{align}
for all real $-1 < t < 1$.
\end{prop}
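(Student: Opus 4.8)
The plan is to recognize $I(t)$ in \eqref{eq_complex_it_prop} as exactly the left-hand side of the partial-fractions identity \eqref{eq_pf_cauchy}, and then bound the two terms on its right-hand side. First I would apply Cauchy's integral formula (Theorem~\ref{thm_cauchy}) with $g=\psi_n$ on the square $\Gamma_R$ whose boundary avoids the roots of $\psi_n$; since $\psi_n$ is entire with only real simple roots (Remark~\ref{rem_continuation}), the hypotheses of Theorem~\ref{thm_cauchy} are satisfied, and the roots of $\psi_n$ in $(-1,1)$ are precisely $t_1,\dots,t_n$. This produces, for each admissible $R$, the decomposition \eqref{eq_pf_cauchy} of $I(t)$ into a finite sum over the exterior roots $\pm x_1,\dots,\pm x_M$ plus a contour integral over $\Gamma_R$. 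Because $n$ is even, $\psi_n$ is even (Theorem~\ref{thm_pswf_main}), so $\psi_n'(-x_k)=-\psi_n'(x_k)$, and the paired terms $\tfrac{1}{\psi_n'(x_k)(t-x_k)}+\tfrac{1}{\psi_n'(-x_k)(t+x_k)}$ collapse to $\tfrac{2x_k}{\psi_n'(x_k)(t^2-x_k^2)}$, which streamlines the estimate of the exterior sum.

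Next I would bound the infinite exterior sum. The hypotheses $c>30$ and \eqref{eq_complex_1_prop} give $n>2c/\pi+1$, hence $\chi_n>c^2$ by Theorem~\ref{thm_khi_elem}, so Proposition~\ref{prop_sharp_simple} applies and yields $\frac{1}{|\psi_n'(x_k)|}\le e^{1/4}|\lambda_n|\frac{(x_k^2-1)^{3/4}}{(x_k^2-\chi_n/c^2)^{1/4}}$ at every exterior root. I would split the sum at $x_M$ of order $|\lambda_n|^{-2}$, the point where the tail estimate \eqref{eq_pf_two} takes over. For the \emph{head} ($x_k\le x_M$) I feed this derivative bound, together with the spacing lower bound $x_{k+1}-x_k\ge\pi/c$ from \eqref{eq_25_09_h3_prop} of Proposition~\ref{prop_spacing_big}, into a sum-to-integral comparison, producing \eqref{eq_pf_head}, whose size is $O(|\lambda_n|(\log(2x_M)+\chi_n^{1/4}))=O(|\lambda_n|(\log(1/|\lambda_n|)+\chi_n^{1/4}))$ since $\log(2x_M)=O(\log(1/|\lambda_n|))$. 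For the \emph{tail} ($x_k>|\lambda_n|^{-2}$) I would use the consecutive-root bound \eqref{eq_pf_two}, whose right-hand side telescopes as $20c\int ds/s^2$ and sums to $O(c|\lambda_n|^2)$, negligible against the head. Combining head and tail gives the series bound \eqref{eq_pf_series}.

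Finally I would control the contour integral: passing to the distinguished sequence $R_k\to\infty$ on whose sides the growth of $\psi_n$ is understood, the $\limsup$ of the contour term is at most $2\sqrt{2}\,|\lambda_n|$ by \eqref{eq_pf_contour}. Letting $R=R_k\to\infty$ in \eqref{eq_pf_cauchy}, the finite exterior sum converges to the infinite series controlled by \eqref{eq_pf_series} and the contour term is absorbed into the $2\sqrt{2}|\lambda_n|$ bound, yielding \eqref{eq_pf_final} and, after assembling the constants, the claimed \eqref{eq_complex_prop}; formally this is the combination of Theorem~\ref{thm_complex_summary} with the bookkeeping in the proof of Theorem~\ref{thm_sum_w}. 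The main obstacle is precisely that bookkeeping: the factor $5$ in \eqref{eq_pf_head}, the constant $20$ in \eqref{eq_pf_two}, the $2\sqrt{2}$ in \eqref{eq_pf_contour}, and the $e^{1/4}$ of Proposition~\ref{prop_sharp_simple} must all be carried through the sum-to-integral comparison and added so that the coefficient of $|\lambda_n|\log(1/|\lambda_n|)$ stays below $24$ and that of $|\lambda_n|\chi_n^{1/4}$ below $130$; keeping these constants explicit and sharp, rather than merely $O(\cdot)$, is the only genuinely delicate point, the analytic content having already been supplied by the cited theorems.
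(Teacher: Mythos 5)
Your overall route coincides with the paper's: decompose $I(t)$ via Theorem~\ref{thm_cauchy} as in \eqref{eq_pf_cauchy}, split the exterior series at $x_M$ of order $|\lambda_n|^{-2}$, control the tail by Theorem~\ref{lem_two_bound}, the contour by Theorems~\ref{lem_vertical_bound} and~\ref{lem_contour}, and assemble the constants exactly as the paper does (the proposition is indeed an immediate consequence of Theorem~\ref{thm_complex_summary} together with \eqref{eq_sum_w_c}, \eqref{eq_sum_w_d} in the proof of Theorem~\ref{thm_sum_w}, which is your closing citation). However, your mechanism for the head of the series has a genuine gap. Feeding the bound of Proposition~\ref{prop_sharp_simple} and the spacing $x_{k+1}-x_k\geq \pi/c$ into a sum-to-integral comparison cannot produce \eqref{eq_pf_head}: since $(x^2-1)^{3/4}\,(x^2-\chi_n/c^2)^{-1/4}\sim x$ and $|t-x_k|\sim x_k$ for $-1<t<1$, each individual summand $|\psi_n'(x_k)\,(t-x_k)|^{-1}$ is of size $\Theta(|\lambda_n|)$ uniformly in $k$ (as \eqref{eq_pf_dpsi} confirms, $|\psi_n'(x_k)|\approx 2|\psi_n(1)|/(|\lambda_n|\,x_k)$). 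With roughly $c\,x_M/\pi$ roots up to $x_M\sim|\lambda_n|^{-2}$, the triangle inequality therefore yields only $O(c/|\lambda_n|)$ --- astronomically larger than the target $O\bigl(|\lambda_n|\log(1/|\lambda_n|)\bigr)$. The head bound \eqref{eq_pf_head} rests on cancellation between \emph{consecutive} exterior roots, where $\psi_n'$ alternates in sign; quantifying that cancellation requires the two-sided comparison of $|\psi_n'|$ at consecutive roots in Theorem~\ref{thm_consec_der} (itself a consequence of the monotonicity of $Q$ and $\tilde{Q}$ in Theorem~\ref{lem_Q_Q_tilde}), combined with Theorem~\ref{thm_sharp_simple} inside Theorem~\ref{lem_6_10}. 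Your outline never invokes this ingredient: you exploit consecutive-root pairing only in the tail, where \eqref{eq_pf_two} already encodes it, but the head is precisely where it is equally indispensable.

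Your symmetric pairing of $x_k$ with $-x_k$ is, moreover, a red herring rather than a streamlining: for even $n$ one has $\psi_n'(-x_k)=-\psi_n'(x_k)$, so the two terms \emph{reinforce} instead of cancelling, $\tfrac{1}{t-x_k}-\tfrac{1}{t+x_k}=\tfrac{2x_k}{t^2-x_k^2}\approx -2/x_k$, and the symmetric pair remains of size $\Theta(|\lambda_n|)$. (You also tacitly need $|\lambda_n|<1/10$ and $\chi_n-c^2<c^2/|\lambda_n|$ for the tail and contour machinery; these follow from Theorems~\ref{thm_lam10} and~\ref{thm_khi_lambda_bound}, which Theorem~\ref{thm_complex_summary} already packages, so this is a minor omission.) If you replace your sum-to-integral step by Theorem~\ref{lem_6_10}, or its aggregated form Theorem~\ref{lem_head_main}, the remainder of your outline goes through and reproduces the paper's argument.
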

%%%%%%%%%%%%%%
%%%%%%%%%%%%%%
The following proposition is the principal analytical result of
the paper. It is proven in Theorem~\ref{thm_quad_eps_simple}
in Section~\ref{sec_main_result}.
It is illustrated in
Table~\ref{t:test91} and
Figures~\ref{fig:test92}, \ref{fig:test93a}, \ref{fig:test93b}.
\begin{prop}
Suppose that $c>0$ is a positive real number, and that
\begin{align}
c > 30.
\label{eq_quad_eps_simple_c30_prop}
\end{align}
Suppose also that $\varepsilon > 0$ is a positive real number, and that
\begin{align}
\exp\left[ -\frac{3}{2}\cdot(c-20) \right] < \varepsilon < 1.
\label{eq_quad_eps_simple_eps_prop}
\end{align}
Suppose furthermore that $n>0$ and $0 \leq m < n$ are positive integers, 
and that
\begin{align}
n > \frac{2c}{\pi} +
\left(10 + \frac{3}{2} \cdot \log(c) + 
   \frac{1}{2} \cdot \log\frac{1}{\varepsilon}
\right) \cdot \log\left( \frac{c}{2} \right).
\label{eq_quad_eps_simple_n_prop}
\end{align}
Then,
\begin{align}
\left| \int_{-1}^1 \psi_m(s) \; ds - \sum_{j=1}^n \psi_m(t_j) W_j \right|
<
\varepsilon,
\label{eq_quad_eps_simple_prop}
\end{align}
where $t_1 < \dots < t_n$ are the roots of $\psi_n$ in $(-1,1)$,
and $W_1, \dots, W_n$ are defined via 
\eqref{eq_quad_w_first} in Section~\ref{sec_intuition}.
\label{prop_quad_eps_simple}
\end{prop}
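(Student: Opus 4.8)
The plan is to reduce the proposition to the quadrature error estimate already established in Theorems~\ref{thm_quad},~\ref{thm_quad_simple} and summarized in \eqref{eq_qw_quad_error}, and then to discharge the resulting scalar inequality using the bounds on $\chi_n$ and $\abrk{\lambda_n}$ collected in Section~\ref{sec_pswf}. Concretely, \eqref{eq_qw_quad_error} states that the left-hand side of \eqref{eq_quad_eps_simple_prop} is at most $\text{const} \cdot \abrk{\lambda_n} \cdot \brk{ \log(1/\abrk{\lambda_n}) + \chi_n }$, uniformly in $0 \leq m < n$. Thus it suffices to prove that, under the hypothesis \eqref{eq_quad_eps_simple_n_prop}, this quantity is strictly smaller than $\varepsilon$. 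The entire difficulty is therefore the purely scalar task of converting the lower bound on $n$ into a sufficiently strong upper bound on $\abrk{\lambda_n}$, and of controlling the accompanying factor $\chi_n$.

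First I would introduce an auxiliary parameter $\delta > 0$ and calibrate it so that \eqref{eq_quad_eps_simple_n_prop} implies the hypothesis $n > \frac{2c}{\pi} + \frac{2}{\pi^2} \delta \log\brk{4e\pi c / \delta}$ of Theorem~\ref{thm_khi_1}. Matching the factor $\log(c/2)$ appearing in \eqref{eq_quad_eps_simple_n_prop} against $\log\brk{4e\pi c/\delta}$, and the remaining factor against $\frac{2}{\pi^2}\delta$, points to a choice of $\delta$ of order $\log c + \log(1/\varepsilon)$; the constraints \eqref{eq_quad_eps_simple_c30_prop}, \eqref{eq_quad_eps_simple_eps_prop} are precisely what is needed to place this $\delta$ inside the admissible range $0 < \delta < \frac{5\pi}{4}c$ and to guarantee $\chi_n > c^2$. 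Theorem~\ref{thm_khi_1} then yields $\chi_n > c^2 + \frac{4}{\pi}\delta c$, equivalently $x_n = \chi_n/c^2 > 1 + \frac{4\delta}{\pi c}$ in the notation of \eqref{eq_lambda_khi_1}.

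Next I would feed this lower bound on $x_n$ into Theorem~\ref{thm_lambda_khi}. The key observation is that for $x_n = 1 + s$ with $s$ small one has $\sqrt{x_n} - 1/\sqrt{x_n} = s/\sqrt{1+s}$, which is bounded below by essentially $s$, so the exponent $\frac{\pi}{4}\brk{\sqrt{x_n} - 1/\sqrt{x_n}} \cdot c$ in \eqref{eq_lambda_khi} is at least (nearly) $\delta$; the algebraic prefactor $1195 \cdot c \cdot (x_n)^{3/4}(x_n-1)^{1/4}(x_n-1/2)^3$ grows only polynomially in $c$ and $\delta$. Hence $\abrk{\lambda_n} \leq \text{poly}(c,\delta) \cdot e^{-\delta}$, and correspondingly $\log(1/\abrk{\lambda_n}) = O(\delta + \log c)$. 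For the remaining factor I would use $\chi_n < n(n+1) + c^2$ from \eqref{eq_khi_crude} of Theorem~\ref{thm_khi_crude} (or, for a cleaner leading term, the elliptic-integral estimate \eqref{eq_both_large_simple_prop} of Theorem~\ref{thm_n_khi_simple}), so that $\chi_n = O(c^2)$ in the regime of interest. Substituting all of this into \eqref{eq_qw_quad_error} produces a bound of the shape $\text{poly}(c,\delta) \cdot e^{-\delta}$, and the calibration $\delta \sim \log c + \log(1/\varepsilon)$ drives this below $\varepsilon$.

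The hard part will be the explicit constant bookkeeping. The proposition gives a sharp, constant-free threshold on $n$ with the precise leading term $2c/\pi$ and the precise factor $\log(c/2)$, so no inequality may be absorbed into an $O(\cdot)$: the numerical constant $1195$, the exponent $3$ on $(x_n - 1/2)$, and the coefficient $\frac{4}{\pi}$ from Theorem~\ref{thm_khi_1} must all be propagated explicitly. In particular, the slack between the effective value $\delta \approx \frac{\pi^2}{2}\brk{10 + \frac{3}{2}\log c + \frac{1}{2}\log(1/\varepsilon)}$ forced by \eqref{eq_quad_eps_simple_n_prop} and the weaker requirement $\delta \gtrsim \frac{11}{4}\log c + \log(1/\varepsilon)$ coming from the polynomial prefactors must be checked to be nonnegative for every admissible $c$ and $\varepsilon$, and the replacement of $\log\brk{4e\pi c/\delta}$ by $\log(c/2)$ must be justified under \eqref{eq_quad_eps_simple_c30_prop}, \eqref{eq_quad_eps_simple_eps_prop}. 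The comfortable numerical margins (notably the factor $\pi^2/2$) are exactly what make the clean statement attainable.
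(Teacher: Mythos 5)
Your roadmap coincides with the paper's: both routes go Theorem~\ref{thm_quad_simple} $\to$ Theorem~\ref{thm_khi_1} (with a calibrated $\delta$; the paper's $\alpha = 4\delta/\pi$) $\to$ Theorem~\ref{thm_lambda_khi} $\to$ explicit scalar comparisons under \eqref{eq_quad_eps_simple_c30_prop}, \eqref{eq_quad_eps_simple_eps_prop}; the paper merely packages the middle steps as Theorems~\ref{thm_error_khi} and~\ref{thm_quad_eps_large}. However, there is one genuine gap in your step bounding $\chi_n$ (and, implicitly, in your bound on $\log(1/\abrk{\lambda_n})$): the proposition quantifies over \emph{every} $n$ above the threshold \eqref{eq_quad_eps_simple_n_prop}, and on that set both $\chi_n$ and $\log(1/\abrk{\lambda_n})$ are unbounded. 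Your claim ``$\chi_n = O(c^2)$ in the regime of interest'' fails for $n \gg c$: \eqref{eq_khi_crude} gives $\chi_n < n(n+1)+c^2$ and \eqref{eq_both_large_simple_prop} gives $\chi_n$ of order $n^2$, both growing with $n$; likewise ``$\log(1/\abrk{\lambda_n}) = O(\delta + \log c)$'' is false as $n \to \infty$. Bounding the two factors of $\abrk{\lambda_n} \cdot \brk{\log(1/\abrk{\lambda_n}) + \chi_n}$ separately therefore cannot close the argument; you must control the product uniformly in $n$.

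The missing device, which is exactly what the paper supplies, is monotonicity of the combined bound in $\chi_n$. First, a short case analysis (is $\abrk{\lambda_n} \le e^{-\chi_n/4}$ or not, using that $x \mapsto x\log(1/x)$ is increasing near $0$) absorbs the logarithmic term into $12\,\chi_n \abrk{\lambda_n}$; combined with Theorem~\ref{thm_lambda_khi} this yields the error bound $14340 \cdot \chi_n^5\, c^{-7} \exp\left[-\frac{\pi}{4} \cdot \frac{\chi_n - c^2}{\sqrt{\chi_n}}\right]$ of Theorem~\ref{thm_error_khi}, valid for all admissible $n$. Then one checks that $f(y) = y^{10}\exp\left[-\frac{\pi}{4}\cdot\frac{y^2-c^2}{y}\right]$ satisfies $f'(y) < 0$ for all $y > c$ once $c > 30$ (because $\frac{\pi}{4}\, y\,(1 + c^2/y^2) > \frac{\pi}{4} \cdot 30 > 10$), so the bound is \emph{decreasing} in $\chi_n$, and it suffices to evaluate it at the minimal value $\chi_n = c^2 + \alpha c$ furnished by Theorem~\ref{thm_khi_1}; since $\alpha < 5c$, the prefactor $(1+\alpha/c)^5 < 6^5$ stays controlled and the calibration $\alpha = \frac{4\sqrt{6}}{\pi}\brk{\log(1/\varepsilon) + 3\log(c) + \text{const}}$ drives the result below $\varepsilon$. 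With this monotonicity step inserted, the rest of your bookkeeping plan (placing $\delta$ in the admissible range of Theorem~\ref{thm_khi_1}, trading $\log(4e\pi c/\delta)$ for $\log(c/2)$ under \eqref{eq_quad_eps_simple_c30_prop} and \eqref{eq_quad_eps_simple_eps_prop}) is precisely the content of the paper's derivation of Theorem~\ref{thm_quad_eps_simple} from Theorem~\ref{thm_quad_eps_large}.
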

In Proposition~\ref{prop_quad_eps_simple}, we address the
accuracy of the quadrature, discussed in Section~\ref{sec_intuition}.  
More specifically, it asserts that to achieve the prescribed
absolute accuracy $\varepsilon$
(in the sense of \eqref{eq_main_goal}), it suffices
to take $n$ of the order 
$2c/\pi + O\left(\log(c)\cdot(\log(c)-\log(\varepsilon))\right)$.

The assumptions of Proposition~\ref{prop_quad_eps_simple}, however,
have a minor drawback: namely, $\varepsilon$ is assumed not to be
``too small'', in the sense of \eqref{eq_quad_eps_simple_eps_prop}.
In the following proposition, 
proven in Theorem~\ref{thm_quad_eps_weak} in Section~\ref{sec_main_result},
we eliminate this inconvenience.
On the other hand, the resulting lower bound on $n$ is considerably
weaker than that of Proposition~\ref{prop_quad_eps_simple}.
\begin{prop}
Suppose that $c>0$ is a positive real number, and that
\begin{align}
c > 30.
\label{eq_quad_eps_weak_c30_prop}
\end{align}
Suppose also that $\varepsilon > 0$ is a positive real number, and that
\begin{align}
0 < \varepsilon < 1.
\label{eq_quad_eps_weak_eps_prop}
\end{align}
Suppose furthermore that $n>0$ and $0 \leq m < n$ are positive integers, 
and that
\begin{align}
n \cdot \left(1 - \frac{40}{\pi c}\right) > 
c + \frac{12}{\pi} \cdot \log(c) + \frac{4}{\pi} \cdot
\log \frac{1}{\varepsilon}.
\label{eq_quad_eps_weak_n_prop}
\end{align}
Then,
\begin{align}
\left| \int_{-1}^1 \psi_m(s) \; ds - \sum_{j=1}^n \psi_m(t_j) W_j \right|
<
\varepsilon,
\label{eq_quad_eps_weak_prop}
\end{align}
where $t_1 < \dots < t_n$ are the roots of $\psi_n$ in $(-1,1)$,
and $W_1, \dots, W_n$ are defined via 
\eqref{eq_quad_w_first} in Section~\ref{sec_intuition}.
\label{prop_quad_eps_weak}
\end{prop}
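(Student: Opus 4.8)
The plan is to derive Proposition~\ref{prop_quad_eps_weak} from the quadrature error estimate \eqref{eq_qw_quad_error} (Theorems~\ref{thm_quad},~\ref{thm_quad_simple}), which bounds the left-hand side of \eqref{eq_quad_eps_weak_prop} by $\mathrm{const}\cdot|\lambda_n|\cdot(\log(1/|\lambda_n|)+\chi_n)$; everything then reduces to showing that the hypothesis \eqref{eq_quad_eps_weak_n_prop} forces this quantity below $\varepsilon$. The essential difference from Proposition~\ref{prop_quad_eps_simple} is that here $\varepsilon$ is permitted to be arbitrarily small, so the bound on $|\lambda_n|$ must decay without any cap; I would therefore abandon the $\delta$-parametrized estimate of Theorem~\ref{thm_crude_inequality} (whose exponent is bounded by a multiple of $c$, forcing the lower restriction on $\varepsilon$ in Proposition~\ref{prop_quad_eps_simple}) and feed Theorem~\ref{thm_lambda_khi} directly, whose exponent $\frac{\pi}{4}(\sqrt{x_n}-1/\sqrt{x_n})\,c$ grows without bound as $n$ increases.

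First I would record the elementary consequences of \eqref{eq_quad_eps_weak_n_prop}. Since the terms $\frac{12}{\pi}\log c$ and $\frac{4}{\pi}\log(1/\varepsilon)$ are positive, \eqref{eq_quad_eps_weak_n_prop} gives $n(1-40/(\pi c)) > c$, hence $n > c$; together with $c > 30$ this yields $n > 2c/\pi + \sqrt{42}$ and, by Theorem~\ref{thm_khi_elem}, $\chi_n > c^2$, so that both Theorem~\ref{thm_lambda_khi} and the two-sided estimate \eqref{eq_khi_crude} of Theorem~\ref{thm_khi_crude} are available. Writing $\alpha = \sqrt{\chi_n}/c$, the lower bound $\chi_n > n(n+1) > n^2$ gives $\alpha > n/c$, while the upper bound $\chi_n < n(n+1)+c^2 < 3n^2$ (valid since $n > c$) gives $x_n = \chi_n/c^2 < 3(n/c)^2$. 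Because $\alpha \mapsto \alpha - 1/\alpha$ is increasing and $\alpha > n/c$, the exponent in \eqref{eq_lambda_khi} obeys $\frac{\pi c}{4}(\alpha - 1/\alpha) > \frac{\pi}{4}(n - c^2/n) \geq \frac{\pi}{4}(n - c)$, the last step using $n > c$.

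Next I would collapse the error estimate to a single exponential. Bounding the polynomial prefactor of \eqref{eq_lambda_khi} through $x_n < 3(n/c)^2$ shows its logarithm is at most $\log c + 8\log(n/c) + \mathrm{const}$; the factor $\log(1/|\lambda_n|)+\chi_n$ is controlled by $\chi_n < 3n^2$ together with the elementary inequality $t\log(1/t) \le B\log(1/B)$ valid for $0 < t \le B < 1/e$, applied to $t = |\lambda_n|$ and $B$ the right-hand side of \eqref{eq_lambda_khi}, so that the awkward summand $|\lambda_n|\log(1/|\lambda_n|)$ is itself absorbed into a further factor of order $n$. Taking logarithms, the whole error is bounded by an expression of the form $\mathrm{const} + 3\log c + 10\log(n/c) - \frac{\pi}{4}(n-c)$, and the claim \eqref{eq_quad_eps_weak_prop} becomes the inequality $\frac{\pi}{4}(n-c) > 3\log c + 10\log(n/c) + \log(1/\varepsilon) + \mathrm{const}$. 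Finally I would multiply \eqref{eq_quad_eps_weak_n_prop} by $\pi/4$ to obtain $\frac{\pi}{4}(n-c) > \frac{10n}{c} + 3\log c + \log(1/\varepsilon)$, and close the argument with the elementary bound $\frac{n}{c} - \log\frac{n}{c} \ge 1$, which shows $\frac{10n}{c} \ge 10\log(n/c) + 10$ and hence dominates the remaining $10\log(n/c) + \mathrm{const}$.

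The main obstacle is the constant bookkeeping in this penultimate step: one must verify that all accumulated constants — the absolute constant in \eqref{eq_qw_quad_error}, the factor $1195$ and the four polynomial factors in \eqref{eq_lambda_khi}, and the losses in passing from $\chi_n$ and $x_n$ to powers of $n/c$ — together contribute no more than the slack produced by $\frac{10n}{c} - 10\log(n/c) \ge 10$. This is precisely why the coefficients $12$, $4$, and $40$ in \eqref{eq_quad_eps_weak_n_prop} take the values they do, and why, whenever the crude step $\frac{\pi}{4}(n-c^2/n) \ge \frac{\pi}{4}(n-c)$ leaves the budget too tight, I would retain the sharper $\frac{\pi}{4}(n - c^2/n)$, whose extra room $\frac{\pi c}{4}(1 - c/n)$ becomes substantial once $n \gg c$.
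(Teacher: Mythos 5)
Your proposal is correct and takes essentially the paper's own route: the paper likewise combines the error bound $|\lambda_n|\cdot\bigl(24\log(1/|\lambda_n|)+6\chi_n\bigr)$ of Theorem~\ref{thm_quad_simple} with the decay estimate \eqref{eq_lambda_khi} of Theorem~\ref{thm_lambda_khi} (packaged as Theorem~\ref{thm_error_khi}, whose case split on $|\lambda_n|\lessgtr e^{-\chi_n/4}$ plays exactly the role of your $t\log(1/t)\le B\log(1/B)$ device), then substitutes $\sqrt{\chi_n}>n$, takes logarithms, uses $\log(n/c)\le n/c-1$, and compares with the hypothesis \eqref{eq_quad_eps_weak_n_prop} multiplied by $\pi/4$. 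The only genuine divergence is bookkeeping: the paper exploits that $y\mapsto y^{10}\exp\left[-\frac{\pi}{4}\cdot\frac{y^2-c^2}{y}\right]$ is decreasing for $y>c$, so the single lower bound $\sqrt{\chi_n}>n$ controls prefactor and exponent simultaneously and keeps the constant at $14340<e^{10}$, letting the slack $10$ from $10n/c-10\log(n/c)\ge 10$ just suffice, whereas your separate two-sided bounding $n^2<\chi_n<3n^2$ inflates the constant past $e^{10}$ --- but, as you correctly anticipate, retaining $\frac{\pi}{4}\left(n-\frac{c^2}{n}\right)$ supplies extra room $\frac{\pi c}{4}\left(1-\frac{c}{n}\right)>10$ under \eqref{eq_quad_eps_weak_n_prop}, so your version also closes.
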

%
%%%%%%%%%%%%%%%
In the following proposition, we assert that the quadrature weights
$W_1, \dots, W_n$ are positive, provided that $n$ is large enough.
It is proven in Theorem~\ref{thm_positive_w}
in Section~\ref{sec_weights}.
\begin{prop}
Suppose that $c>0$ is a positive real number, and that
\begin{align}
c > 30.
\label{eq_positive_w_c30_prop}
\end{align}
Suppose also that $n>0$ is a positive odd integer, and that
\begin{align}
n > \frac{2c}{\pi} + 5 \cdot \log(c) \cdot \log\left(\frac{c}{2}\right).
\label{eq_positive_w_n_prop}
\end{align}
Suppose furthermore that
$W_1, \dots, W_n$ are defined via 
\eqref{eq_quad_w_first} in Section~\ref{sec_intuition}.
Then, for all integer $j=1,\dots,n$,
\begin{align}
W_j > 0.
\label{eq_positive_w_prop}
\end{align}
\label{prop_positive_w}
\end{prop}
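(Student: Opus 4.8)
The plan is to compare each weight $W_j=\int_{-1}^1\varphi_j(t)\,dt$ with the manifestly positive quantity $V_j=\int_{-1}^1\varphi_j(t)^2\,dt$ and to show that $W_j$ differs from $V_j$ by an amount negligible next to $\sqrt{V_j}$. The starting point is the approximate partition of unity \eqref{eq_qw_1}, which in the present notation reads $1=\sum_{k=1}^n\varphi_k(t)+\psi_n(t)\cdot I(t)$, with $I$ the function of \eqref{eq_complex_it_prop} controlled by \eqref{eq_complex_prop}. Multiplying by $\varphi_j$ and integrating over $(-1,1)$ yields
\[
W_j=\sum_{k=1}^n\int_{-1}^1\varphi_j\varphi_k\,dt+\int_{-1}^1\varphi_j(t)\,\psi_n(t)\,I(t)\,dt.
\]
Each product $\varphi_j\varphi_k$ is bandlimited of band limit $2c$ and, by the decay of $\psi_n$ in Proposition~\ref{prop_psi_for_large_x}, square-integrable on $\Rc$; the nodes $t_1,\dots,t_n$ together with the weights of Quadrature~2 integrate such functions exactly (see \cite{RokhlinXiaoProlate}). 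Since $\varphi_j(t_i)\varphi_k(t_i)=\delta_{ij}\delta_{ik}$ by \eqref{eq_varphi_ij}, this exactness forces $\int_{-1}^1\varphi_j\varphi_k\,dt=V_j\,\delta_{jk}$, so $W_j=V_j+\int_{-1}^1\varphi_j\psi_n I\,dt$. By Cauchy--Schwarz and $\|\psi_n\|_{L^2[-1,1]}=1$,
\[
|W_j-V_j|\le\|I\|_\infty\int_{-1}^1|\varphi_j\psi_n|\,dt\le\|I\|_\infty\,\sqrt{V_j},
\]
whence $W_j\ge\sqrt{V_j}\,\bigl(\sqrt{V_j}-\|I\|_\infty\bigr)$. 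It therefore suffices to prove $V_j>\|I\|_\infty^2$ for every $j$.

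Controlling $\|I\|_\infty$ is the routine half. The bound \eqref{eq_complex_prop}, together with its odd-$n$ analogue issuing from the same Theorem~\ref{thm_complex_summary}, gives $\|I\|_\infty\le|\lambda_n|\cdot(24\log(1/|\lambda_n|)+130\,\chi_n^{1/4})$, while the hypothesis \eqref{eq_positive_w_n_prop} places $n$ far enough above $2c/\pi$ for Theorem~\ref{thm_crude_inequality} (or Theorem~\ref{thm_lambda_khi}) to make $|\lambda_n|$ decay faster than any fixed negative power of $c$. Thus $\|I\|_\infty^2$ is super-algebraically small in $c$, of order $|\lambda_n|^2$ up to logarithmic and $\chi_n^{1/4}$ factors.

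The main obstacle is the lower bound on $V_j=\int_{-1}^1\varphi_j^2\,dt$. Because $\varphi_j(t_j)=1$ and $\varphi_j$ is positive and continuous on the interval between the two roots of $\psi_n$ adjacent to $t_j$, one has $V_j\ge\int\varphi_j^2$ over that interval, which is bounded below by a constant multiple of the local root spacing of $\psi_n$. The Pr\"ufer description in Theorem~\ref{thm_prufer_old} shows that consecutive roots $t_i,t_{i+1}$ in $(-1,1)$ are separated by a distance comparable to $\pi\sqrt{(1-t_i^2)/(\chi_n-c^2t_i^2)}$, smallest for the extreme nodes near $\pm1$; estimating $1-t_n$ from $\int_{t_n}^1\theta'\,dt=\pi/2$ shows that even there $V_j$ decays at most polynomially in $c$. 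Since a polynomially small lower bound dwarfs the super-algebraically small $\|I\|_\infty^2$ under \eqref{eq_positive_w_n_prop}, we conclude $W_j>0$. Making this lower bound fully rigorous and uniform in $j$ — in particular near the endpoints, where $\varphi_j'(t_j)=t_j/(1-t_j^2)$ is large and the cardinal function drops quickly — is the technical heart of the argument. The odd parity of $n$ is used chiefly to exploit the symmetry $W_{n+1-j}=W_j$ (reducing the verification to $j\le (n+1)/2$, including the central node $t=0$, where $\psi_n(0)=\psi_n''(0)=0$ makes $\varphi_j$ especially flat) and to invoke the odd-$n$ form of \eqref{eq_complex_prop}; the comparison identity $W_j=V_j+\int\varphi_j\psi_n I$ itself is parity-independent.
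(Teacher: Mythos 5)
Your argument breaks at the very first substantive step: the claim that the nodes $t_1,\dots,t_n$ with the weights of Quadrature~2 integrate the products $\varphi_j\varphi_k$ \emph{exactly}, which you then use to conclude the orthogonality $\int_{-1}^1\varphi_j\varphi_k\,dt=V_j\,\delta_{jk}$. No $n$-point rule can be exact on the infinite-dimensional space of functions of band limit $2c$; Quadrature~2 is exact only on $\psi_0,\dots,\psi_{n-1}$ (band limit $c$), and the generalized Euclid division of \cite{RokhlinXiaoProlate} yields integration of band-limit-$2c$ functions only up to errors of order $|\lambda_n|$ — exactly the size of the terms you are trying to isolate. The cardinal functions $\varphi_j$ are in fact \emph{not} mutually orthogonal (e.g.\ writing $1/((t-t_j)(t-t_k))$ in partial fractions exhibits $\int\varphi_j\varphi_k$ as a difference of two nonzero Cauchy-type integrals of $\psi_n^2$), and note the circularity lurking here: since $\sum_k\varphi_k=1-\psi_nI$, one has identically $\sum_k\int\varphi_j\varphi_k=W_j-\int\varphi_j\psi_nI$, so your decomposition $W_j=V_j+\int\varphi_j\psi_nI$ carries no information unless the $n-1$ off-diagonal Gram entries are genuinely bounded — and their sum could well swamp $V_j$ precisely at the endpoint nodes where $V_j$ is smallest. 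Moreover, the second half of your plan, the uniform lower bound $V_j>\|I\|_\infty^2$, is explicitly deferred ("the technical heart"), and your quantitative justification is off: under \eqref{eq_positive_w_n_prop} the eigenvalue $|\lambda_n|$ is \emph{not} super-algebraically small in $c$; combining Theorems~\ref{thm_khi_1} and~\ref{thm_lambda_khi} at that threshold gives only a fixed negative power, roughly $|\lambda_n|=O(c^{-7})$ with the paper's constants, so the claimed clash of orders would have to be verified with explicit exponents rather than asserted.

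The paper's proof avoids both difficulties by a completely different mechanism. Instead of lower-bounding each $V_j$, Theorem~\ref{thm_wj_nice} derives an \emph{exact} identity: for odd $n$, the function $\tilde\Phi_n$ of \eqref{eq_num_tilde_phi_def} satisfies the nonhomogeneous prolate ODE \eqref{eq_num_tilde_phi_ode} with constant right-hand side, and variation of parameters together with Abel's formula \eqref{eq_wj_nice_c} gives
\begin{align}
\frac{\left(\psi_n'(t_j)\right)^2(1-t_j^2)}{\left(\psi_n'(0)\right)^2}\cdot W_j
= W_{(n+1)/2} + \frac{ic\lambda_n}{\psi_n'(0)}\int_0^{t_j}\psi_n(t)\,dt,
\end{align}
so every weight equals the central weight plus a correction of size $O(|\lambda_n|\sqrt{n})$. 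Positivity then follows from a soft counting argument: if $W_{(n+1)/2}$ were $\leq 2|\lambda_n|\sqrt{2n}$, all weights would be small (Theorem~\ref{thm_pos_w}), contradicting $W_1+\dots+W_n>2-O(|\lambda_n|\chi_n^{1/4}\log(1/|\lambda_n|))$ (Theorem~\ref{thm_sum_w}, via the same partition-of-unity bound you use) and the numerical estimate of Theorem~\ref{thm_final_w}; hence the central weight dominates the correction and every $W_j>0$. The only ingredient your proposal shares with the paper is the control of $\|I\|_\infty$ through Theorem~\ref{thm_complex_summary}; the load-bearing steps — the ODE identity replacing your unproven orthogonality and $V_j$ lower bound — are absent from your argument, so as it stands the proof does not go through.
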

Numerical experiments seem to indicate that
the assumptions \eqref{eq_positive_w_n_prop} and that $n$ be odd
are unnecessary (see Remarks~\ref{rem_w_even_n},~\ref{rem_w_always_pos}
in Section~\ref{sec_weights}).

%%%%%%%%%%%%%%%%%%%%%%%%%%%%%%%%%%%%%%%%%%%%%
%%%%%%%%%%%%%%%%%%%%%%%%%%%%%%%%%%%%%%%%%%%%%
\section{Analytical Apparatus}
\label{sec_analytical}
The purpose of this section is to provide
the analytical apparatus to be used in the
rest of the paper.

%%%%%%%%%%%%%%%%%%%%%%%%%%%%%%%%%%%%%%%%%%%%%
\subsection{Oscillation Properties of PSWFs}
\label{sec_oscillation}
In this subsection, we prove several facts
about the distance between consecutive roots of PSWFs 
and find a more
subtle relation between $n$ and $\chi_n$ 
(see \eqref{eq_prolate_ode} 
in Section~\ref{sec_pswf})
than the inequality \eqref{eq_khi_crude}.
Throughout this subsection,
$c > 0$ is a positive real number 
and $n$ is a non-negative integer.
The principal results of this subsection are
Theorems~\ref{thm_x1_x0_good},
\ref{thm_spacing}.

%%%%%%%%%%%
\subsubsection{Elimination of the First-Order Term of the Prolate ODE}
\label{sec_first_order}
In this subsection, we analyze the oscillation properties of $\psi_n$
via 
transforming the ODE \eqref{eq_prolate_ode} into
a second-order linear ODE without the first-order term.
The following theorem is the principal technical tool of this subsection.
\begin{thm}
Suppose that $n \geq 0$ is a non-negative integer. Suppose also that
that the functions $\Psi_n, Q_n: (1,\infty) \to \Rc$
are defined, respectively, via the formulae
\begin{align}
\Psi_n(t) = \psi_n(t) \cdot \sqrt{t^2-1}
\label{eq_big_psi_n}
\end{align}
and
\begin{align}
Q_n(t) = 
\frac{ c^2\cdot t^2 - \chi_n  }{ t^2-1 } +
       \frac{ 1 }{ \brk{t^2-1}^2 },
\label{eq_big_q_n}
\end{align}
for $t > 1$. Then,
\begin{align}
\Psi_n''(t) + Q_n(t) \cdot \Psi_n(t) = 0,
\label{eq_big_psi_ode}
\end{align}
for all $t > 1$.
\label{lem_trans}
\end{thm}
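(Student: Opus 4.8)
The plan is to recognize Theorem~\ref{lem_trans} as an instance of the classical Liouville transformation that reduces a linear second-order ODE to its normal form (no first-order term). First I would take the prolate differential equation \eqref{eq_prolate_ode}, which by Remark~\ref{rem_continuation} holds for all real $t$, and in particular on the ray $t>1$. There $t^2-1>0$, so I can divide \eqref{eq_prolate_ode} by $1-t^2$ and write it in the standard form
\begin{align}
\psi_n''(t) + p(t) \cdot \psi_n'(t) + q(t) \cdot \psi_n(t) = 0,
\label{eq_plan_normal}
\end{align}
where
\begin{align}
p(t) = \frac{2t}{t^2-1}, \qquad
q(t) = \frac{c^2 t^2 - \chi_n}{t^2 - 1}.
\label{eq_plan_pq}
\end{align}

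The key observation is that the multiplier $\sqrt{t^2-1}$ in the definition \eqref{eq_big_psi_n} of $\Psi_n$ is exactly the integrating factor that eliminates the first-order term: indeed $\exp\left(\tfrac{1}{2}\int p(t)\,dt\right) = \sqrt{t^2-1}$. Hence I would substitute $\psi_n(t) = \Psi_n(t)\cdot(t^2-1)^{-1/2}$ into \eqref{eq_plan_normal}. Writing $\psi_n = v\cdot\Psi_n$ with $v = (t^2-1)^{-1/2}$, differentiating twice, substituting, and dividing by $v$, the coefficient of $\Psi_n'$ becomes $2v'/v + p = -p + p = 0$, so it vanishes identically, while $\Psi_n$ is left satisfying $\Psi_n'' + Q_n\,\Psi_n = 0$ with
\begin{align}
Q_n(t) = q(t) - \frac{p'(t)}{2} - \frac{p(t)^2}{4}.
\label{eq_plan_Q}
\end{align}

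What remains is the routine algebra of evaluating \eqref{eq_plan_Q}. I would compute $p(t)^2/4 = t^2/(t^2-1)^2$ and $p'(t)/2 = -(t^2+1)/(t^2-1)^2$, so that the two correction terms combine into
\begin{align}
-\frac{p'(t)}{2} - \frac{p(t)^2}{4}
= \frac{(t^2+1) - t^2}{(t^2-1)^2}
= \frac{1}{(t^2-1)^2}.
\label{eq_plan_combine}
\end{align}
Adding $q(t)$ from \eqref{eq_plan_pq} then reproduces \eqref{eq_big_q_n} exactly, which completes the argument. I do not expect any genuine obstacle here; the only points requiring care are using $t^2-1>0$ so that $\sqrt{t^2-1}$ is real and smooth on $(1,\infty)$, and the cancellation in \eqref{eq_plan_combine}. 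Alternatively, one could bypass the general normal-form formula and simply differentiate \eqref{eq_big_psi_n} twice, substitute $\psi_n''$ from \eqref{eq_plan_normal} to remove all derivatives of $\psi_n$ of order two, and verify the resulting identity by direct computation; I expect the Liouville-transformation route to be cleaner and less error-prone.
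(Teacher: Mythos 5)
Your proof is correct, and the algebra checks out: with $p(t)=2t/(t^2-1)$ and $q(t)=(c^2t^2-\chi_n)/(t^2-1)$ one indeed has $\exp\bigl(\tfrac{1}{2}\int p\bigr)=\sqrt{t^2-1}$, and $-p'/2-p^2/4 = \bigl((t^2+1)-t^2\bigr)/(t^2-1)^2 = (t^2-1)^{-2}$, so the normal-form potential $q - p'/2 - p^2/4$ reproduces \eqref{eq_big_q_n} exactly. The paper proves Theorem~\ref{lem_trans} by precisely the ``alternative'' route you mention in your last sentence: it differentiates \eqref{eq_big_psi_n} twice, uses \eqref{eq_prolate_ode} to eliminate $\psi_n''$ from the resulting expression, and simplifies directly to \eqref{eq_big_psi_ode}, never invoking the general Liouville reduction. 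The two arguments are the same transformation in different packaging: your version makes the vanishing of the first-order coefficient automatic (since $2v'/v+p=0$ by construction) and explains \emph{why} the multiplier $\sqrt{t^2-1}$ is the right one, at the cost of quoting (or rederiving) the formula $Q_n=q-p'/2-p^2/4$; the paper's direct computation is entirely self-contained but presents the cancellation as an unexplained algebraic miracle. Either is acceptable, and there is no gap in yours.
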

\begin{proof}
We differentiate $\Psi_n$ with respect to $t$ to obtain
\begin{align}
\Psi_n'(t) = \psi_n'(t) \sqrt{t^2-1} +
             \psi_n(t) \cdot \frac{t}{\sqrt{t^2-1}}.
\label{eq_d_big_psi}
\end{align}
Then, using \eqref{eq_d_big_psi}, we differentiate $\Psi_n'$ with 
respect to $t$ to obtain
\begin{align}
\Psi_n''(t)
& \; = 
\psi_n''(t) \sqrt{t^2-1} +
\psi_n'(t) \cdot \frac{ 2t }{ \sqrt{t^2 - 1} } +
\psi_n(t) \cdot
   \frac{ \sqrt{t^2-1} - t^2 / \sqrt{t^2-1} }{ t^2-1 }
\nonumber \\
& \; =
\psi_n''(t) \sqrt{t^2-1} +
\psi_n'(t) \cdot \frac{ 2t }{ \sqrt{t^2-1} } -
\psi_n(t) \brk{t^2-1}^{-\frac{3}{2}}
\nonumber \\
& \; = 
\frac{1}{\sqrt{t^2-1}}
\left[
  \brk{t^2-1} \cdot \psi_n''(t) + 2t \cdot \psi_n'(t) - 
  \frac{ \psi_n(t) }{ t^2-1 }
\right]
\nonumber \\
& \; = 
\frac{1}{\sqrt{t^2-1}}
\left[ \psi_n(t) \cdot \left(\chi_n - c^2 \cdot t^2 \right) -
       \frac{ \psi_n(t) }{ t^2-1 } \right]
\nonumber \\
& \; = 
-\Psi_n(t) \cdot
\brk{ \frac{ c^2 \cdot t^2 - \chi_n}{t^2-1} + \frac{1}{\brk{t^2-1}^2}}.
\label{eq_dd_big_psi}
\end{align}
To conclude the proof,
we observe that \eqref{eq_big_psi_ode} follows from \eqref{eq_dd_big_psi}.
\end{proof}
%%%%%%%%%%%
\begin{cor}
Suppose that $n \geq 0$ is an integer. Then, $\psi_n$ has infinitely
many roots in $\left(1,\infty\right)$.
\label{cor_infinite}
\end{cor}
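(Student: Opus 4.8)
The plan is to combine the transformed equation \eqref{eq_big_psi_ode} of Theorem~\ref{lem_trans} with the Sturm comparison theorem (Theorem~\ref{thm_08_12_zeros}). Since $\sqrt{t^2-1}>0$ for every $t>1$, the function $\Psi_n$ of \eqref{eq_big_psi_n} vanishes at exactly the same points of $(1,\infty)$ as $\psi_n$; hence it suffices to show that $\Psi_n$ has infinitely many roots in $(1,\infty)$. By Remark~\ref{rem_continuation}, $\psi_n$ is a nonzero real-analytic function, so $\Psi_n$ is a nontrivial solution of \eqref{eq_big_psi_ode} and cannot vanish identically on any subinterval.

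Next I would examine the asymptotic behavior of the coefficient $Q_n$ in \eqref{eq_big_q_n}. As $t \to \infty$, the first summand $\brk{c^2 t^2 - \chi_n}/\brk{t^2-1}$ tends to $c^2$ while the second summand $\brk{t^2-1}^{-2}$ tends to $0$, so $Q_n(t) \to c^2 > 0$. Consequently there exists a real number $T > 1$ such that $Q_n(t) > c^2/2$ for all $t > T$.

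Finally I would invoke Theorem~\ref{thm_08_12_zeros} on the ray $(T, \infty)$ with $g_1(t) \equiv c^2/2$ and $g_2(t) = Q_n(t)$, noting that $g_1 < g_2$ there. The function $\phi_1(t) = \sin\!\brk{ \frac{c}{\sqrt{2}} \cdot (t - T) }$ satisfies $\phi_1'' + g_1 \phi_1 = 0$ and has infinitely many roots in $(T, \infty)$, spaced $\pi\sqrt{2}/c$ apart. By the comparison theorem, the solution $\Psi_n$ of \eqref{eq_big_psi_ode} has a root between every two consecutive roots of $\phi_1$; applying this to each successive pair yields infinitely many roots of $\Psi_n$ in $(T, \infty) \subset (1, \infty)$, and therefore infinitely many roots of $\psi_n$ in $(1, \infty)$.

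The argument is essentially routine once the transformation of Theorem~\ref{lem_trans} is in hand, so I do not anticipate a serious obstacle. The only points that genuinely require care are the two facts isolated above: that $\lim_{t \to \infty} Q_n(t) = c^2$ is strictly positive (which legitimizes the constant-coefficient comparison), and that $\Psi_n$ is a nontrivial solution on the whole ray (which rests on the analyticity of $\psi_n$ from Remark~\ref{rem_continuation}). With both in place, the oscillation of $\Psi_n$ — and hence of $\psi_n$ — follows immediately.
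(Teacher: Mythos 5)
Your proof is correct and follows essentially the same route as the paper: both rest on the transformation of Theorem~\ref{lem_trans} together with the observation that $Q_n(t) \to c^2 > 0$ as $t \to \infty$, and then conclude oscillation from a standard second-order ODE fact. The only difference is the finishing citation — the paper invokes Theorem~\ref{thm_25_09} (root spacing), whereas you invoke the Sturm comparison Theorem~\ref{thm_08_12_zeros} against an explicit sine; your choice is, if anything, the tighter one, since Theorem~\ref{thm_25_09} bounds gaps between roots that are already assumed to exist, while the comparison argument actually produces the roots.
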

\begin{proof}
Suppose that $Q_n: (1,\infty) \to \Rc$ is defined via \eqref{eq_big_q_n}.
Then,
\begin{align}
\lim_{t \to \infty} Q_n(t) = c^2.
\label{eq_q_n_lim}
\end{align}
We conclude by combining \eqref{eq_q_n_lim} with 
\eqref{eq_big_psi_ode} of Theorem~\ref{lem_trans} above and
Theorem~\ref{thm_25_09} in Section~\ref{sec_oscillation_ode}.
\end{proof}
%%%%%%%%%%%%
The following theorem is a counterpart of Theorem~\ref{thm_five}
in Section~\ref{sec_pswf}.
\begin{thm}
Suppose that $n \geq 0$ is a positive integer, and that $\chi_n > c^2$. 
Suppose also that 
$x_1 < x_2 < \dots$ are the roots of $\psi_n$ in $\left(1,\infty\right)$,
and $y_1 < y_2 < \dots$ are the roots of $\psi_n'$ in
$\left(1,\infty\right)$. Then,
\begin{align}
1 < \frac{\sqrt{\chi_n}}{c} < y_1 < x_1 < y_2 < x_2 < \dots.
\label{eq_four}
\end{align}
\label{thm_four}
\end{thm}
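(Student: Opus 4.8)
The plan is to track the signs of $\psi_n$ and $\psi_n'$ as $t$ increases from $1$, splitting the ray at the turning point $t_0 = \sqrt{\chi_n}/c$, which satisfies $t_0>1$ precisely because $\chi_n>c^2$. Throughout I would normalize so that $\psi_n(1)>0$, which is legitimate since Theorem~\ref{thm_psi1_bound} gives $\psi_n^2(1)>1/2$, so $\psi_n(1)\neq 0$. The single identity used repeatedly is obtained by evaluating the ODE \eqref{eq_prolate_ode} at any point $t^\ast>1$ with $\psi_n'(t^\ast)=0$:
\begin{align}
\psi_n''(t^\ast) = \frac{\chi_n - c^2 (t^\ast)^2}{(t^\ast)^2 - 1}\cdot \psi_n(t^\ast).
\label{eq_plan_crit}
\end{align}
The coefficient in \eqref{eq_plan_crit} is positive for $1<t^\ast<t_0$ and negative for $t^\ast>t_0$, and this sign change is the source of the two qualitatively different behaviours on the two sides of $t_0$.

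First I would dispose of the interval $(1,t_0]$. Evaluating \eqref{eq_prolate_ode} at $t=1$, where the leading coefficient vanishes, gives $\psi_n'(1)=\tfrac{1}{2}(\chi_n-c^2)\psi_n(1)>0$. I claim $\psi_n'>0$ on all of $(1,t_0)$. If not, let $y^\ast$ be its first zero there; then $\psi_n$ is increasing on $(1,y^\ast)$, so $\psi_n(y^\ast)>\psi_n(1)>0$, whence \eqref{eq_plan_crit} forces $\psi_n''(y^\ast)>0$, contradicting the fact that $\psi_n'$ decreases to $0$ at $y^\ast$. Thus $\psi_n'>0$ and $\psi_n>0$ on $(1,t_0)$, so neither function has a root there. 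To handle the endpoint, I rule out $\psi_n'(t_0)=0$: if it held, then \eqref{eq_plan_crit} gives $\psi_n''(t_0)=0$, and substituting $\psi_n'(t_0)=\psi_n''(t_0)=0$ together with $\chi_n-c^2t_0^2=0$ into the third-order relation \eqref{eq_dpsi_rec3} yields $\psi_n'''(t_0)=-2c^2 t_0\psi_n(t_0)/(t_0^2-1)<0$; this makes $\psi_n'$ negative on both sides of $t_0$, contradicting $\psi_n'>0$ on $(1,t_0)$. Hence $\psi_n'(t_0)>0$, and both $\psi_n$ and $\psi_n'$ are positive and root-free on $(1,t_0]$; in particular $y_1>t_0$ and $x_1>t_0$.

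Next I would locate $y_1,x_1$ and set up the interlacing on $(t_0,\infty)$. By Corollary~\ref{cor_infinite}, $\psi_n$ has roots beyond $t_0$; let $x_1$ be the smallest. Since $\psi_n(t_0)>0$ and $\psi_n>0$ on $(t_0,x_1)$ with $x_1$ simple, $\psi_n'(x_1)<0$. As $\psi_n'(t_0)>0>\psi_n'(x_1)$, the intermediate value theorem produces a zero $y_1$ of $\psi_n'$ with $t_0<y_1<x_1$, giving the two leftmost strict inequalities of \eqref{eq_four}. The alternation $x_1<y_2<x_2<\cdots$ then follows by the two-sided Sturm argument: Rolle's theorem places a root of $\psi_n'$ strictly between consecutive roots of $\psi_n$, while \eqref{eq_plan_crit}, now with a negative coefficient, shows that at each root $y_k>t_0$ of $\psi_n'$ the sign of $\psi_n''(y_k)$ is opposite to that of $\psi_n(y_k)$; consequently $\psi_n$ takes opposite signs at consecutive roots of $\psi_n'$, forcing a root of $\psi_n$ strictly between them. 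Together these give an exactly-alternating strict interlacing with no room for extra roots, completing \eqref{eq_four}.

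I expect the main obstacle to be the clean handling of the turning point $t_0$ — specifically, excluding the degenerate possibility $\psi_n'(t_0)=0$, which is the only place where the first two derivatives vanish simultaneously and one must descend to \eqref{eq_dpsi_rec3} for the third derivative. Everything to the left of $t_0$ is a convexity-type monotonicity argument governed by the sign of the coefficient in \eqref{eq_plan_crit}, and everything to the right is classical Sturm interlacing; the genuinely delicate point is matching these two regimes across $t_0$ while keeping all inequalities strict.
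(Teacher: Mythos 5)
Your proof is correct and follows essentially the same route as the paper's: normalize $\psi_n(1)>0$, derive $\psi_n'(1)=\tfrac{1}{2}(\chi_n-c^2)\psi_n(1)>0$ from the ODE, rule out roots of $\psi_n'$ in $(1,\sqrt{\chi_n}/c)$ by the sign of $\psi_n''$ at a hypothetical first critical point, and then obtain the strict interlacing beyond the turning point from the same identity with reversed sign (exactly one root of $\psi_n'$ between consecutive roots of $\psi_n$). Your only departure is the explicit exclusion of the degenerate endpoint case $\psi_n'(\sqrt{\chi_n}/c)=0$ via the third-derivative relation \eqref{eq_dpsi_rec3} — a point the paper passes over silently when asserting \eqref{eq_four_both}, since positivity of $\psi_n'$ on the open interval only yields $\psi_n'(\sqrt{\chi_n}/c)\geq 0$; this extra care is a small but genuine improvement in rigor rather than a different approach.
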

\begin{proof}
Without loss of generality, we assume that 
\begin{align}
\psi_n(1) > 0.
\label{eq_psi1_pos}
\end{align}
We combine \eqref{eq_psi1_pos} with the assumption that $\chi_n > c^2$
and the ODE \eqref{eq_prolate_ode}
to obtain
\begin{align}
\psi_n'(1) = \frac{\chi_n - c^2}{2} \cdot \psi_n(1) > 0.
\label{eq_dpsi1_pos}
\end{align}
If, by contradiction to \eqref{eq_four}, 
\begin{align}
1 < y_1 < \frac{\sqrt{\chi_n}}{c},
\label{eq_four_contra1}
\end{align}
then, due to \eqref{eq_prolate_ode},
\begin{align}
\psi_n''(y_1) = - \frac{\chi_n-c^2 \cdot y_1^2}{1 - y_1^2} \cdot \psi_n(y_1)
>0,
\end{align}
in contradiction to \eqref{eq_dpsi1_pos}.
Therefore, $\psi_n'$ is positive in the interval 
$\left(1, \sqrt{\chi_n}/c\right)$; in particular,
\begin{align}
x_1 > \frac{\sqrt{\chi_n}}{c}
\label{eq_four_x1}
\end{align}
and
\begin{align}
\psi_n\left( \frac{\sqrt{\chi_n}}{c} \right) > 0, \quad
\psi_n'\left( \frac{\sqrt{\chi_n}}{c} \right) > 0.
\label{eq_four_both}
\end{align}
We combine \eqref{eq_four_x1} and \eqref{eq_four_both} to conclude that
\begin{align}
\frac{\sqrt{\chi_n}}{c} < y_1 < x_1.
\label{eq_four_triple}
\end{align}
Suppose now that $k$ is a positive integer, and $y$ is a root of $\psi_n'$
in the interval $(x_k, x_{k+1})$. Due to \eqref{eq_prolate_ode},
\begin{align}
\psi_n''(y) = -\frac{c^2 \cdot y^2 - \chi_n}{y^2-1} \cdot \psi_n(y).
\label{eq_four_ddpsi_y}
\end{align}
It follows from \eqref{eq_four_ddpsi_y} that $\psi_n'$ has exactly
one root between two consecutive roots of $\psi_n$. We combine
this observation with \eqref{eq_four_triple} to obtain \eqref{eq_four}.
\end{proof}
%%%%%%%%%%%%
In the following theorem, we describe several properties
of the modified Pr\"ufer transformation (see Section~\ref{sec_prufer})
applied to the prolate differential equation \eqref{eq_prolate_ode}.
\begin{thm}
Suppose that $n \geq 0$ is a positive integer, and that
$\chi_n > c^2$. Suppose also that
$x_1<x_2<\dots$ are the roots of $\psi_n$ in $(1,\infty)$,
and $y_1<y_2<\dots$ are the roots of $\psi_n'$ in $(1,\infty)$
(see Theorem~\ref{thm_four}).
Suppose furthermore that
the function $\theta: \left[\sqrt{\chi_n}/c,\infty\right) \to \Rc$ is
defined via the formula
\begin{align}
\theta(t) = 
\begin{cases}
-\frac{\pi}{2}, & \text{ if } t = \frac{\sqrt{\chi_n}}{c}, \\
\left(i-\frac{1}{2}\right) \cdot \pi, & \text{ if } t = x_i
\text{ for some } i = 1, 2, \dots, \\
\\
\text{atan}\left( -\sqrt{\frac{1-t^2}{\chi_n-c^2t^2}} \cdot
                        \frac{\psi_n'(t)}{\psi_n(t)} \right)
          + m(t) \cdot \pi, & \text{ otherwise }, \\
\end{cases}
\label{eq_prufer_theta}
\end{align}
where $m(t)$ is the number of the roots
of $\psi_n$ in the interval $(1,t)$.
Then, $\theta$ has
the following properties:
\begin{itemize}
\item $\theta$ is continuously differentiable in 
$\left[\sqrt{\chi_n}/c,\infty\right)$.
\item $\theta$ satisfies, for all $t > \sqrt{\chi_n}/c$, the
differential equation
\begin{align}
\theta'(t) = f(t) - v(t) \cdot \sin( 2 \theta(t) ),
\label{eq_prufer_theta_ode}
\end{align}
where the functions $f,v$ are defined, respectively, 
via \eqref{eq_jan_f}, \eqref{eq_jan_v} in
Section~\ref{sec_prufer}.
\item for each integer $k \geq -1$, there is a unique solution
to the equation
\begin{align}
\theta(t) = k \cdot \frac{\pi}{2},
\label{eq_prufer_sm2_equation}
\end{align}
for the unknown $t$ in $\left[\sqrt{\chi_n}/c,\infty\right)$.
More specifically,
\begin{align}
\label{eq_theta_at_x0}
& \theta\left( \frac{\sqrt{\chi_n}}{c} \right) = -\frac{\pi}{2}, \\
\label{eq_theta_at_x}
& \theta(x_i) = \left(i-\frac{1}{2}\right) \cdot \pi, \\
\label{eq_theta_at_y}
& \theta(y_i) = (i-1) \cdot \pi,
\end{align}
for each integer $i \geq 1$.
\end{itemize}
\label{thm_prufer}
\end{thm}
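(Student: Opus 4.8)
The plan is to reduce everything to the first-order ODE \eqref{eq_prufer_theta_ode} on the open ray $(\sqrt{\chi_n}/c,\infty)$ together with the Pr\"ufer dictionary between zeros of $\psi_n,\psi_n'$ and the levels $\theta\in\tfrac{\pi}{2}\mathbb{Z}$. Write $p(t)=t^2-1$, $q(t)=c^2t^2-\chi_n$ and $r=\sqrt{p/q}$, so on each subinterval of $(\sqrt{\chi_n}/c,\infty)$ where $\psi_n\neq 0$ the defining formula \eqref{eq_prufer_theta} reads $\tan\theta=-r\,\psi_n'/\psi_n$ with $m(t)$ locally constant. The first and central step is to differentiate this directly: from $\theta'=(-r\psi_n'/\psi_n)'/(1+r^2(\psi_n'/\psi_n)^2)$ I clear denominators and eliminate $\psi_n''$ using the prolate equation \eqref{eq_prolate_ode} written as $p\psi_n''+p'\psi_n'+q\psi_n=0$. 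The terms $\tfrac{1}{r}\psi_n^2+r\psi_n'^2$ then factor out of the denominator and leave the constant $1/r=f$ of \eqref{eq_jan_f}, while the cross term $\psi_n\psi_n'/(\psi_n^2+r^2\psi_n'^2)$ equals $-\tfrac{1}{2r}\sin 2\theta$; its coefficient collapses, through $r'/r=\tfrac12(p'/p-q'/q)$, to exactly $-v$ with $v$ as in \eqref{eq_jan_v}. The net result is $\theta'=f-v\sin 2\theta$. I expect this to be a routine, if slightly tedious, computation once the identity $\tfrac{r'}{2r}-\tfrac{p'}{2p}=-v$ is isolated.

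Next I glue across the zeros $x_i$ of $\psi_n$. Since each $x_i$ is a simple root, $\psi_n'/\psi_n\to-\infty$ as $t\to x_i^-$ and $\to+\infty$ as $t\to x_i^+$, so $\text{atan}(-r\psi_n'/\psi_n)$ drops by $\pi$ across $x_i$, while the integer $m(t)$ counting zeros in $(1,t)$ rises by one; the two jumps cancel and $\theta$ is continuous at $x_i$ with the prescribed value $(i-\tfrac12)\pi$. Continuous differentiability across $x_i$ is then inherited from the ODE of the first step: at $x_i$ there is no turning point, so $p,q,f,v$ are regular and $\sin 2\theta(x_i)=\sin((2i-1)\pi)=0$, whence the right-hand side $f-v\sin 2\theta$ is continuous and $\theta'$ has equal one-sided limits $f(x_i)$. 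The standard fact that a continuous function whose derivative extends continuously across a point is $C^1$ there finishes the gluing, with $\theta'(x_i)=f(x_i)$.

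The special values and uniqueness follow from the Pr\"ufer dictionary plus monotone threading of the levels. From $\tan\theta=-r\psi_n'/\psi_n$ and \eqref{eq_prufer_u_zero}, \eqref{eq_prufer_du_zero}, the zeros of $\psi_n$ are precisely the points where $\theta$ is an odd multiple of $\pi/2$, and the zeros of $\psi_n'$ the points where $\theta$ is a multiple of $\pi$. The crucial observation is that whenever $\theta(t)=k\pi/2$ one has $\sin 2\theta=0$, so by the ODE $\theta'(t)=f(t)>0$ for $t>\sqrt{\chi_n}/c$; hence $\theta$ crosses each half-integer multiple of $\pi$ strictly upward and can never return to a level it has passed, which gives uniqueness of the solution of $\theta(t)=k\pi/2$. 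For existence and the explicit assignments I combine the interlacing $\sqrt{\chi_n}/c<y_1<x_1<y_2<x_2<\cdots$ of Theorem~\ref{thm_four} with the fact (Corollary~\ref{cor_infinite}) that $\psi_n$ has infinitely many zeros in $(1,\infty)$: starting from $\theta(\sqrt{\chi_n}/c)=-\pi/2$ and ratcheting upward, the ordered crossings are forced to be $\theta(y_i)=(i-1)\pi$ and $\theta(x_i)=(i-\tfrac12)\pi$, and $\theta\to\infty$, so every level $k\pi/2$ with $k\ge -1$ is attained exactly once.

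The main obstacle is the left endpoint $t_0=\sqrt{\chi_n}/c$, which is a turning point of \eqref{eq_prolate_ode}: there $q(t_0)=0$, the modified Pr\"ufer weight $\sqrt{pq}$ degenerates, $f\to 0$ and $v\to\infty$. The usable input is that $\psi_n(t_0)>0$ and $\psi_n'(t_0)>0$, which is exactly \eqref{eq_four_both} from the proof of Theorem~\ref{thm_four}; together with $m(t)\equiv 0$ near $t_0$ (no zero of $\psi_n$ precedes $x_1>t_0$) this gives $-r\psi_n'/\psi_n\to-\infty$ and hence $\theta(t)\to-\pi/2$ as $t\to t_0^+$, securing continuity on the closed ray and the value $\theta(t_0)=-\pi/2$. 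I would therefore establish the ODE, and with it $C^1$ regularity, on the open ray $(\sqrt{\chi_n}/c,\infty)$ and pass to $t_0$ through this one-sided limit; the delicate point to handle with care is that the square-root vanishing of $q$ at $t_0$ makes $\theta'$ blow up as $t\to t_0^+$, so the regularity at the endpoint must be argued through continuity and the structure of $f,v$ near the turning point rather than by naively differentiating the defining formula there.
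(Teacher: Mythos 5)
Your proposal is correct and follows essentially the same route as the paper's own (very terse) proof: the paper simply invokes the generic modified Pr\"ufer computation \eqref{eq_general_dtheta}, \eqref{eq_modified_prufer}, \eqref{eq_jan_prufer_fv} (with the implicit sign flip $\theta\mapsto-\theta$ that converts $\theta'=-f-v\sin 2\theta$ into \eqref{eq_prufer_theta_ode}), the interlacing of Theorem~\ref{thm_four}, and the dictionary ``$\theta(t)\in\tfrac{\pi}{2}\mathbb{Z}$ iff $t$ is a root of $\psi_n$ or $\psi_n'$''; you supply exactly the details it leaves implicit, and your identities check out (in particular $\tfrac{r'}{2r}-\tfrac{p'}{2p}=-v$ and $\psi_n\psi_n'/(\psi_n^2+r^2\psi_n'^2)=-\tfrac{1}{2r}\sin 2\theta$, and the cancellation of the $\pi$-jump of $\mathrm{atan}$ against the jump of $m(t)$ at each $x_i$). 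Your uniqueness argument is marginally different and arguably cleaner: rather than the paper's combination of the dictionary with \eqref{eq_theta_at_x0}--\eqref{eq_theta_at_y}, you observe that at every level $k\pi/2$ one has $\sin 2\theta=0$, hence $\theta'=f>0$, so all crossings are strictly upward and each level is hit exactly once; the two arguments rest on the same mechanism. One point deserves emphasis: your caveat at the turning point $t_0=\sqrt{\chi_n}/c$ is not merely cautious but accurate. Since $\psi_n(t_0)>0$ and $\psi_n'(t_0)>0$ by \eqref{eq_four_both}, one gets $\theta(t)+\tfrac{\pi}{2}\sim\kappa\sqrt{t-t_0}$ with $\kappa>0$, so $\theta'(t)\sim\tfrac{\kappa}{2}(t-t_0)^{-1/2}\to\infty$; thus the theorem's literal claim of continuous differentiability on the \emph{closed} ray is loose, and the paper's proof (which establishes the ODE only on the open ray and dismisses continuity as obvious) never addresses it. Your resolution --- $C^1$ on the open ray plus one-sided continuity with $\theta(t_0^+)=-\pi/2$ --- is the correct statement, and it is all that the rest of the paper ever uses.
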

\begin{proof}
We combine \eqref{eq_four} in 
Theorem~\ref{thm_four} with \eqref{eq_prufer_theta} to conclude that
$\theta$
is well defined for all $t \geq \sqrt{\chi_n}/c$.
Obviously, $\theta$ is continuous, and the identities
\eqref{eq_theta_at_x0}, \eqref{eq_theta_at_x}, \eqref{eq_theta_at_y}
follow immediately from the combination
of Theorem~\ref{thm_four} and \eqref{eq_prufer_theta}.
In addition, $\theta$ satisfies the ODE
\eqref{eq_prufer_theta_ode} in $\left(\sqrt{\chi_n}/c,\infty\right)$ due to
\eqref{eq_general_dtheta}, \eqref{eq_modified_prufer},
\eqref{eq_jan_prufer_fv} in Section~\ref{sec_prufer}.

Finally, to establish the uniqueness of
the solution to the equation \eqref{eq_prufer_sm2_equation},
we make the following observation. Due to \eqref{eq_prufer_theta},
for any point $t > \sqrt{\chi_n}/c$, the value $\theta(t)$ is an integer
multiple of $\pi/2$ if and only if $t$ is either a root of $\psi_n$
or a root of $\psi_n'$. 
We conclude the proof by combining this observation with 
\eqref{eq_theta_at_x0},
\eqref{eq_theta_at_x} and \eqref{eq_theta_at_y}.
\end{proof}
%%%%%%%%%%%%%%%%%%%
The following theorem is illustrated in Table~\ref{t:test76}
(see Experiment 2 in Section~\ref{sec_exp2}).
%
% verified in test76
%
%%%%%%%%%%%
\begin{thm}
Suppose that $n \geq 0$ is an integer, and that
$\chi_n > c^2$. Suppose also that $x_1$ is the minimal root
of $\psi_n$ in $(1,\infty)$. Then,
\begin{align}
x_1 - \frac{\sqrt{\chi_n}}{c} > \frac{ \pi }{ 2c }.
\label{eq_x1_x0_good}
\end{align}
Moreover,
\begin{align}
\sqrt{ \frac{x_1^2-1}{x_1^2-(\chi_n/c^2)} } < 
\frac{2}{\pi} \cdot c \cdot \left( 
x_1 - \frac{\sqrt{\chi_n}}{c}
\right).
\label{eq_x1_x0_smart}
\end{align}
\label{thm_x1_x0_good}
\end{thm}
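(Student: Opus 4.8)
The plan is to derive both assertions from the modified Pr\"ufer representation of Theorem~\ref{thm_prufer}, using only the values of the phase $\theta$ at the first root $y_1$ of $\psi_n'$ and the first root $x_1$ of $\psi_n$ in $(1,\infty)$. Write $f,v$ for the functions of \eqref{eq_jan_f}, \eqref{eq_jan_v} and set $\tau=\sqrt{\chi_n}/c$. A one-line computation gives
\[
f(x_1)=c\sqrt{\frac{x_1^2-\tau^2}{x_1^2-1}},\qquad
\frac{c}{f(x_1)}=\sqrt{\frac{x_1^2-1}{x_1^2-(\chi_n/c^2)}},
\]
so that the second assertion \eqref{eq_x1_x0_smart} is \emph exactly equivalent to the single inequality $f(x_1)\cdot\left(x_1-\tau\right)>\pi/2$. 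Moreover, since $\tau>1$ forces $x_1^2-\tau^2<x_1^2-1$ and hence $f(x_1)<c$, this same inequality yields $c(x_1-\tau)>f(x_1)(x_1-\tau)>\pi/2$, i.e. the first assertion \eqref{eq_x1_x0_good}. Thus everything reduces to proving $f(x_1)(x_1-\tau)>\pi/2$.

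To prove it I would work on the interval $(y_1,x_1)$, where by \eqref{eq_theta_at_y} and \eqref{eq_theta_at_x} (with $i=1$) one has $\theta(y_1)=0$ and $\theta(x_1)=\pi/2$. Continuity of $\theta$ together with the uniqueness clause of Theorem~\ref{thm_prufer} confines $\theta$ strictly to $(0,\pi/2)$ on $(y_1,x_1)$, so $\sin(2\theta(t))>0$ there. Since $v(t)>0$ for $t>\tau$ (both summands in \eqref{eq_jan_v} are positive once $t>1$ and $c^2t^2>\chi_n$), the ODE \eqref{eq_prufer_theta_ode} gives
\[
\theta'(t)=f(t)-v(t)\sin(2\theta(t))<f(t),\qquad t\in(y_1,x_1).
\]
Because $f$ is strictly increasing on $(\tau,\infty)$ — the derivative of $f^2=c^2\,(t^2-\tau^2)/(t^2-1)$ has numerator proportional to $\tau^2-1>0$ — we get $f(t)<f(x_1)$ for $t<x_1$, hence $\theta'(t)<f(x_1)$ on $(y_1,x_1)$. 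Integrating,
\[
\frac{\pi}{2}=\theta(x_1)-\theta(y_1)=\int_{y_1}^{x_1}\theta'(t)\,dt<f(x_1)\,(x_1-y_1).
\]
Finally \eqref{eq_four} of Theorem~\ref{thm_four} gives $\tau<y_1$, so $x_1-y_1<x_1-\tau$, and therefore $\pi/2<f(x_1)(x_1-\tau)$, as required.

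The argument is essentially self-contained once Theorem~\ref{thm_prufer} is available, so I do not expect a serious obstacle; the one point needing care is the confinement $0<\theta(t)<\pi/2$ on $(y_1,x_1)$. I would justify this from continuity and uniqueness alone: were $\theta$ to attain $0$ (resp. $\pi/2$) at an interior point, or to cross one of these levels, the intermediate value theorem would produce a second preimage of $0$ (resp. of $\pi/2$), contradicting the uniqueness asserted in Theorem~\ref{thm_prufer}. A longer alternative would be to establish global monotonicity $\theta'>0$ as in Theorem~\ref{thm_prufer_old}, but the uniqueness route is shorter. I would also note that the computation actually delivers the sharper bound $f(x_1)(x_1-y_1)>\pi/2$, which is weakened only at the last step in order to state the result in terms of the turning point $\tau=\sqrt{\chi_n}/c$.
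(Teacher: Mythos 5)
Your proof is correct and takes essentially the same route as the paper's: both integrate the Pr\"ufer phase ODE \eqref{eq_prufer_theta_ode} over $(y_1,x_1)$, use $\sin(2\theta(t))>0$ there together with $v(t)>0$ to drop the oscillatory term, and exploit the monotonicity of $f$ and the ordering $\sqrt{\chi_n}/c<y_1<x_1$ from Theorem~\ref{thm_four}. The only differences are organizational and harmless — you package both claims into the single inequality $f(x_1)\cdot\left(x_1-\sqrt{\chi_n}/c\right)>\pi/2$ (the paper instead bounds $f<c$ for \eqref{eq_x1_x0_good} and extends the integral down to $\sqrt{\chi_n}/c$ with $f(t)\leq f(x_1)$ for \eqref{eq_x1_x0_smart}), and you justify the confinement $0<\theta<\pi/2$ on $(y_1,x_1)$ via the uniqueness clause of Theorem~\ref{thm_prufer} rather than via \eqref{eq_four_both}.
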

\begin{proof}
Suppose that $y_1$ is the minimal root of $\psi_n'$ in $(1,\infty)$.
Due to Theorem~\ref{thm_four},
\begin{align}
\frac{\sqrt{\chi_n}}{c} < y_1 < x_1.
\label{eq_x1_x0_triple}
\end{align}
Moreover, due to \eqref{eq_prufer_theta} in Theorem~\ref{thm_prufer}
and \eqref{eq_four_both} in the proof of Theorem~\ref{thm_four},
\begin{align}
\sin(2 \theta(t)) > 0,
\label{eq_sin_pos}
\end{align}
for all real $y_1 < t < x_1$,
where $\theta$ is defined via \eqref{eq_prufer_theta}.
We combine \eqref{eq_sin_pos} with
\eqref{eq_prufer_theta_ode}, \eqref{eq_theta_at_x}, \eqref{eq_theta_at_y}
to obtain
\begin{align}
\frac{\pi}{2}
& \; = \int_{y_1}^{x_1} \theta'(t) \; dt
     = \int_{y_1}^{x_1} \left( f(t) - v(t) \cdot \sin(2\theta(t)) \right)\;dt
     \nonumber \\
& \; < \int_{y_1}^{x_1} f(t) \; dt\
     = \int_{y_1}^{x_1} \sqrt{ c^2 - \frac{\chi_n-c^2}{t^2-1}} \; dt
     < c \cdot (x_1-y_1).
\label{eq_x1_y1}
\end{align}
We combine \eqref{eq_x1_x0_triple} with \eqref{eq_x1_y1}
to obtain \eqref{eq_x1_x0_good}. It also follows from \eqref{eq_x1_y1}
that
\begin{align}
\frac{\pi}{2} < \int_{\sqrt{\chi_n}/c}^{x_1}
\sqrt{ c^2 - \frac{\chi_n-c^2}{t^2-1}} \; dt
<
\left(x_1 - \frac{\sqrt{\chi_n}}{c}\right) \cdot
\sqrt{c^2 - \frac{\chi_n-c^2}{x_1^2-1}},
\end{align}
which implies \eqref{eq_x1_x0_smart}.
\end{proof}
%%%%%%%%%%%%%%
The following theorem is a consequence of Theorems~\ref{lem_trans},
\ref{thm_x1_x0_good}.
The results of the corresponding numerical experiments
are reported in
Tables~\ref{t:test81a}, \ref{t:test81b}
(see Experiment 3 in Section~\ref{sec_exp3}).
%%%%%%%%%%%
\begin{thm}
Suppose that $n \geq 0$ is an integer, and that $\chi_n > c^2$.
Suppose also that $x_1 < x_2 < \dots$ are the roots of $\psi_n$
in $(1, \infty)$ (see Theorem~\ref{thm_four}).
Then, 
\begin{align}
\frac{ \pi }{ c }
\sqrt{ 1 - \frac{ 1 }{ 1 + c^2 \left(x_k^2 - 1\right)^2 } } \leq
x_{k+1} - x_k \leq \frac{ \pi }{ c }
   \sqrt{ \frac{x_k^2 - 1}{ x_k^2 - (\chi_n/c^2) } },
\label{eq_25_09_h1}
\end{align}
for each integer $k = 1, 2, \dots$.
If, in addition, $c > 1/5$ and
\begin{align}
n > \frac{2}{\pi} c + \frac{1}{2\pi} \cdot 
\left(\log c + \log(16 \cdot e)\right),
\label{eq_jan_20}
\end{align}
then 
\begin{align}
x_2 - x_1 \geq 
x_3 - x_2 \geq \dots \geq x_{k+1} - x_k \geq \dots \geq
\frac{ \pi }{ c }.
\label{eq_25_09_h3}
\end{align}
\label{thm_spacing}
\end{thm}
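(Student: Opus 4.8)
The plan is to pass to the transformed equation of Theorem~\ref{lem_trans} and then apply the Sturm-type comparison results of Section~\ref{sec_oscillation_ode}. By Theorem~\ref{lem_trans}, the function $\Psi_n(t) = \psi_n(t)\sqrt{t^2-1}$ satisfies $\Psi_n''(t) + Q_n(t)\Psi_n(t) = 0$ on $(1,\infty)$, and since $\sqrt{t^2-1} > 0$ there, $\Psi_n$ and $\psi_n$ share the roots $x_1 < x_2 < \dots$. Writing $u = t^2 - 1$, the potential \eqref{eq_big_q_n} becomes $Q_n = c^2 - (\chi_n-c^2)/u + 1/u^2$, which I will analyze throughout. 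For $t \geq x_1 > \sqrt{\chi_n}/c$ (Theorem~\ref{thm_four}) the first summand of \eqref{eq_big_q_n} is positive, so $Q_n > 0$ on the range of interest, and Theorem~\ref{thm_25_09} is applicable.

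For the two-sided bound \eqref{eq_25_09_h1}, I would bound $Q_n$ on each interval $[x_k, x_{k+1}]$ and invoke Theorem~\ref{thm_25_09}. Since $\chi_n > c^2$, the summand $(c^2 t^2-\chi_n)/(t^2-1) = c^2 - (\chi_n-c^2)/u$ is strictly increasing in $t$ and bounded above by $c^2$, while $1/(t^2-1)^2$ is positive and decreasing. Hence for $t \geq x_k$ one has the lower bound $Q_n(t) \geq (c^2 x_k^2-\chi_n)/(x_k^2-1) =: A^2$ (discard the positive second term, use monotonicity of the first) and the upper bound $Q_n(t) \leq c^2 + 1/(x_k^2-1)^2 =: B^2$. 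Feeding $A^2, B^2$ into Theorem~\ref{thm_25_09} yields $\pi/B < x_{k+1}-x_k < \pi/A$, and a direct algebraic simplification of $\pi/A$ and $\pi/B$ reproduces exactly the right- and left-hand sides of \eqref{eq_25_09_h1}.

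For the monotonicity statement \eqref{eq_25_09_h3}, the key is to place all roots in the region where $Q_n$ is increasing. From $dQ_n/du = ((\chi_n-c^2)u - 2)/u^3$, the function $Q_n$ attains its minimum at $u = 2/(\chi_n-c^2)$ and is increasing for $u > 2/(\chi_n-c^2)$; thus it suffices to show $(\chi_n-c^2)(x_1^2-1) > 2$. Here I would invoke Theorem~\ref{thm_khi_1} with the calibrated choice $\delta = \pi/4$: the hypothesis $c > 1/5$ guarantees $\delta < (5\pi/4)c$, the assumption \eqref{eq_jan_20} is precisely the lower bound on $n$ required by Theorem~\ref{thm_khi_1} for this $\delta$ (since $\tfrac{2}{\pi^2}\cdot\tfrac{\pi}{4}\log(\tfrac{4e\pi c}{\pi/4}) = \tfrac{1}{2\pi}(\log c + \log(16e))$), and the conclusion is $\chi_n - c^2 > c$. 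Combining this with the lower bound $x_1 > \sqrt{\chi_n}/c + \pi/(2c)$ of Theorem~\ref{thm_x1_x0_good} gives $x_1^2 - 1 > \pi\sqrt{\chi_n}/c^2$, whence $(\chi_n-c^2)(x_1^2-1) > c\cdot\pi c/c^2 = \pi > 2$ (using $\chi_n - c^2 > c$ and $\sqrt{\chi_n} > c$). With all roots $x_k \geq x_1$ lying strictly beyond the minimum of $Q_n$, Theorem~\ref{thm_25_09_2} yields the non-increasing spacings $x_2-x_1 \geq x_3-x_2 \geq \dots$. Finally, the same estimate shows $(\chi_n-c^2)(t^2-1) > 1$ for $t \geq x_1$, so that $Q_n(t) = c^2 - (\chi_n-c^2)/u + 1/u^2 < c^2$ there; hence the upper bound $B$ on each $[x_k,x_{k+1}]$ satisfies $B < c$ and $x_{k+1}-x_k > \pi/c$, completing \eqref{eq_25_09_h3}.

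I expect the main obstacle to be the verification that $x_1$ lies beyond the minimum of $Q_n$, i.e. the inequality $(\chi_n-c^2)(x_1^2-1) > 2$; this is where the precise hypotheses \eqref{eq_jan_20} and $c > 1/5$ are consumed, through the nonobvious calibration $\delta = \pi/4$ in Theorem~\ref{thm_khi_1} combined with the root-location bound of Theorem~\ref{thm_x1_x0_good}. The remaining steps amount to routine bounding of $Q_n$ and algebraic simplification.
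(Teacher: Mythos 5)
Your proposal is correct and follows essentially the same route as the paper's proof: the transformation of Theorem~\ref{lem_trans}, the identical two-sided bounds on $Q_n$ over $[x_k,x_{k+1}]$ fed into Theorem~\ref{thm_25_09} for \eqref{eq_25_09_h1}, and the same calibration $\delta=\pi/4$ in Theorem~\ref{thm_khi_1} combined with Theorem~\ref{thm_x1_x0_good} to verify $(\chi_n-c^2)(x_1^2-1)>2$, after which Theorem~\ref{thm_25_09_2} gives \eqref{eq_25_09_h3}. Your only (harmless) divergence is that you make explicit the step $Q_n(t)<c^2$ for $t\geq x_1$ to obtain the final bound $x_{k+1}-x_k\geq\pi/c$, which the paper leaves implicit.
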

\begin{proof}
Suppose that the functions $\Psi_n, Q_n: (1,\infty) \to \Rc$
are those of Theorem~\ref{lem_trans} above. Suppose also
that $k \geq 1$ is a positive integer. Then, due
to \eqref{eq_big_q_n},
\begin{align}
& c^2 \cdot \frac{ x_k^2 - (\chi_n/c^2) }{ x_k^2 - 1 } < 
c^2 \cdot \frac{ t^2 - (\chi_n/c^2) }{ t^2 - 1 } < \nonumber \\
& Q_n(t) < 
c^2 + \frac{1}{\brk{t^2-1}^2} < c^2 + \frac{1}{\brk{x_k^2-1}^2},
\label{eq_qn_upper}
\end{align}
for all real $x_k < t < x_{k+1}$.
We observe
that $\psi_n$ and $\Psi_n$ have the same roots in $(1,\infty)$ due
to \eqref{eq_big_psi_n}, and combine this observation
with \eqref{eq_big_psi_ode} of Theorem~\ref{lem_trans}
and Theorem~\ref{thm_25_09} of Section~\ref{sec_oscillation_ode}
to obtain \eqref{eq_25_09_h1}.

Now we assume that $c > 1/5$ and that $n$ satisfies \eqref{eq_jan_20}.
Also, we define the real number $\delta$ via the formula
\begin{align}
\delta = \frac{\pi}{4}.
\label{eq_25_09_delta}
\end{align}
We recall that $c>1/5$ and 
combine \eqref{eq_jan_20},
\eqref{eq_25_09_delta} and Theorem~\ref{thm_khi_1} in Section~\ref{sec_pswf}
to conclude that
\begin{align}
\frac{\chi_n - c^2}{c} > 1.
\label{eq_25_09_gc1}
\end{align}
Next, we differentiate $Q_n$ with respect to $t$ to obtain
\begin{align}
Q_n'(t) = 
\frac{ 2 \brk{\chi_n - c^2} t }{ \brk{t^2-1}^2 } -
\frac{ 4t }{ \brk{t^2 - 1}^3 }
= 
\frac{ 2t }{ \brk{t^2 - 1}^3 } \brk{ \brk{\chi_n-c^2}\brk{t^2-1} - 2 }.
\label{eq_25_09_dq}
\end{align}
We combine \eqref{eq_25_09_gc1} with Theorem~\ref{thm_x1_x0_good}
to obtain
\begin{align}
& \brk{\chi_n-c^2}\brk{x_1^2-1} - 2 >
  \left(\frac{ \chi_n-c^2 }{ c }\right)^2 + \pi \cdot \frac{\chi_n-c^2}{c} - 2
> 0,
\label{eq_25_09_inter}
\end{align}
and substitute \eqref{eq_25_09_inter} into \eqref{eq_25_09_dq} to conclude
that
\begin{align}
Q_n'(t) > 0,
\label{eq_dqn_pos}
\end{align}
for all $t > x_1$. Thus \eqref{eq_25_09_h3} follows
from the combination of \eqref{eq_dqn_pos} and
Theorem~\ref{thm_25_09_2} in Section~\ref{sec_oscillation_ode}.
\end{proof}
%%%%%%%%%%%%
\begin{remark}
Extensive numerical experiments seem to indicate that,
if $\chi_n > c^2$, then \eqref{eq_25_09_h3}
always holds. In other words, the assumption \eqref{eq_jan_20}
is unnecessary.
\label{rem_big_spacing}
\end{remark}
%%%%%%%%%%%%%%%%%%%%
The following theorem is a counterpart of
Theorem~\ref{thm_spacing} in the case $\chi_n < c^2$.
%%%%%%%%%%%
\begin{thm}
Suppose that 
$n \geq 0$ is an integer, and that
$\chi_n < c^2$.
Suppose also that $x_1 < x_2 < \dots$ are the roots of $\psi_n$
in $(1, \infty)$.
Then,
\begin{align}
x_2 - x_1 \leq 
x_3 - x_2 \leq \dots \leq x_{k+1} - x_k \leq \dots \leq
\frac{ \pi }{ c }.
\label{eq_25_09_h3_reverse}
\end{align}
\label{thm_spacing_2}
\end{thm}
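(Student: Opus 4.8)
The plan is to reuse the first-order-free equation \eqref{eq_big_psi_ode} of Theorem~\ref{lem_trans} and to exploit the fact that, when $\chi_n < c^2$, the coefficient $Q_n$ defined in \eqref{eq_big_q_n} is simultaneously bounded below by $c^2$ and monotonically decreasing on the entire ray $(1,\infty)$. Since $\Psi_n(t) = \psi_n(t)\sqrt{t^2-1}$ shares the roots of $\psi_n$ in $(1,\infty)$, and since $\psi_n$ has infinitely many such roots by Corollary~\ref{cor_infinite}, the numbers $x_1 < x_2 < \dots$ are precisely the consecutive roots of the solution $\Psi_n$ of $\Psi_n'' + Q_n \Psi_n = 0$.

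First I would establish the upper bound $\pi/c$. Writing $Q_n(t) - c^2 = (c^2-\chi_n)/(t^2-1) + 1/(t^2-1)^2$, both summands are strictly positive for $t>1$ because $\chi_n < c^2$; hence $Q_n(t) > c^2$ throughout $(1,\infty)$. Applying Theorem~\ref{thm_25_09} with lower bound $A = c$ on any interval $[x_k, x_{k+1}]$ then yields $x_{k+1} - x_k < \pi/c$, which gives the final inequality in \eqref{eq_25_09_h3_reverse}.

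Next I would establish the monotonicity of the gaps. The computation \eqref{eq_25_09_dq} gives $Q_n'(t) = \frac{2t}{(t^2-1)^3}\left[(\chi_n-c^2)(t^2-1) - 2\right]$. Since $\chi_n - c^2 < 0$ and $t > 1$, the bracketed factor is strictly negative while the prefactor is positive, so $Q_n'(t) < 0$ for all $t > 1$; thus $Q_n$ is (strictly) decreasing on $(1,\infty)$. Invoking Theorem~\ref{thm_25_09_2} with the non-increasing function $g = Q_n$ and the solution $\Psi_n$, the stretching conclusion \eqref{eq_stretch} produces exactly $x_2 - x_1 \leq x_3 - x_2 \leq \dots$, completing \eqref{eq_25_09_h3_reverse}.

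There is no serious obstacle here; the case $\chi_n < c^2$ is in fact cleaner than its counterpart in Theorem~\ref{thm_spacing}, where the sign of $Q_n'$ was not automatic and forced the extra hypothesis \eqref{eq_jan_20}. The only points needing a little care are to use the sharp inequalities (the lower bound $c^2$ on $Q_n$ to control the gaps, and the correct direction of monotonicity, namely that a non-increasing $g$ yields non-decreasing gaps), and to observe that $Q_n$ is monotone on all of $(1,\infty)$ rather than merely beyond some threshold, which is precisely what removes the need for any lower bound on $n$.
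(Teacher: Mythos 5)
Your proposal is correct and follows essentially the same route as the paper: transform to $\Psi_n'' + Q_n\Psi_n = 0$ via Theorem~\ref{lem_trans}, note that $Q_n$ is monotonically decreasing on $(1,\infty)$ when $\chi_n < c^2$, and invoke Theorem~\ref{thm_25_09_2} to get non-decreasing gaps. You are merely more explicit than the paper in two places — verifying $Q_n' < 0$ via the computation \eqref{eq_25_09_dq} where the paper cites inspection of \eqref{eq_big_q_n}, and deriving the bound $\pi/c$ from $Q_n > c^2$ together with Theorem~\ref{thm_25_09} — both of which are sound and, if anything, fill in detail the paper leaves implicit.
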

\begin{proof}
Suppose that the functions $\Psi_n, Q_n: (1,\infty) \to \Rc$
are those of Theorem~\ref{lem_trans} above.
We observe that $Q_n$ is monotonically decreasing in $(1,\infty)$,
due to \eqref{eq_big_q_n}. Also, we observe that $\Psi_n$
and $\psi_n$ have the same zeros in $(1,\infty)$, due
to \eqref{eq_big_psi_n}. We combine these observations
with \eqref{eq_big_psi_ode} and
Theorem~\ref{thm_25_09_2} in Section~\ref{sec_oscillation_ode}
to obtain \eqref{eq_25_09_h3_reverse}.
\end{proof}

%%%%%%%%%%%%%%%%%%%%%%%%%%%%%%%%%%%%%%%%%%%%%
\subsection{Growth Properties of PSWFs}
\label{sec_growth}
In this subsection, we find several bounds
on $\left|\psi_n\right|$ and $\left|\psi_n'\right|$.
Throughout this subsection, $c > 0$ is 
a positive real number
and $n$ is a non-negative integer. The principal result of this subsection
is Theorem~\ref{thm_consec_der}.
%
% It follows by Theorem~\ref{thm_good_n_good_khi} that $\chi_n > c^2$.
%
%%%%%%%%%%%
\subsubsection{Transformation of the Prolate ODE into 
a 2$\times$2 System}
\label{sec_two_by_two}
The ODE \eqref{eq_prolate_ode} can be transformed into a linear
two-dimensional
first-order system of the form 
\begin{align}
Y'(t) = A(t) Y(t),
\label{eq_prolate_ode_2D_again}
\end{align}
where the diagonal entries of $A(t)$ vanish. The application of
Theorem~\ref{thm_lewis} in Section~\ref{sec_growth_ode}
to \eqref{eq_prolate_ode_2D_again}
yields somewhat 
crude but useful estimates on the magnitude of $\psi_n$ and $\psi_n'$.
The following theorem is a technical tool to be used in
the rest of this subsection.
This theorem is illustrated in
Figures~\ref{fig:test82c}, \ref{fig:test82d} (see
Experiment 4 in Section~\ref{sec_exp4}).
%%%%%%%%%%%%%%%
\begin{thm}
Suppose that $n \geq 0$ is a non-negative integer, and that
the functions $p, q: \Rc \to \Rc$ are defined via 
\eqref{eq_prufer_1} in Section~\ref{sec_prufer}. 
Suppose also that the functions 
$Q, \tilde{Q} : (\max\left\{\sqrt{\chi_n}/c,1\right\}, \infty) 
\to \Rc$ are defined,
respectively, via the formulae
\begin{align}
Q(t) 
& \; = \psi_n^2(t) + \frac{ p(t) }{ q(t) } \cdot \brk{\psi'_n(t)}^2 
 = \psi_n^2(t) + 
       \frac{ \brk{t^2-1} \cdot \brk{\psi'_n(t)}^2 }{ c^2 t^2 - \chi_n}
\label{eq_psi1_q}
\end{align}
and
\begin{align}
\tilde{Q}(t) 
& \; = p(t) \cdot q(t) \cdot Q(t)  \nonumber \\
& \; = \brk{t^2 - 1} \cdot \brk{ \brk{c^2 t^2-\chi_n} \cdot \psi_n^2(t) +
                           \brk{t^2-1} \cdot \brk{\psi'_n(t)}^2 }.
\label{eq_psi1_qtilde}
\end{align}
Then, $Q$ is decreasing in
$(\max\left\{\sqrt{\chi_n}/c,1\right\}, \infty)$,
and $\tilde{Q}$ is increasing in
$(\max\left\{\sqrt{\chi_n}/c,1\right\}, \infty)$.
\label{lem_Q_Q_tilde}
\end{thm}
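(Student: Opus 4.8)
The plan is to establish both monotonicity statements by direct differentiation, using the prolate ODE to eliminate the second derivative of $\psi_n$. Writing \eqref{eq_prolate_ode} in the self-adjoint form \eqref{eq_prufer_0} with $p(t)=t^2-1$ and $q(t)=c^2t^2-\chi_n$ (as in \eqref{eq_prufer_1}) gives $\left(p(t)\psi_n'(t)\right)'=-q(t)\psi_n(t)$, equivalently $p\psi_n''=-q\psi_n-p'\psi_n'$. On the ray $t>\max\{\sqrt{\chi_n}/c,1\}$ we have $p>0$, $q>0$, and also $p'=2t>0$, $q'=2c^2t>0$, so that $(pq)'=p'q+pq'>0$; these sign facts are what drive the argument.

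First I would handle $\tilde{Q}=pq\,\psi_n^2+p^2(\psi_n')^2$, since it is the cleaner object. Differentiating and grouping terms, the only second-derivative contribution is $p^2\cdot 2\psi_n'\psi_n''=2p\psi_n'(p\psi_n'')$; substituting $p\psi_n''=-q\psi_n-p'\psi_n'$ turns this into $-2pq\,\psi_n\psi_n'-2pp'(\psi_n')^2$. This exactly cancels the terms $2pq\,\psi_n\psi_n'$ and $2pp'(\psi_n')^2$ coming from differentiating $pq\,\psi_n^2$ and $p^2(\psi_n')^2$, leaving the single surviving term $\tilde{Q}'(t)=(pq)'(t)\cdot\psi_n^2(t)$. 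Since $(pq)'>0$, this is nonnegative (and positive wherever $\psi_n\neq0$), so $\tilde{Q}$ is increasing on $(\max\{\sqrt{\chi_n}/c,1\},\infty)$.

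Then $Q$ follows immediately from $Q=\tilde{Q}/(pq)$: the quotient rule gives $Q'=\big((pq)'\psi_n^2\cdot pq-\tilde{Q}(pq)'\big)/(pq)^2=(pq)'\big(pq\,\psi_n^2-\tilde{Q}\big)/(pq)^2$, and since $pq\,\psi_n^2-\tilde{Q}=-p^2(\psi_n')^2$ we obtain $Q'(t)=-(pq)'(t)\,p^2(t)\,(\psi_n'(t))^2/(pq)^2(t)\le0$. Hence $Q$ is decreasing. There is no genuine obstacle here: the entire content is the algebraic cancellation that produces $\tilde{Q}'=(pq)'\psi_n^2$, and the only care needed is bookkeeping the cross terms. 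I note that this is the exact mirror of Theorem~\ref{thm_Q_Q_tilde}: on the inner interval one works with $1-t^2$ and $\chi_n-c^2t^2$, for which the analogue of $(pq)'$ is negative, reversing both monotonicities.
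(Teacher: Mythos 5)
Your proof is correct, and the final expressions agree exactly with the paper's: your $\tilde{Q}'=(pq)'\psi_n^2$ is the paper's $\tilde{Q}'(t)=2t\,(2c^2t^2-\chi_n-c^2)\,\psi_n^2(t)$ (since $(pq)'=p'q+pq'=2t\,(2c^2t^2-\chi_n-c^2)$), and your $Q'=-(pq)'\,p^2(\psi_n')^2/(pq)^2=-(pq)'(\psi_n')^2/q^2$ is the paper's $Q'(t)=2t\,(\chi_n+c^2-2c^2t^2)\,(\psi_n'(t))^2/(\chi_n-c^2t^2)^2$. The core method is the same in both cases — differentiate, eliminate $\psi_n''$ via the prolate ODE, and check the sign of the surviving factor, which is positive on the ray precisely because $t>\sqrt{\chi_n}/c$ and $t>1$ give $2c^2t^2>\chi_n+c^2$ — but your organization differs usefully in two respects. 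First, you invoke the self-adjoint form $\bigl(p\,\psi_n'\bigr)'=-q\,\psi_n$, which makes the cancellation in $\tilde{Q}'$ transparent in one line, whereas the paper substitutes the solved-for $\psi_n''$ into a long expanded derivative and appeals to ``straightforward algebraic manipulations'' twice, once for each of $Q$ and $\tilde{Q}$ independently. Second, you derive $Q'$ from $\tilde{Q}'$ via the quotient rule applied to $Q=\tilde{Q}/(pq)$, using $pq\,\psi_n^2-\tilde{Q}=-p^2(\psi_n')^2$, so only one ODE substitution is ever performed; this buys economy and makes the structural identity behind the two monotonicities visible (the single quantity $(pq)'$ controls both signs, which also explains your correct closing remark that Theorem~\ref{thm_Q_Q_tilde} is the mirror case where $(pq)'<0$). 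One small point in your favor: your phrasing ``nonnegative, positive wherever $\psi_n\neq 0$'' is more careful than the paper's strict inequalities, since $\tilde{Q}'$ does vanish at the (isolated) roots of $\psi_n$ and $Q'$ at the roots of $\psi_n'$; monotonicity still follows because these zeros are isolated.
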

\begin{proof}
%
% see 12_01_10
%
We differentiate $Q$, defined via \eqref{eq_psi1_q}, with respect to $t$
to obtain
\begin{align}
Q'(t) = & \; 2 \cdot \psi_n(t) \cdot \psi_n'(t) + \left(
        \frac{2c^2 t \cdot (1-t^2)}{(\chi_n-c^2t^2)^2}-\frac{2t}{\chi_n-c^2t^2}
        \right) \cdot \left(\psi_n'(t)\right)^2 + \nonumber \\
  & \; \frac{2\cdot(1-t^2)}{\chi_n-c^2t^2} \cdot \psi_n'(t) \cdot \psi_n''(t).
\label{eq_dq_long}
\end{align}
Due to \eqref{eq_prolate_ode} in Section~\ref{sec_pswf},
\begin{align}
\psi_n''(t) = \frac{2t}{1-t^2} \cdot \psi_n'(t) - 
\frac{\chi_n-c^2 t^2}{1-t^2} \cdot \psi_n(t),
\label{eq_ddpsi}
\end{align}
for all $-1 < t < 1$. We substitute \eqref{eq_ddpsi} into \eqref{eq_dq_long}
and carry out straightforward algebraic manipulations to obtain
\begin{align}
Q'(t) = \frac{2t}{(\chi_n-c^2 t^2)^2} \cdot \left(\chi_n+c^2-2c^2 t^2\right)
        \cdot \left( \psi_n'(t) \right)^2.
\label{eq_dq_short}
\end{align}
Obviously, for all real $t > \max\left\{\sqrt{\chi_n}/c, 1\right\}$,
\begin{align}
\chi_n+c^2-2c^2 t^2 < 0.
\label{eq_term_pos}
\end{align}
We combine \eqref{eq_dq_short} with \eqref{eq_term_pos} to conclude that
\begin{align}
Q'(t) < 0,
\label{eq_dq_positive}
\end{align}
for all real $t > \max\left\{\sqrt{\chi_n}/c, 1\right\}$. Then,
we differentiate $\tilde{Q}$, defined via \eqref{eq_psi1_qtilde},
with respect to $t$ to obtain
\begin{align}
\tilde{Q}'(t) = 
& \; -2t \cdot \left( (\chi_n-c^2 t^2) \cdot \psi_n^2(t) + 
                      (1-t^2) \cdot \left(\psi'_n(t)\right)^2 \right) 
     \nonumber \\
& \; + (1-t^2) \cdot \left( -2c^2 t \cdot \psi_n^2(t) +
                  2 \cdot (\chi_n-c^2 t^2) \cdot \psi_n(t) \cdot \psi_n'(t)
     \right. \nonumber \\
& \; \left. \quad -2t \cdot \left(\psi_n'(t)\right)^2 
                  +2 \cdot (1-t^2) \cdot \psi_n'(t) \cdot \psi_n''(t)
                     \right).
\label{eq_dqtilde_long}
\end{align}
We substitute \eqref{eq_ddpsi} into \eqref{eq_dqtilde_long}
and carry out straightforward algebraic manipulations to obtain
\begin{align}
\tilde{Q}'(t) = 2t \cdot (2 c^2 t^2 - \chi_n - c^2) \cdot \psi_n^2(t).
\label{eq_dqtilde_short}
\end{align}
We combine \eqref{eq_term_pos} with \eqref{eq_dqtilde_short} to conclude
that
\begin{align}
\tilde{Q}'(t) > 0,
\label{eq_dqtilde_negative}
\end{align}
for all real $t > \max\left\{\sqrt{\chi_n}/c, 1\right\}$.
We combine \eqref{eq_dq_positive} and \eqref{eq_dqtilde_negative}
to finish the proof.
\end{proof}
%%%%%%%%%%%%%%
\begin{remark}
We observe that the statement of Theorem~\ref{lem_Q_Q_tilde}
is similar to that of Theorem~\ref{thm_Q_Q_tilde} in Section~\ref{sec_pswf}.
However, while in Theorem~\ref{thm_Q_Q_tilde} the behavior of $\psi_n$
and $\psi_n'$ inside the interval $(-1,1)$ is described,
Theorem~\ref{lem_Q_Q_tilde} deals with $(1,\infty)$ instead.
\label{rem_q}
\end{remark}
The following 
theorem follows directly from Theorem~\ref{lem_Q_Q_tilde}.
It is illustrated in
Tables~\ref{t:test83a}, \ref{t:test83b}
(see Experiment 5 in Section~\ref{sec_exp5}).
%%%%%%%%%%%
%%%%%%%%%%%
\begin{thm}
\label{thm_consec_der}
Suppose that $n \geq 0$ is an integer, and that $\chi_n > c^2$. 
Suppose also that $x < y$ are two roots of $\psi_n$ in $(1, \infty)$.
Then,
\begin{align}
\label{eq_consec_der}
\abrk{ \psi'_n(x) } \cdot \frac{x^2 - 1}{y^2 - 1} \leq
\abrk{ \psi'_n(y) } \leq
\abrk{ \psi'_n(x) } 
\sqrt{ \frac{ x^2 - 1 }{ x^2 - (\chi_n/c^2)} \cdot 
       \frac{ y^2 - (\chi_n/c^2) }{ y^2 - 1 } }.
\end{align}
\end{thm}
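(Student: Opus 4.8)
The plan is to read off both inequalities from the monotonicity of the two auxiliary functions $Q$ and $\tilde{Q}$ established in Theorem~\ref{lem_Q_Q_tilde}, evaluated at the two roots $x<y$. The key observation is that at a root of $\psi_n$ the first summand in each of \eqref{eq_psi1_q}, \eqref{eq_psi1_qtilde} vanishes, so $Q$ and $\tilde{Q}$ collapse to pure functions of $\psi_n'$ and of $t$. Concretely, since $\psi_n(x)=\psi_n(y)=0$,
\begin{align}
Q(x) = \frac{(x^2-1)\cdot(\psi_n'(x))^2}{c^2 x^2 - \chi_n}, \qquad
Q(y) = \frac{(y^2-1)\cdot(\psi_n'(y))^2}{c^2 y^2 - \chi_n},
\label{eq_prop_Q_at_roots}
\end{align}
and likewise $\tilde{Q}(x) = (x^2-1)^2\cdot(\psi_n'(x))^2$, $\tilde{Q}(y) = (y^2-1)^2\cdot(\psi_n'(y))^2$.

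First I would handle the upper bound. Since $\chi_n>c^2$ we have $\max\{\sqrt{\chi_n}/c,1\}=\sqrt{\chi_n}/c$, and by Theorem~\ref{thm_four} both roots $x,y$ lie in $(\sqrt{\chi_n}/c,\infty)$, which is exactly the interval on which Theorem~\ref{lem_Q_Q_tilde} applies. Because $Q$ is decreasing there and $x<y$, we have $Q(y)\le Q(x)$; substituting \eqref{eq_prop_Q_at_roots} and rearranging gives
\begin{align}
(\psi_n'(y))^2 \leq (\psi_n'(x))^2 \cdot \frac{x^2-1}{c^2 x^2-\chi_n}\cdot\frac{c^2 y^2-\chi_n}{y^2-1},
\label{eq_prop_upper_sq}
\end{align}
and taking square roots (all factors are positive since $x,y>\sqrt{\chi_n}/c$) yields the right-hand inequality of \eqref{eq_consec_der} after dividing numerator and denominator inside the radical by $c^2$ to match the stated form $x^2-(\chi_n/c^2)$, $y^2-(\chi_n/c^2)$.

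For the lower bound I would instead use that $\tilde{Q}$ is increasing on the same interval, so $\tilde{Q}(x)\le\tilde{Q}(y)$, i.e. $(x^2-1)^2(\psi_n'(x))^2 \le (y^2-1)^2(\psi_n'(y))^2$; taking square roots and dividing by $(y^2-1)^2$ gives
\begin{align}
|\psi_n'(y)| \geq |\psi_n'(x)|\cdot\frac{x^2-1}{y^2-1},
\label{eq_prop_lower}
\end{align}
which is the left-hand inequality. The only mild care needed is to confirm that the hypotheses of Theorem~\ref{lem_Q_Q_tilde} are genuinely met—that is, that $x$ and $y$ exceed $\sqrt{\chi_n}/c$, which is precisely the content of \eqref{eq_four} in Theorem~\ref{thm_four}—and to track the sign conventions so that the square roots are of positive quantities. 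I do not anticipate a real obstacle here: once the vanishing of $\psi_n$ at the roots is invoked, the entire statement is a direct translation of the monotonicity of $Q$ and $\tilde{Q}$ into inequalities between $|\psi_n'(x)|$ and $|\psi_n'(y)|$, and the main (very minor) bookkeeping is the cosmetic rescaling by $c^2$ to present the radical in the form written in \eqref{eq_consec_der}.
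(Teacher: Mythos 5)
Your proposal is correct and is essentially identical to the paper's own proof: the paper likewise invokes Theorem~\ref{thm_four} to place $x,y$ in $\left(\sqrt{\chi_n}/c,\infty\right)$, uses the vanishing of $\psi_n$ at the roots to reduce $Q$ and $\tilde{Q}$ to the pure $\psi_n'$ terms, and then derives the upper bound from the monotone decrease of $Q$ and the lower bound from the monotone increase of $\tilde{Q}$. No gaps; the sign and positivity bookkeeping you flag is exactly what the paper's proof does as well.
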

\begin{proof}
Due to Theorem~\ref{thm_four},
\begin{align}
\frac{\sqrt{\chi_n}}{c} < x < y.
\label{eq_consec_triple}
\end{align}
Due to Theorem~\ref{lem_Q_Q_tilde}, 
the function $Q:\left(\sqrt{\chi_n}/c, \infty\right) \to \Rc$,
defined
via \eqref{eq_psi1_q}, is monotonically decreasing.
We combine this observation with \eqref{eq_consec_triple} to obtain
\begin{align}
\sqrt{Q(x)} = \frac{ \abrk{ \psi_n'(x) } }{ c } 
              \sqrt{ \frac{x^2 - 1}{x^2 - (\chi_n/c^2)} } \geq
              \frac{ \abrk{ \psi_n'(y) } }{ c }
              \sqrt{ \frac{y^2 - 1}{y^2 - (\chi_n/c^2)} } = \sqrt{Q(y)}.
\label{eq_consec_a}
\end{align}
We rearrange \eqref{eq_consec_a} to obtain 
the right-hand side of \eqref{eq_consec_der}.
Moreover, 
due to Theorem~\ref{lem_Q_Q_tilde}, the function 
$\tilde{Q}:\left(\sqrt{\chi_n}/c, \infty\right) \to \Rc$ defined via
\eqref{eq_psi1_qtilde}, is 
monotonically increasing. Therefore,
\begin{align}
\sqrt{\tilde{Q}(x)} = \abrk{ \psi_n'(x) } \cdot \brk{x^2 - 1} \leq
                      \abrk{ \psi_n'(y) } \cdot \brk{y^2 - 1} =
                      \sqrt{\tilde{Q}(y)},
\end{align}
which yields the left-hand side of \eqref{eq_consec_der}.
\end{proof}
%%%%%%%%%%%
%%%%%%%%%%%%%%
\subsubsection{The Behavior of $\psi_n$ in the Upper-Half Plane}
\label{sec_upper}
The integral equation \eqref{eq_prolate_integral} provides the
analytical continuation of $\psi_n$ onto the whole complex plane.
Moreover, the same equation describes the asymptotic behavior
of $\psi_n(x+it)$ for a fixed $x$ as $t$ grows to infinity
(see Theorem~\ref{lem_q_at_infty} below).
Comparison of these asymptotics to the estimate obtained with
the help of Theorem~\ref{thm_lewis} in Section~\ref{sec_growth_ode}
yields an upper bound on $\abrk{\psi_n(x)}^{-1}$ at the roots
of $\psi_n$ (see Theorem~\ref{thm_12_10_sharp} below).
The principal result of this subsection is Theorem~\ref{thm_sharp_simple}.
%%%%%%%%%%%
\begin{thm}
Suppose that $n \geq 0$ is an integer.
Suppose also that $x$ is a root of $\psi_n$
in $(1, \infty)$. Suppose furthermore that the function
$Q:(0,\infty) \to \Rc$ is defined via the formula
\begin{align}
\label{eq_phi_q}
Q(t) = \abrk{\psi_n(x+it)}^2 + 
       \abrk{\psi_n'(x+it)}^2 
       \frac{ \abrk{(x+it)^2 - 1} }{ \abrk{c^2 (x+it)^2 - \chi_n)} },
\end{align}
where $i = \sqrt{-1}$.
Then, using the asymptotic notation
\eqref{eq_asymptotic}
of Section~\ref{sec_misc_tools},
\begin{align}
\label{eq_sqrt_qt}
\sqrt{Q(t)} \sim \frac{ e^{ct} \abrk{\psi_n(1)} \sqrt{2} }
                      { ct \abrk{\lambda_n} },
\quad t \to \infty,
\end{align}
where $\lambda_n$ is the $n$th eigenvalue of
the integral operator \eqref{eq_prolate_integral}.
\label{lem_q_at_infty}
\end{thm}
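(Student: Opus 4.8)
The plan is to read off the large-$t$ behaviour of $\psi_n(x+it)$ and $\psi_n'(x+it)$ directly from the integral equation \eqref{eq_prolate_integral}, which by Remark~\ref{rem_continuation} continues $\psi_n$ to the entire complex plane, and then to show that the two summands defining $Q(t)$ in \eqref{eq_phi_q} have identical leading asymptotics, so that their sum carries the factor of $2$ responsible for the $\sqrt{2}$ in \eqref{eq_sqrt_qt}.

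First I would set $z = x+it$ in \eqref{eq_prolate_integral}, obtaining $\lambda_n \psi_n(x+it) = \int_{-1}^1 \psi_n(s)\, e^{icxs}\, e^{-cts}\, ds$, and change variables $s = -1+u$ to concentrate the integral at the dominant endpoint $s = -1$:
\begin{align}
\lambda_n \psi_n(x+it) = e^{ct} \int_0^2 f(u)\, e^{-ctu}\, du, \qquad f(u) = \psi_n(-1+u)\, e^{icx(-1+u)}.
\end{align}
Here $f$ is entire, hence $C^2$ on $[0,2]$, and $f(0) = \psi_n(-1)\, e^{-icx} \neq 0$ since $\pm 1$ are not roots of $\psi_n$. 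Applying Watson's Lemma (Theorem~\ref{thm_watson_lemma}) gives $\int_0^2 f(u)\, e^{-ctu}\, du \sim f(0)/(ct)$, whence $\psi_n(x+it) \sim e^{ct}\psi_n(-1)e^{-icx}/(ct\lambda_n)$ and therefore $\abrk{\psi_n(x+it)} \sim e^{ct}\abrk{\psi_n(1)}/(ct\abrk{\lambda_n})$, where I use $\abrk{\psi_n(-1)} = \abrk{\psi_n(1)}$ from the even/odd parity of $\psi_n$ (Theorem~\ref{thm_pswf_main}). Differentiating \eqref{eq_prolate_integral} in $z$ gives $\lambda_n \psi_n'(z) = \int_{-1}^1 ics\, \psi_n(s)\, e^{iczs}\, ds$; the same substitution and Watson's Lemma, now with leading coefficient $g(0) = -\psi_n(-1)e^{-icx}$ (the factor $c$ from $ic$ cancelling against the $1/(ct)$), yield $\abrk{\psi_n'(x+it)} \sim e^{ct}\abrk{\psi_n(1)}/(t\abrk{\lambda_n})$.

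It remains to combine these with the rational weight in \eqref{eq_phi_q}. Since $\abrk{(x+it)^2 - 1} \sim t^2$ and $\abrk{c^2 (x+it)^2 - \chi_n} \sim c^2 t^2$ as $t \to \infty$, the weight tends to $1/c^2$. Consequently the first summand of $Q$ satisfies $\abrk{\psi_n(x+it)}^2 \sim e^{2ct}\psi_n(1)^2/(c^2 t^2 \abrk{\lambda_n}^2)$, while the second satisfies $\abrk{\psi_n'(x+it)}^2 \cdot \abrk{(x+it)^2-1}/\abrk{c^2(x+it)^2-\chi_n} \sim \bigl(e^{2ct}\psi_n(1)^2/(t^2\abrk{\lambda_n}^2)\bigr)\cdot(1/c^2)$, which is the \emph{same} quantity. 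Writing $L(t) = e^{2ct}\psi_n(1)^2/(c^2 t^2\abrk{\lambda_n}^2)$, both summands are $\sim L(t)$, so $Q(t) \sim 2L(t)$; taking square roots, which preserves asymptotic equivalence for positive functions, gives \eqref{eq_sqrt_qt}.

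The obstacles are more bookkeeping than depth. Watson's Lemma as stated is for real-valued integrands, so I must either split $f$ into real and imaginary parts — noting that one of them may vanish at $u=0$ when $\cos(cx)$ or $\sin(cx)$ vanishes, in which case a single integration by parts shows that part contributes only $o(1/t)$ and does not disturb the leading term — or invoke the elementary complex extension of the lemma (integrate by parts once, then use that $\int_0^2 f'(u)e^{-ctu}\,du \to 0$). The genuinely delicate point is the cancellation-free matching of the two summands of $Q$: one must check that the extra factor $c^2$ produced by differentiation is exactly undone by the $1/c^2$ limit of the rational weight, so that both summands share the common leading term $L(t)$ and the factor $2$, and hence the $\sqrt{2}$, emerges.
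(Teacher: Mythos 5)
Your proposal is correct and takes essentially the same route as the paper's proof: both read off the endpoint asymptotics of $\psi_n(x+it)$ and $\psi_n'(x+it)$ from the integral equation \eqref{eq_prolate_integral} via Watson's lemma after shifting the integration variable so the dominant endpoint sits at $0$ (the paper additionally rescales $s \mapsto s/c$ so the large parameter is literally $t$, while you keep $ct$ --- immaterial), use parity to replace $\abrk{\psi_n(-1)}$ by $\abrk{\psi_n(1)}$, and obtain the factor $\sqrt{2}$ from the $1/c^2$ limit of the rational weight, which makes the two summands of $Q$ asymptotically equal. Your explicit treatment of Watson's lemma for the complex-valued integrand (splitting into real and imaginary parts, or one integration by parts) patches a point the paper applies silently, and is sound.
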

\begin{proof}
We use \eqref{eq_prolate_integral} in Section~\ref{sec_pswf} to obtain
\begin{align}
\lambda_n \psi_n(x+it)
& \; = \int_{-1}^1 \psi_n(s) e^{ics(x+it)} ds 
     = \int_{-1}^1 \psi_n(s) e^{icsx} e^{-cst} ds \nonumber \\
& \; = \int_0^2 \left[ \psi_n(s-1) e^{ic\brk{s-1}x} \right] e^{-c\brk{s-1}t} ds
       \nonumber \\
& \; = e^{ct} \int_0^2 \left[ \psi_n(s-1) e^{ic\brk{s-1}x} \right] e^{-cst} ds
       \nonumber \\
& \; = \frac{ e^{ct} }{ c }
       \int_0^{2c} \psi_n\brk{s/c-1} e^{ic\brk{s/c-1}x} e^{-st} ds
       \nonumber \\
& \; = \frac{ e^{ct} e^{-icx} }{ c }
       \int_0^{2c} \psi_n\brk{s/c-1} e^{isx} e^{-st} ds.
\label{eq_psi_expand}
\end{align}
Since $\psi_n(1) = \brk{-1}^n \psi_n(-1)$, it follows from
Theorem~\ref{thm_watson_lemma} in Section~\ref{sec_misc_tools} that
\begin{align}
\label{eq_psi_infty}
\abrk{\psi_n\brk{x+it}} \sim 
\frac{ e^{ct} \abrk{\psi_n(1)} }{ \abrk{\lambda_n} ct },
\quad t \to \infty.
\end{align}
Also, we differentiate \eqref{eq_psi_expand} 
with respect to $t$ to obtain
\begin{align}
\lambda_n \psi_n'(x+it) 
& \; = ic \int_{-1}^1 s \psi_n(s) e^{icsx} e^{-cts} ds \nonumber \\
& \; = i e^{ct} e^{-icx} 
       \int_0^{2c} \brk{s/c-1} \psi_n\brk{s/c-1} e^{ixs} e^{-st} ds.
\label{eq_dpsi_expand}
\end{align}
We combine \eqref{eq_dpsi_expand} with
Theorem~\ref{thm_watson_lemma} in Section~\ref{sec_misc_tools}
to obtain
\begin{align}
\label{eq_dpsi_infty}
\abrk{\psi_n'\brk{x+it}} \sim 
\frac{ e^{ct} \abrk{\psi_n(1)} }{ \abrk{\lambda_n} t },
\quad t \to \infty.
\end{align}
We substitute \eqref{eq_psi_infty} and \eqref{eq_dpsi_infty} into
\eqref{eq_phi_q} to obtain
\begin{align}
Q(t) 
& \; \sim \brk{ \frac{ e^{ct} \abrk{\psi_n(1)} }{ \abrk{\lambda_n}ct } }^2 +
     \frac{ \abrk{(x+it)^2 - 1} }{ \abrk{c^2 (x+it)^2 - \chi_n} }
     \brk{ \frac{ e^{ct} \abrk{\psi_n(1)} }{ \abrk{\lambda_n}t } }^2 \\
& \; \sim 2 \brk{ \frac{ e^{ct} \abrk{\psi_n(1)} }{ \abrk{\lambda_n}ct } }^2, 
     \quad t \to \infty,
\end{align}
which implies \eqref{eq_sqrt_qt}.
\end{proof}
%%%%%%%%%%%
The rest of this subsection is dedicated to establishment
of an upper bound on $|\psi_n'(x)|^{-1}$ at the roots
of $\psi_n$.
We start with
introducing the following definition.
\begin{definition}
Suppose that $x > x_0 > 1$ are real numbers. 
We define $b_c(x,x_0)$ via the formula
\begin{align}
b_c(x,x_0) = \exp\left[
\frac{\pi}{64c} \cdot \sqrt{ \frac{x^2-1}{x^2-x_0^2} } \cdot
\sum_{i,j=1}^4 \frac{1}{\delta_i(x,x_0) + \delta_j(x,x_0)}
\right],
\label{eq_bc_def}
\end{align}
with
\begin{align}
& \delta_1(x,x_0) = x-x_0, \nonumber \\
& \delta_2(x,x_0) = x+x_0, \nonumber \\
& \delta_3(x,x_0) = x-1, \nonumber \\
& \delta_4(x,x_0) = x+1.
\label{eq_deltas}
\end{align}
\label{def_bc}
\end{definition}
%%%%%%%%%%%
Next, we prove several technical theorems.
\begin{thm}
Suppose that $x>x_0>1$ are real numbers.
Then
\begin{align}
\int_0^{\infty} \left(
\sqrt{\frac{1}{2} \abrk{ \frac{(x+it)^2-x_0^2}{(x+it)^2-1} } +
\frac{1}{2} \Re\brk{ \frac{(x+it)^2-x_0^2}{(x+it)^2-1} } } -1 \right) dt = 0,
\label{eq_harm_integral}
\end{align}
where $i = \sqrt{-1}$ and, for any complex number $z$, we denote its
real part by $\Re(z)$.
\label{lem_harmonic}
\end{thm}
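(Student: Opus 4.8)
The plan is to recognize the integrand as the real part of a single holomorphic function of the complex variable $w = x + it$, and then to invoke Theorem~\ref{thm_harmonic}. The starting point is the elementary identity, valid for the principal branch of the square root of any $z \in \Cc$,
\begin{align}
\brk{\Re \sqrt{z}}^2 = \tfrac{1}{2}\brk{\abrk{z} + \Re z},
\end{align}
which one checks by writing $z = re^{i\phi}$ with $\abrk{\phi} \leq \pi$. Setting $g(w) = \brk{w^2 - x_0^2}/\brk{w^2 - 1}$ with $w = x + it$, this gives $\sqrt{\tfrac{1}{2}\abrk{g} + \tfrac{1}{2}\Re g} = \abrk{\Re \sqrt{g}}$. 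First I would remove the outer absolute value. Writing $u = w^2$, the map $u \mapsto (u - x_0^2)/(u-1)$ is a M\"obius transformation with real coefficients and determinant $x_0^2 - 1 > 0$, hence it carries the upper half-plane into itself; since $u = w^2$ lies in the open upper half-plane whenever $\Re w > 0$ and $\Im w > 0$, it follows that $\Im g(w) > 0$ there, so $g$ avoids the cut $(-\infty, 0]$ and the principal branch $\sqrt{g}$ is holomorphic on $\{\Re w > x_0\}$ with $\Re\sqrt{g} \geq 0$ and $\sqrt{g} \to 1$ as $w \to \infty$. Consequently the integrand equals $\Re\sqrt{g(x+it)} - 1 = \Re h(x+it)$, where $h := \sqrt{g} - 1$.

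Next I would record the analytic properties of $h$ needed for Theorem~\ref{thm_harmonic}. Since the branch points $\pm 1, \pm x_0$ of $\sqrt{g}$ all satisfy $\Re w \leq x_0$, the function $h$ is holomorphic on the half-plane $\{\Re w > x_0\}$. From the expansion $g(w) = 1 - \brk{x_0^2 - 1}/\brk{w^2 - 1}$ one gets $h(w) = -\brk{x_0^2-1}/\brk{2\brk{w^2-1}} + O\brk{\abrk{w}^{-4}}$, so that $\abrk{h(x+it)} \leq C/\brk{x^2 + t^2}$ for a constant $C = C(x_0)$ and all large $\abrk{w}$. Now I would fix any $x_* \in (x_0, x)$ and define $u(t,x) = \Re h(x + it)$ on the shifted half-plane $\bar H_{x_*}$. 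This $u$ is the real part of a holomorphic function, hence harmonic in the interior; it is bounded (being continuous and decaying at infinity on $\{\Re w \geq x_*\}$); and it is integrable in $t$, since $\int_{-\infty}^\infty \abrk{u(t,x_*)}\,dt \leq \int_{-\infty}^\infty \abrk{h(x_* + it)}\,dt < \infty$ by the $\abrk{w}^{-2}$ decay.

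Theorem~\ref{thm_harmonic} then yields that $\int_{-\infty}^\infty u(t,x)\,dt$ is independent of the height $x \geq x_*$. To pin down the common value I would let the height tend to infinity: the decay bound gives $\abrk{\int_{-\infty}^\infty u(t,x)\,dt} \leq \int_{-\infty}^\infty \abrk{h(x+it)}\,dt \leq C'/x \to 0$, so the constant is $0$. Finally, because $h$ is real on the ray $(x_0,\infty)$, the reflection principle gives $h(\bar{w}) = \overline{h(w)}$, whence $u(-t,x) = \Re h(x - it) = \Re h(x+it) = u(t,x)$; thus $u$ is even in $t$ and
\begin{align}
\int_0^\infty \brk{\sqrt{\tfrac{1}{2}\abrk{g} + \tfrac{1}{2}\Re g} - 1}\,dt = \int_0^\infty u(t,x)\,dt = \tfrac{1}{2}\int_{-\infty}^\infty u(t,x)\,dt = 0,
\end{align}
which is \eqref{eq_harm_integral}. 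The main obstacle I expect is the branch bookkeeping for $\sqrt{g}$ in the first paragraph: one must be sure that $g$ stays off the negative real axis in the relevant half-plane so that the principal square root is single-valued, holomorphic, and has nonnegative real part, which is exactly what lets the absolute value be dropped and makes the integrand genuinely harmonic. Once that is secured, the decay estimate and the application of Theorem~\ref{thm_harmonic} are routine.
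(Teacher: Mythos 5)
Your proof is correct, and it reaches \eqref{eq_harm_integral} by a genuinely different route than the paper, even though both arguments ultimately rest on the same key tool, Theorem~\ref{thm_harmonic}. The paper defines $u(t,x)$ directly as the integrand, derives the explicit formulas \eqref{eq_u_re}, \eqref{eq_u_abs} for the real part and modulus of the M\"obius ratio, verifies harmonicity by a brute-force (``tedious but straightforward'') check of Laplace's equation, and then pins down the height-independent value of $\int_{-\infty}^{\infty} u(t,x)\,dt$ by a two-sided squeeze: an upper bound obtained by residue integration of the rational majorant in \eqref{eq_u_ineq_2}, and a lower bound obtained from positivity and monotonicity properties of $u$, both tending to zero as $x \to \infty$. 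You instead observe that the integrand is $\abrk{\Re\sqrt{g(x+it)}}-1$ with $g(w)=(w^2-x_0^2)/(w^2-1)$, and your branch analysis (the real-coefficient M\"obius map with determinant $x_0^2-1>0$ preserving the upper half-plane) correctly shows $g$ avoids the cut $(-\infty,0]$ on $\{\Re w > x_0\}$, so the integrand is $\Re h$ for $h=\sqrt{g}-1$ holomorphic there; harmonicity is then automatic, the absolute value drops for free, and the constant is determined by the single quantitative estimate $\abrk{h(w)}=O(\abrk{w}^{-2})$, which gives $\abrk{\int_{-\infty}^{\infty} u(t,x)\,dt}\leq C'/x \to 0$, with the reduction to $(0,\infty)$ handled cleanly by the reflection principle (the paper leaves the evenness in $t$ implicit in its formulas). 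Your approach buys brevity and conceptual transparency, replacing both the Laplacian verification and the two-sided bounds with one asymptotic expansion; the paper's more computational route has the side benefit that its explicit formulas and inequalities \eqref{eq_u_re}, \eqref{eq_u_abs}, \eqref{eq_u_ineq_2} are reused downstream, in the proof of Theorem~\ref{lem_big_int}, so they are not wasted effort in the paper's overall architecture.
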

\begin{proof}
We fix $x_0>1$, and view the integrand in \eqref{eq_harm_integral} as a function
of $t$ and $x$. We denote this function by $u(t,x)$. In other
words, $u(t,x)$ is a real-valued function of two 
real variables, defined via the formula
\begin{align}
u(t,x) = \sqrt{\frac{1}{2} \abrk{ \frac{(x+it)^2-x_0^2}{(x+it)^2-1} } +
\frac{1}{2} \Re\brk{ \frac{(x+it)^2-x_0^2}{(x+it)^2-1} } } - 1.
\label{eq_u_tx}
\end{align}
Obviously, for fixed real $x>x_0$,
\begin{align}
\lim_{\abrk{t} \to \infty} u(t,x) = 0.
\label{eq_u_limit}
\end{align}
Next, we observe that
\begin{align}
\Re\brk{ \frac{(x+it)^2-x_0^2}{(x+it)^2-1} } =
1 +  \brk{x_0^2-1} \cdot \frac{t^2+1-x^2 }
                          { \brk{t^2-(x^2-1)}^2 + 4x^2t^2 } 
\label{eq_u_re}
\end{align}
and
\begin{align}
\abrk{ \frac{(x+it)^2-x_0^2}{(x+it)^2-1} } = 
\sqrt{1 + \brk{x_0^2-1} \cdot \frac{ 2t^2 - 2x^2 + x_0^2 + 1 }
                             { \brk{t^2 - \brk{x^2 - 1}}^2 + 4x^2 t^2 } }.
\label{eq_u_abs}
\end{align}
We combine \eqref{eq_u_tx}, \eqref{eq_u_re} and \eqref{eq_u_abs}
to conclude that
for all $x \geq x_0$ and $t\geq 0$,
\begin{align}
-1 \leq u(t,x) \leq 
\frac{x_0^2-1}{8} \cdot \frac{ 4t^2 - 4x^2 + x_0^2 + 3 }
                             { \brk{t^2 - \brk{x^2 - 1}}^2 + 4x^2 t^2 }.
\label{eq_u_ineq}
\end{align}
Therefore, $u(t,x)$ is a bounded function in the ``shifted'' upper-half plane
\begin{align}
H_{x_0} = \left\{ (t,x) \; : \; x > x_0 \right\}.
\label{eq_hx0}
\end{align}
Next, again due to \eqref{eq_u_re} and \eqref{eq_u_abs}, 
for all $x\geq x_0$ and all real $t$
satisfying the inequality $t^2 > x^2-1$, we have
\begin{align}
0 \leq u(t,x) \leq 
\frac{x_0^2-1}{8} \cdot \frac{ 4t^2 - 4x^2 + x_0^2 + 3 }
                             { \brk{t^2 - \brk{x^2 - 1}}^2 + 4x^2 t^2 }.
\label{eq_u_ineq_2}
\end{align}
In particular, the function $t \to u(t,x_0)$ belongs to
$L^1(\Rc)$. In other words,
\begin{align}
\int_{-\infty}^{\infty} |u(t,x_0)| \; dt < \infty.
\label{eq_u_lp}
\end{align}
By carrying out
tedious but straightforward calculations, one can verify that 
in $H_{x_0}$, defined via \eqref{eq_hx0}, the function $u(x,t)$
satisfies the Laplace's equation
\begin{align}
\frac{ \partial^2 u }{ \partial t^2}(t,x) +
\frac{ \partial^2 u }{ \partial x^2}(t,x) = 0.
\label{eq_u_laplace}
\end{align}
In other words, $u(t,x)$ is a bounded harmonic function 
in the shifted upper-half plane $H_{x_0}$.
We apply Theorem~\ref{thm_harmonic} 
in Section~\ref{sec_misc_tools} to 
conclude that, for all real $t$ and $x>x_0$,
\begin{align}
u(t,x) = 
\frac{1}{\pi} \int_{-\infty}^{\infty} u(s,x_0) \cdot 
\frac{x-x_0}{(t-s)^2 + (x-x_0)^2} \; ds,
\end{align}
and, moreover, for all $x > x_0$,
\begin{align}
\int_{-\infty}^{\infty} u(t,x_0) \; dt = \int_{-\infty}^{\infty} u(t,x) \; dt.
\label{eq_u_equality}
\end{align}
We integrate the right-hand side of \eqref{eq_u_ineq_2} by using
the standard complex analysis residues technique to obtain the inequality
\begin{align}
\int_{-\infty}^{\infty} u(t,x) 
& \; \leq 
\frac{x_0^2-1}{8} \cdot
\int_{-\infty}^{\infty} \frac{4t^2-4x^2+x_0^2+3}{(t^2-(x^2-1))^2+4x^2t^2} \; dt
\nonumber \\
& \; =
\frac{\pi}{16x} \cdot \frac{ (x_0^2-1)^2 }{ x^2-1 }.
\label{eq_u_bound}
\end{align}
We take the limit $x \to \infty$ in \eqref{eq_u_bound}
and use \eqref{eq_u_equality} to conclude that,
for all $x \geq x_0$,
\begin{align}
\int_{-\infty}^{\infty} u(t,x) \; dt \leq 0.
\label{eq_u_leq_0}
\end{align}
On the other hand, due to \eqref{eq_u_re} and \eqref{eq_u_abs}, $u(t,x)$ 
is a non-negative function whenever $t^2 > x^2 - 1$ and an increasing
function for $0 \leq t \leq \sqrt{x^2-1}$. Therefore,
\begin{align}
\int_{-\infty}^{\infty} u(t,x) \; dt 
& \; \geq
2 \cdot u(0,x) \cdot \sqrt{x^2-1} \nonumber \\
& \; = 
2 \cdot \brk{\sqrt{1-\frac{x_0^2-1}{x^2-1}}-1} \cdot \sqrt{x^2-1} \nonumber \\
& \; \geq -2 \cdot \frac{x_0^2-1}{x^2-1} \cdot \sqrt{x^2-1} =
     -2 \cdot \frac{x_0^2-1}{\sqrt{x^2-1}}.
\label{eq_u_below}
\end{align}
By taking the limit $x \to \infty$ in \eqref{eq_u_below}, we conclude that,
for all $x \geq x_0$,
\begin{align}
\int_{-\infty}^{\infty} u(t,x) \; dt \geq 0.
\label{eq_u_bound_2}
\end{align}
Thus \eqref{eq_harm_integral} follows from the combination
of \eqref{eq_u_bound} and \eqref{eq_u_bound_2}.
\end{proof}
%%%%%%%%%%%
\begin{thm}
Suppose that $x > x_0 > 1$ are real numbers. We define the function 
$R: \Rc \to \Rc$ via the formula
\begin{align}
R(t) = \abrk{ \brk{ (x+it)^2 - 1} \cdot \brk{ (x+it)^2 - x_0^2 } }^{-1}.
\label{eq_big_r_def}
\end{align}
Then, for all real $t$,
\begin{align}
\frac{ R'(t) }{ R(t) } = -t \cdot 
\sum_{j=1}^4 \frac{1}{t^2+\delta_j(x,x_0)^2}
\label{eq_drr}
\end{align}
where $\delta_j(x,x_0)$ are defined via \eqref{eq_deltas}
for all $j=1,2,3,4$.
Moreover,
\begin{align}
\int_0^{\infty} \brk{\frac{R'(t)}{R(t)}}^2 \; dt = 
\frac{\pi}{2} \sum_{i,j=1}^4 \frac{1}{\delta_i(x,x_0) + \delta_j(x,x_0)}.
\label{eq_drr_int}
\end{align}
\label{lem_drr}
\end{thm}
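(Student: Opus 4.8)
The plan is to begin by factoring the quartic polynomial inside the absolute value in \eqref{eq_big_r_def}, which turns $R$ into a product of elementary factors whose logarithmic derivative is transparent. Since $(x+it)^2-1=\bigl((x-1)+it\bigr)\bigl((x+1)+it\bigr)$ and $(x+it)^2-x_0^2=\bigl((x-x_0)+it\bigr)\bigl((x+x_0)+it\bigr)$, the definitions \eqref{eq_deltas} of $\delta_1,\dots,\delta_4$ give
\[
\bigl((x+it)^2-1\bigr)\bigl((x+it)^2-x_0^2\bigr)=\prod_{j=1}^4\bigl(\delta_j(x,x_0)+it\bigr),
\]
and hence, on taking absolute values,
\[
R(t)=\prod_{j=1}^4\bigl|\delta_j(x,x_0)+it\bigr|^{-1}=\prod_{j=1}^4\bigl(\delta_j(x,x_0)^2+t^2\bigr)^{-\frac12}.
\]

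First I would establish \eqref{eq_drr} by taking the logarithmic derivative of this product. Since $\log R(t)=-\tfrac12\sum_{j=1}^4\log\bigl(\delta_j(x,x_0)^2+t^2\bigr)$, termwise differentiation gives
\[
\frac{R'(t)}{R(t)}=-\frac12\sum_{j=1}^4\frac{2t}{\delta_j(x,x_0)^2+t^2}=-t\sum_{j=1}^4\frac{1}{t^2+\delta_j(x,x_0)^2},
\]
which is exactly \eqref{eq_drr}.

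For \eqref{eq_drr_int}, I would square \eqref{eq_drr} and expand the square of the sum, reducing the integral to
\[
\int_0^{\infty}\left(\frac{R'(t)}{R(t)}\right)^2dt=\sum_{i,j=1}^4 I_{ij},\qquad
I_{ij}=\int_0^{\infty}\frac{t^2\,dt}{\bigl(\delta_i(x,x_0)^2+t^2\bigr)\bigl(\delta_j(x,x_0)^2+t^2\bigr)}.
\]
The key computation is that $I_{ij}=\pi/\bigl(2(\delta_i+\delta_j)\bigr)$ for \emph{every} pair $(i,j)$. For $i\neq j$ this follows from partial fractions in the variable $u=t^2$: writing the integrand as a combination of $(\delta_i^2+t^2)^{-1}$ and $(\delta_j^2+t^2)^{-1}$ and integrating each term by means of the standard formula $\int_0^{\infty}(\delta^2+t^2)^{-1}\,dt=\pi/(2\delta)$, the coefficients collapse to $\tfrac{\pi}{2}\cdot(\delta_i-\delta_j)/(\delta_i^2-\delta_j^2)=\pi/\bigl(2(\delta_i+\delta_j)\bigr)$. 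The diagonal case $i=j$ gives $\int_0^{\infty}t^2(\delta_i^2+t^2)^{-2}\,dt=\pi/(4\delta_i)$, which agrees with the same formula. Summing over all sixteen pairs yields \eqref{eq_drr_int}.

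The computations are entirely elementary and present no genuine obstacle; the only point requiring mild care is verifying that the partial-fraction answer is valid uniformly, i.e. that the off-diagonal expression $(\delta_i-\delta_j)/(\delta_i^2-\delta_j^2)$ has a removable singularity as $\delta_j\to\delta_i$ and limits to the diagonal value $1/(2\delta_i)$. Since $x>x_0>1$ forces all four $\delta_j(x,x_0)$ to be strictly positive, each improper integral converges and the formula $\int_0^{\infty}(\delta^2+t^2)^{-1}\,dt=\pi/(2\delta)$ applies without difficulty.
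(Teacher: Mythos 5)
Your proof is correct, and its overall structure coincides with the paper's: both establish \eqref{eq_drr} by factoring the quartic as $\prod_{j=1}^4(\delta_j(x,x_0)+it)$ and taking the logarithmic derivative of the resulting product (the paper works with $\log|\delta_j+it|^2$, you with $\log(\delta_j^2+t^2)$, which is the same computation), and both reduce \eqref{eq_drr_int} to the pairwise integrals $\int_0^{\infty}t^2\,dt/\bigl((t^2+\delta_i^2)(t^2+\delta_j^2)\bigr)=\pi/\bigl(2(\delta_i+\delta_j)\bigr)$. The one genuine difference is in how that basic integral is evaluated: the paper uses the residue theorem, computing the residues at $z=i\delta_i$ and $z=i\delta_j$, whereas you use real-variable partial fractions in $u=t^2$ together with $\int_0^{\infty}(\delta^2+t^2)^{-1}dt=\pi/(2\delta)$. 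Your route is marginally more careful on one point: the paper's residue formula \eqref{eq_drr_3} as written presumes simple poles, i.e. $a\neq b$, and so formally does not cover the four diagonal terms $i=j$ of the double sum (where the pole at $z=i\delta_i$ is double), while you verify the diagonal case $\int_0^{\infty}t^2(\delta_i^2+t^2)^{-2}dt=\pi/(4\delta_i)$ separately and confirm it matches the limiting value $\pi/\bigl(2(\delta_i+\delta_i)\bigr)$. (For the off-diagonal pairs no such issue arises in either proof, since $x>x_0>1$ makes the four values $x-x_0<x-1<x+1<x+x_0$ strictly positive and pairwise distinct.) In short: same decomposition and same key identity, with your elementary real-analytic evaluation trading the paper's complex-analytic one, at a small gain in completeness on the diagonal terms.
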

\begin{proof}
We observe that
\begin{align}
\frac{R'(t)}{R(t)} 
& \; =
\frac{d}{dt} \log R(t) = \frac{1}{2} \cdot \frac{d}{dt} \log R^2(t) 
\nonumber \\
& \; = -\frac{1}{2} \cdot \frac{d}{dt} 
  \log \prod_{j=1}^4 |\delta_j(x,x_0) + it|^2 \nonumber\\
& \; = -\frac{1}{2} \sum_{j=1}^4 \frac{d}{dt} \log |\delta_j(x,x_0) + it|^2,
\label{eq_drr_1}
\end{align}
where $\delta_1, \delta_2, \delta_3, \delta_4$ are defined via 
\eqref{eq_deltas}. We note that, for any real number $a$,
\begin{align}
\frac{d}{dt} \log |a + it|^2 = \frac{d}{dt} \log(a^2 + t^2) = 
\frac{ 2t }{ a^2 + t^2 },
\label{eq_drr_2}
\end{align}
and thus \eqref{eq_drr} follows from the combination of
\eqref{eq_drr_1} and \eqref{eq_drr_2}. Next, for any real numbers $a, b > 0$,
\begin{align}
& \int_0^{\infty} \frac{ t^2 \; dt }{ (t^2+a^2)\cdot(t^2+b^2)}
= \nonumber \\
& i\pi\brk{\text{Res}\left[ \frac{z^2}{(z^2+a^2)\cdot(z^2+b^2)} ; z=ia \right] +
\text{Res}\left[ \frac{z^2}{(z^2+a^2)\cdot(z^2+b^2)} ; z=ib \right]} 
= \nonumber \\
& i\pi \brk{ \frac{ (ia)^2 }{ 2ia (b^2-a^2)} + 
                      \frac{ (ib)^2 }{ 2ib (a^2-b^2)} }
= \frac{\pi}{2} \cdot \frac{1}{a+b},
\label{eq_drr_3}
\end{align}
and thus \eqref{eq_drr_int} follows from the combination of
\eqref{eq_drr} and \eqref{eq_drr_3}.
\end{proof}
%%%%%%%%%%%
\begin{thm}
Suppose that $x > x_0 > 1$ are real numbers, and that the function
$R: \Rc \to \Rc$ is defined via \eqref{eq_big_r_def} in Theorem~\ref{lem_drr}.
Suppose furthermore, that $c,s>0$ are real numbers. Then,
\begin{align}
& c \int_0^s
\sqrt{\frac{1}{2} \abrk{ \frac{(x+it)^2-x_0^2}{(x+it)^2-1} } +
\frac{1}{2} \Re\brk{ \frac{(x+it)^2-x_0^2}{(x+it)^2-1} } + 
\brk{\frac{R'(t)}{4cR(t)}}^2 } \; dt \; \leq \nonumber \\
& cs + \log b_c(x,x_0),
\label{eq_big_int}
\end{align}
where $b_c(x,x_0)$ is defined via \eqref{eq_bc_def} 
in Definition~\ref{def_bc}.
\label{lem_big_int}
\end{thm}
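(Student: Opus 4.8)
The plan is to split the square root pointwise via the elementary inequality $\sqrt{A+B}\le \sqrt A + B/(2\sqrt A)$, valid for $A>0$ and $B\ge 0$ (square the right-hand side), applied with
\begin{align}
A(t) = \frac{1}{2}\abrk{\frac{(x+it)^2-x_0^2}{(x+it)^2-1}} + \frac{1}{2}\Re\brk{\frac{(x+it)^2-x_0^2}{(x+it)^2-1}}, \quad B(t) = \brk{\frac{R'(t)}{4cR(t)}}^2.
\end{align}
This bounds the left-hand side of \eqref{eq_big_int} by $c\int_0^s \sqrt{A(t)}\,dt + c\int_0^s B(t)/(2\sqrt{A(t)})\,dt$, and I would show that these two pieces are at most $cs$ and $\log b_c(x,x_0)$, respectively. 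Throughout I would use that $A(t)>0$, which holds because $A$ is minimized at $t=0$ with $A(0)=(x^2-x_0^2)/(x^2-1)>0$ (see below).

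For the first piece I would write $\sqrt{A(t)} = 1 + u(t,x)$, where $u$ is precisely the integrand of Theorem~\ref{lem_harmonic}, so that $c\int_0^s\sqrt{A(t)}\,dt = cs + c\int_0^s u(t,x)\,dt$. It then suffices to prove $\int_0^s u(t,x)\,dt \le 0$ for every $s>0$. Here I would invoke the sign structure of $u$ recorded in the proof of Theorem~\ref{lem_harmonic} via \eqref{eq_u_re} and \eqref{eq_u_abs}: the map $t\mapsto u(t,x)$ is negative and increasing on $[0,\sqrt{x^2-1}]$ and non-negative on $[\sqrt{x^2-1},\infty)$, hence changes sign exactly once (from negative to non-negative). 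Consequently $s\mapsto \int_0^s u\,dt$ first decreases and then increases, so its maximum is attained at the endpoints $s=0$ and $s=\infty$; since $\int_0^\infty u\,dt = 0$ by Theorem~\ref{lem_harmonic}, both endpoint values vanish, giving $\int_0^s u\,dt\le 0$ for all $s$, as needed.

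For the second piece, the same monotonicity shows $A$ attains its minimum over $[0,\infty)$ at $t=0$, so $1/\sqrt{A(t)} \le \sqrt{(x^2-1)/(x^2-x_0^2)}$ for all $t\ge 0$. Pulling this factor out and extending the domain to $[0,\infty)$ yields
\begin{align}
c\int_0^s \frac{B(t)}{2\sqrt{A(t)}}\,dt \leq \frac{c}{2}\sqrt{\frac{x^2-1}{x^2-x_0^2}}\int_0^\infty \brk{\frac{R'(t)}{4cR(t)}}^2 dt = \frac{1}{32c}\sqrt{\frac{x^2-1}{x^2-x_0^2}}\int_0^\infty \brk{\frac{R'(t)}{R(t)}}^2 dt.
\end{align}
Substituting the closed form \eqref{eq_drr_int} of Theorem~\ref{lem_drr} for the last integral and comparing with Definition~\ref{def_bc} gives exactly $\log b_c(x,x_0)$ (the constants combine as $\tfrac{1}{32c}\cdot\tfrac{\pi}{2}=\tfrac{\pi}{64c}$). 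Adding the two bounds produces \eqref{eq_big_int}.

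The square-root inequality and the final arithmetic matching $\log b_c$ are routine; the step requiring genuine care is the sign analysis of $u$, since the \emph{statement} of Theorem~\ref{lem_harmonic} only records that $\int_0^\infty u\,dt = 0$. The facts that $u$ changes sign exactly once and that $A$ is minimized at the origin are what let me bound the first integral by $cs$ and simultaneously control $1/\sqrt{A}$ by its value at $t=0$; I would extract both from \eqref{eq_u_re} and \eqref{eq_u_abs}, exactly as in the proof of Theorem~\ref{lem_harmonic}.
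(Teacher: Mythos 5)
Your proposal is correct and follows essentially the same route as the paper's own proof: the paper decomposes the integrand in exactly the same way (see \eqref{eq_big_int_0}--\eqref{eq_big_int_3}), bounds the $u$-term by zero via Theorem~\ref{lem_harmonic} together with the sign structure of $u$, bounds $\left(u(t,x)+1\right)^2$ from below by its value at $t=0$, and matches constants through \eqref{eq_drr_int} just as you do, your pointwise inequality $\sqrt{A+B}\leq\sqrt{A}+B/(2\sqrt{A})$ being the same elementary step the paper applies in \eqref{eq_big_int_3}. One minor slip worth noting: $u(\cdot,x)$ is not negative on all of $\left[0,\sqrt{x^2-1}\right]$ (at $t=\sqrt{x^2-1}$ one has $\Re(w)=1$ while $\abrk{w}>1$ by \eqref{eq_u_re}, \eqref{eq_u_abs}, so $u>0$ there), but since $u$ is increasing on that interval, starts negative at $t=0$, and is non-negative beyond $\sqrt{x^2-1}$, your single-sign-change conclusion --- and hence the endpoint argument giving $\int_0^s u\,dt\leq 0$ --- remains valid.
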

\begin{proof}
Suppose that the function $u: \Rc^2 \to \Rc$ is defined via
\eqref{eq_u_tx} in Theorem~\ref{lem_harmonic}. Then the left-hand side
of \eqref{eq_big_int} can be written as
\begin{align}
& c \int_0^s
\sqrt{\frac{1}{2} \abrk{ \frac{(x+it)^2-x_0^2}{(x+it)^2-1} } +
\frac{1}{2} \Re\brk{ \frac{(x+it)^2-x_0^2}{(x+it)^2-1} } + 
\brk{\frac{R'(t)}{4cR(t)}}^2 } \; dt = \nonumber \\
& c\int_0^s dt + 
  c\int_0^s u(t,x) \; dt + \nonumber \\
& c \int_0^s \brk{ \sqrt{ (u(t,x)+1)^2 + \brk{\frac{R'(t)}{4cR(t)}}^2 } - 
  \sqrt{ (u(t,x)+1)^2 }} dt,
\label{eq_big_int_0}
\end{align}
where the function $R: \Rc \to \Rc$ is defined via
\eqref{eq_big_r_def} in Theorem~\ref{lem_drr}. Due to 
Theorem~\ref{lem_harmonic} and \eqref{eq_u_ineq_2},
\begin{align}
c\int_0^s u(t,x) \; dt < 0.
\label{eq_big_int_1}
\end{align}
Also, due to \eqref{eq_u_re} and \eqref{eq_u_abs} in the proof of 
Theorem~\ref{lem_harmonic}, for all real $t \geq 0$,
\begin{align}
(u(t,x) + 1)^2 \geq (u(0,x) + 1)^2 = \frac{x^2-x_0^2}{x^2-1}.
\label{eq_big_int_2}
\end{align}
We combine \eqref{eq_big_int_2} with \eqref{eq_drr_int} 
in Theorem~\ref{lem_drr} to conclude that
\begin{align}
& c \int_0^s \brk{ \sqrt{ (u(t,x)+1)^2 + \brk{\frac{R'(t)}{4cR(t)}}^2 } - 
  \sqrt{ (u(t,x)+1)^2 }} dt \leq \nonumber \\
& \frac{c}{ 2 \sqrt{ (u(0,x)+1)^2 } } 
  \int_0^s \brk{\frac{ R'(t) }{ 4cR(t) }}^2 dt =
\frac{1}{32c} \cdot \sqrt{ \frac{x^2-1}{x^2-x_0^2} } 
\int_0^s \brk{\frac{ R'(t) }{ R(t) }}^2 dt < \nonumber \\
& \frac{\pi}{64c} \cdot \sqrt{ \frac{x^2-1}{x^2-x_0^2} } \cdot
  \sum_{i,j=1}^4 \frac{1}{\delta_i(x,x_0) + \delta_j(x,x_0)} = 
\log b_c(x,x_0),
\label{eq_big_int_3}
\end{align}
where $\delta_1,\delta_2,\delta_3,\delta_4$ are defined
via \eqref{eq_deltas},
and $b_c(x,x_0)$ is defined via \eqref{eq_bc_def}
in Definition~\ref{def_bc}.
Thus \eqref{eq_big_int}
follows from the combination of
\eqref{eq_big_int_0}, \eqref{eq_big_int_1} and \eqref{eq_big_int_3}.
\end{proof}
%%%%%%%%%%%
\begin{thm}
Suppose that $n \geq 0$ is an integer, and that $\chi_n > c^2$.
Suppose also that $x$ is a root of $\psi_n$ in $(1, \infty)$.
Suppose furthermore that the function $Q: \Rc \to \Rc$
is defined via the formula
\begin{align}
Q(t) = \abrk{ \psi_n(x+it) }^2 + \abrk{ \psi_n'(x+it)}^2 \cdot
   \frac{ \abrk{(x+it)^2 -1} }{ \abrk{c^2 (x+it)^2-\chi_n} }.
\label{eq_q_sharp_0}
\end{align}
Then, for all real $t>0$,
\begin{align}
\sqrt{Q(t)} \leq
\frac{ \abrk{ \psi_n'(x) } }{ ct } \cdot
\frac{ \brk{x^2-1}^{3/4} }{ \brk{x^2-(\chi_n/c^2)}^{1/4} } \cdot
e^{ct} \cdot b_c\left(x,\frac{\sqrt{\chi_n}}{c}\right),
\label{eq_phi_q_sharp_inequality}
\end{align}
where $b_c$ is defined via \eqref{eq_bc_def}.
\label{lem_q_sharp}
\end{thm}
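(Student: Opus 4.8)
The plan is to realize $Q(t)$ as the Lyapunov-type functional of Theorem~\ref{thm_lewis} for the prolate ODE restricted to the vertical line $z=x+it$, and then to feed the resulting exponent into Theorem~\ref{lem_big_int}. First I would rewrite \eqref{eq_prolate_ode} in self-adjoint form $\frac{d}{dz}\brk{(z^2-1)\psi_n'(z)} + \brk{c^2 z^2 - \chi_n}\psi_n(z) = 0$, valid for all complex $z$ by Remark~\ref{rem_continuation}, and set $w(t) = \psi_n(x+it)$ and $U(t) = \brk{(x+it)^2-1}\psi_n'(x+it)$. Using $\frac{d}{dt} = i\frac{d}{dz}$, this pair satisfies \eqref{eq_wu_ode} with $\beta(t) = i/\brk{(x+it)^2-1}$ and $\gamma(t) = -i\brk{c^2(x+it)^2-\chi_n}$, which have no zeros for $t>0$ because $x>\sqrt{\chi_n}/c>1$ by Theorem~\ref{thm_four}. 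A direct computation gives $R(t) = \abrk{\beta(t)}/\abrk{\gamma(t)} = \brk{\abrk{(x+it)^2-1}\cdot\abrk{c^2(x+it)^2-\chi_n}}^{-1}$, and the cancellation $R(t)\abrk{U(t)}^2 = \abrk{\psi_n'(x+it)}^2\abrk{(x+it)^2-1}/\abrk{c^2(x+it)^2-\chi_n}$ shows that the functional $Q(t) = \abrk{w(t)}^2 + R(t)\abrk{U(t)}^2$ of \eqref{eq_wu_q} coincides with the $Q(t)$ of \eqref{eq_q_sharp_0}.

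Next I would evaluate at $t_0=0$. Since $x$ is a root of $\psi_n$, we have $w(0)=0$, so $\sqrt{Q(0)} = \abrk{\psi_n'(x)}\sqrt{(x^2-1)/(c^2x^2-\chi_n)}$, both factors under the root being positive by Theorem~\ref{thm_four}. The heart of the argument is to match the exponent in \eqref{eq_wu_main_ineq} with the integrand of Theorem~\ref{lem_big_int}. Writing $x_0=\sqrt{\chi_n}/c$, I would compute $\beta(t)\gamma(t) = c^2\brk{(x+it)^2-x_0^2}/\brk{(x+it)^2-1}$, so that $\abrk{\beta}\abrk{\gamma}+\Re(\beta\gamma)$ equals $c^2$ times the first two terms under the square root in \eqref{eq_big_int}. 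Moreover, the present $R$ differs from the $R$ of Theorem~\ref{lem_drr} only by the constant factor $c^2$, so $R'/R$ is identical and $\brk{R'/(4R)}^2 = c^2\brk{R'/(4cR)}^2$. Consequently the integrand in \eqref{eq_wu_main_ineq} is exactly $c$ times the integrand of \eqref{eq_big_int}, and Theorem~\ref{lem_big_int} bounds the exponent by $ct + \log b_c(x,x_0)$, contributing the factor $e^{ct}\cdot b_c\brk{x,\sqrt{\chi_n}/c}$.

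It remains to absorb the prefactor $\brk{R(t)/R(0)}^{1/4}$ of \eqref{eq_wu_main_ineq}. Setting $A=x^2-1$, $B=c^2x^2-\chi_n$, $A_t=\abrk{(x+it)^2-1}$ and $B_t=\abrk{c^2(x+it)^2-\chi_n}$, I would combine $\sqrt{Q(0)}$ with $\brk{R(t)/R(0)}^{1/4} = \brk{AB/(A_tB_t)}^{1/4}$ to obtain $\abrk{\psi_n'(x)}\,A^{3/4}B^{-1/4}(A_tB_t)^{-1/4}$; factoring $B^{1/4}=c^{1/2}(x^2-\chi_n/c^2)^{1/4}$ recovers the powers $(x^2-1)^{3/4}$ and $(x^2-\chi_n/c^2)^{-1/4}$ of \eqref{eq_phi_q_sharp_inequality}. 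Finally I would bound $A_tB_t\geq c^2 t^4$ by factoring $(x+it)^2-1=(x+it-1)(x+it+1)$ and $c^2(x+it)^2-\chi_n=c^2(x+it-x_0)(x+it+x_0)$ and noting that each of the four linear factors has modulus at least $t$; this gives $(A_tB_t)^{-1/4}\le c^{-1/2}t^{-1}$, and the two factors of $c^{-1/2}$ combine into the $1/(ct)$ of \eqref{eq_phi_q_sharp_inequality}.

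The bulk of the difficulty is conceptual rather than computational: one must recognize that Theorems~\ref{lem_harmonic}, \ref{lem_drr} and \ref{lem_big_int} were engineered precisely so that the growth exponent from Theorem~\ref{thm_lewis} for this particular system collapses to $ct+\log b_c$. Once the system is set up and the identity for $\beta\gamma$ is in hand, every step is an elementary manipulation; the only genuinely new estimate is the clean lower bound $A_tB_t\ge c^2t^4$, which supplies the decay $1/(ct)$ that makes the right-hand side of \eqref{eq_phi_q_sharp_inequality} useful for large $t$.
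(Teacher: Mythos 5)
Your proposal is correct and is essentially the paper's own proof: the paper likewise complexifies the prolate ODE along $z=x+it$, forms the pair $w(t)=\psi_n(x+it)$, $u(t)=\brk{(x+it)^2-1}\varphi'(t)$ with $\beta(t)=\brk{(x+it)^2-1}^{-1}$, $\gamma(t)=c^2(x+it)^2-\chi_n$, applies Theorem~\ref{thm_lewis}, identifies the exponent with the integrand of Theorem~\ref{lem_big_int} (using that $R$ differs from that of Theorem~\ref{lem_drr} only by the constant $c^2$), evaluates $\sqrt{Q(0)}$ at the root, and bounds $R(t)/R(0)$ by $(x^2-1)(x^2-\chi_n/c^2)/t^4$. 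The only difference is cosmetic — you place the factors of $i$ in $\beta,\gamma$ rather than in $u$, which leaves $|\beta|,|\gamma|$ and $\beta\gamma$ unchanged — and your four-linear-factor justification of $A_tB_t\ge c^2t^4$ simply makes explicit the paper's unexplained inequality \eqref{eq_q_sharp_9}.
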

\begin{proof}
We define the function $\varphi: \Rc \to \Cc$ via the formula
\begin{align}
\varphi(t) = \psi_n(x+it).
\label{eq_q_sharp_1}
\end{align}
Due to \eqref{eq_prolate_ode}, $\varphi$ satisfies the ODE
\begin{align}
\brk{(x+it)^2-1}\cdot\varphi''(t) + 2i(x+it)\cdot\varphi'(t)+
  (\chi_n-c^2(x+it)^2)\cdot\varphi(t) = 0.
\label{eq_q_sharp_2}
\end{align}
We define the functions $w,u: \Rc \to \Cc$ via the formulae
\begin{align}
w(t) = \varphi(t), \quad u(t) = \brk{ (x+it)^2-1 } \cdot \varphi'(t).
\label{eq_q_sharp_3}
\end{align}
Due to \eqref{eq_q_sharp_2}, the functions $w,u$ satisfy the equation
\begin{align}
\label{eq_q_sharp_4}
\begin{pmatrix} w'(t) \\ u'(t) \end{pmatrix} =
\begin{pmatrix} 0 & \beta(t) \\ \gamma(t) & 0 \end{pmatrix}
\begin{pmatrix} w(t) \\ u(t) \end{pmatrix},
\end{align}
where the functions $\beta,\gamma: \Rc \to \Cc$ are defined
via the formulae
\begin{align}
\beta(t) = \brk{ (x+it)^2-1 }^{-1}, \quad
\gamma(t) = c^2 (x+it)^2 - \chi_n.
\label{eq_q_sharp_5}
\end{align}
We combine Theorem~\ref{thm_lewis} in Section~\ref{sec_growth_ode} 
with Theorem~\ref{lem_big_int} above to conclude that, for all real $t>0$,
\begin{align}
\sqrt{ \frac{Q(t)}{Q(0)} } \leq 
\brk{ \frac{R(t)}{R(0)} }^{\frac{1}{4}} \cdot e^{ct} \cdot 
b_c\left(x, \frac{\sqrt{\chi_n}}{c}\right),
\label{eq_q_sharp_6}
\end{align}
where $b_c$ is defined via \eqref{eq_bc_def}, $Q$ is defined
via \eqref{eq_q_sharp_0}, and the function $R: \Rc \to \Rc$ is defined
via the formula
\begin{align}
R(t) = \brk{ \abrk{ (x+it)^2 - 1} \cdot \abrk{ (x+it)^2 - (\chi_n/c^2) } }^{-1}.
\label{eq_q_sharp_7}
\end{align}
Since $\psi_n(x) = 0$ by assumption, it follows that
\begin{align}
\sqrt{Q(0)} = \frac{ \abrk{\psi_n'(x)} }{ c } \cdot 
   \sqrt{\frac{x^2-1}{x^2-(\chi_n/c^2)}}.
\label{eq_q_sharp_8}
\end{align}
Moreover, for all real $t>0$,
\begin{align}
\frac{R(t)}{R(0)} & \; = 
\frac{ \brk{x^2-1} \cdot \brk{x^2-(\chi_n/c^2)} }
   { \abrk{ (x+it)^2-1 } \cdot \abrk{ (x+it)^2-(\chi_n/c^2) } } \nonumber \\
& \; \leq
\frac{ \brk{x^2-1} \cdot \brk{x^2-(\chi_n/c^2)} }{ t^4 }.
\label{eq_q_sharp_9}
\end{align}
Thus \eqref{eq_phi_q_sharp_inequality} follows 
from the combination of \eqref{eq_q_sharp_6}, \eqref{eq_q_sharp_8}
and \eqref{eq_q_sharp_9}.
\end{proof}
%%%%%%%%%%%%%%
In the following theorem, we derive a lower bound on $|\psi_n'(x)|$,
where $x$ is a root of $\psi_n$ in $(1,\infty)$. It is illustrated
in Tables~\ref{t:test84a},~\ref{t:test84b}
(see Experiment 6 in Section~\ref{sec_exp6}).
%%%%%%%%%%%%%%
\begin{thm}[A sharper bound on $\abrk{\psi_n'(x)}$ at roots]
Suppose that $n\geq 0$ is an integer, and that
$\chi_n > c^2$. Suppose also that $x$ is 
a root of $\psi_n$ in $(1,\infty)$. Then,
\begin{align}
\frac{1}{\abrk{\psi_n'(x)}} \leq 
\frac{ \abrk{\lambda_n} }{ \abrk{\psi_n(1)} \sqrt{2} } \cdot 
\frac{ (x^2-1)^{\frac{3}{4}} }{ (x^2-(\chi_n/c^2))^{\frac{1}{4}} } 
\cdot b_c\left(x,\frac{\sqrt{\chi_n}}{c} \right),
\label{eq_12_10_sharp}
\end{align}
where $b_c$ is defined via \eqref{eq_bc_def}.
\label{thm_12_10_sharp}
\end{thm}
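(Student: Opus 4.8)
The plan is to extract \eqref{eq_12_10_sharp} by playing the upper bound of Theorem~\ref{lem_q_sharp} against the asymptotics of Theorem~\ref{lem_q_at_infty}. The crucial point is that both theorems concern the \emph{same} function $Q$: the formula \eqref{eq_phi_q} defining $Q$ in Theorem~\ref{lem_q_at_infty} coincides verbatim with the formula \eqref{eq_q_sharp_0} defining $Q$ in Theorem~\ref{lem_q_sharp}. Moreover, both the asymptotic value \eqref{eq_sqrt_qt} and the upper bound \eqref{eq_phi_q_sharp_inequality} grow like $e^{ct}/(ct)$ as $t \to \infty$, so dividing out this common growth factor and letting $t \to \infty$ will isolate the desired inequality between constants.

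Concretely, I would first multiply both sides of \eqref{eq_phi_q_sharp_inequality} by $ct \cdot e^{-ct}$ to obtain, for every real $t > 0$,
\[
\frac{ct}{e^{ct}} \cdot \sqrt{Q(t)} \leq
\abrk{\psi_n'(x)} \cdot
\frac{\brk{x^2-1}^{3/4}}{\brk{x^2-(\chi_n/c^2)}^{1/4}} \cdot
b_c\brk{x,\frac{\sqrt{\chi_n}}{c}}.
\]
The right-hand side is a constant independent of $t$. Next I would evaluate the limit of the left-hand side as $t \to \infty$ using \eqref{eq_sqrt_qt}: since $\sqrt{Q(t)} \sim e^{ct}\abrk{\psi_n(1)}\sqrt{2}\,/\,\brk{ct\abrk{\lambda_n}}$, the product $\brk{ct/e^{ct}}\sqrt{Q(t)}$ tends to $\abrk{\psi_n(1)}\sqrt{2}/\abrk{\lambda_n}$.

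Since the left-hand side is bounded above by the constant right-hand side for every $t > 0$, and its limit as $t \to \infty$ exists, that limit too respects the bound:
\[
\frac{\abrk{\psi_n(1)}\sqrt{2}}{\abrk{\lambda_n}} \leq
\abrk{\psi_n'(x)} \cdot
\frac{\brk{x^2-1}^{3/4}}{\brk{x^2-(\chi_n/c^2)}^{1/4}} \cdot
b_c\brk{x,\frac{\sqrt{\chi_n}}{c}}.
\]
Dividing both sides by $\abrk{\psi_n'(x)} \cdot \abrk{\psi_n(1)}\sqrt{2}/\abrk{\lambda_n}$ and rearranging yields \eqref{eq_12_10_sharp}, completing the proof.

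I do not anticipate a genuine obstacle here; the argument is a textbook ``sandwich at infinity''. The only step warranting care is the passage from the pointwise inequality, valid for all $t > 0$, to the corresponding inequality on the limit — which is elementary, because Theorem~\ref{lem_q_at_infty} guarantees that the relevant limit exists and each pre-limit value respects the bound. I would also make explicit at the outset that the two invocations of ``$Q$'' refer to one and the same function, so that the two cited theorems can legitimately be combined.
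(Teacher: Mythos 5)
Your proposal is correct and is precisely the paper's own argument: the proof of Theorem~\ref{thm_12_10_sharp} likewise combines the upper bound \eqref{eq_phi_q_sharp_inequality} of Theorem~\ref{lem_q_sharp} with the asymptotics \eqref{eq_sqrt_qt} of Theorem~\ref{lem_q_at_infty} and takes $t \to \infty$ to obtain \eqref{eq_thm_sharp_1}, which rearranges to \eqref{eq_12_10_sharp}. Your write-up merely makes explicit the (correct) observations that the paper leaves implicit --- that the two definitions \eqref{eq_phi_q} and \eqref{eq_q_sharp_0} of $Q$ coincide, and that a pointwise bound by a $t$-independent constant passes to the limit.
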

\begin{proof}
We combine Theorem~\ref{lem_q_at_infty} with Theorem~\ref{lem_q_sharp}
and take $t \to \infty$ to conclude that
\begin{align}
\frac{ e^{ct} \abrk{\psi_n(1)} \sqrt{2} }
                      { ct \abrk{\lambda_n} }
\leq
\frac{ \abrk{ \psi_n'(x) } }{ ct } \cdot
\frac{ \brk{x^2 - 1}^{\frac{3}{4}} }{ \brk{x^2 - (\chi_n/c^2)}^{\frac{1}{4}} }
\cdot
e^{ct} \cdot b_c\left(x,\frac{\sqrt{\chi_n}}{c}\right),
\label{eq_thm_sharp_1}
\end{align}
which implies \eqref{eq_12_10_sharp}.
\end{proof}
%%%%%%%%%%%
The following theorem provides a bound on 
$b_c\left(x,\sqrt{\chi_n}/c\right)$, defined
via \eqref{eq_bc_def} in Definition~\ref{def_bc}
and used in Theorem~\ref{thm_12_10_sharp}.
\begin{thm}
Suppose that $n \geq 0$ is an integer, and that
$\chi_n > c^2$. Suppose also that $x$ is a root
of $\psi_n$ in $(1,\infty)$. Then,
\begin{align}
b_c\left(x,\frac{\sqrt{\chi_n}}{c}\right) \leq e^{1/4},
\label{eq_bc_ineq}
\end{align}
where $b_c$ is defined via \eqref{eq_bc_def}.
\label{thm_bc_big}
\end{thm}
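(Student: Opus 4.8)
The plan is to take the logarithm of $b_c$ and reduce the claim to an elementary inequality that is then closed off by \eqref{eq_x1_x0_smart} of Theorem~\ref{thm_x1_x0_good}. Writing $x_0 = \sqrt{\chi_n}/c$, the hypothesis $\chi_n > c^2$ gives $x_0 > 1$, and Theorem~\ref{thm_four} guarantees $x > x_0 > 1$, so all four quantities $\delta_1,\dots,\delta_4$ in \eqref{eq_deltas} are strictly positive. Taking logarithms in \eqref{eq_bc_def}, proving \eqref{eq_bc_ineq} is equivalent to showing
\begin{align}
\frac{\pi}{64c} \cdot \sqrt{\frac{x^2-1}{x^2-x_0^2}} \cdot \sum_{i,j=1}^4 \frac{1}{\delta_i(x,x_0) + \delta_j(x,x_0)} \leq \frac{1}{4}.
\end{align}

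Next I would bound the double sum crudely. Since $x_0 > 1$, the smallest of the four $\delta_j(x,x_0)$ is $\delta_1 = x - x_0$, so every denominator satisfies $\delta_i + \delta_j \geq 2(x-x_0)$; as the sum has $16$ terms, it is at most $8/(x-x_0)$. Substituting this bound, the left-hand side above is at most $\frac{\pi}{8c} \cdot g(x)$, where $g(x) = (x-x_0)^{-1}\sqrt{(x^2-1)/(x^2-x_0^2)}$. Thus it suffices to prove $g(x) \leq 2c/\pi$.

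The key step is to reduce the bound on $g$ at an arbitrary root $x$ to the bound at the first root $x_1$. I would show that $g$ is monotonically decreasing on $(x_0,\infty)$: the factor $(x-x_0)^{-1}$ is clearly decreasing, and a one-line derivative computation shows $(x^2-1)/(x^2-x_0^2)$ is decreasing for $x>x_0$, since its numerator after differentiation carries the factor $1-x_0^2 < 0$. Because every root $x$ of $\psi_n$ in $(1,\infty)$ satisfies $x \geq x_1$, this yields $g(x) \leq g(x_1)$. Finally, \eqref{eq_x1_x0_smart} of Theorem~\ref{thm_x1_x0_good} states exactly that $g(x_1) = (x_1-x_0)^{-1}\sqrt{(x_1^2-1)/(x_1^2-x_0^2)} < 2c/\pi$, so $g(x) < 2c/\pi$, the displayed inequality holds, and hence $b_c(x,\sqrt{\chi_n}/c) < e^{1/4}$, establishing \eqref{eq_bc_ineq}.

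The only substantive obstacle is passing from the first-root estimate \eqref{eq_x1_x0_smart} to all roots; everything else is the crude termwise bound on the double sum, which, notably, is already sharp enough to produce the constant $1/4$ with no slack to spare. I expect the monotonicity of $g$ to be the one point that needs a short argument, and it becomes immediate once the sign of $1-x_0^2$ is observed.
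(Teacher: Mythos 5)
Your proof is correct and is essentially the paper's own argument: both rest on the same three ingredients, namely the crude termwise bound $\delta_i+\delta_j \geq 2(x-x_0)$ giving $8/(x-x_0)$ for the sixteen-term sum, a monotonicity-in-$x$ step reducing the estimate to the first root $x_1$, and then \eqref{eq_x1_x0_smart} of Theorem~\ref{thm_x1_x0_good} to close it with exactly the constant $1/4$. The only (cosmetic) difference is the order of operations — the paper first observes that $b_c(x,x_0)$ itself is decreasing in $x$ and then bounds the sum at $x_1$, whereas you bound the sum at an arbitrary root $x$ and verify monotonicity of the resulting function $g$ — and your derivative check of $(x^2-1)/(x^2-x_0^2)$ simply makes explicit what the paper dismisses as obvious.
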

\begin{proof}
Obviously, $b_c(x,x_0)$, defined via \eqref{eq_bc_def},
is a decreasing function of $x$ for a fixed real number $x_0>1$. Therefore,
for all real $x_0>1$,
\begin{align}
b_c(x,x_0) \leq b_c(x_1,x_0),
\label{eq_bc_0}
\end{align}
where $x_1$ is the minimal root of $\psi_n$ in $\brk{1,\infty}$
(see also Theorem~\ref{thm_four}).
We use
\eqref{eq_deltas} to conclude that
\begin{align}
\sum_{i,j=1}^4 \frac{1}{\delta_i\left(x,\frac{\sqrt{\chi_n}}{c}\right)
                       +\delta_j\left(x,\frac{\sqrt{\chi_n}}{c}\right)} < 
\frac{16}{2\cdot(x_1-(\sqrt{\chi_n}/c))} =
\frac{8}{x_1-(\sqrt{\chi_n}/c)}.
\label{eq_bc_1}
\end{align}
Also, due to \eqref{eq_x1_x0_smart} in Theorem~\ref{thm_x1_x0_good},
\begin{align}
\sqrt{\frac{x_1^2-1}{x_1^2-(\chi_n/c^2)}} <
\frac{2}{\pi} \cdot c \cdot \left(x_1 - \frac{\sqrt{\chi_n}}{c}\right).
\label{eq_bc_2}
\end{align}
We combine \eqref{eq_bc_0}, \eqref{eq_bc_1} and \eqref{eq_bc_2}
to conclude that
\begin{align}
b_c\left(x, \frac{\sqrt{\chi_n}}{c} \right) & \; \leq \exp\left[ 
   \frac{\pi}{64c} \cdot 
   \frac{8}{x_1-(\sqrt{\chi_n}/c)} \cdot
\frac{2}{\pi} \cdot c \cdot \left(x_1 - \frac{\sqrt{\chi_n}}{c}\right) \right]
= e^{1/4},
\end{align}
which implies \eqref{eq_bc_ineq}.
\end{proof}

The following theorem is a direct consequence of 
Theorems~\ref{thm_12_10_sharp}, \ref{thm_bc_big}.
This is the principal result of this subsection.

%%%%%%%%%%%%%%
\begin{thm}[A sharper bound on $\abrk{\psi_n'(x)}$ at roots]
Suppose that $n\geq 0$ is an integer, and that
$\chi_n > c^2$. Suppose also that $x$ is 
a root of $\psi_n$ in $(1,\infty)$. Then,
\begin{align}
\frac{1}{\abrk{\psi_n'(x)}} \leq 
e^{1/4} \cdot \abrk{\lambda_n} \cdot 
\frac{ (x^2-1)^{\frac{3}{4}} }{ (x^2-(\chi_n/c^2))^{\frac{1}{4}} }.
\label{eq_sharp_simple}
\end{align}
\label{thm_sharp_simple}
\end{thm}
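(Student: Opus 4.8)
The plan is to obtain \eqref{eq_sharp_simple} as an immediate consequence of the sharper bound already established in Theorem~\ref{thm_12_10_sharp}, after discarding the two auxiliary factors appearing there. Recall that Theorem~\ref{thm_12_10_sharp} asserts
\begin{align}
\frac{1}{\abrk{\psi_n'(x)}} \leq
\frac{ \abrk{\lambda_n} }{ \abrk{\psi_n(1)} \sqrt{2} } \cdot
\frac{ (x^2-1)^{\frac{3}{4}} }{ (x^2-(\chi_n/c^2))^{\frac{1}{4}} }
\cdot b_c\left(x,\frac{\sqrt{\chi_n}}{c} \right),
\label{eq_plan_start}
\end{align}
so that the desired inequality will follow once I show that the prefactor
$\bigl(\abrk{\psi_n(1)}\sqrt{2}\bigr)^{-1}$ is at most $1$ and that the factor
$b_c(x,\sqrt{\chi_n}/c)$ is at most $e^{1/4}$. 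The structure of the argument is thus: first invoke \eqref{eq_plan_start}; then bound each of these two remaining factors by an explicit constant; then multiply the bounds together.

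For the first factor I would appeal to Theorem~\ref{thm_psi1_bound}, which, under the standing assumption $\chi_n > c^2$, guarantees $\psi_n^2(1) > 1/2$, hence $\abrk{\psi_n(1)} > 1/\sqrt{2}$. Consequently
\begin{align}
\frac{1}{\abrk{\psi_n(1)}\sqrt{2}} < \frac{1}{(1/\sqrt{2})\cdot\sqrt{2}} = 1.
\label{eq_plan_psi1}
\end{align}
For the second factor I would cite Theorem~\ref{thm_bc_big}, which, again using $\chi_n > c^2$ and that $x$ is a root of $\psi_n$ in $(1,\infty)$, yields precisely $b_c(x,\sqrt{\chi_n}/c) \leq e^{1/4}$.

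Combining \eqref{eq_plan_start}, \eqref{eq_plan_psi1} and the bound $b_c(x,\sqrt{\chi_n}/c)\leq e^{1/4}$ then gives
\begin{align}
\frac{1}{\abrk{\psi_n'(x)}} \leq
e^{1/4} \cdot \abrk{\lambda_n} \cdot
\frac{ (x^2-1)^{\frac{3}{4}} }{ (x^2-(\chi_n/c^2))^{\frac{1}{4}} },
\label{eq_plan_final}
\end{align}
which is exactly \eqref{eq_sharp_simple}. There is no genuine obstacle here, since all the analytic work—the Watson-lemma asymptotics feeding Theorem~\ref{lem_q_at_infty}, the Lewis-type growth estimate of Theorem~\ref{lem_q_sharp}, and the harmonic-function computation behind Theorem~\ref{lem_harmonic} and hence Theorem~\ref{thm_bc_big}—has already been carried out in the preceding theorems. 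The only point that warrants a moment of care is to make sure the hypotheses of Theorems~\ref{thm_psi1_bound}, \ref{thm_12_10_sharp} and \ref{thm_bc_big} are all met, namely that $\chi_n > c^2$ and that $x$ is a root of $\psi_n$ lying in $(1,\infty)$; both are exactly the hypotheses of the present theorem, so the three results apply verbatim and the proof closes immediately.
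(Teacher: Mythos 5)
Your proposal is correct and follows essentially the same route as the paper, whose proof of Theorem~\ref{thm_sharp_simple} is precisely the combination of Theorems~\ref{thm_psi1_bound}, \ref{thm_12_10_sharp} and \ref{thm_bc_big}; your only addition is to spell out the elementary step $\abrk{\psi_n(1)} > 1/\sqrt{2}$ yielding $\bigl(\abrk{\psi_n(1)}\sqrt{2}\bigr)^{-1} < 1$, which the paper leaves implicit. The hypothesis-checking you perform is also exactly what is needed, so nothing is missing.
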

\begin{proof}
We combine Theorems~\ref{thm_psi1_bound},
\ref{thm_12_10_sharp}, \ref{thm_bc_big} to obtain \eqref{eq_sharp_simple}.
\end{proof}

%%%%%%%%%%%%%%%%%%%%%%%%%%%%%%%%%%%%%%%%%%%%%
\subsection{Partial Fractions Expansion of $1/\psi_n$}
\label{sec_one_over_psi}
In this subsection, we analyze the
function $1/\psi_n(z)$ of the complex variable $z$.
This function is meromorphic with $n$ simple poles inside $\left(-1, 1\right)$
and infinitely many real simple poles $\pm x_1, \pm x_2, \dots$
outside $\left(-1, 1\right)$ (see
Theorems~\ref{thm_pswf_main},~\ref{thm_five} in Section~\ref{sec_pswf} and
Theorem~\ref{thm_four},
Corollary~\ref{cor_infinite} in Section~\ref{sec_first_order}).
For $-1 < t < 1$, we use Theorem~\ref{thm_cauchy} of 
Section~\ref{sec_misc_tools} to construct the
partial fractions expansion of
$1/\psi_n(t)$ (see \eqref{eq_pf_cauchy} in Section~\ref{sec_pf}).
Then, we establish that the contribution of
the poles $\pm x_1, \pm x_2, \dots$ to this expansion 
is of order $|\lambda_n|$.
This statement is made precise in Theorems~\ref{thm_complex},
\ref{thm_complex_summary},
which are the principal results of this subsection.

%%%%%%%%%%%%%
\subsubsection{Contribution of the Head of the Series 
\eqref{eq_pf_cauchy}}
\label{sec_head}
We use the results of Section~\ref{sec_oscillation} and 
Section~\ref{sec_growth} to bound the contribution
of the first few summands of the series \eqref{eq_pf_cauchy}
in Section~\ref{sec_pf}.
This is summarized in Theorem~\ref{lem_head_main} below.
In Theorem~\ref{lem_6_10}, we provide an upper bound on the contribution
of two consecutive summands of \eqref{eq_pf_cauchy}.
Theorem~\ref{lem_6_10} is illustrated in Table~\ref{t:test85}
(see Experiment 7 in Section~\ref{sec_exp7}).
%%%%%%%%%%%%%%%%%%%%%%%%%%%%%%%%%%%%%%%%%%%%%%
\begin{thm}[contribution of consecutive roots]
Suppose that $n \geq 0$ is an integer, and that
$\chi_n > c^2$. Suppose also that 
$x < y$ are two consecutive roots of $\psi_n$ in $(1,\infty)$.
Then,
\begin{align}
\label{eq_6_10}
& \abrk{ \frac{1}{\brk{t-x} \psi_n'(x)} + \frac{1}{\brk{t-y} \psi_n'(y)} }
\leq
e^{1/4} \cdot \abrk{\lambda_n} \cdot
  \int_x^y \frac{ (z+1)^2 \; dz}{\brk{z^2-\left(\chi_n/c^2\right)}^{3/2}},
\end{align}
for all real $t$ in the interval $(-1,1)$.
\label{lem_6_10}
\end{thm}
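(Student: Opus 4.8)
The plan is to exploit the fact that at two consecutive simple roots $x<y$ of $\psi_n$ the derivatives $\psi_n'(x)$ and $\psi_n'(y)$ have opposite signs (since $\psi_n$ keeps a constant sign on $(x,y)$ and all its roots are simple, by Theorem~\ref{thm_pswf_main} and Remark~\ref{rem_continuation}). Writing $s_x=1/\abrk{\psi_n'(x)}$ and $s_y=1/\abrk{\psi_n'(y)}$ and noting that for $t\in(-1,1)$ we have $t-x<0$ and $t-y<0$, the two summands have opposite signs and their sum collapses, up to an overall sign, to a \emph{difference} of two positive quantities,
\[
\abrk{\frac{1}{(t-x)\psi_n'(x)}+\frac{1}{(t-y)\psi_n'(y)}}
=\abrk{\frac{s_y}{y-t}-\frac{s_x}{x-t}}.
\]
This algebraic cancellation is essential: bounding each term separately yields something of order one, whereas the claimed right-hand side is of order $\abrk{\lambda_n}/(cx)$, so the proof must capture the near-cancellation of consecutive terms.

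To turn the difference into an integral over $[x,y]$ I would introduce a differentiable function $g$ on $[x,y]$ reproducing the two endpoint values exactly, so that $\frac{s_y}{y-t}-\frac{s_x}{x-t}=g(y)-g(x)=\int_x^y g'(z)\,dz$ and hence $\abrk{\mathrm{sum}}\le\int_x^y\abrk{g'(z)}\,dz$. The natural choice is dictated by the monotone invariant $\tilde Q$ of Theorem~\ref{lem_Q_Q_tilde}: since $\tilde Q(z)=(z^2-1)^2\,(\psi_n'(z))^2$ at every root of $\psi_n$ (the $\psi_n^2$ term drops out), the function
\[
g(z)=\frac{z^2-1}{(z-t)\sqrt{\tilde Q(z)}}
\]
satisfies $g(x)=s_x/(x-t)$ and $g(y)=s_y/(y-t)$ exactly, is smooth on $[x,y]$ (here $\tilde Q>0$ and $z>x>\sqrt{\chi_n}/c>1$ by Theorem~\ref{thm_four}), and is built only from quantities I can control.

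The main work, and the main obstacle, is a clean pointwise bound of the shape $\abrk{g'(z)}\le e^{1/4}\abrk{\lambda_n}\,(z+1)^2/(z^2-(\chi_n/c^2))^{3/2}$. Logarithmic differentiation gives $g'/g=2z/(z^2-1)-1/(z-t)-\tilde Q'/(2\tilde Q)$. Using the explicit formula \eqref{eq_dqtilde_short} for $\tilde Q'$ together with the elementary estimate $\psi_n^2\big/\big[(c^2z^2-\chi_n)\psi_n^2+(z^2-1)(\psi_n')^2\big]\le 1/(c^2z^2-\chi_n)$, the troublesome $\psi_n$-dependence in $\tilde Q'/\tilde Q$ cancels and one is left with the closed-form estimate $0\le \tilde Q'/\tilde Q\le 2z(2z^2-1-\chi_n/c^2)\big/\big[(z^2-1)(z^2-\chi_n/c^2)\big]$. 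To bound $g$ itself I would invoke the monotonicity of $\tilde Q$ (so $\tilde Q(z)\ge\tilde Q(x)$) together with Theorem~\ref{thm_sharp_simple} at the endpoint to extract the factor $e^{1/4}\abrk{\lambda_n}$, and finally the crude inequalities $z-t<z+1$ (valid since $t>-1$) and $z-t\ge z-1$ to collapse the various factors of $z^2-1$, $z^2-(\chi_n/c^2)$ and $z-t$ into the deliberately generous numerator $(z+1)^2$ and denominator $(z^2-(\chi_n/c^2))^{3/2}$. The delicate point is that exact endpoint interpolation forces the $\tilde Q$-based $g$, whose natural bound still carries the endpoint $x$; reconciling this with the $z$-only integrand requires absorbing the $x$-versus-$z$ discrepancy into the loose factor $(z+1)^2$ and verifying that the resulting inequality holds for all $z\ge x_1$, not merely asymptotically. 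Once this pointwise bound is in hand, integrating over $[x,y]$ yields \eqref{eq_6_10} immediately.
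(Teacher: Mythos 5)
Your structural observations are all correct — the opposite signs of $\psi_n'$ at consecutive roots, the exact endpoint interpolation by $g(z)=(z^2-1)/\bigl((z-t)\sqrt{\smash[b]{\tilde Q(z)}}\bigr)$, and the two-sided estimate $0\le\tilde Q'/\tilde Q\le 2z\bigl(2z^2-1-\chi_n/c^2\bigr)/\bigl((z^2-1)(z^2-\chi_n/c^2)\bigr)$ — but the step you defer to ``verification,'' the pointwise bound $\abrk{g'(z)}\le e^{1/4}\abrk{\lambda_n}(z+1)^2/(z^2-\chi_n/c^2)^{3/2}$, is a genuine gap: it does not follow from your ingredients, and the bound your ingredients actually produce is \emph{false} on most of $[x,y]$ in the regime of large roots. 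Write $a^2=\chi_n/c^2$ and take $t\uparrow 1$. Your estimates give $\abrk{g(z)}\le e^{1/4}\abrk{\lambda_n}(z+1)\big/\bigl((x^2-1)^{1/4}(x^2-a^2)^{1/4}\bigr)$ (via $\tilde Q(z)\ge\tilde Q(x)$ and Theorem~\ref{thm_sharp_simple}), while the lower extreme of $g'/g=\frac{2z}{z^2-1}-\frac{1}{z-t}-\frac{\tilde Q'}{2\tilde Q}$ permits
\begin{align}
\abrk{\frac{g'(z)}{g(z)}} \leq \frac{1}{z^2-1}+\frac{z}{z^2-a^2},
\nonumber
\end{align}
so your method's bound on $\abrk{g'(z)}$ fits under the target only if
\begin{align}
\left(\frac{1}{z^2-1}+\frac{z}{z^2-a^2}\right)\cdot\frac{(z^2-a^2)^{3/2}}{z+1}
\leq (x^2-1)^{1/4}\,(x^2-a^2)^{1/4}.
\nonumber
\end{align}
Expanding both sides for large roots, the left-hand side is $z-\frac{a^2}{2z}+O(z^{-2})$ and the right-hand side is $x-\frac{1+a^2}{4x}+O(x^{-3})$, so the requirement reduces to $z-x\le\frac{a^2-1}{4x}+O(x^{-2})$ — which fails on essentially all of $[x,y]$, since $y-x\ge\pi/c$ is bounded below while $(a^2-1)/(4x)\to 0$. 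The slack you hoped to exploit in the ``generous'' numerator $(z+1)^2$ is only a relative $1+O(1/z)$ over the natural $z^2-1$, far too small, and the fixed constant $e^{1/4}$ in \eqref{eq_6_10} leaves no budget elsewhere. The underlying reason is that decoupling the two estimates pays both worst cases simultaneously: $\tilde Q'/2\tilde Q$ attains its upper envelope only where $\psi_n'(z)\approx 0$, i.e.\ mid-oscillation, exactly where $\tilde Q(z)$ has already grown strictly above $\tilde Q(x)$; your interval arithmetic discards this correlation, so even if the true $g'$ satisfies the bound, your chain cannot prove it.

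The paper avoids needing any pointwise derivative bound on a $t$-dependent interpolant. It splits according to which of the two (opposite-signed) terms dominates, shows by a monotonicity-in-$t$ argument that each case is extremal at $t=1$, respectively $t=-1$, and then applies the \emph{matching one-sided half} of Theorem~\ref{thm_consec_der} (your two extremes of $\tilde Q'/\tilde Q$ are precisely the integrated form of its two halves, coming from monotonicity of $Q$ and $\tilde Q$ in Theorem~\ref{lem_Q_Q_tilde}). In Case~1 this collapses the expression \emph{exactly} to $(x^2-1)^{1/4}(x^2-a^2)^{1/4}\bigl(g_0(x)-g_0(y)\bigr)$ for the fixed, $t$-free function $g_0(z)=\sqrt{(z+1)/\bigl((z-1)(z^2-a^2)\bigr)}$, whose derivative does admit the clean bound $\abrk{g_0'(z)}\le(z+1)^2/\bigl((z^2-a^2)^{3/2}\sqrt{z^2-1}\bigr)$; in Case~2 it telescopes to $(y-x)/\bigl((y^2-1)\abrk{\psi_n'(y)}\bigr)$, which sits far below the common right-hand side. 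So the comparison between the endpoints is only ever made \emph{at} the endpoints, where both halves of the monotone information are available, rather than at every interior $z$ and every $t$. To repair your proposal, replace the fundamental theorem of calculus on your $g$ by this sign-case split tied to $t=\pm1$; as written, the central estimate cannot be closed.
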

\begin{proof}
Suppose that $-1 < t < 1$ is a real number. To prove \eqref{eq_6_10},
we distinguish between two cases. In the first case,
\begin{align}
\frac{ 1 }{ \brk{x-t} \abrk{\psi_n'(x)} } \geq
\frac{ 1 }{ \brk{y-t} \abrk{\psi_n'(y)} }.
\label{eq_6_10_case_1}
\end{align}
We combine \eqref{eq_6_10_case_1} with 
Theorem~\ref{thm_consec_der} in Section~\ref{sec_two_by_two}
and Theorem~\ref{thm_four}
to obtain
\begin{align}
& \abrk{ \frac{1}{\brk{t-x} \psi_n'(x)} + \frac{1}{\brk{t-y} \psi_n'(y)} } 
= \nonumber \\
& \frac{ 1 }{ \brk{x-t} \abrk{\psi_n'(x)} } -
  \frac{ 1 }{ \brk{y-t} \abrk{\psi_n'(y)} }
\leq 
\frac{ 1 }{ \brk{x-1} \abrk{\psi_n'(x)} } -
  \frac{ 1 }{ \brk{y-1} \abrk{\psi_n'(y)} } 
\leq \nonumber \\
& \frac{ 1 }{ \abrk{\psi_n'(x)} } 
\brk{
  \frac{1}{x-1} - \frac{1}{y-1} \cdot
  \sqrt{ \frac{x^2-(\chi_n/c^2)}{x^2-1} \cdot 
         \frac{y^2-1}{y^2-(\chi_n/c^2)} } }.
\label{eq_6_10_a}
\end{align}
We substitute \eqref{eq_sharp_simple} of Theorem~\ref{thm_sharp_simple}
into \eqref{eq_6_10_a} and carry out straightforward algebraic
manipulations
to obtain
\begin{align}
& \abrk{ \frac{1}{\brk{t-x} \psi_n'(x)} + \frac{1}{\brk{t-y} \psi_n'(y)} } 
\leq \nonumber \\
& e^{1/4} \cdot |\lambda_n| \cdot
\frac{ \brk{x^2 - 1}^{\frac{3}{4}} }
     { \brk{x^2 - \left(\chi_n/c^2\right)}^{\frac{1}{4}} }
\brk{
  \frac{1}{x-t} - \frac{1}{y-t}
  \sqrt{ \frac{x^2-\left(\chi_n/c^2\right)}{x^2-1} 
  \cdot \frac{y^2-1}{y^2-\left(\chi_n/c^2\right)} } }
\leq \nonumber \\
& e^{1/4} \cdot |\lambda_n| \cdot
\brk{x^2-1}^{\frac{1}{4}} \brk{x^2-\left(\chi_n/c^2\right)}^{\frac{1}{4}}
\brk{ g(x) - g(y) },
\label{eq_6_10_b}
\end{align}
where the function $g:(\sqrt{\chi_n}/c, \infty) \to \Rc$ is defined
via the formula
\begin{align}
g(z) = \sqrt{ \frac{ z + 1 }
{ \brk{z-1} \brk{z^2 - \left(\chi_n/c^2\right)} } }.
\label{eq_6_10_c}
\end{align}
We differentiate \eqref{eq_6_10_c} with respect to $z$ to obtain
\begin{align}
g'(z) & \; = 
\frac{ \sqrt{ \brk{z-1} \brk{z^2-\left(\chi_n/c^2\right)} } }
     { 2 \sqrt{z+1} } \cdot
\frac{ -2z^3 - 2z^2 + 2z + 2\left(\chi_n/c^2\right) }
     { \brk{z-1}^2 \brk{z^2-\left(\chi_n/c^2\right)}^2 } \nonumber \\
& \; > -
\frac{ (z+1)^2 }{ \brk{z^2-\left(\chi_n/c^2\right)}^{3/2} \cdot \sqrt{z^2-1} }.
\label{eq_6_10_d}
\end{align}
We substitute \eqref{eq_6_10_d} into \eqref{eq_6_10_b} to obtain
\begin{align}
& \abrk{ \frac{1}{\brk{t-x} \psi_n'(x)} + \frac{1}{\brk{t-y} \psi_n'(y)} } 
\leq \nonumber \\
& e^{1/4} \cdot |\lambda_n| \cdot
\brk{x^2-1}^{\frac{1}{4}} \brk{x^2-\left(\chi_n/c^2\right)}^{\frac{1}{4}}
\cdot \int_x^y |g'(z)| \; dz
\leq \nonumber \\
& e^{1/4} \cdot |\lambda_n| \cdot
  \int_x^y \frac{ (z+1)^2 \; dz}{\brk{z^2-\left(\chi_n/c^2\right)}^{3/2}},
\label{eq_6_10_e}
\end{align}
which establishes \eqref{eq_6_10} under the assumption \eqref{eq_6_10_case_1}.
%%%%%%%%%%%
If, on the other hand,
\begin{align}
\frac{ 1 }{ \brk{x-t} \abrk{\psi_n'(x)} } <
\frac{ 1 }{ \brk{y-t} \abrk{\psi_n'(y)} },
\label{eq_6_10_case_2}
\end{align}
then we combine
\eqref{eq_6_10_case_2} 
with Theorem~\ref{thm_consec_der} in Section~\ref{sec_two_by_two}
to obtain
\begin{align}
& \abrk{ \frac{1}{\brk{t-x} \psi_n'(x)} + \frac{1}{\brk{t-y} \psi_n'(y)} } 
= \nonumber \\
& \frac{ 1 }{ \brk{y-t} \abrk{\psi_n'(y)} } -
  \frac{ 1 }{ \brk{x-t} \abrk{\psi_n'(x)} } 
\leq 
 \frac{ 1 }{ \brk{y+1} \abrk{\psi_n'(y)} } -
  \frac{ 1 }{ \brk{x+1} \abrk{\psi_n'(x)} } 
  \leq  \nonumber \\
& \frac{1}{\abrk{\psi_n'(y)}} \cdot \frac{1}{y+1} \cdot
\left(1 - \frac{y+1}{x+1} \cdot \frac{x^2-1}{y^2-1} \right) = 
 \frac{1}{\abrk{\psi_n'(y)}} \cdot \frac{y-x}{y^2-1}.
\label{eq_6_10_f}
\end{align}
We substitute \eqref{eq_sharp_simple} of Theorem~\ref{thm_sharp_simple}
into \eqref{eq_6_10_f} to obtain
\begin{align}
& \abrk{ \frac{1}{\brk{t-x} \psi_n'(x)} + \frac{1}{\brk{t-y} \psi_n'(y)} } 
\leq \nonumber \\
& e^{1/4} \cdot \abrk{\lambda_n} \cdot 
\frac{ (y^2-1)^{\frac{3}{4}} }{ (y^2-(\chi_n/c^2))^{\frac{1}{4}} } \cdot
\frac{y-x}{y^2-1} \leq \nonumber \\
& e^{1/4} \cdot \abrk{\lambda_n} \cdot 
\int_x^y \frac{ dz }{ (z^2-(\chi_n/c^2))^{\frac{1}{2}} },
\label{eq_6_10_g}
\end{align}
which establishes \eqref{eq_6_10} under the assumption \eqref{eq_6_10_case_2}.
\end{proof}
%%%%%%%%%%%
The following theorem is a generalization of Theorem~\ref{lem_6_10}.
\begin{thm}
Suppose that $n \geq 0$ is an integer, and that $\chi_n > c^2$.
Suppose also that $1 < x_1 < x_2 < \dots$ are the roots of $\psi_n$
in $(1,\infty)$, and that $M>0$ is an even integer.
Then, for all real $-1 < t < 1$,
\begin{align}
\left|
\sum_{k=1}^M \frac{1}{(t-x_k) \cdot \psi_n'(x_k)}
\right| < 4 e^{1/4} \cdot |\lambda_n| \cdot 
\left( \log(2 \cdot x_M) + \sqrt{1 + \frac{\sqrt{\chi_n}}{\pi}} \right).
\label{eq_head_main}
\end{align}
\label{lem_head_main}
\end{thm}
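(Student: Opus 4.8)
The plan is to exploit the pairing bound of Theorem~\ref{lem_6_10} together with the separation estimate \eqref{eq_x1_x0_good} of Theorem~\ref{thm_x1_x0_good}, reducing everything to one elementary integral. Because $M$ is even, I would first group the $M$ summands into the $M/2$ consecutive pairs $(x_1,x_2),(x_3,x_4),\dots,(x_{M-1},x_M)$ and apply the triangle inequality:
\[
\left| \sum_{k=1}^M \frac{1}{(t-x_k)\,\psi_n'(x_k)} \right|
\le
\sum_{j=1}^{M/2}
\left| \frac{1}{(t-x_{2j-1})\,\psi_n'(x_{2j-1})}
     + \frac{1}{(t-x_{2j})\,\psi_n'(x_{2j})} \right|.
\]
Theorem~\ref{lem_6_10} bounds the $j$-th summand by $e^{1/4}|\lambda_n|$ times the integral of $(z+1)^2/(z^2-(\chi_n/c^2))^{3/2}$ over $[x_{2j-1},x_{2j}]$. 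Since these integration intervals are pairwise disjoint subintervals of $[x_1,x_M]$ and the integrand is positive, the sum of the pairwise integrals is at most $\int_{x_1}^{x_M}$ of the same integrand; thus the whole problem collapses to bounding a single integral.

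Next I would evaluate that integral in closed form via the decomposition $(z+1)^2 = (z^2-(\chi_n/c^2)) + 2z + (1+\chi_n/c^2)$, which gives
\[
\frac{(z+1)^2}{(z^2-(\chi_n/c^2))^{3/2}}
= \frac{1}{(z^2-(\chi_n/c^2))^{1/2}}
+ \frac{2z}{(z^2-(\chi_n/c^2))^{3/2}}
+ \frac{1+(\chi_n/c^2)}{(z^2-(\chi_n/c^2))^{3/2}}.
\]
Each term has an elementary antiderivative. The first integrates to $\log(z+\sqrt{z^2-(\chi_n/c^2)})$, which is at most $\log(2x_M)$ after discarding the nonnegative contribution at $x_1$ and using $\sqrt{x_M^2-(\chi_n/c^2)} < x_M$. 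The second and third integrate, respectively, to $-2(z^2-(\chi_n/c^2))^{-1/2}$ and to $-\frac{(1+\chi_n/c^2)\,z}{(\chi_n/c^2)\,\sqrt{z^2-(\chi_n/c^2)}}$; since their contributions at the upper endpoint $x_M$ are nonpositive, each is bounded by its value at the lower endpoint $x_1$. This leaves the logarithmic term plus two boundary terms concentrated at $x_1$.

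The final and most delicate step is to show those $x_1$-boundary terms are controlled by $\sqrt{1+\sqrt{\chi_n}/\pi}$. Writing $a=\sqrt{\chi_n}/c$ and using $1+a^2 \le 2a^2$ (valid since $\chi_n>c^2$ forces $a>1$), the two boundary terms add up to at most $2(1+x_1)/\sqrt{x_1^2-a^2} < 4x_1/\sqrt{x_1^2-a^2}$. Here I would invoke \eqref{eq_x1_x0_good}: from $x_1-a>\pi/(2c)$ and $x_1+a>2a$ one gets $x_1^2-a^2=(x_1-a)(x_1+a)>\pi a/c$, whence $x_1^2/(x_1^2-a^2)=1+a^2/(x_1^2-a^2)<1+ac/\pi=1+\sqrt{\chi_n}/\pi$ and therefore $x_1/\sqrt{x_1^2-a^2}<\sqrt{1+\sqrt{\chi_n}/\pi}$. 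Combining, the integral is at most $\log(2x_M)+4\sqrt{1+\sqrt{\chi_n}/\pi}$, and since $\log(2x_M)\ge 0$ we may enlarge the logarithmic coefficient to $4$; multiplying the resulting integral bound by $e^{1/4}|\lambda_n|$ then gives \eqref{eq_head_main}. The main obstacle is exactly this extraction of a clean $\sqrt{1+\sqrt{\chi_n}/\pi}$ from the singular boundary terms: it works only because the spacing estimate \eqref{eq_x1_x0_good} keeps the first root $x_1$ a definite distance away from the turning point $\sqrt{\chi_n}/c$, where the integrand blows up.
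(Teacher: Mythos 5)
Your proposal is correct and follows essentially the same route as the paper's proof: pair the summands via Theorem~\ref{lem_6_10}, use disjointness of the integration intervals to collapse the sum into a single integral over $[x_1,x_M]$, and control the boundary term at $x_1$ through the separation estimate \eqref{eq_x1_x0_good} combined with $x_1>\sqrt{\chi_n}/c$. The only cosmetic difference is that the paper replaces $(z+1)^2$ by $4z^2$ before integrating and uses the single antiderivative of $z^2\,(z^2-\chi_n/c^2)^{-3/2}$, whereas you integrate three exact pieces and absorb the bookkeeping (via $1+a^2\le 2a^2$, $1+x_1<2x_1$, and enlarging the logarithmic coefficient) at the end — both yield the same constant $4e^{1/4}$.
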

\begin{proof}
Due to Theorem~\ref{lem_6_10} above,
\begin{align}
\left|
\sum_{k=1}^M \frac{1}{(t-x_k) \cdot \psi_n'(x_k)}
\right| 
& \; \leq e^{1/4} \cdot |\lambda_n| \cdot 
\sum_{k=1}^{M/2}
\int_{x_{2k-1}}^{x_{2k}} 
\frac{ (z+1)^2 \; dz}{\brk{z^2-\left(\chi_n/c^2\right)}^{3/2}} \nonumber \\
& \; < e^{1/4} \cdot |\lambda_n| \cdot 
\int_{x_1}^{x_M}
\frac{ (z+1)^2 \; dz}{\brk{z^2-\left(\chi_n/c^2\right)}^{3/2}}.
\label{eq_head_main_a}
\end{align}
We observe that
\begin{align}
\int
\frac{ z^2 \; dz}{\brk{z^2-\left(\chi_n/c^2\right)}^{3/2}} =
\log\left( z + \sqrt{z^2-\left(\chi_n/c^2\right)}\right) -
\frac{z}{\sqrt{z^2-\left(\chi_n/c^2\right)}},
\label{eq_head_main_b}
\end{align}
and combine \eqref{eq_head_main_b} with \eqref{eq_head_main_a}
to obtain
\begin{align}
& \left|
\sum_{k=1}^M \frac{1}{(t-x_k) \cdot \psi_n'(x_k)}
\right| < 4 e^{1/4} \cdot |\lambda_n| \cdot  \left(
\log\left( 2 x_M \right) + 
\frac{x_1}{\sqrt{x_1^2-\left(\chi_n/c^2\right)}}
\right).
\label{eq_head_main_c}
\end{align}
It follows from the combination of Theorem~\ref{thm_four} and
Theorem~\ref{thm_x1_x0_good} that
\begin{align}
\frac{x_1}{\sqrt{x_1^2-\left(\chi_n/c^2\right)}}
& \; =
\sqrt{1 + \frac{\left(\chi_n/c^2\right)}{x_1^2-\left(\chi_n/c^2\right)}}
\nonumber \\
& \; \leq
\sqrt{1 + \frac{\sqrt{\chi_n}}{2c} \cdot \frac{2c}{\pi} }
= \sqrt{1 + \frac{\sqrt{\chi_n}}{\pi}},
\label{eq_head_main_d}
\end{align}
and we substitute \eqref{eq_head_main_d} into \eqref{eq_head_main_c}
to conclude the proof.
\end{proof}
%%%%%%%%%%%%%%%%%%%%%%%%%%%%%%%%%%%%%%%%%%%

%%%%%%%%%%%%%
\subsubsection{Contribution of the Tail of the Series 
\eqref{eq_pf_cauchy}}
\label{sec_tail}

In the following theorem, we establish an upper bound on $\chi_n$
in terms of $|\lambda_n|$.
\begin{thm}
Suppose that $n > 0$ is a positive integer, and that
\begin{align}
c > 30.
\label{eq_c_30}
\end{align}
Suppose also that
\begin{align}
|\lambda_n| < \frac{1}{10}.
\label{eq_lambda_10}
\end{align}
Then,
\begin{align}
\chi_n - c^2 < \frac{c^2}{|\lambda_n|}.
\label{eq_khi_lambda_bound}
\end{align}
\label{thm_khi_lambda_bound}
\end{thm}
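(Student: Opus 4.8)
The plan is to reduce the inequality to a statement about $x_n := \chi_n/c^2$ and then invoke the explicit upper bound on $\abrk{\lambda_n}$ from Theorem~\ref{thm_lambda_khi}. First I would dispose of the trivial case: if $\chi_n \le c^2$, then $\chi_n - c^2 \le 0 < c^2/\abrk{\lambda_n}$ and there is nothing to prove, so I may assume $\chi_n > c^2$, i.e.\ $x_n > 1$. In this regime the desired bound \eqref{eq_khi_lambda_bound} is equivalent to $\abrk{\lambda_n}\cdot(x_n-1) < 1$. If moreover $x_n - 1 \le 10$, then, since $\abrk{\lambda_n} < 1/10$ by \eqref{eq_lambda_10}, we get $\abrk{\lambda_n}\cdot(x_n-1) \le 10\,\abrk{\lambda_n} < 1$, and we are done. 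Thus the only case requiring real work is $x_n > 11$.

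In the case $x_n > 11$ I would next verify that Theorem~\ref{thm_lambda_khi} applies, i.e.\ that $n > 2c/\pi + \sqrt{42}$. This follows from Theorem~\ref{thm_khi_crude}: since $\chi_n < n(n+1) + c^2$, we have $n(n+1) > \chi_n - c^2 = c^2(x_n-1) > 10c^2 > 9000$ (using $c > 30$), whence $n > \sqrt{10}\,c - 1$; as $2c/\pi + \sqrt{42} < c + 7$, the inequality $\sqrt{10}\,c - 1 > c + 7$ holds for every $c > 30$. Theorem~\ref{thm_lambda_khi} then gives $\abrk{\lambda_n} < 1195\,c\,x_n^{3/4}(x_n-1)^{1/4}(x_n-1/2)^3 \exp[-\tfrac{\pi c}{4}(\sqrt{x_n} - 1/\sqrt{x_n})]$, so it suffices to show that $1195\,c\,x_n^{3/4}(x_n-1)^{5/4}(x_n-1/2)^3 \exp[-\tfrac{\pi c}{4}(\sqrt{x_n}-1/\sqrt{x_n})] < 1$.

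The main obstacle is proving this last inequality uniformly over $x_n > 11$ and $c > 30$, which I would handle by a calculus argument. Writing $u = \sqrt{x_n} > \sqrt{11}$ and taking logarithms, the claim becomes $h(u) > 0$, where $h(u) = \tfrac{\pi c}{4}(u - 1/u) - \log(1195c) - \tfrac34\log(u^2) - \tfrac54\log(u^2-1) - 3\log(u^2-1/2)$. The key point is that the single linear-in-$u$ term $\tfrac{\pi c}{4}u$ overwhelms the logarithmic terms, whose derivatives are bounded and decreasing on $[\sqrt{11},\infty)$. I would check that $h'(u) = \tfrac{\pi c}{4}(1+1/u^2) - \tfrac{3}{2u} - \tfrac{5u}{2(u^2-1)} - \tfrac{6u}{u^2-1/2} > 0$ for $u \ge \sqrt{11}$ and $c > 30$, by bounding each of the three subtracted terms by its value at $u=\sqrt{11}$ (each is decreasing there) and comparing with $\tfrac{\pi c}{4} \ge \tfrac{30\pi}{4}$; hence $h$ is increasing. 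It then remains to verify the base case $h(\sqrt{11}) > 0$, i.e.\ $\tfrac{\pi c}{4}\cdot\tfrac{10}{\sqrt{11}} > \log(1195c) + \tfrac34\log 11 + \tfrac54\log 10 + 3\log(21/2)$; since the left side grows linearly in $c$ while the right side grows only like $\log c$, and the inequality holds with ample room at $c=30$, monotonicity in $c$ closes the argument. The technical heart of the proof is thus precisely this explicit verification that the exponential decay of $\abrk{\lambda_n}$ dominates the polynomial growth of its prefactor in $x_n$, uniformly in both variables.
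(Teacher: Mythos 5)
Your proof is correct, and it reaches \eqref{eq_khi_lambda_bound} by a genuinely different decomposition than the paper's. The paper splits into cases according to the size of $n$: for $n$ below the threshold $\frac{2c}{\pi} + \frac{5}{4\pi^2}\log\left(\frac{64e\pi}{10}\right)\cdot c$ it invokes Theorems~\ref{thm_n_and_khi} and~\ref{thm_khi_2} to conclude $\chi_n - c^2 < 10c^2$, which together with $|\lambda_n| < 1/10$ gives the claim, while above the threshold it invokes Theorem~\ref{thm_khi_1} to get $\chi_n - c^2 > \frac{5}{2\pi}\,c^2$ and then shows $\frac{\chi_n}{c^2}\cdot|\lambda_n| < f(c,y_n) < 1$ for $f(c,y) = 1195\, y^{10}\, c \exp\left[-\pi (y^2-1) c/(4y)\right]$, by proving $f$ decreasing in both variables for $y \geq \sqrt{1+5/(2\pi)}$, $c \geq 8$, and checking numerically that the critical band limit $c_0$ solving $f(c_0,y_0)=1$ satisfies $c_0 < 30$. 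You instead split on $x_n = \chi_n/c^2$ directly at $x_n = 11$, which makes the intermediate case follow trivially from the hypothesis $|\lambda_n| < 1/10$ alone, so you bypass Theorems~\ref{thm_khi_1} and~\ref{thm_khi_2} entirely; and you secure the hypothesis $n > 2c/\pi + \sqrt{42}$ of Theorem~\ref{thm_lambda_khi} via the crude bound of Theorem~\ref{thm_khi_crude} (from $n(n+1) > \chi_n - c^2 > 10c^2$) rather than via the case structure. Your endgame is also more explicit: the one-variable estimate $h(u) > 0$, with $h'(u) > 0$ verified by bounding the three subtracted terms at $u = \sqrt{11}$ (they sum to roughly $3.2$, against $\pi c/4 > 23$) and the base case $h(\sqrt{11}) \approx 49 > 0$ at $c = 30$ with $\partial_c h(\sqrt{11}) = \frac{\pi}{4}\cdot\frac{10}{\sqrt{11}} - \frac{1}{c} > 0$, replaces the paper's implicitly defined constant $c_0$ and its unexhibited ``elementary calculations.'' What the paper's route buys in exchange is the marginally stronger conclusion $\chi_n \cdot |\lambda_n| < c^2$ (your argument yields exactly $(\chi_n - c^2)\cdot|\lambda_n| < c^2$, which is all the theorem asks), and a monotone-auxiliary-function device that the paper deploys again in the same style later (compare \eqref{eq_quad_eps_large_e}--\eqref{eq_quad_eps_large_f} in the proofs of Theorems~\ref{thm_quad_eps_large} and~\ref{thm_final_w}); your version is the more self-contained and fully explicit of the two.
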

\begin{proof}
Suppose first that
%%% it's about 0.5067
\begin{align}
n < \frac{2c}{\pi} + \frac{2}{\pi^2} \cdot \frac{10}{16} \cdot
\log\left( \frac{64e\pi}{10} \right) \cdot c.
\label{eq_klb_a}
\end{align}
We combine Theorems~\ref{thm_n_and_khi},~\ref{thm_khi_2}
in Section~\ref{sec_pswf} with \eqref{eq_c_30}, \eqref{eq_lambda_10}
to conclude that
\begin{align}
\chi_n - c^2 < 10 \cdot c^2,
\label{eq_klb_b}
\end{align}
provided that \eqref{eq_klb_a} holds. If, on the other hand,
\begin{align}
n \geq \frac{2c}{\pi} + \frac{2}{\pi^2} \cdot \frac{10}{16} \cdot
\log\left( \frac{64e\pi}{10} \right) \cdot c,
\label{eq_klb_c}
\end{align}
then we combine \eqref{eq_klb_c} with Theorem~\ref{thm_khi_1}
in Section~\ref{sec_pswf} to obtain
\begin{align}
\chi_n - c^2 > \frac{4}{\pi} \cdot \frac{10}{16} \cdot c^2
= \frac{5}{2\pi} \cdot c^2.
\label{eq_klb_d}
\end{align}
Suppose now that the function $f : (0,\infty) \times (1,\infty) \to \Rc$
is defined via the formula
\begin{align}
f(c,y) = 1195 \cdot y^{10} \cdot c \cdot 
\exp\left[ -\frac{\pi\cdot \left( y^2 - 1 \right) \cdot c}{4y}
   \right].
\label{eq_klb_e}
\end{align}
We differentiate \eqref{eq_klb_e} with respect to $c$ to obtain
\begin{align}
\frac{\partial f}{\partial c}(c,y) =
\frac{f(c,y)}{c} \cdot
\left(
1 -\frac{\pi\cdot \left( y^2 - 1 \right) \cdot c}{4y}
\right).
\label{eq_klb_f}
\end{align}
Also, we differentiate \eqref{eq_klb_e} with respect to $y$ to obtain
\begin{align}
\frac{\partial f}{\partial y}(c,y) =
\frac{f(c,y)}{y} \cdot
\left(
10 -\frac{\pi\cdot \left( y^2 + 1 \right) \cdot c}{4y}
\right).
\label{eq_klb_g}
\end{align}
We define the real number $y_0$ via the formula
\begin{align}
y_0 = \sqrt{1 + \frac{5}{2\pi}},
\label{eq_klb_h}
\end{align}
and combine \eqref{eq_klb_f}, \eqref{eq_klb_g}, \eqref{eq_klb_h} to conclude
that
\begin{align}
\frac{\partial f}{\partial c}(c,y) < 0, \quad
\frac{\partial f}{\partial y}(c,y) < 0,
\label{eq_klb_i}
\end{align}
for all $y \geq y_0$ and all $c \geq 8$.
Also, we defined the real number $c_0$ to be the solution of the
equation
\begin{align}
f(c,y_0) = 1,
\label{eq_klb_j}
\end{align}
in the unknown $c \geq 8$ (this solution is unique due to \eqref{eq_klb_i}).
We carry out elementary calculations to conclude that
\begin{align}
c_0 < 30.
\label{eq_klb_k}
\end{align}
We combine \eqref{eq_klb_h}, \eqref{eq_klb_i}, \eqref{eq_klb_j},
\eqref{eq_klb_k} to conclude that
\begin{align}
f(c,y) < 1,
\label{eq_klb_l}
\end{align}
for all $y > y_0$ and all $c > 30$. 
Suppose now that $n$ satisfies the inequality \eqref{eq_klb_c}.
We define the real number $y_n$
via the formula
\begin{align}
y_n = \sqrt{\frac{\chi_n}{c^2}},
\label{eq_klb_m}
\end{align}
and combine \eqref{eq_c_30}, \eqref{eq_klb_c}, \eqref{eq_klb_d},
\eqref{eq_klb_e}, \eqref{eq_klb_k}, \eqref{eq_klb_l},
\eqref{eq_klb_m} with Theorem~\ref{thm_lambda_khi} in Section~\ref{sec_pswf}
to conclude that
\begin{align}
\frac{\chi_n}{c^2} \cdot |\lambda_n| < f(c,y_n) < 1,
\label{eq_klb_n}
\end{align}
provided that \eqref{eq_klb_c} holds. We combine \eqref{eq_c_30},
\eqref{eq_lambda_10},
\eqref{eq_klb_a}, \eqref{eq_klb_b}, \eqref{eq_klb_c}, \eqref{eq_klb_d},
\eqref{eq_klb_h},
\eqref{eq_klb_n} to obtain \eqref{eq_khi_lambda_bound}, and thus
conclude the proof.
\end{proof}

According to Theorem~\ref{thm_spacing}, the distance between
two large consecutive roots of $\psi_n$ in $(1,\infty)$ is fairly
close to $\pi/c$. In the following theorem, we make
this observation more precise.
%%%%%%%%%%%%
\begin{thm}
Suppose that $n>0$ is a positive integer, and that
\begin{align}
n > \frac{2c}{\pi} + 1.
\label{eq_close_1}
\end{align}
Suppose also that $x, y$ are two consecutive roots of $\psi_n$
in $(1, \infty)$, and that
\begin{align}
\frac{1}{|\lambda_n|} < x < y.
\label{eq_close_2}
\end{align}
Suppose furthermore that 
\begin{align}
|\lambda_n| < \frac{1}{10},
\label{eq_close_3}
\end{align}
and that
\begin{align}
\chi_n - c^2 < \frac{c^2}{|\lambda_n|}.
\label{eq_close_4}
\end{align}
Then,
\begin{align}
\pi \leq c \cdot (y-x) \leq \pi + \frac{2}{|\lambda_n| \cdot x^2}.
\label{eq_close_roots}
\end{align}
\label{lem_close_roots}
\end{thm}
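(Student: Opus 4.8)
The plan is to prove the two inequalities separately, deriving both from the oscillation analysis of Section~\ref{sec_first_order} together with the elementary bounds assembled above. The hypothesis \eqref{eq_close_1} gives $\chi_n > c^2 + 1$ by Theorem~\ref{thm_khi_elem}, so in particular $\chi_n > c^2$ and the machinery of Theorems~\ref{lem_trans}, \ref{thm_four}, \ref{thm_spacing} applies. Thus $x$ and $y$ are consecutive roots of $\Psi_n = \psi_n \cdot \sqrt{t^2-1}$, which satisfies $\Psi_n'' + Q_n \Psi_n = 0$ with $Q_n$ given by \eqref{eq_big_q_n}, and by Theorem~\ref{thm_four} every root of $\psi_n$ in $(1,\infty)$ exceeds $\sqrt{\chi_n}/c$.

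For the lower bound $\pi \leq c(y-x)$, I would rewrite $Q_n$ in the form
\[
Q_n(t) = c^2 - \frac{\chi_n - c^2}{t^2 - 1} + \frac{1}{(t^2-1)^2}
       = c^2 + \frac{1}{t^2-1}\left( \frac{1}{t^2-1} - (\chi_n - c^2) \right).
\]
On $[x,y]$ we have $t \geq x > 1/|\lambda_n| > 10$ by \eqref{eq_close_2} and \eqref{eq_close_3}, so $t^2 - 1 > 99$ and hence $\tfrac{1}{t^2-1} < 1 < \chi_n - c^2$; this forces the bracketed term to be negative and therefore $Q_n(t) < c^2$ throughout $[x,y]$. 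Since $Q_n$ is also strictly positive there (as $t > \sqrt{\chi_n}/c$), Theorem~\ref{thm_25_09} applied with $B = c$ gives $y - x > \pi/c$, which is the claimed lower bound.

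For the upper bound I would start from the right-hand inequality of \eqref{eq_25_09_h1} in Theorem~\ref{thm_spacing}, namely $y - x \leq \frac{\pi}{c}\sqrt{(x^2-1)/(x^2-\chi_n/c^2)}$, write $\frac{x^2-1}{x^2-\chi_n/c^2} = 1 + \frac{\chi_n/c^2 - 1}{x^2 - \chi_n/c^2}$, and apply the concavity estimate $\sqrt{1+u} \leq 1 + u/2$ (valid since the argument is nonnegative, because $x^2 > \chi_n/c^2 > 1$). This reduces the target inequality $c(y-x) \leq \pi + 2/(|\lambda_n| x^2)$ to
\[
\pi \cdot (\chi_n - c^2) \cdot |\lambda_n| \cdot x^2 \;\leq\; 4 c^2 \left(x^2 - \frac{\chi_n}{c^2}\right).
\]
Using \eqref{eq_close_4} in the form $(\chi_n - c^2)|\lambda_n| < c^2$ bounds the left side by $\pi c^2 x^2$, so it suffices to prove $\pi x^2 \leq 4(x^2 - \chi_n/c^2)$, i.e. $\chi_n/c^2 \leq \frac{4-\pi}{4}\, x^2$.

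The remaining step is elementary: $\chi_n/c^2 = 1 + (\chi_n - c^2)/c^2 < 1 + 1/|\lambda_n|$ by \eqref{eq_close_4}, while $x^2 > 1/|\lambda_n|^2$ by \eqref{eq_close_2}, so that
\[
\frac{\chi_n/c^2}{x^2} < \left(1 + \frac{1}{|\lambda_n|}\right)|\lambda_n|^2 = |\lambda_n|^2 + |\lambda_n| < \frac{11}{100},
\]
using \eqref{eq_close_3}, and $\tfrac{11}{100}$ sits comfortably below $\frac{4-\pi}{4} \approx 0.215$. The computations are all routine; the one delicate point is the second step, where the single hypothesis $(\chi_n - c^2)|\lambda_n| < c^2$ together with $x > 1/|\lambda_n|$ must be precisely strong enough both to absorb the factor $\pi$ into $4$ and to dominate the subtracted term $\chi_n/c^2$. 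I expect the main obstacle to be bookkeeping—tracking which of \eqref{eq_close_1}--\eqref{eq_close_4} is invoked at each reduction—rather than any genuine analytic difficulty, since the margin between $0.11$ and $0.215$ leaves ample room.
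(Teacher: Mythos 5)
Your proof is correct, and the upper-bound half is the paper's argument almost verbatim: the paper likewise starts from the right-hand inequality of \eqref{eq_25_09_h1}, writes $\frac{x^2-1}{x^2-\chi_n/c^2}=1+\frac{\chi_n/c^2-1}{x^2-\chi_n/c^2}$, applies $\sqrt{1+u}\leq 1+u/2$, and then closes with \eqref{eq_close_4}, \eqref{eq_close_2}, \eqref{eq_close_3} (its final reduction, $\frac{\pi}{2|\lambda_n|}\cdot\frac{1}{x^2-1-1/|\lambda_n|}<\frac{2}{|\lambda_n|x^2}$, is the same generous arithmetic as your $0.11<\frac{4-\pi}{4}$ margin, just with the substitution of \eqref{eq_close_4} performed in a different order). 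Where you genuinely diverge is the lower bound. The paper obtains $\pi/c\leq y-x$ through the monotone-spacing machinery: it uses \eqref{eq_25_09_dq} together with $\chi_n-c^2>1$ (Theorem~\ref{thm_khi_elem}) to get $Q_n'>0$, invokes Theorem~\ref{thm_25_09_2} so that the gaps between consecutive roots are non-increasing, and then appeals to the limiting gap $\pi/c$ coming from $Q_n(t)\to c^2$. You instead exploit the hypotheses $x>1/|\lambda_n|>10$ and $\chi_n-c^2>1$ to establish the pointwise bound $Q_n(t)<c^2$ on all of $[x,y]$, via the rewriting $Q_n(t)=c^2+\frac{1}{t^2-1}\bigl(\frac{1}{t^2-1}-(\chi_n-c^2)\bigr)$, and then apply Theorem~\ref{thm_25_09} directly with $B=c$. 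This is a genuine (if modest) simplification in this regime: your argument is local to the single interval $[x,y]$ and needs no statement about the entire sequence of roots nor any limiting argument; note that it is precisely the largeness of $x$ that makes it available, since near $\sqrt{\chi_n}/c$ the bound $Q_n<c^2$ can fail, which is why the general Theorem~\ref{thm_spacing} only yields the weaker lower bound in \eqref{eq_25_09_h1}. Your parenthetical that $Q_n>0$ on $[x,y]$ (from $x>\sqrt{\chi_n}/c$, Theorem~\ref{thm_four}) is also exactly what is needed to supply the constant $A>0$ required by the two-sided hypothesis of Theorem~\ref{thm_25_09}, so the application is legitimate as stated.
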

\begin{proof}
Suppose that the functions $\Psi_n, Q_n: (1,\infty) \to \Rc$
are those of Theorem~\ref{lem_trans}.
We combine Theorem~\ref{thm_khi_elem} of Section~\ref{sec_pswf},
\eqref{eq_25_09_dq} in the proof of Theorem~\ref{thm_spacing},
Theorem~\ref{thm_25_09_2} in Section~\ref{sec_oscillation_ode},
\eqref{eq_close_1}, \eqref{eq_close_2} and \eqref{eq_close_3}
to conclude that
\begin{align}
\frac{\pi}{c} \leq y-x.
\label{eq_close_a}
\end{align}
On the other hand, we combine Theorem~\ref{thm_spacing}
with \eqref{eq_close_2}, \eqref{eq_close_3}, \eqref{eq_close_4} to obtain
\begin{align}
c \cdot (y-x) & \; \leq \pi \cdot 
\sqrt{1 + \frac{\left(\chi_n/c^2\right)-1}{x^2-\left(\chi_n/c^2\right)}} \leq
\pi + \frac{\pi}{2} \cdot
      \frac{\left(\chi_n/c^2\right)-1}{x^2-\left(\chi_n/c^2\right)}
  \nonumber \\
& \; \leq \pi + \frac{\pi}{2 \cdot |\lambda_n|} \cdot
     \frac{1}{x^2 - 1 - 1/|\lambda_n|} < \pi + \frac{2}{|\lambda_n|\cdot x^2}.
\label{eq_close_b}
\end{align}
Thus \eqref{eq_close_roots} follows from the combination
of \eqref{eq_close_a} and \eqref{eq_close_b}.
\end{proof}
%%%%%%%%%%%%
The following two theorems are direct consequences of 
the integral equation \eqref{eq_prolate_integral2}
in Section~\ref{sec_pswf}.
%%%%%%%%%%%
\begin{thm}[expansion of $\psi_n(x)$]
Suppose that $n \geq 0$ is a non-negative integer, and
that $x > 1$ is a real number. 
If $n$ is even, then
\begin{align}
\psi_n(x) = \frac{2 \psi_n(1)}{cx\lambda_n}
\left[\sin(cx) + \frac{1}{\lambda_n \psi_n(1)}
   \int_{-1}^1 \frac{ \sin\brk{c(x-t)} \psi_n(t) t }{x - t} \; dt \right].
\label{eq_2_10_even1}
\end{align}
If $n$ is odd, then
\begin{align}
\psi_n(x) = \frac{2 \psi_n(1)}{icx\lambda_n}
\left[\cos(cx) + \frac{1}{i\lambda_n \psi_n(1)}
   \int_{-1}^1 \frac{ \sin\brk{c(x-t)} \psi_n(t) t }{x - t} \; dt \right].
\label{eq_2_10_odd1}
\end{align}
\label{lem_psi_for_large_x}
\end{thm}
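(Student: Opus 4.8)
The plan is to extend the sine-kernel integral equation \eqref{eq_prolate_integral2} off the interval $[-1,1]$ and then isolate its leading term. First I would observe that, although \eqref{eq_prolate_integral2} is stated for $-1 \leq x \leq 1$, both of its sides extend to entire functions of $x$: the left-hand side by Remark~\ref{rem_continuation} (which continues $\psi_n$ to all of $\Cc$ via \eqref{eq_prolate_integral}), and the right-hand side because the kernel $\sin(c(x-t))/(x-t)$ is entire in $x$ for each fixed $t$. Since the two entire functions agree on $[-1,1]$, they agree for all real $x$, so in particular
\begin{align}
\mu_n \psi_n(x) = \frac{1}{\pi}\int_{-1}^1 \frac{\sin(c(x-t))}{x-t}\,\psi_n(t)\,dt
\label{eq_plan_ext}
\end{align}
holds for every $x > 1$.

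Next I would split off the principal contribution using the elementary identity $1/(x-t) = (1/x)\brk{1 + t/(x-t)}$, which rewrites \eqref{eq_plan_ext} as
\begin{align}
\mu_n \psi_n(x) = \frac{1}{\pi x}\int_{-1}^1 \sin(c(x-t))\,\psi_n(t)\,dt + \frac{1}{\pi x}\int_{-1}^1 \frac{\sin(c(x-t))\,t}{x-t}\,\psi_n(t)\,dt.
\label{eq_plan_split}
\end{align}
The second integral is precisely the correction term appearing in \eqref{eq_2_10_even1} and \eqref{eq_2_10_odd1}, so it remains only to evaluate the first one. Expanding $\sin(c(x-t)) = \sin(cx)\cos(ct) - \cos(cx)\sin(ct)$ reduces this to computing $\int_{-1}^1 \cos(ct)\psi_n(t)\,dt$ and $\int_{-1}^1 \sin(ct)\psi_n(t)\,dt$. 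I would evaluate these by setting $x = 1$ in \eqref{eq_prolate_integral}, which gives $\lambda_n \psi_n(1) = \int_{-1}^1 \psi_n(t)\cos(ct)\,dt + i\int_{-1}^1 \psi_n(t)\sin(ct)\,dt$. By Theorem~\ref{thm_pswf_main}, $\psi_n$ is even when $n$ is even and odd when $n$ is odd; hence for even $n$ the sine integral vanishes and $\int_{-1}^1 \cos(ct)\psi_n(t)\,dt = \lambda_n\psi_n(1)$, while for odd $n$ the cosine integral vanishes and $\int_{-1}^1 \sin(ct)\psi_n(t)\,dt = -i\,\lambda_n\psi_n(1)$. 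Substituting back, the first integral in \eqref{eq_plan_split} equals $\lambda_n\psi_n(1)\sin(cx)$ for even $n$ and $i\,\lambda_n\psi_n(1)\cos(cx)$ for odd $n$.

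To finish, I would eliminate $\mu_n$ in favor of $\lambda_n$. Using \eqref{eq_prolate_mu} together with the relation $\lambda_n = i^n|\lambda_n|$ from Theorem~\ref{thm_pswf_main}, one has $\mu_n = (c/2\pi)\lambda_n^2$ for even $n$ (since $\lambda_n$ is then real, so $\lambda_n^2 = |\lambda_n|^2$) and $\mu_n = -(c/2\pi)\lambda_n^2$ for odd $n$ (since $\lambda_n$ is then purely imaginary, so $\lambda_n^2 = -|\lambda_n|^2$). Solving \eqref{eq_plan_split} for $\psi_n(x)$ with these substitutions and factoring out $2\psi_n(1)/(cx\lambda_n)$ in the even case (respectively $2\psi_n(1)/(icx\lambda_n)$ in the odd case) yields \eqref{eq_2_10_even1} (respectively \eqref{eq_2_10_odd1}) after routine simplification; in the odd case one checks the signs by recalling $1/i = -i$. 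The only step requiring genuine care is the analytic-continuation argument of the first paragraph; once \eqref{eq_plan_ext} is justified for $x > 1$, the remainder is a parity computation together with bookkeeping of the powers of $i$.
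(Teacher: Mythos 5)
Your proposal is correct and follows essentially the same route as the paper's proof: both start from \eqref{eq_prolate_integral2} together with \eqref{eq_prolate_mu}, split the kernel via $1/(x-t) = 1/x + t/(x(x-t))$, evaluate the leading integral through \eqref{eq_prolate_integral} and the parity facts of Theorem~\ref{thm_pswf_main} (you via the real trigonometric expansion and evaluation at $x=1$, the paper via complex exponentials evaluated at $x=\pm 1$), and handle the signs using $|\lambda_n|^2 = (-1)^n \lambda_n^2$. Your explicit analytic-continuation argument justifying the use of \eqref{eq_prolate_integral2} at $x>1$ is a detail the paper's proof invokes only tacitly, and it is a welcome addition rather than a divergence.
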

\begin{proof}
We observe that
\begin{align}
\frac{1}{x-t} = \frac{1}{x} + \frac{t}{x \cdot (x-t)},
\label{eq_2_10_a}
\end{align}
for all real $-1 < t < 1$. We combine \eqref{eq_2_10_a}
with \eqref{eq_prolate_mu}, \eqref{eq_prolate_integral2} 
in Section~\ref{sec_pswf} to obtain
\begin{align}
\frac{ c \abrk{\lambda_n}^2 }{ 2 \pi } \psi_n(x)
& \; = 
\frac{1}{\pi} \int_{-1}^1 \frac{e^{ic(x-t)} \psi_n(t) }{2i(x-t)} dt \;-\;
\frac{1}{\pi} \int_{-1}^1 \frac{e^{-ic(x-t)} \psi_n(t)}{2i(x-t)} dt
\nonumber \\
& \; =
\frac{e^{icx}}{2\pi i} \int_{-1}^1 \frac{e^{-ict} \psi_n(t)}{x} dt \;+\;
\frac{e^{icx}}{2\pi i} \int_{-1}^1 \frac{e^{-ict} \psi_n(t)t}{x(x-t)} dt
\nonumber \\
& \; -
\frac{e^{-icx}}{2\pi i} \int_{-1}^1 \frac{e^{ict} \psi_n(t)}{x} dt \;-\;
\frac{e^{-icx}}{2\pi i} \int_{-1}^1 \frac{e^{ict} \psi_n(t)t}{x(x-t)}dt
\nonumber \\
& \; =
\frac{e^{icx} \lambda_n \psi_n(-1)}{2\pi i x} -
\frac{e^{-icx} \lambda_n \psi_n(1)}{2\pi i x} 
\nonumber \\
& \; +
\frac{1}{\pi} \int_{-1}^1 \frac{ \sin\brk{c(x-t)} }{x(x-t)} \psi_n(t) t \; dt.
\label{eq_2_10_1}
\end{align}
Due to Theorem~\ref{thm_pswf_main} in Section~\ref{sec_pswf},
\begin{align}
\psi_n(1) = (-1)^n \cdot \psi_n(-1),
\label{eq_2_10_psi1}
\end{align}
and 
\begin{align}
\abrk{\lambda_n}^2 = (-1)^n \cdot \lambda_n^2.
\label{eq_2_10_lambda}
\end{align}
Thus \eqref{eq_2_10_even1} and \eqref{eq_2_10_odd1}
follow from the combination of \eqref{eq_2_10_1},
\eqref{eq_2_10_psi1} and \eqref{eq_2_10_lambda}.
\end{proof}
%%%%%%%%%%%
\begin{thm}[expansion of $\psi_n'(x)$]
Suppose that $n \geq 0$ is a non-negative integer, and
that $x > 1$ is a real number. 
If $n$ is even, then
\begin{align}
\psi_n'(x)
=
\frac{2 \psi_n(1)}{x \lambda_n} \cdot
& \left[ \cos(cx) - \frac{\sin(cx)}{cx} +
\frac{ 1 }{ \lambda_n \psi_n(1) }
    \int_{-1}^1 \frac{ \cos\brk{c(x-t)} \psi_n(t) t }{ x - t} dt \;+
  \right. \nonumber
\\
& \quad \left. \frac{ 1 }{ c \lambda_n \psi_n(1) }
    \int_{-1}^1 \frac{ \sin\brk{c(x-t)} \psi_n(t) \brk{t^2-2xt} }
                     { x \brk{x-t}^2 } \; dt
 \right].
\label{eq_2_10_even2}
\end{align}
If $n$ is odd, then
\begin{align}
\psi_n'(x)
=
-\frac{2 \psi_n(1)}{i x \lambda_n} \cdot
& \left[
  \sin\brk{cx} + \frac{\cos(cx)}{cx} +
 \frac{ i }{ \lambda_n \psi_n(1) }
    \int_{-1}^1 \frac{ \cos\brk{c(x-t)} \psi_n(t) t }{ x - t} dt +
  \right. 
 \nonumber \\
& \quad \left. \frac{ i }{ c \lambda_n \psi_n(1) }
    \int_{-1}^1 \frac{ \sin\brk{c(x-t)} \psi_n(t) \brk{t^2-2xt} }
                     { x \brk{x-t}^2 } \; dt
\right].
\label{eq_2_10_odd2}
\end{align}
\label{lem_dpsi_for_large_x}
\end{thm}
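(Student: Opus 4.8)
The plan is to mirror the proof of Theorem~\ref{lem_psi_for_large_x}, but to start from the \emph{derivative} of the integral equation \eqref{eq_prolate_integral2} rather than the equation itself. Since $x>1$ and $t\in[-1,1]$, the denominator $x-t\geq x-1>0$ never vanishes, so the kernel $\sin(c(x-t))/(x-t)$ is smooth in $x$ and differentiation under the integral sign is justified. First I would differentiate \eqref{eq_prolate_integral2} with respect to $x$ to obtain
\[
\mu_n \psi_n'(x) = \frac{1}{\pi}\int_{-1}^1 \left(\frac{c\cos(c(x-t))}{x-t} - \frac{\sin(c(x-t))}{(x-t)^2}\right)\psi_n(t)\,dt,
\]
which exposes the two kernels $c\cos(c(x-t))/(x-t)$ and $-\sin(c(x-t))/(x-t)^2$ that must be analyzed separately.

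For each kernel I would peel off a leading boundary contribution from a lower-order remainder by means of two algebraic identities: the decomposition $1/(x-t)=1/x+t/(x(x-t))$ already used in Theorem~\ref{lem_psi_for_large_x}, together with its second-order analogue
\[
\frac{1}{(x-t)^2}=\frac{1}{x^2}-\frac{t^2-2xt}{x^2(x-t)^2},
\]
which one checks directly. Writing $\cos(c(x-t))$ and $\sin(c(x-t))$ in terms of $e^{\pm ic(x-t)}$ and pulling out the factors $e^{\pm icx}$, the $1/x$ and $1/x^2$ pieces produce the boundary integrals $\int_{-1}^1 e^{\mp ict}\psi_n(t)\,dt=\lambda_n\psi_n(\mp 1)$, supplied by evaluating \eqref{eq_prolate_integral} at $x=\mp 1$. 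The complementary pieces $t/(x(x-t))$ and $(t^2-2xt)/(x^2(x-t)^2)$ are precisely the two remainder integrals displayed in \eqref{eq_2_10_even2} and \eqref{eq_2_10_odd2}, with the sign of the second one turning positive as required.

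Finally I would collect the boundary terms using the parity relations $\psi_n(1)=(-1)^n\psi_n(-1)$ and $|\lambda_n|^2=(-1)^n\lambda_n^2$ from Theorem~\ref{thm_pswf_main}, and divide through by $\mu_n=c|\lambda_n|^2/(2\pi)$ (see \eqref{eq_prolate_mu}). In the even case the pair $e^{\pm icx}$ recombines into a cosine from the cosine kernel (at order $1/x$) and into a sine from the sine kernel (at order $1/x^2$), reproducing the bracketed factor $\cos(cx)-\sin(cx)/(cx)$ of \eqref{eq_2_10_even2} with prefactor $2\psi_n(1)/(x\lambda_n)$; in the odd case the roles of $\cos$ and $\sin$ interchange and $\lambda_n$ is purely imaginary, yielding $\sin(cx)+\cos(cx)/(cx)$ with prefactor $-2\psi_n(1)/(ix\lambda_n)$, as in \eqref{eq_2_10_odd2}. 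The only real care needed is the bookkeeping of signs and of the factor $i$ in the odd case; there is no genuine analytic obstacle, since the whole argument reduces to the same two evaluations of \eqref{eq_prolate_integral} at $x=\pm 1$ that drove the proof of the preceding theorem.
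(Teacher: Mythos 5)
Your proposal is correct, but it follows a different route from the paper. The paper's proof is a one-liner: it obtains \eqref{eq_2_10_even2} and \eqref{eq_2_10_odd2} by differentiating the already-established expansions \eqref{eq_2_10_even1}, \eqref{eq_2_10_odd1} of Theorem~\ref{lem_psi_for_large_x} with respect to $x$; the combination of the resulting kernels $\frac{t}{x(x-t)}+\frac{t}{(x-t)^2}=\frac{2xt-t^2}{x(x-t)^2}$ then produces the $(t^2-2xt)$ remainder, and the factor $\cos(cx)-\sin(cx)/(cx)$ comes from the product rule applied to $\sin(cx)/x$. You instead differentiate the integral equation \eqref{eq_prolate_integral2} itself and rerun the decomposition argument of Theorem~\ref{lem_psi_for_large_x} from scratch, using the second-order identity $\frac{1}{(x-t)^2}=\frac{1}{x^2}-\frac{t^2-2xt}{x^2(x-t)^2}$, the boundary evaluations $\int_{-1}^1 e^{\mp ict}\psi_n(t)\,dt=\lambda_n\psi_n(\mp 1)$ of \eqref{eq_prolate_integral}, the parity relations, and division by $\mu_n=c|\lambda_n|^2/(2\pi)$; I have checked that this bookkeeping reproduces both displayed formulas exactly, including the sign of the $(t^2-2xt)$ term and the factor $i$ in the odd case. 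What the paper's route buys is brevity — one differentiation of a known identity — while yours buys self-containedness (no dependence on the algebraic form of the previous theorem's conclusion, only on its proof technique) and an explicit justification of differentiation under the integral sign via $x-t\geq x-1>0$, a point the paper's proof also needs but leaves tacit; the two are of course equivalent in substance, since differentiating \eqref{eq_2_10_even1} under the integral sign amounts to differentiating \eqref{eq_prolate_integral2} and then decomposing.
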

\begin{proof}
The identities \eqref{eq_2_10_even2}, \eqref{eq_2_10_odd2} are obtained,
respectively,
via straightforward differentiation of \eqref{eq_2_10_even1},
\eqref{eq_2_10_odd1} of Theorem~\ref{lem_psi_for_large_x} with respect to $x$.
\end{proof}
%%%%%%%%%%%%%%%%%%%%%
\begin{remark}
In the rest of this subsection, we will 
assume that $n$ is even. 
The analysis for odd values of $n$ is essentially identical,
and will be omitted.
\label{rem_even_odd}
\end{remark}

\begin{thm}
Suppose that $n > 0$ is an even integer, that
\begin{align}
n > \frac{2c}{\pi} + 1,
\label{eq_sin_cos_1}
\end{align}
and that $x, y$ are two consecutive roots of $\psi_n$ in $(1,\infty)$.
Suppose also that
\begin{align}
|\lambda_n| < \frac{1}{10},
\label{eq_sin_cos_2}
\end{align}
and that 
\begin{align}
\frac{1}{|\lambda_n|^2} < x < y.
\label{eq_sin_cos_3}
\end{align}
Suppose furthermore that
\begin{align}
\chi_n - c^2 < \frac{c^2}{|\lambda_n|}, 
\label{eq_sin_cos_4}
\end{align}
and that the positive integer $K(x)$ is defined via the formula
\begin{align}
K(x) = \text{Round}\left( \frac{c}{\pi} \cdot x \right),
\label{eq_kx}
\end{align}
where, for any real number $\alpha$, $\text{Round}(\alpha)$
is the closest integer number to $\alpha$. Then,
\begin{align}
\label{eq_sc_sincx}
& |\sin(cx)| \leq \frac{2}{|\lambda_n| \cdot x}, \\
\label{eq_sc_kx}
& |cx - K(x)\cdot \pi| \leq \frac{\pi}{|\lambda_n| \cdot x}, \\
\label{eq_sc_coscx}
& (-1)^{K(x)} \cdot \cos(cx) \geq 1 - \frac{\pi}{|\lambda_n| \cdot x},
\end{align}
and, moreover, for all real $-1 < t < 1$,
\begin{align}
\label{eq_sc_sindif}
& \left| \sin( c\cdot(y-t) ) + \sin(c \cdot (x-t)) \right|
   \leq \frac{2}{|\lambda_n| \cdot x^2}, \\
\label{eq_sc_cosdif}
& \left| \cos( c\cdot(y-t) ) + \cos(c \cdot (x-t)) \right|
   \leq \frac{2}{|\lambda_n| \cdot x^2}.
\end{align}
\label{lem_sin_cos}
\end{thm}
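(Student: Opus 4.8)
The plan is to derive all six estimates from the expansion \eqref{eq_2_10_even1} of Theorem~\ref{lem_psi_for_large_x} together with the root--spacing estimate of Theorem~\ref{lem_close_roots}, after recording the preliminary facts that make these tools available. Since $n > 2c/\pi + 1$ by \eqref{eq_sin_cos_1}, Theorem~\ref{thm_khi_elem} gives $\chi_n > c^2$, so Theorem~\ref{thm_psi1_bound} applies and yields $|\psi_n(1)| > 1/\sqrt{2}$. Moreover, because $|\lambda_n| < 1/10$ we have $x > |\lambda_n|^{-2} > |\lambda_n|^{-1} > 10$, so the hypotheses \eqref{eq_close_1}--\eqref{eq_close_4} of Theorem~\ref{lem_close_roots} follow from \eqref{eq_sin_cos_1}--\eqref{eq_sin_cos_4}; in particular $\pi \le c(y-x) \le \pi + 2/(|\lambda_n| x^2)$. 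Throughout I use that $n$ is even, invoking Remark~\ref{rem_even_odd} for the odd case.

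For \eqref{eq_sc_sincx} I would set $\psi_n(x) = 0$ in \eqref{eq_2_10_even1}; since the prefactor $2\psi_n(1)/(cx\lambda_n)$ is nonzero, the bracket must vanish, giving $\sin(cx) = -(\lambda_n\psi_n(1))^{-1}\int_{-1}^1 \sin(c(x-t))\,\psi_n(t)\,t\,(x-t)^{-1}\,dt$. The main obstacle is extracting the clean constant $2$: the crude bound $|t|/(x-t) \le 1/(x-1)$ combined with $\int_{-1}^1 |\psi_n| \le \sqrt{2}$ only yields $2/(|\lambda_n|(x-1))$, which is slightly too weak. Instead I would apply Cauchy--Schwarz keeping the weight $t/(x-t)$ intact, using $\|\psi_n\|_{L^2[-1,1]} = 1$, so that the integral is bounded by $(\int_{-1}^1 t^2(x-t)^{-2}\,dt)^{1/2} \le (2/3)^{1/2}/(x-1)$. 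With $|\psi_n(1)|^{-1} < \sqrt{2}$ this produces $|\sin(cx)| < 2/(\sqrt{3}\,|\lambda_n|(x-1))$, and since $x > 100 > (3+\sqrt{3})/2$ one checks $2/(\sqrt{3}(x-1)) \le 2/x$, establishing \eqref{eq_sc_sincx}.

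Next I would write $cx = K(x)\pi + \delta$ with $|\delta| \le \pi/2$, which holds by the definition \eqref{eq_kx} of $K(x)$ as the nearest integer to $cx/\pi$. Then $\sin(cx) = (-1)^{K(x)}\sin\delta$, so \eqref{eq_sc_sincx} gives $|\sin\delta| \le 2/(|\lambda_n| x)$; Jordan's inequality $|\sin\delta| \ge (2/\pi)|\delta|$ on $[-\pi/2,\pi/2]$ then yields $|\delta| \le \pi/(|\lambda_n| x)$, which is \eqref{eq_sc_kx}. For \eqref{eq_sc_coscx} I would note $(-1)^{K(x)}\cos(cx) = \cos\delta \ge 1 - \delta^2/2$ and bound $\delta^2/2 \le \pi/(|\lambda_n| x)$ using \eqref{eq_sc_kx} together with $|\lambda_n| x > |\lambda_n|^{-1} > \pi/2$.

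Finally, for \eqref{eq_sc_sindif} and \eqref{eq_sc_cosdif} I would use Theorem~\ref{lem_close_roots} to write $c(y-x) = \pi + \eta$ with $0 \le \eta \le 2/(|\lambda_n| x^2)$. Setting $a = c(x-t)$, the identity $\sin(\theta+\pi) = -\sin\theta$ gives $\sin(c(y-t)) + \sin(c(x-t)) = \sin a - \sin(a+\eta)$, and the sum-to-product formula bounds this by $2|\sin(\eta/2)| \le \eta \le 2/(|\lambda_n| x^2)$; the cosine case is identical using $\cos(\theta+\pi) = -\cos\theta$. This completes all six estimates, the delicate point throughout being the sharp constant in \eqref{eq_sc_sincx}, which is handled by the Cauchy--Schwarz refinement above.
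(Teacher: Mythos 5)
Your proposal is correct and follows essentially the same route as the paper's proof: isolating $\sin(cx)$ from \eqref{eq_2_10_even1} at a root and applying Cauchy--Schwarz with $\|\psi_n\|_{L^2}=1$ and $|\psi_n(1)|^{-1}<\sqrt{2}$ to get $2/(\sqrt{3}\,|\lambda_n|(x-1))$, then Jordan's inequality $|s|\leq \frac{\pi}{2}|\sin s|$ for \eqref{eq_sc_kx}, deducing \eqref{eq_sc_coscx} from the previous two bounds, and obtaining \eqref{eq_sc_sindif}, \eqref{eq_sc_cosdif} from Theorem~\ref{lem_close_roots} via elementary trigonometric identities. You even make explicit the constant checks (e.g. $x>(3+\sqrt{3})/2$ and $|\lambda_n|x>\pi/2$) that the paper leaves implicit.
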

\begin{proof}
We combine Theorems~\ref{thm_pswf_main}, \ref{thm_psi1_bound}
of Section~\ref{sec_pswf}, \eqref{eq_2_10_even1} of 
Theorem~\ref{lem_psi_for_large_x} with 
\eqref{eq_sin_cos_2}, \eqref{eq_sin_cos_3} to obtain
\begin{align}
\abrk{ \sin(c x_k) }
& \; = \abrk{ -\frac{ 1 }{ \lambda_n \psi_n(1) }
    \int_{-1}^1 
    \frac{ \sin\brk{c(x_k-t)} \cdot \psi_n(t) \cdot t \; dt}{ x_k-t } }
\nonumber \\
& \; \leq \frac{\sqrt{2}}{\abrk{\lambda_n} \brk{x_k - 1} }
  \brk{\int_{-1}^1 \psi_n^2(t) dt }^{\frac{1}{2}} \cdot
  \brk{\int_{-1}^1 t^2 dt }^{\frac{1}{2}}
\nonumber \\
& \; \leq \frac{ 2 }{ \sqrt{3} \abrk{\lambda_n} \brk{x_k - 1} },
\label{eq_2_10_sc_1_proof}
\end{align}
which implies \eqref{eq_sc_sincx}. We observe that, for all real
$-\pi/2 \leq s \leq \pi/2$,
\begin{align}
\left| s \right| \leq \frac{\pi}{2} \cdot 
 \left| \sin(s) \right|,
\label{eq_sin_nice}
\end{align}
and combine \eqref{eq_sin_nice} with \eqref{eq_sc_sincx} to 
obtain \eqref{eq_sc_kx}. The inequality \eqref{eq_sc_coscx}
follows from the combination of \eqref{eq_sc_sincx} and \eqref{eq_sc_kx}.
Finally, both \eqref{eq_sc_sindif} and \eqref{eq_sc_cosdif}
follow from the combination of \eqref{eq_sin_cos_1},
\eqref{eq_sin_cos_2}, \eqref{eq_sin_cos_3},
\eqref{eq_sin_cos_4} and Theorem~\ref{lem_close_roots}.
\end{proof}
%%%%%%%%%%%%%%%%%%%%%%
\begin{thm}
Suppose that $n>0$ is an even positive integer, and that $x,y$ are two
consecutive roots of $\psi_n$ in $(1,\infty)$. Suppose also that
the inequalities \eqref{eq_sin_cos_1},
\eqref{eq_sin_cos_2}, \eqref{eq_sin_cos_3}, \eqref{eq_sin_cos_4}
of Theorem~\ref{lem_sin_cos} hold, and that the integer $K(x)$
is defined via \eqref{eq_kx} in Theorem~\ref{lem_sin_cos}.
Suppose furthermore that
\begin{align}
c > 1.
\label{eq_c_geq_1}
\end{align}
Then, 
\begin{align}
\psi_n'(x) = 
\frac{2 \brk{-1}^{K(x)} \psi_n(1) }{ \lambda_n x } \cdot
\left[1 - D(x)\right]
\label{eq_tail_dpsix}
\end{align}
and
\begin{align}
\psi_n'(y) =
- \frac{2 \brk{-1}^{K(x)} \psi_n(1) }{ \lambda_n y } \cdot
\left[1 - D(x) + G(x)\right],
\label{eq_tail_dpsiy}
\end{align}
where the real numbers $D(x)$ and $G(x)$ satisfy,
respectively, the inequalities
\begin{align}
\left| D(x) \right| \leq \frac{6}{|\lambda_n| \cdot x}
\label{eq_tail_dx}
\end{align}
and
\begin{align}
\left| G(x) \right| \leq \frac{24}{|\lambda_n| \cdot x^2}.
\label{eq_tail_gx}
\end{align}
\label{lem_tail_dpsi}
\end{thm}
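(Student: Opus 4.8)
The plan is to start from the exact expansions of $\psi_n'(x)$ and $\psi_n'(y)$ furnished by Theorem~\ref{lem_dpsi_for_large_x} (the even-$n$ formula \eqref{eq_2_10_even2}, as permitted by Remark~\ref{rem_even_odd}), and to read off $D(x)$ and $G(x)$ by factoring the leading constant $(-1)^{K(x)}$ out of each bracket. Writing the bracket in \eqref{eq_2_10_even2} at a generic root $z$ as
\[
B(z) = \cos(cz) - \frac{\sin(cz)}{cz} + I_1(z) + I_2(z),
\]
where $I_1,I_2$ denote the two integral terms, the desired identities \eqref{eq_tail_dpsix}, \eqref{eq_tail_dpsiy} amount to setting $D(x) = 1 - (-1)^{K(x)} B(x)$ and $G(x) = -(-1)^{K(x)}\brk{B(x) + B(y)}$. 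Thus the theorem reduces to two estimates, $\abrk{D(x)} \leq 6/(\abrk{\lambda_n} x)$ and $\abrk{B(x)+B(y)} \leq 24/(\abrk{\lambda_n} x^2)$. Throughout I would use $\psi_n^2(1) > 1/2$ (Theorem~\ref{thm_psi1_bound}, applicable since $\chi_n > c^2$ follows from \eqref{eq_sin_cos_1} via Theorem~\ref{thm_khi_elem}) and the normalization $\| \psi_n\|_{L^2} = 1$, together with the observation that \eqref{eq_sin_cos_2}, \eqref{eq_sin_cos_3} force $x > 100$, so $x-1$ may be replaced by $x$ up to a harmless constant.

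For $D(x)$, the non-oscillatory term contributes $0 \leq 1 - (-1)^{K(x)}\cos(cx) \leq \pi/(\abrk{\lambda_n} x)$ directly from \eqref{eq_sc_coscx}. The term $\sin(cx)/(cx)$ is bounded by $2/(c\abrk{\lambda_n} x^2)$ using \eqref{eq_sc_sincx} and $c>1$. For $I_1(x)$ I would apply Cauchy--Schwarz to $\int_{-1}^1 \cos(c(x-t))\psi_n(t) t/(x-t)\,dt$, using $\abrk{x-t} \geq x-1$, $\| \psi_n\|_{L^2}=1$, and $\brk{\int_{-1}^1 t^2\,dt}^{1/2} = \sqrt{2/3}$; after dividing by $\abrk{\lambda_n \psi_n(1)}$ and invoking $\abrk{\psi_n(1)} > 1/\sqrt 2$, this gives $\abrk{I_1(x)} \leq (2/\sqrt 3)/(\abrk{\lambda_n}(x-1))$. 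For $I_2(x)$ the bounds $\abrk{t^2-2xt} \leq 3x$ and $\abrk{x-t}^2 \geq (x-1)^2$ reduce it to $\abrk{I_2(x)} \leq 6/(c\abrk{\lambda_n}(x-1)^2)$, which is of order $1/(\abrk{\lambda_n} x^2)$. Summing, the two leading contributions $\pi + 2/\sqrt 3 < 6$ deliver $\abrk{D(x)} \leq 6/(\abrk{\lambda_n} x)$ with room to spare.

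For $B(x) + B(y)$ the key is that $y-x$ lies within $O(1/c)$ of $\pi/c$ by Theorem~\ref{lem_close_roots}, so each oscillatory quantity nearly cancels its counterpart. The pairs $\cos(cx)+\cos(cy)$ and $\sin(cx)+\sin(cy)$ are each bounded by $2/(\abrk{\lambda_n} x^2)$ via \eqref{eq_sc_cosdif}, \eqref{eq_sc_sindif} at $t=0$; dividing the sines by $cx$ and $cy$ preserves this order. The $I_2$ terms are already $O(1/(\abrk{\lambda_n}x^2))$ individually by the bound above, so their sum is disposed of crudely.

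The hard part is the cancellation in $I_1(x)+I_1(y)$: each summand is only $O(1/(\abrk{\lambda_n} x))$, yet the sum must be $O(1/(\abrk{\lambda_n} x^2))$. Here I would split the combined kernel as
\begin{align*}
& \frac{\cos(c(x-t))}{x-t} + \frac{\cos(c(y-t))}{y-t} = \\
& \frac{\cos(c(x-t)) + \cos(c(y-t))}{x-t} + \cos(c(y-t))\left(\frac{1}{y-t} - \frac{1}{x-t}\right).
\end{align*}
The first piece uses the full strength of \eqref{eq_sc_cosdif} (its numerator is $\leq 2/(\abrk{\lambda_n}x^2)$), while the second exploits the smoothness of $z \mapsto 1/(z-t)$, since $\abrk{1/(y-t) - 1/(x-t)} = \abrk{y-x}/\abrk{(x-t)(y-t)} \leq (y-x)/(x-1)^2$ with $y-x = O(1/c)$. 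Integrating each piece against $\psi_n(t) t$ by Cauchy--Schwarz and dividing by $\abrk{\lambda_n \psi_n(1)}$ yields an $O(1/(\abrk{\lambda_n} x^2))$ bound. Assembling the three contributions, and converting each $(x-1)^{-2}$ into $x^{-2}$ up to a factor near $1$ (legitimate since $x>100$), gives $\abrk{B(x)+B(y)} \leq 24/(\abrk{\lambda_n} x^2)$ and completes the argument.
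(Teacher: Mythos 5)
Your proposal is correct and follows essentially the same route as the paper's own proof: the paper likewise starts from \eqref{eq_2_10_even2}, defines $D(x)$ and $G(x)$ implicitly through the brackets at $x$ and $y$, bounds the single-root terms via \eqref{eq_sc_sincx}, \eqref{eq_sc_coscx} and Cauchy--Schwarz (with $\abrk{\psi_n(1)}^{-1} \leq \sqrt{2}$ and $\int_{-1}^1 \abrk{\psi_n(t)\,t}\,dt \leq \sqrt{2/3}$), and handles the cancellation in $I_1(x)+I_1(y)$ with exactly your kernel splitting $\frac{\cos(c(x-t))+\cos(c(y-t))}{x-t} + \frac{\cos(c(y-t))\cdot(x-y)}{(y-t)(x-t)}$, using \eqref{eq_sc_cosdif} and Theorem~\ref{lem_close_roots} (this is the paper's \eqref{eq_tail_dpsi_c}). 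The only cosmetic difference is that the paper bounds the paired sine terms individually via \eqref{eq_sc_sincx} and $c>1$ rather than invoking any cancellation, which is also what your order-counting implicitly amounts to.
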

\begin{proof}
The proof is based on the identity
\eqref{eq_2_10_even2} of Theorem~\ref{lem_dpsi_for_large_x}.
First, we combine 
Theorems~\ref{thm_pswf_main}, \ref{thm_psi1_bound}
of Section~\ref{sec_pswf},
\eqref{eq_sin_cos_2} and \eqref{eq_sin_cos_3} to obtain
\begin{align}
& \left| \frac{ 1 }{ c \lambda_n \psi_n(1) }
    \int_{-1}^1 \frac{ \sin\brk{c(x-t)} \psi_n(t) \brk{t^2-2xt} }
                     { x \brk{x-t}^2 } \; dt
\right| \leq \nonumber \\
& \frac{3 \sqrt{2}}{c \cdot |\lambda_n| \cdot (x-1)^2} \cdot
\int_{-1}^1 \left| \psi_n(t) \cdot t \right| dt \leq
\frac{4}{c \cdot |\lambda_n| \cdot x^2}.
\label{eq_tail_dpsi_a}
\end{align}
By the same token,
\begin{align}
& \left| \frac{ 1 }{ c \lambda_n \psi_n(1) }
    \int_{-1}^1 \frac{ \sin\brk{c(y-t)} \psi_n(t) \brk{t^2-2yt} }
                     { y \brk{y-t}^2 } \; dt
\right| \leq 
\frac{4}{c \cdot |\lambda_n| \cdot x^2}.
\label{eq_tail_dpsi_b}
\end{align}
Also, we combine \eqref{eq_close_roots} of Theorem~\ref{lem_close_roots} and
\eqref{eq_sin_cos_2}, \eqref{eq_sin_cos_3},
\eqref{eq_sc_cosdif} of Theorem~\ref{lem_sin_cos}
to obtain, for all real $-1 < t < 1$,
\begin{align}
& \left| \frac{\cos(c\cdot(x-t))}{x-t} + \frac{\cos(c\cdot(y-t))}{y-t} \right|
  = \nonumber \\
& \left| \frac{\cos(c\cdot(x-t))+\cos(c\cdot(y-t))}{x-t} +
         \frac{\cos(c(y-t)) \cdot (x-y)}{(y-t)\cdot(x-t)} \right|
  \leq \nonumber \\
& \frac{2}{|\lambda_n| \cdot x^2} \cdot \frac{2}{x-1} + 
  \frac{2}{x^2} \cdot \left(\pi + \frac{2}{|\lambda_n| \cdot x^2}\right)
 \leq
 \frac{8}{x^2}.
\label{eq_tail_dpsi_c}
\end{align}
We combine Theorems~\ref{thm_pswf_main}, \ref{thm_psi1_bound}
of Section~\ref{sec_pswf} with \eqref{eq_tail_dpsi_c} to obtain
\begin{align}
& \left|
\frac{1}{\lambda_n \cdot \psi_n(1)} \cdot
\int_{-1}^1 \left(
\frac{\cos(c\cdot(x-t))}{x-t} + \frac{\cos(c\cdot(y-t))}{y-t}
\right) \cdot \psi_n(t) \cdot t \; dt
\right| \leq \nonumber \\
& \frac{\sqrt{2}}{|\lambda_n|} \cdot \frac{8}{x^2} \cdot
\int_{-1}^1 \left| \psi_n(t) \cdot t \right| dt
\leq \frac{10}{|\lambda_n| \cdot x^2}.
\label{eq_tail_dpsi_d}
\end{align}
We substitute \eqref{eq_sc_sincx}, \eqref{eq_sc_cosdif}
of Theorem~\ref{lem_sin_cos}, \eqref{eq_tail_dpsi_a},
\eqref{eq_tail_dpsi_b}, \eqref{eq_tail_dpsi_d} into
\eqref{eq_2_10_even2} of Theorem~\ref{lem_dpsi_for_large_x} and use
\eqref{eq_c_geq_1} to obtain
\begin{align}
& \left| 
\frac{\lambda_n}{2 \psi_n(1)} \cdot 
  \left(x \cdot \psi_n'(x) + y \cdot \psi_n'(y) \right)
\right| \leq \nonumber \\
& \left| \cos(cx)+\cos(cy) \right| + 
\left| \frac{\sin(cx)}{cx} + \frac{\sin(cy)}{cy} \right| +
\frac{4 + 4 + 10}{|\lambda_n| \cdot x^2} \leq
 \frac{24}{|\lambda_n| \cdot x^2}.
\label{eq_tail_dpsi_e}
\end{align}
In addition, we observe that, similar to \eqref{eq_tail_dpsi_a},
\eqref{eq_tail_dpsi_b}, \eqref{eq_tail_dpsi_c} above,
\begin{align}
& \left| \frac{ 1 }{ \lambda_n \psi_n(1) }
    \int_{-1}^1 \frac{ \cos\brk{c(y-t)} \cdot \psi_n(t) \cdot t }
                     { x-t } \; dt
\right| \leq \nonumber \\
& \frac{\sqrt{2}}{|\lambda_n| \cdot (x-1) } \cdot
\int_{-1}^1 \left| \psi_n(t) \cdot t \right| dt
\leq \frac{2}{|\lambda_n| \cdot x}
\label{eq_tail_dpsi_f}
\end{align}
Finally, we substitute \eqref{eq_sc_sincx}, 
\eqref{eq_sc_coscx}, \eqref{eq_tail_dpsi_e}
and \eqref{eq_tail_dpsi_f} into \eqref{eq_2_10_even2}
of Theorem~\ref{lem_dpsi_for_large_x} to
conclude the proof.
\end{proof}
%%%%%%%%%%%%%%%%%%%%%%%
In the following theorem, we provide an upper bound on
the sum of the principal parts of $1/\psi_n$ at two consecutive
roots of $\psi_n$ in $(1,\infty)$
(see \eqref{eq_pf_cauchy} in Section~\ref{sec_pf}).
%%%%%%%%%%%%%%%%%%%%%%
\begin{thm}
Suppose that $n>0$ is an even positive integer, and that $x,y$ are two
consecutive roots of $\psi_n$ in $(1,\infty)$. Suppose also that
the inequalities \eqref{eq_sin_cos_1},
\eqref{eq_sin_cos_2}, \eqref{eq_sin_cos_3}, \eqref{eq_sin_cos_4}
of Theorem~\ref{lem_sin_cos} hold.
Suppose furthermore that
\begin{align}
c > 1.
\label{eq_two_bound_c}
\end{align}
Then, for all real $-1 < t < 1$,
\begin{align}
\left| \frac{1}{\psi_n'(x) \cdot (x-t)} +
       \frac{1}{\psi_n'(y) \cdot (y-t)} \right| \leq
20 \cdot c \cdot \int_x^y \frac{ds}{s^2}.
\label{eq_two_bound}
\end{align}
\label{lem_two_bound}
\end{thm}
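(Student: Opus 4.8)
The plan is to exploit the fact that, in the regime $x,y>|\lambda_n|^{-2}$, the two summands on the left-hand side of \eqref{eq_two_bound} almost cancel: to leading order each of $1/(\psi_n'(x)(x-t))$ and $1/(\psi_n'(y)(y-t))$ equals $\pm\,\lambda_n/(2(-1)^{K(x)}\psi_n(1))$, and these opposite contributions annihilate, leaving a residue that is smaller by a factor of roughly $1/x^2$. This cancellation is exactly what the crude bound $1/\psi_n'(\cdot)=O(|\lambda_n|x)$ cannot see, so the first task is to expose it quantitatively. To do so I would invoke the precise expansions of $\psi_n'(x)$ and $\psi_n'(y)$ supplied by Theorem~\ref{lem_tail_dpsi}; all of its hypotheses are in force, since \eqref{eq_sin_cos_1}--\eqref{eq_sin_cos_4} are assumed and $c>1$ by \eqref{eq_two_bound_c}. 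By Remark~\ref{rem_even_odd} it suffices to treat even $n$.

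First I would rewrite \eqref{eq_tail_dpsix} and \eqref{eq_tail_dpsiy} as $1/\psi_n'(x)=\lambda_n x/\bigl(2(-1)^{K(x)}\psi_n(1)(1-D)\bigr)$ and $1/\psi_n'(y)=-\lambda_n y/\bigl(2(-1)^{K(x)}\psi_n(1)(1-D+G)\bigr)$, where $D=D(x)$ and $G=G(x)$, factor the common constant $A=\lambda_n/(2(-1)^{K(x)}\psi_n(1))$ out of the sum, and combine the bracket over a common denominator. The two ``$1$''-terms cancel, and a short computation yields
\begin{align*}
\frac{1}{\psi_n'(x)(x-t)}+\frac{1}{\psi_n'(y)(y-t)}
&= A\left(\frac{Gx}{(1-D)(1-D+G)(x-t)}+\frac{t(y-x)}{(1-D+G)(x-t)(y-t)}\right).
\end{align*}
The point of this form is that every surviving term carries either the small factor $G$ or the factor $y-x$, so the spurious $O(|\lambda_n|)$ part has been removed.

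Next I would estimate the two pieces. By Theorem~\ref{thm_psi1_bound}, $|\psi_n(1)|>1/\sqrt2$, hence $|A|<|\lambda_n|/\sqrt2$. From \eqref{eq_tail_dx}, \eqref{eq_tail_gx} together with the hypothesis $x>|\lambda_n|^{-2}$ of \eqref{eq_sin_cos_3} one obtains $|D|\le 6/(|\lambda_n|x)\le 6|\lambda_n|<3/5$ and $|G|\le 24/(|\lambda_n|x^2)$, so $|1-D|$ and $|1-D+G|$ are bounded below by absolute positive constants. Crucially, the factor $|\lambda_n|$ in $|A|$ cancels the factor $|\lambda_n|^{-1}$ inside the bound on $|G|$, so the first (dominant) piece is $O(1/x^2)$ with no residual $\lambda_n$, while the second piece already carries $y-x$. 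Using $|t|<1$, $x-t>x-1$ and $y-t>y-1$ then reduces both pieces to elementary rational expressions in $x$ and $y$.

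Finally, to reach the right-hand side $20c\int_x^y ds/s^2 = 20c\,(y-x)/(xy)$, I would invoke the lower bound $c(y-x)\ge\pi$ from \eqref{eq_close_roots} of Theorem~\ref{lem_close_roots} (again applicable here). The piece carrying $y-x$ maps directly onto $(y-x)/(xy)$ after replacing $(x-1)(y-1)$ by $xy$ up to a factor $1+O(1/x)$; the dominant $O(1/x^2)$ piece is converted by writing $1/x^2\le (c/\pi)(y-x)/x^2$ and comparing $(y-x)/x^2$ with $(y-x)/(xy)$ via $y/x=1+O(1/x)$. I expect the main obstacle to be purely quantitative: keeping the product of all constants — $1/\sqrt2$, the lower bounds on $|1-D|$ and $|1-D+G|$, and the near-unit ratios $y/(x-1)$ and $x/(x-t)$ — below the target $20$. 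The tightest case is $x\approx|\lambda_n|^{-2}$ with $|\lambda_n|$ near $1/10$, where $|D|$ is closest to $3/5$; there one must use the estimates sharply, leaning on the fact that $x$ is already very large (so $x/(x-1)$ and $y/x$ are extremely close to $1$) rather than on their crudest forms.
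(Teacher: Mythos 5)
Your proposal is correct and follows the paper's own proof essentially verbatim: the paper likewise substitutes \eqref{eq_tail_dpsix}, \eqref{eq_tail_dpsiy} of Theorem~\ref{lem_tail_dpsi}, combines the two terms over a common denominator so that the leading parts cancel (its numerator $xyG(x)+t(y-x)(1-D(x))-txG(x)$ is exactly your bracket after clearing denominators), bounds $D(x),G(x)$ via \eqref{eq_tail_dx}, \eqref{eq_tail_gx}, and converts the resulting $O(1/x^2)$ term into $c\int_x^y ds/s^2$ using $c\cdot(y-x)\geq\pi$ from Theorem~\ref{lem_close_roots}, just as you propose. The constant bookkeeping you flag as the main obstacle is indeed the delicate point, and the paper handles it rather loosely, simply absorbing the prefactor $\bigl(2|\psi_n(1)|\cdot(1-D(x))\cdot(1-D(x)+G(x))\cdot(x-t)\cdot(y-t)\bigr)^{-1}$ into $(xy)^{-1}$ and then relying on the slack $50/\pi<20$ in the final step.
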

\begin{proof}
Suppose that the integer $K(x)$ is defined via \eqref{eq_kx}
in Theorem~\ref{lem_sin_cos}. We combine \eqref{eq_tail_dpsix},
\eqref{eq_tail_dpsiy},
\eqref{eq_tail_dx},
\eqref{eq_tail_gx} of Theorem~\ref{lem_tail_dpsi} to obtain
\begin{align}
& \left| \frac{1}{\psi_n'(x) \cdot (x-t)} +
       \frac{1}{\psi_n'(y) \cdot (y-t)} \right| = \nonumber \\
& \abrk{
  \frac{ \brk{-1}^{K(x)} \lambda_n }{ 2 \psi_n(1) } \cdot
  \left[
     \frac{x}{\brk{x-t} \brk{1-D(x)} } -
     \frac{y}{\brk{y-t} \brk{1 - D(x) + G(x)} }
  \right]} \leq \nonumber \\
& \frac{ \abrk{\lambda_n} }{ xy } \cdot
  \abrk{
     x \brk{y - t}\brk{1 - D(x) + G(x)} - 
     y \brk{x - t}\brk{1 - D(x)}
  } = \nonumber \\
& \frac{ \abrk{\lambda_n} }{ xy } \cdot
 \abrk{
   xy G(x) + t \brk{y-x}\brk{1-D(x)} - t x G(x)
 } \leq \nonumber \\
& 2 \abrk{ \lambda_n G(x) } + 
  \frac{ 2 \abrk{\lambda_n} \brk{y - x } }{ xy } = 
 2 \abrk{ \lambda_n G(x) } + 
   2 \abrk{\lambda_n} \cdot \int_x^y \frac{ds}{s^2}.
\label{eq_two_bound_a}
\end{align}
where $D(x), G(x)$ are those of Theorem~\ref{lem_sin_cos}.
We combine Theorem~\ref{lem_close_roots} and Theorem~\ref{lem_sin_cos}
to conclude that
\begin{align}
2 \abrk{ \lambda_n G(x) } \leq
\frac{48}{x^2} =
\frac{48 \dot (y-x)}{xy \cdot(y-x)} \cdot \frac{y}{x}\leq
\frac{50 \cdot c}{\pi} \cdot \int_x^y \frac{ds}{s^2}.
\label{eq_two_bound_b}
\end{align}
We substitute \eqref{eq_two_bound_b} into \eqref{eq_two_bound_a}
and use \eqref{eq_sin_cos_2} to obtain \eqref{eq_two_bound}.
\end{proof}

%%%%%%%%%%
\subsubsection{Bound on the Right-Hand Side of \eqref{eq_pf_cauchy}}
\label{sec_head_tail}

The following theorem is a consequence of
Theorem~\ref{lem_head_main} in Section~\ref{sec_head}
and Theorem~\ref{lem_two_bound} in Section~\ref{sec_tail}.

\begin{thm}
Suppose that $c > 1$ is a real number, and that $n > 0$ is a positive
integer such that
\begin{align}
n > \frac{2c}{\pi} + 1.
\label{eq_head_tail_1}
\end{align}
Suppose also that
\begin{align}
|\lambda_n| < \frac{1}{10},
\label{eq_head_tail_2}
\end{align}
and that
\begin{align}
\chi_n - c^2 < \frac{c^2}{|\lambda_n|}.
\label{eq_head_tail_3}
\end{align}
Suppose furthermore that $1 < x_1 < x_2 < \dots$ are the roots
of $\psi_n$ in $(1,\infty)$. Then, for all real $-1 < t < 1$,
\begin{align}
& \lim_{N \to \infty} \left|
\sum_{k=1}^N \left( 
 \frac{1}{\psi_n'(x_{2k-1}) \cdot (x_{2k-1}-t)} +
 \frac{1}{\psi_n'(x_{2k}) \cdot (x_{2k}-t)} 
\right) 
\right| \leq \nonumber \\
& 6 \cdot |\lambda_n| \cdot 
\left( 2 \cdot \log\left( \frac{2}{|\lambda_n|} \right)+ 
      \sqrt{1 + \frac{\sqrt{\chi_n}}{\pi}} \right)
+ 20 \cdot c \cdot |\lambda_n|^2.
\label{eq_head_tail}
\end{align}
\label{thm_head_tail}
\end{thm}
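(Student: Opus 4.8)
The plan is to split the infinite sum into a \emph{head}, consisting of those roots of $\psi_n$ lying below the threshold $1/|\lambda_n|^2$, and a \emph{tail}, consisting of those above it, and to bound the two pieces using Theorem~\ref{lem_head_main} and Theorem~\ref{lem_two_bound} respectively. In accordance with Remark~\ref{rem_even_odd}, I would treat the case of even $n$, the odd case being analogous. First I would record the structural facts used throughout: by Theorem~\ref{thm_khi_elem}, the hypothesis \eqref{eq_head_tail_1} forces $\chi_n > c^2$, so that Theorems~\ref{thm_four}, \ref{thm_spacing}, \ref{lem_head_main} and \ref{lem_two_bound} all apply; and by Corollary~\ref{cor_infinite} there are infinitely many roots $x_1 < x_2 < \dots$ in $(1,\infty)$ tending to infinity, so only finitely many of them lie below $1/|\lambda_n|^2$.

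Next I would choose the splitting index: let $M$ be the smallest \emph{even} integer for which $x_{M+1} > 1/|\lambda_n|^2$. Since $|\lambda_n| < 1/10$ gives $1/|\lambda_n|^2 > 100$, while by \eqref{eq_25_09_h1} consecutive roots near the threshold are separated by at most $\tfrac{\pi}{c}\sqrt{(x_k^2-1)/(x_k^2-\chi_n/c^2)}$, which is small compared with $1/|\lambda_n|^2$, the largest root not exceeding the threshold sits within a bounded number of such spacings of $1/|\lambda_n|^2$, so that $x_M \leq 2/|\lambda_n|^2$. The head of the pair-sum, namely $\sum_{k=1}^{M/2}\bigl(\tfrac{1}{\psi_n'(x_{2k-1})(x_{2k-1}-t)}+\tfrac{1}{\psi_n'(x_{2k})(x_{2k}-t)}\bigr)$, is exactly $\sum_{j=1}^{M}\tfrac{1}{\psi_n'(x_j)(x_j-t)}$ (the sign discrepancy between $(x_j-t)$ and $(t-x_j)$ being irrelevant under the absolute value), so Theorem~\ref{lem_head_main} applies directly. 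Combining its bound with $x_M \le 2/|\lambda_n|^2$, the elementary identity $\log(2x_M) \le \log(4/|\lambda_n|^2) = 2\log(2/|\lambda_n|)$, and $4e^{1/4} < 6$, would give
\begin{align*}
\left| \sum_{k=1}^{M/2}\left( \frac{1}{\psi_n'(x_{2k-1})(x_{2k-1}-t)}+\frac{1}{\psi_n'(x_{2k})(x_{2k}-t)}\right) \right| < 6\,|\lambda_n| \left( 2\log\frac{2}{|\lambda_n|} + \sqrt{1+\frac{\sqrt{\chi_n}}{\pi}} \right),
\end{align*}
which is the first term on the right-hand side of \eqref{eq_head_tail}.

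For the tail I would apply Theorem~\ref{lem_two_bound} to each pair $(x_{2k-1},x_{2k})$ with $2k-1 \geq M+1$. By the choice of $M$, each such pair satisfies $1/|\lambda_n|^2 < x_{2k-1} < x_{2k}$, so that \eqref{eq_sin_cos_3} holds, and the remaining hypotheses \eqref{eq_sin_cos_1}, \eqref{eq_sin_cos_2}, \eqref{eq_sin_cos_4} are exactly \eqref{eq_head_tail_1}, \eqref{eq_head_tail_2}, \eqref{eq_head_tail_3}, while $c>1$ is assumed; hence each pair contributes at most $20c\int_{x_{2k-1}}^{x_{2k}} ds/s^2$. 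Since the intervals $[x_{2k-1},x_{2k}]$ for $k > M/2$ are pairwise disjoint and contained in $[x_{M+1},\infty)$, summing and passing to the limit gives
\begin{align*}
\lim_{N\to\infty}\sum_{k=M/2+1}^{N} 20c\int_{x_{2k-1}}^{x_{2k}}\frac{ds}{s^2} \leq 20c\int_{x_{M+1}}^{\infty}\frac{ds}{s^2} = \frac{20c}{x_{M+1}} < 20\,c\,|\lambda_n|^2,
\end{align*}
which simultaneously establishes convergence of the tail series and furnishes the second term of \eqref{eq_head_tail}.

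Finally I would apply the triangle inequality to the decomposition $\sum_{k=1}^{N} = \sum_{k=1}^{M/2} + \sum_{k=M/2+1}^{N}$ and combine the two displays above to obtain \eqref{eq_head_tail}. The main obstacle is not analytic but combinatorial: the tail estimate requires that \emph{every} tail pair begin above $1/|\lambda_n|^2$, which forces $M$ to be chosen both even and large enough that $x_{M+1}$ already exceeds the threshold. The delicate point is then verifying that this forced adjustment of parity does not inflate the head's logarithm beyond the stated constant — that is, confirming $x_M \le 2/|\lambda_n|^2$ — which is precisely where the spacing estimate \eqref{eq_25_09_h1}, together with $1/|\lambda_n|^2 > 100$, is needed.
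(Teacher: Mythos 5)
Your proposal is correct and follows essentially the same route as the paper's proof: split the pair-sum at an even index $M$ chosen so that $x_{M+1}$ lies just above the threshold $|\lambda_n|^{-2}$ (with $x_M \le 2/|\lambda_n|^2$, so that $\log(2x_M) \le 2\log(2/|\lambda_n|)$ and $4e^{1/4} < 6$ yield the first term), bound the head via Theorem~\ref{lem_head_main} together with Theorem~\ref{thm_khi_elem}, and bound the tail pairwise via Theorem~\ref{lem_two_bound} and the integral $20c\int_{|\lambda_n|^{-2}}^{\infty} ds/s^2 = 20c|\lambda_n|^2$, invoking Remark~\ref{rem_even_odd} for odd $n$. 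The only cosmetic difference is that you justify the placement of $x_M$ below $2/|\lambda_n|^2$ via the spacing estimate \eqref{eq_25_09_h1} of Theorem~\ref{thm_spacing}, whereas the paper cites Theorem~\ref{lem_close_roots} for the same purpose; both work.
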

\begin{proof}
We combine \eqref{eq_head_tail_1}, \eqref{eq_head_tail_2}, 
\eqref{eq_head_tail_3} with Theorem~\ref{lem_close_roots} to 
select a positive even integer $M$ such that
\begin{align}
\frac{1}{|\lambda_n|^2} \leq x_{M+1} \leq \frac{2}{|\lambda_n|^2}.
\label{eq_head_tail_a}
\end{align}
We combine \eqref{eq_head_tail_a} with Theorem~\ref{thm_khi_elem}
in Section~\ref{sec_pswf} and Theorem~\ref{lem_head_main}
in Section~\ref{sec_head} to obtain, for all real $-1 < t < 1$,
\begin{align}
\left| \sum_{k=1}^M \frac{1}{\psi_n'(x_k) \cdot (x_k-t)} \right| \leq
6 \cdot |\lambda_n| \cdot 
\left(
\log\left( \frac{4}{|\lambda_n|^2} \right) +
      \sqrt{1 + \frac{\sqrt{\chi_n}}{\pi}}
\right)
\label{eq_head_tail_b}
\end{align}
Next, we combine \eqref{eq_head_tail_a} with
Remark~\ref{rem_even_odd} and Theorem~\ref{lem_two_bound}
in Section~\ref{sec_tail}
to obtain, for all real $-1<t<1$,
\begin{align}
& \left|
\sum_{k=(M+2)/2}^N \left( 
 \frac{1}{\psi_n'(x_{2k-1}) \cdot (x_{2k-1}-t)} +
 \frac{1}{\psi_n'(x_{2k}) \cdot (x_{2k}-t)} 
\right) 
\right| \leq \nonumber \\
& 20 \cdot c \cdot \int_{|\lambda_n|^{-2}}^{\infty} \frac{ds}{s^2}.
\label{eq_head_tail_c}
\end{align}
Thus \eqref{eq_head_tail} follows from the combination of
\eqref{eq_head_tail_b} and \eqref{eq_head_tail_c}.
\end{proof}
%%%%%%%%%%%%%%%%%%%%%%%%%%%%%%%%%%%%%
The rest of this subsection is devoted to the analysis of
the boundary term of partial fractions expansion of $1/\psi_n$
(see \eqref{eq_pf_cauchy} in Section~\ref{sec_pf}).
In the following theorem, we establish a lower bound on $|\psi_n(z)|$
for certain values of $z$.
%%%%%%%%%%%%%%%%%%%%%%
\begin{thm}
Suppose that $n > 0$ is an even positive number,
and that
\begin{align}
|\lambda_n| < \frac{1}{10}.
\label{eq_vertical_bound_0}
\end{align}
Suppose also that $k > 0$ is an integer number, and that
\begin{align}
k > \frac{8}{\pi} \cdot \frac{c+1}{|\lambda_n|}.
\label{eq_vertical_bound_1}
\end{align}
Suppose furthermore that the real number $R_k$ is defined via
the formula
\begin{align}
R_k = \frac{\pi}{c} \cdot \left(k + \frac{1}{2}\right).
\label{eq_vertical_bound_2}
\end{align}
Then, for any real number $y$, 
\begin{align}
\left| \psi_n( R_k + i \cdot y ) \right| > 
\left| \frac{\psi_n(1)}{c \cdot \lambda_n} \right| \cdot 
\frac{\cosh(cy)}{|R_k+i \cdot y|},
\label{eq_vertical_bound}
\end{align}
where $i=\sqrt{-1}$ is the imaginary unit. Moreover,
for any real number $x$,
\begin{align}
| \psi_n(x+i \cdot R_k) | >
\left| \frac{\psi_n(1)}{c \cdot \lambda_n} \right| \cdot 
\frac{\cosh(cR_k)}{|x+i \cdot R_k|}.
\label{eq_horizontal_bound}
\end{align}
\label{lem_vertical_bound}
\end{thm}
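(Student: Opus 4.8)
The plan is to reduce everything to the complex-argument form of the expansion in Theorem~\ref{lem_psi_for_large_x}. Since $\psi_n$ is entire and the right-hand side of \eqref{eq_2_10_even1} is analytic in $z$ away from $z=0$, the identity theorem lets me extend \eqref{eq_2_10_even1} to all complex $z \neq 0$; separating the leading term from the correction, and recalling that $\lambda_n$ is real for even $n$ by Theorem~\ref{thm_pswf_main}, this reads
\begin{align}
\psi_n(z) = \frac{2\psi_n(1)\sin(cz)}{cz\lambda_n} + \frac{2}{c\lambda_n^2 z}\int_{-1}^1 \frac{\sin(c(z-t))\,\psi_n(t)\,t}{z-t}\,dt,
\label{eq_vb_plan_expansion}
\end{align}
valid for every $z\neq 0$. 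I would then show that on both contours the leading term already produces the claimed size, while the integral is a lower-order correction controlled by the hypothesis \eqref{eq_vertical_bound_1} on $k$. Throughout I use $\psi_n^2(1) > \tfrac12$ from Theorem~\ref{thm_psi1_bound} (which applies since the smallness of $|\lambda_n|$ places $n$ in the regime $\chi_n > c^2$).

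For the vertical segment $z = R_k + iy$, the choice \eqref{eq_vertical_bound_2}, namely $cR_k = \pi(k+\tfrac12)$, is engineered so that $\sin(cR_k)=\pm1$ and $\cos(cR_k)=0$, whence $|\sin(cz)| = \cosh(cy)$ exactly. Thus the leading term of \eqref{eq_vb_plan_expansion} has modulus $L = \tfrac{2|\psi_n(1)|}{c|z||\lambda_n|}\cosh(cy)$. For the correction I would use $|\sin(c(z-t))| = \sqrt{\cos^2(ct)+\sinh^2(cy)} \le \cosh(cy)$, the trivial bound $|z-t|\ge R_k-1$, and Cauchy--Schwarz together with $\|\psi_n\|_{L^2[-1,1]}=1$ and $\int_{-1}^1 t^2\,dt = \tfrac23$, giving an error $E \le \tfrac{2\sqrt{2/3}}{c|\lambda_n|^2|z|(R_k-1)}\cosh(cy)$. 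The ratio $E/L$ is then at most $\tfrac{2/\sqrt3}{|\lambda_n|(R_k-1)}$; since \eqref{eq_vertical_bound_1} forces $|\lambda_n|R_k > 8$, this ratio sits well below $\tfrac12$, so $|\psi_n(z)| \ge L - E > \tfrac12 L$, which is exactly \eqref{eq_vertical_bound}.

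The horizontal segment $z = x + iR_k$ is the delicate case and the main obstacle. Here $|\sin(cz)| = \sqrt{\cosh^2(cR_k)-\cos^2(cx)}$ only satisfies $|\sin(cz)| \ge \sinh(cR_k)$, so the leading term is a priori controlled by $\sinh(cR_k)$ rather than the $\cosh(cR_k)$ appearing in \eqref{eq_horizontal_bound}; the missing factor must be recovered from the $2$ in the numerator of \eqref{eq_vb_plan_expansion}. The key observation is that $cR_k = \pi(k+\tfrac12)$ is large, so $\tanh(cR_k)$ is essentially $1$ and $2\sinh(cR_k) > \cosh(cR_k)$ with a definite margin. Bounding the correction exactly as before (using $|\sin(c(z-t))|\le\cosh(cR_k)$ and $|z-t|\ge R_k$), the inequality \eqref{eq_horizontal_bound} reduces to
\begin{align}
|\psi_n(1)|\bigl(2\tanh(cR_k)-1\bigr) \ge \frac{2\sqrt{2/3}}{|\lambda_n|R_k}.
\label{eq_vb_plan_reduction}
\end{align}
The left side exceeds $\tfrac{1}{\sqrt2}\cdot 0.99$, while the right side is below $\tfrac14$ because $|\lambda_n|R_k>8$, so \eqref{eq_vb_plan_reduction} holds with room to spare and the horizontal bound follows.
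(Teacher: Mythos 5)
Your proposal is correct and follows essentially the same route as the paper's own proof: analytic continuation of the expansion \eqref{eq_2_10_even1} to complex argument, the exact identity $\left|\sin\left(c\left(R_k+iy\right)\right)\right| = \cosh(cy)$ forced by $cR_k = \pi\left(k+\tfrac{1}{2}\right)$, a Cauchy--Schwarz bound on the correction integral using $\|\psi_n\|_{L^2[-1,1]}=1$ together with $\psi_n^2(1)>\tfrac12$, and the hypothesis on $k$ to get $|\lambda_n| R_k > 8$, with your explicit $2\tanh(cR_k)-1$ margin on the horizontal segment being only a more explicit bookkeeping of what the paper does by comparing $|\sin(c(x+iR_k-t))|$ to $|\sin(c(x+iR_k))|$ via $\cosh(2cR_k)>e^{160}$. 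One caveat: your stated justification that $|\lambda_n|<\tfrac{1}{10}$ alone forces $\chi_n>c^2$ (needed to invoke Theorem~\ref{thm_psi1_bound}) fails for large $c$, since $|\lambda_n|\leq\sqrt{2\pi/c}<\tfrac{1}{10}$ for all $n$ once $c>200\pi$; however, the paper's proof invokes Theorem~\ref{thm_psi1_bound} under the identical hypotheses, so this gap is shared with, not worse than, the original argument.
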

\begin{proof}
Suppose that $x, y$ are arbitrary real numbers.
We observe that
\begin{align}
|\sin(c(x+iy))|^2 & \; =
|\cosh(cy) \cdot \sin(cx) + i \cdot \cos(cx)\cdot \sinh(cy)|^2 \nonumber \\
& \; = \frac{ \cosh(2cy) - \cos(2cx) }{2}.
\label{eq_vb_a}
\end{align}
On the other hand, we combine \eqref{eq_vertical_bound_1}, 
\eqref{eq_vertical_bound_2} and \eqref{eq_vb_b} to conclude that
\begin{align}
\cos(2cR_k) = \cos(2 \pi k + \pi) = -1.
\label{eq_vb_b}
\end{align}
We combine \eqref{eq_vb_a} and \eqref{eq_vb_b} to conclude that,
for all real $-1 < t < 1$,
\begin{align}
| \sin(c\cdot(R_k+iy-t)) | \leq |\sin(c\cdot(R_k+iy)) | = \cosh(cy).
\label{eq_vb_c}
\end{align}
Next, we combine \eqref{eq_vertical_bound_0}, 
\eqref{eq_vertical_bound_1}, \eqref{eq_vertical_bound_2},
\eqref{eq_vb_c},
Theorems~\ref{thm_pswf_main},~\ref{thm_psi1_bound} in Section~\ref{sec_pswf}
to conclude that
\begin{align}
& \left|
\frac{1}{\lambda_n \psi_n(1)} \cdot 
\int_{-1}^1 \frac{ \sin\brk{c\cdot(R_k+iy-t)} \psi_n(t) t }{R_k+iy - t} \; dt
\right| \leq \nonumber \\
& \frac{\cosh(cy)}{R_k} \cdot \frac{2}{|\lambda_n|} \cdot
\int_{-1}^1 | \psi_n(t) \cdot t| \; dt 
\leq \frac{\cosh(cy)}{R_k} \cdot \frac{2}{|\lambda_n|} \leq \nonumber \\
& \cosh(cy) \cdot \frac{2}{|\lambda_n|} \cdot \frac{|\lambda_n|}{8}
  \leq \frac{\cosh(cy)}{4}.
\label{eq_vb_d}
\end{align}
We combine \eqref{eq_vb_c}, \eqref{eq_vb_d} and
\eqref{eq_2_10_even1} of 
Theorem~\ref{lem_psi_for_large_x} in Section~\ref{sec_tail} to obtain
\begin{align}
|\psi_n(R_k+iy)| >
\left| \frac{2 \cdot \psi_n(1) \cdot \sin(c\cdot(R_k+iy)) }
{ c \cdot (R_k+iy) \cdot \lambda_n}
\right| \cdot \left(1 - \frac{1}{4}\right),
\label{eq_vb_e}
\end{align}
which implies \eqref{eq_vertical_bound}. On the other hand,
due to \eqref{eq_vb_a},
\begin{align}
-1 \leq 2\cdot|\sin(c\cdot(x+iR_k))|^2 - \cosh(2cR_k) \leq 1,
\label{eq_hb_a}
\end{align}
for all real $x$. Also, due to the combination of
\eqref{eq_vertical_bound_0} and \eqref{eq_vertical_bound_1},
\begin{align}
\cosh(2cR_k) > \exp\left( \frac{16}{|\lambda_n|} \right) > e^{160}.
\label{eq_hb_b}
\end{align}
We combine \eqref{eq_hb_a}, \eqref{eq_hb_b},
\eqref{eq_vertical_bound_0}, 
\eqref{eq_vertical_bound_1}, \eqref{eq_vertical_bound_2},
Theorems~\ref{thm_pswf_main},~\ref{thm_psi1_bound} in Section~\ref{sec_pswf}
to conclude that, for all real $x$,
\begin{align}
& \left|
\frac{1}{\lambda_n \psi_n(1)} \cdot 
\int_{-1}^1 \frac{ \sin\brk{c\cdot(x+iR_k-t)} \psi_n(t) t }{x+iR_k - t} \; dt
\right| \leq \nonumber \\
& \left|
\frac{2}{\lambda_n} \cdot \frac{\sin(c\cdot(x+iR_k))}{R_k}
\right| \leq
\frac{|\sin(c\cdot(x+iR_k))|}{8}.
\label{eq_hb_c}
\end{align}
We combine \eqref{eq_hb_a}, \eqref{eq_hb_b}, \eqref{eq_hb_c} and
\eqref{eq_2_10_even1} of 
Theorem~\ref{lem_psi_for_large_x} in Section~\ref{sec_tail} to obtain,
for all real $x$,
\begin{align}
& | \psi_n(x+iR_k) | \geq 
\left| \frac{2 \cdot \psi_n(1) \cdot \sin(c\cdot(x+iR_k)) }
{ c \cdot (x+iR_k) \cdot \lambda_n}
\right| \cdot \left(1 - \frac{1}{4}\right),
%> \nonumber \\
%& \left| \frac{\psi_n(1)}{c \cdot \lambda_n} \right| \cdot 
%\frac{\cosh(cR_k)}{|x+i \cdot R_k|},
\label{eq_hb_d}
\end{align}
which implies \eqref{eq_horizontal_bound}.
\end{proof}
%%%%%%%%%%%%%%%%%%%%%
%%%%%%%%%%%%%%%%%%%%%
In the following theorem, we use
Theorem~\ref{lem_vertical_bound} to establish an upper bound on 
the absolute value of a certain
contour integral.
\begin{thm}
Suppose that $n > 0$ is an even positive number,
and that \eqref{eq_vertical_bound_0} holds.
Suppose also that $k > 0$ is an integer number that
satisfies the inequality \eqref{eq_vertical_bound_1},
and that the real number $R_k$ is defined via
\eqref{eq_vertical_bound_2}.
Suppose furthermore that $\Gamma_k$ is the boundary of the
square
\begin{align}
\left[-R_k, R_k\right] \times \left[-i\cdot R_k, i\cdot R_k\right]
\label{eq_contour_1}
\end{align}
in the complex plane,
traversed in the counterclockwise direction. In other words,
$\Gamma_k$ admits the parametrization
\begin{align}
\Gamma_k(s) = 
\begin{cases}
R_k - iR_k + 2isR_k, & 0 \leq s \leq 1, \\
R_k + iR_k - 2(s-1)R_k, & 1 \leq s \leq 2, \\
-R_k+ iR_k - 2i(s-2)R_k, & 2 \leq s \leq 3, \\
-R_k- iR_k + 2(s-3)R_k, & 3 \leq s \leq 4.
\end{cases}
\label{eq_contour_2}
\end{align}
Then, for all real $-1<t<1$,
\begin{align}
\left|
\frac{1}{2\pi i} \oint_{\Gamma_k} \frac{dz}{\psi_n(z) \cdot (z-t)}
\right| < 2\sqrt{2}\cdot |\lambda_n| \cdot
\left(1 + 2cR_k \cdot e^{-cR_k} \right).
\label{eq_contour}
\end{align}
\label{lem_contour}
\end{thm}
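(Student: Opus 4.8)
The plan is to bound the contour integral by the sum of four line integrals, one per side of the square $\Gamma_k$ (using the parametrization \eqref{eq_contour_2}), and to estimate each side by combining the lower bounds on $|\psi_n|$ from Theorem~\ref{lem_vertical_bound} with an elementary lower bound on $|z-t|$. First I would write $\left| \frac{1}{2\pi i}\oint_{\Gamma_k} \frac{dz}{\psi_n(z)(z-t)} \right| \le \frac{1}{2\pi}\oint_{\Gamma_k} \frac{|dz|}{|\psi_n(z)|\,|z-t|}$. Since $n$ is even, $\psi_n$ is even (Theorem~\ref{thm_pswf_main}) and, being real on the real axis, satisfies $|\psi_n(\bar z)| = |\psi_n(z)|$; these two symmetries reduce the left vertical side to the bound \eqref{eq_vertical_bound} on the right vertical side, and the bottom horizontal side to the bound \eqref{eq_horizontal_bound} on the top horizontal side. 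Thus it suffices to estimate the two representative sides and double each contribution.

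For the two vertical sides ($z = \pm R_k + iy$, $|y| \le R_k$) I would insert \eqref{eq_vertical_bound}, which gives $|\psi_n(z)|^{-1} < |c\lambda_n/\psi_n(1)| \cdot |z|/\cosh(cy)$. Since $|t| < 1$ and $|z| \ge R_k$, we have $|z - t| > |z| - 1 \ge |z|(1 - 1/R_k)$, so $|z|/|z-t| < R_k/(R_k-1)$ and the two factors of $|z|$ cancel. The integrand is then dominated by $\frac{R_k}{R_k-1}\cdot\frac{1}{\cosh(cy)}$, up to the constant $|c\lambda_n/\psi_n(1)|$, and $\int_{-R_k}^{R_k} \frac{dy}{\cosh(cy)} < \int_{-\infty}^{\infty}\frac{dy}{\cosh(cy)} = \pi/c$. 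Together with the factor $\frac{1}{2\pi}$ and the doubling, the two vertical sides contribute less than $\left|\frac{\lambda_n}{\psi_n(1)}\right|\frac{R_k}{R_k-1}$. Bounding $|\psi_n(1)|^{-1} < \sqrt2$ by Theorem~\ref{thm_psi1_bound} and $\frac{R_k}{R_k-1} < 2$ (valid since \eqref{eq_vertical_bound_0} and \eqref{eq_vertical_bound_1} force $R_k = \frac{\pi}{c}(k+\frac12) > 80$), this is at most $2\sqrt2\,|\lambda_n|$, which is precisely the ``$1$'' term of \eqref{eq_contour}.

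For the two horizontal sides ($z = x \pm iR_k$, $|x| \le R_k$) the same manipulation applies, but now \eqref{eq_horizontal_bound} produces the constant $\cosh(cR_k)$ in the denominator, independent of $x$; integrating over $x \in [-R_k, R_k]$ merely multiplies by the length $2R_k$, and $\cosh(cR_k)^{-1} < 2e^{-cR_k}$ supplies the exponential decay. Keeping track again of the factors $\frac{R_k}{R_k-1} < 2$ and $|\psi_n(1)|^{-1} < \sqrt2$, the two horizontal sides contribute less than $\frac{8\sqrt2}{\pi}|\lambda_n|\,cR_k e^{-cR_k} < 4\sqrt2\,|\lambda_n|\,cR_k e^{-cR_k}$, which matches the ``$2cR_k e^{-cR_k}$'' term of \eqref{eq_contour}. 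Summing the two estimates yields \eqref{eq_contour}.

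The main obstacle is bookkeeping rather than conceptual: one must verify that the loose multiplicative factors, chiefly $\frac{R_k}{R_k-1}$ and $\frac{1}{|\psi_n(1)|}$, are controlled tightly enough that the vertical and horizontal contributions fit under the stated coefficients $2\sqrt2$ and $4\sqrt2$ respectively. This is where \eqref{eq_vertical_bound_1} is essential: it forces $R_k$ so large that $\frac{R_k}{R_k-1}$ stays well below $2$, and it is also exactly the hypothesis under which Theorem~\ref{lem_vertical_bound} was established. A minor point to check is that Theorem~\ref{lem_vertical_bound} holds for all real $y$ (resp.\ $x$), so its bounds are valid along the entire sides, and extending $\int_{-R_k}^{R_k}\frac{dy}{\cosh(cy)}$ to the full line only loosens the inequality in the correct direction.
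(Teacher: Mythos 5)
Your proposal is correct and follows essentially the same route as the paper's own proof: the sides of $\Gamma_k$ are treated separately via the lower bounds \eqref{eq_vertical_bound}, \eqref{eq_horizontal_bound} of Theorem~\ref{lem_vertical_bound}, with $|\psi_n(1)|^{-1} < \sqrt{2}$ from Theorem~\ref{thm_psi1_bound}, the extension $\int_{-R_k}^{R_k} dy/\cosh(cy) < \pi/c$, $\cosh(cR_k)^{-1} < 2e^{-cR_k}$, and symmetry reducing the count to one vertical and one horizontal side. Your only departure is cosmetic bookkeeping — you track the factor $R_k/(R_k-1) < 2$ explicitly where the paper absorbs $|z-t|^{-1} \leq 2/|z|$ silently — and your constants land on exactly the paper's bound.
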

\begin{proof}
Suppose that $-1 < t < 1$ is a real number.
We combine Theorem~\ref{thm_psi1_bound} in Section~\ref{sec_pswf}
with \eqref{eq_vertical_bound_0}, \eqref{eq_vertical_bound_1},
\eqref{eq_vertical_bound_2}, \eqref{eq_vertical_bound}
of Theorem~\ref{lem_vertical_bound} to obtain
\begin{align}
& \left| 
\frac{1}{2\pi i} \int_{-R_k}^{R_k }
  \frac{dy}{\psi_n(R_k+iy) \cdot (R_k+iy-t)}
\right| \leq \nonumber \\
& \frac{1}{\pi} \int_{-\infty}^{\infty} 
\frac{dy}{|\psi_n(R_k+iy)| \cdot |R_k+iy|} \leq 
 \frac{\sqrt{2}}{\pi} \int_{-\infty}^{\infty} 
\frac{c |\lambda_n| \; dy}{\cosh(cy)}
= \sqrt{2} \cdot |\lambda_n|.
\label{eq_contour_a}
\end{align}
On the other hand,
we combine Theorem~\ref{thm_psi1_bound} in Section~\ref{sec_pswf}
with \eqref{eq_vertical_bound_0}, \eqref{eq_vertical_bound_1},
\eqref{eq_vertical_bound_2}, \eqref{eq_horizontal_bound}
of Theorem~\ref{lem_vertical_bound} to obtain
\begin{align}
& \left| 
-\frac{1}{2\pi i} \int_{-R_k}^{R_k }
  \frac{dx}{\psi_n(x+iR_k) \cdot (x+iR_k-t)}
\right| \leq \nonumber \\
& \frac{1}{\pi} \int_{-R_k}^{R_k} 
\frac{dx}{|\psi_n(x+iR_k)| \cdot |x+iR_k|} \leq 
\frac{c \cdot |\lambda_n| \sqrt{2}}{\pi \cdot \cosh(c R_k)}
\int_{-R_k}^{R_k} dx \leq \nonumber \\
& \frac{4 \cdot \sqrt{2} \cdot |\lambda_n| \cdot cR_k
\cdot e^{-cR_k}}{\pi}.
\label{eq_contour_b}
\end{align}
We combine \eqref{eq_contour_2}, \eqref{eq_contour_a}, \eqref{eq_contour_b}
with the observation that $|\psi_n|$ is symmetric about zero to
obtain \eqref{eq_contour}.
\end{proof}
%%%%%%%%%%%%%%%%%%%%%
We are now ready to prove the principal theorem of this section.
It is illustrated in
Table~\ref{t:test86} and in Figures~\ref{fig:test86a}, \ref{fig:test86b}
(see Experiment 8 in Section~\ref{sec_exp8}).
\begin{thm}
Suppose that $c>1$, and that $n>0$ is an even positive integer.
Suppose also that
\begin{align}
n > \frac{2c}{\pi} + 1,
\label{eq_complex_1}
\end{align}
that
\begin{align}
|\lambda_n| < \frac{1}{10},
\label{eq_complex_2}
\end{align}
and that
\begin{align}
\chi_n - c^2 < \frac{c^2}{|\lambda_n|}.
\label{eq_complex_3}
\end{align}
Suppose furthermore that $-1 < t_1 < \dots < t_n < 1$ are the roots
of $\psi_n$ in $(-1,1)$, and that the function $I:(-1,1) \to \Rc$
is defined via the formula
\begin{align}
I(t) = \frac{1}{\psi_n(t)} - 
\sum_{j=1}^n \frac{1}{\psi_n'(t_j) \cdot (t-t_j)},
\label{eq_complex_it}
\end{align}
for $-1 < t < 1$.
Then,
\begin{align}
|I(t)| \leq |\lambda_n| \cdot I_{\max}, 
\label{eq_complex}
\end{align}
where the real number $I_{max}$ is defined via the formula
\begin{align}
I_{\max} = 
24 \cdot \log\left( \frac{2}{|\lambda_n|} \right) +
13 \cdot (\chi_n)^{1/4} + 40 \cdot c \cdot |\lambda_n| + 2 \sqrt{2}.
\label{eq_complex_imax}
\end{align}
\label{thm_complex}
\end{thm}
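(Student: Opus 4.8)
The plan is to start from the exact partial fractions identity \eqref{eq_pf_cauchy} of Section~\ref{sec_pf}, which is the application of Cauchy's formula (Theorem~\ref{thm_cauchy}) to the meromorphic function $1/\psi_n$. For each even $M$ and each $R$ strictly between the roots $x_M$ and $x_{M+1}$ of $\psi_n$ in $(1,\infty)$, this identity writes $I(t)$ as a finite sum of principal parts at the roots $\pm x_1,\dots,\pm x_M$ lying outside $(-1,1)$, plus the boundary term $\frac{1}{2\pi i}\oint_{\Gamma_R}\frac{dz}{\psi_n(z)(z-t)}$. I would evaluate this along the special sequence $R=R_k=\frac{\pi}{c}(k+\tfrac12)$ of Theorem~\ref{lem_contour}: since $cR_k=\pi(k+\tfrac12)$ is a half-integer multiple of $\pi$ while the large roots of $\psi_n$ satisfy $cx_j\approx j'\pi$ (Theorem~\ref{lem_sin_cos}), each such $R_k$ interlaces the roots once $k$ is large, and the associated $M=M(k)$ tends to infinity with $k$.

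I then bound the two pieces separately. For the sum of principal parts, the key point is that $n$ is even, so by Theorem~\ref{thm_pswf_main} $\psi_n$ is even and $\psi_n'$ is odd; hence $\psi_n'(-x_k)=-\psi_n'(x_k)$, and the negative-root contribution $\sum_k\frac{1}{\psi_n'(-x_k)(t+x_k)}$ equals $-\sum_k\frac{1}{\psi_n'(x_k)(x_k-(-t))}$, i.e.\ up to sign the positive-root sum evaluated at $-t$. Since $\chi_n>c^2$ here (by Theorem~\ref{thm_khi_elem}, as $n>2c/\pi+1$), Theorem~\ref{thm_head_tail} applies; invoking it once at $t$ and once at $-t$ bounds the whole pair-grouped sum by twice its right-hand side,
\[
24|\lambda_n|\log\Big(\tfrac{2}{|\lambda_n|}\Big)+12|\lambda_n|\sqrt{1+\tfrac{\sqrt{\chi_n}}{\pi}}+40c|\lambda_n|^2 .
\]
Because the individual terms decay only like $1/x_k$, the series converges solely through the cancellation of consecutive roots, which is exactly the structure provided by Theorem~\ref{thm_head_tail}; this is why $M(k)$ must be taken even.

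For the boundary term, Theorem~\ref{lem_contour} (which rests on the lower bounds of Theorem~\ref{lem_vertical_bound}) gives $\big|\frac{1}{2\pi i}\oint_{\Gamma_{R_k}}\frac{dz}{\psi_n(z)(z-t)}\big|<2\sqrt2\,|\lambda_n|\,(1+2cR_k e^{-cR_k})$, and since $R_k\to\infty$ the factor $2cR_k e^{-cR_k}\to 0$, so its limsup is at most $2\sqrt2\,|\lambda_n|$. As $I(t)$ is independent of $k$, letting $k\to\infty$ in \eqref{eq_pf_cauchy} and passing to the limsup yields $|I(t)|\le(\text{sum bound})+2\sqrt2\,|\lambda_n|$. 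Finally I would absorb the $\sqrt{\chi_n}$ term with the elementary inequality $12\sqrt{1+\sqrt{\chi_n}/\pi}\le 13(\chi_n)^{1/4}$, valid because $\chi_n>c^2+1>2$; collecting terms gives precisely $|\lambda_n|\,I_{\max}$ with $I_{\max}$ as in \eqref{eq_complex_imax}.

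The main obstacle I anticipate is the limiting argument itself: I must verify that the $R_k$ genuinely lie between consecutive roots so that \eqref{eq_pf_cauchy} is legitimate for each large $k$, confirm that $M(k)$ can be chosen even, and carefully separate the $\lim$ of the (conditionally convergent, pair-grouped) root sum from the $\limsup$ of the boundary integral while holding $I(t)$ fixed on the left-hand side. Once this bookkeeping is in place, the remaining estimates are routine consequences of Theorems~\ref{thm_head_tail} and~\ref{lem_contour}.
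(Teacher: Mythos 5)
Your proposal is correct and follows essentially the same route as the paper's proof: the identity \eqref{eq_pf_cauchy} via Theorem~\ref{thm_cauchy} along the contours $\Gamma_{R_k}$ of Theorem~\ref{lem_contour}, Theorem~\ref{thm_head_tail} applied at $t$ and (by the evenness of $\psi_n$) at $-t$ for the root sums, Theorem~\ref{lem_contour} with $R_k \to \infty$ for the boundary term, and the elementary bound $12\sqrt{1+\sqrt{\chi_n}/\pi} \leq 13(\chi_n)^{1/4}$ (the paper's \eqref{eq_complex_d} combined with Theorem~\ref{thm_khi_elem}). The bookkeeping you flag (choosing $M(k)$ even, $R_k$ avoiding the roots) is handled in the paper simply by noting $\psi_n(R_k) \neq 0$ via \eqref{eq_vertical_bound} and taking $k \to \infty$, so your extra care there is sound but not a departure.
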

\begin{proof}
Suppose that $1 < x_1 < x_2 < \dots$ are the roots of $\psi_n$
in $(1,\infty)$, and that $k$ is an integer satisfying the inequality
\eqref{eq_vertical_bound_1} in Theorem~\ref{lem_vertical_bound}.
Suppose also that the real number $R_k$ is defined via
\eqref{eq_vertical_bound_2} in Theorem~\ref{lem_vertical_bound},
the contour $\Gamma_k$ in the complex plane is defined via
\eqref{eq_contour_2} in Theorem~\ref{lem_contour}, and that $x_M$ is
the maximal root of $\psi_n$ in $(1, \infty)$; in other words,
\begin{align}
1 < x_1 < \dots < x_M < R_k < x_{M+1} < \dots.
\label{eq_complex_b}
\end{align}
(We observe that $\psi_n(R_k) \neq 0$ due to 
\eqref{eq_vertical_bound} in Theorem~\ref{lem_vertical_bound}.)
We combine \eqref{eq_complex_it}, \eqref{eq_complex_b} and
Theorem~\ref{thm_cauchy} of Section~\ref{sec_misc_tools} to conclude that,
for any real $-1 < t < 1$,
\begin{align}
I(t) = & \; \sum_{k=1}^M \left( \frac{1}{\psi_n'(x_k) \cdot (t-x_k)} +
                           \frac{1}{\psi_n'(-x_k) \cdot (t+x_k)} \right) +
   \nonumber \\
& \;   \frac{1}{2\pi i} \oint_{\Gamma_k} \frac{dz}{\psi_n(z)\cdot(z-t)}.
\label{eq_complex_c}
\end{align}
We combine the assumption that $c>1$ with Theorem~\ref{thm_khi_elem}
in Section~\ref{sec_pswf} to conclude that
\begin{align}
\sqrt{1 + \frac{\sqrt{\chi_n}}{\pi}} < 
(\chi_n)^{1/4} \cdot \sqrt{ \frac{1}{\sqrt{2}} + \frac{1}{\pi}}.
\label{eq_complex_d}
\end{align}
We obtain the inequality \eqref{eq_complex} by taking the limit $k \to \infty$
and using \eqref{eq_complex_c}, \eqref{eq_complex_d},
Theorem~\ref{thm_head_tail} and Theorem~\ref{lem_contour}.
\end{proof}
\begin{remark}
The conclusion of Theorem~\ref{thm_complex}
holds for odd values of $n$ as well. The proof is essentially the same,
and is based on Theorems~\ref{lem_psi_for_large_x},
\ref{thm_head_tail}, and obvious
modifications of Theorems~\ref{lem_vertical_bound},~\ref{lem_contour}.
\label{rem_complex}
\end{remark}
\begin{remark}
Suppose that the 
function $I:(-1,1) \to \Rc$ is defined via \eqref{eq_complex_it}.
If $n$ is even, then $I$ is an even function.
If $n$ is odd, then $I$ is an odd function.
\label{rem_i_parity}
\end{remark}
In the following theorem, we provide
a simple condition on $n$ that implies the inequality $|\lambda_n|<0.1$.
\begin{thm}
Suppose that $c>30$, and that $n>0$ is an integer. Suppose also that
\begin{align}
n > \frac{2c}{\pi} + 5.
\label{eq_lam10_n5}
\end{align} 
Then,
\begin{align}
|\lambda_n| < \frac{1}{10}.
\label{eq_lam10}
\end{align}
\label{thm_lam10}
\end{thm}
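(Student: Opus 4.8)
The plan is to reduce to the smallest admissible $n$ and then to bound $|\lambda_n|$ through the eigenvalue $\chi_n$ of the prolate ODE. Since the eigenvalues obey $|\lambda_0| \geq |\lambda_1| \geq \cdots$, it suffices to establish the bound for the least integer $n_0$ exceeding $2c/\pi + 5$; every larger $n$ then inherits $|\lambda_n| \leq |\lambda_{n_0}| < 1/10$. For this $n_0$ one has $n_0 > 2c/\pi + 5$ while the excess $n_0 - 2c/\pi$ stays bounded, and this modest, constant-size gap is exactly the delicate feature of the statement.

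First I would dispose of large band limits by an elementary argument. Combining $\mu_n = \frac{c}{2\pi}|\lambda_n|^2$ (see \eqref{eq_prolate_mu}) with $\mu_n < 1$ (see \eqref{eq_mu_leg_1}) gives $|\lambda_n| < \sqrt{2\pi/c}$ for every $n$, so $|\lambda_n| < 1/10$ holds automatically once $c \geq 200\pi$, with no hypothesis on $n$ required. This reduces the problem to the bounded range $30 < c < 200\pi$.

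For the remaining range, the plan is to turn the hypothesis $n_0 > 2c/\pi + 5$ into a quantitative lower bound on $\chi_{n_0}$ and then feed it into an exponential bound on $|\lambda_{n_0}|$. Choosing $\delta$ as large as the constraint $\frac{2}{\pi^2}\delta\log(4e\pi c/\delta) \leq 5$ permits, Theorem~\ref{thm_khi_1} yields $\chi_{n_0} > c^2 + \frac{4}{\pi}\delta c$, i.e. $x_{n_0} := \chi_{n_0}/c^2 > 1 + \frac{4\delta}{\pi c}$; the two-sided estimate of Theorem~\ref{thm_n_khi_simple} may be used to sharpen this lower bound. Substituting the resulting lower bound on $x_{n_0}$ into Theorem~\ref{thm_lambda_khi} (or Theorem~\ref{thm_crude_inequality}) controls $|\lambda_{n_0}|$ by the product of a polynomial prefactor of size $O(c)$ and the decaying factor $\exp[-\frac{\pi}{4}(\sqrt{x_{n_0}} - 1/\sqrt{x_{n_0}})c]$, and one then checks that this product lies below $1/10$.

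The main obstacle is precisely this last verification in the moderate-$c$ regime. Because the gap $n_0 - 2c/\pi$ is only a constant, the admissible $\delta$ is forced to be $O(1)$, so the lower bound on $\chi_{n_0} - c^2$ is only of order $c/\log c$ and the exponent $\frac{\pi c}{4}\cdot\frac{x_{n_0}-1}{\sqrt{x_{n_0}}}$ is merely $O(1)$; near $c = 30$ this decay does not obviously dominate the $O(c)$ prefactor of Theorems~\ref{thm_crude_inequality}, \ref{thm_lambda_khi}. Closing this gap is the crux: it calls either for a sharper lower bound on $\chi_{n_0} - c^2$ than Theorems~\ref{thm_khi_1}, \ref{thm_n_khi_simple} supply at a constant gap, or for a sharper upper bound on $|\lambda_{n_0}|$ than the crude estimates above (such as the refined bounds of the companion analysis), after which the remaining claim reduces to a finite, elementary computation over the interval $30 < c < 200\pi$.
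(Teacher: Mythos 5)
Your proposal converges on essentially the paper's proof, after a detour the paper never takes. The large-$c$ case is identical: combining \eqref{eq_prolate_mu} with \eqref{eq_mu_leg_1} gives $|\lambda_n| = \sqrt{2\pi\mu_n/c} < \sqrt{2\pi/c} < 1/10$ for $c \geq 200\pi$, with no hypothesis on $n$. And your final fallback for the moderate range is exactly what the paper does: it observes that $\left[30, 200\pi\right]$ is compact and verifies numerically that $|\lambda_{\text{floor}(2c/\pi+5)}| < 1/50$ there, then invokes the ordering $|\lambda_n| \geq |\lambda_{n+1}|$ to cover all larger $n$ (your reduction to the minimal admissible $n_0$ is the same move). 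Your middle analytic route fails for a reason slightly stronger than the quantitative tension you diagnosed: Theorem~\ref{thm_lambda_khi} requires $n > 2c/\pi + \sqrt{42} \approx 2c/\pi + 6.48$ (see \eqref{eq_lambda_khi_0}), which the minimal $n_0 > 2c/\pi + 5$ need not satisfy, so the refined exponential bound is not even applicable at a gap of $5$; and Theorem~\ref{thm_crude_inequality} demands $\delta > 3$, which at gap $5$ forces $\log(4e\pi c/3) \lesssim 5\pi^2/6$, i.e.\ roughly $c \lesssim 320$, and even then yields $\xi(n,c) \approx 7056 \cdot c \cdot e^{-3} \gg 1/10$. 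So your assessment that the prefactor cannot be beaten at constant gap is correct, and your honest conclusion --- that the moderate-$c$ case reduces to a finite verification on the compact interval --- is precisely the paper's argument. Net: the same two-case proof, with the analytic attempt correctly identified by you as a dead end and simply omitted by the paper.
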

\begin{proof}
Suppose first that 
\begin{align}
c > 200 \cdot \pi.
\label{eq_lam10_a}
\end{align}
We combine \eqref{eq_lam10_a}
with
\eqref{eq_prolate_mu},
\eqref{eq_mu_leg_1} in Section~\ref{sec_pswf}
to conclude that, in this case,
\begin{align}
|\lambda_n| = \sqrt{ \frac{2\pi\cdot\mu_n}{c} } < 
\sqrt{ \frac{2\pi}{c} } < \frac{1}{10}.
\label{eq_lam10_b}
\end{align}
On the other hand, suppose that
\begin{align}
30 \leq c \leq 200 \cdot \pi.
\label{eq_lam10_c}
\end{align}
We observe that the interval $\left[30,200\cdot\pi\right]$ is compact,
and use this observation to
verify numerically that, if \eqref{eq_lam10_c} holds,
\begin{align}
|\lambda_{\text{floor}(2c/\pi+5)}| < \frac{1}{50},
\label{eq_lam10_d}
\end{align}
where, for a real number $a$, $\text{floor}(a)$ is the largest
integer less than or equal to $a$.
We combine Theorem~\ref{thm_pswf_main}
in Section~\ref{sec_pswf}, \eqref{eq_lam10_d} and \eqref{eq_lam10_b}
to establish \eqref{eq_lam10}.
\end{proof}
In the following theorem, we summarize 
Theorems~\ref{thm_khi_lambda_bound},
\ref{thm_complex},
\ref{thm_lam10} and Remark~\ref{rem_complex}.
\begin{thm}
Suppose that $c>0$ is a real number and $n>0$ is an integer.
Suppose also that
\begin{align}
c > 30,
\label{eq_complex_summary_0}
\end{align}
and that
\begin{align}
n > \frac{2c}{\pi} + 5.
\label{eq_complex_summary_1}
\end{align}
Suppose furthermore that the function $I:(-1,1) \to \Rc$
is defined via the formula \eqref{eq_complex_it}
in Theorem~\ref{thm_complex}.
Then,
\begin{align}
|I(t)| \leq |\lambda_n| \cdot I_{\max}, 
\label{eq_complex_summary}
\end{align}
where the real number $I_{max}$ is defined via the formula
\eqref{eq_complex_imax} in Theorem~\ref{thm_complex}.
\label{thm_complex_summary}
\end{thm}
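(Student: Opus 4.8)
The plan is to recognize Theorem~\ref{thm_complex_summary} as a bookkeeping statement that merely repackages Theorems~\ref{thm_khi_lambda_bound}, \ref{thm_complex} and \ref{thm_lam10}: under the cleaner hypotheses \eqref{eq_complex_summary_0} and \eqref{eq_complex_summary_1}, all of the technical hypotheses of Theorem~\ref{thm_complex} are automatically satisfied, and so its conclusion \eqref{eq_complex_summary} follows at once. Thus essentially no new analysis is required; the work consists entirely of checking that the numerical thresholds line up.

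First I would feed \eqref{eq_complex_summary_0} and \eqref{eq_complex_summary_1} into Theorem~\ref{thm_lam10}, whose hypotheses are exactly $c>30$ and $n>2c/\pi+5$, to deduce
\begin{align}
|\lambda_n| < \frac{1}{10}.
\label{eq_summary_lambda_plan}
\end{align}
This is precisely the hypothesis \eqref{eq_complex_2} of Theorem~\ref{thm_complex}. Next, since $c>30$ and \eqref{eq_summary_lambda_plan} holds, I would apply Theorem~\ref{thm_khi_lambda_bound} to obtain
\begin{align}
\chi_n - c^2 < \frac{c^2}{|\lambda_n|},
\label{eq_summary_khi_plan}
\end{align}
which is exactly the hypothesis \eqref{eq_complex_3}. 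Finally I would observe that $n > 2c/\pi + 5 > 2c/\pi + 1$, giving the hypothesis \eqref{eq_complex_1}, and that $c > 30 > 1$.

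At this stage every hypothesis of Theorem~\ref{thm_complex} is in force. For even $n$, Theorem~\ref{thm_complex} delivers \eqref{eq_complex_summary} with $I_{\max}$ as defined in \eqref{eq_complex_imax}; for odd $n$, the identical conclusion is guaranteed by Remark~\ref{rem_complex}. Combining the two parities completes the argument, since $I(t)$ in \eqref{eq_complex_it} and the bound \eqref{eq_complex_imax} are defined uniformly in $n$.

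The main (and essentially only) obstacle is the trivial one of confirming that the stronger uniform assumptions $c>30$ and $n>2c/\pi+5$ imply the several weaker and more scattered conditions used in the component theorems; once \eqref{eq_summary_lambda_plan} and \eqref{eq_summary_khi_plan} are in hand there is nothing further to prove, as all of the analytic content has already been established in Sections~\ref{sec_head},~\ref{sec_tail} and the preceding theorems of Section~\ref{sec_head_tail}.
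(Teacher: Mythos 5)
Your proposal is correct and follows essentially the same route as the paper's own proof: the paper likewise feeds \eqref{eq_complex_summary_0} and \eqref{eq_complex_summary_1} into Theorem~\ref{thm_lam10} to obtain \eqref{eq_complex_2}, then combines $c>30$ with \eqref{eq_complex_2} in Theorem~\ref{thm_khi_lambda_bound} to obtain \eqref{eq_complex_3}, and finally invokes Theorem~\ref{thm_complex} together with Remark~\ref{rem_complex} to handle both parities of $n$. Your additional explicit checks ($n>2c/\pi+5>2c/\pi+1$ and $c>30>1$) are left implicit in the paper but are exactly the right verifications.
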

\begin{proof}
We combine \eqref{eq_complex_summary_0}, \eqref{eq_complex_summary_1}
with Theorem~\ref{thm_lam10} to conclude that the inequality
\eqref{eq_complex_2} holds. Also, we combine
\eqref{eq_complex_summary_0},
\eqref{eq_complex_2}
with Theorem~\ref{thm_khi_lambda_bound} to conclude that
the inequality \eqref{eq_complex_3} holds. We combine these
observations with Theorem~\ref{thm_complex} and
Remark~\ref{rem_complex} to obtain \eqref{eq_complex_summary}.
\end{proof}

%%%%%%%%%%%%%%%%%%%%%%%%%%%%%%%%%%%%%%%%%%%%%%
\subsection{PSWF-based Quadrature and its Properties}
\label{sec_quad}
In this subsection, we define 
PSWF-based quadratures of order $n$, find an upper bound on their error,
and show that a prescribed absolute accuracy can be achieved by
a proper choice of $n$.

The principal result of this section is Theorem~\ref{thm_quad_eps_simple}.
\begin{definition}
Suppose that $n > 0$ is a positive integer, and that
\begin{align}
-1 < t_1 < t_2 < \dots < t_n < 1
\label{eq_quad_t}
\end{align}
are the roots of $\psi_n$ the interval in $\brk{-1, 1}$.
For each integer $j = 1, \dots, n$, we define
the function $\varphi_j: (-1,1) \to \Rc$ via the formula
\begin{align}
\varphi_j(t) = \frac{ \psi_n(t) }{ \psi_n'(t_j) \brk{ t-t_j } }.
\label{eq_quad_phi}
\end{align}
In addition, for each integer $j=1,\dots,n$, we define
the real number $W_j$ via the formula
\begin{align}
W_j = \int_{-1}^1 \varphi_j(s) \; ds =
\frac{1}{\psi_n'(t_j)} \int_{-1}^1 \frac{ \psi_n(s) \; ds }{ s - t_j }.
\label{eq_quad_w}
\end{align}
We refer to the expression of the form
\begin{align}
\sum_{j=1}^n W_j \cdot f(t_j)
\label{eq_quad_quad}
\end{align}
as the PSWF-based quadrature rule of order $n$. 
The points $t_1, \dots, t_n$ and the numbers
$W_1, \dots, W_n$ are referred to as the nodes and the 
weights of the quadrature, respectively. The purpose of \eqref{eq_quad_quad}
is to approximate the integral of a bandlimited function $f$ over
the interval $\left[-1,1\right]$.
\label{def_quad}
\end{definition}
%%%%%%%%%%%
\subsubsection{Expansion of $\varphi_j$ into a Prolate Series}
\label{sec_phi}
Suppose that $n > 0$ is a positive integer.
For every integer $j=1,\dots,n$, we define the function
$\varphi_j : (-1,1) \to \Rc$ via \eqref{eq_quad_phi}.
In the following theorem, we evaluate the inner product
$\left<\varphi_j, \psi_m\right>$ for arbitrary $m \neq n$.
This theorem is illustrated 
in Tables~\ref{t:test87a}, \ref{t:test87b},
Figure~\ref{fig:test87}
(see Experiment 9 in Section~\ref{sec_exp9}).
%%%%%%%%%%%
\begin{thm}
Suppose that $n>0$ is a positive integer,
and that $m \neq n$ is a non-negative integer. 
Suppose also that $1 \leq j \leq n$ is an integer. Then,
\begin{align}
\int_{-1}^1 \frac{ \psi_n(t) }{ t - t_j } \; \psi_m(t) \; dt =
\frac{ \abrk{\lambda_m}^2 \psi_m(t_j) }
     { \abrk{\lambda_m}^2 - \abrk{\lambda_n}^2 } \cdot
       \Big[\int_{-1}^1 \frac{ \psi_n(t) \; dt }{ t - t_j }  + 
                 i c \lambda_n \Psi_n(1,t_j) \Big],
\label{eq_quad_coef}
\end{align}
where  $t_j$ is given via \eqref{eq_quad_t} in Definition~\ref{def_quad},
and the complex-valued function $\Psi_n: (-1,1)^2 \to \Cc$
is defined via the formula
\begin{align}
\Psi_n(y,t) = \int_0^y \psi_n(x) e^{-icxt} dx.
\label{eq_quad_big_psi}
\end{align}
\label{thm_quad_coef}
\end{thm}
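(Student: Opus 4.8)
The plan is to read the integral on the left of \eqref{eq_quad_coef} as an off-diagonal matrix element of the self-adjoint operator $Q_c$ and to extract it from a commutator identity, rather than by brute-force manipulation of the double integrals (the latter tends to run in circles, because substituting one eigenequation merely inverts the other). Write $\langle f,g\rangle=\int_{-1}^1 f(t)g(t)\,dt$ (all functions here are real on $(-1,1)$), and let $C$ denote multiplication by $1/(t-t_j)$. Since $\psi_n(t_j)=0$ (Theorem~\ref{thm_pswf_main}), the function $C\psi_n=\psi_n/(t-t_j)$ stays bounded near $t_j$ and hence lies in $L^2\left[-1,1\right]$, so that
\[
J := \int_{-1}^1 \frac{\psi_n(t)}{t-t_j}\,\psi_m(t)\,dt = \langle C\psi_n,\psi_m\rangle .
\]
Using that $Q_c$ is self-adjoint with $Q_c\psi_n=\mu_n\psi_n$ and $Q_c\psi_m=\mu_m\psi_m$ (see \eqref{eq_pswf_qc}, \eqref{eq_prolate_integral2}) and that $C(Q_c\psi_n)=\mu_n\,C\psi_n$, I would first record the identity
\[
\brk{\mu_m-\mu_n}\,J = \big\langle\, Q_c(C\psi_n)-C(Q_c\psi_n),\ \psi_m\,\big\rangle .
\]
The only delicate point is that $C$ is unbounded; this is harmless because $C$ is applied only to $\psi_n$ (never to $\psi_m$, which would leave $L^2$), and the displayed identity uses nothing beyond the self-adjointness of $Q_c$ together with $C\psi_n\in L^2$.

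The heart of the argument is the closed-form evaluation of the commutator term. Using the kernel $\tfrac1\pi\frac{\sin c(x-t)}{x-t}$ of $Q_c$ and the algebraic telescoping $\frac1{t-t_j}-\frac1{x-t_j}=\frac{x-t}{(t-t_j)(x-t_j)}$, the two singular factors collapse against $\sin\brk{c(x-t)}$ and
\[
\big(Q_c(C\psi_n)-C(Q_c\psi_n)\big)(x) = \frac{1}{\pi(x-t_j)}\int_{-1}^1 \frac{\sin\brk{c(x-t)}}{t-t_j}\,\psi_n(t)\,dt .
\]
Expanding $\sin\brk{c(x-t)}$ into exponentials and setting $G(s)=\int_{-1}^1\frac{\psi_n(t)}{t-t_j}e^{icst}\,dt$, the right-hand side becomes $\frac{1}{2\pi i(x-t_j)}\big(e^{icx}G(-1)-e^{-icx}G(1)\big)$. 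I would then prove the two facts that $e^{-ict_j}G(1)=e^{ict_j}G(-1)$ and that this common value, call it $\Lambda$, equals $\int_{-1}^1\frac{\psi_n(t)}{t-t_j}\,dt+ic\lambda_n\,\Psi_n(1,t_j)$. The first holds because the difference $e^{-ict_j}G(1)-e^{ict_j}G(-1)$ is $2i\int_{-1}^1\psi_n(t)\frac{\sin c(t-t_j)}{t-t_j}\,dt=2\pi i\,\mu_n\psi_n(t_j)=0$. The second follows from writing $\frac{e^{ic(t-t_j)}-1}{t-t_j}=ic\int_0^1 e^{ic(t-t_j)u}\,du$, interchanging the order of integration, and applying \eqref{eq_prolate_integral} in the form $\int_{-1}^1\psi_n(t)e^{ictu}\,dt=\lambda_n\psi_n(u)$, which reproduces exactly $\Psi_n(1,t_j)$ from \eqref{eq_quad_big_psi}.

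With $\Lambda$ in hand the previous two displays collapse to $\big(Q_c(C\psi_n)-C(Q_c\psi_n)\big)(x)=\frac{\Lambda}{\pi}\cdot\frac{\sin c(x-t_j)}{x-t_j}$, i.e.\ a scalar multiple of the reproducing kernel of $Q_c$ centered at $t_j$. Pairing with $\psi_m$ and invoking \eqref{eq_prolate_integral2} once more gives $\langle\,\cdot\,,\psi_m\rangle=\Lambda\cdot(Q_c\psi_m)(t_j)=\Lambda\,\mu_m\,\psi_m(t_j)$, so that $\brk{\mu_m-\mu_n}J=\mu_m\psi_m(t_j)\,\Lambda$. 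Substituting $\mu_k=\frac{c}{2\pi}\abrk{\lambda_k}^2$ from \eqref{eq_prolate_mu} converts the prefactor $\mu_m/\brk{\mu_m-\mu_n}$ into $\abrk{\lambda_m}^2/\brk{\abrk{\lambda_m}^2-\abrk{\lambda_n}^2}$ and yields \eqref{eq_quad_coef}; the division is legitimate since $\mu_m\neq\mu_n$, the eigenvalues of $F_c$ (hence the $\mu_k$) being simple and distinct by Theorem~\ref{thm_pswf_main}. The main obstacle is exactly the closed-form commutator computation together with the identification of $\Lambda$; once the singular kernel is telescoped and the reproducing property of $Q_c$ is used on both $\psi_n$ (at $t_j$, giving $0$) and $\psi_m$ (at $t_j$, giving $\mu_m\psi_m(t_j)$), the remainder is routine.
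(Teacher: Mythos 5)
Your proof is correct, and it takes a genuinely different route from the paper's. The paper works entirely with the operator $F_c$: it first derives the identity \eqref{eq_quad_coef_c}, valid for all $x$ (your Claim 2 is precisely this identity specialized to $x=1$, obtained by the same device of writing $\brk{e^{ic(t-t_j)}-1}/(t-t_j)=ic\int_0^1 e^{ic(t-t_j)u}\,du$ and applying \eqref{eq_prolate_integral}), then expands $\psi_m=\lambda_m^{-1}F_c\left[\psi_m\right]$, integrates by parts the product $\Psi_m(x,-t_j)\Psi_n(x,t_j)$ over $\left[-1,1\right]$, and needs parity bookkeeping (that $\psi_n(-t_j)=0$, the parity of $\psi_m$, and $\lambda_m=i^m\abrk{\lambda_m}$) to arrive at a self-referential linear equation, \eqref{eq_quad_coef_i}, which it solves for the unknown integral. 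You instead work with the self-adjoint operator $Q_c$ and compute the commutator $\left[Q_c,C\right]\psi_n$ in closed form: the telescoping $1/(t-t_j)-1/(x-t_j)=(x-t)/\brk{(t-t_j)(x-t_j)}$ collapses the two singular factors against the sinc kernel, your Claim 1 (an application of \eqref{eq_prolate_integral2} at $t_j$ together with $\psi_n(t_j)=0$) shows the boundary constant $\Lambda$ is well defined (indeed real, consistent with the commutator being a real function), and a second application of \eqref{eq_prolate_integral2} on $\psi_m$ reproduces $\mu_m\psi_m(t_j)$, yielding $\brk{\mu_m-\mu_n}J=\mu_m\psi_m(t_j)\Lambda$ in one stroke; \eqref{eq_prolate_mu} then converts $\mu_m/\brk{\mu_m-\mu_n}$ into the stated prefactor. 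Your approach buys a cleaner structure — no double integration by parts, no parity case analysis, and a conceptual explanation of the answer as (reproducing kernel at $t_j$) paired with $\psi_m$ — while the paper's approach produces the identity \eqref{eq_quad_coef_c} at general $x$ as an intermediate object within the $F_c$ machinery it has already set up. One minor caution: your closing parenthetical infers $\mu_m\neq\mu_n$ from simplicity of the eigenvalues of $F_c$, but simplicity alone does not exclude $\lambda_m=-\lambda_n$ (possible in principle when $m\equiv n+2 \pmod 4$), which would give $\abrk{\lambda_m}=\abrk{\lambda_n}$; the strict decrease of the $\mu_k$ is a classical fact not proved in the paper, and the theorem's statement (as well as the paper's own division when solving \eqref{eq_quad_coef_i}) presupposes $\abrk{\lambda_m}\neq\abrk{\lambda_n}$ in exactly the same way, so this is a shared implicit hypothesis rather than a gap in your argument.
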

\begin{proof}
We combine \eqref{eq_prolate_integral} 
with \eqref{eq_quad_big_psi} to obtain, for all real $-1 < y < 1$,
\begin{align}
& \int_{t = -1}^1 \psi_n(t) 
\int_{x = 0}^y \frac{d}{dx}
\left[ \frac{e^{icx(t-t_j)} }{ic(t-t_j)} \right] dx \; dt
 =
\int_{x = 0}^y e^{-icxt_j} \int_{t = -1}^1 \psi_n(t) e^{icxt} dt \; dx =
\nonumber \\
& \lambda_n \cdot \int_0^y \psi_n(x) e^{-icxt_j} dx
= \lambda_n \cdot \Psi_n(y,t_j).
\label{eq_quad_coef_a}
\end{align}
On the other hand,
\begin{align}
& \int_{t = -1}^1 \psi_n(t) 
\int_{x = 0}^y \frac{d}{dx}
\left[ \frac{e^{icx(t-t_j)} }{ic(t-t_j)} \right] dx \; dt
 =
\frac{1}{ic} \int_{-1}^1 \frac{\psi_n(t)}{t - t_j}
  \brk{e^{icy(t-t_j)} - 1} dt =
\nonumber \\
& \frac{e^{-icyt_j}}{ic} \int_{-1}^1 \frac{\psi_n(t)}{t - t_j} e^{icyt} dt
- \frac{1}{ic} \int_{-1}^1 \frac{ \psi_n(t) \; dt }{t - t_j}.
\label{eq_quad_coef_b}
\end{align}
We combine \eqref{eq_quad_coef_a} and \eqref{eq_quad_coef_b}
to obtain, for all real $-1 < x < 1$,
\begin{align}
ic \lambda_n e^{icxt_j} \Psi_n(x,t_j) +
e^{icxt_j} \int_{-1}^1 \frac{ \psi_n(t) \; dt }{ t - t_j } =
\int_{-1}^1 \frac{ \psi_n(t) }{ t - t_j } e^{icxt} dt.
\label{eq_quad_coef_c}
\end{align}
We combine \eqref{eq_prolate_integral},
\eqref{eq_quad_big_psi} and \eqref{eq_quad_coef_c}
to obtain
\begin{align}
& \int_{-1}^1 \frac{ \psi_n(t) }{ t - t_j } \; \psi_m(t) \; dt =
 \frac{ 1 }{ \lambda_m }
\int_{x = -1}^1 \psi_m(x)
\int_{t = -1}^1 \frac{ \psi_n(t) }{ t - t_j } e^{icxt} dt \; dx =
\nonumber \\
& \frac{ ic \lambda_n}{\lambda_m}
 \int_{-1}^1 \psi_m(x) e^{icxt_j} \Psi_n(x,t_j) \; dx +
\frac{1}{\lambda_m} 
\brk{ \int_{-1}^1 \psi_m(x) e^{icxt_j} dx }
\brk{ \int_{-1}^1 \frac{ \psi_n(t) \; dt }{ t - t_j } } \nonumber \\
& \frac{ic \lambda_n}{\lambda_m}
\int_{-1}^1 \frac{\partial \Psi_m}{\partial x}(x,-t_j) \Psi_n(x,t_j) \; dx +
\psi_m(t_j) \int_{-1}^1 \frac{ \psi_n(t) \; dt }{ t - t_j }.
\label{eq_quad_coef_d}
\end{align}
We observe that $\psi_n(-t_j)=0$, and combine this observation with
\eqref{eq_prolate_integral} in Section~\ref{sec_pswf} and 
\eqref{eq_quad_big_psi} to obtain
\begin{align}
0 = \frac{\psi_n(-t_j)}{\lambda_n} = \int_{-1}^1 \psi_n(t) e^{-ictt_j} \; dt
  = \Psi_n(1,t_j)-\Psi_n(-1,t_j),
\label{eq_quad_coef_e}
\end{align}
and also
\begin{align}
\lambda_m \psi_m(t_j)
= \int_{-1}^1 \psi_m(t) e^{ictt_j} dt 
= \Psi_m(1,-t_j) - \Psi_m(-1,-t_j).
\label{eq_quad_coef_f}
\end{align}
We combine \eqref{eq_quad_coef_e}, \eqref{eq_quad_coef_f} to obtain
\begin{align}
\left[ \Psi_m(x,-t_j) \Psi_n(x,t_j) \right]_{x=-1}^1 & \; =
\Psi_n(1,t_j) \brk{ \Psi_m(1,-t_j) - \Psi_m(-1,-t_j) } \nonumber \\
& \; =
\lambda_m \psi_m(t_j) \Psi_n(1,t_j).
\label{eq_quad_coef_g}
\end{align}
Also, we combine \eqref{eq_prolate_integral},
Theorem~\ref{thm_pswf_main} in Section~\ref{sec_pswf}
and \eqref{eq_quad_big_psi}
to obtain
\begin{align}
& \int_{-1}^1 \Psi_m(x,-t_j) \frac{\partial \Psi_n}{\partial x}(x,t_j) \; dx =
\nonumber \\
& \frac{1}{\lambda_m}
    \int_{x = -1}^1 \psi_n(x) e^{-icxt_j} 
    \int_{y = 0}^x e^{icyt_j} 
    \int_{t = -1}^1 \psi_m(t) e^{icty} dt \; dy \; dx =
\nonumber \\
& \frac{1}{\lambda_m}
    \int_{t = -1}^1 \psi_m(t)
    \int_{x = -1}^1 \psi_n(x) e^{-icxt_j}
    \int_{y = 0}^x e^{ic(t_j+t)y} dy \; dx \; dt =
\nonumber \\
& \frac{1}{\lambda_m}
    \int_{t = -1}^1 \psi_m(t)
    \int_{x = -1}^1 \psi_n(x) 
    \brk{
       \frac{e^{icxt} - e^{-icxt_j}}{ic(t_j+t)}
    } dx \; dt =
\nonumber \\
& \frac{ \lambda_n}{ \lambda_m }
   \int_{-1}^1 \frac{ \psi_m(t) \psi_n(t) dt }{ic(t+t_j)} =
\frac{ \brk{-1}^{n+m+1} \lambda_n }{ ic \lambda_m }
  \int_{-1}^1 \frac{ \psi_n(t) }{ t - t_j } \; \psi_m(t) \; dt.
\label{eq_quad_coef_h}
\end{align}
We combine Theorem~\ref{thm_pswf_main} in Section~\ref{sec_pswf} with
\eqref{eq_quad_coef_g}, \eqref{eq_quad_coef_h} to obtain
\begin{align}
& \frac{ic \lambda_n}{\lambda_m}
\int_{-1}^1 \frac{\partial \Psi_m}{\partial x}(x,-t_j) \Psi_n(x,t_j) \; dx =
\nonumber \\
& \frac{ic \lambda_n}{\lambda_m} \cdot \left[
\lambda_m \psi_m(t_j) \Psi_n(1,t_j) +
\frac{ \brk{-1}^{n+m} \lambda_n }{ ic \lambda_m }
  \int_{-1}^1 \frac{ \psi_n(t) }{ t - t_j } \; \psi_m(t) \; dt
\right] = \nonumber \\
& ic\lambda_n \psi_m(t_j) \Psi_n(1,t_j) + 
\frac{|\lambda_n|^2}{|\lambda_m|^2}
\int_{-1}^1 \frac{ \psi_n(t) }{ t - t_j } \; \psi_m(t) \; dt.
\label{eq_quad_coef_i}
\end{align}
Finally, we recall that $m \neq n$ and substitute \eqref{eq_quad_coef_i}
into \eqref{eq_quad_coef_d} to obtain \eqref{eq_quad_coef}.
\end{proof}
%%%%%%%%%%%
%%%%%%%%%%%
\subsubsection{Quadrature Error}
\label{sec_quad_error}
For a positive integer $n > 0$, we define the 
PSWF-based quadrature of order $n$ via
\eqref{eq_quad_t}, \eqref{eq_quad_w} in Definition~\ref{def_quad}.
This quadrature is used to approximate the integral of an
arbitrary bandlimited
function $f:(-1,1) \to \Cc$
over the interval $(-1,1)$
(see \eqref{eq_quadrature_used} in Section~\ref{sec_outline}
and \eqref{eq_quad_quad}).
We refer to the difference
\begin{align}
\int_{-1}^1 f(t) \; dt - \sum_{j=1}^n f\left(t_j\right) \cdot W_j
\label{eq_quad_error_def}
\end{align}
as the ``quadrature error'' (for integrating $f$).
The following theorem, illustrated in 
Tables~\ref{t:test89}, \ref{t:test90}, provides an upper bound
on the absolute value
of the quadrature error (for integrating $\psi_m$ for arbitrary $m < n$).
One of the principal goals of this paper is to 
investigate this error (see
see \eqref{eq_main_goal} in Section~\ref{sec_outline}). 
The results of additional numerical experiments,
in which this quadrature is used for integration of
certain functions, are 
summarized in 
Tables~\ref{t:test90},~\ref{t:test91} and
Figures~\ref{fig:test92}, \ref{fig:test93a}, \ref{fig:test93b}
(see Experiments 11, 12 in Section~\ref{sec_exp11}).
%%%%%%%%%%%
\begin{thm}
Suppose that $n>0$ and $0 \leq m \leq n-1$ are integers.
Suppose also that $t_1, \dots, t_n$ and $W_1, \dots, W_n$ are, 
respectively, the nodes and weights of the quadrature, introduced
in Definition~\ref{def_quad} above. Suppose furthermore that
the real number $P_{n,m}$ is defined via the formula
\begin{align}
P_{n,m} = 
\sum_{j = 1}^n \frac{ \psi_m(t_j) }{ \psi_n'(t_j) } \cdot \Psi_n(1,t_j),
\label{eq_quad_p}
\end{align}
where the complex-valued function $\Psi_n:(-1,1)^2 \to \Cc$
is that of Theorem~\ref{thm_quad_coef} above.
Then,
\begin{align}
& \abrk{ \int_{-1}^1 \psi_m(s) \; ds - \sum_{j=1}^n \psi_m(t_j) W_j } \leq
\nonumber \\
& \brk{1 - \frac{\abrk{\lambda_n}^2}{\abrk{\lambda_m}^2} } \cdot 
  \| I \|_{\infty} +
\abrk{\lambda_n} \cdot \brk{ 
 \frac{ \abrk{\lambda_n} }{ \abrk{\lambda_m} } \abrk{\psi_m(0)} +
 c \abrk{P_{n,m}} },
\label{eq_quad_thm}
\end{align}
where $\| I \|_{\infty}$ is the $L^{\infty}$-norm of the function 
$I:(-1,1) \to \Rc$, 
defined via
\eqref{eq_complex_it} in Theorem~\ref{thm_complex} 
in Section~\ref{sec_head_tail},
i.e.
\begin{align}
\| I \|_{\infty} = \sup\left\{ |I(t)| \; : \; -1 < t < 1\right\}.
\label{eq_i_inf}
\end{align}
\label{thm_quad}
\end{thm}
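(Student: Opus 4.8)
The plan is to start from the defining identity for the error function $I$. Multiplying \eqref{eq_complex_it} through by $\psi_n(t)$ and invoking the definition \eqref{eq_quad_phi} of $\varphi_j$ gives the pointwise identity
\begin{align}
\psi_n(t) \cdot I(t) = 1 - \sum_{j=1}^n \varphi_j(t),
\label{eq_plan_identity}
\end{align}
for all $-1 < t < 1$ with $\psi_n(t) \neq 0$. I would then multiply \eqref{eq_plan_identity} by $\psi_m(t)$ and integrate over $(-1,1)$, which yields
\begin{align}
\int_{-1}^1 \psi_m(t) \; dt =
\sum_{j=1}^n \int_{-1}^1 \varphi_j(t)\psi_m(t) \; dt +
\int_{-1}^1 \psi_m(t)\psi_n(t) I(t) \; dt.
\label{eq_plan_main}
\end{align}
The first sum on the right is what will be converted into the quadrature sum, and the last integral, call it $J$, will be the $\| I \|_\infty$ term.

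Next I would evaluate each $\int \varphi_j \psi_m$ using the machinery already in place. Since $\varphi_j(t) = \psi_n(t)/(\psi_n'(t_j)(t-t_j))$ and, by \eqref{eq_quad_w}, $\int_{-1}^1 \psi_n(t)/(t-t_j)\,dt = \psi_n'(t_j) W_j$, Theorem~\ref{thm_quad_coef} rewrites as
\begin{align}
\int_{-1}^1 \varphi_j(t)\psi_m(t)\;dt =
\frac{\abrk{\lambda_m}^2 \psi_m(t_j)}{\abrk{\lambda_m}^2 - \abrk{\lambda_n}^2}
\left[ W_j + \frac{ic\lambda_n}{\psi_n'(t_j)}\Psi_n(1,t_j) \right].
\label{eq_plan_coef}
\end{align}
Writing $r = \abrk{\lambda_n}^2/\abrk{\lambda_m}^2$ (so $r<1$, since $m<n$ forces $\abrk{\lambda_m} > \abrk{\lambda_n}$, which is exactly what makes the denominator in Theorem~\ref{thm_quad_coef} nonzero), summing \eqref{eq_plan_coef} over $j$ and recognizing the definition \eqref{eq_quad_p} of $P_{n,m}$ collapses the first term of \eqref{eq_plan_main} into
\begin{align}
\sum_{j=1}^n \int_{-1}^1 \varphi_j \psi_m \; dt =
\frac{1}{1-r}\left[ \sum_{j=1}^n \psi_m(t_j) W_j + ic\lambda_n P_{n,m}\right].
\label{eq_plan_sum}
\end{align}

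Finally I would solve for the quadrature error. Substituting \eqref{eq_plan_sum} into \eqref{eq_plan_main}, multiplying through by $(1-r)$, and rearranging produces the clean identity
\begin{align}
\int_{-1}^1 \psi_m \; dt - \sum_{j=1}^n \psi_m(t_j) W_j
= r \int_{-1}^1 \psi_m \; dt + ic\lambda_n P_{n,m} + (1-r) J,
\label{eq_plan_error}
\end{align}
where $J = \int_{-1}^1 \psi_m \psi_n I \, dt$. Three routine estimates then finish the proof: evaluating the integral equation \eqref{eq_prolate_integral} at $x=0$ gives $\int_{-1}^1 \psi_m\,dt = \lambda_m \psi_m(0)$, so $\abrk{r \int \psi_m} = \abrk{\lambda_n}\cdot(\abrk{\lambda_n}/\abrk{\lambda_m})\abrk{\psi_m(0)}$; the middle term is simply $c\abrk{\lambda_n}\abrk{P_{n,m}}$; and Cauchy--Schwarz together with the normalization $\|\psi_m\|_{L^2[-1,1]} = \|\psi_n\|_{L^2[-1,1]} = 1$ from Theorem~\ref{thm_pswf_main} bounds $\abrk{J} \le \|I\|_\infty$, whence $\abrk{(1-r)J} \le (1-r)\|I\|_\infty$. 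Adding these via the triangle inequality in \eqref{eq_plan_error} gives \eqref{eq_quad_thm} exactly. The only genuinely delicate step is the algebraic rearrangement leading to \eqref{eq_plan_error}: one must track the factor $1/(1-r)$ from Theorem~\ref{thm_quad_coef} carefully and confirm $1-r = 1 - \abrk{\lambda_n}^2/\abrk{\lambda_m}^2 > 0$ so that the $\|I\|_\infty$ coefficient is nonnegative; the rest is bookkeeping built directly on Theorem~\ref{thm_quad_coef}.
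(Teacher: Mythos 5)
Your proposal is correct and follows essentially the same route as the paper's own proof: multiply the partial-fractions identity \eqref{eq_complex_it} by $\psi_m(t)\,\psi_n(t)$, integrate over $(-1,1)$, apply Theorem~\ref{thm_quad_coef} together with \eqref{eq_quad_w} to collapse the sum into $\sum_j \psi_m(t_j)W_j + ic\lambda_n P_{n,m}$, and rearrange. The only cosmetic difference is that you make the Cauchy--Schwarz bound $\abrk{\int_{-1}^1 \psi_m\psi_n I} \leq \|I\|_{\infty}$ explicit, whereas the paper absorbs it into a factor $\xi \in \left[-1,1\right]$.
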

\begin{proof}
Suppose that the function $I: (-1,1) \to \Rc$ is defined
via \eqref{eq_complex_it} in Theorem~\ref{thm_complex}
in Section~\ref{sec_head_tail}. We multiply \eqref{eq_complex_it}
by $\psi_n(t) \cdot \psi_m(t)$ to obtain, for all real $-1<t<1$,
\begin{align}
\psi_m(t) = 
\sum_{j = 1}^n \psi_m(t) \varphi_j(t) + 
\psi_m(t) \psi_n(t) I(t),
\label{eq_quad_a}
\end{align}
where, for each $j=1,\dots,n$, the function $\varphi_j: (-1,1) \to \Rc$
is that of Definition~\ref{def_quad}.
We combine \eqref{eq_prolate_integral}, 
Theorem~\ref{thm_complex}, Definition~\ref{def_quad},
Theorem~\ref{thm_quad_coef}, \eqref{eq_i_inf}, and integrate
\eqref{eq_quad_a} over the interval $(-1,1)$ to obtain
\begin{align}
& \lambda_m \psi_m(0) = \nonumber \\
& \frac{ \abrk{\lambda_m}^2 }{ \abrk{\lambda_m}^2 - \abrk{\lambda_n}^2 }
\sum_{j=1}^n \psi_m(t_j) \brk{W_j + ic\lambda_n 
                                \frac{ \Psi_n(1,t_j) }{ \psi_n'(t_j)} } +
\xi \cdot \| I \|_{\infty}, 
\label{eq_quad_b}
\end{align}
where $-1 \leq \xi \leq 1$ is a real number.
We combine \eqref{eq_quad_b} with \eqref{eq_quad_p}
to obtain
\begin{align}
& \left(1 - \frac{|\lambda_n|^2}{|\lambda_m|^2}\right) \cdot 
\lambda_m \psi_m(0) = \nonumber \\
& \sum_{j = 1}^n \psi_m(t_j) W_j + i c \lambda_n P_{n,m} +
\left(1 - \frac{|\lambda_n|^2}{|\lambda_m|^2}\right) \cdot 
\xi \cdot \| I \|_{\infty}.
\label{eq_quad_c}
\end{align}
Finally, we rearrange \eqref{eq_quad_c} to obtain \eqref{eq_quad_thm}.
\end{proof}
%%%%%%%%%%%
In the following theorem, we establish an upper bound
on $P_{n,m}$, defined via \eqref{eq_quad_p} above.
This theorem is illustrated
in Table~\ref{t:test88} and Figure~\ref{fig:test88}
(see Experiment 10 in Section~\ref{sec_exp10}).
%%%%%%%%%%%
\begin{thm}
Suppose that $n, m$ are non-negative integers,
and that $0 \leq m < n$.
Suppose also that $\chi_n > c^2$, and that
the real number $P_{n,m}$ is defined
via \eqref{eq_quad_p} in Theorem~\ref{thm_quad}. Then,
\begin{align}
c \abrk{P_{n,m}} \leq \sqrt{32} \cdot n^2.
\label{eq_quad_cp}
\end{align}
\label{lem_quad_cp}
\end{thm}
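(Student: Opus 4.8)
The plan is to estimate $P_{n,m}=\sum_{j=1}^n \frac{\psi_m(t_j)}{\psi_n'(t_j)}\,\Psi_n(1,t_j)$ (see \eqref{eq_quad_p}, \eqref{eq_quad_big_psi}) term by term, bounding the three factors separately. The two factors in the numerator are easy: since $m<n\le n+1$, Theorem~\ref{thm_all_psi_upper_bound} gives $|\psi_m(t_j)|\le 2\sqrt{n}$; and since $\Psi_n(1,t_j)=\int_0^1 \psi_n(x)e^{-icxt_j}\,dx$, the Cauchy--Schwarz inequality together with the definite parity of $\psi_n$ (so that $\int_0^1\psi_n^2=\tfrac12\int_{-1}^1\psi_n^2=\tfrac12$) yields $|\Psi_n(1,t_j)|\le\|\psi_n\|_{L^2[0,1]}=1/\sqrt{2}$. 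Thus everything reduces to controlling the contribution of the reciprocals $1/|\psi_n'(t_j)|$ of the derivative of $\psi_n$ at its interior roots.

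Rather than summing these reciprocals directly, I would apply the Cauchy--Schwarz inequality across the index $j$, writing
\[
c\,|P_{n,m}| \le c\Big(\sum_{j=1}^n \psi_m(t_j)^2\Big)^{1/2}\Big(\sum_{j=1}^n \frac{|\Psi_n(1,t_j)|^2}{\psi_n'(t_j)^2}\Big)^{1/2}\le 2cn\Big(\sum_{j=1}^n \frac{|\Psi_n(1,t_j)|^2}{\psi_n'(t_j)^2}\Big)^{1/2},
\]
where I used $\sum_j\psi_m(t_j)^2\le n\cdot(2\sqrt n)^2=4n^2$. It then suffices to prove the bound $\sum_{j=1}^n |\Psi_n(1,t_j)|^2/\psi_n'(t_j)^2 \le 8n^2/c^2$, which produces exactly the stated constant $\sqrt{32}=4\sqrt2$.

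The tool for the derivative factor is Theorem~\ref{thm_Q_Q_tilde}: evaluating its amplitude function $Q$ (see \eqref{eq_q_old}) at a root $t_j$ gives $Q(t_j)=\frac{1-t_j^2}{\chi_n-c^2t_j^2}\,\psi_n'(t_j)^2$, and since $\chi_n>c^2$ forces $\sqrt{\chi_n}/c>1$, the function $Q$ is increasing on $(0,1)$. Hence $|\psi_n'(t_j)|=\big((\chi_n-c^2t_j^2)/(1-t_j^2)\big)^{1/2}\sqrt{Q(t_j)}\ge\sqrt{\chi_n-c^2}\,\sqrt{Q(0)}$, so $1/|\psi_n'(t_j)|$ is largest at the innermost roots and decays toward the endpoints. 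Combined with $|\Psi_n(1,t_j)|\le 1/\sqrt2$, this already controls every term whose root is bounded away from the origin, leaving only the few roots near the center to worry about.

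The main obstacle is precisely that innermost contribution, where $|\psi_n'(t_j)|$ can be as small as $\sqrt{\chi_n-c^2}$ times the central amplitude $\sqrt{Q(0)}$, so a naive termwise bound does not close. The clean resolution I expect is that $\Psi_n(1,t_j)$ is correspondingly small there: for instance when $n$ is even, parity forces $\int_0^1\psi_n(x)\cos(cxt_j)\,dx=0$, so $\Psi_n(1,t_j)=-i\int_0^1\psi_n(x)\sin(cxt_j)\,dx$ is purely imaginary and of size $O(t_j)$ for small $t_j$; matching this vanishing against the lower bound on $|\psi_n'(t_j)|$ keeps each quotient bounded and forces the sum to the claimed $O(n^2/c^2)$ size. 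Making this compensation quantitative — equivalently, extracting a usable lower bound on the central amplitude $Q(0)$ from the normalization $\int_{-1}^1\psi_n^2=1$ and the monotonicity of $Q$ — is the crux of the argument. An alternative that sidesteps pointwise derivative bounds altogether is to interchange the sum and the integral, $P_{n,m}=\int_0^1\psi_n(x)\big(\sum_{j}\frac{\psi_m(t_j)}{\psi_n'(t_j)}e^{-icxt_j}\big)\,dx$, and apply Cauchy--Schwarz in $L^2[0,1]$, reducing the problem to an $L^2$ estimate of the exponential sum.
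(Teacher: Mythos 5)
Your peripheral estimates are correct and coincide with the paper's: $|\psi_m(t_j)|\le 2\sqrt{n}$ from Theorem~\ref{thm_all_psi_upper_bound}, and $|\Psi_n(1,t_j)|\le 1/\sqrt{2}$ by Cauchy--Schwarz; your Cauchy--Schwarz packaging over $j$ is also fine and would reproduce the constant $\sqrt{32}$ \emph{if} you had a uniform bound $1/|\psi_n'(t_j)|\le 4\sqrt{n}/c$. But the proposal has a genuine gap at exactly the point you yourself flag as the crux: you never prove a quantitative lower bound on $Q(0)$, and neither of your suggested escapes is carried out. The parity route is not quantitative and is unlikely to close on its own: for even $n$ one has $\psi_n'(0)=0$, so $Q(0)=\psi_n^2(0)$, and nothing in your argument prevents $\psi_n(0)$ from being small; moreover, at the innermost root $t_j\sim\chi_n^{-1/2}$ the bound $|\Psi_n(1,t_j)|=O(c\,t_j)$ only gives $O(c/\sqrt{\chi_n})=O(1)$, i.e.\ no gain. (Your vanishing claim $\int_0^1\psi_n(x)\cos(cxt_j)\,dx=0$ for even $n$ is true, but it uses the integral equation \eqref{eq_prolate_integral} at a root, not parity alone.) A second, smaller inaccuracy: your prefactor $\sqrt{\chi_n-c^2}$ is needlessly weak, since $(\chi_n-c^2t^2)/(1-t^2)=c^2+(\chi_n-c^2)/(1-t^2)\ge\chi_n$; with the correct prefactor $\sqrt{\chi_n}$, a \emph{termwise} bound does close once $Q(0)$ is controlled, so the pessimism about the innermost roots stems from the weaker prefactor rather than from a real obstruction.

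The paper fills precisely this hole, using the half of Theorem~\ref{thm_Q_Q_tilde} you did not invoke: the function $\tilde{Q}(t)=(1-t^2)\left((\chi_n-c^2t^2)\psi_n^2(t)+(1-t^2)(\psi_n'(t))^2\right)$ is \emph{decreasing} on $(0,1)$, whence $c^2\psi_n^2(t)(1-t^2)^2\le\tilde{Q}(t)\le\tilde{Q}(0)=\chi_n\,Q(0)$, i.e.\ $\psi_n^2(t)\le \chi_n Q(0)/\left(c^2(1-t^2)^2\right)$. Since $\psi_n^2\le\psi_n^2(1)\le n+\tfrac12$ on $[-1,1]$ (Theorems~\ref{thm_psi1_bound},~\ref{thm_extrema}), the mass of $\psi_n^2$ on $[1-1/(8n),1]$ is at most $3/16$, so $\int_0^{1-1/(8n)}\psi_n^2\ge 5/16$; combining this with $\int_0^{1-1/(8n)}(1-x^2)^{-2}dx\le 5n$ yields $Q(0)\ge c^2/(16n\chi_n)$, and hence, with the corrected prefactor, $|\psi_n'(t_j)|\ge\sqrt{\chi_n\,Q(t_j)}\ge\sqrt{\chi_n\,Q(0)}\ge c/(4\sqrt{n})$ \emph{uniformly} in $j$. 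Plugging this into either your Cauchy--Schwarz frame or the paper's simpler $n\cdot\max$ bound gives $c|P_{n,m}|\le n\cdot 2\sqrt{n}\cdot\tfrac{1}{\sqrt{2}}\cdot 4\sqrt{n}=\sqrt{32}\,n^2$, as claimed. In short: your outline is compatible with the paper's proof, but the normalization-plus-monotonicity argument for $Q(0)$ is the entire substance of the theorem, and it is missing from the proposal.
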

\begin{proof}
Since $\chi_n > c^2$, the inequality
\begin{align}
\psi_n^2(t) \leq \psi_n^2(1) \leq n + \frac{1}{2},
\end{align}
holds for all real $-1 \leq t \leq 1$,
due to Theorems~\ref{thm_psi1_bound}, \ref{thm_extrema}
in Section~\ref{sec_pswf}.
Therefore,
\begin{align}
\int_{1 - 1/8n}^1 \psi_n^2(t) \; dt \leq \frac{1}{8} + \frac{1}{16n} <
\frac{3}{16}.
\label{eq_quad_cp_a}
\end{align}
We combine \eqref{eq_quad_cp_a} with Theorem~\ref{thm_pswf_main}
in Section~\ref{sec_pswf} to obtain
\begin{align}
\int_0^{1 - 1/8n} \psi_n^2(t) \; dt 
= \int_0^1 \psi_n^2(t) \; dt - \int_{1 - 1/8n}^1 \psi_n^2(t) \; dt
\geq 
\frac{1}{2} - \frac{3}{16} = \frac{5}{16}.
\label{eq_quad_fourth}
\end{align}
We observe that 
\begin{align}
\int \frac{ dx }{ \brk{1 - x^2}^2 } = 
\frac{1}{2} \cdot \frac{ x }{ 1 - x^2 } + 
\frac{1}{4} \log \frac{x+1}{1-x},
\label{eq_quad_cp_b}
\end{align}
and combine \eqref{eq_quad_fourth} and \eqref{eq_quad_cp_b} to obtain
\begin{align}
& \int_0^{1-1/8n} \frac{ dx }{ \brk{1 - x^2}^2 } =
 \frac{1}{2} \cdot \frac{1 - 1/8n}{1 - \brk{1-1/8n}^2} +
  \frac{1}{4} \log \frac{2 - 1/8n}{1/8n} = \nonumber \\
& \frac{1}{2} \cdot \frac{ 8n \brk{8n-1} }{ 16n - 1} +
  \frac{1}{4} \log \brk{16n - 1} \leq
 4n + n \leq 5n.
\label{eq_quad_8n}
\end{align}
Suppose that the functions $Q(t), \tilde{Q}(t) : (-1,1) \to \Rc$ are 
defined, respectively, 
via the formulae \eqref{eq_q_old}, \eqref{eq_qtilde_old}
in
Theorem~\ref{thm_Q_Q_tilde} in Section~\ref{sec_pswf}.
We apply Theorem~\ref{thm_Q_Q_tilde}
with $t_0 = 0$ and $0 < t \leq 1$ to obtain
\begin{align}
Q(0) \cdot \chi_n
& \; = Q(0) \cdot p(0) \cdot q(0) = \tilde{Q}(0) \nonumber \\
& \; \geq \tilde{Q}(t) = c^2
\left[ \psi_n^2(t) + \frac{ \brk{t^2-1} \brk{\psi_n'(t)}^2 }
                                    { \brk{c^2 \cdot t^2 - \chi_n} } \right]
   \cdot \brk{1 - t^2} \brk{\chi_n/c^2 - t^2} \nonumber \\
& \; \geq c^2 \psi_n^2(t) \brk{1-t^2} \brk{\chi_n/c^2-t^2}
     \geq c^2 \psi_n^2(t) \brk{1-t^2}^2.
\label{eq_quad_q0_est}
\end{align}
It follows from \eqref{eq_quad_fourth}, 
\eqref{eq_quad_8n} and \eqref{eq_quad_q0_est} that
\begin{align}
& 5n \cdot Q(0) \cdot \frac{\chi_n}{c^2} \geq 
Q(0) \cdot \frac{\chi_n}{c^2} 
\int_0^{1 - 1/8n} \frac{ dx }{ \brk{1-x^2}^2 } \geq 
\int_0^{1 - 1/8n} \psi_n^2(t) \; dt \geq \frac{5}{16},
\end{align}
which, in turn, implies that
\begin{align}
\frac{1}{Q(0)} \leq 16 n \cdot \frac{\chi_n}{c^2}.
\label{eq_quad_32n}
\end{align}
Suppose now that $j \geq n/2$ is an integer, and $t_j$ is that
of Definition~\ref{def_quad}. We combine \eqref{eq_quad_32n}
with Theorem~\ref{thm_Q_Q_tilde} in Section~\ref{sec_pswf}
to obtain
\begin{align}
\frac{ \left(\psi_n'(t_j)\right)^2 }{\chi_n} \geq
\frac{ (1-t_j^2) \cdot \left(\psi_n'(t_j)\right)^2 }{ \chi_n-c^2 t_j^2} =
Q(t_j) \geq Q(0) \geq \frac{c^2}{16n \cdot \chi_n}.
\label{eq_quad_cp_c}
\end{align}
Due to Theorem~\ref{thm_all_psi_upper_bound} in Section~\ref{sec_pswf},
for all integer $0 \leq m < n$ and real $-1 < t < 1$,
\begin{align}
| \psi_m(t) | \leq 2 \sqrt{n}.
\label{eq_quad_cp_d}
\end{align}
We combine Theorem~\ref{thm_pswf_main} in Section~\ref{sec_pswf}
with \eqref{eq_quad_big_psi} of Theorem~\ref{thm_quad_coef} above
to obtain, for all real $0 \leq t \leq 1$,
\begin{align}
| \Psi_n(1,t) | =
\left| \int_0^1 \psi_n(x) e^{-icxt} \; dt \right| \leq
\frac{1}{2}\int_{-1}^1 |\psi_n(x)|\;dx \leq \frac{\sqrt{2}}{2}.
\label{eq_quad_cp_e}
\end{align}
Finally, we combine \eqref{eq_quad_p}, \eqref{eq_quad_cp_c},
\eqref{eq_quad_cp_d} and \eqref{eq_quad_cp_e} to obtain
\begin{align}
c |P_{n,m}| \leq cn \cdot 
\max_{t_j \geq 0} \left|
  \frac{\psi_m(t_j)}{\psi_n'(t_j)} \cdot \Psi_n(1,t_j)
\right| \leq 
cn \cdot \frac{\sqrt{16n}}{c} \cdot \frac{\sqrt{2}}{2} \cdot 2\sqrt{n},
\end{align}
which implies \eqref{eq_quad_cp}.
\end{proof}
%%%%%%%%%%%%
\begin{cor}
Suppose that $m$ is an odd integer. Then,
$P_{n,m}=0$.
\label{cor_p_parity}
\end{cor}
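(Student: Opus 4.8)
The plan is to exploit the reflection symmetry of the nodes. Since $m$ is odd and $\psi_n$ has definite parity, the roots of $\psi_n$ in $(-1,1)$ satisfy $t_{n+1-j} = -t_j$, and when $n$ is odd the central node is $t_{(n+1)/2}=0$. Writing the $j$-th summand of $P_{n,m}$ (see \eqref{eq_quad_p} in Theorem~\ref{thm_quad}) as
\begin{align}
a_j = \frac{\psi_m(t_j)}{\psi_n'(t_j)}\cdot\Psi_n(1,t_j),
\end{align}
I would show that $a_{n+1-j} = -a_j$ for every $j$, so that the terms cancel in pairs.

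To carry this out, first I would record the elementary parities from Theorem~\ref{thm_pswf_main}: $\psi_m(-t)=-\psi_m(t)$ since $m$ is odd, while $\psi_n(-t)=(-1)^n\psi_n(t)$ forces $\psi_n'(-t)=(-1)^{n+1}\psi_n'(t)$. Hence
\begin{align}
\frac{\psi_m(-t_j)}{\psi_n'(-t_j)} = (-1)^{n}\cdot\frac{\psi_m(t_j)}{\psi_n'(t_j)}.
\end{align}
The key step is the companion identity for $\Psi_n(1,t_j)$. From \eqref{eq_quad_coef_e} in the proof of Theorem~\ref{thm_quad_coef} we already have $\Psi_n(1,t_j)=\Psi_n(-1,t_j)$, precisely because $-t_j$ is also a root of $\psi_n$. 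Substituting $x\mapsto -x$ in the definition \eqref{eq_quad_big_psi} of $\Psi_n(-1,t_j)$ and using $\psi_n(-x)=(-1)^n\psi_n(x)$ together with the reality of $\psi_n$ yields
\begin{align}
\Psi_n(1,t_j)=\Psi_n(-1,t_j)=(-1)^{n+1}\,\overline{\Psi_n(1,t_j)},
\end{align}
so that $\Psi_n(1,t_j)$ is purely imaginary when $n$ is even and purely real when $n$ is odd. Finally, since $\Psi_n(1,-t_j)=\overline{\Psi_n(1,t_j)}$ (again by reality of $\psi_n$), combining the three displays and rearranging the last as $\overline{\Psi_n(1,t_j)}=(-1)^{n+1}\Psi_n(1,t_j)$ gives
\begin{align}
a_{n+1-j}=(-1)^{n}\cdot\frac{\psi_m(t_j)}{\psi_n'(t_j)}\cdot\overline{\Psi_n(1,t_j)}
=(-1)^{2n+1}a_j=-a_j.
\end{align}

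It then remains only to dispose of the central term when $n$ is odd: there $t_{(n+1)/2}=0$, and $\psi_m(0)=0$ because $m$ is odd, so this summand vanishes outright. Pairing $j$ with $n+1-j$ now annihilates every remaining term, whence $P_{n,m}=\sum_{j=1}^n a_j=0$. The only genuinely non-routine point is the conjugation identity $\overline{\Psi_n(1,t_j)}=(-1)^{n+1}\Psi_n(1,t_j)$; everything else is bookkeeping of signs. That identity, however, is essentially already contained in \eqref{eq_quad_coef_e}, so I expect the main obstacle to be merely recognizing that the vanishing of $\psi_n$ at both $t_j$ and $-t_j$ pins $\Psi_n(1,t_j)$ to a single coordinate axis of the complex plane.
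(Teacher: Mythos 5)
Your proof is correct and follows essentially the same route as the paper's: both rest on the node symmetry $t_{n+1-j}=-t_j$, the parities of $\psi_m$ and $\psi_n'$, and a reflection identity for $\Psi_n(1,t_j)$ obtained from the integral equation \eqref{eq_prolate_integral} evaluated at a root of $\psi_n$. The only cosmetic difference is that the paper combines $(-1)^n\Psi_n(1,t_j)+\Psi_n(1,t_{n+1-j})=\lambda_n\psi_n(t_j)=0$ in one stroke, concludes that $\Psi_n(1,t_j)/\psi_n'(t_j)$ is symmetric under $j\mapsto n+1-j$, and then pairs the odd factor $\psi_m$, whereas you split the same computation into the conjugation step $\Psi_n(1,-t_j)=\overline{\Psi_n(1,t_j)}$ plus the axis observation $\overline{\Psi_n(1,t_j)}=(-1)^{n+1}\Psi_n(1,t_j)$ and deduce that the full summands are antisymmetric --- algebraically identical.
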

\begin{proof}
Suppose that $1 \leq j \leq n$ is an integer,
and $t_1,\dots,t_n$ are the roots of $\psi_n$ in $(-1,1)$.
We combine Theorem~\ref{thm_pswf_main} and
\eqref{eq_prolate_integral} in Section~\ref{sec_pswf}
with \eqref{eq_quad_big_psi} to obtain, for every $j=1,\dots,n$,
\begin{align}
& (-1)^n \cdot \Psi_{n, j}(1) + \Psi_{n, n+1-j}(1) = \nonumber \\
& \int_0^1 \psi_n(-x) e^{-icxt_j} \; dx +
  \int_0^1 \psi_n(x) e^{icxt_j} \; dx = \nonumber \\
& \int_{-1}^1 \psi_n(x) e^{icxt_j} \; dx = \lambda_n \psi_n(t_j) = 0.
\label{eq_cor_p_a}
\end{align}
We observe that 
$\psi_n'$ is odd for even $n$ and even for odd $n$,
and combine this observation with \eqref{eq_cor_p_a} to obtain,
for every integer $j=1,\dots,n$,
\begin{align}
\frac{ \Psi_{n, j}(1) }{ \psi_n'(t_j) } = 
\frac{ \Psi_{n, n+1-j}(1) }{ \psi_n'(t_{n+1-j}) }.
\label{eq_big_psi_2}
\end{align}
We combine \eqref{eq_big_psi_2} with \eqref{eq_quad_p} to obtain
\begin{align}
P_{n, m} & \; = \sum_{j=1}^n \psi_m(t_j) \cdot
   \frac{ \Psi_{n, j}(1) }{ \psi_n'(t_j) } \nonumber \\ 
& \; = \sum_{j \leq n/2}  
   \brk{ \psi_m(t_j) + \psi_m(-t_j) } \cdot
   \frac{ \Psi_{n, j}(1) }{ \psi_n'(t_j) } = 0.
\end{align}
\label{rem_quad_p}
\end{proof}
%%%%%%%%%%%%%%%%
% APRIL 12, 2012
%%%%%%%%%%%%
In the following theorem, we simplify the inequality \eqref{eq_quad_thm}
of Theorem~\ref{thm_quad}. It is illustrated in
Table~\ref{t:test91} and in Figure~\ref{fig:test92}
(see Experiment 12 in Section~\ref{sec_exp12}).
See also Conjecture~\ref{conj_quad_error}
and Remark~\ref{rem_conj} in Section~\ref{sec_exp12}.
\begin{thm}
Suppose that $n>0$ and $0 \leq m \leq n-1$ are integers.
Suppose also that $t_1, \dots, t_n$ and $W_1, \dots, W_n$ are, 
respectively, the nodes and weights of the quadrature, introduced
in Definition~\ref{def_quad} above. Suppose furthermore that
\begin{align}
c > 30,
\label{eq_quad_simple_c30}
\end{align}
and that
\begin{align}
n > \frac{2c}{\pi} + 5.
\label{eq_quad_simple_n2c}
\end{align}
Then,
\begin{align}
\left| \int_{-1}^1 \psi_m(s) \; ds - \sum_{j=1}^n \psi_m(t_j) W_j \right| \leq
|\lambda_n| \cdot \left(
24 \cdot \log\left( \frac{1}{|\lambda_n|} \right) +
6 \cdot \chi_n
\right).
\label{eq_quad_simple_thm}
\end{align}
\label{thm_quad_simple}
\end{thm}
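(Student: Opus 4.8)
The plan is to begin from the master estimate \eqref{eq_quad_thm} of Theorem~\ref{thm_quad} and to bound each of its three constituents by the quantitative results already in hand, after which every term except $24\log(1/|\lambda_n|)$ will be absorbed into $6\chi_n$. First I would confirm that the hypotheses \eqref{eq_quad_simple_c30} and \eqref{eq_quad_simple_n2c} trigger the required auxiliary bounds. Since $c>30$ and $n>2c/\pi+5$, Theorem~\ref{thm_lam10} yields $|\lambda_n|<1/10$; since moreover $n>2c/\pi+1$, Theorem~\ref{thm_khi_elem} yields $\chi_n>c^2$, whence $\chi_n>c^2>900$. These are precisely the conditions needed to apply Theorem~\ref{thm_complex_summary} and Theorem~\ref{lem_quad_cp}.

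Next I would estimate the right-hand side of \eqref{eq_quad_thm} term by term. Because $|\lambda_m|\geq|\lambda_n|$ for $m\leq n$, the two prefactors $1-|\lambda_n|^2/|\lambda_m|^2$ and $|\lambda_n|/|\lambda_m|$ both lie in $[0,1]$ and may be discarded. Theorem~\ref{thm_complex_summary} gives $\|I\|_\infty\leq|\lambda_n|\cdot I_{\max}$, with $I_{\max}$ as in \eqref{eq_complex_imax}; Theorem~\ref{lem_quad_cp} (applicable since $\chi_n>c^2$) gives $c\,|P_{n,m}|\leq\sqrt{32}\,n^2$; and Theorem~\ref{thm_all_psi_upper_bound} gives $|\psi_m(0)|\leq 2\sqrt{n}$, valid because $m\leq n-1<n+1$. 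Substituting these into \eqref{eq_quad_thm} yields
\begin{align}
\left| \int_{-1}^1 \psi_m(s) \; ds - \sum_{j=1}^n \psi_m(t_j) W_j \right|
\leq |\lambda_n|\cdot\left( I_{\max} + 2\sqrt{n} + \sqrt{32}\,n^2 \right).
\label{eq_plan_collected}
\end{align}

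To finish, I would expand $I_{\max}=24\log(2/|\lambda_n|)+13(\chi_n)^{1/4}+40c|\lambda_n|+2\sqrt{2}$ and split $24\log(2/|\lambda_n|)=24\log(1/|\lambda_n|)+24\log 2$. Comparing \eqref{eq_plan_collected} with the target \eqref{eq_quad_simple_thm}, the term $24\log(1/|\lambda_n|)$ matches exactly, so it remains to verify the residual inequality
\begin{align}
24\log 2 + 13(\chi_n)^{1/4} + 40c|\lambda_n| + 2\sqrt{2}
 + 2\sqrt{n} + \sqrt{32}\,n^2 \leq 6\chi_n.
\label{eq_plan_residual}
\end{align}
Here I would use $n(n+1)<\chi_n$ (Theorem~\ref{thm_khi_crude}), hence $n^2<\chi_n$ and $\sqrt{n}<(\chi_n)^{1/4}$; the inequality $c<\sqrt{\chi_n}$ together with $|\lambda_n|<1/10$, giving $40c|\lambda_n|<4\sqrt{\chi_n}$; and $\sqrt{32}=4\sqrt{2}$, so $\sqrt{32}\,n^2<4\sqrt{2}\,\chi_n$. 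Writing $u=(\chi_n)^{1/4}\geq\sqrt{30}$, the inequality \eqref{eq_plan_residual} reduces to $(6-4\sqrt{2})u^4\geq 4u^2+15u+(24\log 2+2\sqrt{2})$, which holds at $u=\sqrt{30}$ and, since the quartic coefficient $6-4\sqrt{2}>0$ makes the left side eventually dominate, for all larger $u$ by monotonicity.

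The hard part will be the tightness of this final numeric estimate \eqref{eq_plan_residual}: the coefficient $\sqrt{32}=4\sqrt{2}\approx 5.657$ coming from the $c\,|P_{n,m}|$ term leaves only the slim margin $6-4\sqrt{2}\approx 0.343$ in front of $\chi_n$ to absorb the remaining terms, so the argument succeeds only because $\chi_n>c^2>900$ forces $\chi_n$ to be large enough for the quartic in $u$ to overtake its lower-order companions.
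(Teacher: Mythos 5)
Your proposal is correct and follows essentially the same route as the paper's proof: start from \eqref{eq_quad_thm} of Theorem~\ref{thm_quad}, discard the prefactors, insert the bounds from Theorems~\ref{thm_complex_summary}, \ref{lem_quad_cp} and \ref{thm_all_psi_upper_bound}, and absorb everything except $24\log(1/|\lambda_n|)$ into $6\chi_n$ using $\chi_n>c^2>900$. The only (immaterial) deviation is that you obtain $n^2<\chi_n$ from Theorem~\ref{thm_khi_crude}, whereas the paper derives $n<\sqrt{\chi_n}$ from Theorem~\ref{thm_n_khi_simple}; your closing numeric verification at $u=(\chi_n)^{1/4}=\sqrt{30}$ matches the paper's estimate \eqref{eq_quad_simple_d}.
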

\begin{proof}
We combine Theorems~\ref{thm_pswf_main}, \ref{thm_khi_elem},
\ref{thm_all_psi_upper_bound} in Section~\ref{sec_pswf},
the inequality \eqref{eq_quad_simple_n2c} and
Theorems~\ref{thm_quad}, \ref{lem_quad_cp} to conclude that
\begin{align}
\left| \int_{-1}^1 \psi_m(s) \; ds - \sum_{j=1}^n \psi_m(t_j) W_j \right|
\leq
\| I \|_{\infty} + |\lambda_n| \cdot (2 \sqrt{n} + \sqrt{32} \cdot n^2),
\label{eq_quad_simple_a}
\end{align}
where $\| I \|_{\infty}$ is defined via \eqref{eq_i_inf}
in Theorem~\ref{thm_quad}. Next, we combine
\eqref{eq_quad_simple_c30}, 
\eqref{eq_quad_simple_n2c},
Theorem~\ref{thm_khi_elem} in Section~\ref{sec_pswf},
Theorems~\ref{thm_lam10}, \ref{thm_complex_summary}
in Section~\ref{sec_head_tail}
and
\eqref{eq_i_inf}
to conclude that
\begin{align}
\| I \|_{\infty} \leq |\lambda_n| \cdot
\left(
24 \cdot \log\left( \frac{2}{|\lambda_n|}\right) +
13 \cdot (\chi_n)^{1/4} + 4\sqrt{\chi_n} + 2\sqrt{2}
\right).
\label{eq_quad_simple_b}
\end{align}
We combine \eqref{eq_quad_simple_n2c} with
Theorem~\ref{thm_n_khi_simple} in Section~\ref{sec_pswf}
to conclude that
\begin{align}
n < \sqrt{\chi_n}.
\label{eq_quad_simple_c}
\end{align}
Also, we observe that,
due to the combination of \eqref{eq_quad_simple_c30}
and Theorem~\ref{thm_khi_elem} in Section~\ref{sec_pswf},
\begin{align}
& \sqrt{32} \cdot \chi_n + 4 \sqrt{\chi_n} + 15 \cdot (\chi_n)^{1/4}
 + 2 \sqrt{2} + 24 \cdot \log(2) = \nonumber \\
& \chi_n \cdot \left(
\sqrt{32} + 4 \cdot \chi_n^{-1/2} + 15 \cdot \chi_n^{-3/4} +
 (2 \sqrt{2} + 24 \cdot \log(2)) \cdot \chi_n^{-1} 
\right) < 6 \cdot \chi_n.
\label{eq_quad_simple_d}
\end{align}
Now \eqref{eq_quad_simple_thm} follows from the combination
of \eqref{eq_quad_simple_a},
\eqref{eq_quad_simple_b},
\eqref{eq_quad_simple_c} and
\eqref{eq_quad_simple_d}.
\end{proof}
%%%%%%%%%%%%
%%%%%%%%%%%%
%%%%%%%%%%%%%
The following theorem is a conclusion of Theorem~\ref{thm_lambda_khi}
of Section~\ref{sec_pswf}
and Theorems~\ref{thm_quad_simple}, \ref{thm_lam10} above.
\begin{thm}
Suppose that $n>0$ and $0 \leq m \leq n-1$ are integers.
Suppose also that $t_1, \dots, t_n$ and $W_1, \dots, W_n$ are, 
respectively, the nodes and weights of the quadrature, introduced
in Definition~\ref{def_quad} above. Suppose furthermore that
\begin{align}
c > 30,
\label{eq_error_khi_c30}
\end{align}
and that
\begin{align}
n > \frac{2c}{\pi} + 7.
\label{eq_error_khi_n7}
\end{align}
Then,
\begin{align}
\left| \int_{-1}^1 \psi_m(s) \; ds - \sum_{j=1}^n \psi_m(t_j) W_j \right| \leq
14340 \cdot \frac{\chi_n^5}{c^7} \cdot 
   \exp\left[ -\frac{\pi}{4} \cdot 
       \frac{\chi_n-c^2}{\sqrt{\chi_n}} \right].
\label{eq_error_khi_thm}
\end{align}
\label{thm_error_khi}
\end{thm}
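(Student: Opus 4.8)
The plan is to substitute the eigenvalue bound of Theorem~\ref{thm_lambda_khi} into the quadrature error estimate of Theorem~\ref{thm_quad_simple}. First I would verify that all three cited results apply. Since $c>30$ and $n>\frac{2c}{\pi}+7>\frac{2c}{\pi}+5$, both Theorem~\ref{thm_quad_simple} and Theorem~\ref{thm_lam10} are in force, the latter giving $|\lambda_n|<1/10$; moreover $\frac{2c}{\pi}+7>\frac{2c}{\pi}+\sqrt{42}$, so Theorem~\ref{thm_lambda_khi} applies, and Theorem~\ref{thm_khi_elem} yields $\chi_n>c^2+1>901$. Theorem~\ref{thm_quad_simple} then bounds the left-hand side of \eqref{eq_error_khi_thm} by $|\lambda_n|\cdot\left(24\log(1/|\lambda_n|)+6\chi_n\right)$, so everything reduces to estimating this product.

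Next I would rewrite \eqref{eq_lambda_khi} purely in terms of $\chi_n$ and $c$. Writing $x_n=\chi_n/c^2$ and using the crude estimates $(x_n-1)^{1/4}\le x_n^{1/4}$ and $(x_n-\tfrac12)^3\le x_n^3$, the polynomial prefactor collapses to $1195\,c\,x_n^{4}=1195\,\chi_n^4/c^7$, while a direct computation shows the exponent $-\frac{\pi}{4}\left(\sqrt{x_n}-1/\sqrt{x_n}\right)c$ equals exactly the target exponent $-\frac{\pi}{4}\cdot\frac{\chi_n-c^2}{\sqrt{\chi_n}}$. Hence
\begin{align}
|\lambda_n| < \Lambda := 1195\cdot\frac{\chi_n^4}{c^7}\cdot
\exp\left[-\frac{\pi}{4}\cdot\frac{\chi_n-c^2}{\sqrt{\chi_n}}\right].
\end{align}
The term $6\chi_n|\lambda_n|$ is then immediately at most $6\chi_n\Lambda=7170\,\frac{\chi_n^5}{c^7}\exp\left[-\frac{\pi}{4}\frac{\chi_n-c^2}{\sqrt{\chi_n}}\right]$, which is precisely half of the desired right-hand side, since $14340=2\cdot 7170$. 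Thus the theorem reduces to showing that the remaining term $24|\lambda_n|\log(1/|\lambda_n|)$ is also dominated by this same quantity.

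This last term is the only genuine obstacle, because bounding $\log(1/|\lambda_n|)$ from above would ordinarily demand a \emph{lower} bound on $|\lambda_n|$, which is unavailable. The remedy is to control the product $|\lambda_n|\log(1/|\lambda_n|)=g(|\lambda_n|)$ as a whole, exploiting that $g(t)=t\log(1/t)$ is increasing on $(0,1/e)$, and to split on the size of $\Lambda$. If $\Lambda\ge 1/10$, the definition of $\Lambda$ forces $7170\frac{\chi_n^5}{c^7}\exp\left[-\frac{\pi}{4}\frac{\chi_n-c^2}{\sqrt{\chi_n}}\right]\ge\frac{7170}{11950}\chi_n>540$, which dwarfs $24\,g(|\lambda_n|)<24\,g(1/10)<6$ (using $|\lambda_n|<1/10<1/e$ and monotonicity). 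If instead $\Lambda<1/10$, then $|\lambda_n|<\Lambda<1/e$, so $g(|\lambda_n|)\le g(\Lambda)=\Lambda\log(1/\Lambda)$; and because $\chi_n>c^2$ gives $1195\chi_n^4/c^7>1195c>1$, the polynomial part of $\log(1/\Lambda)$ is negative, whence $\log(1/\Lambda)\le\frac{\pi}{4}\frac{\chi_n-c^2}{\sqrt{\chi_n}}\le\frac{\pi}{4}\sqrt{\chi_n}$. This yields $24\,g(\Lambda)\le 6\pi\sqrt{\chi_n}\,\Lambda=6\pi\cdot 1195\,\frac{\chi_n^{9/2}}{c^7}\exp\left[-\frac{\pi}{4}\frac{\chi_n-c^2}{\sqrt{\chi_n}}\right]$, and the desired inequality $6\pi\cdot 1195\,\chi_n^{9/2}\le 7170\,\chi_n^{5}$ reduces to $6\pi\cdot 1195\le 7170\sqrt{\chi_n}$, which holds comfortably since $\chi_n>901$. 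Adding the two halves produces the constant $14340$ and completes the argument. The only delicate point is this final term; the power-counting in the prefactor and the case split are routine once the monotonicity idea is in place.
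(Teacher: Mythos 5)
Your proof is correct and takes essentially the same route as the paper: both substitute the bound of Theorem~\ref{thm_lambda_khi} (simplified, exactly as you do, to $\Lambda = 1195\,\chi_n^4\, c^{-7}\exp\left[-\tfrac{\pi}{4}(\chi_n-c^2)/\sqrt{\chi_n}\right]$) into the estimate of Theorem~\ref{thm_quad_simple}, and both tame the $24\,|\lambda_n|\log(1/|\lambda_n|)$ term via the monotonicity of $t\log(1/t)$ on $(0,1/e)$ combined with a case split, the constant arising as $2\cdot 6\cdot 1195 = 12\cdot 1195 = 14340$ in either version. The only difference is cosmetic: the paper splits on whether $|\lambda_n|\le \exp\left[-\chi_n/4\right]$ and absorbs the $6\chi_n$ term into the logarithmic one, whereas you split on whether $\Lambda \ge 1/10$ and bound $\log(1/\Lambda)$ directly by $\tfrac{\pi}{4}\sqrt{\chi_n}$, bounding each of the two terms by half the target.
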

\begin{proof}
We combine \eqref{eq_error_khi_c30}, \eqref{eq_error_khi_n7}
with Theorem~\ref{thm_quad_simple} above to obtain
\begin{align}
\left| \int_{-1}^1 \psi_m(s) \; ds - \sum_{j=1}^n \psi_m(t_j) W_j \right| \leq
|\lambda_n| \cdot \left(
24 \cdot \log\left( \frac{1}{|\lambda_n|} \right) +
6 \cdot \chi_n
\right).
\label{eq_error_khi_a}
\end{align}
Suppose first that
\begin{align}
|\lambda_n| \leq \exp\left[ -\frac{\chi_n}{4} \right].
\label{eq_error_khi_b}
\end{align}
Then,
\begin{align}
|\lambda_n| \cdot \left(
24 \cdot \log\left( \frac{1}{|\lambda_n|} \right) +
6 \cdot \chi_n
\right) \leq 48 \cdot |\lambda_n| \cdot 
  \log\left( \frac{1}{|\lambda_n|} \right).
\label{eq_error_khi_c}
\end{align}
We combine \eqref{eq_error_khi_c30}, \eqref{eq_error_khi_b}
and Theorem~\ref{thm_n_and_khi} in Section~\ref{sec_pswf} to conclude that
\begin{align}
|\lambda_n| < \exp\left[ -\frac{c^2}{4} \right] < e^{-225} < e^{-1}.
\label{eq_error_khi_d}
\end{align}
We combine \eqref{eq_error_khi_b}, \eqref{eq_error_khi_c} and
\eqref{eq_error_khi_d} to obtain
\begin{align}
& |\lambda_n| \cdot \left(
24 \cdot \log\left( \frac{1}{|\lambda_n|} \right) +
6 \cdot \chi_n
\right) \leq 
48 \cdot |\lambda_n| \cdot \log\left( \frac{1}{|\lambda_n|} \right) \leq
\nonumber \\
& 48 \cdot \exp\left[ -\frac{\chi_n}{4} \right] \cdot \frac{\chi_n}{4} =
12 \cdot \chi_n \cdot \exp\left[ -\frac{\chi_n}{4} \right].
\label{eq_error_khi_e}
\end{align}
Suppose, on the other hand, that
\begin{align}
\exp\left[ -\frac{\chi_n}{4} \right] < |\lambda_n| < \frac{1}{10}
\label{eq_error_khi_f}
\end{align}
(note that the right-hand side inequality in \eqref{eq_error_khi_f}
follows from the combination of \eqref{eq_error_khi_c30},
\eqref{eq_error_khi_n7} and Theorem~\ref{thm_lam10}).
It follows from \eqref{eq_error_khi_f} that, in this case,
\begin{align}
|\lambda_n| \cdot \left(
24 \cdot \log\left( \frac{1}{|\lambda_n|} \right) +
6 \cdot \chi_n
\right) \leq 12 \cdot \chi_n \cdot |\lambda_n|.
\label{eq_error_khi_g}
\end{align}
We combine \eqref{eq_error_khi_n7} with Theorem~\ref{thm_lambda_khi}
to obtain
\begin{align}
|\lambda_n| < 1195 \cdot \frac{\chi_n^4}{c^7} \cdot
\exp\left[ -\frac{\pi}{4} \cdot 
       \frac{\chi_n-c^2}{\sqrt{\chi_n}} \right].
\label{eq_error_khi_h}
\end{align}
We combine \eqref{eq_error_khi_c30} with Theorem~\ref{thm_n_and_khi}
of Section~\ref{sec_pswf} to conclude that
\begin{align}
\exp\left[ -\frac{\chi_n}{4} \right] <
1195 \cdot \frac{\chi_n^4}{c^7} \cdot
\exp\left[ -\frac{\pi}{4} \cdot 
       \frac{\chi_n-c^2}{\sqrt{\chi_n}} \right].
\label{eq_error_khi_i}
\end{align}
We combine 
\eqref{eq_error_khi_b},
\eqref{eq_error_khi_e},
\eqref{eq_error_khi_f},
\eqref{eq_error_khi_g},
\eqref{eq_error_khi_h},
\eqref{eq_error_khi_i} that
\begin{align}
& |\lambda_n| \cdot \left(
24 \cdot \log\left( \frac{1}{|\lambda_n|} \right) +
6 \cdot \chi_n
\right) \leq \nonumber \\
& 12 \cdot \chi_n \cdot
1195 \cdot \frac{\chi_n^4}{c^7} \cdot
\exp\left[ -\frac{\pi}{4} \cdot 
       \frac{\chi_n-c^2}{\sqrt{\chi_n}} \right].
\label{eq_error_khi_j}
\end{align}
Now \eqref{eq_error_khi_thm} follows from the combination
of \eqref{eq_error_khi_a} and \eqref{eq_error_khi_j}.
\end{proof}
%%%%%%%%%%%%
%%%%%%%%%%%
\subsubsection{The Principal Result}
\label{sec_main_result}

In Theorem~\ref{thm_error_khi}, we established an upper bound
on the quadrature error 
for integrating $\psi_m$ (see \eqref{eq_error_khi_thm}). However, this
bound depends on $\chi_n$. In particular, it is not obvious
how large $n$ should be to make sure that the quadrature error
does not exceed given $\varepsilon>0$. In this subsection,
we eliminate this inconvenience.

The following theorem is illustrated in Table~\ref{t:test178}
(see Experiment 14 in Section~\ref{sec_exp14}).
%%%%%%%%%%%%
\begin{thm}
Suppose that $c>0$ is a positive real number, and that
\begin{align}
c > 30.
\label{eq_quad_eps_large_c30}
\end{align}
Suppose also that $\varepsilon > 0$ is a positive real number, and that
\begin{align}
0 < \log \frac{1}{\varepsilon} < 
\frac{5 \cdot \pi}{4\sqrt{6}} \cdot c - 3 \cdot \log(c) - \log(6^5 \cdot 14340).
\label{eq_quad_eps_large_eps}
\end{align}
Suppose furthermore that the real number $\alpha$ is defined
via the formula
\begin{align}
\alpha = \frac{4\sqrt{6}}{\pi} \cdot
\left(
\log \frac{1}{\varepsilon} + 3 \cdot \log(c) + \log(6^5 \cdot 14340)
\right),
\label{eq_quad_eps_large_alpha}
\end{align}
and that the real number $\nu(\alpha)$ is defined via the formula
\begin{align}
\nu(\alpha) = \frac{2c}{\pi} + \frac{\alpha}{2\pi} \cdot
   \log\left( \frac{16ec}{\alpha} \right).
\label{eq_quad_eps_large_nu}
\end{align}
Suppose, in addition, that
$n>0$ and $0 \leq m \leq n-1$ are integers,
and that
\begin{align}
n > \nu(\alpha).
\label{eq_quad_eps_large_n_nu}
\end{align}
Then,
\begin{align}
\left| \int_{-1}^1 \psi_m(s) \; ds - \sum_{j=1}^n \psi_m(t_j) W_j \right|
<
\varepsilon,
\label{eq_quad_eps_large_thm}
\end{align}
where $t_j$, $W_j$ are defined, respectively,
via \eqref{eq_quad_t}, \eqref{eq_quad_w} in Definition~\ref{def_quad}.
\label{thm_quad_eps_large}
\end{thm}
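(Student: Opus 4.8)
The plan is to reduce the claim to the explicit bound of Theorem~\ref{thm_error_khi}, which expresses the quadrature error through $\chi_n$, and then to convert the hypothesis $n > \nu(\alpha)$ into a sufficiently strong lower bound on $\chi_n$ via Theorem~\ref{thm_khi_1}. First I would set $\delta = \pi\alpha/4$ and observe that, with this choice, the threshold $\frac{2c}{\pi} + \frac{2}{\pi^2}\delta\log(4e\pi c/\delta)$ of Theorem~\ref{thm_khi_1} coincides with $\nu(\alpha)$, since $2\delta/\pi^2 = \alpha/(2\pi)$ and $4e\pi c/\delta = 16ec/\alpha$. The assumption \eqref{eq_quad_eps_large_eps} on $\varepsilon$ is precisely what guarantees $\alpha < 5c$, i.e. $0 < \delta < \frac{5\pi}{4}c$, so Theorem~\ref{thm_khi_1} applies and yields $\chi_n > c^2 + \frac{4}{\pi}\delta c = c^2 + \alpha c$. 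The same chain shows $n > \nu(\alpha) > \frac{2c}{\pi} + 7$, so Theorem~\ref{thm_error_khi} is applicable and bounds the left-hand side of \eqref{eq_quad_eps_large_thm} by $g(\chi_n)$, where
\[
g(s) = 14340\cdot\frac{s^5}{c^7}\cdot\exp\left[-\frac{\pi}{4}\cdot\frac{s-c^2}{\sqrt{s}}\right].
\]

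Next I would show that $g$ is decreasing on $(c^2,\infty)$. Writing $s = u^2$ with $u = \sqrt{s} > c > 30$, differentiation of $\log g$ gives $\frac{d}{du}\log g = \frac{10}{u} - \frac{\pi}{4}(1 + c^2/u^2)$, which is negative since $10/u < 1/3 < \pi/4$. Because $\chi_n > c^2 + \alpha c$, this monotonicity gives $g(\chi_n) < g(c^2 + \alpha c)$, so it suffices to bound $g$ at the boundary value $s^\ast = c^2 + \alpha c$.

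Finally I would estimate $g(s^\ast)$ using $\alpha < 5c$, which forces $s^\ast < 6c^2$ and hence $\sqrt{s^\ast} < \sqrt{6}\,c$. This bounds the prefactor by $14340\cdot (s^\ast)^5/c^7 < 14340\cdot 6^5 c^3$ and the exponent by $\frac{s^\ast-c^2}{\sqrt{s^\ast}} = \frac{\alpha c}{\sqrt{s^\ast}} > \frac{\alpha}{\sqrt{6}}$; by the definition \eqref{eq_quad_eps_large_alpha} of $\alpha$ one has $\frac{\pi}{4}\cdot\frac{\alpha}{\sqrt{6}} = \log\frac{1}{\varepsilon} + 3\log c + \log(6^5\cdot 14340)$. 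Multiplying the two estimates, the factors $6^5$, $14340$, and $c^3$ cancel exactly against $\varepsilon\, c^{-3}(6^5\cdot 14340)^{-1}$, leaving $g(s^\ast) < \varepsilon$ and therefore \eqref{eq_quad_eps_large_thm}.

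The main obstacle is this last step: the interplay between the polynomial prefactor $s^5/c^7$ and the exponential decay. The monotonicity of $g$ is what lets me evaluate everything at $s^\ast = c^2 + \alpha c$, where the crucial upper bound $s^\ast < 6c^2$ is available, rather than at the possibly enormous actual value of $\chi_n$; without it the prefactor could not be controlled for large $n$. The delicate bookkeeping is the choice $\delta = \pi\alpha/4$ together with the constant $\sqrt{6}$, arranged so that the exponent produced by Theorem~\ref{thm_khi_1} matches, up to the cancelling constants $6^5\cdot 14340\cdot c^3$, the definition of $\alpha$. Verifying the algebraic identity $\nu(\alpha) = \frac{2c}{\pi}+\frac{2}{\pi^2}\delta\log(4e\pi c/\delta)$ and the side condition $\alpha < 5c$ is exactly where the hypotheses \eqref{eq_quad_eps_large_eps} and \eqref{eq_quad_eps_large_nu} are consumed.
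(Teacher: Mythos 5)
Your proposal is correct and follows essentially the same route as the paper's own proof: reduce to Theorem~\ref{thm_error_khi}, invoke Theorem~\ref{thm_khi_1} (the paper does not name $\delta = \pi\alpha/4$ explicitly, but the computation is identical) to get $\chi_n > c^2 + \alpha c$, establish monotonicity of the same function $y^{10}\exp[-\tfrac{\pi}{4}(y^2-c^2)/y]$, and close with the $\alpha < 5c$, $s^\ast < 6c^2$, $\sqrt{6}$ bookkeeping that makes the constants $6^5 \cdot 14340 \cdot c^3$ cancel against the definition of $\alpha$. The one step you wave at, $\nu(\alpha) > \tfrac{2c}{\pi} + 7$, does hold as you assert: the lower bound $\alpha > \tfrac{4\sqrt{6}}{\pi}\bigl(3\log(c) + \log(6^5 \cdot 14340)\bigr) > 89$ together with $\log(16ec/\alpha) > \log(16e/5) > 2$ gives $\nu(\alpha) - \tfrac{2c}{\pi} > 28$, matching the paper's explicit verification that $\nu(\alpha) > \tfrac{2c}{\pi} + 30$.
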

\begin{proof}
It follows from
\eqref{eq_quad_eps_large_eps} that
\begin{align}
5c > \alpha > 
\frac{4\sqrt{6}}{\pi} \cdot
\left(
3 \cdot \log(c) + \log(6^5 \cdot 14340)
\right),
\label{eq_quad_eps_large_a}
\end{align}
where $\alpha$ is defined via \eqref{eq_quad_eps_large_alpha}.
We observe that 
\begin{align}
\frac{d}{d \alpha} \left[ \alpha \cdot \log\left( \frac{16ec}{\alpha} \right)
\right] = \log\left( \frac{16c}{\alpha} \right),
\label{eq_quad_eps_large_b}
\end{align}
and hence 
the function $\nu: (0,16c) \to \Rc$, defined
via \eqref{eq_quad_eps_large_nu}, is monotonically increasing.
We combine \eqref{eq_quad_eps_large_c30},
\eqref{eq_quad_eps_large_alpha},
\eqref{eq_quad_eps_large_a},
\eqref{eq_quad_eps_large_b} to conclude that
\begin{align}
\frac{2c}{\pi}+30 < \nu(\alpha) < 
\frac{2c}{\pi} + \frac{5c}{2\pi} \cdot \log\left( \frac{16e}{5} \right)
< \frac{5c}{2}.
\label{eq_quad_eps_large_c}
\end{align}
We combine Theorem~\ref{thm_khi_1} of Section~\ref{sec_pswf} with
\eqref{eq_quad_eps_large_nu},
\eqref{eq_quad_eps_large_n_nu} and \eqref{eq_quad_eps_large_a} to obtain
the inequality
\begin{align}
\chi_n > c^2 + \alpha \cdot c.
\label{eq_quad_eps_large_d}
\end{align}
Suppose now that the function $f:(c,\infty) \to \Rc$ is
defined via the formula
\begin{align}
f(y) = y^{10} \cdot \exp\left[
-\frac{\pi}{4} \cdot \frac{y^2-c^2}{y}
\right].
\label{eq_quad_eps_large_e}
\end{align}
We differentiate \eqref{eq_quad_eps_large_e} with respect to $y$ 
and use \eqref{eq_quad_eps_large_c30} to obtain
\begin{align}
f'(y) = \frac{f(y)}{y} \cdot \left[
10 - y \cdot \frac{\pi}{4} \cdot \left(1 + \frac{c^2}{y^2}\right) 
\right] < 0,
\label{eq_quad_eps_large_f}
\end{align}
for all $y > c$. We combine
\eqref{eq_quad_eps_large_c30},
\eqref{eq_quad_eps_large_c}, 
\eqref{eq_quad_eps_large_d},
\eqref{eq_quad_eps_large_e}, \eqref{eq_quad_eps_large_f} with
Theorem~\ref{thm_error_khi} to conclude that
\begin{align}
& \left| \int_{-1}^1 \psi_m(s) \; ds - \sum_{j=1}^n \psi_m(t_j) W_j \right| 
\leq \nonumber \\
& 14340 \cdot \frac{\chi_n^5}{c^7} \cdot 
   \exp\left[ -\frac{\pi}{4} \cdot 
       \frac{\chi_n-c^2}{\sqrt{\chi_n}} \right] \leq \nonumber \\
& 14340 \cdot c^3 \cdot \left(1 + \frac{\alpha}{c}\right)^5 \cdot
\exp\left[
-\frac{\pi}{4} \cdot \frac{\alpha}{\sqrt{1+\alpha/c}}
\right].
\label{eq_quad_eps_large_g}
\end{align}
We combine \eqref{eq_quad_eps_large_a}, \eqref{eq_quad_eps_large_g}
to obtain
\begin{align}
\left| \int_{-1}^1 \psi_m(s) \; ds - \sum_{j=1}^n \psi_m(t_j) W_j \right| 
\leq 14340 \cdot 6^5 \cdot c^3 \cdot
\exp\left[
-\frac{\pi}{4} \cdot \frac{\alpha}{\sqrt{6}}
\right].
\label{eq_quad_eps_large_h}
\end{align}
Now \eqref{eq_quad_eps_large_thm} follows from
the combination of 
\eqref{eq_quad_eps_large_alpha} and
\eqref{eq_quad_eps_large_h}.
\end{proof}
%%%%%%%%%%%
The following theorem is a direct consequence of
Theorem~\ref{thm_quad_eps_large}. This theorem
is one of the principal results of the paper.
It is illustrated in Table~\ref{t:test178}
(see Experiment 14 in Section~\ref{sec_exp14}).
See also Conjecture~\ref{conj_quad_error}
in Section~\ref{sec_exp12}.
\begin{thm}
Suppose that $c>0$ is a positive real number, and that
\begin{align}
c > 30.
\label{eq_quad_eps_simple_c30}
\end{align}
Suppose also that $\varepsilon > 0$ is a positive real number, and that
\begin{align}
\exp\left[ -\frac{3}{2}\cdot(c-20) \right] < \varepsilon < 1.
\label{eq_quad_eps_simple_eps}
\end{align}
Suppose furthermore that $n>0$ and $0 \leq m < n$ are positive integers, 
and that
\begin{align}
n > \frac{2c}{\pi} +
\left(10 + \frac{3}{2} \cdot \log(c) + 
   \frac{1}{2} \cdot \log\frac{1}{\varepsilon}
\right) \cdot \log\left( \frac{c}{2} \right).
\label{eq_quad_eps_simple_n}
\end{align}
Then,
\begin{align}
\left| \int_{-1}^1 \psi_m(s) \; ds - \sum_{j=1}^n \psi_m(t_j) W_j \right|
<
\varepsilon,
\label{eq_quad_eps_simple_thm}
\end{align}
where $t_j$, $W_j$ are defined, respectively,
via \eqref{eq_quad_t}, \eqref{eq_quad_w} in Definition~\ref{def_quad}.
\label{thm_quad_eps_simple}
\end{thm}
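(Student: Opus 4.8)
The plan is to deduce Theorem~\ref{thm_quad_eps_simple} directly from Theorem~\ref{thm_quad_eps_large} by showing that the hypotheses \eqref{eq_quad_eps_simple_c30}--\eqref{eq_quad_eps_simple_n} of the former imply those of the latter. Concretely, with $\alpha$ defined via \eqref{eq_quad_eps_large_alpha} and $\nu(\alpha)$ via \eqref{eq_quad_eps_large_nu}, I must verify (i) that the admissible range \eqref{eq_quad_eps_large_eps} for $\varepsilon$ holds, and (ii) that the lower bound \eqref{eq_quad_eps_simple_n} on $n$ is at least $\nu(\alpha)$, so that \eqref{eq_quad_eps_large_n_nu} is in force. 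The conclusion \eqref{eq_quad_eps_simple_thm} then coincides with \eqref{eq_quad_eps_large_thm}.

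For (i), I would start from \eqref{eq_quad_eps_simple_eps}, which gives $\log(1/\varepsilon) < \tfrac32(c-20) = \tfrac32 c - 30$, and compare this with the right-hand side of \eqref{eq_quad_eps_large_eps}. Since $\tfrac{5\pi}{4\sqrt6} > \tfrac32$, the difference of the leading linear terms, $\brk{\tfrac{5\pi}{4\sqrt6}-\tfrac32}c$, grows linearly in $c$ and dominates the slowly varying terms $30 - 3\log c - \log(6^5\cdot 14340)$ once $c > 30$; a short endpoint check at $c = 30$ closes this, after which \eqref{eq_quad_eps_large_eps} is available (and hence so is the bracketing $5c > \alpha > \cdots$ of \eqref{eq_quad_eps_large_a}).

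For (ii), I would bound $\nu(\alpha)$ above by splitting it as in \eqref{eq_quad_eps_large_nu}. Writing $L = \log(1/\varepsilon) + 3\log c + \log(6^5\cdot 14340)$, so that $\tfrac{\alpha}{2\pi} = \tfrac{2\sqrt6}{\pi^2}L$, the first key estimate is
\[
\frac{\alpha}{2\pi} = \frac{2\sqrt6}{\pi^2}\Big(\log\tfrac1\varepsilon + 3\log c + \log(6^5\cdot 14340)\Big) \le \tfrac12\log\tfrac1\varepsilon + \tfrac32\log c + 10,
\]
which holds because $\tfrac{2\sqrt6}{\pi^2} < \tfrac12$, $3\cdot\tfrac{2\sqrt6}{\pi^2} < \tfrac32$, and $\tfrac{2\sqrt6}{\pi^2}\log(6^5\cdot 14340) < 10$. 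The second key estimate is $\log(16ec/\alpha) \le \log(c/2)$, equivalently $\alpha \ge 32e$; this follows from the lower bound on $\alpha$ recorded in \eqref{eq_quad_eps_large_a}, evaluated at $c = 30$ (the worst case, since that lower bound is increasing in $c$). Multiplying the two estimates and adding $2c/\pi$ places $\nu(\alpha)$ below the right-hand side of \eqref{eq_quad_eps_simple_n}, hence below $n$, which is exactly \eqref{eq_quad_eps_large_n_nu}.

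The substance of the argument is entirely the numeric bookkeeping in step (ii): one must confirm that the three coefficients produced by the factor $\tfrac{2\sqrt6}{\pi^2} \approx 0.496$ fall under the target coefficients $(\tfrac12,\tfrac32,10)$ of \eqref{eq_quad_eps_simple_n}, and that $\alpha > 32e \approx 87$ throughout $c > 30$. Both are elementary but somewhat tight (the constant comparison $\approx 9.2 < 10$ and the bound $\alpha > 89 > 87$ at $c=30$ leave little slack), so the main care goes into pinning these inequalities down rather than into any structural difficulty.
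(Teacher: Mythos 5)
Your proposal is correct and follows essentially the same route as the paper's own proof: the paper likewise reduces to Theorem~\ref{thm_quad_eps_large} by verifying \eqref{eq_quad_eps_large_eps} via the linear-vs-logarithmic comparison \eqref{eq_quad_eps_simple_a}, bounding $\alpha/(2\pi)$ term-by-term exactly as in your first key estimate (the paper's \eqref{eq_quad_eps_simple_b}), and checking $\alpha > 89 > 32e$ so that $\log(16ec/\alpha) \le \log(c/2)$ (the paper's \eqref{eq_quad_eps_simple_c}). Your numerical checks ($2\sqrt{6}/\pi^2 \approx 0.496 < 1/2$, the constant $\approx 9.2 < 10$, and $\alpha > 89$ at $c = 30$) match the paper's.
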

\begin{proof}
We observe that, for all real $x > 30$,
\begin{align}
\frac{3}{2} \cdot (x-20) < 
\frac{5 \cdot \pi}{4\sqrt{6}} \cdot x - 
3 \cdot \log(x) - \log(6^5 \cdot 14340).
\label{eq_quad_eps_simple_a}
\end{align}
Also, we combine \eqref{eq_quad_eps_simple_c30}, 
\eqref{eq_quad_eps_simple_eps} to conclude that
\begin{align}
\frac{4\sqrt{6}}{2\pi^2} \cdot
\left(
\log \frac{1}{\varepsilon} + 3 \cdot \log(c) + \log(6^5 \cdot 14340)
\right) <
10 + \frac{3}{2} \cdot \log(c) + \frac{1}{2} \cdot \log \frac{1}{\varepsilon}.
\label{eq_quad_eps_simple_b}
\end{align}
Furthermore,
we combine \eqref{eq_quad_eps_simple_c30}, 
\eqref{eq_quad_eps_simple_eps} to conclude that
\begin{align}
\frac{4\sqrt{6}}{\pi} \cdot
\left(
\log \frac{1}{\varepsilon} + 3 \cdot \log(c) + \log(6^5 \cdot 14340)
\right) > 89 > 2 \cdot 16 e.
\label{eq_quad_eps_simple_c}
\end{align}
Now \eqref{eq_quad_eps_simple_thm} follows 
from the combination of
\eqref{eq_quad_eps_simple_c30},
\eqref{eq_quad_eps_simple_eps},
\eqref{eq_quad_eps_simple_n},
\eqref{eq_quad_eps_simple_a},
\eqref{eq_quad_eps_simple_b},
\eqref{eq_quad_eps_simple_c} and
Theorem~\ref{thm_quad_eps_large}.
\end{proof}
%%%%%%%%%%
The assumptions of Theorem~\ref{thm_quad_eps_simple} contain
a minor inconvenience - namely, the parameter $\varepsilon$ is
not allowed to be ``too small'' (in the sense of
\eqref{eq_quad_eps_simple_eps}). In the following theorem,
we eliminate this restriction. On the other hand, for the values
of $\varepsilon$ in the range \eqref{eq_quad_eps_simple_eps}, the resulting
inequality for $n$ is much weaker than \eqref{eq_quad_eps_simple_n}.
\begin{thm}
Suppose that $c>0$ is a positive real number, and that
\begin{align}
c > 30.
\label{eq_quad_eps_weak_c30}
\end{align}
Suppose also that $\varepsilon > 0$ is a positive real number, and that
\begin{align}
0 < \varepsilon < 1.
\label{eq_quad_eps_weak_eps}
\end{align}
Suppose furthermore that $n>0$ and $0 \leq m < n$ are positive integers, 
and that
\begin{align}
n \cdot \left(1 - \frac{40}{\pi c}\right) > 
c + \frac{12}{\pi} \cdot \log(c) + \frac{4}{\pi} \cdot
\log \frac{1}{\varepsilon}.
\label{eq_quad_eps_weak_n}
\end{align}
Then,
\begin{align}
\left| \int_{-1}^1 \psi_m(s) \; ds - \sum_{j=1}^n \psi_m(t_j) W_j \right|
<
\varepsilon,
\label{eq_quad_eps_weak_thm}
\end{align}
where $t_j$, $W_j$ are defined, respectively,
via \eqref{eq_quad_t}, \eqref{eq_quad_w} in Definition~\ref{def_quad}.
\label{thm_quad_eps_weak}
\end{thm}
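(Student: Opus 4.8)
The plan is to bound the quadrature error directly through Theorem~\ref{thm_error_khi}, whose right-hand side depends only on $\chi_n$ and $c$, and then to convert the hypothesis \eqref{eq_quad_eps_weak_n} into a lower bound on $\chi_n$ strong enough to push this expression below $\varepsilon$. The essential difference from Theorem~\ref{thm_quad_eps_large} (which caps $\varepsilon$ from below through \eqref{eq_quad_eps_large_eps}) is the source of the $\chi_n$-bound: there it came from Theorem~\ref{thm_khi_1}, whose parameter $\delta$ must satisfy $\delta < \frac{5\pi}{4}c$, and it is exactly this constraint that limits how small $\varepsilon$ may be. Here I would instead invoke the crude but unconditional bound $\chi_n > n(n+1)$ of Theorem~\ref{thm_khi_crude}, which holds for every $n$. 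This removes the upper restriction on $\log(1/\varepsilon)$, at the cost of the weaker, essentially linear, lower bound \eqref{eq_quad_eps_weak_n} on $n$.

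First I would check that Theorem~\ref{thm_error_khi} applies. Since $1 - 40/(\pi c) \in (0,1)$ for $c > 30$, the hypothesis \eqref{eq_quad_eps_weak_n} gives $n > c/(1 - 40/(\pi c)) > c$; moreover $c > 2c/\pi + 7$ for $c > 30$, so $n > 2c/\pi + 7$. In particular $\chi_n > c^2$ (Theorem~\ref{thm_khi_elem}), and the hypotheses $c > 30$ and $n > 2c/\pi + 7$ of Theorem~\ref{thm_error_khi} both hold, yielding
\[
\left| \int_{-1}^1 \psi_m(s)\,ds - \sum_{j=1}^n \psi_m(t_j) W_j \right| \leq 14340 \cdot \frac{\chi_n^5}{c^7} \cdot \exp\left[ -\frac{\pi}{4} \cdot \frac{\chi_n - c^2}{\sqrt{\chi_n}}\right].
\]

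Next I would bound the exponent from below. The function $g(\chi) = \sqrt{\chi} - c^2/\sqrt{\chi}$ is increasing, and $\chi_n > n(n+1) > n^2$ by Theorem~\ref{thm_khi_crude}, so $\frac{\chi_n - c^2}{\sqrt{\chi_n}} = g(\chi_n) > g(n^2) = n - c^2/n$. The coefficient $40/(\pi c)$ in \eqref{eq_quad_eps_weak_n} is calibrated precisely for the following step: from $n > c/(1-40/(\pi c)) = \pi c^2/(\pi c - 40)$ one gets $c^2/n < c - 40/\pi$, hence $\frac{\pi c^2}{4n} < \frac{\pi c}{4} - 10$. Multiplying \eqref{eq_quad_eps_weak_n} by $\pi/4$ gives $\frac{\pi n}{4} - \frac{10 n}{c} > \frac{\pi c}{4} + 3\log c + \log(1/\varepsilon)$, and combining this with the previous two estimates yields
\[
\frac{\pi}{4} \cdot \frac{\chi_n - c^2}{\sqrt{\chi_n}} > \frac{\pi n}{4} - \frac{\pi c}{4} + 10 > \log\frac{1}{\varepsilon} + 3\log c + \frac{10 n}{c} + 10.
\]

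Finally I would absorb the algebraic prefactor. Since $c < n$, Theorem~\ref{thm_khi_crude} gives $\chi_n < n(n+1) + c^2 < 3n^2$, so $5\log\chi_n < 5\log 3 + 10\log n$. After taking logarithms in the error bound, it then suffices to verify $3\log c + \frac{10n}{c} + 10 \geq \log(14340) + 5\log\chi_n - 7\log c$, which reduces, on writing $r = n/c \geq 1$, to the elementary inequality $r - \log r \geq 0.51$; this holds for all $r \geq 1$, since $r - \log r$ attains its minimum value $1$ at $r = 1$. Combining the last two displays then gives the estimate $< \varepsilon$. The only real work is this bookkeeping of constants; the one point requiring genuine care is the calibration $\frac{\pi c^2}{4n} < \frac{\pi c}{4} - 10$, which is where the otherwise mysterious factor $40/(\pi c)$ in the hypothesis earns its keep.
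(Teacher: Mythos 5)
Your proposal is correct and takes essentially the same route as the paper's proof: both pass through Theorem~\ref{thm_error_khi}, replace $\sqrt{\chi_n}$ by $n$, and convert the hypothesis \eqref{eq_quad_eps_weak_n} into the required exponential decay by logarithmic bookkeeping, with the $40/(\pi c)$ factor absorbing the $10n/c$ term exactly as you identify. The only differences are cosmetic: you get the two-sided bound $n^2 < \chi_n < 3n^2$ from Theorem~\ref{thm_khi_crude} and handle prefactor and exponent separately (closing with $r - \log r \geq 1$ at $r = n/c$), whereas the paper obtains $n^2 < \chi_n$ from Theorem~\ref{thm_n_khi_simple}, uses the monotonicity of $y \mapsto y^{10}\exp\left[-\frac{\pi}{4}\cdot\frac{y^2-c^2}{y}\right]$ (from the proof of Theorem~\ref{thm_quad_eps_large}) to substitute $n$ for $\sqrt{\chi_n}$ in both factors at once — so it never needs an upper bound on $\chi_n$ — and closes with $\log x \leq x-1$ together with $\log(14340) < 10$.
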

\begin{proof}
We combine \eqref{eq_quad_eps_weak_n} with Theorem~\ref{thm_n_khi_simple}
in Section~\ref{sec_pswf} and \eqref{eq_E} in Section~\ref{sec_elliptic}
to conclude that
\begin{align}
c^2 < n^2 < \chi_n.
\label{eq_quad_eps_weak_a}
\end{align}
Also, we combine \eqref{eq_quad_eps_weak_c30},
\eqref{eq_quad_eps_weak_eps},
\eqref{eq_quad_eps_weak_n},
\eqref{eq_quad_eps_weak_a}
with Theorem~\ref{thm_error_khi} and
\eqref{eq_quad_eps_large_f} in the proof of Theorem~\ref{thm_quad_eps_large}
to conclude that
\begin{align}
& \left| \int_{-1}^1 \psi_m(s) \; ds - \sum_{j=1}^n \psi_m(t_j) W_j \right|
 \leq \nonumber \\
& 14340 \cdot \frac{\chi_n^5}{c^7} \cdot
   \exp\left[ -\frac{\pi}{4} \cdot \frac{\chi_n-c^2}{\sqrt{\chi_n}} \right]
 \leq \nonumber \\
& 14340 \cdot c^3 \cdot \left(\frac{n}{c}\right)^{10} \cdot
   \exp\left[ -\frac{\pi}{4} \cdot c \cdot 
   \left(\frac{n}{c}-\frac{c}{n} \right) \right].
\label{eq_quad_eps_weak_b}
\end{align}
We take the logarithm of both sides of \eqref{eq_quad_eps_weak_b}
and use \eqref{eq_quad_eps_weak_a} to obtain
\begin{align}
& \log \left| \int_{-1}^1 \psi_m(s) \; ds - \sum_{j=1}^n \psi_m(t_j) W_j \right|
 < \nonumber \\
& \log(14340) + 3\cdot\log(c) + 10\cdot\log\left(\frac{n}{c}\right) -
  \frac{\pi}{4} \cdot n + \frac{\pi}{4} \cdot c < \nonumber \\
& \log(14340) + 3\cdot\log(c) + 10\cdot\left(\frac{n}{c}\right) -10 -
  \frac{\pi}{4} \cdot n + \frac{\pi}{4} \cdot c < \nonumber \\
& \frac{\pi}{4} \cdot \left(
\frac{12}{\pi} \cdot \log(c) - n \cdot\left(1-\frac{40}{\pi c}\right)
 + c \right).
\label{eq_quad_eps_weak_c}
\end{align}
Now \eqref{eq_quad_eps_weak_thm} follows from the combination
of \eqref{eq_quad_eps_weak_n} and \eqref{eq_quad_eps_weak_c}.
\end{proof}

%%%%%%%%%%%%%%%%%%%%%%%%%%%%%%%%%%%%%%%%%%%%%%%%
\subsubsection{Quadrature Weights}
\label{sec_weights}
In this subsection, we analyze the
weights $W_1,\dots,W_n$ of the quadrature,
defined in Definition~\ref{def_quad} in Section~\ref{sec_quad}.
This analysis has two principal purposes. On the one hand,
it provides the basis for a fast algorithm for the evaluation
of the weights. On the other hand, it provides a theoretical explanation
of some empirically observed properties of the weights.

The results of this subsection are 
illustrated in Table~\ref{t:test96} and in Figure~\ref{fig:test96}
(see Experiment 15 in Section~\ref{sec_exp15}).

In the following theorem, we describe a function, whose values at the roots
$t_1, \dots, t_n$ of $\psi_n$ in $(-1,1)$ are equal to the quadrature
weights $W_1,\dots,W_n$, up to a certain scaling.
%%%%%%%%%%%
\begin{thm}
Suppose that $n$ is a non-negative integer. 
Suppose also that the function $\tilde{\Phi}_n: (-1,1) \to \Rc$
is defined via the formula
\begin{align}
\tilde{\Phi}_n(t) = \sum_{k = 0}^{\infty} \alpha_k^{(n)} Q_k(t),
\label{eq_num_tilde_phi_def}
\end{align}
where $Q_k(t)$ is the $k$th Legendre function of the second kind, defined 
in Section~\ref{sec_legendre},
and $\alpha_k^{(n)}$ is the $k$th coefficient of the Legendre
expansion of $\psi_n$, defined via
\eqref{eq_num_leg_alpha_knc}
in Section~\ref{sec_legendre}.
Suppose furthermore that $t_1 < \dots < t_n$ are the roots
of $\psi_n$ in $(-1,1)$. Then, for every integer
$j = 1, 2, \dots, n$,
\begin{align}
\tilde{\Phi}_n(t_j) = 
\frac{ 1 }{ 2 } \int_{-1}^1 \frac{ \psi_n(t) \; dt}{ t_j - t }.
\label{eq_num_tilde_phi_equality}
\end{align}
\label{lem_tilde_phi}
\end{thm}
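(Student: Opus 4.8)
The plan is to recognize $\tilde\Phi_n$ as the boundary value, taken from the upper half-plane, of the holomorphic function obtained by replacing each Legendre polynomial $P_k$ in the expansion \eqref{eq_num_leg_exp} of $\psi_n$ by the corresponding Legendre function of the second kind $Q_k$. The crucial point will be that the singular (imaginary) part of this boundary value is proportional to $\psi_n(t_j)$, which vanishes because $t_j$ is a root of $\psi_n$; the whole identity then falls out of this single cancellation.

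First I would define, for $z \in \Cc \setminus \left[-1,1\right]$, the function $F(z) = \sum_{k=0}^{\infty} \alpha_k^{(n)} Q_k(z)$. Using the integral representation \eqref{eq_qn_pn}, namely $Q_k(z) = \frac{1}{2}\int_{-1}^1 P_k(t)/(z-t)\,dt$, together with the Legendre expansion \eqref{eq_num_leg_exp} of $\psi_n$, I would interchange summation and integration to obtain
\[
F(z) = \frac{1}{2}\int_{-1}^1 \frac{\psi_n(t)}{z-t}\,dt, \qquad z \in \Cc\setminus\left[-1,1\right].
\]
The interchange is legitimate because the coefficients $\alpha_k^{(n)}$ decay superexponentially (see Remark~\ref{rem_tridiagonal}), while the $Q_k(z)$ decay geometrically in $k$, uniformly on compact subsets of $\Cc\setminus\left[-1,1\right]$; hence the series for $F$ converges uniformly there and may be integrated term by term.

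Next I would let $z$ approach the root $t_j \in (-1,1)$ vertically, $z = t_j + i\eta$ with $\eta \downarrow 0$, and compare the two sides. On the series side, the standard jump relation $\lim_{\eta\downarrow 0} Q_k(x+i\eta) = Q_k(x) - \tfrac{i\pi}{2}P_k(x)$ for $x \in (-1,1)$ gives, after passing the limit through the uniformly convergent sum,
\[
\lim_{\eta\downarrow 0} F(t_j + i\eta) = \sum_{k=0}^{\infty} \alpha_k^{(n)}\left(Q_k(t_j) - \tfrac{i\pi}{2}P_k(t_j)\right) = \tilde\Phi_n(t_j) - \frac{i\pi}{2}\,\psi_n(t_j) = \tilde\Phi_n(t_j),
\]
where the imaginary term drops out precisely because $\psi_n(t_j)=0$. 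On the integral side, since $\psi_n$ has a simple zero at $t_j$, writing $\psi_n(t)=(t-t_j)h(t)$ with $h$ bounded on $\left[-1,1\right]$ shows that $\left|\psi_n(t)/(t_j+i\eta-t)\right| \le \|h\|_\infty$ uniformly in $\eta$ (the vertical approach keeps $|t_j+i\eta-t|\ge|t_j-t|$), so dominated convergence yields $\lim_{\eta\downarrow 0} \frac{1}{2}\int_{-1}^1 \psi_n(t)/(t_j+i\eta-t)\,dt = \frac{1}{2}\int_{-1}^1 \psi_n(t)/(t_j-t)\,dt$. Equating the two limits gives \eqref{eq_num_tilde_phi_equality}.

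The conceptual heart is trivial once the cancellation is seen, so the real work, and the main obstacle, is the analytic bookkeeping: establishing the geometric decay of $Q_k$ off the cut so as to justify the termwise integration and the uniform convergence of $F$, and correctly invoking the boundary jump of $Q_k$ across $(-1,1)$ so that the $\eta\downarrow 0$ limit may be exchanged with the infinite sum. An alternative that sidesteps the complex limit entirely is to use the real principal-value representation $Q_k(x) = \frac{1}{2}\,\mathrm{P.V.}\int_{-1}^1 P_k(t)/(x-t)\,dt$ for $x\in(-1,1)$, sum over $k$ to get $\tilde\Phi_n(x) = \frac{1}{2}\,\mathrm{P.V.}\int_{-1}^1 \psi_n(t)/(x-t)\,dt$, and then observe that at $x=t_j$ the principal value coincides with the ordinary integral because the singularity of the integrand is removable; this merely trades the jump-relation argument for a justification of the principal-value representation.
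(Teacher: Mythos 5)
Your proposal is correct and follows essentially the same route as the paper: both pass to $z = t_j + i\delta$ in the upper half-plane, use \eqref{eq_qn_pn} together with the superexponential decay of the coefficients $\alpha_k^{(n)}$ to identify $\sum_{k} \alpha_k^{(n)} Q_k(t_j+i\delta)$ with $\frac{1}{2}\int_{-1}^1 \psi_n(t)\,dt/(t_j+i\delta-t)$, and then let $\delta \downarrow 0$, with the simple root $\psi_n(t_j)=0$ supplying the key cancellation. The only organizational difference is that you route that cancellation through the Sokhotski--Plemelj jump relation $Q_k(t_j+i0^+) = Q_k(t_j) - \frac{i\pi}{2}\,P_k(t_j)$ (and your principal-value variant is an equally valid repackaging), whereas the paper verifies the same cancellation by directly estimating the near-singularity piece of the integral in \eqref{eq_lem_tilde_phi_d}.
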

\begin{proof}
Suppose that $1 \leq j \leq n$ is an integer, and that
$\delta > 0$ is a positive real number.
We combine \eqref{eq_num_tilde_phi_def} with
\eqref{eq_num_leg_exp},
\eqref{eq_num_leg_beta_knc},
\eqref{eq_num_leg_alpha_knc},
\eqref{eq_qn_pn} in Section~\ref{sec_legendre}
to obtain
\begin{align}
\sum_{k = 0}^{\infty} \alpha_k^{(n)} Q_k(t_j + i\delta) =
\frac{ 1 }{ 2 } \int_{-1}^1 \frac{ \psi_n(t) \; dt}{ t_j+i \delta - t },
\label{eq_lem_tilde_phi_b}
\end{align}
provided that $\delta$ is sufficiently small. 
Suppose now that 
$\varepsilon > 0$ is a real number, and that
\begin{align}
\varepsilon < \frac{1}{2} \cdot \min\left\{
|t_j-1|, \; |t_j+1|
\right\}.
\label{eq_lem_tilde_phi_c}
\end{align}
We observe that,
since $t_j$ is a root of $\psi_n$, the right-hand side
of \eqref{eq_num_tilde_phi_equality} is well defined.
We combine this observation with \eqref{eq_lem_tilde_phi_b},
\eqref{eq_lem_tilde_phi_c} to evaluate
\begin{align}
& \lim_{\delta\to 0, \; \delta>0} \left(
\frac{ 1 }{ 2 } \int_{t_j-\varepsilon}^{t_j+\varepsilon} 
  \frac{ \psi_n(t) \; dt}{ t_j+i \delta - t } -
\frac{ 1 }{ 2 } \int_{t_j-\varepsilon}^{t_j+\varepsilon} 
 \frac{ \psi_n(t) \; dt}{ t_j - t } \right) =
\nonumber \\
& \lim_{\delta\to 0, \; \delta>0}
 \frac{1}{2} \int_{-\varepsilon}^\varepsilon \psi_n(t_j+s) \cdot
 \left( \frac{1}{s+i\delta} - \frac{1}{s} \right) ds = \nonumber \\
& -\lim_{\delta\to 0, \; \delta>0} 
  \frac{i\delta\cdot\psi_n'(t_j)}{2} 
  \int_{-\varepsilon}^{\varepsilon} \frac{ds}{s+i\delta} = \nonumber \\
& \lim_{\delta\to 0, \; \delta>0} 
 \delta \cdot \psi_n'(t_j) \cdot \arctan\left( \frac{\varepsilon}{\delta}\right)
= 0.
\label{eq_lem_tilde_phi_d}
\end{align}
We combine \eqref{eq_num_tilde_phi_def},
\eqref{eq_lem_tilde_phi_b}, \eqref{eq_lem_tilde_phi_d}
to obtain \eqref{eq_num_tilde_phi_equality}.
\end{proof}
%%%%%%%%%%%
The following corollary is a direct consequence of Definition~\ref{def_quad}
and Theorem~\ref{lem_tilde_phi}.
\begin{cor}
Suppose that $n>0$ and $1 \leq j \leq n$ are positive integers.
Suppose also that
the function $\tilde{\Phi}_n:(-1,1) \to \Rc$
is defined
via \eqref{eq_num_tilde_phi_def} in Theorem~\ref{lem_tilde_phi}.
Then, 
\begin{align}
W_j = - 2 \cdot \frac{ \tilde{\Phi}_n(t_j) }{ \psi_n'(t_j) },
\label{eq_tilde_phi_w}
\end{align}
where $t_j$, $W_j$ are defined, respectively,
via \eqref{eq_quad_t}, \eqref{eq_quad_w} of Definition~\ref{def_quad}.
\label{cor_tilde_phi_w}
\end{cor}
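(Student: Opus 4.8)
The plan is to obtain \eqref{eq_tilde_phi_w} by directly matching the two integral representations already available: the definition of the weight $W_j$ in \eqref{eq_quad_w} and the evaluation of $\tilde{\Phi}_n$ at the roots of $\psi_n$ established in Theorem~\ref{lem_tilde_phi}. Specifically, \eqref{eq_quad_w} in Definition~\ref{def_quad} gives
\begin{align}
W_j = \frac{1}{\psi_n'(t_j)} \int_{-1}^1 \frac{\psi_n(s) \; ds}{s - t_j},
\label{eq_cor_prop_wj}
\end{align}
while \eqref{eq_num_tilde_phi_equality} of Theorem~\ref{lem_tilde_phi} gives
\begin{align}
\tilde{\Phi}_n(t_j) = \frac{1}{2} \int_{-1}^1 \frac{\psi_n(t) \; dt}{t_j - t}.
\label{eq_cor_prop_phij}
\end{align}

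The key step I would carry out is the trivial observation that the integrands in \eqref{eq_cor_prop_wj} and \eqref{eq_cor_prop_phij} differ only through the sign of the denominator, since $t_j - t = -(t - t_j)$. First I would rewrite \eqref{eq_cor_prop_phij} as $\tilde{\Phi}_n(t_j) = -\frac{1}{2}\int_{-1}^1 \psi_n(t)\,(t-t_j)^{-1}\,dt$, so that $\int_{-1}^1 \psi_n(s)\,(s-t_j)^{-1}\,ds = -2\,\tilde{\Phi}_n(t_j)$. Substituting this into \eqref{eq_cor_prop_wj} then yields $W_j = -2\,\tilde{\Phi}_n(t_j)/\psi_n'(t_j)$, which is exactly \eqref{eq_tilde_phi_w}.

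Since this is advertised as a direct consequence of Definition~\ref{def_quad} and Theorem~\ref{lem_tilde_phi}, there is no genuine analytic difficulty. The only point deserving a remark is the well-definedness of the integral $\int_{-1}^1 \psi_n(s)\,(s-t_j)^{-1}\,ds$ as an ordinary (rather than principal-value) integral: because $t_j$ is a root of $\psi_n$ (see Theorem~\ref{thm_pswf_main}), the apparent singularity at $s=t_j$ is removable, so both integrals converge and the algebraic manipulation above is legitimate. This is precisely the fact already exploited inside the proof of Theorem~\ref{lem_tilde_phi}, so I would simply invoke that theorem rather than re-examine the limit. Thus the ``hard part'' here is essentially nonexistent; the entire content is the sign bookkeeping between $s-t_j$ and $t_j-t$ together with division by $\psi_n'(t_j)$.
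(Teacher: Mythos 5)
Your proposal is correct and coincides with the paper's argument: the paper offers no separate proof precisely because the corollary follows by combining \eqref{eq_quad_w} with \eqref{eq_num_tilde_phi_equality} and flipping the sign of the denominator, which is exactly your computation. Your added remark on the removable singularity at $s=t_j$ is accurate and indeed already handled inside the proof of Theorem~\ref{lem_tilde_phi}, so nothing further is needed.
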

Corollary~\ref{cor_tilde_phi_w} is illustrated in
Table~\ref{t:test96}. We observe that Theorem~\ref{lem_tilde_phi}
and Corollary~\ref{cor_tilde_phi_w} describe a connection
between the weights $W_1, \dots, W_n$ and the values
of $\tilde{\Phi}_n$ at $t_1, \dots, t_n$, where the function $\tilde{\Phi}_n$
is defined via \eqref{eq_num_tilde_phi_def}.
In the following theorem, we prove that $\tilde{\Phi}_n$ satisfies
a certain second-order non-homogeneous ODE, closely related
to the prolate ODE \eqref{eq_prolate_ode} in Section~\ref{sec_pswf}.
%%%%%%%%%%%
\begin{thm}
Suppose that $n$ is a non-negative integer, and that
the function $\tilde{\Phi}_n : (-1,1) \to \Rc$
is defined via \eqref{eq_num_tilde_phi_def} in Theorem~\ref{lem_tilde_phi}. 
Suppose also that the second-order differential operator $L_n$
is defined via the formula
\begin{align}
L_n\left[\varphi\right](t) = \brk{1-t^2} \varphi''(t) - 2t \varphi'(t)
+ \brk{\chi_n - c^2 t^2} \varphi(t).
\label{eq_ln_def}
\end{align}
Then, in the interval $(-1,1)$ the function
$\tilde{\Phi}_n$
satisfies the nonhomogeneous ODE 
\begin{align}
L_n \left[ \tilde{\Phi}_n \right] (t) = 
-c^2 \brk{ \alpha_0^{(n)} t + \alpha_1^{(n)} / 3 },
\label{eq_num_tilde_phi_ode}
\end{align}
where the coefficients $\alpha_0^{(n)}, \alpha_1^{(n)}$ are the first
two coefficients of the Legendre expansion of $\psi_n$,
defined via \eqref{eq_num_leg_alpha_knc} in Section~\ref{sec_legendre}.
\label{lem_tilde_phi_ode}
\end{thm}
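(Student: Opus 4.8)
The plan is to apply the prolate operator $L_n$ term by term to the series \eqref{eq_num_tilde_phi_def} defining $\tilde{\Phi}_n$, and to exploit the fact that, apart from finitely many low-order boundary terms, the Legendre functions of the second kind $Q_k$ obey exactly the same recurrences as the Legendre polynomials $P_k$ whose combination is $\psi_n$. Since $\psi_n=\sum_k \alpha_k^{(n)}P_k$ already satisfies $L_n[\psi_n]=0$ by Theorem~\ref{thm_prolate_ode}, almost all terms will cancel, and only the discrepancy between the $Q_k$ and $P_k$ recurrences at $k=0,1$ will survive.

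First I would record that each $Q_k$ solves the Legendre ODE \eqref{eq_legendre_ode}, so that $(1-t^2)Q_k''(t)-2tQ_k'(t)=-k(k+1)Q_k(t)$, and hence, by the definition \eqref{eq_ln_def} of $L_n$,
\begin{align}
L_n[Q_k](t) = \brk{\chi_n - k(k+1)} Q_k(t) - c^2 t^2 Q_k(t).
\label{eq_plan_LnQk}
\end{align}
Next I would rewrite $t^2 Q_k$ in the basis $\{Q_m\}$ using the three-term recurrence \eqref{eq_num_rk_rec}, which is valid for $k\geq 2$. The decisive point is the behaviour at $k=0,1$, where \eqref{eq_num_rk_rec} fails because $Q_0,Q_1$ are not polynomials. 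Using the explicit formulae \eqref{eq_legendre_fun_0_1} one gets $tQ_0 = Q_1+1$, and combining this with the recurrence \eqref{eq_legendre_fun_rec} I would derive
\begin{align}
t^2 Q_0(t) = B_0\, Q_0(t) + C_2\, Q_2(t) + t, \qquad
t^2 Q_1(t) = B_1\, Q_1(t) + C_3\, Q_3(t) + \tfrac{1}{3},
\label{eq_plan_Q01}
\end{align}
i.e. precisely the polynomial identities except for the inhomogeneous remainders $t$ and $\tfrac13$, which arise from the $+1$ in $tQ_0=Q_1+1$.

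Finally I would sum $\alpha_k^{(n)}L_n[Q_k]$ over $k$ and collect, for each $m\geq 0$, the coefficient of $Q_m$. Using \eqref{eq_plan_LnQk} and \eqref{eq_num_rk_rec}, this coefficient equals
\begin{align}
\brk{\chi_n - m(m+1)} \alpha_m^{(n)} -
c^2 \brk{ A_m\, \alpha_{m+2}^{(n)} + B_m\, \alpha_m^{(n)} + C_m\, \alpha_{m-2}^{(n)} },
\label{eq_plan_coeff}
\end{align}
which is exactly the quantity one obtains by the identical computation applied to $\psi_n=\sum_k \alpha_k^{(n)}P_k$; since $L_n[\psi_n]=0$, every coefficient in \eqref{eq_plan_coeff} vanishes. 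The only contributions not absorbed into some $Q_m$ are the extra remainders in \eqref{eq_plan_Q01}, weighted by $-c^2\alpha_0^{(n)}$ and $-c^2\alpha_1^{(n)}$ respectively, giving $-c^2\brk{\alpha_0^{(n)} t + \alpha_1^{(n)}/3}$, which is \eqref{eq_num_tilde_phi_ode}.

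The main obstacle is analytic rather than algebraic: I must justify differentiating \eqref{eq_num_tilde_phi_def} twice term by term and rearranging the resulting series. This rests on the superexponential decay of the coefficients $\alpha_k^{(n)}$ (cf. Remark~\ref{rem_tridiagonal}) against the at-most-polynomial growth of $Q_k$, $Q_k'$ and $Q_k''$ on compact subsets of $(-1,1)$, which makes the series and its first two derivatives converge locally uniformly and legitimizes the termwise manipulations. The singularities of the $Q_k$ at $t=\pm 1$ do not interfere, since the identity is asserted only for $-1<t<1$.
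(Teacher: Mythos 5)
Your proposal is correct and follows essentially the same route as the paper's proof: apply $L_n$ termwise using the Legendre ODE together with the three-term recurrence \eqref{eq_num_rk_rec}, cancel every $Q_m$-coefficient against the identically vanishing coefficients of the parallel $P_m$-expansion of $L_n\left[\psi_n\right]=0$ (via orthogonality), and identify the inhomogeneity with the $k=0,1$ remainders $t^2Q_0-B_0Q_0-C_2Q_2=t$ and $t^2Q_1-B_1Q_1-C_3Q_3=\tfrac{1}{3}$, which the paper computes by substituting the explicit logarithmic formulae while you derive them more cleanly from $tQ_0=Q_1+1$ and the recurrence \eqref{eq_legendre_fun_rec}. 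Your closing justification of the termwise differentiation, via the superexponential decay of $\alpha_k^{(n)}$ against the locally tame growth of $Q_k$, $Q_k'$, $Q_k''$ on compact subsets of $(-1,1)$, is a legitimate point that the paper leaves implicit.
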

\begin{proof}
We combine \eqref{eq_legendre_ode}, \eqref{eq_num_rk_rec}
of Section~\ref{sec_legendre} with
\eqref{eq_ln_def} to obtain
\begin{align}
L_n\left[Q_k\right] = \left(\chi_n - k(k+1) - c^2 t^2\right) \cdot Q_k,
\label{eq_ln_rk}
\end{align}
where $Q_k$ is the $k$th Legendre function of the second kind,
defined in Section~\ref{sec_legendre}.
We combine \eqref{eq_num_rk_rec} of Section~\ref{sec_legendre}
with \eqref{eq_ln_rk} to obtain
\begin{align}
& L_n \left[ \sum_{k=0}^{\infty} \alpha_k^{(n)} Q_k \right]
= \sum_{k=0}^{\infty} \alpha_k^{(n)} \brk{\chi_n - k(k+1) - c^2 t^2} Q_k =
\nonumber \\
& \sum_{k=0}^{\infty} \alpha_k^{(n)} \brk{\chi_n - k(k+1)} Q_k
\nonumber \\
& \; \quad  - c^2 \sum_{k=0}^{\infty} \alpha_k^{(n)} 
           \brk{A_{k-2} Q_{k-2} + B_k Q_k + C_{k+2} Q_{k+2}} = \nonumber \\
& \sum_{k = 2}^{\infty} \left[
 \brk{\chi_n - k(k+1)} \alpha_k^{(n)} - 
  c^2 \brk{\alpha_{k+2}^{(n)} A_k + \alpha_k^{(n)} B_k + 
           \alpha_{k-2}^{(n)} C_k} \right] Q_k
  \nonumber \\
& \; \quad + \left[ \brk{\chi_n - 1(1+1)} \alpha_1^{(n)} - 
                    c^2 \brk{\alpha_3^{(n)} A_1 + \alpha_1^{(n)} B_1} 
 \right] Q_1 
  \nonumber \\
& \; \quad + \left[ \brk{\chi_n - 0(0+1)} \alpha^{(n)}_0 -
                    c^2 \brk{\alpha_2^{(n)} A_0 + \alpha_0^{(n)} B_0} 
\right] Q_0
  \nonumber \\
& \; \quad - c^2 \brk{
  \alpha_1^{(n)} \brk{t^2 Q_1 - B_1 Q_1 - C_3 Q_3} + 
  \alpha_0^{(n)} \brk{t^2 Q_0 - B_0 Q_0 - C_2 Q_2}},
\label{eq_num_ln_p_expansion}
\end{align}
where $A_k, B_k, C_k$ are defined, respectively,
via
\eqref{eq_a_k_rec}, \eqref{eq_b_k_rec}, \eqref{eq_c_k_rec}
in Section~\ref{sec_legendre}.
By the same token, \eqref{eq_num_ln_p_expansion} holds, if we replace
$Q_k$'s with $P_k$'s, where $P_k$ is the $k$th Legendre polynomial
defined in Section~\ref{sec_legendre}. In other words,
\begin{align}
& L_n \left[ \sum_{k=0}^{\infty} \alpha_k^{(n)} P_k \right]
= \nonumber \\
& \sum_{k = 2}^{\infty} \left[
 \brk{\chi_n - k(k+1)} \alpha_k^{(n)} - 
  c^2 \brk{\alpha_{k+2}^{(n)} A_k + \alpha_k^{(n)} B_k + 
           \alpha_{k-2}^{(n)} C_k} \right] P_k
  \nonumber \\
& \; \quad + \left[ \brk{\chi_n - 1(1+1)} \alpha_1^{(n)} - 
                    c^2 \brk{\alpha_3^{(n)} A_1 + \alpha_1^{(n)} B_1} 
\right] P_1 
  \nonumber \\
& \; \quad + \left[ \brk{\chi_n - 0(0+1)} \alpha^{(n)}_0 -
                    c^2 \brk{\alpha_2^{(n)} A_0 + \alpha_0^{(n)} B_0} 
\right] P_0
  \nonumber \\
& \; \quad - c^2 \brk{
  \alpha_1^{(n)} \brk{t^2 P_1 - B_1 P_1 - C_3 P_3} + 
  \alpha_0^{(n)} \brk{t^2 P_0 - B_0 P_0 - C_2 P_2}},
\label{eq_num_ln_p_expansion2}
\end{align}
We combine \eqref{eq_legendre_pol_0_1},
\eqref{eq_num_rk_rec} of Section~\ref{sec_legendre}
to conclude that
\begin{align}
& t^2 \cdot P_1(t) - B_1 \cdot P_1(t) - C_3 \cdot P_3(t) = 0, \nonumber \\
& t^2 \cdot P_0(t) - B_0 \cdot P_0(t) - C_2 \cdot P_2(t) = 0.
\label{eq_num_ln_1}
\end{align}
We recall that $\left\{ P_k \right\}$ form an orthogonal
system in $L^2\left[-1,1\right]$, and combine this observation with
\eqref{eq_prolate_ode} in Section~\ref{sec_pswf}, 
\eqref{eq_num_leg_exp} in Section~\ref{sec_legendre},
\eqref{eq_ln_def}, 
\eqref{eq_num_ln_p_expansion2} and \eqref{eq_num_ln_1} to conclude that,
for every integer $k \geq 2$,
\begin{align}
\brk{\chi_n - k(k+1)} \alpha_k^{(n)} - 
  c^2 \brk{\alpha_{k+2}^{(n)} A_k + \alpha_k^{(n)} B_k + 
           \alpha_{k-2}^{(n)} C_k} = 0,
\label{eq_num_ln_2}
\end{align}
and also
\begin{align}
& \brk{\chi_n - 1(1+1)} \alpha_1^{(n)} - 
                    c^2 \brk{\alpha_3^{(n)} A_1 + \alpha_1^{(n)} B_1}  = 0, 
  \nonumber \\
& \brk{\chi_n - 0(0+1)} \alpha^{(n)}_0 -
                    c^2 \brk{\alpha_2^{(n)} A_0 + \alpha_0^{(n)} B_0} = 0.
\label{eq_num_ln_3}
\end{align}
We substitute \eqref{eq_num_ln_2}, \eqref{eq_num_ln_3}
into \eqref{eq_num_ln_p_expansion} and use
\eqref{eq_num_tilde_phi_def} to obtain
\begin{align}
L_n \left[ \tilde{\Phi}_n \right](t)
& \; = L_n \left[ \sum_{k=0}^{\infty} \alpha_k^{(n)} Q_k \right](t) 
= \nonumber \\
& \; - c^2 \alpha_1^{(n)} 
           \brk{t^2 Q_1(t) - B_1 Q_1(t) - C_3 Q_3(t)} \nonumber \\
& \; -c^2 \alpha_0^{(n)} \brk{t^2 Q_0(t) - B_0 Q_0(t) - C_2 Q_2(t)}.
\label{eq_num_ln_q_expansion}
\end{align}
We combine 
\eqref{eq_legendre_fun_0_1}, \eqref{eq_legendre_fun_2_3},
\eqref{eq_b_k_rec}, \eqref{eq_c_k_rec}
of Section~\ref{sec_legendre} to obtain
\begin{align}
& t^2 Q_0(t) - B_0 Q_0(t) - C_2 Q_2(t) = \nonumber \\
& \brk{t^2 - \frac{1}{3}} \frac{1}{2} \log \frac{1+t}{1-t}
  - \frac{2}{3} \brk{\frac{1}{4}\brk{3t^2 - 1} \log \frac{1+t}{1-t} - 
    \frac{3}{2} t } = t
\label{eq_num_ln_4}
\end{align}
and
\begin{align}
& t^2 Q_1(t) - B_1 Q_1(t) - C_3 Q_3(t) = \nonumber \\
& \brk{t^2 - \frac{3}{5}} \brk{\frac{t}{2} \log \frac{1+t}{1-t} - 1} - 
  \frac{2}{5} \brk{ \frac{1}{4} \brk{5t^3 - 3t} \log \frac{1+t}{1-t} -
   \frac{5}{2}t^2 + \frac{2}{3} } = \nonumber \\
& \brk{ \frac{t}{2} \brk{t^2 - \frac{3}{5}} - \frac{1}{10} \brk{5t^2 - 3t} }
  \log \frac{1+t}{1-t} - t^2 + \frac{3}{5} + t^2 - \frac{4}{15} = \frac{1}{3}.
\label{eq_num_ln_5}
\end{align}
Finally, we substitute \eqref{eq_num_ln_4}, 
\eqref{eq_num_ln_5} into 
\eqref{eq_num_ln_q_expansion} to obtain
\eqref{eq_num_tilde_phi_ode}.
\end{proof}
In the following corollary, we establish a recurrence
relation between the derivatives of $\tilde{\Phi}_n$ of
arbitrary order (compare to Theorem~\ref{thm_dpsi_reck} in
Section~\ref{sec_pswf}).
\begin{cor}
Suppose that the function $\tilde{\Phi}_n : (-1,1) \to \Rc$ is defined
via \eqref{eq_num_tilde_phi_def} of Theorem~\ref{lem_tilde_phi}.
Suppose also that $-1 < t < 1$ is a real number. Then,
\begin{align}
& \left(1 - t^2\right) \cdot  \tilde{\Phi}_n'''(t) - 
4t \cdot \tilde{\Phi}_n''(t) +
\left(\chi_n - c^2 t^2 - 2\right) \cdot \tilde{\Phi}_n'(t) - 
2c^2 t \cdot \tilde{\Phi}_n(t) = 
\nonumber \\
& -c^2 \alpha_0^{(n)},
\label{eq_num_dtilde_phi_3}
\end{align}
where $\alpha_0^{(n)}$ is defined via
\eqref{eq_num_leg_alpha_knc} in Section~\ref{sec_legendre}
(compare to \eqref{eq_dpsi_rec3} of
Theorem~\ref{thm_dpsi_reck} 
in Section~\ref{sec_pswf}).
Also, for every integer $k \geq 2$, 
\begin{align}
& \brk{1 - t^2} \tilde{\Phi}_n^{(k+2)}(t) 
  - 2 \brk{k+1} t \tilde{\Phi}_n^{(k+1)}(t)
  + \brk{\chi_n - k\brk{k+1} - c^2 t^2} \tilde{\Phi}_n^{(k)}(t) \nonumber \\
& \quad \quad 
  -c^2 k t \tilde{\Phi}_n^{(k-1)}(t) 
  -c^2 k \brk{k-1} \tilde{\Phi}_n^{(k-2)}(t) = 0.
\label{eq_dphi_reck}
\end{align}
In other words, the higher order derivatives of $\tilde{\Phi}_n$ 
and $\psi_n$ satisfy the same recurrence relation
\eqref{eq_dpsi_reck} (see Theorem~\ref{thm_dpsi_reck} 
in Section~\ref{sec_pswf}).
\label{cor_dtilde_phi}
\end{cor}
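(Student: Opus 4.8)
The plan is to obtain both identities by repeatedly differentiating the single nonhomogeneous second-order ODE
\begin{align}
L_n\left[\tilde{\Phi}_n\right](t) = -c^2\brk{\alpha_0^{(n)} t + \alpha_1^{(n)}/3}
\nonumber
\end{align}
established in Theorem~\ref{lem_tilde_phi_ode} (see \eqref{eq_num_tilde_phi_ode}), where $L_n$ is the operator defined in \eqref{eq_ln_def}. The key structural observation is that $L_n$ is a linear differential operator whose coefficients $1-t^2$, $-2t$, and $\chi_n - c^2 t^2$ depend only on $t$, $c$, and $\chi_n$, and not on the particular function to which it is applied. Consequently, differentiating $L_n\left[\tilde{\Phi}_n\right]$ a given number of times produces exactly the same differential expression in the derivatives of $\tilde{\Phi}_n$ as differentiating $L_n\left[\psi_n\right]$ produces in the derivatives of $\psi_n$; the two computations differ only in the right-hand side. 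This is precisely what underlies the assertion that $\tilde{\Phi}_n$ and $\psi_n$ obey the same recurrences, and it reduces the proof to the calculation already carried out for Theorem~\ref{thm_dpsi_reck}.

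First I would differentiate \eqref{eq_num_tilde_phi_ode} once with respect to $t$. On the left-hand side the three Leibniz expansions of $(1-t^2)\tilde{\Phi}_n''$, $-2t\tilde{\Phi}_n'$, and $(\chi_n - c^2 t^2)\tilde{\Phi}_n$ collapse exactly as in the derivation of \eqref{eq_dpsi_rec3}, yielding the expression $(1-t^2)\tilde{\Phi}_n''' - 4t\tilde{\Phi}_n'' + (\chi_n - c^2 t^2 - 2)\tilde{\Phi}_n' - 2c^2 t\tilde{\Phi}_n$. On the right-hand side, since $-c^2(\alpha_0^{(n)} t + \alpha_1^{(n)}/3)$ is affine in $t$, its derivative is the constant $-c^2\alpha_0^{(n)}$. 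This is exactly \eqref{eq_num_dtilde_phi_3}.

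Next, for $k \geq 2$ I would apply the Leibniz rule to compute the $k$th derivative of \eqref{eq_num_tilde_phi_ode} directly. Each of the three coefficient functions in $L_n$ is a polynomial of degree at most two, so only the first three terms of each Leibniz expansion survive, and collecting them reproduces verbatim the left-hand side of \eqref{eq_dpsi_reck} with $\psi_n$ replaced by $\tilde{\Phi}_n$. There is no genuine obstacle here; the only point requiring care is the bookkeeping of the binomial coefficients in the three Leibniz sums. The essential feature is that the inhomogeneity is affine in $t$: its first derivative is the nonzero constant $-c^2\alpha_0^{(n)}$, while every derivative of order $k \geq 2$ vanishes, which is exactly why \eqref{eq_num_dtilde_phi_3} carries a constant right-hand side whereas \eqref{eq_dphi_reck} is homogeneous. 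Hence the $k$th differentiation produces \eqref{eq_dphi_reck}, establishing that the higher derivatives of $\tilde{\Phi}_n$ satisfy the same relation \eqref{eq_dpsi_reck} as those of $\psi_n$.
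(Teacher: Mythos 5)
Your proposal is correct and follows essentially the same route as the paper: the paper likewise obtains \eqref{eq_num_dtilde_phi_3} by differentiating \eqref{eq_num_tilde_phi_ode} once, and obtains \eqref{eq_dphi_reck} from the observation that the second (and all higher) derivatives of the affine right-hand side vanish, combined with Theorem~\ref{thm_dpsi_reck}. The only cosmetic difference is that you redo the Leibniz bookkeeping explicitly for $k \geq 2$, where the paper simply cites Theorem~\ref{thm_dpsi_reck}, since the operator $L_n$ and its coefficients are identical in both settings.
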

\begin{proof}
To prove \eqref{eq_num_dtilde_phi_3}, we differentiate both sides
of \eqref{eq_num_tilde_phi_ode} with respect to $t$. 
To prove \eqref{eq_dphi_reck},
we observe that the second derivative of the right-hand side of
\eqref{eq_num_tilde_phi_ode} is identically zero, and combine this observation
with Theorem~\ref{thm_dpsi_reck} in Section~\ref{sec_pswf}.
\end{proof}

The rest of this subsection is devoted to establishing the positivity
of the quadrature weights $W_1, \dots, W_n$, defined via \eqref{eq_quad_w}
in Definition~\ref{def_quad}.
The principal result of this part is
Theorem~\ref{thm_positive_w} 
(see also Remarks~\ref{rem_w_even_n},~\ref{rem_w_always_pos}).
%
%
% Actually it can be shown that for example for even $n$
% \begin{align}
% & \brk{1 - t_j^2} \brk{ \psi_n'(t_j) }^2 W_j = \nonumber \\
% & \brk{1 - t_{j_0}^2} \brk{ \psi_n'(t_{j_0}) }^2 W_{j_0} -
% c^2 \lambda_n \psi_n(0) \int_{t_{j_0}}^{t_j} \psi_n(t) t \; dt,
% \label{eq_num_w_explicit}
% \end{align}
% where $t_{j_0}$ is the minimal root of $\psi_n$ greater than 0 and
% $j > j_0$.
% 
%
%%%%%%%%%%%%%%%%%%%%%%%%%%%
\begin{thm}
Suppose that $c>0$ is a real number, and that $n>0$ is an odd integer.
Suppose also that $t_1, t_2, \dots, t_n$ and $W_1, W_2, \dots, W_n$
are defined, respectively, via \eqref{eq_quad_t}, \eqref{eq_quad_w}
in Definition~\ref{def_quad}. Suppose furthermore that the integer
$j_0$ is defined via the formula
\begin{align}
j_0 = \frac{n+1}{2}.
\label{eq_wj_nice_j0}
\end{align}
Then, for every integer $j = 1, \dots, n$,
\begin{align}
\frac{ \left(\psi_n'(t_j)\right)^2 \cdot (1-t_j^2) }
     { \left(\psi_n'(0)\right)^2 } \cdot W_j =
W_{j_0} + \frac{ic\lambda_n}{\psi_n'(0)} \int_0^{t_j} \psi_n(t) \; dt.
\label{eq_thm_wj_nice}
\end{align}
\label{thm_wj_nice}
\end{thm}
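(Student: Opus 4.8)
The plan is to exploit the fact that, since $n$ is odd, Theorem~\ref{thm_pswf_main} forces $\psi_n$ to be an odd function, so that $\psi_n(0)=0$ and the middle root is $t_{j_0}=0$ (with $j_0=(n+1)/2$). Moreover, by \eqref{eq_beta_zero_parity} and \eqref{eq_num_leg_alpha_knc} the even-indexed Legendre coefficients of $\psi_n$ vanish, in particular $\alpha_0^{(n)}=0$. Feeding this into the nonhomogeneous ODE \eqref{eq_num_tilde_phi_ode} of Theorem~\ref{lem_tilde_phi_ode} collapses its right-hand side to the constant $C:=-c^2\alpha_1^{(n)}/3$, i.e. $L_n[\tilde{\Phi}_n]\equiv C$, while $L_n[\psi_n]\equiv 0$. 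This is the crucial simplification that makes the odd case clean.

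Next I would write $L_n$ in self-adjoint form, $L_n[\varphi]=\tfrac{d}{dt}\bigl[(1-t^2)\varphi'\bigr]+(\chi_n-c^2t^2)\varphi$, and form the Wronskian-type combination $G(t)=(1-t^2)\bigl(\tilde{\Phi}_n'(t)\psi_n(t)-\tilde{\Phi}_n(t)\psi_n'(t)\bigr)$. The standard Lagrange-identity computation gives $G'(t)=C\,\psi_n(t)$. Integrating from $0$ to $t_j$ and using that $\psi_n$ vanishes at both endpoints ($\psi_n(t_j)=0$ and $\psi_n(0)=0$), the two surviving boundary terms are $G(t_j)=-(1-t_j^2)\tilde{\Phi}_n(t_j)\psi_n'(t_j)$ and $G(0)=-\tilde{\Phi}_n(0)\psi_n'(0)$, yielding
\begin{align}
-(1-t_j^2)\tilde{\Phi}_n(t_j)\psi_n'(t_j)+\tilde{\Phi}_n(0)\psi_n'(0)=C\int_0^{t_j}\psi_n(t)\,dt.
\label{eq_plan_wronskian}
\end{align}
Dividing \eqref{eq_plan_wronskian} by $(\psi_n'(0))^2$ and multiplying by $2$, I would invoke Corollary~\ref{cor_tilde_phi_w}, which gives $W_j=-2\tilde{\Phi}_n(t_j)/\psi_n'(t_j)$, to recognize the first term as $(\psi_n'(t_j))^2(1-t_j^2)W_j/(\psi_n'(0))^2$ and the second as $-W_{j_0}$ (since $t_{j_0}=0$). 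This turns \eqref{eq_plan_wronskian} into the asserted identity \eqref{eq_thm_wj_nice}, up to identifying the constant multiplying the integral.

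The one genuinely arithmetic step — and the place where the stated coefficient $ic\lambda_n/\psi_n'(0)$ must be matched — is showing $2C/(\psi_n'(0))^2 = ic\lambda_n/\psi_n'(0)$, equivalently $2C=ic\lambda_n\psi_n'(0)$. For this I would differentiate the integral equation \eqref{eq_prolate_integral} and set $x=0$ to get $\lambda_n\psi_n'(0)=ic\int_{-1}^1 t\,\psi_n(t)\,dt$, then use $t=P_1(t)$ together with orthogonality of the Legendre polynomials (Section~\ref{sec_legendre}) to evaluate $\int_{-1}^1 t\,\psi_n(t)\,dt=\tfrac{2}{3}\alpha_1^{(n)}$, so $\lambda_n\psi_n'(0)=\tfrac{2ic}{3}\alpha_1^{(n)}$. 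Substituting $C=-c^2\alpha_1^{(n)}/3$ then gives $ic\lambda_n\psi_n'(0)=-\tfrac{2c^2}{3}\alpha_1^{(n)}=2C$, as needed. The main obstacle is purely bookkeeping: keeping the signs and the factor of $2$ straight across the Wronskian integration, the division by $(\psi_n'(0))^2$, and the application of Corollary~\ref{cor_tilde_phi_w}; the odd parity of $n$ does all the conceptual work by killing both the $\alpha_0^{(n)}$ term and the lower endpoint contribution of $\psi_n$.
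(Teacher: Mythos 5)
Your proof is correct, and it takes a genuinely different route from the paper's. The paper introduces an auxiliary \emph{homogeneous} solution $\Phi_n$ of $L_n[\varphi]=0$ with initial data $\Phi_n(0)=1/\psi_n'(0)$, $\Phi_n'(0)=0$, writes $\tilde{\Phi}_n$ explicitly by variation of parameters, determines the two free constants by a parity argument applied to the four summands of that representation, and only then eliminates $\Phi_n(t_j)$ via Abel's formula $\Phi_n(t)\psi_n'(t)-\Phi_n'(t)\psi_n(t)=(1-t^2)^{-1}$ evaluated at the roots. You bypass all of this: applying the Lagrange identity directly to the pair $(\psi_n,\tilde{\Phi}_n)$, so that $G(t)=(1-t^2)\bigl(\psi_n\tilde{\Phi}_n'-\tilde{\Phi}_n\psi_n'\bigr)$ satisfies $G'=C\,\psi_n$ with $C=-c^2\alpha_1^{(n)}/3$, and integrating from $0$ to $t_j$ (both endpoints being roots of $\psi_n$ when $n$ is odd) produces the identity in one stroke, with Corollary~\ref{cor_tilde_phi_w} converting $\tilde{\Phi}_n(t_j)$ and $\tilde{\Phi}_n(0)$ into $W_j$ and $W_{j_0}$. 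Your constant-matching step $2C=ic\lambda_n\psi_n'(0)$ is also sound and is exactly the content of the paper's formula \eqref{eq_num_lambda_odd}, which you rederive from first principles by differentiating \eqref{eq_prolate_integral} at $x=0$ and using $\int_{-1}^1 t\,\psi_n(t)\,dt=\tfrac{2}{3}\alpha_1^{(n)}$; note the paper simply cites \eqref{eq_num_lambda_odd} at the corresponding point. Both arguments exploit odd parity, but differently: the paper uses it to kill the coefficient $C_1$ of $\psi_n$ in the variation-of-parameters representation, while you use it to force $\psi_n(0)=0$ (killing the lower boundary term $\psi_n(0)\tilde{\Phi}_n'(0)$ in $G(0)$) and $\alpha_0^{(n)}=0$ (collapsing the right-hand side of \eqref{eq_num_tilde_phi_ode} to a constant). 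What each approach buys: the paper's yields an explicit closed-form representation of $\tilde{\Phi}_n$ in terms of $\psi_n$ and $\Phi_n$, which is potentially reusable; yours is shorter, avoids introducing and analyzing the second solution $\Phi_n$ altogether, and makes transparent why the statement holds at every root $t_j$ (including $t_j<0$, where the integrated identity holds verbatim by exactness). The arithmetic checks out: multiplying your Wronskian identity by $2/(\psi_n'(0))^2$ and substituting $W_j=-2\tilde{\Phi}_n(t_j)/\psi_n'(t_j)$ reproduces \eqref{eq_thm_wj_nice} with the stated coefficient.
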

\begin{proof}
Suppose that the differential operator $L_n$ is defined via
\eqref{eq_ln_def} in Theorem~\ref{lem_tilde_phi_ode}. Suppose also
that the function $\Phi_n: (-1,1) \to \Rc$ is the solution
of the homogeneous second-order ODE
\begin{align}
L_n\left[ \varphi \right] = 0
\label{eq_wj_nice_a}
\end{align}
in the interval $(-1,1)$
with the initial conditions
\begin{align}
\Phi_n(0) = \frac{1}{\psi_n'(0)}, \quad \Phi_n'(0) = 0.
\label{eq_wj_nice_b}
\end{align}
Obviously, $\Phi_n$ is an even function. Moreover,
\begin{align}
\Phi_n(t) \cdot \psi_n'(t) - \Phi_n'(t) \cdot \psi_n(t) = \frac{1}{1-t^2} 
\label{eq_wj_nice_c}
\end{align}
for all real $-1 < t < 1$ (this is the classical
Abel's formula; see e.g. Theorem 3.3.2 in \cite{DiPrima}).
Suppose that the function $\tilde{\Phi}_n: (-1,1) \to \Rc$ is defined
via \eqref{eq_num_tilde_phi_def} in Theorem~\ref{lem_tilde_phi}.
We combine \eqref{eq_num_tilde_phi_ode} of Theorem~\ref{lem_tilde_phi_ode}
with \eqref{eq_num_lambda_odd} to conclude that $\tilde{\Phi}_n$
satisfies the non-homogeneous ODE
\begin{align}
L_n\left[ \tilde{\Phi}_n \right](x) = \frac{ic\lambda_n \psi_n'(0)}{2},
\label{eq_wj_nice_d}
\end{align}
for all real $-1 < x < 1$. 
We observe that $\psi_n, \Phi_n$ are two independent solutions
of the ODE \eqref{eq_wj_nice_a}, and combine this observation
with \eqref{eq_wj_nice_d} to conclude that, for all real $-1 < x < 1$,
\begin{align}
\tilde{\Phi}_n(x) = & \; C_1 \cdot \psi_n(x) + C_2 \cdot \Phi_n(x) \; + 
\nonumber \\
& \; \frac{ic\lambda_n \psi_n'(0)}{2} \cdot \left(
\psi_n(x) \int_0^x \Phi_n(t)\;dt - \Phi_n(x) \int_0^x \psi_n(t) \; dt
\right),
\label{eq_wj_nice_e}
\end{align}
for some constants $C_1, C_2$. Out of the four summands on the right-hand
side of \eqref{eq_wj_nice_e}, the function $C_1 \cdot \psi_n(x)$ is odd,
while the other three functions are even. We combine
this observation with \eqref{eq_num_tilde_phi_def} and
\eqref{eq_wj_nice_e} to conclude that
\begin{align}
C_1 = 0.
\label{eq_wj_nice_f}
\end{align}
On the other hand, we substitute $x=0$ into \eqref{eq_wj_nice_e} to conclude
that
\begin{align}
C_2 = \frac{ \tilde{\Phi}_n(0) }{ \Phi_n(0) }.
\label{eq_wj_nice_g}
\end{align}
Suppose now that $j$ is an integer between $1$ and $n$. We recall that $t_j$
is a root of $\psi_n$
due to \eqref{eq_quad_t}, and combine this observation with
\eqref{eq_wj_nice_b},
\eqref{eq_wj_nice_e}, \eqref{eq_wj_nice_f}, \eqref{eq_wj_nice_g}
to obtain
\begin{align}
\tilde{\Phi}_n(t_j) 
 = \Phi_n(t_j) \cdot \left( \tilde{\Phi}_n(0) \psi_n'(0) -
  \frac{ic \lambda_n \psi_n'(0)}{2} \int_0^{t_j} \psi_n(t) \; dt \right).
\label{eq_wj_nice_h}
\end{align}
We combine \eqref{eq_tilde_phi_w} of Corollary~\ref{cor_tilde_phi_w}
with \eqref{eq_wj_nice_j0} and \eqref{eq_wj_nice_h} to obtain
\begin{align}
W_j \cdot \psi_n'(t_j) = \Phi_n(t_j) \cdot \left(
W_{j_0} \cdot \left(\psi_n'(0)\right)^2 + 
ic\lambda_n \psi_n'(0) \int_0^{t_j} \psi_n(t) \; dt
\right).
\label{eq_wj_nice_i}
\end{align}
Finally, we combine \eqref{eq_wj_nice_c} with \eqref{eq_wj_nice_i}
to obtain \eqref{eq_thm_wj_nice}.
\end{proof}
%%%%
%%%%
\begin{thm}
Suppose that $c>0$ is a positive real number, and that
\begin{align}
c > 30.
\label{eq_pos_w_c30}
\end{align}
Suppose also that $n>0$ is an odd positive integer, and that
\begin{align}
n > \frac{2c}{\pi} + 7.
\label{eq_pos_w_n}
\end{align}
Suppose also that $t_1, \dots, t_n$ and $W_1, \dots, W_n$ are defined,
respectively, via \eqref{eq_quad_t}, \eqref{eq_quad_w}
of Definition~\ref{def_quad}. Suppose, in addition, that
\begin{align}
W_{(n+1)/2} \leq 2 \cdot |\lambda_n| \cdot \sqrt{2n}.
\label{eq_pos_w_w0}
\end{align}
Then,
\begin{align}
W_1 + \dots + W_n \leq
\frac{4 \sqrt{2} \cdot \left(\chi_n\right)^{7/4} }{\chi_n-c^2} \cdot
|\lambda_n|.
\label{eq_pos_w_thm}
\end{align}
\label{thm_pos_w}
\end{thm}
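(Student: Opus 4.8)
The plan is to start from the identity \eqref{eq_thm_wj_nice} of Theorem~\ref{thm_wj_nice}. Since $n$ is odd, the central node is $t_{j_0}=0$ with $j_0=(n+1)/2$, and $\psi_n(0)=0$; solving \eqref{eq_thm_wj_nice} for each weight gives
\[
W_j = \frac{\brk{\psi_n'(0)}^2}{\brk{\psi_n'(t_j)}^2 \brk{1-t_j^2}} \left[ W_{j_0} + \frac{ic\lambda_n}{\psi_n'(0)} \int_0^{t_j} \psi_n(t)\,dt \right].
\]
I would then bound $W_1+\dots+W_n \leq \sum_{j=1}^n \abrk{W_j}$ by estimating, for each $j$, the prefactor and the bracket separately.

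For the prefactor I would invoke the function $Q$ of Theorem~\ref{thm_Q_Q_tilde} (defined in \eqref{eq_q_old}). Since $n>2c/\pi+7$, Theorem~\ref{thm_khi_elem} gives $\chi_n>c^2$, so $\min\{\sqrt{\chi_n}/c,1\}=1$ and $Q$ is increasing on $(0,1)$; as $Q$ is even, $Q(t_j)\geq Q(0)$ for every root $t_j\in(-1,1)$. Evaluating $Q$ at a root and at the origin yields $\brk{1-t_j^2}\brk{\psi_n'(t_j)}^2 = Q(t_j)\brk{\chi_n-c^2t_j^2}$ and, using $\psi_n(0)=0$, $\brk{\psi_n'(0)}^2 = \chi_n Q(0)$. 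Hence the prefactor equals $\chi_n Q(0)/\brk{Q(t_j)\brk{\chi_n-c^2t_j^2}}$ and is therefore bounded by $\chi_n/(\chi_n-c^2)$.

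For the bracket, the hypothesis \eqref{eq_pos_w_w0} supplies $\abrk{W_{j_0}}\leq 2\abrk{\lambda_n}\sqrt{2n}$. To handle the integral term I would import the lower bound $1/Q(0)\leq 16n\chi_n/c^2$ established in \eqref{eq_quad_32n} inside the proof of Theorem~\ref{lem_quad_cp}, which gives $\abrk{\psi_n'(0)}=\sqrt{\chi_n Q(0)}\geq c/\brk{4\sqrt n}$. Combining this with the Cauchy--Schwarz estimate $\abrk{\int_0^{t_j}\psi_n(t)\,dt}\leq \int_0^1\abrk{\psi_n(t)}\,dt\leq 1/\sqrt2$ (using $\int_0^1\psi_n^2=1/2$) bounds the integral summand by $2\abrk{\lambda_n}\sqrt{2n}$, so the whole bracket is at most $4\abrk{\lambda_n}\sqrt{2n}$ and $\abrk{W_j}\leq \brk{\chi_n/(\chi_n-c^2)} \cdot 4\abrk{\lambda_n}\sqrt{2n}$.

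Summing over the $n$ roots then gives $\sum_j W_j\leq 4\sqrt2\, \brk{\chi_n/(\chi_n-c^2)}\, n^{3/2}\abrk{\lambda_n}$, and the final step is to convert $n^{3/2}$ into $\chi_n^{3/4}$: from $n>2c/\pi+7$ and Theorem~\ref{thm_n_khi_simple}, together with $E\leq \pi/2$, one obtains $n<\sqrt{\chi_n}$, hence $n^{3/2}<\chi_n^{3/4}$, which produces exactly \eqref{eq_pos_w_thm}. The main obstacle is the uniform control of the prefactor: it hinges on the monotonicity of $Q$ from Theorem~\ref{thm_Q_Q_tilde} and on the identity $\brk{\psi_n'(0)}^2=\chi_n Q(0)$, reinforced by the quantitative lower bound on $Q(0)$; the remaining estimates are routine.
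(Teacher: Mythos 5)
Your argument is correct and follows essentially the same route as the paper's proof: the identity of Theorem~\ref{thm_wj_nice}, the monotonicity of $Q$ from Theorem~\ref{thm_Q_Q_tilde} to control the prefactor by $\chi_n/(\chi_n-c^2t_j^2)\leq\chi_n/(\chi_n-c^2)$, the bound $1/Q(0)\leq 16n\chi_n/c^2$ from \eqref{eq_quad_32n} in the proof of Theorem~\ref{lem_quad_cp} combined with Cauchy--Schwarz for the integral term, and $n<\sqrt{\chi_n}$ from Theorem~\ref{thm_n_khi_simple} to convert $n^{3/2}$ into $\chi_n^{3/4}$. One small misstatement: the hypothesis \eqref{eq_pos_w_w0} is one-sided and does not supply $\abrk{W_{(n+1)/2}}\leq 2\abrk{\lambda_n}\sqrt{2n}$ (a priori $W_{(n+1)/2}$ could be very negative, and indeed no positivity may be assumed here, since this theorem feeds a proof by contradiction); the fix is trivial and is what the paper does --- since the prefactor is positive, bound $W_j$ itself (not $\abrk{W_j}$) from above by the prefactor times $W_{(n+1)/2}$ plus the absolute value of the integral term, which is at most $4\abrk{\lambda_n}\sqrt{2n}$, after which your computation goes through unchanged.
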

\begin{proof}
We combine \eqref{eq_pos_w_n},
Theorems~\ref{thm_n_and_khi},~\ref{thm_Q_Q_tilde} in Section~\ref{sec_pswf} and
\eqref{eq_quad_32n} in the proof of Theorem~\ref{lem_quad_cp} to conclude that
\begin{align}
\frac{c}{|\psi_n'(0)|} \leq 4 \sqrt{n}.
\label{eq_pos_w_a}
\end{align}
We combine \eqref{eq_pos_w_a} with Theorem~\ref{thm_pswf_main}
of Section~\ref{sec_pswf} to conclude that, for any $-1<x<1$,
\begin{align}
\left| \frac{ic\lambda_n}{\psi_n'(0)} \int_0^x \psi_n(t) \; dt \right| \leq
4 |\lambda_n| \sqrt{n} \int_0^1 |\psi_n(t)| \; dt \leq
2 \cdot |\lambda_n| \cdot \sqrt{2n}.
\label{eq_pos_w_b}
\end{align}
We combine \eqref{eq_pos_w_w0}, \eqref{eq_pos_w_b} with 
Theorem~\ref{thm_wj_nice} to conclude that, for every
integer $1 \leq j \leq n$,
\begin{align}
W_j \leq \frac{ \left(\psi_n'(0)\right)^2 }
{ \left(\psi_n'(t_j)\right)^2 (1-t_j^2) }
   \cdot 4 \cdot |\lambda_n| \cdot \sqrt{2n}.
\label{eq_pos_w_c}
\end{align}
We combine \eqref{eq_pos_w_n}, \eqref{eq_pos_w_c} and
Theorems~\ref{thm_n_and_khi},~\ref{thm_Q_Q_tilde} 
in Section~\ref{sec_pswf} to conclude that,
for every integer $1 \leq j \leq n$,
\begin{align}
W_j \leq \frac{\chi_n}{\chi_n-c^2 \cdot t_j^2} 
    \cdot 4 \cdot |\lambda_n| \cdot \sqrt{2n}.
\label{eq_pos_w_d}
\end{align}
We combine \eqref{eq_pos_w_n} with 
Theorems~\ref{thm_n_khi_simple} in Section~\ref{sec_pswf} to obtain
the inequality
\begin{align}
n < \sqrt{\chi_n}.
\label{eq_pos_w_e}
\end{align}
Now \eqref{eq_pos_w_thm} follows from the combination of
\eqref{eq_pos_w_d} and \eqref{eq_pos_w_e}.
\end{proof}
%%%%%%%%%%%%%%%
%%%%
\begin{thm}
Suppose that $c>0$ is a positive real number, and that
\begin{align}
c > 30.
\label{eq_sum_w_c30}
\end{align}
Suppose also that $n>0$ is a positive integer, and that
\begin{align}
n > \frac{2c}{\pi} + 7.
\label{eq_sum_w_n}
\end{align}
Suppose also that $t_1, \dots, t_n$ and $W_1, \dots, W_n$ are defined,
respectively, via \eqref{eq_quad_t}, \eqref{eq_quad_w}
of Definition~\ref{def_quad}. 
Then,
\begin{align}
W_1 + \dots + W_n >
2 - |\lambda_n| \cdot \left(24 \cdot \log \frac{1}{|\lambda_n|} +
130 \cdot \sqrt[4]{\chi_n} \right).
\label{eq_sum_w_thm}
\end{align}
\label{thm_sum_w}
\end{thm}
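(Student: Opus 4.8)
The plan is to turn the weight sum into an integral of the remainder function $I$ from \eqref{eq_complex_it} and then estimate that integral. First I would multiply the defining identity \eqref{eq_complex_it} by $\psi_n(t)$; since $\varphi_j(t)=\psi_n(t)/(\psi_n'(t_j)(t-t_j))$ by \eqref{eq_quad_phi}, this produces the ``partition of unity with remainder'' $1=\sum_{j=1}^n\varphi_j(t)+\psi_n(t)\,I(t)$ for all $-1<t<1$. Integrating over $(-1,1)$ and using $\int_{-1}^1\varphi_j=W_j$ from \eqref{eq_quad_w} together with $\int_{-1}^1 dt=2$, I obtain the exact identity $\sum_{j=1}^n W_j=2-\int_{-1}^1\psi_n(t)\,I(t)\,dt$. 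Thus the theorem is equivalent to the upper bound $\int_{-1}^1\psi_n(t)I(t)\,dt<\abrk{\lambda_n}\,(24\log(1/\abrk{\lambda_n})+130\,\chi_n^{1/4})$, and everything reduces to controlling $E:=\int_{-1}^1\psi_n(t)I(t)\,dt$.

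The hypotheses $c>30$ and $n>2c/\pi+7$ place us inside the scope of Theorem~\ref{thm_complex_summary} (using Remark~\ref{rem_complex} for odd $n$), which gives $\|I\|_\infty\le\abrk{\lambda_n}\,I_{\max}$ with $I_{\max}$ as in \eqref{eq_complex_imax}. The quickest route to the right order of magnitude is Cauchy--Schwarz: since $\|\psi_n\|_{L^2[-1,1]}=1$ we have $\int_{-1}^1\abrk{\psi_n}\le\sqrt2$, whence $\abrk E\le\sqrt2\,\abrk{\lambda_n}\,I_{\max}$. This already yields $\sum_{j=1}^n W_j=2+O(\abrk{\lambda_n}(\log(1/\abrk{\lambda_n})+\chi_n^{1/4}))$, which is the substance of the statement. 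To make the constants clean I would first simplify $I_{\max}$: writing $\log(2/\abrk{\lambda_n})=\log2+\log(1/\abrk{\lambda_n})$ and using $\chi_n>c^2+1$ (Theorem~\ref{thm_khi_elem}, so $\chi_n^{1/4}>\sqrt c>\sqrt{30}$), $\abrk{\lambda_n}<1/10$ (Theorem~\ref{thm_lam10}), and $c\abrk{\lambda_n}<\sqrt{2\pi c}$ (from $\mu_n<1$ and \eqref{eq_prolate_mu}), the four lower-order contributions $24\log2$, $13\chi_n^{1/4}$, $40c\abrk{\lambda_n}$ and $2\sqrt2$ all get absorbed into the $\chi_n^{1/4}$ term, giving the clean pointwise estimate $\|I\|_\infty\le\abrk{\lambda_n}(24\log(1/\abrk{\lambda_n})+130\chi_n^{1/4})$. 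This is exactly the bound borrowed by the $I(t)$-proposition in Section~\ref{sec_summary}.

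The hard part is that the Cauchy--Schwarz step costs a factor $\sqrt2$ on the logarithmic head term, and since $\log(1/\abrk{\lambda_n})$ grows like $\chi_n^{1/2}$ (Theorem~\ref{thm_lambda_khi}) this excess cannot be absorbed into $\chi_n^{1/4}$; consequently the bare estimate $\sqrt2\,\abrk{\lambda_n}I_{\max}$ overshoots the claimed coefficient $24$. To recover it I would estimate $E$ \emph{directly}, rather than through $\|I\|_\infty$, by integrating the partial-fractions representation \eqref{eq_pf_cauchy} of $I$ term by term against $\psi_n$. The key gain is the cancellation $\int_{-1}^1\psi_n(t)\,dt=\lambda_n\psi_n(0)$ (evaluate \eqref{eq_prolate_integral} at $x=0$, which is $O(\abrk{\lambda_n}\sqrt n)$ by Theorem~\ref{thm_all_psi_upper_bound}) together with the splitting \eqref{eq_2_10_a}: each integral $\int_{-1}^1\psi_n(t)/(t-x_k)\,dt$ then carries an extra factor $1/x_k$ relative to the pointwise principal part $1/(t-x_k)$. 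Pairing consecutive roots as in Theorems~\ref{lem_head_main} and~\ref{lem_two_bound}, and bounding the boundary contour term via Theorem~\ref{lem_contour}, reproduces the head--tail--contour estimate of Section~\ref{sec_head_tail} for the integrated quantity but without the $\sqrt2$ penalty on the head, which is what pins the logarithmic constant at $24$.

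In summary, the structure of the argument is short (the identity $\sum_j W_j=2-E$ plus a bound on $E$), but the delicate point, and the main source of technical effort, is matching the precise constants $24$ and $130$: the naive $\int\abrk{\psi_n}\le\sqrt2$ estimate gives the correct order and suffices to prove the qualitative statement $\sum_j W_j=2+O(\abrk{\lambda_n}(\log(1/\abrk{\lambda_n})+\chi_n^{1/4}))$, while the sharp constant requires exploiting the oscillatory cancellation of $\psi_n$ inside the integral. The residual bookkeeping is of the same flavor as in the proofs of Theorems~\ref{thm_complex} and~\ref{thm_complex_summary}.
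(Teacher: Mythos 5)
Your skeleton coincides with the paper's proof of Theorem~\ref{thm_sum_w}: it multiplies \eqref{eq_complex_it} by $\psi_n$ to obtain the partition-of-unity identity \eqref{eq_sum_w_a}, integrates over $(-1,1)$ using $\int_{-1}^1\varphi_j = W_j$, invokes Theorems~\ref{thm_lam10} and~\ref{thm_khi_lambda_bound} to verify the hypotheses of Theorem~\ref{thm_complex}, and then consolidates constants exactly as you do, via $40c|\lambda_n| < 40\sqrt{2\pi c} < 40\sqrt{2\pi}\,\chi_n^{1/4}$ and absorption of $24\log 2$, $13\,\chi_n^{1/4}$ and $2\sqrt{2}$ into $130\,\chi_n^{1/4}$ (see \eqref{eq_sum_w_c}, \eqref{eq_sum_w_d}). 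Where you genuinely depart from the paper is the step you flag as the hard part: the paper does \emph{not} carry out any refined estimate of $E=\int_{-1}^1 I(t)\psi_n(t)\,dt$. It passes directly from $\|I\|_\infty \le |\lambda_n|\cdot I_{\max}$ (with $I_{\max}$ as in \eqref{eq_complex_imax}) to the integrated bound \eqref{eq_sum_w_b} with the bare factor $I_{\max}$ — in effect using $\int_{-1}^1 |\psi_n| \le 1$, whereas Cauchy--Schwarz only gives $\sqrt{2}$ (and even the oscillatory heuristic $\psi_n \approx \sin$ gives $\approx 4/\pi > 1$). Your diagnosis is therefore correct and, if anything, exposes a small slip in the paper's own constants: the excess factor $\sqrt{2}$ on the term $24\log(1/|\lambda_n|)$ cannot be absorbed into $130\,\chi_n^{1/4}$, since $\log(1/|\lambda_n|) \gtrsim \tfrac{\pi}{4}(\chi_n - c^2)/\sqrt{\chi_n}$ by Theorem~\ref{thm_lambda_khi} eventually dominates $\chi_n^{1/4}$. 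Your proposed remedy — integrating the expansion \eqref{eq_pf_cauchy} against $\psi_n$ term by term, exploiting $\int_{-1}^1\psi_n = \lambda_n\psi_n(0)$ and the splitting \eqref{eq_2_10_a} to gain a factor $1/x_k$ per pole, then redoing the head--tail--contour bookkeeping — is a legitimate (though unexecuted, and laborious) way to recover the stated coefficient $24$; a cheaper repair is simply to restate the theorem with inflated constants (e.g., $34$ and $190$), which is harmless because the only downstream uses of Theorem~\ref{thm_sum_w}, in Theorems~\ref{thm_final_w} and~\ref{thm_positive_w}, carry orders of magnitude of slack. Either way, your argument proves the statement up to constants, and with your refinement (or relaxed constants) it is complete.
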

\begin{proof}
Suppose that the function $I(t) : (-1, 1) \to \Rc$ is defined via
\eqref{eq_complex_it} in Theorem~\ref{thm_complex}. Then,
\begin{align}
1 = \sum_{j=1}^n \frac{\psi_n(t)}{\psi_n'(t_j) \cdot (t-t_j)} +
    I(t) \cdot \psi_n(t), 
\label{eq_sum_w_a}
\end{align}
for all real $-1 < t < 1$.
We integrate \eqref{eq_sum_w_a} over the interval $(-1,1)$
and use Theorem~\ref{thm_pswf_main} in Section~\ref{sec_pswf},
Theorems~\ref{thm_khi_lambda_bound},
\ref{thm_complex},
\ref{thm_lam10} and Definition~\ref{def_quad} to obtain
\begin{align}
W_1 + \dots + W_n > 2 - 
|\lambda_n| \cdot \left(
24 \cdot \log \frac{2}{|\lambda_n|} + 
13 \sqrt[4]{\chi_n} + 40 c |\lambda_n| + 2\sqrt{2}.
\right)
\label{eq_sum_w_b} 
\end{align}
We combine \eqref{eq_prolate_mu}, \eqref{eq_mu_leg_1},
Theorem~\ref{thm_n_and_khi}
in Section~\ref{sec_pswf}
with \eqref{eq_sum_w_n}
to obtain
\begin{align}
40 c |\lambda_n| < 40 \sqrt{2\pi c} < 40 \sqrt{2\pi} \cdot \sqrt[4]{\chi_n}.
\label{eq_sum_w_c}
\end{align} 
We combine \eqref{eq_sum_w_c30}, \eqref{eq_sum_w_n},
\eqref{eq_sum_w_c} with Theorem~\ref{thm_n_and_khi}
in Section~\ref{sec_pswf} to obtain
\begin{align}
13 \sqrt[4]{\chi_n} + 40 c |\lambda_n| + 2\sqrt{2} <
130 \sqrt[4]{\chi_n}.
\label{eq_sum_w_d}
\end{align} 
Now we substitute \eqref{eq_sum_w_d} into \eqref{eq_sum_w_b}
to obtain \eqref{eq_sum_w_thm}.
\end{proof}
%%%%%%%%%%%%%%%%%%%
%%%%
\begin{thm}
Suppose that $c>0$ is a positive real number, and that
\begin{align}
c > 30.
\label{eq_final_w_c30}
\end{align}
Suppose also that the real number $\beta$ is defined via
the formula
\begin{align}
\beta = \frac{90}{\log(30)}.
\label{eq_final_w_beta}
\end{align}
Suppose furthermore that $n>0$ is a positive integer, and that
\begin{align}
n > \frac{2c}{\pi} + \frac{\beta \cdot \log(c)}{2 \pi} \cdot
\log\left( \frac{16 e c}{\beta \cdot \log(c)} \right)
\label{eq_final_w_n}
\end{align}
Then,
\begin{align}
|\lambda_n| \cdot \left(
24 \cdot \log\frac{1}{|\lambda_n|} +
130 \sqrt[4]{\chi_n} + \frac{4\sqrt{2}\left(\chi_n\right)^{7/4}}{\chi_n-c^2}
\right) < 2 \cdot e^{-10}.
\label{eq_final_w_thm}
\end{align}
\label{thm_final_w}
\end{thm}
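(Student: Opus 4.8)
The plan is to put the entire left-hand side of \eqref{eq_final_w_thm} into the single shape $\mathrm{const}(c)\cdot\chi_n^{q}\,c^{-r}\exp\!\left[-\frac{\pi}{4}\frac{\chi_n-c^2}{\sqrt{\chi_n}}\right]$ with $q<6$, and then reuse the monotonicity machinery already developed for Theorem~\ref{thm_quad_eps_large}. First I would set $\delta=\frac{\pi\beta\log(c)}{4}$ and observe that with this choice the hypothesis \eqref{eq_final_w_n} coincides \emph{exactly} with the hypothesis of Theorem~\ref{thm_khi_1}, since $\frac{2}{\pi^2}\delta=\frac{\beta\log c}{2\pi}$ and $\frac{4e\pi c}{\delta}=\frac{16ec}{\beta\log c}$. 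After checking $0<\delta<\frac{5\pi}{4}c$ for $c>30$ (which reduces to $\log c<\frac{c\log 30}{18}$, true because $\log c/c$ is decreasing), Theorem~\ref{thm_khi_1} gives $\chi_n>c^2+\frac{4}{\pi}\delta c=c^2+\beta c\log c$; in particular $\chi_n-c^2>\beta c\log c$.

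Next I would record the bounds on $|\lambda_n|$. Since the second summand of \eqref{eq_final_w_n} exceeds both $5$ and $\sqrt{42}$ (at $c=30$ it is already about $38$), Theorems~\ref{thm_lam10} and \ref{thm_lambda_khi} apply, yielding $|\lambda_n|<1/10<1/e$ and, after the crude simplification $x_n^{3/4}(x_n-1)^{1/4}(x_n-1/2)^3<x_n^4$ used in \eqref{eq_error_khi_h}, $|\lambda_n|<1195\,\frac{\chi_n^4}{c^7}\exp\!\left[-\frac{\pi}{4}\frac{\chi_n-c^2}{\sqrt{\chi_n}}\right]$. Then I would bound the three bracket terms one at a time. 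The product of $|\lambda_n|$ with $130\sqrt[4]{\chi_n}$ is at most $130\cdot1195\,\frac{\chi_n^{17/4}}{c^7}\exp[\cdots]$; the tail term $\frac{4\sqrt2\,\chi_n^{7/4}}{\chi_n-c^2}$, after inserting $\chi_n-c^2>\beta c\log c$, contributes at most $\frac{4\sqrt2\cdot1195}{\beta\log c}\,\frac{\chi_n^{23/4}}{c^8}\exp[\cdots]$; and for $24\log\frac{1}{|\lambda_n|}$ I would use that $x\mapsto x\log\frac1x$ is increasing on $(0,1/e)$ to replace $|\lambda_n|\log\frac1{|\lambda_n|}$ by $B\log\frac1B$ with $B$ the displayed majorant, then estimate $\log\frac1B\le\frac{\pi}{4}\sqrt{\chi_n}+\bigl|\log(1195\chi_n^4/c^7)\bigr|$. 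Each of the three terms is thereby dominated by $\mathrm{const}(c)\cdot\chi_n^{q}\,c^{-r}\exp\!\left[-\frac{\pi}{4}\frac{\chi_n-c^2}{\sqrt{\chi_n}}\right]$ with $q\le 23/4<6$.

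With every term in this common form, I would invoke the computation in \eqref{eq_quad_eps_large_f}, which shows more generally that $y\mapsto y^{p}\exp\!\left[-\frac{\pi}{4}(y-c^2/y)\right]$ is decreasing on $(c,\infty)$ whenever $p<\frac{\pi}{4}(y+c^2/y)$, hence for all $p\le 12$ (since $y>c>30$ gives $\frac{\pi}{4}(y+c^2/y)>\frac{\pi}{4}\cdot 60>47$). Writing $y=\sqrt{\chi_n}$ and $p=2q\le 23/2<12$, this lets me replace $\chi_n$ by its smallest admissible value $\chi_n=c^2+\beta c\log c$ in each term. After this substitution the whole left-hand side becomes a function of $c$ alone, of the form $\mathrm{const}\cdot c^{2q-r}\bigl(1+\frac{\beta\log c}{c}\bigr)^{q}\exp\!\left[-\frac{\pi}{4}\frac{\beta\log c}{\sqrt{1+\beta\log c/c}}\right]$.

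The last and hardest step is to show this function of $c$ stays below $2e^{-10}$ for all $c>30$, and this final bookkeeping is the main obstacle. Here I would use $\frac{\beta\log c}{c}\le\frac{\beta\log 30}{30}=3$ (again by monotonicity of $\log c/c$), so that $1+\frac{\beta\log c}{c}\le 4$ and $\frac{\pi}{4}\frac{\beta\log c}{\sqrt{1+\beta\log c/c}}\ge\frac{\pi\beta}{8}\log c$, whence the exponential factor is at most $c^{-\pi\beta/8}$ with $\frac{\pi\beta}{8}=\frac{90\pi}{8\log 30}>10$. Since in every term the surviving polynomial power satisfies $2q-r\le \frac{23}{2}-8<10<\frac{\pi\beta}{8}$, the resulting function of $c$ is decreasing, so it suffices to verify the inequality at the single point $c=30$ (where $\chi_n=3600$, $\sqrt{\chi_n}=60$, $\frac{\chi_n-c^2}{\sqrt{\chi_n}}=45$, and a direct evaluation gives a left-hand side near $2\times10^{-5}$, comfortably below $2e^{-10}\approx 9\times10^{-5}$). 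One must confirm carefully that, after the crude polynomial majorizations, the net power of $c$ in each term really is strictly smaller than $\pi\beta/8$, for otherwise reduction to the endpoint $c=30$ would fail; the constants $\beta=90/\log 30$ and $2e^{-10}$ are evidently tuned precisely so that $c=30$ is the tight case.
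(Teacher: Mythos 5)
Your proposal is correct, and it rests on exactly the same three pillars as the paper's own proof: Theorem~\ref{thm_khi_1} applied with $\delta=\pi\beta\log(c)/4$ (yielding $\chi_n>c^2+\beta c\log(c)$, the paper's \eqref{eq_final_w_a2}), the simplified eigenvalue bound $|\lambda_n|<1195\,\chi_n^4\,c^{-7}\exp\bigl[-\tfrac{\pi}{4}(\chi_n-c^2)/\sqrt{\chi_n}\bigr]$ from Theorem~\ref{thm_lambda_khi}, and the monotonicity computation \eqref{eq_quad_eps_large_f}, followed by the same endgame estimates $1+\beta\log(c)/c\le 4$ and $\exp[\cdots]\le c^{-\pi\beta/8}$ with $\pi\beta/8\approx 10.4>10$. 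The differences are in the bookkeeping, and they are real enough to note. The paper first consolidates the two algebraic summands into a single term, $130\chi_n^{1/4}+4\sqrt{2}\chi_n^{7/4}/(\chi_n-c^2)<10\,\chi_n^{7/4}/(\chi_n-c^2)$ (valid since $\sqrt{\chi_n}>30$), precisely so that multiplication by the $\lambda$-bound produces the power $\chi_n^5$ and the lemma \eqref{eq_quad_eps_large_f} applies verbatim with exponent $10$; you instead keep three terms and generalize the lemma to any $p\le 12$, which is legitimate because $\tfrac{\pi}{4}(y+c^2/y)\ge\tfrac{\pi}{2}c>47$ for $y>c>30$. For the logarithmic term the paper uses a bootstrap you avoid: it first proves the algebraic part is $<e^{-10}$ (via the logarithmic inequality \eqref{eq_final_w_f}), deduces $|\lambda_n|<e^{-16}$ from \eqref{eq_final_w_g} because the algebraic factor exceeds $e^{6}$, and then gets $24|\lambda_n|\log(1/|\lambda_n|)<384\,e^{-16}<e^{-10}$, splitting the target as $e^{-10}+e^{-10}$; you instead majorize $|\lambda_n|\log(1/|\lambda_n|)\le B\log(1/B)$ directly, which is the one place your writeup has a small omission: the monotone replacement requires $B<1/e$, which you never verify. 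It does hold — $B$ is itself of your common form with $p=8$, hence bounded by $1195\cdot 4^{4}\cdot c^{1-\pi\beta/8}$, about $4\times 10^{-9}$ at $c=30$ and decreasing — but the sentence must be added. Your endpoint reduction is sound and, pleasingly, explains the constants: at $c=30$ the majorizations $1+\beta\log(c)/c\le 4$ and $\exp[\cdots]\le c^{-\pi\beta/8}$ become equalities, the residual powers $2q-r\le 7/2$ lie safely below $\pi\beta/8$, and the evaluation at $\chi_n=3600$ gives roughly $2\times 10^{-5}<2e^{-10}\approx 9.1\times 10^{-5}$. Net comparison: you buy a uniform one-pass argument ending in a single endpoint check, at the cost of larger exponents (up to $\chi_n^{23/4}$) and the $B<1/e$ verification; the paper buys the clean exponent $\chi_n^5$ and sidesteps that verification via its two-stage bootstrap.
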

\begin{proof}
We combine \eqref{eq_final_w_c30}, \eqref{eq_final_w_beta}, \eqref{eq_final_w_n}
with Theorem~\ref{thm_n_and_khi} in Section~\ref{sec_pswf}
to obtain the inequality
\begin{align}
130 \sqrt[4]{\chi_n} + \frac{4\sqrt{2}\left(\chi_n\right)^{7/4}}{\chi_n-c^2}
< \frac{130\cdot\left(\chi_n\right)^{5/4} + 
        4\sqrt{2}\left(\chi_n\right)^{7/4}}{\chi_n-c^2}
< \frac{10\cdot\left(\chi_n\right)^{7/4}}{\chi_n-c^2}.
\label{eq_final_w_a}
\end{align}
Also we combine \eqref{eq_final_w_c30},
\eqref{eq_final_w_beta}, \eqref{eq_final_w_n} with
Theorem~\ref{thm_khi_1} in Section~\ref{sec_pswf} to conclude that
\begin{align}
\chi_n > c^2 + \beta \cdot \log(c) \cdot c.
\label{eq_final_w_a2}
\end{align}
We combine \eqref{eq_final_w_c30}, \eqref{eq_final_w_beta}
and \eqref{eq_final_w_a2} to obtain
\begin{align}
\frac{\left(\chi_n\right)^{3/4}}{\chi_n-c^2} <
\frac{ c^{3/2} \cdot \left(1 + \beta \cdot \log(c)/c\right)^{3/4}}
{\beta \cdot\log(c) \cdot c} <
\frac{ \sqrt{8c} }{\beta \cdot \log(c)}.
\label{eq_final_w_b}
\end{align}
We substitute \eqref{eq_final_w_b} into \eqref{eq_final_w_a}
to obtain
\begin{align}
130 \sqrt[4]{\chi_n} + \frac{4\sqrt{2}\left(\chi_n\right)^{7/4}}{\chi_n-c^2}
< \frac{ 10 \sqrt{8c} \cdot \chi_n}{\beta \cdot \log(c)}.
\label{eq_final_w_c}
\end{align}
We combine \eqref{eq_final_w_c30}, \eqref{eq_final_w_beta},
\eqref{eq_final_w_n} with Theorem~\ref{thm_lambda_khi}
to obtain
\begin{align}
|\lambda_n| < 1195 \cdot \frac{\chi_n^4}{c^7} \cdot
\exp\left[ -\frac{\pi}{4} \cdot 
       \frac{\chi_n-c^2}{\sqrt{\chi_n}} \right].
\label{eq_final_w_d}
\end{align}
We combine \eqref{eq_quad_eps_large_e}, \eqref{eq_quad_eps_large_f}
in the proof of Theorem~\ref{thm_quad_eps_large} with
\eqref{eq_final_w_c30}, \eqref{eq_final_w_beta},
\eqref{eq_final_w_a2},
\eqref{eq_final_w_c}, \eqref{eq_final_w_d} to obtain
\begin{align}
& |\lambda_n| \cdot \left(
130 \sqrt[4]{\chi_n} + \frac{4\sqrt{2}\left(\chi_n\right)^{7/4}}{\chi_n-c^2}
\right) < \nonumber \\
& \frac{ 11950 \cdot c^3 \sqrt{8c} \cdot \left(1+\beta\cdot\log(c)/c\right)^5 }
{ \beta \cdot\log(c)} \cdot
\exp\left[ -\frac{\pi}{4} \cdot 
  \frac{ \beta \cdot \log(c)}{\sqrt{1 + \beta \cdot \log(c)/c}}
\right] < \nonumber \\
& \frac{ 11950 \cdot c^3 \sqrt{8c} \cdot 4^5 }
{ \beta \cdot\log(c)} \cdot
\exp\left[ -\frac{\pi \cdot \beta \cdot \log(c)}{8} 
\right].
\label{eq_final_w_e}
\end{align}
We take the logarithm of the right-hand side of \eqref{eq_final_w_e} 
and use \eqref{eq_final_w_c30}, \eqref{eq_final_w_beta} to
obtain
\begin{align}
& \log\left(
\frac{ 11950 \cdot c^3 \sqrt{8c} \cdot 4^5 }
{ \beta \cdot\log(c)} \cdot
\exp\left[ -\frac{\pi \cdot \beta \cdot \log(c)}{8} 
\right]
\right) = \nonumber \\
& \log\left( \frac{11950 \sqrt{8} \cdot 4^5}{\beta \cdot \log(c)} \right) +
\left(\frac{7}{2} - \frac{\pi \cdot \beta}{8}\right) \cdot \log(c)
 < -10.
\label{eq_final_w_f}
\end{align}
We combine \eqref{eq_final_w_e} with \eqref{eq_final_w_f} to conclude that
\begin{align}
|\lambda_n| \cdot \left(
130 \sqrt[4]{\chi_n} + \frac{4\sqrt{2}\left(\chi_n\right)^{7/4}}{\chi_n-c^2}
\right) < e^{-10}.
\label{eq_final_w_g}
\end{align}
We combine \eqref{eq_final_w_c30}, \eqref{eq_final_w_beta},
\eqref{eq_final_w_g} to conclude that
\begin{align}
|\lambda_n| < e^{-16}.
\label{eq_final_w_h}
\end{align}
It follows from \eqref{eq_final_w_h} that
\begin{align}
24 \cdot |\lambda_n| \cdot \log\frac{1}{|\lambda_n|} < 
24 \cdot 16 \cdot e^{-16} < e^{-10}.
\label{eq_final_w_i}
\end{align}
Now \eqref{eq_final_w_thm} follows from the combination
of \eqref{eq_final_w_g} and \eqref{eq_final_w_i}.
\end{proof}
%%%%%%%%%%%%%%%%%%%%%%%%%
%%%%
\begin{thm}
Suppose that $c>0$ is a positive real number, and that
\begin{align}
c > 30.
\label{eq_positive_w_c30}
\end{align}
Suppose also that $n>0$ is a positive odd integer, and that
\begin{align}
n > \frac{2c}{\pi} + 5 \cdot \log(c) \cdot \log\left(\frac{c}{2}\right).
\label{eq_positive_w_n}
\end{align}
Suppose furthermore that $W_1, \dots, W_n$ are defined,
via \eqref{eq_quad_w}
of Definition~\ref{def_quad}. 
Then, for all integer $j=1,\dots,n$,
\begin{align}
W_j > 0.
\label{eq_positive_w_thm}
\end{align}
\label{thm_positive_w}
\end{thm}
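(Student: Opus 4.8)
The plan is to reduce the positivity of all the weights to a single lower bound on the central weight $W_{j_0}$ with $j_0 = (n+1)/2$, and then to obtain that lower bound by a contradiction argument built from the sum estimates already at hand (Theorems~\ref{thm_pos_w},~\ref{thm_sum_w},~\ref{thm_final_w}). First I would invoke Theorem~\ref{thm_wj_nice}, which applies since $n$ is odd: for each integer $1 \le j \le n$,
\[
\frac{ \left(\psi_n'(t_j)\right)^2 \cdot (1-t_j^2) }{ \left(\psi_n'(0)\right)^2 } \cdot W_j = W_{j_0} + \frac{ic\lambda_n}{\psi_n'(0)} \int_0^{t_j} \psi_n(t) \; dt .
\]
The coefficient of $W_j$ on the left is strictly positive, because $t_j \in (-1,1)$ forces $1-t_j^2 > 0$, while Theorem~\ref{thm_pswf_main} guarantees $\psi_n'(t_j) \neq 0$ (the roots of $\psi_n$ are simple) and $\psi_n'(0) \neq 0$ (for odd $n$, the point $0$ is a simple root of the odd function $\psi_n$). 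Hence $W_j > 0$ is equivalent to positivity of the right-hand side.

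Next I would control the integral term. For odd $n$, $\lambda_n$ is purely imaginary, so $ic\lambda_n$ is real and the second summand is real; moreover, by the estimate \eqref{eq_pos_w_b} established in the proof of Theorem~\ref{thm_pos_w}, its absolute value is at most $2 |\lambda_n| \sqrt{2n}$ uniformly in $j$. Consequently, it suffices to prove the single inequality
\[
W_{j_0} > 2 \cdot |\lambda_n| \cdot \sqrt{2n} .
\]
To establish this I would argue by contradiction. Suppose $W_{j_0} \leq 2|\lambda_n|\sqrt{2n}$. Then Theorem~\ref{thm_pos_w} applies and gives $W_1 + \dots + W_n \leq \frac{4\sqrt{2} (\chi_n)^{7/4}}{\chi_n - c^2} |\lambda_n|$. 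On the other hand, Theorem~\ref{thm_sum_w} gives the lower bound $W_1 + \dots + W_n > 2 - |\lambda_n|(24 \log(1/|\lambda_n|) + 130 \sqrt[4]{\chi_n})$. Combining the two bounds forces
\[
2 < |\lambda_n| \cdot \left( 24 \cdot \log\frac{1}{|\lambda_n|} + 130 \sqrt[4]{\chi_n} + \frac{4\sqrt{2}\left(\chi_n\right)^{7/4}}{\chi_n-c^2} \right),
\]
which contradicts Theorem~\ref{thm_final_w}, whose right-hand side equals $2 e^{-10} < 2$. This contradiction yields the required lower bound on $W_{j_0}$, and with it the positivity of every $W_j$.

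The main obstacle, and essentially the only new work, is checking that the single hypothesis \eqref{eq_positive_w_n}, namely $n > \frac{2c}{\pi} + 5 \log(c) \log(c/2)$, simultaneously implies the hypotheses of the three auxiliary theorems. The requirement $n > \frac{2c}{\pi} + 7$ of Theorems~\ref{thm_pos_w} and~\ref{thm_sum_w} is immediate, since $5 \log(c) \log(c/2) > 7$ for $c > 30$. The delicate comparison is with \eqref{eq_final_w_n}, where $\beta = 90/\log(30)$: I would verify that $5 \log(c) \log(c/2) \geq \frac{\beta \log(c)}{2\pi} \log\!\left( \frac{16 e c}{\beta \log(c)} \right)$ for all $c > 30$, which after dividing by $\log(c)$ reduces to $5 \log(c/2) \geq \frac{\beta}{2\pi} \log\!\left( \frac{16 e c}{\beta \log(c)} \right)$. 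Since $\frac{\beta}{2\pi} < 5$ and the argument of the right-hand logarithm grows no faster than $c$ (its leading behavior is $\log(c)$), this is an elementary monotonicity-and-endpoint estimate, confirmed by differentiating in $c$ and evaluating at $c = 30$.
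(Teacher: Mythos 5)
Your proof is correct and follows essentially the same route as the paper's: the reduction via Theorem~\ref{thm_wj_nice} together with the bound \eqref{eq_pos_w_b}, and the contradiction argument combining Theorems~\ref{thm_pos_w}, \ref{thm_sum_w}, \ref{thm_final_w} to force $W_{(n+1)/2} > 2\,|\lambda_n|\sqrt{2n}$. Your explicit verification that \eqref{eq_positive_w_n} implies the hypothesis \eqref{eq_final_w_n} with $\beta = 90/\log(30)$ (and the weaker condition $n > 2c/\pi + 7$) is a welcome addition that the paper leaves implicit, and your endpoint-plus-monotonicity check of it is sound.
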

\begin{proof}
Suppose first, by contradiction, that
\begin{align}
W_{(n+1)/2} \leq 2 \cdot |\lambda_n| \cdot \sqrt{2n}.
\label{eq_positive_w_a}
\end{align}
Then we combine \eqref{eq_positive_w_c30},
\eqref{eq_positive_w_n}, \eqref{eq_positive_w_a} with
Theorems~\ref{thm_pos_w},~\ref{thm_sum_w} to conclude that
\begin{align}
\frac{4 \sqrt{2} \cdot \left(\chi_n\right)^{7/4} }{\chi_n-c^2} \cdot
|\lambda_n| & \; \geq
W_1 + \dots + W_n \nonumber \\
& \; >
2 - |\lambda_n| \cdot \left(24 \cdot \log \frac{1}{|\lambda_n|} +
130 \cdot \sqrt[4]{\chi_n} \right),
\label{eq_positive_w_b}
\end{align}
in contradiction to Theorem~\ref{thm_final_w}. Therefore,
\begin{align}
W_{(n+1)/2} > 2 \cdot |\lambda_n| \cdot \sqrt{2n}.
\label{eq_positive_w_c}
\end{align}
We combine \eqref{eq_positive_w_c} with
Theorem~\ref{thm_wj_nice} and \eqref{eq_pos_w_b} in the proof
of Theorem~\ref{thm_pos_w} to obtain,
for every $j=1,\dots,n$,
\begin{align}
\frac{ \left(\psi_n'(t_j)\right)^2 \cdot (1-t_j^2) }
     { \left(\psi_n'(0)\right)^2 } \cdot W_j & \; =
W_{(n+1)/2} + \frac{ic\lambda_n}{\psi_n'(0)} \int_0^{t_j} \psi_n(t) \; dt 
\nonumber \\
& \; > 2 \cdot |\lambda_n| \cdot \sqrt{2n} - 
 \left| \frac{c\lambda_n}{\psi_n'(0)} \int_0^{t_j} \psi_n(t) \; dt \right|
 > 0,
\label{eq_positive_w_d}
\end{align}
where $t_1, \dots, t_n$ are defined via \eqref{eq_quad_t}
in Definition~\ref{def_quad}.
Now \eqref{eq_positive_w_thm} follows directly from 
the combination of
\eqref{eq_positive_w_d}
and \eqref{eq_pos_w_b} in the proof of Theorem~\ref{thm_pos_w}.
\end{proof}
\begin{remark}
The conclusion of Theorem~\ref{thm_positive_w} holds
for even integers $n$ as well. The proof of this fact is
similar to that of Theorem~\ref{thm_positive_w}, and 
is based on
Theorems~\ref{thm_sum_w},~\ref{thm_final_w} and the obvious
modifications of Theorem~\ref{thm_wj_nice},~\ref{thm_pos_w}.
\label{rem_w_even_n}
\end{remark}
\begin{remark}
Extensive numerical experiments 
(see e.g. Table~\ref{t:test96} and Figure~\ref{fig:test96})
seem to indicate that the assumption
\eqref{eq_positive_w_n} is unnecessary. In other words,
the weights $W_1, \dots, W_n$ are always positive,
even for small values of $n$.
\label{rem_w_always_pos}
\end{remark}
\begin{remark}
It follows from Theorem~\ref{thm_wj_nice} that, if $1 \leq j,k \leq n$
are integers, then
\begin{align}
\left( \psi_n'(t_j) \right)^2 \cdot (1-t_j^2) \cdot W_j =
\left( \psi_n'(t_k) \right)^2 \cdot (1-t_k^2) \cdot W_k +
O\left( |\lambda_n| \right)
\label{eq_w_approx}
\end{align}
(see also Experiment 15 in Section~\ref{sec_exp15}).
We observe that for $c=0$
the quadrature introduced in Definition~\ref{def_quad} 
is the well known Gaussian quadrature,
whose nodes are the roots $t_1,\dots,t_n$ of the Legendre polynomial $P_n$
(see Section~\ref{sec_legendre}), and whose weights are defined via
the formula
\begin{align}
W_j = \frac{ 2 }{ P_n'(t_j)^2 \brk{1 - t_j^2} }
\label{eq_gauss_weights}
\end{align}
(see e.g. \cite{Abramovitz}, Section 25.4). 
Thus, \eqref{eq_w_approx} is not
surprising. 
\label{rem_w_approx}
\end{remark}

%%%%%%%%%%%%%%%%%%%%%%%%%%%%%%%%%%%%%%%%%%%%%
%%%%%%%%%%%%%%%%%%%%%%%%%%%%%%%%%%%%%%%%%%%%%
\section{Numerical Algorithms}
\label{sec_numerical}
In this section, we describe several numerical algorithms
for the evaluation of the PSWFs, some related quantities,
and 
the nodes and weights
of the quadrature, defined in Definition~\ref{def_quad} in
Section~\ref{sec_quad}.
Throughout this section, the band limit $c > 0$ is a real number,
and the prolate index $n \geq 0$ is
a non-negative integer.

%%%%%%%%%%%%%%%%%%%%%%%%%%%%%%%%%%%%%%%%%%%%%%%%%%%%%
\subsection{Evaluation of $\chi_n$ and 
$\psi_n(x)$, $\psi_n'(x)$ for $-1 \leq x \leq 1$}
\label{sec_evaluate_beta}

The use of the expansion of $\psi_n$ into a Legendre series
(see \eqref{eq_num_leg_exp} in Section~\ref{sec_legendre})
for the evaluation of $\psi_n$
in the interval $(-1,1)$ goes back at least to 
the classical Bouwkamp algorithm (see \cite{Bouwkamp}).
More specifically, the coefficients 
$\beta_0^{(n)}, \beta_1^{(n)}, \dots$
of the Legendre expansion are precomputed first 
(see \eqref{eq_num_leg_beta_knc},
\eqref{eq_num_leg_alpha_knc} in Section~\ref{sec_legendre}).
These coefficients decay superalgebraically; in particular,
relatively few terms of the infinite sum \eqref{eq_num_leg_exp}
are required to evaluate $\psi_n$ to essentially
machine precision (see Section~\ref{sec_legendre},
in particular Theorem~\ref{thm_tridiagonal} and Remark~\ref{rem_tridiagonal},
and also \cite{RokhlinXiaoProlate} for more details).

Suppose that $n \geq 0$, and we are interested to evaluate
the coefficients $\beta^{(m}_0, \beta^{(m)}_1, \dots$ of the Legendre
expansion of $\psi_m$, for every
integer $0 \leq m \leq n$. This can be achieved by solving two 
$N \times N$ symmetric
tridiagonal eigenproblems, where $N$ is of order $n$
(see Theorem~\ref{thm_tridiagonal} and Remark~\ref{rem_tridiagonal}
in Section~\ref{sec_legendre}, and also
\cite{RokhlinXiaoProlate} for more details about this algorithm). 
In addition, this algorithm evaluates $\chi_0, \dots, \chi_n$.
Once this precomputation 
is done, for every integer $0 \leq m \leq n$ and for every 
real $-1 < x < 1$, we can evaluate $\psi_m(x)$ in $O(n)$ operations,
by computing the sum \eqref{eq_num_leg_exp}.

Suppose, on the other hand, that we are interested in 
a single PSWF only (as opposed to all the first $n$ PSWFs).
Obviously, we can use the algorithm mentioned above; however,
its cost is $O(n^2)$ operations (see Remark~\ref{rem_tridiagonal}).
In the rest of this subsection, we describe an algorithm
for the evaluation of $\beta_0^{(n)}, \beta_1^{(n)}, \dots$ and $\chi_n$,
whose cost is only $O(n)$ operations.

This algorithm is also based on Theorem~\ref{thm_tridiagonal}
in Section~\ref{sec_legendre}. It consists of two principal steps.
First, we compute a low-accuracy approximation $\tilde{\chi}_n$
of $\chi_n$, by means of Sturm's bisection
(see Section~\ref{sec_sturm}, \eqref{eq_a_even_eig},
\eqref{eq_a_odd_eig} and Remark~\ref{rem_tridiagonal}
in Section~\ref{sec_legendre},
and also \cite{Wilkinson2}).
Second, we compute $\chi_n$ and $\beta^{(n)}$, defined
via \eqref{eq_beta_n} in Section~\ref{sec_legendre},
by means of inverse power method (see Section~\ref{sec_power},
and also \cite{Wilkinson}, \cite{Dahlquist}).
The inverse power method requires an initial approximation to
both the eigenvalue and the eigenvector;
for this purpose we use, respectively,
$\tilde{\chi}_n$ 
and a random vector of unit length.

Below is a more detailed description of these two steps.
\paragraph{Step 1 (initial approximation $\tilde{\chi}_n$ of $\chi_n$).}
Suppose that the infinite symmetric tridiagonal matrices $A^{even}$
and $A^{odd}$ are defined, respectively, via 
\eqref{eq_a_even}, \eqref{eq_a_odd} in Section~\ref{sec_legendre}.
Suppose also that $A^{(n)}$ is the $N \times N$ upper left square
submatrix of $A^{even}$, if $n$ is even, or of $A^{odd}$, if $n$ is odd. \\
{\bf Comment.} $N$ is an integer of order $n$ 
(see Remark~\ref{rem_tridiagonal}). The choice
\begin{align}
N = 1.1 \cdot c + n + 1000
\label{eq_n_choice}
\end{align}
is sufficient for all practical purposes.
\begin{itemize}
\item use Theorems~\ref{thm_n_and_khi}, \ref{thm_khi_crude}
and \ref{thm_n_khi_simple} in Section~\ref{sec_pswf}
to choose real numbers $x_0 < y_0$ such that
\begin{align}
x_0 < \chi_n < y_0.
\label{eq_khi_brackets}
\end{align}
{\bf Comment.} For a more detailed discussion of lower and upper 
bounds
on $\chi_n$, see, for example, \cite{Report}, \cite{ReportArxiv}.
See also Remark~\ref{rem_sturm_cost} below.
\item use Sturm's bisection (see Section~\ref{sec_sturm}) with
initial values $x_0, y_0$ to compute $\tilde{\chi}_n$.
On each iteration of Sturm's bisection, the Sturm sequence
(see Theorem~\ref{thm_sturm}) is computed based on
the matrix $A^{(n)}$ (see above). \\
{\bf Comment.} We only require that $\tilde{\chi}_n$ be
a low-order approximation to $\chi_n$ in the following sense:
$\tilde{\chi}_n$ is closer to $\chi_n$ than to any 
$\chi_k$ with $k \neq n$.
\end{itemize}
\begin{remark}
The use of Sturm's bisection as a tool to compute the eigenvalues of
a symmetric tridiagonal matrix goes back at least to 
\cite{Wilkinson2}; in the context of PSWFs, it seems to appear
first in \cite{Hodge}.
\label{rem_sturm_first}
\end{remark}

The cost analysis of Step 1
relies on the following observation.
This observation is based on 
Theorems~\ref{thm_prolate_ode},
\ref{thm_n_and_khi},
\ref{thm_khi_crude},
\ref{thm_n_khi_simple},
\ref{thm_khi_1},
\ref{thm_khi_2} in Section~\ref{sec_pswf},
as well as on extensive numerical experiments and asymptotic expansions
(see, for example,
\cite{RokhlinXiaoProlate},
\cite{RokhlinXiaoApprox},
\cite{SlepianAsymptotic},
\cite{Report},
\cite{ReportArxiv}).

{\bf Observation 1.} Suppose that $n \geq 0$ is an integer.

If $0 \leq n < 2c/\pi$, then (e.g. it seems $0 \leq \chi_0 \leq c$)
\begin{align}
\chi_{n+1} - \chi_n = O(c).
\label{eq_delta_khi_small}
\end{align} 

If $n > 2c/\pi$, then
\begin{align}
\chi_{n+1} - \chi_n = O(n).
\label{eq_delta_khi_large}
\end{align}
\begin{remark}
If $0 \leq n < 2c/\pi$, then we combine Theorems~\ref{thm_n_and_khi},
\ref{thm_khi_crude} in Section~\ref{sec_pswf} 
to obtain
\begin{align}
n \cdot(n+1) < \chi_n < c^2.
\label{eq_khi_small_bounds}
\end{align}
We combine \eqref{eq_delta_khi_small}, \eqref{eq_khi_small_bounds}
and Corollary~\ref{cor_sturm} in Section~\ref{sec_sturm}
to conclude that, in this case, the cost of Step 1 is
$O(n \cdot \log(c))$ operations.
If, on the other hand, $n > 2c/\pi$, then we combine
Theorems~\ref{thm_n_and_khi}, \ref{thm_n_khi_simple},
Corollary~\ref{cor_sturm} in Section~\ref{sec_sturm} and
\eqref{eq_delta_khi_large} to conclude that, in this case,
the cost of Step 1 is $O(n)$ operations.
\label{rem_sturm_cost}
\end{remark}

\paragraph{Step 2 (evaluation of $\chi_n$ and $\beta^{(n)}$).}
Suppose that $\tilde{\chi}_n$ is an approximation to $\chi_n$,
computed in Step 1 (in the sense that $\tilde{\chi}_n$ is closer
to $\chi_n$ than to any other eigenvalue $\chi_k$).
Suppose also that $N$ is that of
Remark~\ref{rem_tridiagonal} in Section~\ref{sec_legendre}
(see also Step 1 above, and, in particular, \eqref{eq_n_choice}),
and that $\beta^{(n)} \in \Rc^N$ is defined via
\eqref{eq_beta_n} in Section~\ref{sec_legendre}.
\begin{itemize}
\item generate a unit length random vector $\tilde{\beta} \in \Rc^N$. \\
{\bf Comment.}
We use $\tilde{\chi}_n$ and $\tilde{\beta}$ as initial approximations
to the eigenvalue $\chi_n$ and the corresponding eigenvector, respectively,
for the inverse power method (see Section~\ref{sec_power}).
\item conduct inverse power method iterations until $\chi_n$ is evaluated
to machine precision. The corresponding unit eigenvector
is denoted by $\hat{\beta}^{(n)}$.\\
{\bf Comment.} Each iterations costs $O(n)$ operations,
and only $O(1)$ iterations are required (see Section~\ref{sec_power}).
In practice, the number of iterations is always less than 10.
\item conduct additional $K$ iterations of inverse power method,
until the convergence of the first coordinate of $\hat{\beta}^{(n)}$. \\
{\bf Comment.} Both analysis and numerical experiments 
(to be reported at a later date) suggest that
\begin{align}
K = 1 + 
\text{ceil}\left(
\frac{\log\left( \left|\beta^{(n)}_0\right| + \left|\beta^{(n)}_1\right| 
 \right)}
{\log\left( \varepsilon \right)}
\right),
\label{eq_k_power}
\end{align}
where $\varepsilon$ is the machine precision (e.g. 
$\varepsilon \approx \mbox{\text{\rm{1D-16}}}$
for double precision calculations),
and $\text{ceil}(a)$ is the minimal integer greater than $a$,
for a real number $a$. For example, if 
$|\beta^{(n)}_0| \approx \mbox{\text{\rm{1D-99}}}$, and
$\varepsilon \approx \mbox{\text{\rm{1D-16}}}$,
then
$K = 8$.
In practice, $K$ does not to 
be known in advance; rather, we iterate until convergence.
\end{itemize}
\begin{remark}
The cost of Step 2 is $O(n)$ operations.
\label{rem_inverse_cost}
\end{remark}

\begin{remark}
It is a well known fact (see e.g. \cite{Wilkinson}, \cite{Dahlquist})
that $\chi_n$ is evaluated to essentially machine precision by
the inverse power method. In other words, suppose that $\varepsilon$
is the machine accuracy (e.g. 
$\varepsilon \approx \mbox{\text{\rm{1D-16}}}$ for
double precision calculations); then, $\chi_n$ is evaluated with
{\bf relative} accuracy $\varepsilon$.
In addition, $\hat{\beta}^{(n)}$ approximates
$\beta^{(n)}$
with relative accuracy $\varepsilon$. However, this means
that a single {\bf coordinate} of $\beta^{(n)}$ is only guaranteed to be
evaluated with
{\bf absolute} accuracy $\varepsilon$.
More specifically, for every integer $k=0,\dots,N$,
\begin{align}
\left| \frac{\beta^{(n)}_k - \hat{\beta}^{(n)}_k}{\beta^{(n)}_k} \right|
\leq
\frac{\varepsilon}{\left| \beta^{(n)}_k \right|}.
\label{eq_beta_absolute}
\end{align}
\label{rem_beta_absolute}
\end{remark}
We make the following
observation from Remark~\ref{rem_beta_absolute}.
If we use $\hat{\beta}^{(n)}$ to evaluate Legendre series
(see \eqref{eq_num_leg_exp} in Section~\ref{sec_legendre},
and also \eqref{eq_evaluate_psi}, \eqref{eq_evaluate_dpsi} below),
the result will be obtained with high accuracy. On the other hand,
the small {\it coordinates} of $\beta^{(n)}$ are only guaranteed
to be computed with low accuracy. In particular, 
due to \eqref{eq_beta_absolute}, if, for example, 
$|\beta^{(n)}_k| \leq \varepsilon/10$ for some $k$, 
then, apriori, we do not
expect $\hat{\beta}^{(n)}_k$ to
coincide with $\beta^{(n)}_k$ in any digit at all!

The following conjecture states that the situation is much better
than Remark~\ref{rem_beta_absolute} seems to suggest.
This conjecture has been confirmed by both some preliminary analysis
(see e.g. \cite{Report2}, \cite{Report2Arxiv}) 
and extensive numerical experiments.
The matter is a subject of ongoing research; the results and proofs
will be published at a later date.
\begin{conjecture}
The {\bf coordinates} of $\beta^{(n)}$ are evaluated with high
{\bf relative} accuracy. More specifically, for every $1 \leq k \leq N$,
\begin{align}
\left| \frac{\beta^{(n)}_k - \hat{\beta}^{(n)}_k}{\beta^{(n)}_k} \right|
\leq
\varepsilon \cdot \log(\sqrt{c}),
\label{eq_beta_relative}
\end{align}
where $\hat{\beta}^{(n)}$ is the numerical approximation to $\beta^{(n)}$,
computed in Step 2,
and $\varepsilon$ is the machine accuracy
(e.g. 
$\varepsilon \approx \mbox{\text{\rm{1D-16}}}$ for
double precision calculations).
\label{conj_beta_relative}
\end{conjecture}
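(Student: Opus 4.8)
The plan is to recast Conjecture~\ref{conj_beta_relative} not as a generic statement about the accuracy of an eigenvector of a symmetric matrix, but as a statement about the numerical stability of computing the \emph{recessive} (minimal) solution of the three-term recurrence \eqref{eq_num_a_rec}. The pessimistic guarantee of Remark~\ref{rem_beta_absolute} is exactly the bound one expects when only the $l^2$ geometry of the problem is used; it discards the special structure that I believe forces relative accuracy. First I would record, using Theorem~\ref{thm_tridiagonal}, that the coordinates $\beta^{(n)}_k$ solve the tridiagonal eigen-recurrence with $A_{k,k}\sim k(k+1)$ and $A_{k,k\pm 2}\sim c^2/4$, and that (by Remark~\ref{rem_tridiagonal}) the superexponentially decaying sequence $\beta^{(n)}_k$ is precisely the minimal solution of this recurrence outside the bulk. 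Since the inverse power method of Section~\ref{sec_power} is applied to a tridiagonal matrix and the shifted systems are solved by Gaussian elimination (Remark~\ref{rem_power}), the computation is, up to the shift and the final normalization, an implementation of a Miller-type backward recurrence, which is the classically stable way to extract a minimal solution.

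Second, I would establish a coordinatewise relative-stability lemma for this specific recurrence: if the recurrence coefficients and the right-hand side are perturbed by relative amounts at most $\varepsilon$, then each coordinate of the computed minimal solution is perturbed by a relative amount at most $\varepsilon$ times a controlled amplification factor. The mechanism is that the minimal solution can be generated in the stable (Miller) direction, in which it is the dominant solution, so that a relative error injected at one index is multiplied by a ratio of successive coordinates $|\beta^{(n)}_{k+2}/\beta^{(n)}_k|$ that is at most one in the decay region; errors therefore contract rather than amplify, and the small coordinates inherit the relative accuracy of the larger coordinates that determine them. This contraction is exactly the phenomenon that the absolute bound \eqref{eq_beta_absolute} cannot see, and it is the reason a coordinate of size $\varepsilon/10$ can nevertheless be correct to full relative precision.

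Third, I would quantify the one place where relative error can genuinely accumulate, namely the \emph{transition} region near the turning point $x=\sqrt{\chi_n}/c$ of the ODE \eqref{eq_prolate_ode}, where $|\beta^{(n)}_k|$ is neither clearly increasing nor clearly decreasing and the contraction ratio of the previous step is close to one. Here I would use the sharp spectral estimates of Theorems~\ref{thm_n_khi_simple}, \ref{thm_khi_1}, \ref{thm_khi_2} to locate the turning index and to bound the width of this window by a small power of $c$; summing the per-step relative contributions across such a window yields the logarithmic factor $\log(\sqrt{c})$ appearing in \eqref{eq_beta_relative}. The iteration count is then handled by \eqref{eq_k_power}: once the eigenvalue has converged to machine precision, each additional inverse iteration resolves one further layer of the recessive tail, so the prescribed $K$ extra iterations suffice and contribute only an additional $O(\varepsilon)$ per layer.

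The hard part will be the relative-stability lemma of the second step together with the transition-region analysis of the third. Making rigorous the claim that the backward solve \emph{contracts} relative error requires a rounding-error model for tridiagonal Gaussian elimination in which perturbations are tracked coordinatewise rather than in norm, and in which the pivots are shown to be free of catastrophic cancellation for the shift $\sigma\approx\chi_n$; this in turn demands uniform control of the ratios $|\beta^{(n)}_{k+2}/\beta^{(n)}_k|$ both deep in the decay region and across the nearly parabolic turning zone. Proving that the accumulated factor is genuinely $\log(\sqrt{c})=\frac{1}{2}\log c$, and not a power of $c$, uniformly in $n$ and in the machine precision $\varepsilon$, is the crux, and is the reason the assertion is stated here only as a conjecture, with the detailed analysis deferred to \cite{Report2}, \cite{Report2Arxiv}.
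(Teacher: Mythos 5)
The paper contains no proof of this statement: Conjecture~\ref{conj_beta_relative} is explicitly left open, supported only by numerical experiments and ``preliminary analysis'' deferred to \cite{Report2}, \cite{Report2Arxiv}, with ``results and proofs \dots published at a later date.'' So there is nothing in the paper to measure your argument against, and your text is in any case a roadmap rather than a proof --- you concede in your final paragraph that the two load-bearing steps are unresolved. Your intuition is sound and matches the paper's own framing: the superexponential decay of the coordinates (Remark~\ref{rem_tridiagonal}) means $\beta^{(n)}$ is the recessive solution of the recurrence \eqref{eq_num_a_rec}, and the contrast you draw with the normwise bound \eqref{eq_beta_absolute} of Remark~\ref{rem_beta_absolute} is exactly the right diagnosis of why the generic guarantee is pessimistic. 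But as a proof the proposal has two concrete gaps.

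First, the ``relative-stability lemma'' is asserted, not proved, and the identification of Step~2 with a Miller-type backward recurrence is not literal. The inverse power iteration solves $(A-\sigma I)\hat{v}_j = v_{j-1}$ (see \eqref{eq_inverse_power_lin}): the system is inhomogeneous, the right-hand side $v_{j-1}$ has components in the dominant directions, the result is renormalized, and the pivots of the tridiagonal elimination can be near-singular when $\sigma$ is within $O(\varepsilon\chi_n)$ of $\chi_n$ --- precisely the regime of interest. Standard rounding analysis of this solve gives a \emph{normwise} backward error, which is exactly the content of \eqref{eq_beta_absolute}; upgrading it to a componentwise-relative forward bound requires showing that each floating-point perturbation, wherever injected, is damped by the ratios $\bigl|\beta^{(n)}_{k+2}/\beta^{(n)}_k\bigr|$ before contaminating a small coordinate, and your argument never confronts the inhomogeneous term or the normalization, both of which feed error into the dominant directions where no contraction is available. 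Second, the factor $\log(\sqrt{c})$ is not derived. You bound the width of the turning-point window (where the contraction ratio is near $1$) by ``a small power of $c$'' and then claim the summed per-step contributions yield $\log(\sqrt{c})$; but summing $O(\varepsilon)$ per step over a window of width $c^{\alpha}$ gives $O(\varepsilon\, c^{\alpha})$, not $O(\varepsilon\log c)$. To obtain the logarithm you would need either a window of width $O(\log c)$ or per-step relative contributions decaying like $1/k$ across the window, and neither is established --- indeed Airy-type behavior of the recurrence near the turning index suggests a window width growing like a fractional power of $c$, which is precisely the obstruction you would have to overcome. These are the cruxes you name yourself, and without them the statement remains, as in the paper, a conjecture.
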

In particular, Conjecture~\ref{conj_beta_relative} implies that,
no matter how small $\beta^{(n)}_k$ is, it coincides with
$\hat{\beta}^{(n)}_k$ in all but the last 
$\log_{10}\left( \sqrt{c} \right)$ decimal digits.

\paragraph{Evaluation of $\psi_n(x)$, $\psi_n'(x)$ for $-1<x<1$,
given $\chi_n$ and $\beta^{(n)}_0, \beta^{(n)}_1, \dots$}
Suppose $\chi_n$ and the coefficients 
$\beta^{(n)}_0, \beta^{(n)}_1, \dots$ of the Legendre expansion
of $\psi_n$,
defined via
\eqref{eq_num_leg_beta_knc},
in Section~\ref{sec_legendre}, 
have already been evaluated.
Suppose also, that the integer $N$ is that of Steps 1,2 above
(see, for example, \eqref{eq_n_choice}).

For any real $-1 < x < 1$, 
evaluate $\psi_n(x)$ via the formula
\begin{align}
\psi_n(x) = \sum_{k=0}^{2N} P_k(x) \cdot \alpha^{(n)}_k
          = \sum_{k=0}^{2N} P_k(x) \cdot \beta^{(n)}_k \cdot \sqrt{k+1/2}.
\label{eq_evaluate_psi}
\end{align} 

Also, we evaluate $\psi_n'(x)$ via the formula
\begin{align}
\psi_n(x) = \sum_{k=1}^{2N} P_k'(x) \cdot \alpha^{(n)}_k
          = \sum_{k=0}^{2N} P_k'(x) \cdot \beta^{(n)}_k \cdot \sqrt{k+1/2}.
\label{eq_evaluate_dpsi}
\end{align} 
\begin{remark}
The cost of the evaluation of $\chi_n$ and 
$\beta^{(n)}_0, \beta^{(n)}_1, \dots$ via Steps 1,2 is $O(n)$ operations
(see Remarks~\ref{rem_sturm_cost}, \ref{rem_inverse_cost} above).
Once this precomputation has been done, the cost of each
subsequent evaluation of $\psi_n(x)$, $\psi_n'(x)$, for any real
$-1<x<1$, is $O(n)$ operations,
according to \eqref{eq_evaluate_psi},
\eqref{eq_evaluate_dpsi} and Remark~\ref{rem_legendre_evaluate}
in Section~\ref{sec_legendre}.
\label{rem_total_cost}
\end{remark}

\subsection{Evaluation of $\lambda_n$}
\label{sec_evaluate_lambda}
Suppose that the coefficients $\beta_0^{(n)}, \beta_1^{(n)}, \dots$
of the Legendre expansion of $\psi_n$ 
(see \eqref{eq_num_leg_beta_knc} in Section~\ref{sec_legendre})
as well as $\psi_n(0)$, $\psi_n'(0)$ have already been evaluated
by the algorithm of Section~\ref{sec_evaluate_beta}. 
If $n$ is even, we 
compute $\lambda_n$ 
via the formula
\begin{align}
\lambda_n = \frac{1}{\psi_n(0)} \int_{-1}^1 \psi_n(t) \; dt = 
            \frac{2\alpha_0^{(n)}}{\psi_n(0)} =
            \frac{\beta_0^{(n)}  \sqrt{2}}{\psi_n(0)}.
\label{eq_num_lambda_even}
\end{align}
If $n$ is odd, we compute $\lambda_n$ via the formula
\begin{align}
\lambda_n = \frac{ic}{\psi_n'(0)} \int_{-1}^1 t \cdot \psi_n(t) \; dt =
            \frac{2}{3} \cdot \frac{ic \alpha_1^{(n)}}{ \psi_n'(0)} =
            \sqrt{\frac{2}{3}} \cdot \frac{ic \beta_1^{(n)}}{ \psi_n'(0)}
\label{eq_num_lambda_odd}
\end{align}
(see \eqref{eq_prolate_integral} in Section~\ref{sec_pswf}
and \eqref{eq_legendre_pol_0_1},
\eqref{eq_legendre_normalized},
\eqref{eq_num_leg_beta_knc},
\eqref{eq_num_leg_alpha_knc} in Section~\ref{sec_legendre}).

{\bf Observation.} According to \eqref{eq_num_lambda_even},
\eqref{eq_num_lambda_odd}, the eigenvalue $\lambda_n$ is
evaluated in $O(1)$ operations as a by-product of 
Steps 1,2 of
the algorithm of 
Section~\ref{sec_evaluate_beta}
(the cost of these steps is $O(n)$ operations, due
to Remarks~\ref{rem_sturm_cost}, \ref{rem_inverse_cost}).
Obviously, $\lambda_n$ and $\beta^{(n)}_0$, $\beta^{(n)}_1$
are evaluated to the same relative accuracy. In particular,
even though $|\lambda_n|$ can be extremely small,
$\lambda_n$ is evaluated with fairly high precision 
(see Conjecture~\ref{conj_beta_relative} in
Section~\ref{sec_evaluate_beta}).

%%%%%%%%%%%%%%%%%%%%%%%%%%%%%%%%%%%%%%%%%%%%%%%
\subsection{Evaluation of the Quadrature Nodes}
\label{sec_evaluate_nodes}
Due to Definition~\ref{def_quad} in Section~\ref{sec_quad},
the $n$ quadrature nodes $t_1, \dots, t_n$ are precisely
the roots of $\psi_n$ in $(-1,1)$. 
In this subsection, we describe a numerical algorithm for
the evaluation of the quadrature nodes.
Since $\psi_n$ is symmetric
about the origin (see Theorem~\ref{thm_pswf_main}
in Section~\ref{sec_pswf}), it suffices to evaluate the roots
of $\psi_n$ in the interval $(0,1)$.

To evaluate the quadrature nodes, we use the fast algorithm
for the calculation of the roots of special functions,
described in \cite{Glaser}. This algorithm is based
on Pr\"ufer transformation (see Section~\ref{sec_prufer}),
Runge-Kutta method (see Section~\ref{sec_runge_kutta})
and Taylor's method (see Section~\ref{sec_taylor}).
It computes all the roots of $\psi_n$ in $(-1,1)$ in only $O(n)$
operations.

A short outline of the principal steps of the algorithm is provided below.
For a more detailed description of the algorithm and its properties,
the reader is referred to \cite{Glaser}.

The following observation is a direct consequence
of Theorem~\ref{thm_prufer_old}
in Section~\ref{sec_prufer}.

{\bf Observation 1.} Suppose that the function 
$\theta: \left[t_1,t_n\right] \to \Rc$ is defined via
\eqref{eq_prufer_theta_old} in Theorem~\ref{thm_prufer_old}
in Section~\ref{sec_prufer}. Suppose also that the function
$s: \left[\pi/2, \pi\cdot(n-1/2)\right] \to \left[-t_n,t_n\right]$
is the inverse of $\theta$. Then, $s$ is well defined, monotonically
increasing and continuously differentiable. Moreover, for all
real
$\pi/2 < \eta < \pi\cdot(n-1/2)$, 
\begin{align}
s'(\eta) = \frac{1}{f\left(s(\eta)\right) - 
 v\left(s(\eta)\right)\cdot\sin(2\eta)},
\label{eq_ds_ode}
\end{align}
where the functions $f,v$ are defined, respectively,
via \eqref{eq_jan_f}, \eqref{eq_jan_v} in Section~\ref{sec_prufer}.
In addition, for every integer $i=1,\dots,n$,
\begin{align}
s\left( \left(i-\frac{1}{2}\right) \cdot \pi \right) = t_i,
\label{eq_s_at_ti}
\end{align}
and also
\begin{align}
s\left( \frac{\pi n}{2} \right) = 0.
\label{eq_s_at_0}
\end{align}

Suppose now that $t_{\min}$ is the minimal root of $\psi_n$ in
$[0,1)$. 

\paragraph{Step 1 (evaluation of $t_{\min}$).}
If $n$ is odd, then
\begin{align}
t_{\min} = t_{(n+1)/2} = 0,
\label{eq_t_min_odd}
\end{align}
and this step of the algorithm is trivial.
On the other hand, if $n$ is even, we observe that
\begin{align}
t_{\min} = t_{(n+2)/2} > 0.
\label{eq_t_min_even}
\end{align}
We numerically solve the ODE \eqref{eq_ds_ode} with the initial condition
\eqref{eq_s_at_0} in the interval $\left[\pi n/2, \pi\cdot(n+1)/2\right]$,
by using 20 steps of Runge-Kutta method (see Section~\ref{sec_runge_kutta}).
The rightmost value $\tilde{t}_{\min}$ of the solution is 
a low-order approximation of $t_{\min}$ (see
\eqref{eq_s_at_ti}, \eqref{eq_t_min_even}).

We compute $t_{\min}$ via Newton's method (see Section~\ref{sec_newton}), using
$\tilde{t}_{\min}$ as the initial approximation to $t_{\min}$.
On each Newton iteration, we evaluate $\psi_n$ and $\psi_n'$
by using the algorithm of Section~\ref{sec_evaluate_beta}.

{\bf Observation 2.} The point $\tilde{t}_{\min}$ approximates
$t_{\min}$ to roughly three-four decimal digits. Subsequently,
only several Newton iterations are required to obtain $t_{\min}$
to essentially machine precision (see \cite{Glaser} for more details).
Thus, the cost of Step 1 is $O(n)$ operations.

\paragraph{Step 2 (evaluation of $\psi_n'(t_{\min})$).}
We evaluate $\psi_n'(t_{\min})$ by using the algorithm
of Section~\ref{sec_evaluate_beta}. 

{\bf Observation 3.} The cost of Step 2 is $O(n)$ operations
(see Remark~\ref{rem_total_cost} in Section~\ref{sec_evaluate_beta}).

The remaining roots of $\psi_n$ in $(t_{\min}, 1)$ are computed 
iteratively, as follows. Suppose that $n/2 < j < n$ is an integer,
and both $t_j$ and $\psi_n'(t_j)$ have already been evaluated.

\paragraph{Step 3 (evaluation of $t_{j+1}$ and $\psi_n'(t_{j+1})$,
given $t_j$ and $\psi_n'(t_j)$).}

\begin{itemize}
\item use the recurrence relation \eqref{eq_dpsi_reck}
(see Theorem~\ref{thm_dpsi_reck} in Section~\ref{sec_pswf})
to evaluate $\psi_n^{(2)}(t_j), \dots, \psi_n^{(30)}(t_j)$.
\item use 20 steps of Runge-Kutta method (see Section~\ref{sec_runge_kutta}),
to solve the ODE \eqref{eq_ds_ode} with the initial condition
\begin{align}
s\left( \pi \cdot \left(j-\frac{1}{2}\right) \right) = t_j
\label{eq_s_eq_tj}
\end{align}
in the interval $\left[\pi \cdot(j- 1/2), \pi \cdot(j+ 1/2) \right]$,
by using 20 steps of Runge-Kutta method (see Section~\ref{sec_runge_kutta}).
The rightmost value $\tilde{t}_{j+1}$ of the solution is a low-order
approximation of $t_{j+1}$.
\item compute $t_{j+1}$ via Newton's method 
(see Section~\ref{sec_newton}), using
$\tilde{t}_{j+1}$ as the initial approximation to $t_{j+1}$.
On each Newton iteration, we evaluate $\psi_n$ and $\psi_n'$
by using Taylor's method (see Section~\ref{sec_taylor}).
The Taylor expansion of order 30 about $t_j$ is used, e.g.
\begin{align}
\psi_n(t) = \sum_{k=0}^{30} \frac{\psi_n^{(k)}(t_j)}{k!} \cdot (t-t_j)^k +
O\left( (t-t_j)^{k+1} \right).
\label{eq_t_taylor}
\end{align}
\item evaluate $\psi_n'(t_{j+1})$ by using Newton's method, i.e.
by computing the sum
\begin{align}
\sum_{k=0}^{29} \frac{\psi_n^{(k+1)}(t_j)}{k!} \cdot (t_{j+1}-t_j)^k.
\label{eq_dpsi_taylor}
\end{align}
\end{itemize}

{\bf Observation 4.} The point $\tilde{t}_{j+1}$ approximates
$t_{j+1}$ to roughly three-four decimal digits. Subsequently,
only several Newton iterations are required to obtain $t_{j+1}$
to essentially machine precision (see \cite{Glaser} for more details).
The cost of Step 3 is $O(1)$ operations. 

\paragraph{Step 4 (evaluation of $t_j$ and $\psi_n'(t_j)$ for 
all $j \leq n/2$).}
Step 3 is repeated iteratively, for every integer $n/2 < j < n$. 
To evaluate $t_j$ and $\psi_n'(t_j)$ for $-1 < t_j < 0$,
we use the symmetry
of $\psi_n$ about zero, established in Theorem~\ref{thm_pswf_main}
in Section~\ref{sec_pswf}. More specifically,
for every $1 \leq j \leq n/2$, we compute
\begin{align}
t_j = t_{n+1-j}
\label{eq_tj_symmetry}
\end{align} 
and
\begin{align}
\psi_n'(t_j) = (-1)^{n+1} \cdot \psi_n'(t_{n+1-j}).
\end{align}

\paragraph{Summary (evaluation of $t_j$ and $\psi_n'(t_j)$, for
all $j=1,\dots,n$).}
To summarize, to evaluate the roots of $\psi_n$ in $(-1,1)$
as well as $\psi_n'$ at these roots, we proceed as follows.
\begin{itemize}
\item run Step 1, to evaluate $t_{\min}$ (see \eqref{eq_t_min_odd},
\eqref{eq_t_min_even}). Cost: $O(n)$.
\item run Step 2, to evaluate $\psi_n'(t_{\min})$. Cost: $O(n)$.
\item for every integer $n/2 < j < n$, run Step 3. Cost: $O(n)$.
\item for every integer $1 \leq j \leq n/2$, run Step 4.
Cost: $O(n)$.
\end{itemize}
\begin{remark}
We observe that the algorithm of this subsection not only
computes the roots $t_1, \dots, t_n$ of $\psi_n$ in $(-1,1)$,
but also evaluates $\psi_n'$ at all these roots.
The total cost of the algorithm is $O(n)$ operations.
\label{rem_evaluate_nodes}
\end{remark}

%%%%%%%%%%%%%%%%%%%%%%%%%%%%%%%%%%%%%%%%%%%%%%%%%
\subsection{Evaluation of the Quadrature Weights}
\label{sec_evaluate_weights}
In this subsection, we describe an algorithm for the evaluation
of the weights $W_1,\dots,W_n$ of the quadrature,
defined in Definition~\ref{def_quad} in Section~\ref{sec_quad}.
The results of this subsection are 
illustrated in Table~\ref{t:test96} and in Figure~\ref{fig:test96}
(see Experiment 15 in Section~\ref{sec_exp15}).

Obviously, one way to compute $W_1, \dots, W_n$ is to evaluate
the integrals of $\varphi_1, \dots, \varphi_n$ numerically
(see Definition~\ref{def_quad}). However, each $\varphi_j$ has
$n-1$ zeros in $(-1,1)$, and this approach is unlikely to cost
less that $O(n^2)$ operations
(see also Section~\ref{sec_evaluate_beta}). In addition, each $\varphi_j$
has a singularity (albeit, removable) at $t_j$, which might be
a nuisance for numerical integration, especially if high precision
is required.

Below we describe two additional ways to evaluate the weights,
based on the results of Section~\ref{sec_weights}.
One of them, based on
Theorem~\ref{lem_tilde_phi} and Corollary~\ref{cor_tilde_phi_w}, 
is straightforward and accurate; however, its cost 
is $O(n^2)$ operations. The other way, 
based on Theorem~\ref{lem_tilde_phi_ode} and
Corollary~\ref{cor_dtilde_phi}, in addition to having high accuracy
and being easy to implement,
is also computationally efficient: its cost is only $O(n)$ operations.

We assume that the quadrature nodes $t_1, \dots, t_n$ as well as
$\psi_n'(t_1), \dots, \psi_n'(t_n)$ have already been computed
(by the algorithm of Section~\ref{sec_evaluate_nodes},
whose cost is $O(n)$ operations).

\paragraph{Algorithm 1: evaluation of $W_1, \dots, W_n$ in $O(n^2)$ operations.}
Suppose that the coefficients $\alpha_0^{(n)}, \dots, \alpha_{2N}^{(n)}$
of the Legendre expansion of $\psi_n$ 
(see \eqref{eq_num_leg_alpha_knc} in Section~\ref{sec_legendre})
have already been evaluated,
by the algorithm of Section~\ref{sec_evaluate_beta};
here $N$ is an integer of order $n$ (see 
\eqref{eq_n_choice} in Section~\ref{sec_evaluate_beta}).
We compute $W_j$
by evaluating the sum
\begin{align}
-\frac{2}{\psi_n'(t_j)} 
\sum_{k = 0}^{2N} \alpha_k^{(n)} Q_k(t_j),
\label{eq_wj_as_sum}
\end{align}
where $Q_0, Q_1, \dots$ are 
the Legendre functions of the second kind, defined 
in Section~\ref{sec_legendre}.

{\bf Observation 1.} The sum \eqref{eq_wj_as_sum} approximates
the corresponding infinite sum
to essentially machine precision,
due to the superexponential decay of $\alpha_k^{(n)}$ and
the high precision to which $Q_k(t_j)$ are evaluated
(see Sections~\ref{sec_legendre}, \ref{sec_evaluate_beta}, and also
\cite{Ryzhik},
\cite{RokhlinXiaoProlate},
\cite{Abramovitz}). In combination with
Theorem~\ref{lem_tilde_phi} and Corollary~\ref{cor_tilde_phi_w},
this implies that \eqref{eq_wj_as_sum} is an accurate formula
for the evaluation of $W_j$
(see also Experiment~15 in Section~\ref{sec_exp15}).

{\bf Observation 2.} For every integer $j$, we evaluate 
$Q_0(t_j), \dots, Q_{2N}(t_j)$ recursively, by using
\eqref{eq_legendre_fun_0_1}, \eqref{eq_legendre_fun_rec}
in Section~\ref{sec_legendre}, in $O(N)$ operations
(see Remark~\ref{rem_legendre_evaluate} in Section~\ref{sec_legendre}).
Since
$N = O(n)$ (see Section~\ref{sec_evaluate_beta}),
the overall cost of computing $W_1, \dots, W_n$ via \eqref{eq_wj_as_sum}
is $O(n^2)$ operations.

\paragraph{Algorithm 2: evaluation of $W_1, \dots, W_n$ in $O(n)$ operations.}
This algorithm consists of the following steps.

Suppose that $t_{\min}$ is the minimal root of $\psi_n$ in $[0,1)$.
In other words,
\begin{align}
t_{\min} = 
\begin{cases}
t_{(n+1)/2} & n \text{ is odd}, \\
t_{(n+2)/2} & n \text{ is even}.
\end{cases}
\label{eq_tmin_both}
\end{align}
Suppose also that the function $\tilde{\Phi}_n: (-1,1) \to \Rc$ is defined
via \eqref{eq_num_tilde_phi_def} in Theorem~\ref{lem_tilde_phi}
in Section~\ref{sec_weights}.
\paragraph{Step 1 (evaluation of 
$\tilde{\Phi}_n(t_{\min})$ and $\tilde{\Phi}_n'(t_{\min})$).}
Suppose that the coefficients $\alpha_0^{(n)}, \dots, \alpha_{2N}^{(n)}$
of the Legendre expansion of $\psi_n$ 
(see \eqref{eq_num_leg_alpha_knc} in Section~\ref{sec_legendre})
have already been evaluated
by the algorithm of Section~\ref{sec_evaluate_beta}.
We evaluate $\tilde{\Phi}_n(t_{\min})$ by computing the sum
\begin{align}
\sum_{k = 0}^{2N} \alpha_k^{(n)} Q_k(t_{\min}).
\label{eq_tilde_phi_as_sum}
\end{align}
Also, we evaluate $\tilde{\Phi}'_n(t_{\min})$ by computing the sum
\begin{align}
\sum_{k = 0}^{2N} \alpha_k^{(n)} Q_k'(t_{\min})
\label{eq_dtilde_phi_as_sum}
\end{align}
(see Algorithm 1 and Observations 1, 2 above, Theorem~\ref{lem_tilde_phi}
in Section~\ref{sec_weights} and Section~\ref{sec_legendre}).

{\bf Observation 3.} We evaluate
$Q_0'(t_{\min}), \dots, Q_{2N}'(t_{\min})$ recursively
(see Sections~\ref{sec_legendre}, \ref{sec_evaluate_beta}, and also
\cite{Ryzhik},
\cite{RokhlinXiaoProlate},
\cite{Abramovitz}). Thus both \eqref{eq_tilde_phi_as_sum} and
\eqref{eq_dtilde_phi_as_sum} approximate
$\tilde{\Phi}_n(t_{\min})$ and $\tilde{\Phi}_n'(t_{\min})$,
respectively,
to essentially machine precision, and are computed in $O(n)$ operations
(see Observations 1, 2 above and
Remark~\ref{rem_legendre_evaluate} in Section~\ref{sec_legendre}).

We evaluate $\tilde{\Phi}_n$ at all but the last four remaining roots
of $\psi_n$ in $[0,1)$ iteratively, as follows.
Suppose that $n/2 < j < n$ is an integer,
and both 
$\tilde{\Phi}_n(t_j)$ and $\tilde{\Phi}_n'(t_j)$
have already been evaluated.

\paragraph{Step 2 (evaluation of
$\tilde{\Phi}_n(t_{j+1})$ and $\tilde{\Phi}_n'(t_{j+1})$,
given $\tilde{\Phi}_n(t_j)$ and $\tilde{\Phi}_n'(t_j)$).}
\begin{itemize}
\item use the recurrence relation 
\eqref{eq_num_dtilde_phi_3}, \eqref{eq_dphi_reck}
(see Corollary~\ref{cor_dtilde_phi} in Section~\ref{sec_weights})
to evaluate $\tilde{\Phi}_n^{(2)}(t_j), \dots, \tilde{\Phi}_n^{(60)}(t_j)$.
\item evaluate $\tilde{\Phi}_n(t_{j+1})$ by using Newton's method, i.e.
by computing the sum
\begin{align}
\sum_{k=0}^{60} \frac{\psi_n^{(k)}(t_j)}{k!} \cdot (t_{j+1}-t_j)^k.
\label{eq_phi_newton}
\end{align}
\item evaluate $\tilde{\Phi}'_n(t_{j+1})$ by using Newton's method, i.e.
by computing the sum
\begin{align}
\sum_{k=0}^{59} \frac{\psi_n^{(k+1)}(t_j)}{k!} \cdot (t_{j+1}-t_j)^k.
\label{eq_dphi_newton}
\end{align}
\end{itemize}

{\bf Observation 4.} 
For each $j$, the cost of the evaluation of \eqref{eq_phi_newton},
\eqref{eq_dphi_newton} is $O(1)$ operations (i.e. does not depend on $n$). 
Also, 
\eqref{eq_phi_newton},
\eqref{eq_dphi_newton}
approximate, $\tilde{\Phi}_n(t_j)$ and $\tilde{\Phi}_n'(t_j)$,
respectively,
to essentially machine precision. For a detailed discussion
of the accuracy and stability of this step, the reader
is referred to \cite{Glaser}.

\paragraph{Step 3 (evaluation of
$\tilde{\Phi}_n(t_j)$ for $n-3 \leq j \leq n$).}
For $j=n-3, n-2, n-1, n$, we evaluate $\tilde{\Phi}_n(t_j)$
by computing the sum 
\begin{align}
\sum_{k = 0}^{2N} \alpha_k^{(n)} Q_k(t_j).
\label{eq_tilde_phi_j_as_sum}
\end{align}
(similar to \eqref{eq_tilde_phi_as_sum} in Step 1).
\begin{remark}
We compute $\tilde{\Phi}_n$ at the last four nodes via
\eqref{eq_tilde_phi_j_as_sum} rather than \eqref{eq_phi_newton},
since the accuracy of the latter deteriorates when $(1-t_j^2)$ 
becomes too small
(see \eqref{eq_dphi_reck} in Corollary~\ref{cor_dtilde_phi}).
Since this approach works in practice, is cheap in terms of
the number of operations and eliminates the above concern,
there was no need in a detailed analysis of the issue
(see also \cite{Glaser} for more details).
\label{rem_last_weights}
\end{remark}

\paragraph{Step 4 (evaluation of $\tilde{\Phi}_n(t_j)$ for
$1 \leq j \leq n/2$).}
Suppose that $1 \leq j \leq n/2$. We evaluate $\tilde{\Phi}_n(t_j)$
via the formula
\begin{align}
\tilde{\Phi}_n(t_j) = (-1)^{n+1} \cdot \tilde{\Phi}_n(t_{n+1-j})
\label{eq_tilde_phi_sym}
\end{align}
($\tilde{\Phi}$ is symmetric with respect to zero due to
the combination of 
Theorem~\ref{lem_tilde_phi} in Section~\ref{sec_weights}
and \eqref{eq_legendre_fun_rec} in Section~\ref{sec_legendre}).

\paragraph{Step 5 (evaluation of $W_1, \dots, W_n$).}
By performing Steps 1-4 of Algorithm 2,
we evaluate $\tilde{\Phi}_n$ at the roots $t_1, \dots, t_n$
of $\psi_n$ in $(-1,1)$. Now, for every $j=1,\dots,n$,
we evaluate $W_j$ via
\eqref{eq_tilde_phi_w}
of Corollary~\ref{cor_tilde_phi_w}
in Section~\ref{sec_weights}.

\begin{remark}
The overall cost of Steps 1-5 of Algorithm 2
is $O(n)$ operations.
\label{rem_weights_cost}
\end{remark}

%%%%%%%%%%%%%%%%%%%%%%%%%%%%%%%%%%%%%%%%%%%%%%%%%%%%%%%%%%%%%%%%%%
\subsection{Evaluation of $\psi_n$ and its roots outside $(-1,1)$}
\label{sec_outside}

The PSFWs provide a natural way to represent bandlimited functions
over the interval $(-1,1)$ (see Theorem~\ref{thm_pswf_main}
in Section~\ref{sec_pswf}). Therefore, even though each $\psi_n$
is defined (and holomorphic) in the whole complex plane,
in applications (construction of PSWFs, quadratures, interpolation etc.)
one is mostly interested in the properties of $\psi_n(t)$ 
for real $t$ inside $(-1,1)$
(see, for example, Section~\ref{sec_pswf},
\cite{RokhlinXiaoProlate}, \cite{Report}, \cite{ReportArxiv}, 
\cite{Report2}, \cite{Report2Arxiv}).

On the other hand, the properties of the quadrature rules studied in
this paper
(see Definition~\ref{def_quad} in Section~\ref{sec_quad})
depend, perhaps surprisingly, on the behavior of $\psi_n$
\emph{outside} the interval $(-1,1)$
(see Sections~\ref{sec_upper},~\ref{sec_one_over_psi},~\ref{sec_quad}).
Thus, while one is rarely interested in the evaluation of $\psi_n$
and related quantities outside $(-1,1)$ per se,
we do need such tools to illustrate our analysis
(see Section~\ref{sec_num_res} below).

The rest of this section is devoted to the description of numerical
algorithms for the evaluation of $\psi_n(x)$ and $\psi_n'(x)$ for
$x>1$, as well as the location of the roots of $\psi_n$ in $(1,\infty)$.
These algorithms were developed as auxiliary tools, and are not meant 
to be used in practical applications.

Throughout this subsection, we assume that $c>0$ is a positive
real number, and $n$ is a non-negative integer.

%%%%%%%%%%%%%%%%%%%%%%%%%%%%%%%%%%%%%%%%%%%%%%%%%%%
\subsubsection{Evaluation of $\psi_n(x)$ for $x>1$}
\label{sec_psi_outside}
To evaluate $\psi_n(x)$ for $x>1$, we use the 
integral equation
\eqref{eq_prolate_integral} in Section~\ref{sec_pswf}
(as opposed to using 
the Legendre series \eqref{eq_num_leg_exp} of Section~\ref{sec_legendre}
to evaluate $\psi_n(x)$ for $-1<x<1$). Namely,
we evaluate $\psi_n(x)$ via proceed as follows:
\begin{itemize}
\item Compute $\chi_n$ and the coefficients 
$\alpha^{(n)}_0, \alpha^{(n)}_1, \dots$
of the Legendre expansion of $\psi_n$ (see Section~\ref{sec_evaluate_beta}).
\item Compute $\lambda_n$ (see Section~\ref{sec_evaluate_lambda}).
\item Compute $\psi_n(x)$ via evaluating the integral
\begin{align}
\frac{1}{\lambda_n} \int_{-1}^1 \psi_n(t) \cdot e^{icxt} \; dt
\label{eq_eval_psi_outside}
\end{align}
numerically, by using $m=O(n)$ Gaussian quadrature nodes in
the interval $(-1,1)$.
\end{itemize}
We observe that the integrand in \eqref{eq_eval_psi_outside}
is oscillatory: $\psi_n$ has $n$ zeros in $(-1,1)$, and $e^{icxt}$
is periodic with period $(2\pi)/(cx)$. Moreover,
$\psi_n(x)$ itself is oscillatory with frequency
of order $n$ (unless $x$ is between
$1$ and $\sqrt{\chi_n}/c$, see Theorems~\ref{thm_four},~\ref{thm_spacing}
in Section~\ref{sec_first_order}).

Thus, we used a fairly large number of Gaussian nodes to evaluate
\eqref{eq_eval_psi_outside}. For example, for $c=100$ and $n\leq 100$ we used
the Gaussian quadrature of order $500$; for $c=1000$ and $n\leq 750$ we
used the Gaussian quadrature of order $3000$.

\begin{remark}
\label{rem_evaluate_psi}
For each of the $m$ Gaussian nodes $\tau_k$, we compute $\psi_n(\tau_k)$ via
evaluating the sum
\begin{align}
\sum_{j=0}^{2N} P_j(\tau_k) \cdot \alpha^{(n)}_j,
\label{eq_eval_psi_sum}
\end{align}
where $N$ is of order $n$ (see Section~\ref{sec_evaluate_beta}). Thus,
the resulting algorithm for the evaluation of $\psi_n(x)$ is fairly
expensive: its cost is $O(N \cdot n) = O(n^2)$ operations,
as opposed to $O(n)$ operations 
 to evaluate $\psi_n(x)$ for $-1 < x < 1$
(see Remark~\ref{rem_total_cost} in Section~\ref{sec_evaluate_beta}).
\end{remark}

%%%%%%%%%%%%%%%%%%%%%%%%%%%%%%%%%%%%%%%%%%%%%%%%%%%
\subsubsection{Evaluation of $\psi_n'(x)$ for $x>1$}
\label{sec_dpsi_outside}

We differentiate the identity \eqref{eq_prolate_integral}
in Section~\ref{sec_pswf} to obtain, for all complex $x$,
\begin{align}
\psi_n'(x) = 
\frac{ic}{\lambda_n} \int_{-1}^1 t \cdot \psi_n(t) \cdot e^{icxt} \; dt.
\label{eq_eval_dpsi}
\end{align}
We use \eqref{eq_eval_dpsi} to evaluate $\psi_n'(x)$ for $x>1$
in the same manner we use \eqref{eq_eval_psi_outside} to
evaluate $\psi_n(x)$ (see Section~\ref{sec_psi_outside}).
The resulting algorithm has the same cost as the one
of Section~\ref{sec_psi_outside} (see Remark~\ref{rem_evaluate_psi}).
%
%\begin{remark}
%\label{rem_evaluate_dpsi}
%We also use the finite differences calculation
%\begin{align}
%\psi_n'(x) = \frac{\psi_n(x+h)-\psi_n(x-h)}{2h} + O(h^2)
%\label{eq_dpsi_finite}
%\end{align}
%with $10^{-7} \leq h \leq 10^{-5}$ as a sanity test.
%\end{remark}

%%%%%%%%%%%%%%%%%%%%%%%%%%%%%%%%%%%%%%%%%%%%%%%%%%%
\subsubsection{Evaluation of the roots of $\psi_n$ in $(1,\infty)$}
\label{sec_roots_outside}

Suppose that $\chi_n > c^2$. Suppose also that $k \geq 1$ is an integer.
According to Theorem~\ref{thm_four} of Section~\ref{sec_first_order},
\begin{align}
\frac{\sqrt{\chi_n}}{c} = x_0 < x_1 < x_2 < \dots < x_k.
\label{eq_dpsi_k_outside}
\end{align}
where $x_1,\dots,x_k$ are
the $k$ minimal roots of $\psi_n$ in $(1,\infty)$.
We define the function $\theta: \left[x_0, x_k\right] \to \Rc$
via \eqref{eq_prufer_theta} in Theorem~\ref{thm_prufer}
of Section~\ref{sec_first_order}. Then, $\theta$ is monotonically increasing;
moreover,
\begin{align}
\theta(x_0) = -\frac{\pi}{2}, \quad
\theta(x_1) = \frac{\pi}{2}, \quad
\theta(x_k) = \pi \cdot \left( k - \frac{1}{2} \right).
\label{eq_theta_outside}
\end{align}
Also, $\theta$ satisfies the nonlinear first order ODE
\eqref{eq_prufer_theta_ode} (see Theorem~\ref{thm_prufer}).
Furthermore, 
for every integer $j = 0, 1, \dots, k-1$,
\begin{align}
x_{j+1} - x_j \approx \frac{\pi}{c}
\label{eq_deltax_outside}
\end{align}
(see Theorems~\ref{thm_x1_x0_good}, \ref{thm_spacing}
in Section~\ref{sec_oscillation} for a more precise statement).

Suppose now that $j$ is an integer between $0$ and $k-1$, and
$x_0, \dots, x_j$ have already been evaluated (note that to evaluate
the special point $x_0$ we only need to evaluate $\chi_n$, see
Section~\ref{sec_evaluate_beta}). We evaluate $x_{j+1}$ as follows.
\begin{itemize}
\item Define $h$ via the formula
\begin{align}
h = \frac{\pi}{100 c}.
\label{eq_h_outside}
\end{align}
\item Use Runge-Kutta method (see Section~\ref{sec_runge_kutta})
to evaluate $\theta(x_j + i \cdot h)$ numerically
(by solving the ODE \eqref{eq_prufer_theta_ode} with the initial
condition \eqref{eq_theta_outside}), for $i=1,2,3, \dots$. \\
{\bf Comment.} Due to \eqref{eq_deltax_outside}, $h$ defined via
\eqref{eq_h_outside} is a reasonable step size of the Runge-Kutta
ODE solver.
\item Stop when
\begin{align}
\theta(x_k+i\cdot h) < \pi\cdot\left(j+\frac{1}{2}\right) <
\theta(x_k+(i+1)\cdot h).
\label{eq_stop_outside}
\end{align}
\item Define $\tilde{x}_{j+1}$ via the formula
\begin{align}
\tilde{x}_{j+1} = x_k + \left(i + \frac{1}{2}\right) \cdot h,
\label{eq_tildex_outside}
\end{align}
where $i$ is as in \eqref{eq_stop_outside}. This is the initial
approximation of $x_{j+1}$. \\
{\bf Comment.} Due to \eqref{eq_deltax_outside}, \eqref{eq_h_outside},
we expect $\tilde{x}_{j+1}$ to approximate $x_{j+1}$ roughly to three-four
decimal digits.
\item Use Newton's method (see Section~\ref{sec_newton}) with the initial
point $\tilde{x}_{j+1}$ to evaluate $x_{j+1}$. \\
{\bf Comment.} For each Newton iteration, we evaluate $\psi_n(x)$, 
$\psi_n'(x)$ by using the algorithms of 
Sections~\ref{sec_psi_outside},~\ref{sec_dpsi_outside},
respectively.
\end{itemize}
\begin{remark}
\label{rem_roots_outside}
We observe that the algorithm of Section~\ref{sec_roots_outside}
is similar to that of Section~\ref{sec_evaluate_nodes}.
However, rather than solving the ODE for the inverse of 
$\theta$ (see \eqref{eq_ds_ode} in Section~\ref{sec_evaluate_nodes}), 
here we solve the ODE for $\theta$.
Also, rather than 
evaluating $\psi_n(x)$ and $\psi_n'(x)$ by Taylor's method
(see \eqref{eq_t_taylor}, \eqref{eq_dpsi_taylor} in
Section~\ref{sec_evaluate_nodes}), here we evaluate $\psi_n(x)$
and $\psi_n'(x)$ by using the algorithms of
Section~\ref{sec_psi_outside},~\ref{sec_dpsi_outside},
respectively. 
% In other words, speed and accuracy
% have been sacrificed for the sake of simplicity.
\end{remark}

\section{Numerical Results}
\label{sec_num_res}
This section has two principal purposes. First,
we illustrate the analysis of Section~\ref{sec_analytical}
by means of numerical examples. Second, we demonstrate
the performance of the algorithms presented in
Section~\ref{sec_numerical}. All the calculations were
implemented in FORTRAN (the Lahey 95 LINUX version).

In all the experiments, the principal numerical algorithms
of the paper, described in Sections~\ref{sec_evaluate_beta}--
\ref{sec_evaluate_lambda}, were run in double precision.
On the other hand, the auxiliary algorithms of Section~\ref{sec_outside}
(whose sole purpose is to illustrate the analysis)
were run in extended precision.

%%%%%%%%%%%%%%%%%%%%%%%%%%%%%%%%
\subsection{Properties of PSWFs}
In this subsection, we illustrate the analytical results
from Section~\ref{sec_oscillation}, 
Section~\ref{sec_growth} and Section~\ref{sec_one_over_psi}.

%%%%%%%%%%%%%%%%%%%%%%%%%%%%%%
\subsubsection{Illustration of Results from Section~\ref{sec_oscillation}}
\label{sec_ill_osc}
%%%%%%%%%%%%%%%%%%%%%%%%%%%%%%
\paragraph{Experiment 1.}
\label{sec_exp1}
In this experiment, we 
illustrate Theorem~\ref{thm_five} in Section~\ref{sec_pswf}
and Theorem~\ref{thm_four} in Section~\ref{sec_first_order}.
We proceed as follows.
We choose, more or less arbitrarily, the band
limit $c>0$ and the prolate index $n \geq 0$, and evaluate
$\psi_n(x)$ at 1000 equispaced points in the interval
$(-1.5, 1.5)$. To evaluate $\psi_n(x)$ for $-1 \leq x \leq 1$,
we use the algorithm of Section~\ref{sec_evaluate_beta}
(in double precision). To evaluate $\psi_n(x)$ for $|x|>1$,
we use the algorithm of Section~\ref{sec_psi_outside}
(in extended precision).

We display the results of the experiment
in Figures~\ref{fig:test75a}, \ref{fig:test75b},
corresponding to the choice $c=20$, $n=9$ and $c=20$, $n=14$,
respectively.
Each of these figures contains a plot of the corresponding $\psi_n$.

We observe that
the relations
\eqref{eq_all_tx_small} and \eqref{eq_all_tx_large} hold for
the functions in Figures~\ref{fig:test75a}, 
\ref{fig:test75b}, respectively. 
The inequality \eqref{eq_extremum_general} of
Theorem~\ref{thm_extrema} in Section~\ref{sec_pswf}
holds in both cases,
that is,
the absolute value of local extrema of $\psi_n(t)$ increases
as $t$ grows from $0$ to $1$. On the other hand,
\eqref{eq_extremum_special} holds only for Figure~\ref{fig:test75b}.
This is due to the fact that $\chi_9 < c^2$ and $\chi_{14} > c^2$
(see also Theorem~\ref{thm_n_and_khi} in Section~\ref{sec_pswf}).
Also, we observe that
the magnitude of the oscillations outside $(-1, 1)$ is roughly
inversely proportional to
$|\lambda_n|$. 

%%%%%%%%%%%%%%%%%%%%%
\begin{figure} [htbp]
\begin{center}
\includegraphics[width=12cm, bb=-130 50 760 770, clip=true]
{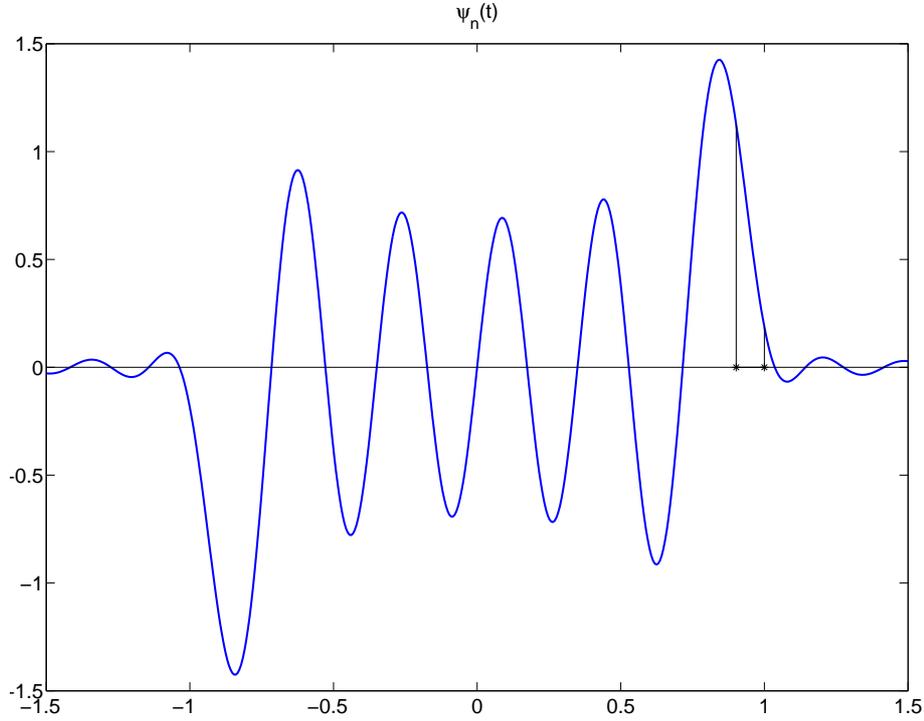}
%{\epsfxsize=450pt\epsffile{test75a.eps}}
\caption
{\it
The function $\psi_n(t)$ for $c = 20$ and $n = 9$.
Since $\chi_n \approx 325.42 < c^2$, the behavior is as asserted
in \eqref{eq_all_tx_small} of Theorem~\ref{thm_five}. The points
$\sqrt{\chi_n}/c \approx 0.90197$ and $1$ are marked with asterisks.
The eigenvalue $\abrk{\lambda_n} \approx 0.55978$ is relatively large,
and the oscillations of $\psi_n$ outside $(-1, 1)$ have
small magnitude. Compare to Figure~\ref{fig:test75b}.
Corresponds to Experiment 1.
}
\label{fig:test75a}
\end{center}
\end{figure}
%%%%%%%%%%%%
%%%%%%%%%%%%%%%%%%%%%
\begin{figure} [htbp]
\begin{center}
\includegraphics[width=12cm, bb=-130 50 760 770, clip=true]
{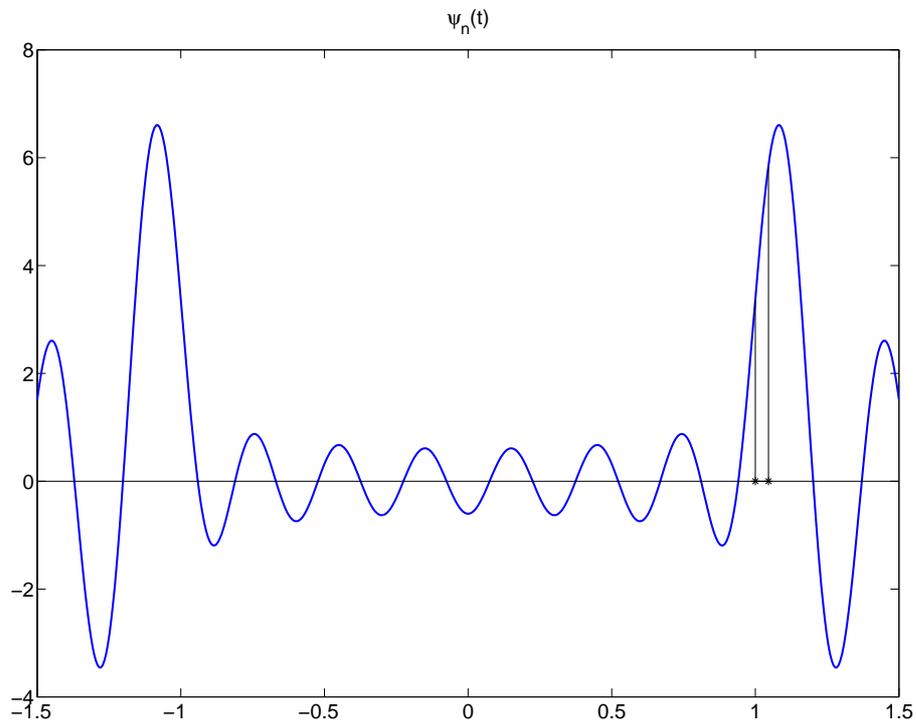}
\caption
{\it
The function $\psi_n(t)$ for $c = 20$ and $n = 14$.
Since $\chi_n \approx 437.36 > c^2$, the behavior is as asserted
in \eqref{eq_all_tx_large} of Theorem~\ref{thm_five}. The points $1$ and
$\sqrt{\chi_n}/c \approx 1.0457$ are marked with asterisks. Observe that
$\abrk{\lambda_n} \approx 0.12564$, and
the oscillations of $\psi_n$ outside $(-1, 1)$ have
relatively large magnitude (of order $\abrk{\lambda_n}^{-1}$).
Compare to Figure~\ref{fig:test75a}.
Corresponds to Experiment 1.
}
\label{fig:test75b}
\end{center}
\end{figure}
%%%%%%%%%%%%

%%%%%%%%%%%%%%%%%%%
\begin{table}[htbp]
\begin{center}
\begin{tabular}{c|c|c|c|c}
$c$   & $n$   & $x_1-\sqrt{\chi_n}/c$  &  
$\frac{\pi}{2c} \sqrt{\frac{x_1^2-1}{x_1^2-(\chi_n/c^2)}}$ & 
$(x_1-\sqrt{\chi_n}/c) \cdot \frac{2c}{\pi}
\sqrt{\frac{x_1^2-(\chi_n/c^2)}{x_1^2-1}}$\\[1ex]
\hline
10 & 15 &
  0.46561E+00 & 
  0.22542E+00 & 
  0.20655E+01 \\
10 & 19 &
  0.51090E+00 &
  0.24279E+00 &
  0.21043E+01 \\
10 & 24 &
  0.55570E+00 &
  0.26055E+00 &
  0.21328E+01 \\
\hline
100 & 76 &
  0.49260E-01 &
  0.23935E-01 &
  0.20581E+01 \\
100 & 84 &
  0.57274E-01 &
  0.27070E-01 &
  0.21158E+01 \\
100 & 92 &
  0.63570E-01 &
  0.29602E-01 &
  0.21475E+01 \\
\hline
1000 & 652 &
  0.52819E-02 &
  0.23016E-02 &
  0.22949E+01 \\
1000 & 664 &
  0.56889E-02 &
  0.27295E-02 &
  0.20843E+01 \\
1000 & 676 &
  0.63367E-02 &
  0.30338E-02 &
  0.20887E+01 \\
\end{tabular}
\end{center}
\caption{\it
The relation between 
the left-hand side and the right-hand side of the inequality
\eqref{eq_x1_x0_smart} of Theorem~\ref{thm_x1_x0_good}.
For each value of the band limit $c$, 
the three values of $n$ are chosen such that 
$|\lambda_n| \approx 10^{-5}, 10^{-9}, 10^{-13}$, respectively.
Corresponds to Experiment 2.
}
\label{t:test76}
\end{table}
%%%%%%%%%%%
%%%%%%%%%%%
\paragraph{Experiment 2.}
\label{sec_exp2}
In the following numerical experiment, we illustrate
Theorem~\ref{thm_x1_x0_good} in Section~\ref{sec_first_order}. 
We proceed as follows.
For each of the three values of band limit $c$ (namely,
$c=10,100,1000$), we pick three values of the prolate index $n$.
The values of $n$ are chosen to satisfy $n > 2c/\pi$ (which implies that
$\chi_n > c^2$, due to Theorem~\ref{thm_n_and_khi} in Section~\ref{sec_pswf}).
Then, we evaluate 
the eigenvalue $\chi_n$
of the ODE \eqref{eq_prolate_ode} of Section~\ref{sec_pswf},
by using the algorithm of Section~\ref{sec_evaluate_beta}.
Also, we evaluate
the minimal root $x_1$ of $\psi_n$ in $(1,\infty)$
(see Theorem~\ref{thm_four} 
in Section~\ref{sec_first_order}), by using
the algorithm of Section~\ref{sec_roots_outside}.

The results of this experiment are displayed
in Table~\ref{t:test76}. This table has the following structure.
The first two columns contain the band limit $c$ and PSWF index $n$.
The third column contains the difference between the
$x_1$ and the special point
$\sqrt{\chi_n}/c$ (see Theorem~\ref{thm_four} 
in Section~\ref{sec_first_order}).
This difference is 
the left-hand side of the inequality 
\eqref{eq_x1_x0_smart} of Theorem~\ref{thm_x1_x0_good}.
On the other hand, the fourth column contains the right-hand side of
\eqref{eq_x1_x0_smart} (a lower bound on this difference).
The last column contains the ratio of
the value in the third column to the value in the fourth column.

We observe that the value in the fourth column is smaller than
the value in the third column roughly by a factor of 2,
for all the choices of $c, n$. In other words,
the lower bound on $x_1-\sqrt{\chi_n}/c$,
provided by 
Theorem~\ref{thm_x1_x0_good},
is rather inaccurate, but is of correct order.

%%%%%%%%%%%%%%%%%%%
\begin{table}[htbp]
\begin{center}
\begin{tabular}{c|c|c|c|c|c}
$k$   & 
$x_{k+1} - x_k$ &
$\frac{\pi\brk{x_k^2-1}}{\sqrt{1+c^2\brk{x_k^2-1}^2}}$ &
$\frac{\pi}{c} \sqrt{\frac{x_k^2-1}{x_k^2-(\chi_n/c^2)}}$ &
lower error &
upper error \\[1ex]
\hline
 1
& 0.51496E-01 & 0.31410E-01 & 0.58023E-01 & 0.39005E+00 & 0.12676E+00  \\
 2
& 0.45166E-01 & 0.31412E-01 & 0.47546E-01 & 0.30452E+00 & 0.52703E-01  \\
 3
& 0.42078E-01 & 0.31413E-01 & 0.43379E-01 & 0.25345E+00 & 0.30936E-01  \\
 4
& 0.40179E-01 & 0.31414E-01 & 0.41019E-01 & 0.21815E+00 & 0.20908E-01  \\
 5
& 0.38872E-01 & 0.31414E-01 & 0.39466E-01 & 0.19185E+00 & 0.15285E-01  \\
 6
& 0.37908E-01 & 0.31415E-01 & 0.38354E-01 & 0.17129E+00 & 0.11754E-01  \\
 7
& 0.37164E-01 & 0.31415E-01 & 0.37512E-01 & 0.15470E+00 & 0.93670E-02  \\
 8
& 0.36570E-01 & 0.31415E-01 & 0.36851E-01 & 0.14097E+00 & 0.76646E-02  \\
 9
& 0.36084E-01 & 0.31415E-01 & 0.36315E-01 & 0.12939E+00 & 0.64016E-02  \\
 10
& 0.35678E-01 & 0.31415E-01 & 0.35872E-01 & 0.11949E+00 & 0.54352E-02  \\
 11
& 0.35334E-01 & 0.31415E-01 & 0.35499E-01 & 0.11090E+00 & 0.46772E-02  \\
 12
& 0.35038E-01 & 0.31415E-01 & 0.35180E-01 & 0.10338E+00 & 0.40703E-02  \\
 13
& 0.34780E-01 & 0.31415E-01 & 0.34905E-01 & 0.96745E-01 & 0.35761E-02  \\
 14
& 0.34554E-01 & 0.31416E-01 & 0.34664E-01 & 0.90835E-01 & 0.31677E-02  \\
 15
& 0.34354E-01 & 0.31416E-01 & 0.34451E-01 & 0.85540E-01 & 0.28261E-02  \\
 16
& 0.34176E-01 & 0.31416E-01 & 0.34263E-01 & 0.80768E-01 & 0.25372E-02  \\
 17
& 0.34016E-01 & 0.31416E-01 & 0.34094E-01 & 0.76444E-01 & 0.22905E-02  \\
 18
& 0.33872E-01 & 0.31416E-01 & 0.33942E-01 & 0.72510E-01 & 0.20780E-02  \\
 19
& 0.33741E-01 & 0.31416E-01 & 0.33805E-01 & 0.68913E-01 & 0.18937E-02  \\
\end{tabular}
\end{center}
\caption{\it
Illustration of Theorem~\ref{thm_spacing} with $c = 100$ and $n = 90$.
$|\lambda_n| \approx 10^{-10}$.
Corresponds to Experiment 3.
}
\label{t:test81a}
\end{table}
%%%%%%%%%%%
%%%%%%%%%%%%%%%%%%%
\begin{table}[htbp]
\begin{center}
\begin{tabular}{c|c|c|c|c|c}
$k$   & 
$x_{k+1} - x_k$ &
$\frac{\pi\brk{x_k^2-1}}{\sqrt{1+c^2\brk{x_k^2-1}^2}}$ &
$\frac{\pi}{c} \sqrt{\frac{x_k^2-1}{x_k^2-(\chi_n/c^2)}}$ &
lower error &
upper error \\[1ex]
\hline
 1
& 0.59672E-01 & 0.31414E-01 & 0.68077E-01 & 0.47355E+00 & 0.14086E+00  \\
 2
& 0.51323E-01 & 0.31415E-01 & 0.54472E-01 & 0.38790E+00 & 0.61363E-01  \\
 3
& 0.47161E-01 & 0.31415E-01 & 0.48918E-01 & 0.33387E+00 & 0.37253E-01  \\
 4
& 0.44558E-01 & 0.31415E-01 & 0.45710E-01 & 0.29496E+00 & 0.25858E-01  \\
 5
& 0.42740E-01 & 0.31415E-01 & 0.43566E-01 & 0.26496E+00 & 0.19329E-01  \\
 6
& 0.41383E-01 & 0.31415E-01 & 0.42010E-01 & 0.24087E+00 & 0.15150E-01  \\
 7
& 0.40325E-01 & 0.31415E-01 & 0.40820E-01 & 0.22094E+00 & 0.12275E-01  \\
 8
& 0.39472E-01 & 0.31416E-01 & 0.39874E-01 & 0.20410E+00 & 0.10193E-01  \\
 9
& 0.38767E-01 & 0.31416E-01 & 0.39102E-01 & 0.18964E+00 & 0.86267E-02  \\
 10
& 0.38174E-01 & 0.31416E-01 & 0.38457E-01 & 0.17705E+00 & 0.74127E-02  \\
 11
& 0.37667E-01 & 0.31416E-01 & 0.37910E-01 & 0.16597E+00 & 0.64491E-02  \\
 12
& 0.37229E-01 & 0.31416E-01 & 0.37440E-01 & 0.15614E+00 & 0.56691E-02  \\
 13
& 0.36845E-01 & 0.31416E-01 & 0.37030E-01 & 0.14735E+00 & 0.50273E-02  \\
 14
& 0.36506E-01 & 0.31416E-01 & 0.36670E-01 & 0.13943E+00 & 0.44920E-02  \\
 15
& 0.36204E-01 & 0.31416E-01 & 0.36350E-01 & 0.13225E+00 & 0.40401E-02  \\
 16
& 0.35933E-01 & 0.31416E-01 & 0.36065E-01 & 0.12572E+00 & 0.36546E-02  \\
 17
& 0.35690E-01 & 0.31416E-01 & 0.35808E-01 & 0.11975E+00 & 0.33228E-02  \\
 18
& 0.35469E-01 & 0.31416E-01 & 0.35576E-01 & 0.11426E+00 & 0.30349E-02  \\
 19
& 0.35267E-01 & 0.31416E-01 & 0.35365E-01 & 0.10921E+00 & 0.27833E-02  \\
\end{tabular}
\end{center}
\caption{\it
Illustration of Theorem~\ref{thm_spacing} with $c = 100$ and $n = 110$.
$|\lambda_n| \approx 10^{-25}$.
Corresponds to Experiment 3.
}
\label{t:test81b}
\end{table}
%%%%%%%%%%%
\paragraph{Experiment 3.}
\label{sec_exp3}
In the following numerical experiment,
we illustrate Theorem~\ref{thm_spacing} in Section~\ref{sec_first_order}.
We proceed as follows. We choose the band limit $c$ and the prolate
index $n$. For each such choice, 
we compute the first 20 roots
$x_1, \dots, x_{20}$ of $\psi_n$ in $(1,\infty)$, using the 
algorithm of Section~\ref{sec_roots_outside}.
Also, for each $k=1,\dots,19$, we compute the upper
and lower bound on $x_{k+1}-x_k$, established
in Theorem~\ref{thm_spacing}.

The results of the experiment are displayed in
Tables~\ref{t:test81a}, \ref{t:test81b},
that correspond to $c = 100$ and $n = 90, 110$, respectively.
These tables have the following structure.
The first column contains the index $k$ of the root $x_k$ of $\psi_n$ in 
$(1, \infty)$.
The second column contains the difference between two consecutive roots
$x_{k+1}$ and $x_k$ of $\psi_n$ in $(1,\infty)$.
The third and fourth columns contain, respectively, the lower
and upper bound on $x_{k+1} - x_k$, as in \eqref{eq_25_09_h1} of 
Theorem~\ref{thm_spacing}. The last two columns contain
the relative errors of these bounds. 

We observe that the upper bound
is more accurate in terms of relative error. Moreover, the 
relative accuracy
of both bounds improves monotonically as $k$ grows. On the other hand,
for a fixed $k$, the accuracy in Table~\ref{t:test81a} is slightly higher
than that in Table~\ref{t:test81b}, which suggests that the bounds worsen
as $n$ grows. We also observe that  
the difference $x_{k+1}-x_k$ between two consecutive
roots decreases monotonically to $\pi/c$, as $k$ grows
(see \eqref{eq_25_09_h3} in 
Theorem~\ref{thm_spacing} and Remark~\ref{rem_big_spacing}).

%%%%%%%%%%%%%%%%%%%%%%%%%%%%%%
\subsubsection{Illustration of Results from Section~\ref{sec_growth}}
%%%%%%%%%%%%%%%%%%%%%%%%%%%%%%
%%%%%%%%%%%%%%%%%%%%%
\begin{figure} [htbp]
\begin{center}
\includegraphics[width=10cm, bb=68 218 542 574, clip=true]
{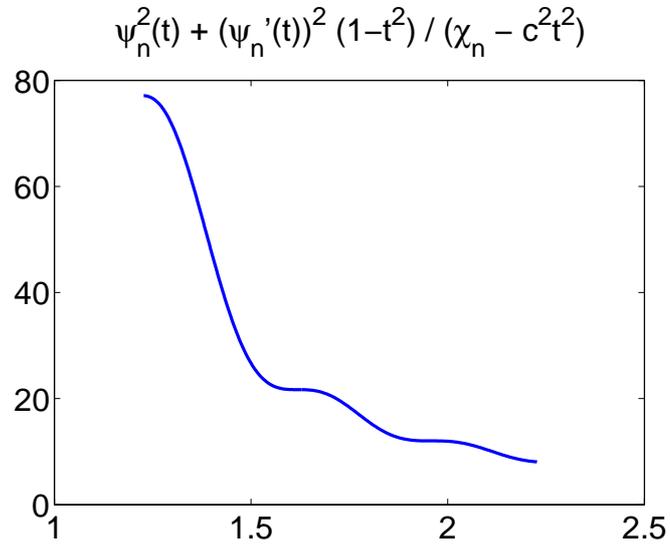}
\caption
{\it
$Q(t)$ defined via \eqref{eq_psi1_q}, with $c = 10$ and $n = 8$.
See Experiment 4.
}
\label{fig:test82c}
\end{center}
\end{figure}
%%%%%%%%%%%%
%%%%%%%%%%%%%%%%%%%%%
\begin{figure} [htbp]
\begin{center}
\includegraphics[width=10cm, bb=68 218 542 574, clip=true]
{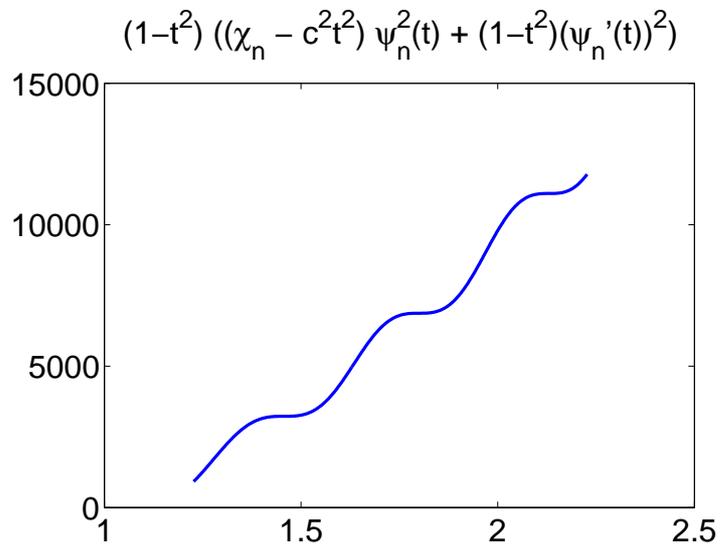}
\caption
{\it
$\tilde{Q}(t)$ defined via \eqref{eq_psi1_qtilde}, with $c = 10$ and $n = 8$.
See Experiment 4.
}
\label{fig:test82d}
\end{center}
\end{figure}
%%%%%%%%%%%%
\paragraph{Experiment 4.}
\label{sec_exp4}
In this experiment, we illustrate
Theorem~\ref{lem_Q_Q_tilde} in Section~\ref{sec_two_by_two}.
We proceed as follows. We choose the band limit $c=10$
and the prolate index $n=8$. Then, we compute $\chi_n$
by using the algorithm of Section~\ref{sec_evaluate_beta}.
Also, we evaluate $\psi_n$ and $\psi_n'$ at 500 equispaced
points in the interval 
\begin{align}
\left(\frac{\sqrt{\chi_n}+1}{c},\frac{\sqrt{\chi_n}+1}{c}+1\right).
\label{eq_qq_int}
\end{align}
For each such point $x$, we compute $Q(x)$ and $\tilde{Q}(x)$,
where the functions $Q, \tilde{Q}$ are defined, respectively,
via \eqref{eq_psi1_q}, \eqref{eq_psi1_qtilde} in Theorem~\ref{lem_Q_Q_tilde}.

In Figures~\ref{fig:test82c}, \ref{fig:test82d},
we plot, respectively, $Q$ and $\tilde{Q}$ over
the interval \eqref{eq_qq_int}. We observe that,
as expected, 
$Q$ is monotonically decreasing and $\tilde{Q}$ is 
monotonically increasing.
On the other hand, we observe that the second derivative
of each of $Q, \tilde{Q}$ does not have a constant
sign in this interval.

%%%%%%%%%%%%%%%%%%%
\begin{table}[htbp]
\begin{center}
\begin{tabular}{c|c|c|c|c|c}
$k$   & 
$\abrk{ \frac{\psi_n'(x_{k+1})}{\psi_n'(x_k)} }$ &
$\frac{ x_k^2 - 1 }{ x_{k+1}^2 - 1 }$ &
$\sqrt{ \frac{x_k^2 - 1}{c^2 x_k^2 - \chi_n} \cdot 
        \frac{c^2 x_{k+1}^2 - \chi_n}{x_{k+1}^2 - 1} }$ &
lower error &
upper error \\[1ex]
\hline
 1
& 0.93958E+00 & 0.74737E+00 & 0.11909E+01 & 0.20457E+00 & 0.26750E+00  \\
 2
& 0.93463E+00 & 0.81017E+00 & 0.10796E+01 & 0.13317E+00 & 0.15516E+00  \\
 3
& 0.93943E+00 & 0.84386E+00 & 0.10463E+01 & 0.10172E+00 & 0.11373E+00  \\
 4
& 0.94463E+00 & 0.86575E+00 & 0.10309E+01 & 0.83504E-01 & 0.91326E-01  \\
 5
& 0.94920E+00 & 0.88139E+00 & 0.10223E+01 & 0.71439E-01 & 0.77048E-01  \\
 6
& 0.95309E+00 & 0.89325E+00 & 0.10170E+01 & 0.62785E-01 & 0.67058E-01  \\
 7
& 0.95639E+00 & 0.90260E+00 & 0.10134E+01 & 0.56236E-01 & 0.59629E-01  \\
 8
& 0.95922E+00 & 0.91021E+00 & 0.10109E+01 & 0.51085E-01 & 0.53864E-01  \\
 9
& 0.96166E+00 & 0.91655E+00 & 0.10090E+01 & 0.46915E-01 & 0.49244E-01  \\
 10
& 0.96380E+00 & 0.92191E+00 & 0.10076E+01 & 0.43460E-01 & 0.45449E-01  \\
 11
& 0.96568E+00 & 0.92653E+00 & 0.10065E+01 & 0.40545E-01 & 0.42269E-01  \\
 12
& 0.96735E+00 & 0.93055E+00 & 0.10056E+01 & 0.38048E-01 & 0.39561E-01  \\
 13
& 0.96885E+00 & 0.93408E+00 & 0.10049E+01 & 0.35883E-01 & 0.37225E-01  \\
 14
& 0.97019E+00 & 0.93722E+00 & 0.10043E+01 & 0.33984E-01 & 0.35185E-01  \\
 15
& 0.97141E+00 & 0.94003E+00 & 0.10038E+01 & 0.32304E-01 & 0.33386E-01  \\
 16
& 0.97252E+00 & 0.94256E+00 & 0.10034E+01 & 0.30805E-01 & 0.31788E-01  \\
 17
& 0.97353E+00 & 0.94485E+00 & 0.10031E+01 & 0.29459E-01 & 0.30356E-01  \\
 18
& 0.97447E+00 & 0.94694E+00 & 0.10028E+01 & 0.28242E-01 & 0.29065E-01  \\
 19
& 0.97533E+00 & 0.94886E+00 & 0.10025E+01 & 0.27136E-01 & 0.27895E-01  \\
\end{tabular}
\end{center}
\caption{\it
Illustration of Theorem~\ref{thm_consec_der}, with
$c = 100$,  $n = 80$, $|\lambda_n| = \mbox{\text{\rm{0.58925E-07}}}$.
See Experiment 5.
}
\label{t:test83a}
\end{table}
%%%%%%%%%%%
%%%%%%%%%%%%%%%%%%%
\begin{table}[htbp]
\begin{center}
\begin{tabular}{c|c|c|c|c|c}
$k$   & 
$\abrk{ \frac{\psi_n'(x_{k+1})}{\psi_n'(x_k)} }$ &
$\frac{ x_k^2 - 1 }{ x_{k+1}^2 - 1 }$ &
$\sqrt{ \frac{x_k^2 - 1}{c^2 x_k^2 - \chi_n} \cdot 
        \frac{c^2 x_{k+1}^2 - \chi_n}{x_{k+1}^2 - 1} }$ &
lower error &
upper error \\[1ex]
\hline
 1
& 0.99507E+00 & 0.81420E+00 & 0.12260E+01 & 0.18177E+00 & 0.23205E+00  \\
 2
& 0.97042E+00 & 0.85769E+00 & 0.10994E+01 & 0.11618E+00 & 0.13292E+00  \\
 3
& 0.96628E+00 & 0.88122E+00 & 0.10600E+01 & 0.88030E-01 & 0.96998E-01  \\
 4
& 0.96620E+00 & 0.89669E+00 & 0.10413E+01 & 0.71944E-01 & 0.77728E-01  \\
 5
& 0.96726E+00 & 0.90789E+00 & 0.10306E+01 & 0.61380E-01 & 0.65503E-01  \\
 6
& 0.96863E+00 & 0.91647E+00 & 0.10238E+01 & 0.53843E-01 & 0.56971E-01  \\
 7
& 0.97004E+00 & 0.92332E+00 & 0.10192E+01 & 0.48159E-01 & 0.50637E-01  \\
 8
& 0.97139E+00 & 0.92894E+00 & 0.10158E+01 & 0.43700E-01 & 0.45725E-01  \\
 9
& 0.97265E+00 & 0.93365E+00 & 0.10133E+01 & 0.40096E-01 & 0.41791E-01  \\
 10
& 0.97382E+00 & 0.93768E+00 & 0.10114E+01 & 0.37115E-01 & 0.38560E-01  \\
 11
& 0.97490E+00 & 0.94116E+00 & 0.10099E+01 & 0.34603E-01 & 0.35854E-01  \\
 12
& 0.97588E+00 & 0.94421E+00 & 0.10086E+01 & 0.32453E-01 & 0.33550E-01  \\
 13
& 0.97679E+00 & 0.94691E+00 & 0.10076E+01 & 0.30590E-01 & 0.31562E-01  \\
 14
& 0.97763E+00 & 0.94932E+00 & 0.10068E+01 & 0.28957E-01 & 0.29826E-01  \\
 15
& 0.97840E+00 & 0.95148E+00 & 0.10061E+01 & 0.27514E-01 & 0.28297E-01  \\
 16
& 0.97912E+00 & 0.95344E+00 & 0.10055E+01 & 0.26228E-01 & 0.26938E-01  \\
 17
& 0.97978E+00 & 0.95522E+00 & 0.10050E+01 & 0.25073E-01 & 0.25721E-01  \\
 18
& 0.98040E+00 & 0.95684E+00 & 0.10045E+01 & 0.24030E-01 & 0.24624E-01  \\
 19
& 0.98098E+00 & 0.95834E+00 & 0.10042E+01 & 0.23082E-01 & 0.23630E-01  \\
\end{tabular}
\end{center}
\caption{\it
Illustration of Theorem~\ref{thm_consec_der},
$c = 200$, $n = 160$, $|\lambda_n| = \mbox{\text{\rm{0.17136E-13}}}$.
See Experiment 5.
}
\label{t:test83b}
\end{table}
%%%%%%%%%%%
\paragraph{Experiment 5.}
\label{sec_exp5}
In the following experiment, we illustrate 
Theorem~\ref{thm_consec_der} in Section~\ref{sec_two_by_two}.
We proceed as follows. We choose, more or less arbitrarily,
the band limit $c$ and the prolate index $n$. For each choice of
$c,n$, we evaluate $\chi_n$ by using the algorithm of
Section~\ref{sec_evaluate_beta}. Then, we evaluate the first
20 roots $x_1, \dots, x_{20}$ of $\psi_n$ in $(1,\infty)$,
by using the algorithm of Section~\ref{sec_roots_outside}
(in extended precision). For each such root $x_k$, we evaluate 
$\psi_n'(x_k)$ by using the algorithm of Section~\ref{sec_dpsi_outside}
(in extended precision).

We display the results of the experiment
in 
Tables~\ref{t:test83a}, \ref{t:test83b}, corresponding to
$c = 100$, $n = 80$ and
$c = 200$, $n = 160$, respectively. These tables have the following
structure. 
The first column contains the index $k$ of the root $x_k$ of $\psi_n$ in 
$(1,\infty)$.
The second column contains the absolute value of the ratio of 
$\psi_n'(x_{k+1})$ to $\psi_n'(x_k)$.
The third and fourth columns contain the lower and upper bound on 
that ratio, respectively, established in \eqref{eq_consec_der}
of Theorem~\ref{thm_consec_der}. The last two columns contain
the relative errors of these bounds. 

We observe that the ratio
in the second column is always less than one. Moreover, it first decreases up
to a certain $k$ and then increases as $k$ grows.
Both bounds have roughly the same relative accuracy and become sharper
as $k$ grows. Even for $k = 1$ the errors are about $20\%$, while
already at $k = 7$ they drop to about $5\%$. 
We also observe (not shown in the tables)
that the magnitude of $\left| \psi_n'(x_k) \right|$
is about $10^{8}$ for Table~\ref{t:test83a} and 
about $10^{15}$ for Table~\ref{t:test83b}
(see also Experiment 6 below).

%%%%%%%%%%%%%%%%%%%
\begin{table}[htbp]
\begin{center}
\begin{tabular}{c|c|c|c|c}
$k$   & 
$ \abrk{ \psi_n'(x_k) }^{-1} $ &
$ \frac{ |\lambda_n| \left(x_k^2 - 1\right)^{3/4} }
  {\left(x_k^2 - (\chi_n/c^2)\right)^{1/4} |\psi_n(1)| \sqrt{2} }$ &
$\varepsilon_k$ &
$b_c\left(x_k,\frac{\sqrt{\chi_n}}{c}\right)$ \\[1ex]
\hline
 1                                                                            
& 0.57349E-19 & 0.81518E-19 & 0.72340E-02 & 0.10181E+01  \\                   
 2                                                                            
& 0.56895E-19 & 0.80550E-19 & 0.15530E-02 & 0.10106E+01  \\                   
 3                                                                            
& 0.58182E-19 & 0.82319E-19 & 0.64593E-03 & 0.10081E+01  \\
 4
& 0.59907E-19 & 0.84743E-19 & 0.34935E-03 & 0.10067E+01  \\
 5
& 0.61785E-19 & 0.87390E-19 & 0.21744E-03 & 0.10059E+01  \\
 6
& 0.63718E-19 & 0.90120E-19 & 0.14767E-03 & 0.10052E+01  \\
 7
& 0.65667E-19 & 0.92874E-19 & 0.10641E-03 & 0.10048E+01  \\
 8
& 0.67615E-19 & 0.95627E-19 & 0.80051E-04 & 0.10044E+01  \\
 9
& 0.69553E-19 & 0.98367E-19 & 0.62211E-04 & 0.10041E+01  \\
 10
& 0.71477E-19 & 0.10109E-18 & 0.49596E-04 & 0.10039E+01  \\
 11
& 0.73385E-19 & 0.10379E-18 & 0.40358E-04 & 0.10036E+01  \\
 12
& 0.75277E-19 & 0.10646E-18 & 0.33400E-04 & 0.10035E+01  \\
 13
& 0.77154E-19 & 0.10911E-18 & 0.28034E-04 & 0.10033E+01  \\
 14
& 0.79015E-19 & 0.11175E-18 & 0.23815E-04 & 0.10032E+01  \\
 15
& 0.80861E-19 & 0.11436E-18 & 0.20441E-04 & 0.10030E+01  \\
 16
& 0.82693E-19 & 0.11695E-18 & 0.17703E-04 & 0.10029E+01  \\
 17
& 0.84511E-19 & 0.11952E-18 & 0.15453E-04 & 0.10028E+01  \\
 18
& 0.86317E-19 & 0.12207E-18 & 0.13584E-04 & 0.10027E+01  \\
 19
& 0.88112E-19 & 0.12461E-18 & 0.12015E-04 & 0.10026E+01  \\
% 20
%& 0.89895E-19 & 0.12713E-18 & 0.10687E-04 & 0.10026E+01  \\
\end{tabular}
\end{center}
\caption{\it
Illustration of Theorem~\ref{thm_12_10_sharp} with $c = 100$, $n = 100$.
$\lambda_n = \mbox{\text{\rm{0.94419E-18}}}$.
See Experiment 6.
}
\label{t:test84a}
\end{table}
%%%%%%%%%%%
%%%%%%%%%%%%%%%%%%%
\begin{table}[htbp]
\begin{center}
\begin{tabular}{c|c|c|c|c}
$k$   & 
$ \abrk{ \psi_n'(x_k) }^{-1} $ &
$ \frac{ |\lambda_n| \left(x_k^2 - 1\right)^{3/4} }
  {\left(x_k^2 - (\chi_n/c^2)\right)^{1/4} |\psi_n(1)| \sqrt{2} }$ &
$\varepsilon_k$ &
$b_c\left(x_k,\frac{\sqrt{\chi_n}}{c}\right)$ \\[1ex]
\hline
 1                                                                            
& 0.10723E-23 & 0.15242E-23 & 0.72932E-02 & 0.10140E+01  \\                   
 2                                                                            
& 0.10407E-23 & 0.14734E-23 & 0.15865E-02 & 0.10077E+01  \\                   
 3
& 0.10464E-23 & 0.14805E-23 & 0.67023E-03 & 0.10056E+01  \\
 4
& 0.10621E-23 & 0.15025E-23 & 0.36864E-03 & 0.10045E+01  \\
 5
& 0.10817E-23 & 0.15300E-23 & 0.23349E-03 & 0.10038E+01  \\
 6
& 0.11028E-23 & 0.15598E-23 & 0.16142E-03 & 0.10033E+01  \\
 7
& 0.11246E-23 & 0.15905E-23 & 0.11843E-03 & 0.10029E+01  \\
 8
& 0.11466E-23 & 0.16216E-23 & 0.90717E-04 & 0.10027E+01  \\
 9
& 0.11685E-23 & 0.16527E-23 & 0.71782E-04 & 0.10025E+01  \\
 10
& 0.11903E-23 & 0.16835E-23 & 0.58262E-04 & 0.10023E+01  \\
 11
& 0.12119E-23 & 0.17140E-23 & 0.48264E-04 & 0.10021E+01  \\
 12
& 0.12332E-23 & 0.17441E-23 & 0.40656E-04 & 0.10020E+01  \\
 13
& 0.12542E-23 & 0.17738E-23 & 0.34730E-04 & 0.10019E+01  \\
 14
& 0.12750E-23 & 0.18031E-23 & 0.30020E-04 & 0.10018E+01  \\
 15
& 0.12954E-23 & 0.18320E-23 & 0.26215E-04 & 0.10017E+01  \\
 16
& 0.13156E-23 & 0.18606E-23 & 0.23094E-04 & 0.10016E+01  \\
 17
& 0.13355E-23 & 0.18887E-23 & 0.20502E-04 & 0.10015E+01  \\
 18
& 0.13551E-23 & 0.19164E-23 & 0.18325E-04 & 0.10015E+01  \\
 19
& 0.13745E-23 & 0.19438E-23 & 0.16479E-04 & 0.10014E+01  \\
% 20
%& 0.13936E-23 & 0.19708E-23 & 0.14899E-04 & 0.10014E+01  \\
\end{tabular}
\end{center}
\caption{\it
Illustration of Theorem~\ref{thm_12_10_sharp} with $c = 1000$, $n = 700$.
$\lambda_n = \mbox{\text{\rm{0.12446E-21}}}$.
See Experiment 6.
}
\label{t:test84b}
\end{table}
%%%%%%%%%%%
\paragraph{Experiment 6}
\label{sec_exp6}
In this experiment, we illustrate
Theorems~\ref{thm_12_10_sharp},~\ref{thm_bc_big} in Section~\ref{sec_upper}.
We proceed as follows. We choose, more or less arbitrarily,
the band limit $c$ and the prolate index $n$.
For each such choice, we evaluate $\chi_n$ and $\lambda_n$,
by using the algorithms of 
Sections~\ref{sec_evaluate_beta},~\ref{sec_evaluate_lambda},
respectively (in double precision). Then, we compute
the first 20 roots $x_1, \dots, x_{20}$ of $\psi_n$ in $(1, \infty)$,
by using the algorithm of Section~\ref{sec_roots_outside}
(in extended precision). For each such root $x_k$, we evaluate
$\psi_n'(x_k)$ by using the algorithm of Section~\ref{sec_dpsi_outside}
(in extended precision).

We display the results of the experiment
in Tables~\ref{t:test84a},~\ref{t:test84b}, 
corresponding to $c=100, n=100$ and $c=1000, n=700$, respectively.
These tables have the following
structure.
The first column contains the index $k$ of the root $x_k$ of $\psi_n$ in 
$(1,\infty)$.
The second column contains the reciprocal of $|\psi_n'(x_k)|$.
The third column contains the quantity
\begin{align}
\frac{ |\lambda_n| \left(x_k^2 - 1\right)^{3/4} }
  {\left(x_k^2 - (\chi_n/c^2)\right)^{1/4} |\psi_n(1)| \sqrt{2} }
\label{eq_exp6_upper}
\end{align}
(see \eqref{eq_12_10_sharp} in Theorem~\ref{thm_12_10_sharp}).
The fourth column contains $\varepsilon_k$, defined via the formula
\begin{align}
|\psi_n'(x_k)| = 
\frac{2 \cdot |\psi_n(1)| \left(x_k^2 - (\chi_n/c^2)\right)^{1/4} }
     { |\lambda_n| \left(x_k^2 - 1\right)^{3/4} } \cdot
     (1 + \varepsilon_k)
\label{eq_exp6_epsilon}
\end{align}
(we observe that $\varepsilon_k$ in \eqref{eq_exp6_epsilon}
is obtained via multiplying
\eqref{eq_exp6_upper} by $|\psi_n'(x_k)|/\sqrt{2}$ and subtracting 1
from the result). The last column contains $b_c(x_k,\sqrt{\chi_n}/c)$,
defined via \eqref{eq_bc_def} of Definition~\ref{def_bc}
in Section~\ref{sec_upper}.

According to Theorem~\ref{thm_12_10_sharp}, the product of the values
in the third and fifth columns is an upper bound on $|\psi_n'(x_k)|^{-1}$
(the second column). However, \eqref{eq_exp6_upper} alone (the third column)
already overestimates $|\psi_n'(x_k)|^{-1}$ by roughly $\sqrt{2}$.
We also observe (see the fourth column) that the parameter 
$\varepsilon_k$, defined via \eqref{eq_exp6_epsilon}, is fairly
small, and decreases as $k$ grows. According to
Theorems~\ref{lem_dpsi_for_large_x},~\ref{lem_tail_dpsi} in
Section~\ref{sec_tail},
we expect $\varepsilon_k$
to tend to zero as $k$ grows to $\infty$,
since
\begin{align}
\frac{2 \cdot |\psi_n(1)| \left(x_k^2 - (\chi_n/c^2)\right)^{1/4} }
     { |\lambda_n| \left(x_k^2 - 1\right)^{3/4} } \sim
\frac{2 \cdot |\psi_n(1)|}{| \lambda_n \cdot x_k |}, \quad
k \to \infty.
\label{eq_exp6_limit}
\end{align}
On the other hand, the fact that $\varepsilon_k \approx 10^{-4}$ already
for $k=7$ is somewhat surprising. In other words, 
the left hand side of \eqref{eq_exp6_limit} is a fairly tight
estimate of $|\psi_n'(x_k)|$, even for small $k$.

We also observe that $b_c(x_k,\sqrt{\chi_n}/c)$ (see the last column)
is very close to $1$ even for $k=1$, and becomes even closer to $1$
as $k$ increases. In other words, the upper bound $e^{1/4} \approx 1.284$
on this quantity (see Theorem~\ref{thm_bc_big} in Section~\ref{sec_upper})
is somewhat overcautious.

%%%%%%%%%%%%%%%%%%%%%%%%%%%%%%
\subsubsection{Illustration of Results from Section~\ref{sec_one_over_psi}}
%%%%%%%%%%%%%%%%%%%%%%%%%%%%%%
%%%%%%%%%%%%%%%%%%%
\begin{table}[htbp]
\begin{center}
\begin{tabular}{c|c|c|c}
$k$   &
$\left| \sum_{j=k}^{k+1} \frac{1}{\brk{1-x_j} \psi_n'(x_j)} \right|$ &
%$\abrk{ \frac{1}{\brk{1-x_k} \psi_n'(x_k)} + 
%        \frac{1}{\brk{1-x_{k+1}} \psi_n'(x_{k+1})} } $ &
$e^{1/4} \cdot \abrk{\lambda_n} \cdot
  \int_x^y \frac{ (z+1)^2 \; dz}{\brk{z^2-\left(\chi_n/c^2\right)}^{3/2}}$ &
 ratio \\[1ex]
\hline
 1
& 0.29442E-19 & 0.31341E-17 & 0.10645E+03  \\
 3
& 0.99172E-20 & 0.85727E-18 & 0.86442E+02  \\
 5
& 0.57139E-20 & 0.46271E-18 & 0.80980E+02  \\
 7
& 0.39054E-20 & 0.30749E-18 & 0.78735E+02  \\
 9
& 0.29098E-20 & 0.22656E-18 & 0.77861E+02  \\
 11
& 0.22851E-20 & 0.17760E-18 & 0.77720E+02  \\
 13
& 0.18596E-20 & 0.14509E-18 & 0.78022E+02  \\
 15
& 0.15530E-20 & 0.12209E-18 & 0.78614E+02  \\
 17
& 0.13226E-20 & 0.10503E-18 & 0.79407E+02  \\
 19
& 0.11441E-20 & 0.91920E-19 & 0.80345E+02  \\
 21
& 0.10021E-20 & 0.81564E-19 & 0.81393E+02  \\
 23
& 0.88694E-21 & 0.73193E-19 & 0.82524E+02  \\
 25
& 0.79193E-21 & 0.66300E-19 & 0.83720E+02  \\
 27
& 0.71242E-21 & 0.60534E-19 & 0.84969E+02  \\
 29
& 0.64507E-21 & 0.55645E-19 & 0.86261E+02  \\
 31
& 0.58742E-21 & 0.51451E-19 & 0.87588E+02  \\
 33
& 0.53762E-21 & 0.47818E-19 & 0.88944E+02  \\
 35
& 0.49425E-21 & 0.44643E-19 & 0.90325E+02  \\
 37
& 0.45620E-21 & 0.41846E-19 & 0.91727E+02  \\
 39
& 0.42261E-21 & 0.39365E-19 & 0.93147E+02  \\
\end{tabular}
\end{center}
\caption{\it
Illustration of Theorem~\ref{lem_6_10} with $c = n = 100$.
$\lambda_n = \mbox{\text{\rm{0.94419E-18}}}$.
See Experiment 7.
}
\label{t:test85}
\end{table}
%%%%%%%%%%%
\paragraph{Experiment 7.}
\label{sec_exp7}
In this experiment, we illustrate Theorem~\ref{lem_6_10}
in Section~\ref{sec_head}. We proceed as follows.
We choose the band limit and the prolate index to be,
respectively, $c=100$ and $n=100$.
We evaluate $\chi_n$ and $\lambda_n$,
by using the algorithms of 
Sections~\ref{sec_evaluate_beta},~\ref{sec_evaluate_lambda},
respectively (in double precision). Then, we compute
the first 40 roots $x_1, \dots, x_{40}$ of $\psi_n$ in $(1, \infty)$,
by using the algorithm of Section~\ref{sec_roots_outside}
(in extended precision). For each such root $x_k$, we evaluate
$\psi_n'(x_k)$ by using the algorithm of Section~\ref{sec_dpsi_outside}
(in extended precision).

For each $k=1,3,5,\dots,39$, we evaluate 
\begin{align}
\max_{-1 \leq t \leq 1}
\left|
\frac{1}{(t-x_k)\cdot\psi_n'(x_k)} +
\frac{1}{(t-x_{k+1})\cdot\psi_n'(x_{k+1})}
\right|
\label{eq_ex7_two}
\end{align}
(it turns out that the maximum is attained
at $t=1$.) Then, we evaluate the upper bound on 
\eqref{eq_ex7_two}, provided by Theorem~\ref{lem_6_10}.

We display the results of the experiment in
Table~\ref{t:test85}.
The first column contains the index $k$ of the root 
$x_k$ of $\psi_n$ in $(1, \infty)$.
The second column contains the quantity \eqref{eq_ex7_two}.
The third column contains the upper bound on \eqref{eq_ex7_two},
provided by \eqref{eq_6_10} in Theorem~\ref{lem_6_10}.
The last column contains the ratio of the third column to the second column.

We observe that the quantity \eqref{eq_ex7_two} (in the second column)
decreases with $k$. On the other hand, the bound on \eqref{eq_ex7_two}
(in the third column) gets tighter as $k$ increases from 1 to 11,
and then deteriorates, as $k$ increases further on, roughly linearly in $k$.
The latter observation is not surprising, since
\begin{align}
|\lambda_n| \cdot
  \int_x^y \frac{ (z+1)^2 \; dz}{\brk{z^2-\left(\chi_n/c^2\right)}^{3/2}}
\sim \frac{\pi \cdot |\lambda_n|}{c \cdot x_k}, \quad
k \to \infty,
\label{eq_ex7_limit}
\end{align}
due to Theorem~\ref{thm_spacing} in Section~\ref{sec_first_order},
while, for sufficiently large $k$,
\begin{align}
\left|
\frac{1}{(t-x_k)\cdot\psi_n'(x_k)} +
\frac{1}{(t-x_{k+1})\cdot\psi_n'(x_{k+1})}
\right| \leq \frac{ 20 \cdot \pi \cdot |\lambda_n| }{ x_k^2},
\label{eq_ex7_asym}
\end{align}
due to Theorem~\ref{lem_two_bound} in Section~\ref{sec_tail}.
In other words, the upper bound on \eqref{eq_ex7_two}, provided
by Theorem~\ref{lem_6_10}, is of a wrong order ($O(x_k^{-1})$ instead
of $O(x_k^{-2})$). In particular, it can be used only to bound
the head of the convergent series 
\begin{align}
\left| \sum_{k=1}^{\infty} \frac{1}{(t-x_k) \cdot \psi_n'(x_k)}
\right|.
\label{eq_ex7_series}
\end{align}
Of course, this is precisely how Theorem~\ref{lem_6_10} is used
(see the proof of Theorem~\ref{lem_head_main} in Section~\ref{sec_head}
and the proof of Theorem~\ref{thm_head_tail} in Section~\ref{sec_head_tail}).
%%%%%%%%%%%%%%%%%%%%%
\begin{figure} [htbp]
\begin{center}
\includegraphics[width=12cm, bb=81   227   529   564, clip=true]
{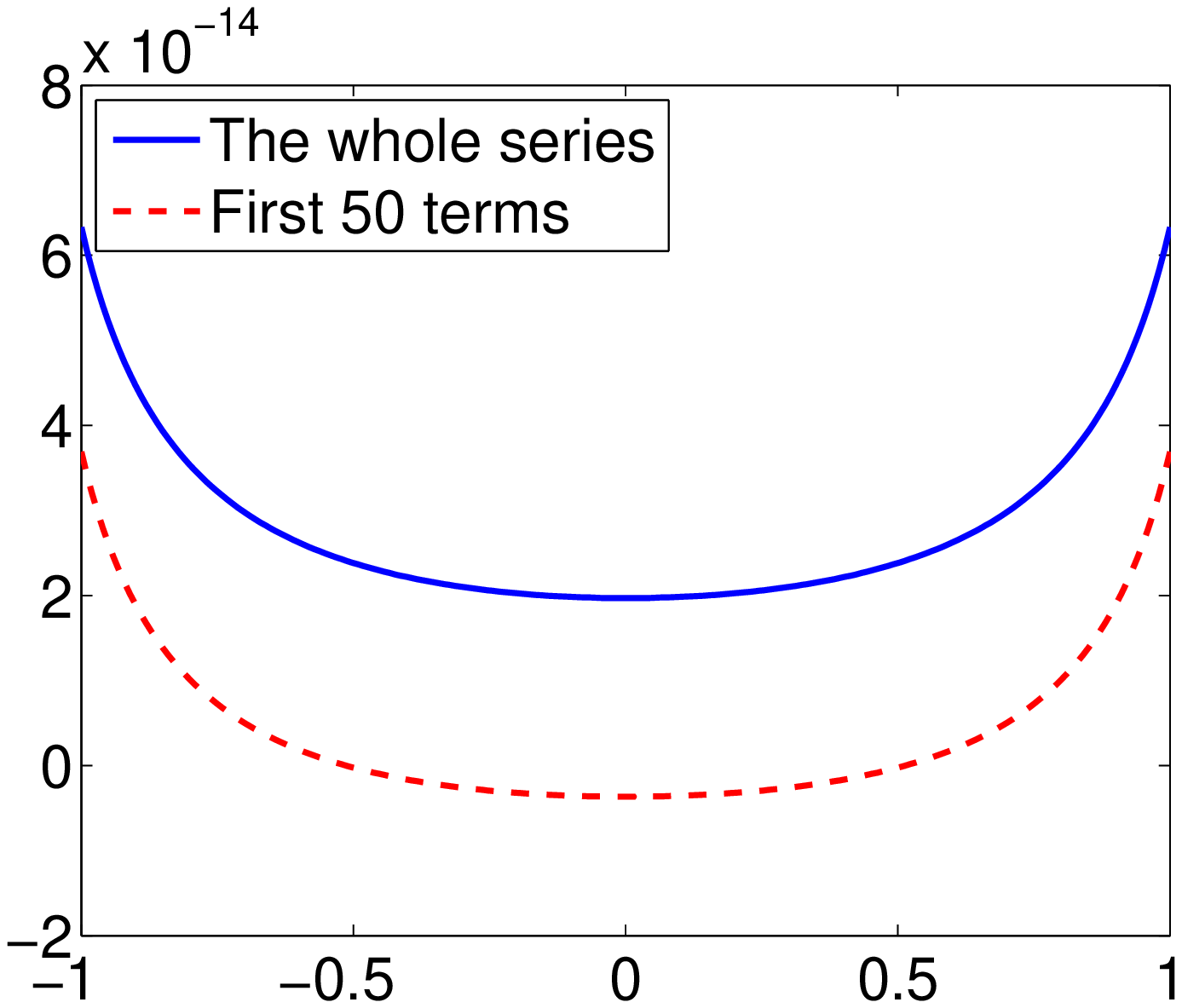}
\caption
{\it
Illustration of Theorems~\ref{thm_complex_summary} with $c = 100$, $n = 80$.
$|\lambda_n| = \mbox{\text{\rm{0.58925E-07}}}$.
}
\label{fig:test86a}
\end{center}
\end{figure}
%%%%%%%%%%%%
%%%%%%%%%%%%%%%%%%%%%
\begin{figure} [htbp]
\begin{center}
\includegraphics[width=12cm, bb=81   227   529   564, clip=true]
{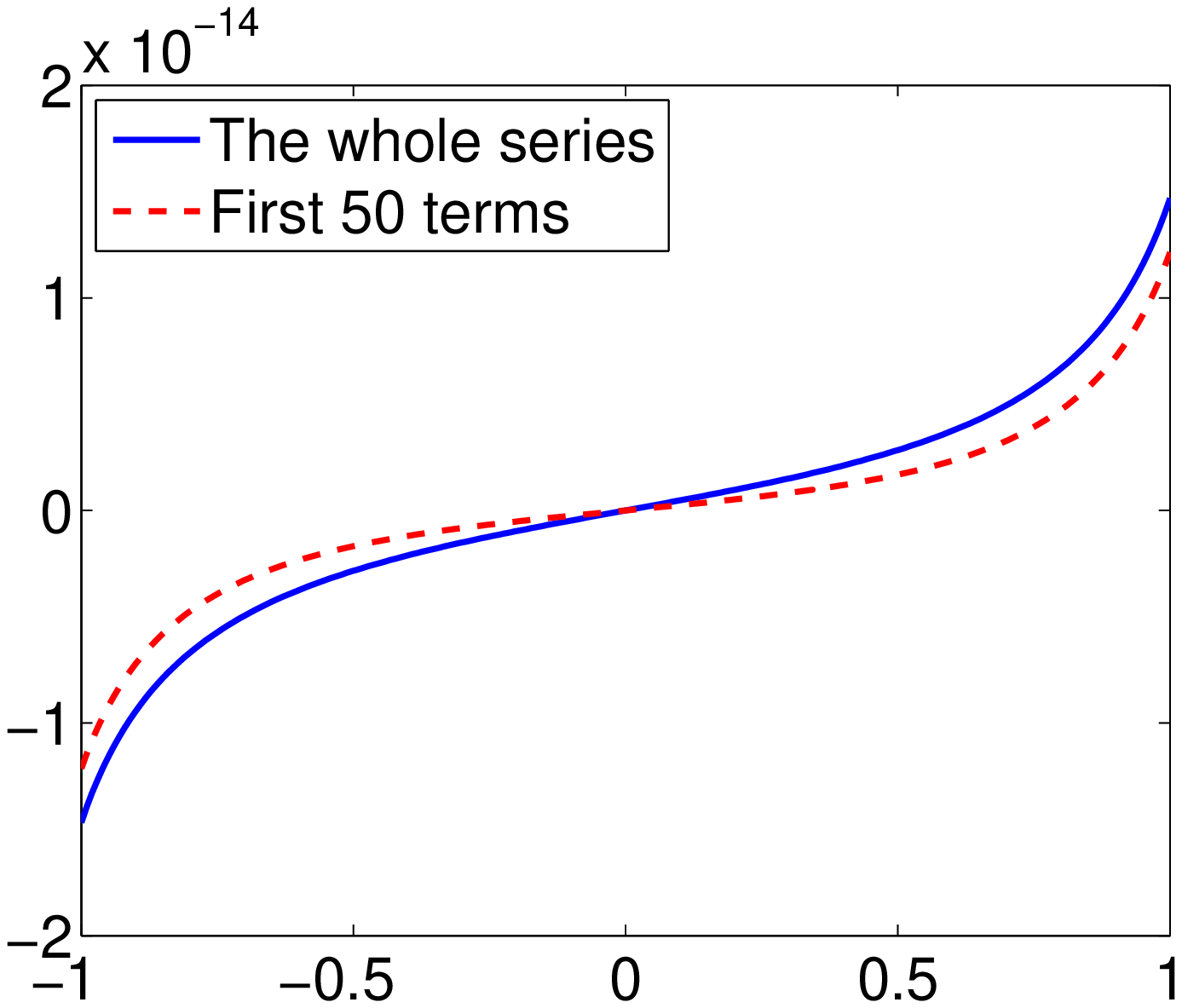}
\caption
{\it
Illustration of Theorem~\ref{thm_complex_summary} with $c = 100$, $n = 81$.
$|\lambda_n| = \mbox{\text{\rm{0.19431E-07}}}$.
}
\label{fig:test86b}
\end{center}
\end{figure}
%%%%%%%%%%%%

%%%%%%%%%%%%%%%%%%%
\begin{table}[htbp]
\begin{center}
\begin{tabular}{c|c|c|c|c|c}
$c$   &
$n$   &
$\|I\|_{\infty}$ &
$ \abrk{\lambda_n} $ &
$ \abrk{\lambda_n} / \| I \|_{\infty} $ &
$I_{\max}$ \\[1ex]
\hline
100 & 80
& 0.99408E-08 & 0.58925E-07 & 0.59276E+01 & 0.55502E+03  \\
100 & 81
& 0.28195E-08 & 0.19431E-07 & 0.68914E+01 & 0.58207E+03  \\
100 & 90
& 0.63405E-13 & 0.45487E-12 & 0.71741E+01 & 0.84186E+03  \\
100 & 91
& 0.14648E-13 & 0.12985E-12 & 0.88645E+01 & 0.87239E+03  \\
\hline
200 & 146
& 0.57204E-08 & 0.32856E-07 & 0.57436E+01 & 0.62129E+03  \\
200 & 147
& 0.19902E-08 & 0.12477E-07 & 0.62691E+01 & 0.64480E+03  \\
200 & 158
& 0.21537E-13 & 0.15123E-12 & 0.70219E+01 & 0.91959E+03  \\
200 & 159
& 0.64626E-14 & 0.51123E-13 & 0.79107E+01 & 0.94591E+03  \\
\hline
400 & 274
& 0.15108E-07 & 0.80630E-07 & 0.53369E+01 & 0.67438E+03  \\
400 & 275
& 0.61774E-08 & 0.34713E-07 & 0.56193E+01 & 0.69478E+03  \\
400 & 288
& 0.47053E-13 & 0.31193E-12 & 0.66293E+01 & 0.97598E+03  \\
400 & 289
& 0.17000E-13 & 0.12189E-12 & 0.71703E+01 & 0.99872E+03  \\
\hline
800 & 530
& 0.18269E-07 & 0.91984E-07 & 0.50351E+01 & 0.77801E+03  \\
800 & 531
& 0.83405E-08 & 0.43433E-07 & 0.52075E+01 & 0.79612E+03  \\
800 & 546
& 0.46822E-13 & 0.29701E-12 & 0.63434E+01 & 0.10833E+04  \\
800 & 547
& 0.19631E-13 & 0.12945E-12 & 0.65942E+01 & 0.11033E+04  \\
\end{tabular}
\end{center}
\caption{\it
Illustration of Theorem~\ref{thm_complex_summary}. See Experiment 8.
}
\label{t:test86}
\end{table}
%%%%%%%%%%%
\paragraph{Experiment 8.}
\label{sec_exp8}
In this experiment, we illustrate Theorem~\ref{thm_complex_summary}
in Section~\ref{sec_head_tail}.
We proceed as follows. We choose, more or less arbitrarily,
the band limit $c$ and the prolate index $n$.
Then, we evaluate $\chi_n$ and $\lambda_n$, by using
the algorithms of Section~\ref{sec_evaluate_beta},~\ref{sec_evaluate_lambda},
respectively (in double precision). 
Next, we find the roots $t_1, \dots, t_n$ of $\psi_n$ in the interval
$(-1,1)$, by using the algorithm of Section~\ref{sec_evaluate_nodes}
(in double precision). For each root $t_i$, we compute $\psi_n'(t_i)$.

Suppose now that the function $I: [-1,1] \to \Rc$ is defined
via \eqref{eq_complex_it} in Theorem~\ref{thm_complex}.
We evaluate $I$ at $3\cdot(n+1)$ points $z_1,\dots,z_{3(n+1)}$
in the interval $[-1,1]$. The points are chosen in such a way
that, if $t_k < z_j < t_{k+1}$ for some $j,k$, then
\begin{align}
\frac{1}{3} \leq \frac{z_j-t_k}{t_{k+1}-z_j} \leq 3.
\label{eq_ex8_z}
\end{align}
In other words, no point $z_j$ is ``too close'' to any root of $\psi_n$
in $(-1,1)$. For each $j=1, \dots, 3\cdot(n+1)$, we evaluate $I(z_j)$
in extended precision.
\begin{remark}
\label{rem_exp8}
For each $-1 \leq t \leq 1$, we expect $I(t)$ to be of order $|\lambda_n|$,
due to Theorems~\ref{thm_complex}, \ref{thm_complex_summary}. On the other hand,
suppose that $-1 \leq t \leq 1$, and $t_k$ is the closest
root of $\psi_n$ to $t$. Then,
\begin{align}
\frac{1}{\psi_n(t)} = \frac{1}{(t-t_k) \cdot \psi_n'(t_k)} + O(1).
\label{eq_exp8_psi}
\end{align}
Therefore, in the evaluation of $\psi_n(t)$, we expect to lose
roughly
\begin{align}
\log_{10}\left( \frac{1}{| \psi_n'(t_k) \cdot (t-t_k) \cdot \lambda_n|}\right)
\label{eq_exp8_lose}
\end{align}
decimal digits. In other words, this calculation is rather inaccurate.
However, since we need it only to illustrate the analysis,
we were satisfied when we got at least two decimal digits,
and did not make any attempts to enhance the accuracy.
\end{remark}
On the other hand, we compute the first 50 roots 
$x_1, \dots, x_{50}$ of $\psi_n$ in $(1,\infty)$, and, for each
such root $x_j$, we evaluate $\psi_n'(x_j)$. These calculations
are based on the algorithms of 
Sections~\ref{sec_dpsi_outside},~\ref{sec_roots_outside}.
Then, for each $z_j$, we evaluate the sum
\begin{align}
I_{50}(z_j) = \sum_{k=1}^{50} \left(
\frac{1}{\psi_n'(x_k) \cdot (z_j-x_k)} +
\frac{1}{\psi_n'(-x_k) \cdot (z_j+x_k)}
\right).
\label{eq_exp8_i50}
\end{align}
We display the results of the experiment
in Figures~\ref{fig:test86a},~\ref{fig:test86b},
for $c=100$, $n=90$ and $c=100$, $n=91$, respectively.
On each of these figures, we plot the function $I$,
defined via \eqref{eq_complex_it} in Theorem~\ref{thm_complex}
(blue solid line) and the function $I_{50}$,
defined via \eqref{eq_exp8_i50} (red dashed line).

We observe that, in both figures, the maximum of both $I$ and $I_{50}$
is attained at the end points of the interval.
Also, we observe that the values of $I$ and $I_{50}$ are of order
$|\lambda_n|$, as expected; also, the functions appear,
at least by eye, to be
well approximated by polynomials of order up to 3. In other words,
the reciprocal of $\psi_n$ seems to be approximated up to an
error of order $|\lambda_n|$ by a rational function with $n$ poles,
as asserted in Theorems~\ref{thm_complex}, \ref{thm_complex_summary}.

We display additional results of this experiment in
Table~\ref{t:test86}.
This table has the following structure.
The first and second column contain the band limit $c$ and
the prolate index $n$, respectively.
The third column contains the maximum of the absolute value of the
function $I$
in the interval $[-1,1]$, i.e.
\begin{align}
\| I \|_{\infty} = \max\left\{ |I(t)| \; : \; -1 \leq t \leq 1 \right\},
\label{eq_exp8_inf}
\end{align}
where $I$ is defined via \eqref{eq_complex_it} in Theorem~\ref{thm_complex}.
The fourth column contains $|\lambda_n|$.
The fifth column contains the ratio $|\lambda_n| / \|I\|_{\infty}$.
The last column contains $I_{\max}$, defined via
\eqref{eq_complex_imax} in Theorem~\ref{thm_complex}.

We make the following observations from Table~\ref{t:test86}.
First, $|\lambda_n|$ alone is already an upper bound
on $\|I\|_{\infty}$. Moreover, for a fixed band limit $c$,
the ratio $|\lambda_n|/\|I\|_{\infty}$ increases
as $n$ grows. For all the values of $c,n$ in Table~\ref{t:test86},
this ratio varies between 5 and 9.
On the other hand, $I_{\max}$ varies between 500 and 1000.
Moreover, $I_{\max}$ increases with $n$, for each fixed band limit $c$.
In other words, the upper bound $|\lambda_n| \cdot I_{\max}$
on $\|I\|_{\infty}$, established in Theorem~\ref{thm_complex},
deteriorates as $n$ increases. Moreover, the factor $I_{\max}$
in \eqref{eq_complex} of Theorem~\ref{thm_complex}
appears to be unnecessary. The main source of inaccuracy is 
Theorem~\ref{lem_6_10}
in Section~\ref{sec_head}, which provides a relatively poor upper
bound on the expressions of the form \eqref{eq_exp8_i50}
(see Figures~\ref{fig:test86a},~\ref{fig:test86b}
and Experiment 7 above).

Nevertheless, due to the fast decay of $|\lambda_n|$ with $n$,
the estimates of Theorem~\ref{thm_complex}, albeit
somewhat loose, are sufficient for the purposes of this paper
(see the analysis of the quadrature error in Section~\ref{sec_quad},
and also Experiment 14 in Section~\ref{sec_exp14} below).

%%%%%%%%%%%%%%%%%%%%%%%%%%%%%%
\subsubsection{Illustration of Results from Section~\ref{sec_quad}}
%%%%%%%%%%%%%%%%%%%%%%%%%%%%%%
%%%%%%%%%%%%%%%%%%%
\paragraph{Experiment 9.}
\label{sec_exp9}
In this numerical experiment, we illustrate 
Theorem~\ref{thm_quad_coef} 
in Section~\ref{sec_phi}. We proceed as follows.
We choose, more or less arbitrarily,
the band limit $c$, the prolate index $n$ and 
the root index $1 \leq j \leq n$.
Then, we evaluate $\lambda_n$ and the roots $t_1, \dots, t_n$
of $\psi_n$ in $(-1,1)$, by using the algorithms
of Sections~\ref{sec_evaluate_lambda},~\ref{sec_evaluate_nodes},
respectively. We use $10\cdot n$ Gaussian nodes to evaluate 
\begin{align}
A_{n,j} = \int_{-1}^1 \frac{\psi_n(t) \; dt}{t - t_j}
\label{eq_exp9_anj}
\end{align}
and
\begin{align}
B_{n,j} = ic\lambda_n \cdot \Psi_n(1,t_j),
\label{eq_exp9_bnj}
\end{align}
where $\Psi_n(1,t_j)$ is defined via \eqref{eq_quad_big_psi}
in Theorem~\ref{thm_quad_coef}.
We  observe that $A_{n,j}$ and $B_{n,j}$ appear on the right-hand
side of \eqref{eq_quad_coef} in Theorem~\ref{thm_quad_coef}.

Next, for each integer $m=0,1,\dots,n-1$, we 
use the same Gaussian quadrature to
evaluate
\begin{align}
\int_{-1}^1 \frac{\psi_n(t) \cdot \psi_m(t) \; dt}{t-t_j}.
\label{eq_exp9_lhs}
\end{align}
In addition, for each integer $m=0,1,\dots,n-1$, we compute
\begin{align}
\frac{ |\lambda_m|^2 \cdot \psi_m(t_j)}{ |\lambda_m|^2 - |\lambda_n|^2 },
\label{eq_exp9_psim}
\end{align}
by using the algorithms of 
Sections~\ref{sec_evaluate_beta},~\ref{sec_evaluate_lambda}.
All the calculations are carried out in double precision.
%%%%%%%%%%%%
\begin{table}[htbp]
\begin{center}
\begin{tabular}{c|c|c|c|c|c}
$c$ &
$n$ &
$j$ &
$|\lambda_n|$ &
$A_{n,j}$ &
$B_{n,j}$ \\[1ex]
\hline
10  & 20  & 13  & 0.11487E-09 & -.25341E+01 & -.69171E-11 \\
500 & 340 & 226 & 0.27418E-09 & -.19569E+01 & -.17690E-09 \\
\end{tabular}
\end{center}
\caption{\it
Illustration of Theorem~\ref{thm_quad_coef}.
See Experiment 9.
}
\label{t:test87}
\end{table}
%%%%%%%%%%%
%%%%%%%%%%%%%%%%%%%%%
\begin{figure} [htbp]
\begin{center}
\includegraphics[width=12cm, bb=68   218   542   574, clip=true]
{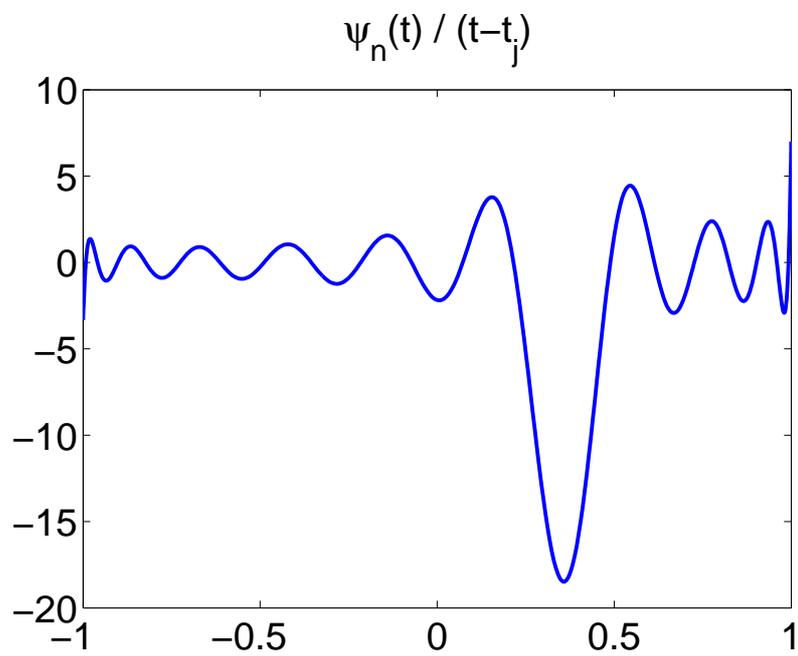}
\caption
{\it
The graph of $\psi_n(t)/(t-t_j)$ with $c = 10$, $n = 20$ and $j = 13$.
Corresponds to Table~\ref{t:test87a}. See Experiment 9.
}
\label{fig:test87}
\end{center}
\end{figure}
%%%%%%%%%%%%
\begin{table}[htbp]
\begin{center}
\begin{tabular}{c|c|c|c}
$m$   &
$\int_{-1}^1 \frac{ \psi_n(t) \psi_m(t) dt }{ t - t_j}$   &
$\frac{ \abrk{\lambda_m}^2 \psi_m(t_j) }
      { \abrk{\lambda_m}^2 - \abrk{\lambda_n}^2}$ &
$E_m$ \\[1ex]
\hline
 0
& -.18363E+01 & 0.72463E+00 & -.15543E-14  \\
 1
& -.28929E+01 & 0.11416E+01 & -.53291E-14  \\
 2
& -.18299E+01 & 0.72208E+00 & -.51070E-14  \\
 3
& 0.73457E+00 & -.28987E+00 & 0.12212E-14  \\
 4
& 0.19270E+01 & -.76041E+00 & 0.37748E-14  \\
 5
& 0.40316E+00 & -.15909E+00 & 0.21094E-14  \\
 6
& -.14464E+01 & 0.57078E+00 & -.22204E-14  \\
 7
& -.10263E+01 & 0.40498E+00 & 0.10658E-13  \\
 8
& 0.11062E+01 & -.43654E+00 & 0.95479E-14  \\
 9
& 0.17030E+01 & -.67204E+00 & 0.99920E-14  \\
 10
& -.23035E+00 & 0.90899E-01 & -.26645E-14  \\
 11
& -.19061E+01 & 0.75217E+00 & -.44409E-15  \\
 12
& -.91510E+00 & 0.36111E+00 & 0.16653E-14  \\
 13
& 0.13774E+01 & -.54355E+00 & 0.11990E-13  \\
 14
& 0.18002E+01 & -.71037E+00 & 0.37748E-14  \\
 15
& -.23786E+00 & 0.93863E-01 & -.97422E-14  \\
 16
& -.19723E+01 & 0.77830E+00 & -.11546E-13  \\
 17
& -.10566E+01 & 0.41697E+00 & -.15987E-13  \\
 18
& 0.12849E+01 & -.50705E+00 & -.19984E-14  \\
 19
& 0.19509E+01 & -.76986E+00 & -.42188E-14  \\
\end{tabular}
\end{center}
\caption{\it
Illustration of Theorem~\ref{thm_quad_coef} with
$c = 10$, $n = 20$ and $j = 13$.
See Experiment 9.
}
\label{t:test87a}
\end{table}
%%%%%%%%%%%
%%%%%%%%%%%%%%%%%%%
\begin{table}[htbp]
\begin{center}
\begin{tabular}{c|c|c|c}
$m$   &
$\int_{-1}^1 \frac{ \psi_n(t) \psi_m(t) dt }{ t - t_j}$   &
$\frac{ \abrk{\lambda_m}^2 \psi_m(t_j) }
      { \abrk{\lambda_m}^2 - \abrk{\lambda_n}^2}$ &
$E_m$ \\[1ex]
\hline
 0
& 0.60926E-12 & 0.31814E-12 & 0.33872E-15  \\
 20
& 0.43712E-01 & 0.22813E-01 & 0.52666E-14  \\
 40
& -.32804E+01 & -.17120E+01 & -.84377E-13  \\
 60
& 0.85749E+00 & 0.44751E+00 & -.60729E-13  \\
 80
& -.14190E+01 & -.74055E+00 & -.91926E-13  \\
 100
& 0.84651E+00 & 0.44178E+00 & -.28089E-13  \\
 120
& 0.35414E+00 & 0.18482E+00 & 0.47351E-13  \\
 140
& 0.53788E+00 & 0.28071E+00 & -.21316E-13  \\
 160
& -.17111E+01 & -.89302E+00 & -.35749E-13  \\
 180
& -.93523E+00 & -.48808E+00 & 0.31863E-13  \\
 200
& -.30219E+00 & -.15771E+00 & 0.48406E-13  \\
 220
& -.51322E+00 & -.26784E+00 & 0.45852E-13  \\
 240
& -.12216E+01 & -.63753E+00 & -.11546E-13  \\
 260
& -.10503E+01 & -.54811E+00 & -.82379E-13  \\
 280
& 0.93142E+00 & 0.48609E+00 & 0.84377E-14  \\
 300
& -.55310E-02 & -.28865E-02 & 0.50818E-13  \\
 320
& 0.11601E+00 & 0.60544E-01 & -.13105E-12  \\
 339
& 0.14218E+01 & 0.74200E+00 & -.94369E-13  \\
\end{tabular}
\end{center}
\caption{\it
Illustration of Theorem~\ref{thm_quad_coef} with
$c = 500$, $n = 340$ and $j = 226$.
See Experiment 9.
}
\label{t:test87b}
\end{table}
%%%%%%%%%%%

We display the results of the experiment in Figure~\ref{fig:test87}
and Tables~\ref{t:test87a},~\ref{t:test87b}.
In Figure~\ref{fig:test87}, we plot the function $\psi_n(t)/(t-t_j)$,
corresponding to $c=10$, $n=20$, and $j=13$, over the
interval $(-1,1)$.
We observe that this function has $n-1$ roots in $(-1,1)$:
all the roots of $\psi_n$ except for $t_j$. Obviously, the value
of this function at $t_j$ is $\psi_n'(t_j)$.

In Tables~\ref{t:test87},~\ref{t:test87a},~\ref{t:test87b}, 
we display the results
of the experiment, corresponding to
$c=10$, $n=20$, $j=13$ and
$c=500$, $n=340$ and $j=226$, respectively.
Table~\ref{t:test87} contains the values of the parameters
$c,n,j$, as well as the quantities $A_{n,j}$, $B_{n,j}$,
defined, respectively, via \eqref{eq_exp9_anj},
\eqref{eq_exp9_bnj} above.
Tables~\ref{t:test87a},~\ref{t:test87b} have the following structure.
The first column contains the parameter $m$
(an integer between 0 and $n-1$).
The second column contains \eqref{eq_exp9_lhs}
(the left-hand side of \eqref{eq_quad_coef} in
Theorem~\ref{thm_quad_coef}); in other words,
this is the inner product of $\psi_n(t)/(t-t_j)$ with $\psi_m$.
The third column contains \eqref{eq_exp9_psim}
(appears on the right-hand side of \eqref{eq_quad_coef}).
The last column contains the absolute error $E_m$ of the calculation
of \eqref{eq_exp9_psim}, defined via
\begin{align}
E_m =
\int_{-1}^1 \frac{\psi_n(t) \cdot \psi_m(t) \; dt}{t-t_j} - 
\frac{ |\lambda_m|^2 \cdot \psi_m(t_j)}{ |\lambda_m|^2 - |\lambda_n|^2 }
\cdot (A_{n,j}+B_{n,j})
\label{eq_exp9_em}
\end{align}
(obviously, $E_m$ would be equal to zero in exact arithmetics,
due to Theorem~\ref{thm_quad_coef}).

We make the following observations from
Tables~\ref{t:test87},~\ref{t:test87a},~\ref{t:test87b}.
As expected, $A_{n,j}$ is significantly larger than $B_{n,j}$
(by a factor of order $|\lambda_n|^{-1}$).
In other words,
\begin{align}
\int_{-1}^1 \frac{\psi_n(t) \cdot \psi_m(t) \; dt}{t-t_j} =
\frac{ |\lambda_m|^2 \cdot \psi_m(t_j)}{ |\lambda_m|^2 - |\lambda_n|^2 }
\cdot
\int_{-1}^1 \frac{\psi_n(t) \; dt}{t-t_j} \cdot
\left(1 + 
O\left( |\lambda_n| \right) \right)
\label{eq_exp9_approx}
\end{align}
(see Theorem~\ref{thm_quad_coef} and 
\eqref{eq_exp9_anj},~\eqref{eq_exp9_bnj},~\eqref{eq_exp9_lhs},~\eqref{eq_exp9_psim}
above). Also, in each of Tables~\ref{t:test87a},~\ref{t:test87b},
all the quantities in the second and third column
are roughly of the same order of magnitude
(except for the first row in Table~\ref{t:test87b}).
We also observe that the numerical evaluations of the left-hand side
and the right-hand side of \eqref{eq_quad_coef}
in Theorem~\ref{thm_quad_coef} agree up to an absolute
error of order $\approx 10^{-14}$.

%%%%%%%%%%%%%%%%%%%
\begin{table}[htbp]
\begin{center}
\begin{tabular}{c|c|c|c}
$c$   &
$n$   &
$|\lambda_n|$   &
$|P_{n,n-2}|$   \\[1ex]
\hline
1000 & 670 & 0.93659E-11 & 0.49177E-03 \\
1000 & 690 & 0.73056E-18 & 0.43907E-03 \\
1000 & 710 & 0.15947E-25 & 0.40076E-03 \\
\end{tabular}
\end{center}
\caption{\it
Illustration of Theorem~\ref{lem_quad_cp}.
Corresponds to Figure~\ref{fig:test88}.
}
\label{t:test88a}
\end{table}
%%%%%%%%%%%%%%%%%%%%%
\begin{figure} [htbp]
\begin{center}
\includegraphics[width=12cm, bb=68   218   542   574, clip=true]
{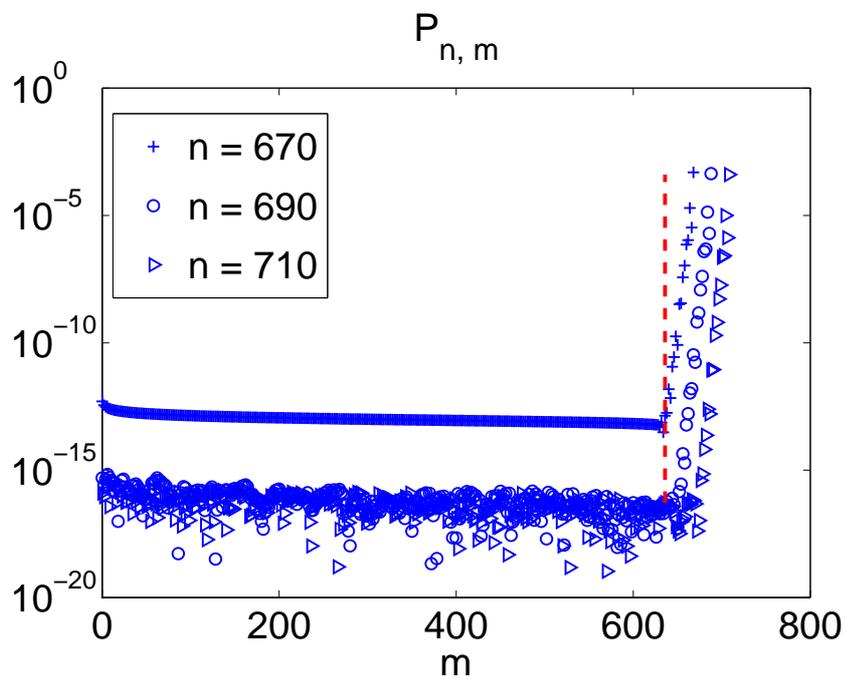}
\caption
{\it
Plot of $P_{n, m}$ \eqref{eq_quad_p} with $c = 1000$ and
$n = 670$ (crosses), $n = 690$ (circles), $n = 710$ (triangles).
The value $m = 2c/\pi$ is marked with a dashed line.
See Experiment 10.
}
\label{fig:test88}
\end{center}
\end{figure}
%%%%%%%%%%%%
\paragraph{Experiment 10.}
\label{sec_exp10}
In this experiment, we illustrate
Theorem~\ref{lem_quad_cp} in Section~\ref{sec_quad_error}.
We proceed as follows.
We choose, more or less arbitrarily,
the band limit $c$ and the prolate index $n$.
Then, we evaluate $\lambda_n$, using the algorithm
of Section~\ref{sec_evaluate_lambda}
(in double precision). Next, for each integer $0 \leq m \leq n-1$, 
we evaluate $P_{n,m}$, defined
via \eqref{eq_quad_p} in Theorem~\ref{thm_quad}
in Section~\ref{sec_quad_error},
by using the algorithms of 
Sections~\ref{sec_evaluate_beta},~\ref{sec_evaluate_nodes}
(in double precision).
We observe that, due to Corollary~\ref{cor_p_parity}
in Section~\ref{sec_quad_error},
it suffices to consider only even values of $m$
(since $P_{n,m}=0$ if $m$ is odd).

%%%%%%%%%%%%%%%%%%%
\begin{table}[htbp]
\begin{center}
\begin{tabular}{c|c|c|c|c}
$c$   &
$n$   &
$\abrk{\lambda_n}$   &
$\underset{0 \leq m < n}{\max} |P_{n,m}|$ &
$c \cdot \underset{0 \leq m < n}{\max} |P_{n,m}| $ \\[1ex]
\hline
50 & 47
& 0.26917E-07 & 0.81444E-02 & 0.40722E+00  \\
50 & 53
& 0.72096E-11 & 0.72290E-02 & 0.36145E+00  \\
50 &  57
& 0.19830E-13 & 0.67353E-02 & 0.33676E+00  \\
\hline
100 & 81
& 0.19431E-07 & 0.48065E-02 & 0.48065E+00  \\
100 &  87
& 0.18068E-10 & 0.44412E-02 & 0.44412E+00  \\
100 &  93
& 0.10185E-13 & 0.41418E-02 & 0.41418E+00  \\
\hline
250  &  179
& 0.18854E-07 & 0.22730E-02 & 0.56825E+00  \\
250 &  186
& 0.22556E-10 & 0.14014E-02 & 0.35035E+00  \\
250 &  193
& 0.17851E-13 & 0.20475E-02 & 0.51188E+00  \\
\hline
500  & 339
& 0.40938E-07 & 0.12600E-02 & 0.63000E+00  \\
500 &  348
& 0.20575E-10 & 0.85073E-03 & 0.42537E+00  \\
500 &  355
& 0.39965E-13 & 0.11550E-02 & 0.57751E+00  \\
\hline
1000 &  659
& 0.38241E-07 & 0.68143E-03 & 0.68143E+00  \\ 
1000 &  668
& 0.44256E-10 & 0.49838E-03 & 0.49838E+00  \\
1000 &  677
& 0.35933E-13 & 0.63339E-03 & 0.63339E+00  \\
\hline
2000 & 1297
& 0.41740E-07 & 0.36453E-03 & 0.72906E+00  \\
2000 & 1307
& 0.47570E-10 & 0.35192E-03 & 0.70385E+00  \\
2000 &  1317
& 0.39064E-13 & 0.34212E-03 & 0.68424E+00  \\
\hline
4000 &  2572
& 0.33682E-07 & 0.16247E-03 & 0.64987E+00  \\
4000 & 2583
& 0.37417E-10 & 0.18703E-03 & 0.74813E+00  \\
4000 & 2594
& 0.30728E-13 & 0.14902E-03 & 0.59608E+00  \\
\end{tabular}
\end{center}
\caption{\it
Illustration of Theorem~\ref{lem_quad_cp}.
See Experiment 10.
}
\label{t:test88}
\end{table}
%%%%%%%%%%%
We display the results of the experiment in 
Tables~\ref{t:test88a},~\ref{t:test88}
and in Figure~\ref{fig:test88}.
In Figure~\ref{fig:test88}, we plot $|P_{n,m}|$ as a function
of even integer $m$ on the logarithmic scale
for $c=1000$ and three choices of $n$,
namely,
$n=670$ (pluses), $n=690$ (circles), and $n=710$ (triangles).
The value $2c/\pi$ is marked with a red dashed line.
In Table~\ref{t:test88a}, we display the quantities $|\lambda_n|$
and $|P_{n,n-2}|$, corresponding to Figure~\ref{fig:test88}.

We make the following observations from Figure~\ref{fig:test88},
Table~\ref{t:test88a} and some additional numerical experiments. 
First, $|P_{n,m}| < |\lambda_n|$ for all $m<2c/\pi$
(obviously, in Figure~\ref{fig:test88} we see this phenomenon
only for $n=670$, since the calculations are carried
out in double precision; for $n=690,710$ and $m<2c/\pi$,
$|P_{n,m}| < 10^{-15}$).
On the other hand, for even $2c/\pi < m < n$,
we observe that $|P_{n,m}|$ grows roughly exponentially
with $m$, reaching its maximum at $m=n-2$. This maximum
is approximately $5 \cdot 10^{-4}$,
for all the three values of $n$ (see Table~\ref{t:test88a}).
However, Theorem~\ref{lem_quad_cp} asserts that,
for all $m < n$,
\begin{align}
|P_{n,m}| \leq \frac{ \sqrt{32} n^2 }{ c }.
\label{eq_exp10_bound}
\end{align}
In other words, Theorem~\ref{lem_quad_cp} overestimates $|P_{n,m}|$
by a factor of order $n^2$.

In Table~\ref{t:test88}, we display some additional
results of this experiment.
This table has the following structure.
The first and second column contain, respectively,
the band limit $c$ and the prolate index $n$.
The third column contains $|\lambda_n|$. The fourth column contains
\begin{align}
\max\left\{ |P_{n,m} | \; : \; 0 \leq m \leq n-1 \right\}.
\label{eq_exp10_max}
\end{align}
The last column contains the value \eqref{eq_exp10_max},
multiplied by the band limit $c$ (i.e. the product of the first
and fourth columns).

We make the following observations from Table~\ref{t:test88}
and some additional experiments. First, for each of the seven
values of $c$, the three indices $n$ were chosen in such
a way that $|\lambda_n|$ is between
$10^{-14}$ and $10^{-7}$. Even though $c$ varies between $50$
(the first three rows) and $4000$ (the last three rows),
the values in the last column are roughly of the same order,
for all the choices of $c$ and $n$. Moreover, these
values are always between $0.3$ and $0.75$.
This observation seems to indicate that Theorem~\ref{lem_quad_cp}
overestimates this quantity by $O(n^2)$
(see also \eqref{eq_exp10_bound} and Figure~\ref{fig:test88}).

Additional observations seem to indicate that the maximum
in \eqref{eq_exp10_max} is always attained at the largest even $m$
between zero and $n-1$
(as in Figure~\ref{fig:test88}).
Also, for this value of $m$, all the summands
\begin{align}
\frac{ \psi_m(t_j) \Psi_{n,j}(1) }{ \psi_n'(t_j)} 
\label{eq_exp10_summand}
\end{align}
in \eqref{eq_quad_p} have been observed to have the same sign
for all $j=1,\dots,n$.
Thus, the inaccuracy of the bound in Theorem~\ref{lem_quad_cp}
is due to overestimation of the summands \eqref{eq_exp10_summand},
rather than due to cancellation of summands with opposite signs.

%%%%%%%%%%%%%%%%%%%%%%%%%%%%%%%%%%%%%%%%%%%%%%
\subsection{Performance of the Quadrature}
In this subsection, we report the results of numerical
experiments illustrating the performance
of the quadrature, defined in Definition~\ref{def_quad},
and whose properties are studied
in Section~\ref{sec_quad}. 

%%%%%%%%%%%%%%%%%%%%%%%%%%%%%%
\subsubsection{Quadrature Error and its Relation to $\abrk{\lambda_n}$}
\label{sec_quad_err_num}
%%%%%%%%%%%%%%%%%%%
\begin{table}[htbp]
\begin{center}
\begin{tabular}{c|c|c|c|c|c}
$m$   &
$\lambda_m \psi_m(0)$ &
$S_m$ &
$\int \psi_m \brk{1-\sum \varphi_j} $ &
$\xi_m$ &
error \\[1ex]
\hline
 0
& 0.70669E+00 & 0.70669E+00 & 0.20856E-16 & 0.79599E-12 & -.55511E-15  \\
 2
& 0.49581E+00 & 0.49581E+00 & 0.77098E-15 & 0.29426E-10 & -.88818E-15  \\
 4
& 0.42581E+00 & 0.42581E+00 & 0.97200E-15 & 0.37098E-10 & -.23870E-14  \\
 6
& 0.38527E+00 & 0.38527E+00 & -.83346E-15 & -.31810E-10 & -.13323E-14  \\
 8
& 0.35695E+00 & 0.35695E+00 & -.10918E-14 & -.41671E-10 & -.99920E-15  \\
 10
& 0.33516E+00 & 0.33516E+00 & 0.25553E-15 & 0.97526E-11 & -.17208E-14  \\
 12
& 0.31730E+00 & 0.31730E+00 & -.25500E-14 & -.97326E-10 & 0.11102E-15  \\
 14
& 0.30201E+00 & 0.30201E+00 & -.35426E-14 & -.13521E-09 & 0.13878E-14  \\
 16
& 0.28844E+00 & 0.28844E+00 & -.20470E-14 & -.78128E-10 & -.16653E-15  \\
 18
& 0.27604E+00 & 0.27604E+00 & -.28733E-13 & -.10967E-08 & 0.42188E-14  \\
 20
& 0.26435E+00 & 0.26435E+00 & -.14073E-12 & -.53714E-08 & 0.90483E-14  \\
 22
& 0.25299E+00 & 0.25299E+00 & 0.26178E-11 & 0.99913E-07 & 0.94924E-14  \\
 24
& 0.24150E+00 & 0.24150E+00 & 0.15530E-10 & 0.59274E-06 & -.66613E-15  \\
 26
& 0.22919E+00 & 0.22919E+00 & -.17315E-09 & -.66085E-05 & -.72997E-14  \\
 28
& 0.21377E+00 & 0.21377E+00 & -.53359E-09 & -.20365E-04 & 0.14710E-14  \\
 30
& 0.18075E+00 & 0.18075E+00 & 0.55489E-08 & 0.21178E-03 & -.51903E-14  \\
 32
& 0.10038E+00 & 0.10038E+00 & -.62071E-08 & -.23690E-03 & -.70915E-14  \\
 34
& 0.27988E-01 & 0.27988E-01 & -.88231E-07 & -.33675E-02 & 0.10113E-13  \\
 36
& 0.49822E-02 & 0.49818E-02 & 0.40165E-06 & 0.15330E-01 & 0.29751E-14  \\
 38
& 0.70503E-03 & 0.70008E-03 & 0.49503E-05 & 0.18894E+00 & -.13444E-13  \\
\end{tabular}
\end{center}
\caption{\it
Illustration of the proof of Theorem~\ref{thm_quad}
with $c = 50$ and $n = 40$. See Experiment 11.
}
\label{t:test89}
\end{table}
%%%%%%%%%%%
\paragraph{Experiment 11.}
\label{sec_exp11}
In this experiment, we illustrate Theorem~\ref{thm_quad}
in Section~\ref{sec_quad_error}.
We proceed as follows. We choose, more or less arbitrarily,
the band limit $c$ and the prolate index $n$.
We evaluate $\lambda_n$ as well as the nodes $t_1, \dots, t_n$
and the weights $W_1, \dots, W_n$ of the quadrature,
defined in Definition~\ref{def_quad}
in Section~\ref{sec_quad}. To do so, we use the algorithms
of Sections~\ref{sec_evaluate_lambda}, 
\ref{sec_evaluate_nodes},
\ref{sec_evaluate_weights}, respectively (in double precision).

Then, we choose an even integer $0 \leq m < n$, 
and evaluate $\lambda_m$, $\psi_m(0)$ and $\psi_m(t_j)$,
for all $j=1,2,\dots,n$, by using the algorithms
of Sections~\ref{sec_evaluate_lambda}, \ref{sec_evaluate_beta}
(in double precision). 
Next, we evaluate $P_{n,m}$, defined via \eqref{eq_quad_p}
in Theorem~\ref{thm_quad} (see Experiment 10 
in Section~\ref{sec_exp10}).
Also, we compute $\| I \|_{\infty}$
(see \eqref{eq_i_inf} in Theorem~\ref{thm_quad} and
\eqref{eq_exp8_inf} in Experiment 8 in
Section~\ref{sec_exp8}). Finally, we evaluate
\begin{align}
\int_{-1}^1 \psi_m(t) \cdot \left(1 - \sum_{j=1}^n \varphi_j(t) \right) dt,
\label{eq_exp11_int1}
\end{align}
where the functions $\varphi_1, \dots, \varphi_n$ are those
of Definition~\ref{def_quad} in Section~\ref{sec_quad}.

We display the results of the experiment in Table~\ref{t:test89}.
The data in this table correspond to $c=50$ and $n=40$.
Table~\ref{t:test89} has the following structure.
The first column contains the even integer parameter $m$,
which varies between $0$ and $n-2$.
The second column contains $\lambda_m  \psi_m(0)$
(we observe that
\begin{align}
\lambda_m \psi_m(0) = \int_{-1}^1 \psi_m(t) \; dt,
\label{eq_exp11_psim}
\end{align}
due to \eqref{eq_prolate_integral} in Section~\ref{sec_pswf}).
The third column contains the quantity $S_m$, defined via
the formula
\begin{align}
S_m = 
\frac{ |\lambda_m|^2 }{ |\lambda_m|^2 - |\lambda_n|^2 } \cdot
\left(
ic\lambda_n P_{n,m} + \sum_{j=1}^n \psi_m(t_j) \cdot W_j
\right).
\label{eq_exp11_third}
\end{align}
The fourth column contains the integral \eqref{eq_exp11_int1}.
The fifth column contains the number $\xi_m$, defined via
the formula
\begin{align}
\xi_m = \frac{1}{\| I \|_{\infty}} 
\int_{-1}^1 \psi_m(t) \cdot \left(1 - \sum_{j=1}^n \varphi_j(t) \right) dt.
\label{eq_exp11_xi}
\end{align}
(We observe that,
due to \eqref{eq_quad_b} in Theorem~\ref{thm_quad},
$\xi_m$ equals to the value in the third column, divided
by $\| I \|_{\infty}$. The latter does not depend on $m$, and
is equal to 0.26201E-04, for $c=50$ and $n=40$.)
The last column contains the difference between the value
in the third column and the sum of the values in the fourth in fifth
columns (due to the combination of
\eqref{eq_exp11_int1},
\eqref{eq_exp11_psim},
\eqref{eq_exp11_third} and \eqref{eq_quad_b} in Theorem~\ref{thm_quad}, this 
quantity would be zero in exact arithmetics).

We make the following observations from Table~\ref{t:test89}.
First, 
\begin{align}
S_m = \int_{-1}^1 \psi_m(t) \cdot
\left(\varphi_1(t) + \dots + \varphi_n(t)
\right) dt,
\label{eq_exp11_obs1}
\end{align}
due to the combination of \eqref{eq_exp11_third} and
Theorem~\ref{thm_quad}. We observe that $S_m$ is close
to $\lambda_m \psi_m(0)$ for small $m$, but
coincides with the latter only in two digits for $m=38$.

Second, we observe that the value in the fourth column
(see \eqref{eq_exp11_int1}) is grows from $\approx 10^{-16}$ at $m=0$
up to $\approx 5 \cdot 10^{-6}$ at $m=38$.

Third, we observe that, due to the combination of \eqref{eq_exp11_xi}
and Theorem~\ref{thm_quad},
\begin{align}
\xi_m = \frac{1}{\| I \|_{\infty}} 
\int_{-1}^1 I(t) \cdot \psi_n(t) \cdot \psi_m(t) \; dt,
\label{eq_exp11_obs2}
\end{align}
where $I$ is that of 
Theorem~\ref{thm_complex} in Section~\ref{sec_head_tail}.
Theoretically, $|\xi_m|$ is bounded from above by 1
(see the proof of Theorem~\ref{thm_quad}). However, in fact,
$|\xi_m|$ is significantly smaller than one for small values of $m$,
though $\xi_m \approx 0.2$ for $m=38$.

Next, the value in the last column, that would be zero
in exact arithmetics, serves as a test of the accuracy of the calculation.
We observe that this value is of order $10^{-14}$, for all $m$.
Finally,
we note that $\lambda_m \psi_m(0)$ is always positive and
monotonically decreases with $m$.

%%%%%%%%%%%%%%%%%%%
\begin{table}[htbp]
\begin{center}
\begin{tabular}{c|c|c|c|c}
$m$   &
$\lambda_m \psi_m(0)$ &
$ \int \psi_m - \sum W_j \psi_m(t_j)$ &
$C_{n,m}$ &
$C_{n,m} / |\lambda_n|$ \\[1ex]
%
% $ \brk{1 - \frac{\abrk{\lambda_n}^2}{\abrk{\lambda_m}^2}} I_{\max} + 
%   \frac{\abrk{\lambda_n}}{\abrk{\lambda_m}} \abrk{\psi_m(0)} + 
%   c\abrk{P_{n,m}}$ \\[1ex]
%
\hline
 0
& 0.70669E+00 & -.44409E-15 & 0.26389E-04 & 0.20432E+00  \\
 2
& 0.49581E+00 & -.16653E-15 & 0.26333E-04 & 0.20389E+00  \\
 4
& 0.42581E+00 & -.13323E-14 & 0.26314E-04 & 0.20375E+00  \\
 6
& 0.38527E+00 & -.21649E-14 & 0.26303E-04 & 0.20366E+00  \\
 8
& 0.35695E+00 & -.22760E-14 & 0.26296E-04 & 0.20361E+00  \\
 10
& 0.33516E+00 & -.16653E-14 & 0.26290E-04 & 0.20356E+00  \\
 12
& 0.31730E+00 & -.23870E-14 & 0.26285E-04 & 0.20352E+00  \\
 14
& 0.30201E+00 & -.24980E-14 & 0.26281E-04 & 0.20349E+00  \\
 16
& 0.28844E+00 & 0.11102E-14 & 0.26277E-04 & 0.20346E+00  \\
 18
& 0.27604E+00 & -.59230E-13 & 0.26274E-04 & 0.20344E+00  \\
 20
& 0.26435E+00 & 0.83716E-12 & 0.26271E-04 & 0.20342E+00  \\
 22
& 0.25299E+00 & -.89038E-11 & 0.26268E-04 & 0.20339E+00  \\
 24
& 0.24150E+00 & 0.76862E-10 & 0.26265E-04 & 0.20337E+00  \\
 26
& 0.22919E+00 & -.65870E-09 & 0.26262E-04 & 0.20335E+00  \\
 28
& 0.21377E+00 & 0.45239E-08 & 0.26253E-04 & 0.20327E+00  \\
 30
& 0.18075E+00 & -.19826E-07 & 0.26282E-04 & 0.20350E+00  \\
 32
& 0.10038E+00 & 0.68548E-07 & 0.26276E-04 & 0.20345E+00  \\
 34
& 0.27988E-01 & -.33810E-06 & 0.26849E-04 & 0.20789E+00  \\
 36
& 0.49822E-02 & 0.27232E-05 & 0.28516E-04 & 0.22080E+00  \\
 38
& 0.70503E-03 & -.22754E-04 & 0.72700E-04 & 0.56291E+00  \\
\end{tabular}
\end{center}
\caption{\it
Illustration of Theorem~\ref{thm_quad}
with $c = 50$ and $n = 40$. See Experiment 12.
}
\label{t:test90}
\end{table}
%%%%%%%%%%%
\paragraph{Experiment 12.}
\label{sec_exp12}
In this experiment, we illustrate 
Theorems~\ref{thm_quad},~\ref{thm_quad_simple}
in Section~\ref{sec_quad_error}. We proceed as follows.
We choose, more or less arbitrarily, band limit $c$ and
prolate index $n$. We evaluate $\chi_n$, $\lambda_n$,
as well as the nodes $t_1,\dots,t_n$ and the weights
$W_1, \dots, W_n$ of the quadrature, defined in
Definition~\ref{def_quad} in Section~\ref{sec_quad}.
To do so, we use, respectively, the algorithms of
Sections~\ref{sec_evaluate_beta}, \ref{sec_evaluate_lambda},
\ref{sec_evaluate_nodes}, \ref{sec_evaluate_weights}
(in double precision).
Then, we choose an even integer $0 \leq m < n$, and
evaluate $\lambda_m$, $\psi_m(0)$, and $\psi_m(t_j)$
for all $j=1,\dots,n$, using the algorithms
of Sections~\ref{sec_evaluate_lambda}, \ref{sec_evaluate_beta}
(in double precision).

We display the results of this experiment in
Table~\ref{t:test90}.
The data in this table correspond to $c=50$ and $n=40$
(the same as for Table~\ref{t:test89} in Experiment 11).
Table~\ref{t:test90} has the following structure.
The first column contains the even integer $m$,
that varies between $0$ and $n-2$.
The second column contains $\lambda_m \psi_m(0)$.
The third column contains the difference
\begin{align}
\lambda_m \psi_m(0) - \sum_{j=1}^n \psi_m(t_j) \cdot W_j.
\label{eq_exp12_dif}
\end{align}
The fourth column contains the number $C_{n,m}$, defined via
the formula
\begin{align}
C_{n,m} = 
\left(1 - \frac{|\lambda_n|^2}{|\lambda_m|^2} \right) \cdot 
  \| I \|_{\infty} +
|\lambda_n| \cdot \left( 
 \frac{ |\lambda_n| }{ |\lambda_m| } |\psi_m(0)| +
 c \abrk{P_{n,m}} \right),
\label{eq_exp12_cnm}
\end{align}
where $\|I\|_{\infty}$ and $P_{n,m}$ are defined, respectively,
via \eqref{eq_i_inf} and \eqref{eq_quad_p} in Theorem~\ref{thm_quad}
(see also Experiment 8 in Section~\ref{sec_exp8}
and Experiment 10 in Section~\ref{sec_exp10}).
Note that \eqref{eq_exp12_cnm} is the right-hand side
of \eqref{eq_quad_thm} in Theorem~\ref{thm_quad}.
The fifth column contains $C_{n,m} / |\lambda_n|$.

We make the following observations from Table~\ref{t:test90}.
We note that \eqref{eq_exp12_dif} in the third column
is the error of the quadrature rule of Definition~\ref{def_quad},
used to integrate $\psi_m$ over $(-1,1)$
(see also \eqref{eq_prolate_integral} in Section~\ref{sec_pswf}).
The absolute value of this 
error is close to the machine precision for small $m$,
and grows up to $\approx 2 \cdot 10^{-5}$ for $m=38$.
For all values of $m$, the absolute value of \eqref{eq_exp12_dif}
is bounded by $C_{n,m}$ (the fourth column),
in agreement with Theorem~\ref{thm_quad}.
We also observe that $C_{n,m}$ is of the same order of magnitude
for all values of $m$ (as opposed to \eqref{eq_exp12_dif}).
Moreover, $C_{n,m}$ is always smaller than $|\lambda_n|$
(in this case, $|\lambda_n| =$ 0.12915E-03). More specifically,
$C_{n,m}$ is between $0.2 \cdot |\lambda_n|$ and
$0.6 \cdot |\lambda_n|$,
for all the values of $m$ (see the last column).

The behavior of the quadrature error \eqref{eq_exp12_dif}
in the third column is explained with the help
Experiment 10 and
Table~\ref{t:test89} in Experiment 11, as follows.
Due to \eqref{eq_quad_c} in the proof of Theorem~\ref{thm_quad},
\begin{align}
& \lambda_m \psi_m(0) - \sum_{j=1}^n \psi_m(t_j) \cdot W_j = \nonumber \\
& |\lambda_n|^2 \cdot \frac{\psi_m(0)}{\lambda_m} +
\left(1 - \frac{|\lambda_n|^2}{|\lambda_m|^2}\right) \cdot 
\xi_m \cdot \| I \|_{\infty}
+ i c \lambda_n P_{n,m},
\label{eq_exp12_error}
\end{align}
where $\xi_m$ is defined via \eqref{eq_exp11_xi} in Experiment 11.
The first summand in the right-hand side of \eqref{eq_exp12_error}
grows as $m$ increases.
The behavior of the second summand in the right-hand side
of \eqref{eq_exp12_error} depends on $\xi_m$, which is
close to zero for small values of $m$ and close to one
for large values of $m$ (see the fifth column
in Table~\ref{t:test89}). Finally, the last summand in
the right-hand side of \eqref{eq_exp12_error} is also
expected to grow with $m$ (compare to Figure~\ref{fig:test88}
in Experiment 10).

To conclude, $C_{n,m}$, defined via \eqref{eq_exp12_cnm},
significantly overestimates the quadrature error \eqref{eq_exp12_dif}
for small values of $m$. On the other hand, when $m$ is close to $n$,
$C_{n,m}$ is a fairly tight bound
on \eqref{eq_exp12_dif}.

In Theorem~\ref{thm_quad_simple}, we provide an upper bound on $C_{n,m}$
(and hence on the quadrature error \eqref{eq_exp12_dif}),
which is independent on $m$, namely,
\begin{align}
\left|
\lambda_m \psi_m(0) - \sum_{j=1}^n \psi_m(t_j) \cdot W_j
\right| & \leq 
C_{n,m}  \nonumber \\
& \leq |\lambda_n| \cdot
\left(
24 \cdot \log\left( \frac{1}{|\lambda_n|} \right) +
6 \cdot \chi_n
\right).
\label{eq_exp12_simple}
\end{align}
However, the logarithmic term in \eqref{eq_exp12_simple} is
due to the inaccuracy of Theorem~\ref{lem_two_bound} in Section~\ref{sec_tail}
(see Experiment 7 in Section~\ref{sec_exp7}).
Also, the term $6 \cdot \chi_n$ in \eqref{eq_exp12_simple}
is due to the inaccuracy of 
Theorem~\ref{lem_quad_cp} in Section~\ref{sec_quad_error}
(see Experiment 10 in Section~\ref{sec_exp10}).
In other words, numerical experiments seem to suggest that 
the quadrature error \eqref{eq_exp12_dif} is bounded by
$|\lambda_n|$, for all even $0 \leq m < n$.

%%%%%%%%%%%%%%%%%%%%%
\begin{figure} [htbp]
\begin{center}
\includegraphics[width=12cm, bb=36   193   574   598, clip=true]
{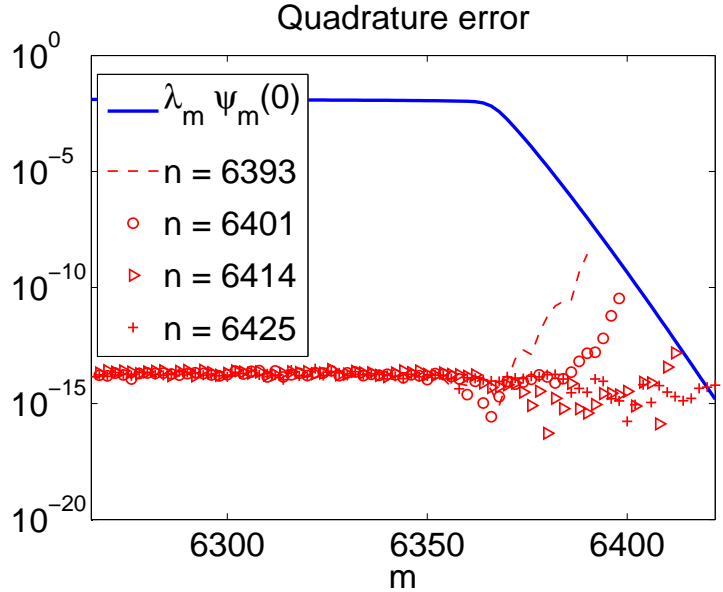}
\caption
{\it
The quadrature error $\left| \int \psi_m - \sum \psi_m(t_j) \cdot W_j \right|$
as a function of even $m < n$, for four different values
of $n$ and $c = 10000$. See Experiment 12.
}
\label{fig:test92}
\end{center}
\end{figure}
%%%%%%%%%%%%
In 
Figure~\ref{fig:test92}, we display the results of the same experiment
with different choice of parameters $c$ and $n$.
Namely, we choose $c=10000$ and plot $\lambda_m \psi_m(0)$
as a function of even $0 \leq m < 6425$, on the logarithmic scale
(solid blue line). In addition, we plot the absolute value
of the quadrature error \eqref{eq_exp12_dif},
as a function of $m$, for four
different values of $n$: $n=6393$ (red dashed line),
$n=6401$ (red circles), $n=6414$ (red triangles),
and $n=6425$ (red pluses). The corresponding values of $|\lambda_n|$
are displayed in Table~\ref{t:test92}.
%%%%%%%%%%%%%%%%%%%
\begin{table}[htbp]
\begin{center}
\begin{tabular}{c|c|c|c|c}
$n$ & 6393 & 6401 & 6414 & 6425 
\\
\hline
$|\lambda_n|$ & 0.43299E-07 & 0.54119E-09 & 0.33602E-12 & 0.52616E-15
\end{tabular}
\end{center}
\caption{\it
Values of $|\lambda_n|$ for $c=10000$ and different choices of $n$.
}
\label{t:test92}
\end{table}

We make the following observations from Figure~\ref{fig:test92}.
First, $\lambda_m \psi_m(0)$ is approximately a constant
for $m < 2c/\pi$, and decays roughly exponentially with $m$
for $m > 2c/\pi$.
Also, for each value of $n$,
the quadrature error \eqref{eq_exp12_dif}
is essentially zero for $m < 2c/\pi$,
and its absolute value
increases roughly exponentially with $m$ for $m > 2c/\pi$.
Nevertheless, the absolute error of the quadrature error
is always bounded from above by $|\lambda_n|$, for each $n$.
See also Tables~\ref{t:test90},~\ref{t:test91} and
Conjecture~\ref{conj_quad_error} below.

%%%%%%%%%%%%%%%%%%%%%%%%%%%%%%
%%%%%%%%%%%%%%%%%%%
\begin{table}[htbp]
\begin{center}
\begin{tabular}{c|c|c|c|c|c}
$c$   &
$n$   &
$m$   &
$\lambda_m \psi_m(0)$ &
$ \int \psi_m - \sum W_j \psi_m(t_j)$ &
$\abrk{\lambda_n}$ \\[1ex]
\hline
250 &  179  &  178
& 0.28699E-07 & -.52496E-08 & 0.18854E-07  \\
250 &  184  &  182
& 0.68573E-09 & -.38341E-10 & 0.16130E-09  \\
250 &  188  &  186
& 0.14108E-10 & -.68758E-12 & 0.30500E-11  \\
\hline
500 &  339  &  338
& 0.52368E-07 & -.13473E-07 & 0.40938E-07  \\
500 &  345  &  344
& 0.37412E-09 & -.86136E-10 & 0.27418E-09  \\
500 &  350  &  348
& 0.12148E-10 & -.99816E-12 & 0.35537E-11  \\
\hline
1000 & 659  &  658
& 0.42709E-07 & -.14354E-07 & 0.38241E-07  \\
1000 & 665  &  664
& 0.51665E-09 & -.15924E-09 & 0.43991E-09  \\
1000 &  671  &  670
& 0.52494E-11 & -.15024E-11 & 0.42815E-11  \\ 
\hline
2000 & 1297  &  1296
& 0.41418E-07 & -.17547E-07 & 0.41740E-07  \\
2000 &  1304  &  1302
& 0.77185E-09 & -.15036E-09 & 0.37721E-09  \\
2000 &  1311  &  1310
& 0.31078E-11 & -.11386E-11 & 0.28754E-11  \\
\hline
4000 &  2572  &  2570
& 0.54840E-07 & -.15493E-07 & 0.33682E-07  \\
4000 &  2579  &  2578
& 0.43032E-09 & -.20771E-09 & 0.46141E-09  \\
4000 &  2587  &  2586
& 0.28193E-11 & -.12805E-11 & 0.29164E-11  \\
\hline
8000 &  5119  &  5118
& 0.43268E-07 & -.26751E-07 & 0.52899E-07  \\
8000 &  5128  &  5126
& 0.50230E-09 & -.16395E-09 & 0.33442E-09  \\
8000 &  5136  &  5134
& 0.50508E-11 & -.15448E-11 & 0.32132E-11  \\
\hline
16000 &  10213  &  10212
& 0.42725E-07 & -.30880E-07 & 0.56568E-07  \\
16000 &  10222  &  10220
& 0.69663E-09 & -.28201E-09 & 0.52821E-09  \\
16000 &  10231  &  10230
& 0.34472E-11 & -.22162E-11 & 0.42902E-11  \\
\end{tabular}
\end{center}
\caption{\it
Relation between the quadrature error and $\abrk{\lambda_n}$.
See Experiment 12.
}
\label{t:test91}
\end{table}
%%%%%%%%%%%

We strengthen the observations above by repeating this experiment
with several other values of band limit $c$ and prolate index $n$.
The results are displayed in Table~\ref{t:test91}.
This table has the following structure.
The first and second column contain, respectively,
the band limit $c$ and the prolate index $n$.
The third column contains the even integer $0 \leq m < n$
(the values of $m$ were chose to be close to $n$).
The fourth column contains $\lambda_m \psi_m(0)$.
The fifth column contains the quadrature error
\eqref{eq_exp12_dif}. The last column contains $|\lambda_n|$.

We make the following observations from Table~\ref{t:test91}.
First, for each of the seven
values of $c$, the three indices $n$ were chosen in such
a way that $|\lambda_n|$ is between
$10^{-12}$ and $10^{-7}$. The values of the band limit 
$c$ vary between $250$
(the first three rows) and $16000$ (the last three rows).
For each $n$, the value of $m$ is chosen to be the largest
even integer below $n$. This choice of $m$ yields
the largest $\lambda_m \psi_m(0)$ and the largest quadrature
error \eqref{eq_exp12_dif} among all $m < n$ 
(see also Table~\ref{t:test90}). Obviously, $|\lambda_m|$ and
$|\lambda_n|$ are of the same order of magnitude, for this choice of $m$.
We also observe that, for all the values of $c,n,m$, 
the absolute error of the quadrature error \eqref{eq_exp12_dif}
is bounded from above by $|\lambda_n|$ (and is roughly equal
to $|\lambda_n|/2$).
In other words, the upper bound on the quadrature error, provided
by Theorem~\ref{thm_quad_simple} (see \eqref{eq_exp12_simple}), 
is somewhat overcautious.

We summarize these observations in the following conjecture.
\begin{conjecture}
Suppose that $c>0$ is a positive real number, and $n>2c/\pi$ is
an integer. Suppose also that $0 \leq m < n$ is an integer.
Suppose furthermore that $t_1, \dots, t_n$ and $W_1, \dots, W_n$ are, 
respectively, the nodes and weights of the quadrature, introduced
in Definition~\ref{def_quad} in Section~\ref{sec_quad}.
Then,
\begin{align}
\left| \int_{-1}^1 \psi_m(s) \; ds - \sum_{j=1}^n \psi_m(t_j) W_j \right| \leq
|\lambda_n|.
\label{eq_quad_error_conj}
\end{align}
\label{conj_quad_error}
\end{conjecture}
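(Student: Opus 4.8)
The plan is to start from the exact error representation obtained in the proof of Theorem~\ref{thm_quad}: rearranging \eqref{eq_quad_c} (see also \eqref{eq_exp12_error}), one has, for every $0 \le m < n$,
\[
\int_{-1}^1 \psi_m(s)\,ds - \sum_{j=1}^n \psi_m(t_j) W_j = |\lambda_n|^2 \frac{\psi_m(0)}{\lambda_m} + \Bigl(1 - \frac{|\lambda_n|^2}{|\lambda_m|^2}\Bigr)\xi_m \|I\|_\infty + ic\lambda_n P_{n,m},
\]
where $|\xi_m| \le 1$, $\|I\|_\infty$ is as in \eqref{eq_i_inf}, and $P_{n,m}$ is defined via \eqref{eq_quad_p}. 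Before estimating these three terms I would dispose of odd $m$ by parity. The nodes are symmetric and the weights satisfy $W_j = W_{n+1-j}$ (a direct consequence of Definition~\ref{def_quad} and the symmetry of $\psi_n$ in Theorem~\ref{thm_pswf_main}); hence for odd $m$ both $\int_{-1}^1 \psi_m$ and $\sum_j \psi_m(t_j) W_j$ vanish, and the inequality is trivial. This is consistent with the representation above, since $\psi_m(0)=0$ and, by Corollary~\ref{cor_p_parity}, $P_{n,m}=0$ in the odd case. Thus the entire content of the conjecture lies in the even case, where all three terms are genuinely present.

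For even $m$, dividing through by $|\lambda_n|$ reduces the conjecture to the single scalar inequality
\[
\frac{|\lambda_n|}{|\lambda_m|}\,|\psi_m(0)| + \frac{\|I\|_\infty}{|\lambda_n|} + c\,|P_{n,m}| \le 1 .
\]
The first term is the mildest: since $|\lambda_m| \ge |\lambda_n|$ for $m \le n$ by the ordering in Theorem~\ref{thm_pswf_main}, it is at most $|\psi_m(0)|$ weighted by the small factor $|\lambda_n|/|\lambda_m|$, and a pointwise bound on $|\psi_m(0)|$ in the spirit of Theorems~\ref{thm_extrema},~\ref{thm_all_psi_upper_bound} should keep it well within the budget. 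The two difficult terms are $\|I\|_\infty/|\lambda_n|$ and $c\,|P_{n,m}|$. The estimates currently available, namely $\|I\|_\infty \le |\lambda_n| I_{\max}$ with $I_{\max}$ of order $\chi_n^{1/4}$ (Theorem~\ref{thm_complex_summary} together with \eqref{eq_complex_imax}) and $c\,|P_{n,m}| \le \sqrt{32}\,n^2$ (Theorem~\ref{lem_quad_cp}), are far too weak and only reproduce the loose bound of Theorem~\ref{thm_quad_simple}. Proving the clean constant $1$ requires sharpening each of these to $O(1)$.

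The main obstacle is the estimate on $\|I\|_\infty$. The spurious factor $\chi_n^{1/4}\sim\sqrt n$ enters through Theorem~\ref{lem_6_10}, whose bound on a pair of consecutive principal parts at the exterior roots decays only like $x_k^{-1}$, whereas Experiment~7 and Theorem~\ref{lem_two_bound} indicate that the true decay is $x_k^{-2}$. I would therefore attempt to extend the sharp alternating-pair estimate of Theorem~\ref{lem_two_bound} — presently valid only in the far tail $x_k > |\lambda_n|^{-2}$ — down to moderate roots, combining the sign alternation of $\psi_n'(x_k)$ at consecutive roots with the sharp derivative lower bound of Theorem~\ref{thm_sharp_simple} and the spacing control of Theorems~\ref{thm_x1_x0_good},~\ref{thm_spacing}, so that the whole exterior series sums with the correct convergent behavior $\sum x_k^{-2}=O(1)$ rather than the divergent $\sum x_k^{-1}$. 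The second obstacle, $c\,|P_{n,m}|=O(1)$, cannot come from cancellation, since Experiment~10 shows the summands in \eqref{eq_quad_p} share a common sign at the worst $m$; instead one must sharpen the pointwise factors $\psi_m(t_j)$, $\Psi_n(1,t_j)$, and $1/\psi_n'(t_j)$ at the interior roots and, ideally, reinterpret $\sum_j \psi_m(t_j)\Psi_n(1,t_j)/\psi_n'(t_j)$ as a contour or quadrature functional of $\Psi_n$ (see \eqref{eq_quad_big_psi}) admitting a direct $O(1/c)$ bound. If both sharpenings are carried through with explicit constants, combining them with the mild first term closes the gap; the numerical evidence in Tables~\ref{t:test90},~\ref{t:test91}, where the error is roughly $|\lambda_n|/2$, strongly suggests that the resulting constant sits comfortably below $1$.
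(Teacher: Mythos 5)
You should first note what the paper itself does with this statement: it is stated as a \emph{conjecture}, and Remark~\ref{rem_conj} says explicitly that it is ``only supported by numerical evidence,'' the strongest rigorously proven bound being Theorem~\ref{thm_quad_simple}, whose right-hand side carries the extra factor $24\log(1/|\lambda_n|)+6\chi_n$. So there is no proof in the paper to compare against, and your proposal is likewise not a proof --- it is a (largely accurate) diagnosis of where the paper's machinery loses sharpness, together with an unexecuted program. Your starting representation is exactly \eqref{eq_exp12_error}, your parity reduction for odd $m$ is correct (the error vanishes identically, consistent with Remark~\ref{rem_i_parity} and Corollary~\ref{cor_p_parity}), and you correctly identify the two bottlenecks: the $\chi_n^{1/4}$ in $I_{\max}$ coming from the $O(x_k^{-1})$ decay of Theorem~\ref{lem_6_10}, and the $\sqrt{32}\,n^2$ of Theorem~\ref{lem_quad_cp}. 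But identifying the obstacles is where the proposal ends; the two ``sharpenings'' on which everything hinges are left as hopes, and there are concrete reasons to doubt both.

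First, your plan to extend the alternating-pair bound of Theorem~\ref{lem_two_bound} down to moderate roots has no proven mechanism: that bound rests on the expansions of Theorems~\ref{lem_psi_for_large_x},~\ref{lem_tail_dpsi}, whose error terms $D(x)$, $G(x)$ are only controlled as $O(1/(|\lambda_n|x))$ and $O(1/(|\lambda_n|x^2))$ and thus blow up unless $x\gg|\lambda_n|^{-1}$; near $x\sim 1$ the only available tools are exactly Theorems~\ref{thm_consec_der} and~\ref{thm_sharp_simple}, which is what Theorem~\ref{lem_6_10} already uses, yielding the $x_k^{-1}$ decay you are trying to beat. Second, even granting both sharpenings, your reduction to the triangle-inequality budget $(|\lambda_n|/|\lambda_m|)|\psi_m(0)|+\|I\|_\infty/|\lambda_n|+c\,|P_{n,m}|\le 1$ is likely unprovable as stated: the paper's own data (Table~\ref{t:test88}) show $c\cdot\max_m|P_{n,m}|$ as large as $\approx 0.75$, and Table~\ref{t:test86} shows $\|I\|_\infty/|\lambda_n|$ up to $\approx 0.2$, so the sum of absolute values can approach or exceed $1$ even in the observed regime --- the true error $\approx|\lambda_n|/2$ (Table~\ref{t:test91}) evidently relies on sign cancellation among the three terms, which your decomposition discards. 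Finally, the conjecture's hypotheses admit any $c>0$ and any $n>2c/\pi$, including regimes where $|\lambda_n|=O(1)$ and the entire head/tail apparatus (which requires $c>30$, $n>2c/\pi+5$, $|\lambda_n|<1/10$, via Theorems~\ref{thm_lam10},~\ref{thm_khi_lambda_bound}) does not apply at all; your plan says nothing about this range, so even the framework of the argument does not cover the full statement.
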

\begin{remark}
Conjecture~\ref{conj_quad_error} provides a stronger inequality
than that of Theorem~\ref{thm_quad_simple}. On the other hand,
Conjecture~\ref{conj_quad_error} has been only supported
by numerical evidence, while Theorem~\ref{thm_quad_simple}
has been rigorously proven.
\label{rem_conj}
\end{remark}
%%%%%%%%%%%%%%%%%%%%%%%
%%%%%%%%%%%%%%%%%%%%%
\begin{figure} [htbp]
\begin{center}
\includegraphics[width=12cm, bb=68   218   542   574, clip=true]
{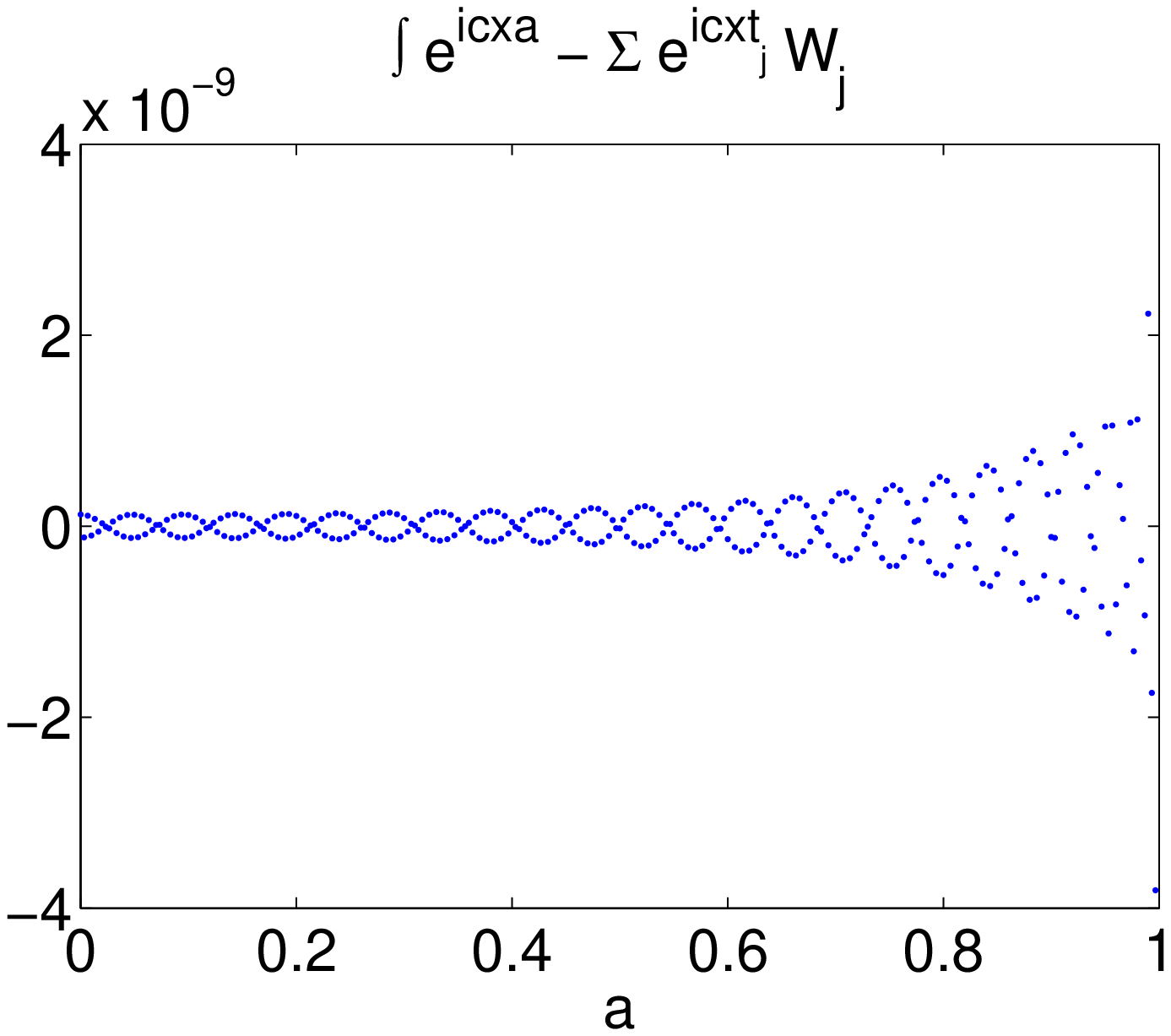}
\caption
{\it
The quadrature error \eqref{eq_exp13_error}
with $c = 1000$, $n = 650$.
See Experiment 13.
}
\label{fig:test93a}
\end{center}
\end{figure}
%%%%%%%%%%%%
%%%%%%%%%%%%%%%%%%%%%
\begin{figure} [htbp]
\begin{center}
\includegraphics[width=12cm, bb=68   218   542   574, clip=true]
{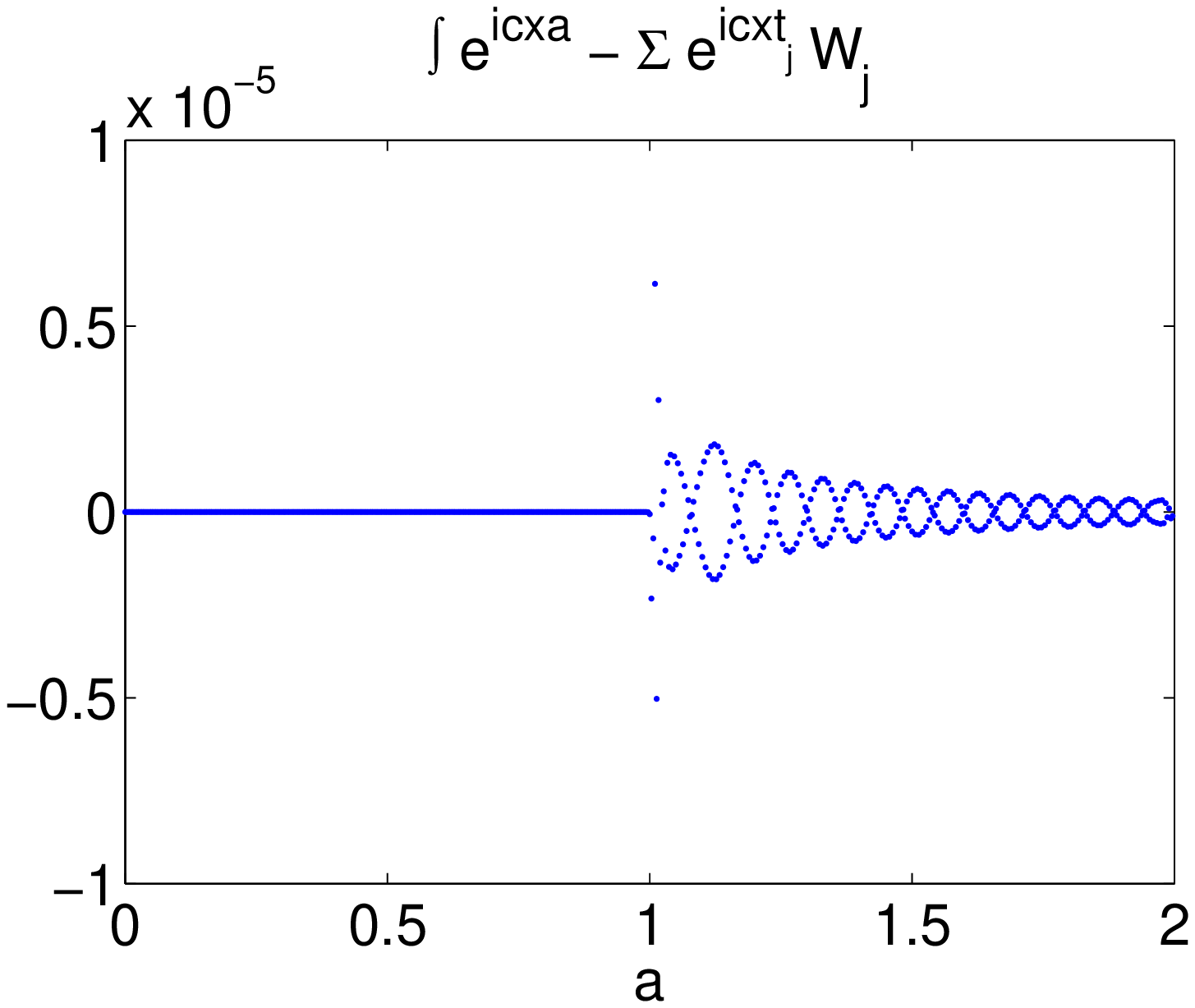}
\caption
{\it
The quadrature error \eqref{eq_exp13_error}
with $c = 1000$, $n = 650$.
See Experiment 13.
}
\label{fig:test93b}
\end{center}
\end{figure}
%%%%%%%%%%%%

\paragraph{Experiment 13.}
\label{sec_exp13}
In this experiment, we demonstrate the performance of the quadrature,
introduced in Definition~\ref{def_quad} in Section~\ref{sec_quad},
on exponential functions. We proceed as follows.
We choose, more or less arbitrarily, the band limit $c$
and the prolate index $n$. We evaluate the quadrature
nodes $t_1, \dots, t_n$ and the quadrature weights $W_1,\dots,W_n$,
by using, respectively,
the algorithms of Sections~\ref{sec_evaluate_nodes},
~\ref{sec_evaluate_weights} (in double precision). 
Also, we evaluate $|\lambda_n|$,
by using the algorithm in Section~\ref{sec_evaluate_lambda}
(in double precision).
Then, we choose a real number $a \geq 0$, and evaluate the 
integral of $e^{icax}$ over $-1 \leq x \leq 1$ via
the formula
\begin{align}
\int_{-1}^1 e^{iacx} \; dx = 
\int_{-1}^1 \cos(acx) \; dx = \frac{2 \sin(ac)}{ac}.
\label{eq_exp13_int}
\end{align}
Also, we compute an approximation to \eqref{eq_exp13_int},
by evaluating the sum
\begin{align}
\sum_{j=1}^n W_j \cdot \cos(icat_j).
\label{eq_exp13_quad}
\end{align}
Finally, we evaluate the error of this approximation, that is,
\begin{align}
\frac{2 \sin(ac)}{ac} - \sum_{j=1}^n W_j \cdot \cos(icat_j).
\label{eq_exp13_error}
\end{align}

In Figures~\ref{fig:test93a},~\ref{fig:test93b}, we display the results
of this experiment. The band limit and the prolate index were
chosen to be, respectively, $c=1000$ and $n=650$. 
This choice yields $\lambda_n =$ -.21224E-04.
In these figure, we plot the quadrature error \eqref{eq_exp13_error}
as a function of the real parameter $a$.
Figure~\ref{fig:test93a} corresponds to $0 \leq a \leq 1$,
while Figure~\ref{fig:test93b} corresponds to $0 \leq a \leq 2$.

We make the following observations
from Figures~\ref{fig:test93a},~\ref{fig:test93b}.
For $0 \leq a \leq 1$, the 
absolute value of the quadrature error \eqref{eq_exp13_error}
is bounded by $4 \cdot 10^{-9} \approx 10 \cdot |\lambda_n|^2$.
The largest quadrature error is obtained when $a$ is close to 1.
On the other hand, for $1 \leq a \leq 2$,
the absolute value of the quadrature error \eqref{eq_exp13_error}
is significantly larger, and is of order $|\lambda_n|$.
The largest quadrature error is obtained when $a$ is close to 1.

These observations admit the following (somewhat imprecise)
explanation. Suppose that $a \geq 0$ is a real number.
Due to \eqref{eq_prolate_integral} and
Theorem~\ref{thm_pswf_main} in Section~\ref{sec_pswf},
\begin{align}
e^{iacx} = \sum_{m=0}^{\infty} \lambda_m \psi_m(a) \psi_m(x),
\label{eq_exp13_exp}
\end{align}
for all real $-1 \leq x \leq 1$ (we note that while $e^{iacx}$ is
not a bandlimited function of $-1\leq x \leq 1$, 
it does belong to $L^2\left[-1,1\right]$).
Moreover,
\begin{align}
\int_{-1}^1 e^{iacx} \; dx =
\frac{2\sin(ac)}{ac} = \sum_{m=0}^{\infty} \lambda_m^2 \psi_m(a) \psi_m(0).
\label{eq_exp13_sinc}
\end{align}
We combine \eqref{eq_exp13_error}, \eqref{eq_exp13_exp}, 
\eqref{eq_exp13_sinc}, to obtain
\begin{align}
& \frac{2 \sin(ac)}{ac} - \sum_{j=1}^n W_j \cdot \cos(icat_j) = \nonumber \\
& \sum_{m = 0}^{\infty} \lambda_m \psi_m(a) 
\brk{\lambda_m \psi_m(0) - \sum_{j=1}^n W_j \psi_m(t_j) }.
\label{eq_exp13_exp_error}
\end{align}
We recall (see Experiment 12) that, for small values of $m$,
the quadrature error \eqref{eq_exp12_dif} is very small compared
to $|\lambda_n|$. On the other hand, for those values of $m < n$ that are close
to $n$, the quadrature error \eqref{eq_exp12_dif} is of order $|\lambda_n|$.
Therefore, roughly speaking, 
\begin{align}
\sum_{m = 0}^{n-1} \lambda_m \psi_m(a) 
\brk{\lambda_m \psi_m(0) - \sum_{j=1}^n W_j \psi_m(t_j) } =
O\left( |\lambda_n|^2 \cdot \psi_{n-1}(a) \right).
\label{eq_exp13_head}
\end{align}
On the other hand, due to the fast decay of $|\lambda_m|$, 
we expect
\begin{align}
\sum_{m = n}^{\infty} \lambda_m \psi_m(a) 
\brk{\lambda_m \psi_m(0) - \sum_{j=1}^n W_j \psi_m(t_j) } =
O\left( |\lambda_n|^2 \cdot \psi_{n}(a) \right).
\label{eq_exp13_tail}
\end{align}
If $0 \leq a \leq 1$, then $|\psi_n(a)| = O(\sqrt{n})$
(see 
Theorems~\ref{thm_psi1_bound},~\ref{thm_extrema},~\ref{thm_all_psi_upper_bound}
in Section~\ref{sec_pswf}).
We combine this observation with \eqref{eq_exp13_head}, \eqref{eq_exp13_tail}
to conclude that
the quadrature error \eqref{eq_exp13_error} is expected 
to be of the order
$|\lambda_n|^2 \cdot \sqrt{n}$.

If, on the other hand, $1 \leq a \leq 2$, then
$|\psi_n(a)| = O\left(|\lambda_n|^{-1}\right)$
(see, for example,
Theorem~\ref{lem_Q_Q_tilde} in Section~\ref{sec_two_by_two},
Theorem~\ref{thm_sharp_simple} in Section~\ref{sec_upper},
Theorem~\ref{lem_psi_for_large_x} in Section~\ref{sec_tail},
Experiment 1 in Section~\ref{sec_exp1},
Experiment 6 in Section~\ref{sec_exp6}).
We combine this observation with \eqref{eq_exp13_head}, \eqref{eq_exp13_tail}
to conclude that, in this case,
the quadrature error \eqref{eq_exp13_error} is expected 
to be of the order
$|\lambda_n|$.

We summarize this crude analysis, supported by the observations above,
in the following conjecture.
\begin{conjecture}
Suppose that $c>0$ is a real number, and that $n>2c/\pi$
is an integer. 
Suppose also that $t_1, \dots, t_n$ and $W_1, \dots, W_n$ are, 
respectively, the nodes and weights of the quadrature, introduced
in Definition~\ref{def_quad} in Section~\ref{sec_quad}. 
Suppose furthermore that $-1 \leq a \leq 1$ is a real number.
Then,
\begin{align}
\int_{-1}^1 e^{icax} \; dx - \sum_{j=1}^n e^{icat_j} \cdot W_j =
O\left( |\lambda_n|^2 \cdot \sqrt{n} \right).
\label{eq_exp_conj}
\end{align}
\label{conj_exp}
\end{conjecture}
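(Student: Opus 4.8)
The plan is to convert the quadrature error into a spectral series and then control it mode by mode. Since $e^{icax}\in L^2\left[-1,1\right]$ and the $\psi_m$ form a complete orthonormal system (Theorem~\ref{thm_pswf_main}), the integral equation \eqref{eq_prolate_integral} gives that the $m$th expansion coefficient of $e^{ica\cdot}$ equals $\lambda_m\psi_m(a)$, so that $e^{icax}=\sum_m\lambda_m\psi_m(a)\psi_m(x)$ in $L^2$. Integrating this over $(-1,1)$ and using $\int_{-1}^1\psi_m = \lambda_m\psi_m(0)$, and interchanging the sum (absolutely convergent by Cauchy--Schwarz on the two $\ell^2$ sequences $\brk{\lambda_m\psi_m(a)}$ and $\brk{\lambda_m\psi_m(0)}$) with the finite quadrature, I would obtain the exact identity
\begin{align}
\int_{-1}^1 e^{icax}\,dx - \sum_{j=1}^n W_j e^{icat_j} = \sum_{m=0}^\infty \lambda_m\psi_m(a)\,E_m, \qquad E_m = \lambda_m\psi_m(0) - \sum_{j=1}^n W_j\psi_m(t_j).
\end{align}
This reduces everything to understanding the scalar quadrature errors $E_m$, for which the machinery of Section~\ref{sec_quad} is available.

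Next I would split the series at $m=n$. For $m\neq n$ the exact formula \eqref{eq_exp12_error} from the proof of Theorem~\ref{thm_quad} writes $E_m$ as a sum of a ``diagonal'' term $\abrk{\lambda_n}^2\psi_m(0)/\lambda_m$, a term $ic\lambda_n P_{n,m}$ controlled via \eqref{eq_quad_p} and Theorem~\ref{lem_quad_cp}, and a term involving $\int_{-1}^1 I\psi_n\psi_m$ with $I$ as in \eqref{eq_complex_it} and $\|I\|_\infty$ bounded by Theorem~\ref{thm_complex_summary}. The crucial point, visible already in the diagonal term and confirmed numerically in Table~\ref{t:test90}, is that for low modes $E_m$ is of size $\abrk{\lambda_n}^2$ rather than $\abrk{\lambda_n}$. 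For the head $m<n$ I would combine this with the uniform bound $\abrk{\psi_m(a)}\leq 2\sqrt{n}$ for $-1\leq a\leq 1$ (Theorem~\ref{thm_all_psi_upper_bound}) and with $\abrk{\lambda_m}\geq\abrk{\lambda_n}$; the super-exponential decay of $\abrk{\lambda_m}$ past $m\approx 2c/\pi$ localizes the head near $m=n-1$, aiming at an overall $O\brk{\abrk{\lambda_n}^2\sqrt{n}}$. For the tail $m>n$ the prefactor $\lambda_m$ decays super-exponentially (Theorems~\ref{thm_crude_inequality},~\ref{thm_lambda_khi}), so even the crude estimate $\abrk{E_m}\lesssim\sqrt{m}\brk{\abrk{\lambda_m}+\sum_j\abrk{W_j}}$ together with $\sum_j W_j = 2+O\brk{\abrk{\lambda_n}}$ already forces the tail well below the target.

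The main obstacle is the head, and specifically the need for genuine cancellation rather than the triangle inequality. Estimating each $\abrk{E_m}$ separately --- even with the sharp conjectural bound $\abrk{E_m}\leq\abrk{\lambda_n}$ of Conjecture~\ref{conj_quad_error} --- only yields $O\brk{c\sqrt{n}\,\abrk{\lambda_n}}$, which is off by a full factor of $\abrk{\lambda_n}$. Recovering $\abrk{\lambda_n}^2\sqrt{n}$ requires exploiting the structure of \eqref{eq_exp12_error}: the diagonal contribution $\abrk{\lambda_n}^2\sum_m(-1)^m\psi_m(a)\psi_m(0)$ does not converge term by term (it is formally the reproducing ``kernel'' of $L^2\left[-1,1\right]$ at the off-diagonal pair $(a,0)$), so it must be resummed against the off-diagonal pieces as a whole, using that $\brk{\int_{-1}^1 I\psi_n\psi_m}_m$ and $\brk{\lambda_m\psi_m(0)}_m$ are square-summable. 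This resummation is the technical heart of the argument, and is the reason the statement is recorded only as a conjecture. Finally, I would record the dichotomy in $a$: the factor $\psi_m(a)$ is $O(\sqrt{n})$ precisely when $-1\leq a\leq 1$, whereas for $\abrk{a}>1$ the growth of $\psi_n$ outside the interval (Theorems~\ref{lem_Q_Q_tilde},~\ref{thm_sharp_simple},~\ref{lem_psi_for_large_x}) gives $\psi_n(a)=O\brk{\abrk{\lambda_n}^{-1}}$ and degrades the rate to $O\brk{\abrk{\lambda_n}}$, in agreement with Figures~\ref{fig:test93a},~\ref{fig:test93b}; hence the restriction $-1\leq a\leq 1$ is essential to the claimed bound.
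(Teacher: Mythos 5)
Your argument reproduces, almost step for step, the paper's own (explicitly non-rigorous) justification of this statement: the paper records it only as a conjecture, supported by the expansion $e^{icax}=\sum_m\lambda_m\psi_m(a)\psi_m(x)$, the error identity \eqref{eq_exp13_exp_error}, the head/tail split \eqref{eq_exp13_head}--\eqref{eq_exp13_tail}, and the dichotomy $|\psi_m(a)|=O(\sqrt{n})$ for $-1\leq a\leq 1$ versus $O\left(|\lambda_n|^{-1}\right)$ outside the interval (Experiment 13). Your diagnosis of exactly where rigor fails --- the head requires genuine cancellation beyond the term-by-term bound $|E_m|\lesssim|\lambda_n|$, which is itself only conjectural (Conjecture~\ref{conj_quad_error}) --- coincides with the reason the paper offers only a ``crude analysis'' and numerical evidence rather than a proof.
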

\paragraph{Experiment 14.}
\label{sec_exp14}
In this experiment, we illustrate
Theorems~\ref{thm_quad_eps_large},~\ref{thm_quad_eps_simple}
in Section~\ref{sec_main_result}.
We proceed as follows. We choose, more or less arbitrarily,
the band limit $c > 0$ and the accuracy parameter $\varepsilon > 0$.
Then, we use the algorithm of Section~\ref{sec_evaluate_lambda}
to find the minimal integer $m$ 
such that $|\lambda_m| < \varepsilon$. 
In other words, we define the integer $n_1(\varepsilon)$
via the formula
\begin{align}
n_1(\varepsilon) = 
\min\left\{ m \geq 0 \; : \; |\lambda_m| < \varepsilon \right\}.
\label{eq_exp14_n1}
\end{align}
Also, we find the minimal integer such that the corresponding
bound on the quadrature error, established in
Theorem~\ref{thm_quad_simple} in Section~\ref{sec_quad_error},
is less that $\varepsilon$
(see also \eqref{eq_exp12_simple} in Experiment 12).
In other words, we defined $n_2(\varepsilon)$ via the formula
\begin{align}
n_2(\varepsilon) = 
\min\left\{ m \geq 0 \; : \; 
|\lambda_m| \cdot
\left(
24 \cdot \log\left( \frac{1}{|\lambda_m|} \right) +
6 \cdot \chi_m
\right)
< \varepsilon \right\}.
\label{eq_exp14_n2}
\end{align}
Then, we define the integer $n_3(\varepsilon)$ via the formula
\eqref{eq_quad_eps_large_nu} in Theorem~\ref{thm_quad_eps_large}.
In other words,
\begin{align}
n_3(\varepsilon) = \text{floor}\left(
\frac{2c}{\pi} + \frac{\alpha(\varepsilon)}{2\pi} \cdot 
\log\left(
\frac{16ec}{\alpha(\varepsilon)},
\right)
\right)
\label{eq_exp14_n3}
\end{align}
where $\alpha(\varepsilon)$ is defined via
\eqref{eq_quad_eps_large_alpha} in Theorem~\ref{thm_quad_eps_large}.
Finally, we define the integer $n_4(\varepsilon)$ via
the right-hand side of \eqref{eq_quad_eps_simple_n}
in Theorem~\ref{thm_quad_eps_simple}. In other words,
\begin{align}
n_4(\varepsilon) = \text{floor}\left(
\frac{2c}{\pi} +
\left(10 + \frac{3}{2} \cdot \log(c) + 
   \frac{1}{2} \cdot \log\frac{1}{\varepsilon}
\right) \cdot \log\left( \frac{c}{2} \right) \right).
\label{eq_exp14_n4}
\end{align}
In both \eqref{eq_exp14_n3} and \eqref{eq_exp14_n4}, $\text{floor}(a)$
denotes the integer part of a real number $a$.

%%%%%%%%%%%%%%%%%%%
\begin{table}[htbp]
\begin{center}
\begin{tabular}{c|c|c|c|c|c|c|c}
$c$     &
$\varepsilon$   &
$n_1(\varepsilon)$ &
$n_2(\varepsilon)$ &
$n_3(\varepsilon)$ &
$n_4(\varepsilon)$ &
$|\lambda_{n_1(\varepsilon)}|$ &
$|\lambda_{n_2(\varepsilon)}|$ \\[1ex]
\hline
   250 & $10^{-10}$ &   184&   198&   277&   303 & 0.60576E-10 & 0.86791E-16\\
   250 & $10^{-25}$ &   216&   227&   326&   386 & 0.31798E-25 & 0.14863E-30\\
   250 & $10^{-50}$ &   260&   270&   393&   525 & 0.28910E-50 & 0.75155E-56\\
\hline
   500 & $10^{-10}$ &   346&   362&   460&   488 & 0.49076E-10 & 0.60092E-16\\
   500 & $10^{-25}$ &   382&   397&   520&   583 & 0.54529E-25 & 0.19622E-31\\
   500 & $10^{-50}$ &   433&   446&   607&   742 & 0.82391E-50 & 0.38217E-56\\
\hline
  1000 & $10^{-10}$ &   666&   687&   803&   834 & 0.95582E-10 & 0.92947E-17\\
  1000 & $10^{-25}$ &   707&   725&   875&   942 & 0.97844E-25 & 0.14241E-31\\
  1000 & $10^{-50}$ &   767&   783&   981&  1120 & 0.39772E-50 & 0.56698E-57\\
\hline
  2000 & $10^{-10}$ &  1305&  1330&  1467&  1500 & 0.95177E-10 & 0.25349E-17\\
  2000 & $10^{-25}$ &  1351&  1373&  1550&  1619 & 0.86694E-25 & 0.27321E-32\\
  2000 & $10^{-50}$ &  1418&  1438&  1675&  1818 & 0.88841E-50 & 0.22795E-57\\
\hline
  4000 & $10^{-10}$ &  2581&  2610&  2768&  2804 & 0.70386E-10 & 0.64396E-18\\
  4000 & $10^{-25}$ &  2632&  2658&  2862&  2935 & 0.57213E-25 & 0.53827E-33\\
  4000 & $10^{-50}$ &  2707&  2730&  3007&  3154 & 0.56712E-50 & 0.88819E-58\\
\hline
  8000 & $10^{-10}$ &  5130&  5163&  5344&  5383 & 0.59447E-10 & 0.22821E-18\\
  8000 & $10^{-25}$ &  5185&  5216&  5450&  5526 & 0.87242E-25 & 0.16237E-33\\
  8000 & $10^{-50}$ &  5268&  5296&  5614&  5765 & 0.95784E-50 & 0.23927E-58\\
\hline
 16000 & $10^{-10}$ & 10225& 10264& 10468& 10509 & 0.63183E-10 & 0.37516E-19\\
 16000 & $10^{-25}$ & 10285& 10321& 10585& 10664 & 0.85910E-25 & 0.41416E-34\\
 16000 & $10^{-50}$ & 10377& 10409& 10769& 10923 & 0.51912E-50 & 0.56250E-59\\
\hline
 32000 & $10^{-10}$ & 20413& 20457& 20686& 20730 & 0.62113E-10 & 0.12818E-19\\
 32000 & $10^{-25}$ & 20478& 20519& 20815& 20897 & 0.78699E-25 & 0.12197E-34\\
 32000 & $10^{-50}$ & 20577& 20615& 21018& 21176 & 0.96802E-50 & 0.15816E-59\\
\hline
 64000 & $10^{-10}$ & 40786& 40837& 41092& 41139 & 0.89344E-10 & 0.28169E-20\\
 64000 & $10^{-25}$ & 40857& 40903& 41232& 41318 & 0.66605E-25 & 0.39212E-35\\
 64000 & $10^{-50}$ & 40964& 41008& 41454& 41616 & 0.85451E-50 & 0.28036E-60\\
\end{tabular}
\end{center}
\caption{\it
Illustration of
Theorems~\ref{thm_quad_eps_large},~\ref{thm_quad_eps_simple}.
See Experiment 14.
}
\label{t:test178}
\end{table}
%%%%%%%%%%%
We display the results of this experiment in Table~\ref{t:test178}.
This table has the following structure.
The first column contains the band limit $c$.
The second column contains the accuracy parameter $\varepsilon$.
The third column contains $n_1(\varepsilon)$, defined via
\eqref{eq_exp14_n1}. 
The fourth column contains $n_2(\varepsilon)$, defined via
\eqref{eq_exp14_n2}. 
The fifth column contains $n_3(\varepsilon)$, defined via
\eqref{eq_exp14_n3}. 
The sixth column contains $n_4(\varepsilon)$, defined via
\eqref{eq_exp14_n4}. 
The seventh column contains $|\lambda_{n_1(\varepsilon)}|$.
The last column contains $|\lambda_{n_2(\varepsilon)}|$.

Suppose that $c>0$ is a band limit, and $n>0$ is an integer.
We define the real number $Q(c,n)$ via the formula
\begin{align}
Q(c,n) = \max\left\{
\left| 
\int_{-1}^1 \psi_m(t) \; dt - \sum_{j=1}^n \psi_m(t_j) \cdot W_j
\right| \; : \; 0 \leq m \leq n-1
\right\},
\label{eq_exp14_qnc}
\end{align}
where $t_1,\dots,t_n$ and $W_1,\dots,W_n$ are, respectively,
the nodes and the weights of the quadrature, defined in
Definition~\ref{def_quad} in Section~\ref{sec_quad}.
In other words, this quadrature rule integrates the first $n$ PSWFs
up to an error at most $Q(c,n)$.

We make the following observations from Table~\ref{t:test178}.
We observe that $Q(c,n_1(\varepsilon)) < \varepsilon$,
due to the combination of Conjecture~\ref{conj_quad_error} 
in Section~\ref{sec_exp12} and
\eqref{eq_exp14_n1}, \eqref{eq_exp14_qnc}. In other words,
numerical evidence suggests that
the quadrature of order $n_1(\varepsilon)$ will integrate
the first $n_1(\varepsilon)$ PSWFs up to an error at most $\varepsilon$
(see Remark~\ref{rem_conj}).
On the other hand, we combine Theorem~\ref{thm_quad_simple}
in Section~\ref{sec_quad_error}
with \eqref{eq_exp14_n2}, \eqref{eq_exp14_qnc}, to conclude that
the quadrature of order $n_2(\varepsilon)$ has been
\emph{rigorously proven} to integrate the first $n_2(\varepsilon)$ PSWFs
up to an error at most $\varepsilon$. In both Theorem~\ref{thm_quad_simple}
and Conjecture~\ref{conj_quad_error}, we establish upper bounds
on $Q(c,n)$ in terms of $|\lambda_n|$. The ratio of 
$|\lambda_{n_1(\varepsilon)}|$ to $|\lambda_{n_2(\varepsilon)}|$ is quite
large: from about $10^6$ for $c=250$ and $\varepsilon = 10^{-10}, 10^{-25},
10^{-50}$ (see the first three rows in Table~\ref{t:test178}),
to about $10^{10}$ for $c=64000$ and
and $\varepsilon = 10^{-10}, 10^{-25}, 10^{-50}$ 
(see the last three rows in Table~\ref{t:test178}).
On the other hand, the difference between $n_2(\varepsilon)$ and
$n_1(\varepsilon)$ is fairly small; for example, for $\varepsilon=10^{-50}$,
this difference varies from 10 for $c=250$
to $23$ for $c=4000$, to merely $44$ for as large $c$ as $c=64000$.

As opposed to $n_1(\varepsilon)$ and $n_2(\varepsilon)$, 
the integer $n_3(\varepsilon)$,
defined via \eqref{eq_exp14_n3},
is computed via an explicit formula
that depends only on $c$ and $\varepsilon$
(rather than on $|\lambda_n|$ and $\chi_n$, that need to be
evaluated numerically). This formula is derived in 
Theorem~\ref{thm_quad_eps_large} by combining 
Theorem~\ref{thm_quad_simple} with some explicit bounds
on $|\lambda_n|$ and $\chi_n$ in terms of $c$ and $n$.
The convenience of \eqref{eq_exp14_n3} vs. \eqref{eq_exp14_n1},
\eqref{eq_exp14_n2} comes at a price: for example, 
for $\varepsilon = 10^{-50}$, the difference between $n_3(\varepsilon)$
and $n_2(\varepsilon)$ is 123 for $c=250$, and 446 for $c=64000$.
However, the difference 
$n_3(\varepsilon)-n_2(\varepsilon)$ is rather small compared to $c$:
for example, for $\varepsilon=10^{-50}$, this difference is
roughly $4 \cdot \left(\log(c)\right)^2$, for all the values
of $c$ in Table~\ref{t:test178}.

Furthermore, we observe that 
$n_4(\varepsilon)$ is also computed via an explicit
formula that depends only on $c$ and $\varepsilon$ (see \eqref{eq_exp14_n4}).
This formula is a simplification of that for $n_3(\varepsilon)$,
derived in Theorem~\ref{thm_quad_eps_simple}. Thus, not surprisingly,
$n_4(\varepsilon)$ is greater than $n_3(\varepsilon)$, for
all the values of $c$ and $\varepsilon$.

We summarize these observations as follows. 
Suppose that the band limit $c$ and 
the accuracy parameter $\varepsilon>0$ are given.
In Theorem~\ref{thm_quad_eps_large}, we prove that 
$n \geq n_3(\varepsilon)$ implies that the quadrature error
$Q(c,n)$, defined via \eqref{eq_exp14_qnc}, will be at most
$\varepsilon$ (for the quadrature of order $n$, defined
in Definition~\ref{def_quad} in Section~\ref{sec_quad}).
On the other hand, numerical evidence suggests that $Q(n,c) < \varepsilon$
also for all the values of $n$ between $n_1(\varepsilon)$ and
$n_3(\varepsilon)$ (see Experiment 12). In this experiment, we 
observed that
the difference between $n_3(\varepsilon)$ and $n_1(\varepsilon)$
is relatively small compared to $c$ 
(roughly of order $\left(\log(c)\right)^2$).

%%%%%%%%%%%%%%%%%%%%%%%%%%%%%%
\subsubsection{Quadrature Weights}
\label{sec_quad_w_num}
%%%%%%%%%%%%%%%%%%%%%%%%%%%%%%
\paragraph{Experiment 15.}
\label{sec_exp15}
In this experiment, we illustrate the results
of Section~\ref{sec_weights}
(in particular, Theorem~\ref{lem_tilde_phi},
Corollary~\ref{cor_tilde_phi_w}
and
Remark~\ref{rem_w_approx}).
We proceed as follows. We choose, more or less arbitrarily,
band limit $c$ and prolate index $n$.
Then, we compute the quadrature nodes $t_1, \dots, t_n$ as well as
$\psi_n'(t_1), \dots, \psi_n'(t_n)$,
by using the algorithm of Section~\ref{sec_evaluate_nodes}. 
We evaluate $\psi_n'(0)$, using
the algorithm of Section~\ref{sec_evaluate_beta}.
Next, we evaluate the quadrature weights $W_1,\dots,W_n$,
by using the algorithm of Section~\ref{sec_evaluate_weights}.
Also, for each $j=1,\dots,n$, we evaluate
the sum
\begin{align}
-\frac{2}{\psi_n'(t_j)} 
\sum_{k = 0}^{\infty} \alpha_k^{(n)} Q_k(t_j),
\label{eq_exp15_sum}
\end{align}
where $Q_k(t)$ is the $k$th Legendre function of the second kind, defined 
in Section~\ref{sec_legendre},
and $\alpha_k^{(n)}$ is the $k$th coefficient of the Legendre
expansion of $\psi_n$, defined via
\eqref{eq_num_leg_alpha_knc}
in Section~\ref{sec_legendre}
(see Theorem~\ref{lem_tilde_phi} and Section~\ref{sec_evaluate_beta}).
To evaluate \eqref{eq_exp15_sum} numerically, we use only $2N$ first
summands, where $N$ is an integer of order $n$
(see \eqref{eq_n_choice} in Section~\ref{sec_evaluate_beta}).
All the calculations are carried out in double precision.

%%%%%%%%%%%%%%%%%%%
\begin{table}[htbp]
\begin{center}
\begin{tabular}{c|c|c|c}
$j$     &
$W_j$   &
$W_j + 2 \cdot \tilde{\Phi}_n(t_j) / \psi_n'(t_j) $ &
$W_j - \frac{W_{21} \left( \psi_n'(0) \right)^2}
      {\left( \psi_n'(t_j) \right)^2 \cdot \left(1 - t_j^2\right) }$ \\[1ex]
\hline
 1
&  0.7602931556894E-02 & 0.00000E+00 & -.55796E-11  \\
 2
&  0.1716167229714E-01 & 0.00000E+00 & -.55504E-10  \\
 3
&  0.2563684665002E-01 & 0.00000E+00 & -.21825E-12  \\
 4
&  0.3278512460580E-01 & 0.00000E+00 & -.11959E-09  \\
 5
&  0.3863462966166E-01 & 0.16653E-15 & 0.82238E-11  \\
 6
&  0.4334940472363E-01 & 0.22204E-15 & -.16247E-09  \\
 7
&  0.4713107235981E-01 & 0.22204E-15 & 0.11270E-10  \\
 8
&  0.5016785516291E-01 & 0.19429E-15 & -.18720E-09  \\
 9
&  0.5261660773966E-01 & 0.26368E-15 & 0.10495E-10  \\
 10
&  0.5460119701692E-01 & 0.29837E-15 & -.20097E-09  \\
 11
&  0.5621699326080E-01 & 0.17347E-15 & 0.81464E-11  \\
 12
&  0.5753664411864E-01 & 0.12490E-15 & -.20866E-09  \\
 13
&  0.5861531690539E-01 & 0.10408E-15 & 0.55098E-11  \\
 14
&  0.5949490764741E-01 & 0.23592E-15 & -.21301E-09  \\
 15
&  0.6020725336886E-01 & 0.13184E-15 & 0.31869E-11  \\
 16
&  0.6077650804037E-01 & 0.18041E-15 & -.21545E-09  \\
 17
&  0.6122088420703E-01 & 0.48572E-16 & 0.14361E-11  \\
 18
&  0.6155390478472E-01 & 0.83267E-16 & -.21675E-09  \\
 19
&  0.6178529976346E-01 & 0.11102E-15 & 0.36146E-12  \\
 20
&  0.6192162112196E-01 & 0.48572E-16 & -.21732E-09  \\
 21
&  0.6196665001384E-01 & 0.00000E+00 & 0.00000E+00  \\
\end{tabular}
\end{center}
\caption{\it
Quadrature weights \eqref{eq_quad_w} with $c = 40$, $n = 41$.
$\lambda_n = \mbox{\text{\rm{i0.69857E-08}}}$.
See Experiment 15.
}
\label{t:test96}
\end{table}
%%%%%%%%%%%
We display the results of this experiment
Table~\ref{t:test96}.
The data in this table correspond to
$c = 40$ and $n = 41$.
Table~\ref{t:test96} has the following structure.
The first column contains the weight index $j$, that varies
between 1 and $21 = (n+1)/2$.
The second column contains $W_j$.
The third column contains the difference between $W_j$
and \eqref{eq_exp15_sum}.
The last column contains the difference 
\begin{align}
W_j- 
\frac{W_{21} \left( \psi_n'(0) \right)^2}
      {\left( \psi_n'(t_j) \right)^2 \cdot \left(1 - t_j^2\right) }
\label{eq_exp15_dif}
\end{align}
(see Remark~\ref{rem_w_approx}).

In Figure~\ref{fig:test96}, we plot
the weights $W_j$, displayed 
in the second column of Table~\ref{t:test96}.
For $j > 21$, the weights are computed via symmetry considerations.
Each $W_j$ is plotted as a red dot above the corresponding node
$t_j$.

We make the following observations from Table~\ref{t:test96}.
First, all the weights are positive
(see Theorem~\ref{thm_positive_w} and Remark~\ref{rem_w_always_pos}).
Moreover, $W_j$ grow
monotonically as $j$ increases to $(n+1)/2$.
Also, due to the combination of Theorems~\ref{lem_tilde_phi},
\ref{lem_tilde_phi_ode} in Section~\ref{sec_weights},
the value in the third column would be zero
in exact arithmetics. We observe that, indeed, this value
is zero up to the machine precision, which confirms
the correctness of the algorithm of Section~\ref{sec_evaluate_weights}.
(We note that, for $j=1,2,3,4$ and $j=21$, this algorithm, in fact,
does evaluate $W_j$ via \eqref{eq_exp15_sum}, and hence
this value in the corresponding rows is exactly zero).
Finally, we observe that, for all $j$,
the value \eqref{eq_exp15_dif} in the last column
is of the order $|\lambda_n|$, in correspondence with
Remark~\ref{rem_w_approx}.
%%%%%%%%%%%%%%%%%%%%%
\begin{figure} [htbp]
\begin{center}
\includegraphics[width=12cm, bb=68   218   542   574, clip=true]
{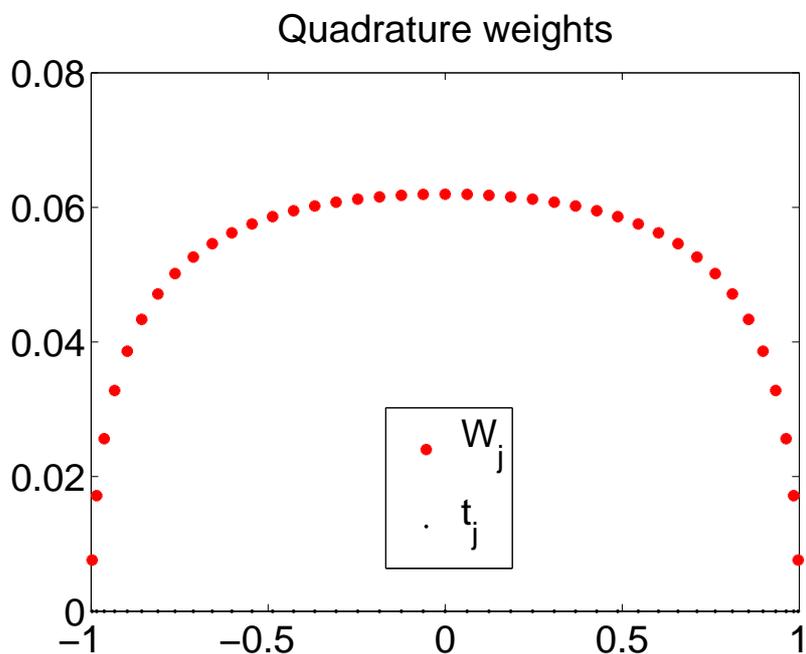}
\caption
{\it
The quadrature weights $W_1,\dots,W_n$ with $c = 40$, $n = 41$.
See Experiment 15.
}
\label{fig:test96}
\end{center}
\end{figure}
%%%%%%%%%%%%

%%%%%%%%%%%%%%%%%%%%%%%%%%%

\begin{comment}
\begin{figure}
\begin{center}
\begin{tabular}{c}
\includegraphics[width=10cm, bb=68 218 542 574, clip=true ]{test82a.eps} \\
(a) \\
\includegraphics[width=10cm, bb=68 218 542 574, clip=true ]{test82b.eps} \\
(b) \\
\includegraphics[width=8cm, bb=68 218 542 574, clip=true ]{test82c.eps} \\
\includegraphics[width=8cm, bb=68 218 542 574, clip=true ]{test82d.eps} \\
(c) \\
(d) \\
\end{tabular}
\caption{
A
}\label{fig:dist_example}
\end{center}
%\vspace{-0.6cm}
\end{figure}
\end{comment}

\end{document}